\pgfplotsset{compat=1.15}
\newcommand{\xx}{\tilde{x}}
\newcommand{\yy}{\tilde{y}}
\newcommand{\vv}{\tilde{v}}
\newcommand{\ww}{\tilde{w}}
\newcommand{\ff}{\tilde{f}}
\newcommand{\xxi}{\tilde{\xi}}
\newcommand{\eeta}{\tilde{\eta}}
\newcommand{\mm}{\tilde{m}}
\newcommand{\rgun}{\mathrm r1}
\newcommand{\dd}{d_\Omega}
\newcommand*\diff{\mathop{}\!\mathrm{d}}
\newcommand{\stereo}{s}
\newcommand{\R}{\mathbb{R}}
\newcommand{\K}{\mathbb{K}}
\newcommand{\C}{\mathbb{C}}
\newcommand{\Z}{\mathbb{Z}}
\newcommand{\N}{\mathbb{N}}
\newcommand{\Ecal}{\mathcal{E}}
\newcommand{\Lcal}{\mathcal{L}}
\newcommand{\Ccal}{\mathcal{C}}
\newcommand{\p}{\mathcal{P}}
\newcommand{\CC}{\mathcal{C}}
\newcommand{\GL}{\mathrm{GL}}
\newcommand{\PGL}{\mathrm{PGL}}
\newcommand{\PR}{\mathrm{P}}
\newcommand{\Hopf}{\mathrm{Hopf}}
\DeclareMathOperator{\Isom}{Isom}
\DeclareMathOperator{\Stab}{Stab}
\DeclareMathOperator{\supp}{supp}
\DeclareMathOperator{\interior}{int}
\DeclareMathOperator{\id}{id}
\DeclareMathOperator{\Aut}{Aut}
\DeclareMathOperator{\End}{End}
\DeclareMathOperator{\Vol}{Vol}
\DeclareMathOperator{\topp}{top}
\DeclareMathOperator{\hor}{\mathtt h}
\DeclareMathOperator{\inj}{inj}
\DeclareMathOperator{\Geod}{Geod}
\DeclareMathOperator{\axis}{Axis}
\DeclareMathOperator{\spl}{spl}
\DeclareMathOperator{\prox}{prox}
\DeclareMathOperator{\orb}{orb}
\DeclareMathOperator{\con}{con}
\DeclareMathOperator{\core}{cor}
\DeclareMathOperator{\sse}{sse}
\DeclareMathOperator{\smooth}{smooth}
\DeclareMathOperator{\sing}{sing}
\DeclareMathOperator{\bip}{bip}
\DeclareMathOperator{\prgun}{pr1}
\newcommand{\corfix}{F}
\newcommand{\ie}{i.e.\ }
\newcommand{\eg}{e.g.\ }
\newcommand{\resp}{resp.\ }
\def\tends[#1]{\underset{#1\to\infty}{\rightarrow}}
\newtheorem{prop}{Proposition}[section]
\newtheorem{thm}[prop]{Theorem}
\newtheorem{lemma}[prop]{Lemma}
\newtheorem{fait}[prop]{Fact}
\newtheorem{cor}[prop]{Corollary}
\newtheorem{obs}[prop]{Observation}
\newtheorem{ex}[prop]{Example}
\theoremstyle{definition}
\newtheorem{defi}[prop]{Definition}
\theoremstyle{remark}
\newtheorem{rqq}[prop]{Remark}
\newtheorem{nota}{Notation}
\renewcommand{\b}{\overline}
\numberwithin{equation}{section}
\author{Pierre-Louis Blayac}
\newcommand{\Addresses}{{
  \bigskip
  
  \textsc{Laboratoire Alexander Grothendieck, Institut des Hautes \'Etudes Scientifiques, Universit\'e Paris-Saclay, 35 route de Chartres, 91440 Bures-sur-Yvette, France}\par
  \textit{E-mail address}: \texttt{pierre-louis.blayac@universite-paris-saclay.fr}
}}
\newcounter{mycount}
\newcommand{\length}[1]{
    \setcounter{mycount}{0}
    \foreach \x in {#1}{
        \stepcounter{mycount}
        }
}
\newcommand{\listnb}[3]{
    \foreach \temp@a [count=\temp@i] in {#1} {
        \IfEq{\temp@i}{#2}{\global\let#3\temp@a\breakforeach}{}
    }
    \par
}
\newcommand{\listnbmod}[2]{
\listnb{#1}{\intcalcInc{\intcalcMod{\intcalcDec{#2}}{\themycount}}}
}
\newcommand{\getanglepoints}[3]{
    \pgfmathanglebetweenpoints{\pgfpointanchor{#2}{center}}
                              {\pgfpointanchor{#3}{center}}
    \global\let#1\pgfmathresult  
}
\newcommand{\getanglelines}[5]{
    \pgfmathanglebetweenlines{\pgfpointanchor{#2}{center}}
                             {\pgfpointanchor{#3}{center}}
                             {\pgfpointanchor{#4}{center}}
                             {\pgfpointanchor{#5}{center}}
    \global\let#1\pgfmathresult  
}
\newcommand{\getdistance}[3]{
  \pgfpointdiff{\pgfpointanchor{#2}{center}} 
               {\pgfpointanchor{#3}{center}} 
  \pgf@xa=\pgf@x
  \pgf@ya=\pgf@y
  \pgfmathparse{veclen(\pgf@xa,\pgf@ya)/28.45274/2} 
  \global\let#1\pgfmathresult 
}
\newcommand{\chooseangle}[6]{
\getanglelines{\temp@a}{#1}{#2}{#2}{#3}
\getanglelines{\temp@b}{#2}{#3}{#3}{#4}
\getanglelines{\temp@c}{#3}{#4}{#4}{#5}
\getanglepoints{\temp@d}{#3}{#4}
\pgfmathsetmacro\temp@e{ifthenelse(greater(\temp@a +\temp@c ,0.001),-\temp@c /(max(\temp@a +\temp@c ,0.001))*\temp@b+\temp@d,-\temp@b /2 +\temp@d)}
\global\let#6=\temp@e
}
\newcommand{\choosecontrolpoints}[7]{
\getanglepoints{\temp@c}{#1}{#2}
\getdistance{\temp@l}{#1}{#2}
\pgfmathsetmacro\temp@a{Mod(\temp@c -#3 ,360)}
\pgfmathsetmacro\temp@b{Mod(#4-\temp@c ,360)}
\pgfmathsetmacro\temp@la{
    ifthenelse(less(\temp@a,90),
        ifthenelse(less(\temp@b,90),abs(sin(\temp@b))*\temp@l,\temp@l),
        ifthenelse(less(\temp@b,90),abs(sin(\temp@b))*\temp@l,\temp@l))}
\pgfmathsetmacro\temp@lb{
    ifthenelse(less(\temp@b,90),
        ifthenelse(less(\temp@a,90),abs(sin(\temp@a))*\temp@l,\temp@l),
        ifthenelse(less(\temp@a,90),abs(sin(\temp@a))*\temp@l,\temp@l))}
        \pgfmathparse{#7*\temp@la}
\global\let#5=\pgfmathresult
        \pgfmathparse{#7*\temp@lb}
\global\let#6=\pgfmathresult
}
\newcommand{\cvx}[2]{
\foreach \@i in {1,...,\themycount} {

\listnbmod{#1}{\intcalcSub{\@i}{2}}{\@A}
\listnbmod{#1}{\intcalcSub{\@i}{1}}{\@B}
\listnbmod{#1}{\@i}{\@C}
\listnbmod{#1}{\intcalcAdd{\@i}{1}}{\@D}
\listnbmod{#1}{\intcalcAdd{\@i}{2}}{\@E}
\listnbmod{#1}{\intcalcAdd{\@i}{3}}{\@F}

\chooseangle{\@A}
            {\@B}
            {\@C}
            {\@D}
            {\@E}
            {\@a}
\chooseangle{\@B}
            {\@C}
            {\@D}
            {\@E}
            {\@F}
            {\@b}
\choosecontrolpoints{\@C}
                    {\@D}
                    {\@a}
                    {\@b}
                    {\@la}
                    {\@lb}
                    {#2}
                    
\coordinate (G) at ($(\@C)+(\@a:\@la)$);
\coordinate (H) at ($(\@D)+(\@b+180:\@lb)$);

\draw (\@C) .. controls (G) and (H) .. (\@D);
}
}
\newcommand{\cvxx}[2]{
\foreach \@i in {1,...,\themycount} {

\listnbmod{#1}{\intcalcSub{\@i}{2}}{\@A}
\listnbmod{#1}{\intcalcSub{\@i}{1}}{\@B}
\listnbmod{#1}{\@i}{\@C}
\listnbmod{#1}{\intcalcAdd{\@i}{1}}{\@D}
\listnbmod{#1}{\intcalcAdd{\@i}{2}}{\@E}
\listnbmod{#1}{\intcalcAdd{\@i}{3}}{\@F}

\chooseangle{\@A}
            {\@B}
            {\@C}
            {\@D}
            {\@E}
            {\@a}
\chooseangle{\@B}
            {\@C}
            {\@D}
            {\@E}
            {\@F}
            {\@b}
\choosecontrolpoints{\@C}
                    {\@D}
                    {\@a}
                    {\@b}
                    {\@la}
                    {\@lb}
                    {#2}
                    
\coordinate (G) at ($(\@C)+(\@a:\@la)$);
\coordinate (H) at ($(\@D)+(\@b+180:\@lb)$);
\coordinate (A\@i) at (intersection of \@C--G and \@D--H);

\draw (\@C) .. controls (G) and (H) .. (\@D);
}
}
\newcommand{\draww}[7]{
\coordinate (@A) at ($(#1)!-#3!(#2)$);
\coordinate (@B) at ($(#2)!-#4!(#1)$);
\draw[#5] (@A)--(@B) node[#6]{#7};
}
\title{Patterson--Sullivan densities in convex projective geometry}
\date{}
\begin{document}

\maketitle

\begin{abstract}
 For any rank-one convex projective manifold with a compact convex core, we prove that there exists a unique probability measure of maximal entropy on the set of unit tangent vectors whose geodesic is contained in the convex core, and that it is mixing. We use this to establish asymptotics for the number of closed geodesics. In order to construct the measure of maximal entropy, we develop a theory of Patterson--Sullivan densities for general rank-one convex projective manifolds. In particular, we establish a Hopf--Tsuji--Sullivan--Roblin dichotomy, and prove that, when it is finite, the measure on the unit tangent bundle induced by a Patterson--Sullivan density is mixing under the action of the geodesic flow.
\end{abstract}

\tableofcontents

\pagebreak

\section{Introduction}

Compact real hyperbolic manifolds are fundamental objects in geometry and dynamical systems; their geodesic flows are among the prime examples of chaotic systems which led to standard notions such as ergodicity or entropy. One classical way to deform the geometry of a compact hyperbolic manifold $M_0$ is to deform its Riemannian metric to allow variable negative curvature. It was one breakthrough of the theory of Anosov flows to establish that many dynamical features of the induced geodesic flow remain after such deformations (\eg existence and uniqueness of the measure of maximal entropy, proved independently by Bowen \cite{bowen_BMmeas} and Margulis \cite{margulis_BMmeas}). 

There is another interesting way to deform the geometry of $M_0$. Consider the holonomy representation of the fundamental group $\rho_0:\pi_1(M_0)\rightarrow \mathrm{PO}(d,1)$, where $d$ is the dimension of $M_0$. Under certain conditions, one can deform it continuously (using \eg bending, see \cite{bending}) to get a representation $\rho$ valued in $\PGL_{d+1}(\R)$ which is not conjugate to a representation into $\mathrm{PO}(d,1)$. A theorem of Koszul \cite[Cor.\,p.\,103]{koszul68}, combined with a theorem of Benoist \cite[Th.\,1.1]{CD3} (due to Choi--Goldman \cite{ChoiGold93} for $d=3$), ensures that the representation remains faithful and discrete, and that $\rho(\pi_1(M_0))$ preserves and acts cocompactly on a properly convex open subset $\Omega$ of the real projective space $\PR(V)$, where $V=\R^{d+1}$. The quotient $\Omega/\rho(\pi_1(M_0))$ is a compact \emph{convex projective manifold} which, like Riemannian manifolds, admits a geodesic flow, as we now recall.

In general, a convex projective manifold is a quotient $M=\Omega/\Gamma$ of a properly convex open set $\Omega\subset \PR(V)$ by a discrete group $\Gamma\subset\PGL(V)$ of projective transformations preserving $\Omega$. If $M$ is compact, then we say that $\Gamma$ divides $\Omega$ and that $\Omega$ is a divisible convex set. The set $\Omega$ admits a Finsler metric, called the Hilbert metric, which is proper and $\Gamma$-invariant (hence $\Gamma$ acts properly discontinuously on $\Omega$). Moreover, the intersection of any projective line with $\Omega$ is a geodesic for the Hilbert metric, which we call a \emph{straight line} of $\Omega$. Thus, there is a natural geodesic flow $(\phi_t)_{t\in\R}$ on the unit tangent bundle $T^1M:=T^1\Omega/\Gamma$, which parametrises straight lines. Benoist \cite{CD1} initiated the study of $(\phi_t)_{t\in\R}$ in the divisible case. He proved that if $M$ is obtained by deforming a compact hyperbolic manifold, then $\Omega$ is \emph{strictly convex}, \ie its boundary $\partial\Omega\subset\PR(V)$ does not contain any non-trivial projective segment. He also proved that if $M$ is compact, then $\Omega$ is strictly convex if and only if the geodesic flow $(\phi_t)_{t\in\R}$ is Anosov.

In this paper, we are particularly interested in divisible convex sets that are \emph{not} strictly convex. Classical examples are the \emph{higher-rank symmetric} divisible convex sets, namely the projective models of the symmetric spaces of $\PGL_n(\K)$, where $n\geq 3$ and $\K$ is $\R$ or $\C$ or the classical quaternionic (or octonionic for $n=3$) division algebra (for more details see \cite[\S2.4]{benoist_survey}). Other interesting,  irreducible examples were constructed by Benoist \cite{CD4}, followed by Marquis \cite{LudoThese}, Ballas--Danciger--Lee \cite{BDL_cvxproj_3mfd} and Choi--Lee--Marquis \cite{choi2016convex}, in dimensions $3$ to $7$.

Bray \cite{bray_ergodicity,bray_top_mixing} studied the geodesic flow of $3$-dimensional irreducible compact convex projective manifolds $M=\Omega/\Gamma$ for which $\Omega$ is not necessarily strictly convex. Although the theory of Anosov flows does not apply in this setting, he managed to construct, on the unit tangent bundle, a flow-invariant ergodic measure with maximal entropy \cite[Th.\,1.1]{bray_ergodicity}. For this he used Benoist's precise and beautiful geometric description of such $3$-manifolds \cite[Th.\,1.1]{CD4}. He also adopted methods that had been elaborated for non-positively curved Riemannian manifolds, whose geodesic flows are not Anosov in general. More precisely, he drew on the work of Knieper \cite{knieper_BMmeasure} and Roblin \cite{roblin_smf}, whose main tool are \emph{Patterson--Sullivan densities}.

In the present article we study the dynamics of the geodesic flow of convex projective manifolds that have arbitrary dimension and are not necessarily compact. Like Bray, we use methods from the non-positively curved Riemannian world, in particular inspired by Knieper and Roblin. We generalise and improve Bray's results \cite{bray_ergodicity}, and develop more systematically the theory of Patterson--Sullivan densities in this setting.

\subsection{Rank-one convex projective manifolds}

Knieper~\cite{knieper_BMmeasure} considered non-positively curved compact Riemannian manifolds which satisfy a property called \emph{rank-one}. These generalise negatively curved compact manifolds, whose geodesic flow is uniformly hyperbolic, in only requiring that the geodesic flow have a hyperbolic behaviour along at least one geodesic, which is said to be \emph{rank-one} (see \cite[Def.\,5.1.1]{knieper_handbook}). Similarly, we will consider \emph{rank-one} convex projective manifolds, which generalise convex projective manifolds $M=\Omega/\Gamma$ where $\Omega$ is strictly convex and $\partial\Omega$ is smooth (by which we mean $\mathcal{C}^1$, see Section~\ref{duality}). We will use the following definition, recently introduced by M.\,Islam \cite{islam_rank_one}.

We say that a point of the boundary $\partial\Omega$ is \emph{strongly extremal} if it does not belong to any non-trivial segment contained in $\partial\Omega$. If $\Omega$ is strictly convex, then all points of $\partial\Omega$ are strongly extremal. We say that a point is \emph{smooth} if it admits a unique supporting hyperplane; such points are also commonly called $\mathcal{C}^1$. We denote by $\partial_{\sse}\Omega$ the set of smooth, strongly extremal points of $\partial\Omega$.  Given any vector $v\in T^1\Omega$, we denote by $\pi v\in\Omega$ its footpoint and by $\phi_{\pm\infty}v=\lim_{t\to\pm\infty}\pi\phi_{t}v$ the intersection points of the projective line generated by $v$ with $\partial\Omega$. We denote by $\Aut(\Omega)$ the subgroup of $\PGL(V)$ consisting of elements that preserve~$\Omega$, called automorphisms of $\Omega$.

\begin{defi}[\!{\!\cite[Def.\,6.2 \& Prop.\,6.3]{islam_rank_one}}]\label{def rank-one}
Let $\Omega\subset\PR(V)$ be a properly convex open set.  A vector $v\in T^1\Omega$, and the geodesic of $\Omega$ spanned by $v$, are called \emph{rank-one} if $\phi_{\infty}v$ and $\phi_{-\infty}v$ belong to $\partial_{\sse}\Omega$. An infinite-order automorphism of $\Omega$ is said to be \emph{rank-one} if it preserves a rank-one geodesic of $\Omega$.

Let $\Gamma\subset\Aut(\Omega)$ be a discrete subgroup, and $M:=\Omega/\Gamma$. A vector $v$ of $T^1M$, and the geodesic of $M$ spanned by $v$, are said to be \emph{rank-one} if any lift of $v$ to $T^1\Omega$ is rank-one. The convex projective manifold (or orbifold) $M$ is \emph{rank-one} if it contains a rank-one periodic vector, \ie if $\Gamma$ contains a rank-one element.
\end{defi}
We will see (Fact~\ref{period_is_translation_length}) that a rank-one automorphism $g$ of $\Omega$ is a \emph{proximal} element of $\PGL(V)$, \ie it has an attracting fixed point in $\PR(V)$; its inverse being also rank-one and hence proximal, $g$ is said to be \emph{biproximal}. We will also see that a rank-one automorphism preserves a unique rank-one geodesic in $\Omega$.

If $\Omega$ is strictly convex and $\partial\Omega$ is smooth, then $\partial_{\sse}\Omega$ is the whole projective boundary $\partial\Omega$, all geodesics of $\Omega$ are rank-one and any biproximal automorphism of $\Omega$ is rank-one. In fact, if $\Omega$ is strictly convex \emph{or} if $\partial\Omega$ is smooth, then $M=\Omega/\Gamma$ is rank-one as soon as $\Gamma$ contains a biproximal element, by Facts~\ref{les extremites des geodesiques periodiques biproximales sont lisses} and \ref{equivalences rang un} below.

When $\Omega$ is not strictly convex, an example of a geodesic in $\Omega$ that is \emph{not} rank-one is a geodesic that is contained in a \emph{properly embedded simplex} (PES), \ie a projective simplex $S$ of dimension $k\geq 2$ whose relative interior (see Section~\ref{duality}) is equal to $S\cap \Omega$. Such a simplex $S$ can be interpreted as a flat of $\Omega$ since it is isometric to $\R^k$ endowed with some norm. In many examples of convex projective manifolds $M=\Omega/\Gamma$, for instance when $M$ is $3$-dimensional, compact  and irreducible, the maximal (for inclusion) PES's of $\Omega$ satisfy good properties (such as being isolated, see \cite{CD4,Bobb,IZ19relhypb}) which imply that $M$ is rank-one.
More precisely, Islam used \cite{IZ19relhypb} to establish \cite[Prop.\,A.2]{islam_rank_one} that if $\Gamma\subset\Aut(\Omega)$ is a non-virtually abelian, discrete subgroup which is relatively hyperbolic with respect to a collection of virtually abelian subgroups of rank at least two, and which acts convex cocompactly on $\Omega$ in the sense of Danciger--Gu\'eritaud--Kassel \cite{fannycvxcocpct} (this notion is introduced in the next Section~\ref{Ssection : intro MME}), then $\Omega/\Gamma$ is rank-one. Islam's argument is explained in the particular case of 3-dimensional compact convex projective manifolds in Example~\ref{Exemple : dim3->rg1}.

Rank-one manifolds are interesting because they include a diversity of examples, and are in some sense generic: in both the Riemannian and the convex projective settings, there exist higher-rank rigidity theorems which classify compact higher-rank (\ie not rank-one) manifolds. See the work of Ballmann \cite[Cor.\,1]{ballmann_higher_rank} and Burns--Spatzier \cite[Th.\,5.1]{burns_spatzier_higher_rank} in the Riemannian case and the recent work of A.\,Zimmer \cite[Th.\,1.4]{zimmer_higher_rank} in the convex projective case.

\subsection{The Bowen--Margulis measure on quotients of convex cocompact actions}\label{Ssection : intro MME}

The generalisation of Bray's results \cite{bray_ergodicity} that we are about to state is analogous to \cite[Th.\,1.1.i]{knieper_BMmeasure}, which says that any non-positively curved rank-one compact Riemannian manifold has a unique measure of maximal entropy. However, our result does not restrict to compact manifolds: it concerns the more general class of manifolds that are quotients of \emph{convex cocompact actions}. 

Let $M=\Omega/\Gamma$ be a convex projective manifold. Following Danciger--Gu\'eritaud--Kassel, the action of $\Gamma$ on $\Omega$ is said to be \emph{naively convex cocompact} if there exists a non-empty $\Gamma$-invariant convex subset of $\Omega$ on which $\Gamma$ acts cocompactly; when $\Omega$ is not strictly convex, this notion is not quite satisfactory, because a small deformation of $\Gamma$ in $\PGL(V)$ may not preserve any properly convex open set (see \cite[\S4.1]{fannycvxcocpct}). Therefore, we will consider another stronger definition, introduced by Danciger--Gu\'eritaud--Kassel.

The \emph{full orbital limit set} of $\Gamma$ is the union over all $x\in\Omega$ of the set of accumulation points of the orbit $\Gamma\cdot x$, and is denoted by $\Lambda^{\orb}_\Omega(\Gamma)$, or simply $\Lambda^{\orb}$ when the context is clear. When $\Omega$ is not strictly convex, the set of accumulation of points of an orbit $\Gamma\cdot x$ may depend on the choice of $x\in\Omega$ (\eg if $\Omega$ is a triangle in $\PR(\R^3)$, and $\Gamma$ is generated by an infinite-order non-proximal element). The convex hull in $\Omega$ of the full orbital limit set is denoted by $\CC^{\core}_\Omega(\Gamma)$; it is $\Gamma$-invariant.

\begin{defi} [\!{\!\cite[Def.\,1.11]{fannycvxcocpct}}]\label{Def : convexe cocompacite} Let $\Omega\subset\PR(V)$ be a properly convex open set, and $\Gamma\subset\Aut(\Omega)$ a discrete subgroup. The action of $\Gamma$ on $\Omega$ is said to be \emph{convex cocompact} if $\CC^{\core}_\Omega(\Gamma)$ is non-empty and has compact quotient by $\Gamma$; the projection in $M$ of $\CC^{\core}_\Omega(\Gamma)$ is called the \emph{convex core} of $M$.
\end{defi}

We refer to \cite[\S1.4--1.6--1.7--4.1--10.7]{fannycvxcocpct} and \cite{DGKLM} for more details and examples on convex cocompactness. Note that if $\Gamma$ divides a properly convex open set $\Omega$, then the convex hull of any $\Gamma$-orbit in $\Omega$ is equal to $\Omega$ (this is due to Vey \cite[Prop.\,3]{vey}), hence $\CC^{\core}_\Omega(\Gamma)=\Omega$ and $\Gamma$ acts convex cocompactly on $\Omega$.

Danciger--Gu\'eritaud--Kassel \cite[Cor.\,4.8]{fannycvxcocpct} proved that if the action of $\Gamma$ on $\Omega$ is convex cocompact, then $\Lambda^{\orb}$ is closed. We denote by $T^1M_{\core}\subset T^1M$ the $(\phi_t)_{t\in\R}$-invariant (and compact if $\Gamma$ is convex cocompact) subset consisting of those vectors whose orbit under the geodesic flow is contained in the convex core, \ie the endpoints of any lift to $\Omega$ of the geodesic are in $\Lambda^{\orb}$. 

A rank-one convex projective manifold is said to be \emph{non-elementary} if its fundamental group does not contain $\Z$ as a finite-index subgroup.

\begin{thm}\label{Thm : proba d'entropie max dans le cas compact}
 Let $\Omega\subset\PR(V)$ be a properly convex open set, and $\Gamma\subset\Aut(\Omega)$ a discrete subgroup. Assume that $\Gamma$ acts convex cocompactly on $\Omega$ and that $M=\Omega/\Gamma$ is rank-one and non-elementary. Then there exists a unique $(\phi_t)_{t\in\R}$-invariant probability measure $m$ on $T^1M_{\core}$ with maximal entropy (\emph{Bowen--Margulis measure}). Moreover $m$ is mixing.
\end{thm}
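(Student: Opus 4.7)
The plan is to follow the Patterson--Sullivan--Roblin strategy adapted to Hilbert geometry, as foreshadowed in the abstract. I would split the argument into three blocks: first, constructing a Patterson--Sullivan density and the associated Bowen--Margulis measure $m$ on $T^1M_{\core}$; second, verifying that $m$ is a finite mixing measure whose entropy matches the topological entropy of the flow; third, establishing uniqueness of the measure of maximal entropy.

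For the construction, I would introduce a Busemann-type cocycle $\beta$ on pairs (point of $\Omega$, smooth boundary point), built from the Hilbert distance $\dd$; let $\delta_\Gamma$ be the critical exponent of the Poincar\'e series $\sum_{\gamma \in \Gamma} e^{-s\,\dd(x,\gamma x)}$, and carry out Patterson's construction to obtain a $(\Gamma,\delta_\Gamma)$-conformal density $(\mu_x)_{x \in \Omega}$ supported on $\Lambda^{\orb}$. Using the Hopf parametrization $v \mapsto (\phi_{-\infty}v,\phi_\infty v,t)$ on the rank-one vectors, I would assemble these measures into a $\Gamma$-invariant, flow-invariant measure $\tilde m$ on $T^1\Omega$ and push it down to $m$ on $T^1M_{\core}$. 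Convex cocompactness makes the convex core compact, which should force $m$ to be finite; then normalize.

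To pin down the properties of $m$, I would invoke the Hopf--Tsuji--Sullivan--Roblin dichotomy (developed elsewhere in the paper) to obtain that $\Gamma$ is of divergence type and that $(\phi_t)$ is ergodic with respect to $m$. Mixing would then follow from Roblin's scheme, provided the translation lengths of rank-one elements generate a dense subgroup of $\R$; this in turn follows from non-elementarity together with the existence of pairs of independent rank-one elements whose translation-length ratio is irrational (otherwise their attracting fixed points would be forced into a projective coincidence incompatible with strong extremality). To identify $m$ as the measure of maximal entropy, I would compute $h(m) = \delta_\Gamma$ from the local product structure of $\tilde m$ in the Hopf coordinates, and prove the matching upper bound $h_{top} \le \delta_\Gamma$ for the flow restricted to $T^1M_{\core}$ via orbital counting of $(\varepsilon,T)$-separated rank-one orbit segments.

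For uniqueness, I would show that any ergodic $(\phi_t)$-invariant probability $\nu$ on $T^1M_{\core}$ achieving $h(\nu) = \delta_\Gamma$ must equal $m$: expansiveness along rank-one directions (a consequence of the smoothness and strong extremality of the endpoints) implies that $\nu$ charges the rank-one part, and a Hopf-style argument reconstructs a $\delta_\Gamma$-conformal density from $\nu$, which by uniqueness up to scalar must be proportional to $(\mu_x)$. The main obstacle throughout is the presence of properly embedded simplices: without strict convexity, the flow fails to be hyperbolic on geodesics trapped inside a PES, so Anosov arguments cannot be transplanted directly. Showing that these non-rank-one directions carry no $m$-mass and contribute no entropy, and that the rank-one part of $T^1M_{\core}$ is dynamically rich enough to drive both mixing and uniqueness, is the delicate step; it is precisely where the systematic Patterson--Sullivan apparatus developed earlier, together with the rank-one and non-elementarity hypotheses, does the real work.
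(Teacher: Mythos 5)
Your overall plan matches the paper's architecture quite closely in the first two blocks: constructing a Patterson--Sullivan density via the horoboundary, building the Sullivan/Bowen--Margulis measure $m$ through the Hopf parametrisation, deducing finiteness from compactness of the convex core, and then getting ergodicity/mixing from the Hopf--Tsuji--Sullivan--Roblin dichotomy plus non-arithmeticity of the rank-one length spectrum. The computation $h(m)=\delta_\Gamma$ via the local product structure and a Manning-style upper bound $h_{\topp}\leq\delta_\Gamma$ is also essentially what the paper does (Proposition~\ref{l'entropie de BM est max} and Fact~\ref{manning}). Two caveats before the main issue: your suggested derivation of non-arithmeticity (``otherwise their attracting fixed points would be forced into a projective coincidence incompatible with strong extremality'') is not a valid argument; the paper cites this as Fact~\ref{non arithmeticite locale} to a companion work, and the real proof is considerably subtler. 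Also note that $\Gamma$ need not be torsion-free, so one needs Selberg's lemma plus an observation about finite covers (Observation~\ref{obs:revetentrop}) to invoke entropy-expansivity (Fact~\ref{geod_is_expansive}) at all.

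The genuine gap is in your uniqueness argument, and it is a structural one. You propose: (a) ``expansiveness along rank-one directions'' forces any MME $\nu$ to charge the rank-one part, and then (b) ``a Hopf-style argument reconstructs a $\delta_\Gamma$-conformal density from $\nu$.'' Step (a) is not a correct inference: entropy-expansivity holds on all of $T^1M_{\core}$, not just along rank-one geodesics, so it does not by itself distinguish the rank-one locus. What actually makes the non-rank-one part irrelevant is an entropy gap (that the closure of the singular periodic orbits has topological entropy $<\delta_\Gamma$), but that statement is Proposition~\ref{l'entropie de l'adherence des orbites periodiques singulieres} in the paper and is \emph{deduced from} Theorem~\ref{Thm : proba d'entropie max dans le cas compact}; relying on it here would be circular. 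Step (b) is a hand-wave past the real difficulty: turning a measure of maximal entropy into a conformal density requires detailed control of conditional measures along the strong (un)stable foliation (Ledrappier--Young/Margulis-type arguments), and this is precisely what is delicate in a non-uniformly hyperbolic, non-strictly-convex setting. The paper avoids it entirely. Its actual uniqueness proof is Knieper's: decompose $m'=tm''+(1-t)m$ with $m''\perp m$, choose compact sets $K_1\subset A$ and $K_2\subset A^c$ with $m(A)=1$, $m'(A)=0$, show via extremality of the endpoints that $\phi_{-n}K_1$ and $\phi_{-n}K_2$ get dynamically far apart, and bound $H_{m'}(\mathcal P^{(n)})$ from above by counting partition elements meeting $\phi_{-n}K_2$ using the dynamical-ball estimate of Corollary~\ref{measure_of_dyn_ball} together with the separated-set bound of Corollary~\ref{ensemble separe d'une boule dynamique} and Lemma~\ref{le coeur fait bip} (needed because the dynamical-ball lower bound is only available on $T^1M_{\bip}$, not all of $T^1M_{\core}$). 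This yields $h_{m''}<\delta_\Gamma$ directly, with no need to produce a conformal density from $\nu$. As written, your uniqueness step would fail.
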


Recall that if $(X,m)$ is a measured space with $m$ finite and invariant under a measurable flow $(\phi_t)_{t\in\R}$, then $m$ is said to be \emph{mixing} if $\lim_{t\to\infty}m(\phi_t(A)\cap B)\,m(X)=m(A)\,m(B)$ for all measurable subsets $A,B\subset X$. Reminders on the notion of entropy are given in Sections~\ref{rappels sur l'entropie topologique} and \ref{rappels sur l'entropie mesurable}.

\begin{rqq}
 In a first version of this paper, we had proved Theorem~\ref{Thm : proba d'entropie max dans le cas compact} under the stronger assumption that Gamma acts strongly irreducibly on $\PR(V)$. Replacing this assumption by non-elementary was made possible by Facts~\ref{fait:des rk1 partout} and \ref{non arithmeticite locale} below, which are proved in the paper \cite{EeERfH+} in collaboration with Feng Zhu.
 \ Note that some results in the present paper (in particular Theorem~\ref{Thm : The weak Hopf--Tsuji--Sullivan--Roblin dichotomy}) are used in \cite{EeERfH+}, but one can easily check that there is no circular reasoning between the two papers.
\end{rqq}

Following Knieper \cite{knieper_BMmeasure}, we will use Theorem~\ref{Thm : proba d'entropie max dans le cas compact} to establish asymptotic estimates on the number of closed geodesics on $M$, and equidistribution results on the Lebesgue measures on closed geodesics.

Recall that in our convex projective setting, the \emph{critical exponent} of $\Gamma$ is defined as 
\begin{equation*}
\delta_\Gamma := \limsup_{r\to\infty}\frac{1}{r}\log\left(\#\{\gamma\in\Gamma : d_\Omega(o,\gamma o)\leq r\}\right),
\end{equation*}
where $o$ is any point of $\Omega$, and $d_\Omega$ is the Hilbert metric on $\Omega$; it does not depend on the $\Gamma$-invariant properly convex open set $\Omega$, see Section~\ref{Section : Exposant critique}. For any element $g\in\PGL(V)$, we set 
\begin{equation}\label{ell}
\ell(g):=\frac{1}{2}\log\left(\frac{\lambda_1(\tilde{g})}{\lambda_{d+1}(\tilde{g})}\right),
\end{equation}
where $\tilde{g}\in\GL(V)$ is any lift of $g$, the integer $d+1$ is the dimension of $V$, and $\lambda_1(\tilde{g})\geq \dots \geq \lambda_{d+1}(\tilde{g})$ are the moduli of the (complex) eigenvalues of $\tilde{g}$. Given a convex projective manifold $M=\Omega/\Gamma$ and a positive number $T>0$, we denote by $[\Gamma]_T$ (\resp $[\Gamma]^{\sing}_T$, \resp $[\Gamma]^{\rgun}_T$) the set of conjugacy classes of elements (\resp of non-rank-one elements, \resp of rank-one elements) $\gamma\in\Gamma$ such that $\ell(\gamma)\leq T$. When $\gamma\in\Gamma$ is rank-one, we denote by $\Lcal [\gamma]$ the Lebesgue measure on the rank-one closed geodesic associated to $\gamma$, normalised to be a probability measure. The link between closed geodesics on $M$ and conjugacy classes of $\Gamma$ is recalled in Section~\ref{Periodic orbits and elements of Gamma}.

\begin{prop}\label{Prop : comptage dans le cas compact}
 In the setting of Theorem~\ref{Thm : proba d'entropie max dans le cas compact}, one can find constants $0<\delta<\delta_\Gamma$ and $C>0$ such that for any $T>C$,
 \begin{enumerate}[label=(\arabic*)]
  \item \label{Item : comptage 1} $\frac{1}{CT}e^{\delta_\Gamma T}\leq \#[\Gamma]_T\leq \frac{C}{T}e^{\delta_\Gamma T}$;
  \item \label{Item : comptage 2} $\#[\Gamma]^{\sing}_T\leq e^{\delta T}$;
  \item \label{Item : comptage 3} $\dfrac{1}{\#[\Gamma]^{\rgun}_T}\displaystyle\sum\limits_{[\gamma]\in[\Gamma]^{\rgun}_T}\Lcal[\gamma] \underset{T\to\infty}{\longrightarrow} m$ for the weak* topology.
 \end{enumerate}
\end{prop}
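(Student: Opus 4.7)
The plan is to follow the Margulis--Knieper--Roblin strategy: items (1) and (3) will be derived simultaneously from the mixing of $m$ (Theorem~\ref{Thm : proba d'entropie max dans le cas compact}) via a weak-$*$ convergence argument on the space of rank-one periodic orbits, while item (2) requires a separate gap estimate on the critical exponent of non-rank-one elements.

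For (1) and (3), the strategy is to exploit the local product decomposition $m = C\cdot \mu^+\otimes\mu^-\otimes dt$ in Hopf-type coordinates on $T^1M_{\core}$, where $\mu^\pm$ are the Patterson--Sullivan densities constructed earlier in the paper. Each rank-one conjugacy class $[\gamma]$ has axis endpoints $\gamma^\pm\in\partial_{\sse}\Omega$ and a closed geodesic of period $\ell(\gamma)$ in $T^1M_{\core}$ (Fact~\ref{period_is_translation_length}). Applying mixing to continuous test functions supported in small cylindrical neighbourhoods around such orbits (of length-window $\varepsilon$ and transverse size $\varepsilon$), and using a shadow-lemma-type estimate to match the $m$-mass of these neighbourhoods with Patterson--Sullivan masses of shadows at the endpoints, should yield, for every $F\in C_c(T^1M_{\core})$ and small $\varepsilon>0$,
\[
\sum_{\substack{[\gamma]\in[\Gamma]^{\rgun}\\ T-\varepsilon<\ell(\gamma)\leq T}}\ell(\gamma)\int F\,d\Lcal[\gamma]\;\underset{T\to\infty}{\sim}\;\frac{1-e^{-\delta_\Gamma\varepsilon}}{\delta_\Gamma}\,e^{\delta_\Gamma T}\int F\,dm.
\]
Testing with $F\equiv 1$ and summing over disjoint windows $(T-k\varepsilon,\,T-(k-1)\varepsilon]$ for $k=1,\ldots,\lfloor T/\varepsilon\rfloor$ then gives the counting bound of (1), while normalising the left-hand sum by $\#[\Gamma]^{\rgun}_T$ and letting $\varepsilon\to 0$ yields (3). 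The fact that only $\asymp$ (rather than asymptotic equivalence) is required in (1) allows one to bypass the finer aperiodicity arguments needed for a true prime geodesic theorem.

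The main obstacle is item (2). The plan is to show that any non-rank-one infinite-order element $\gamma\in\Gamma$ either fails to be biproximal, or is biproximal with at least one of $\gamma^\pm$ lying in the singular stratum $\partial\Omega\setminus\partial_{\sse}\Omega$ (i.e.\ contained in a non-trivial boundary segment or admitting multiple supporting hyperplanes). By Facts~\ref{fait:des rk1 partout} and \ref{non arithmeticite locale} from \cite{EeERfH+}, the Patterson--Sullivan density charges $\partial_{\sse}\Omega\cap\Lambda^{\orb}$ with full mass, while the singular stratum is in a precise sense negligible for it. One then argues that axes of ``singular'' conjugacy classes spend a definite proportion of their length near this stratum, whose effective critical exponent is strictly smaller than $\delta_\Gamma$. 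A covering argument in the limit set, analogous to those used for parabolic fixed points in \cite{roblin_smf}, combined with the upper bound of (1), should yield $\#[\Gamma]^{\sing}_T\leq e^{\delta T}$ for some $\delta<\delta_\Gamma$. Making this gap quantitative is the technical heart of the proof, and requires the detailed structure of the singular boundary stratum established earlier and in \cite{EeERfH+}.
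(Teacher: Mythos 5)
Your route to items (1) and (3) follows the Margulis/Roblin template (product structure of the Bowen--Margulis measure plus mixing on small transverse boxes), whereas the paper deliberately follows Knieper instead, as the author flags in the introduction: the lower bound of (1) is deduced from mixing and a closing lemma (Proposition~\ref{minoration}), the upper bound of (1) from a covering by dynamical balls together with Corollary~\ref{measure_of_dyn_ball} (this part needs no mixing at all), and equidistribution (3) from an entropy computation (Proposition~\ref{Proposition : equidistribution}) which shows that any weak-$*$ limit of the averaged orbital measures has entropy $\geq\delta_\Gamma$ and hence equals $m$ by uniqueness of the measure of maximal entropy. Your Roblin-style orbital-counting formula would, if carried out carefully, give (1) and (3) as well, and it avoids the entropy machinery; the price is that you must prove precise, window-uniform shadow-lemma estimates and handle the closing lemma (Lemma~\ref{closinglemma}) errors across the sum over $k$, none of which is sketched here. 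The Knieper route also has the advantage that the uniqueness of the measure of maximal entropy, once established, powers item (2) essentially for free.

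For item (2) there is a genuine gap. You propose to argue that non-rank-one elements have ``singular'' fixed points, that the Patterson--Sullivan measure is concentrated on $\partial_{\sse}\Omega$, and that therefore an ``effective critical exponent'' of the singular stratum is $<\delta_\Gamma$, invoking an analogy with parabolic cusps. That last step is exactly where the proof must do real work, and the analogy does not supply it: a set of $\nu_o$-measure zero can still host a family of closed geodesics with exponential growth rate equal to $\delta_\Gamma$ (this is precisely what happens for non-maximal-entropy invariant subsets in general), so ``measure zero'' alone gives no counting gap. What the paper actually does is (i) show, via Lemma~\ref{lem:realell}\ref{item:realellpasr1} and Corollary~\ref{Corollaire : real d'ell dans core}, that every non-rank-one conjugacy class realises its translation length along a curve in $T^1\Omega_{\core}$ whose endpoints at infinity are at simplicial distance exactly $2$ (note: the relevant failure is the simplicial distance, not merely that an endpoint misses $\partial_{\sse}\Omega$); (ii) observe that such curves project into the closed flow-invariant set $K=\{d_{\spl}(\phi_{-\infty}v,\phi_\infty v)\le 2\}\subset T^1M_{\core}$, which is disjoint from $T^1M_{\sse}$ and hence $m$-null by Theorem~\ref{Thm : The weak Hopf--Tsuji--Sullivan--Roblin dichotomy}; (iii) use entropy-expansivity and \emph{uniqueness} of the measure of maximal entropy (Theorem~\ref{Thm : proba d'entropie max dans le cas compact}) to conclude $h_{\topp}(K,(\phi_t)_t)<\delta_\Gamma$ (Proposition~\ref{l'entropie de l'adherence des orbites periodiques singulieres}); and (iv) transfer this entropy gap to a counting bound by producing a $(d^{(t)},\epsilon_0/6)$-separated set of size $\gtrsim\#[\Gamma]^{\sing}_{[t,t+1]}$ inside $K$ (Corollary~\ref{seconde majoration}). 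Steps (iii) and (iv) have no analogue in your plan, and the references to \cite{EeERfH+} you invoke (Facts~\ref{fait:des rk1 partout} and \ref{non arithmeticite locale}) are about density of rank-one elements and non-arithmeticity of the length spectrum, not about the singular stratum; the correct fact that $\nu_o(\partial_{\sse}\Omega)=1$ is Proposition~\ref{Proposition : sse est plein}.
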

In the case that $M$ is compact, Proposition~\ref{Prop : comptage dans le cas compact}.\ref{Item : comptage 1} was previously established by Islam \cite[Th.\,1.12]{islam_rank_one} using different techniques.
One may compare Proposition~\ref{Prop : comptage dans le cas compact}.\ref{Item : comptage 2} with another result of Islam \cite[Th.\,1.11]{islam_rank_one} concerning random walks on $\Gamma$.
We prove a slightly more general version of Proposition~\ref{Prop : comptage dans le cas compact}.\ref{Item : comptage 3} in Proposition~\ref{Proposition : equidistribution} below.

\subsection{The machinery of Patterson--Sullivan densities}\label{Section : la machinerie des densites conformes}

The main idea to prove Theorem~\ref{Thm : proba d'entropie max dans le cas compact} is to use \emph{conformal densities} (Definition~\ref{horobord}), also called \emph{Patterson--Sullivan densities}. They are, for an arbitrary proper metric space $(X,d)$ acted on by a discrete subgroup $\Gamma\subset\Isom(X,d)$, families of finite measures (Patterson-Sullivan measures) on the \emph{horoboundary} $\partial_{\hor}X$ of $(X,d)$. More precisely, Patterson--Sullivan measures are quasi-invariant under the action of $\Gamma$, and there is an explicit formula for the Radon--Nikodym cocycle (see Definition~\ref{densites conformes}). They were originally introduced by Patterson \cite{Patterson76} and Sullivan \cite{Sullivan79} for discrete groups of isometries of real hyperbolic spaces, whose horoboundary is the boundary at infinity.  They were later used in much more general geometric settings, such as non-positively curved Riemannian manifolds by Knieper \cite{knieper_BMmeasure}, CAT($-1$)-spaces by Roblin \cite{roblin_smf}, and even CAT(0)-spaces by Picaud--Link \cite{link_ergo_rank_one} and Ricks \cite{ricks2019unique}. In all these settings, the horoboundary is equal to the visual boundary.

Conformal densities were also brought to convex projective geometry, in the strictly convex and smooth case by Crampon in his PhD thesis \cite{these_crampon} and recently by F.\,Zhu \cite{zhu_conf_densities}, and in the non-strictly convex case in dimension $3$ by Bray \cite{bray_ergodicity}.  While Zhu does not adapt Knieper's work (hence does not prove the existence and uniqueness of the measure of maximal entropy), his adaptation of Roblin's results goes further than in the present paper, enabling him for instance to obtain more precise estimates for various counting problems. In the paper \cite{EeERfH+}, the author and Zhu generalise the results of \cite{zhu_conf_densities} to the non-strictly convex case.

Let $M=\Omega/\Gamma$ be a convex projective manifold. If $\partial\Omega$ is smooth, then the horoboundary $\partial_{\hor}\Omega$ is the projective boundary $\partial\Omega$ (see Fact~\ref{smooth_points_are_in_hor}). When $\partial\Omega$ is not smooth, the situation is more delicate since $\partial_{\hor}\Omega$ is different from $\partial\Omega$. To handle this difficulty, Bray's strategy was to weaken the definition of conformal densities so that it has a meaning on $\partial\Omega$. We will do something slightly different: we will use a result of Walsh \cite[Th.\,1.3]{horoboundary}, who proved in general that the horoboundary $\partial_{\hor}\Omega$ \emph{dominates} $\partial\Omega$, in the sense that the identity on $\Omega$ extends to a continuous map $\partial_{\hor}\Omega\to\partial\Omega$, which is onto by density of $\Omega$. We will define conformal densities on $\partial\Omega$ simply as the push-forwards of conformal densities on $\partial_{\hor}\Omega$ by the natural projection $\partial_{\hor}\Omega\to\partial\Omega$. 

Let us detail the steps that we will follow below, adapted from the general theory of conformal densities. Recall that conformal densities depend on a parameter $\delta\geq 0$. Fix $o\in\Omega$.

\begin{enumerate}
 \item \label{Item : construction de PS} Construct a $\delta_\Gamma$-conformal density (this is actually very general, see Fact~\ref{smear}).
 \item \label{Item : construction de BM} Given $\delta\geq 0$ and a $\delta$-conformal density $(\mu_x)_{x\in\Omega}$ on $\partial_{\hor}\Omega$, construct a measure on $\partial_{\hor}\Omega^2\times\R$ which is invariant under the actions of $\Gamma$ and $\R$, and equivalent to $\mu_o\times\mu_o$ times the Lebesgue measure. Derive from it a $(\phi_t)_{t\in\R}$-invariant measure on $T^1M$, called the induced \emph{Sullivan measure}.
 \item \label{Item : HTSR} Given $\delta\geq 0$ and a $\delta$-conformal density $(\mu_x)_{x\in\Omega}$ on $\partial_{\hor}\Omega$, prove a \emph{Hopf--Tsuji--Sullivan--Roblin (HTSR) dichotomy}. It states in particular that the sum $\sum_{\gamma\in\Gamma} e^{-\delta d_\Omega(o,\gamma o)}$ is infinite if and only if the induced Sullivan measure is ergodic under the geodesic flow; in this case $\delta=\delta_\Gamma$, the $\delta_\Gamma$-conformal density is unique and its induced Sullivan measure is called the \emph{Bowen--Margulis measure}. Recall that ergodic means that any $(\phi_t)_{t\in\R}$-invariant measurable set has null or full measure. (A set has full measure if its complement has null measure.)
 \item \label{Item : melange} Assuming that $\sum_{\gamma\in\Gamma} e^{-\delta_\Gamma d_\Omega(o,\gamma o)}$ is infinite, prove that if the Bowen--Margulis measure is finite, then it is mixing under the geodesic flow.
 \item \label{Item : melange -> comptage} Use mixing to solve counting problems. There are many counting problems, such as estimating, as $R$ tends to infinity, the number of points in a fixed orbit $\Gamma\cdot o$ that lie in the ball  of radius $R$ centred at $o$, or the number of closed geodesics with length less than $R$. 
 
 As we are particularly interested in the compact case, we decided to follow, instead of Roblin's, Knieper's approach. It gives estimates for the number of closed geodesics with length less than $R$ as $R$ grows, but also establishes that the Bowen--Margulis measure is the unique measure with maximal entropy. 
\end{enumerate}
Recall that if $\sum_{\gamma\in\Gamma} e^{-\delta_\Gamma d_\Omega(o,\gamma o)}=\infty$, then $\Gamma$ is said to be \emph{divergent}; this notion does not depend on the $\Gamma$-invariant properly convex open set $\Omega\subset\PR(V)$, see Section~\ref{Section : Exposant critique}.

One strength of the theory of conformal densities in the various geometric settings we have mentioned is that many properties of compact manifolds still hold under the following much weaker assumptions: asking $\Gamma$ to be divergent and the Bowen--Margulis measure to be finite. For example, these assumptions hold if $\Gamma$ acts convex cocompactly on $\Omega$ and $M=\Omega/\Gamma$ is rank-one (see Proposition~\ref{Proposition : cvxcocpct -> divergent}).

Another broad class of manifolds which are generally well understood are the geometrically finite manifolds. In some settings, such as Riemannian geometry with variable negative curvature, there is no implication between geometric finiteness and finiteness of the Bowen--Margulis measure (see \cite[\S1.F]{roblin_smf}). Crampon--Marquis \cite[Def.\,1.4]{CM2014finitude} defined geometrically finite convex projective manifolds $M=\Omega/\Gamma$ in the case where $\Omega$ is strictly convex and $\partial\Omega$ is smooth. They carefully investigated the dynamics of the geodesic flow on these manifolds, without using conformal densities. In particular they extended Benoist's result that the flow is uniformly hyperbolic \cite[Th.\,1.2]{CM2014flot} (under an additional assumption of asymptotically hyperbolic cusps). Zhu and the author \cite[Th.\,C]{EeERfH+} proved (generalising earlier results by Zhu \cite[Th.\,10 \& Prop.\,14]{zhu_conf_densities}) that for these manifolds $\Gamma$ is divergent and the Bowen--Margulis measure is finite.

\subsection{The Hopf--Tsuji--Sullivan--Roblin dichotomy}\label{Ssection : HTSR}

Let us explain more precisely Steps \ref{Item : construction de PS}, \ref{Item : construction de BM} and \ref{Item : HTSR} of the previous section. Let $M=\Omega/\Gamma$ be a non-elementary rank-one convex projective manifold. On the one hand, Steps~\ref{Item : construction de PS} and \ref{Item : construction de BM}, \ie constructing a $\delta_\Gamma$-conformal density on $\partial_{\hor}\Omega$ and the induced Sullivan measure on $T^1M$, do not require major changes compared to the case where $M$ is a real hyperbolic manifold; these steps are done in Sections~\ref{rappel sur les densites conformes} and \ref{The Sullivan measures}. On the other hand, Step~\ref{Item : HTSR}, namely the HTSR dichotomy, is more delicate; let us state it formally. For this, we need to recall the definition of two limit sets, both contained in the full orbital limit set.

The \emph{conical limit set} of the action of $\Gamma$ on $\Omega$ is denoted by $\Lambda^{\con}_\Omega(\Gamma)$ (or simply $\Lambda^{\con}$ when the context is clear), and defined as follows. Fix $o\in\Omega$, then $\xi\in\partial\Omega$ belongs to $\Lambda^{\con}$ if there exists a sequence $(\gamma_n)_{n\in\N}\in\Gamma^\N$ going to infinity such that the sequence $(d_\Omega(\gamma_no,[o,\xi)))_{n\in\N}$ is bounded; this does not depend on the choice of $o$. The \emph{proximal limit set} of $\Gamma$, denoted by $\Lambda^{\prox}(\Gamma)$, or simply $\Lambda^{\prox}$ when the context is clear, is the closure of the set of attracting fixed points of proximal elements of $\Gamma$. 

In general, $\Lambda^{\con}\subset\Lambda^{\orb}$ and $\Lambda^{\prox}\subset\overline{\Lambda^{\orb}}$. Furthermore, Danciger--Gu\'eritaud--Kassel \cite[Cor.\,4.8 \& Lem.\,4.18]{fannycvxcocpct} established that $\Gamma$ acts convex cocompactly on $\Omega$ if and only if $\CC^{\core}_\Omega(\Gamma)$ is non-empty and $\Lambda^{\con}_\Omega(\Gamma)$ and $\Lambda^{\orb}_\Omega(\Gamma)$ are equal and closed. 

In \cite[Def.\,1.1]{topmixing} we introduced a $(\phi_t)_{t\in\R}$-invariant closed subset of $T^1M$, called the \emph{biproximal unit tangent bundle} and denoted by $T^1M_{\bip}$; it consists of those vectors $v\in T^1M$ such that $\phi_{\pm\infty}\vv\in\Lambda^{\prox}$ for any lift $\vv\in T^1\Omega$. When $M$ is rank-one and $\Gamma$ is divergent, the following result gives a new interpretation of $T^1M_{\bip}$, as the support of the Bowen--Margulis measure on $T^1M$.

\begin{thm}\label{Thm : The weak Hopf--Tsuji--Sullivan--Roblin dichotomy}
 Let $o\in\Omega\subset\PR(V)$ be a pointed properly convex open set, and $\Gamma\subset\Aut(\Omega)$ a discrete subgroup with $M=\Omega/\Gamma$ rank-one and non-elementary. Let $\delta\geq 0$, let $(\nu_x)_{x\in\Omega}$ be a $\delta$-conformal density on $\partial\Omega$ and let $m$ be the induced Sullivan measure. Then there are two possibilities:
\begin{enumerate}[label=(\arabic*)]
\item \label{Item : cas convergent} either $\sum_\gamma e^{-\delta d_\Omega(o,\gamma o)}<\infty$, and then $\nu_o(\Lambda^{\con})=0$ and the dynamical systems $(T^1M,\phi_t,m)$, $(\Geod^{\infty}(\Omega),\Gamma,\nu_o^2)$ and $(T^1\Omega,\Gamma\times\R,\tilde{m})$ are dissipative and non-ergodic;
\item \label{Item : cas divergent} or $\sum_\gamma e^{-\delta d_\Omega(o,\gamma o)}=\infty$, in which case $\delta=\delta_\Gamma$ (and $\Gamma$ is divergent), and 
\begin{itemize}[leftmargin=.3cm]
 \item $(\nu_x)_{x\in\Omega}$ is the only $\delta_\Gamma$-conformal density (up to a scalar multiple), and $m$ is called the \emph{Bowen--Margulis measure} on $T^1M$; 
 \item $\nu_o(\partial_{\sse}\Omega\cap\Lambda^{\prox}\cap\Lambda^{\con})=\nu_o(\partial\Omega)$ and $\nu_o$ is non-atomic, in particular $\supp(m)=T^1M_{\bip}$;
 \item $(T^1M,\phi_t,m)$, $(\partial\Omega^2,\Gamma,\nu_o^2)$ and $(T^1\Omega,\Gamma\times\R,\tilde{m})$ are conservative and ergodic;
 \item if $m$ is finite then it is mixing.
\end{itemize}
\end{enumerate}
\end{thm}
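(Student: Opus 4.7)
The plan is to adapt the Hopf--Tsuji--Sullivan--Roblin machinery developed in the CAT(0) and non-positively curved Riemannian settings (Roblin, Knieper, Ricks, Link) to Hilbert geometry. The crucial analytic input will be a \emph{Sullivan shadow lemma}: for $R$ large enough and every $\gamma\in\Gamma$, the $\nu_o$-measure of the shadow from $o$ of the Hilbert ball $B(\gamma o,R)$ is comparable, up to a multiplicative constant independent of $\gamma$, to $e^{-\delta d_\Omega(o,\gamma o)}$. One derives this from the conformality property together with uniform control on Busemann functions along the rank-one geodesics supplied by $\partial_{\sse}\Omega\cap\Lambda^{\prox}$ (non-empty thanks to Definition~\ref{def rank-one}), using Walsh's projection $\partial_{\hor}\Omega\to\partial\Omega$ to transfer conformality from the horoboundary to the projective boundary.

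Granted the shadow lemma, the convergent case is a direct Borel--Cantelli argument: a conical limit point must lie in infinitely many shadows of a fixed radius around orbit points, so $\nu_o(\Lambda^{\con})=0$ as soon as $\sum_\gamma e^{-\delta d_\Omega(o,\gamma o)}<\infty$; it then follows that $\nu_o^2$-a.e. pair of endpoints gives a geodesic escaping every compact subset of the convex core, which yields dissipativity and non-ergodicity of the three systems by constructing wandering fundamental domains in $\Geod^\infty(\Omega)$. For the divergent case, comparison of the shadow sum with the Poincar\'e series forces $\delta=\delta_\Gamma$, and a converse Borel--Cantelli argument (using a bounded-multiplicity property of overlapping shadows, coming from the properness of the $\Gamma$-action on $\Omega$) gives $\nu_o(\Lambda^{\con})=\nu_o(\partial\Omega)$. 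A North--South dynamics argument with two independent rank-one elements (available by non-elementarity) shows that $\partial_{\sse}\Omega\cap\Lambda^{\prox}\cap\Lambda^{\con}$ has full $\nu_o$-measure and that $\nu_o$ has no atoms. Ergodicity is then proved by Hopf's method on $T^1M$: for a flow-invariant observable, the forward and backward Birkhoff averages depend only on $\phi_{+\infty}\vv$ and $\phi_{-\infty}\vv$ respectively, and a transitivity/approximation argument using the shadow lemma along $\Gamma$-orbits forces them to be constant $\tilde{m}$-a.e. Uniqueness of the $\delta_\Gamma$-conformal density falls out immediately, since the Radon--Nikodym cocycle of two such densities is $\Gamma$-equivariant and transforms trivially under conformality, hence descends to a $\Gamma$-invariant function on $\partial\Omega$ which must be constant by ergodicity.

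For the mixing statement when $m$ is finite, the plan is to follow the Babillot/Roblin strategy: assuming non-mixing would yield a non-trivial continuous character of the group generated by the translation lengths $\ell(\gamma)$ of rank-one elements $\gamma\in\Gamma$, contradicting the non-arithmeticity of the rank-one length spectrum. This non-arithmeticity follows from the existence of sufficiently independent rank-one elements guaranteed by non-elementarity together with Facts~\ref{fait:des rk1 partout} and~\ref{non arithmeticite locale}. The main obstacle will be the shadow lemma, because in the non-strictly-convex, non-smooth setting one must carefully handle the fact that $\partial_{\hor}\Omega$ strictly dominates $\partial\Omega$ and that the Hilbert metric is only Finsler; a secondary subtlety is that the Hopf argument requires controlling Busemann functions at non-smooth boundary points, which is precisely why the full-measure conclusion has to be localised on $\partial_{\sse}\Omega$.
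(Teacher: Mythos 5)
Your proposal follows the correct high-level template (Roblin's CAT($-1$) argument transplanted to Hilbert geometry, built on a shadow lemma, Borel--Cantelli for the convergent case, Rényi/second-moment for the divergent case, Hopf-type ergodicity, Babillot-style mixing). Several steps match the paper almost exactly. But there is one genuine gap and a couple of underestimated difficulties.

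\textbf{The gap: full $\nu_o$-measure of $\partial_{\sse}\Omega$.} You say a ``North--South dynamics argument with two independent rank-one elements'' shows $\partial_{\sse}\Omega\cap\Lambda^{\prox}\cap\Lambda^{\con}$ has full measure. This does not work. North--South dynamics yields denseness of rank-one fixed points in $\Lambda^{\prox}$, hence denseness of $\partial_{\sse}\Omega\cap\Lambda^{\prox}$ in $\supp(\nu_o)$, but a dense set can be null. Unlike the CAT($-1$) case where every boundary point is automatically smooth and strongly extremal, here $\Lambda^{\prox}$ can contain many non-$\sse$ points (attracting fixed points of biproximal but non-rank-one elements are in $\Lambda^{\prox}$ yet may only be smooth and not strongly extremal, or neither), so you cannot avoid a genuine argument that the measure concentrates on $\partial_{\sse}\Omega$. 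This is exactly what Section~\ref{Ssection : negligeons les singularites} of the paper supplies, and the mechanism is quite different from what you sketch: one combines Poincar\'e recurrence (valid once conservativity is established, Section~\ref{conservativite}) with Benz\'ecri's compactness theorem. The key geometric fact is Lemma~\ref{Lemme : recurrent + rg1 faible -> lisse}: if $v\in T^1\Omega$ has a forward-recurrent projection and $\phi_\infty v$ is not $\sse$, then $d_{\spl}(\phi_{-\infty}v,\phi_\infty v)=2$ --- a degenerate configuration. For singular endpoints this uses Benz\'ecri to extract a limiting triangle (Fact~\ref{Fait : triangle limite}). One then observes that the set of such degenerate vectors is closed, yet a rank-one periodic vector lies in its open complement \emph{and} in $\supp(\tilde m)$, forcing $\tilde m$ to charge the complement --- contradiction. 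This argument has no analogue in the CAT($-1$) or CAT($0$) templates you're importing, and without it the Hopf step collapses because you cannot parametrize a.e.\ $W^{ss}$ manifold by smooth endpoints.

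\textbf{Secondary issues.} (a) Your ``converse Borel--Cantelli with bounded multiplicity'' for full measure of $\Lambda^{\con}$ is roughly Rényi's theorem, but in this non-strictly-convex setting the lower bound on $\tilde m(\tilde K\cap\phi_{-t}g\tilde K)$ cannot be obtained from the ordinary shadow lemma; you need the \emph{scarce} shadows $\mathcal O^-_R$ and a refined shadow lemma (Lemma~\ref{shadowlemma avec defaut}), which in turn needs $M$ rank-one (not just that $T^1M_{\bip}\neq\emptyset$). This is a real geometric subtlety caused by the possible presence of flat pieces in $\partial\Omega$. (b) For ergodicity, the paper does not run the raw Hopf ratio argument; it uses Coudène's packaged criterion and proves $W^{ss}$- and $W^{su}$-invariant functions are constant via cross-ratios and Lemma~\ref{birapport=longueur de translation}, the periods $2\ell(\gamma)$ appearing as special cross-ratios. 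This is essentially equivalent to your outline, but the cross-ratio identity \eqref{Equation : birapport 2} and its extension to $\partial_{\hor}\Omega$ is the actual mechanism that makes the transitivity/approximation step precise; stating ``Hopf's method'' without it leaves the hardest step implicit. (c) The non-arithmeticity of the rank-one length spectrum (Fact~\ref{non arithmeticite locale}) is itself a non-trivial input proved elsewhere; it does not simply ``follow'' from non-elementarity.

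In short: your architecture is right, but the step from full conical measure to full $\sse$ measure is not a formality --- it is a new phenomenon of non-strictly-convex Hilbert geometry, and the North--South sketch you offer does not bridge it.
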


In \ref{Item : cas convergent} we denote by $\Geod^{\infty}(\Omega)$ the space of straight geodesics of $\Omega$, which consists of pairs $(\xi,\eta)\in\partial\Omega^2$ such that $[\xi,\eta]\cap\Omega$ is non-empty. Reminders on the dynamical notions of dissipativity and conservativity are given in Section~\ref{Section : decompo de Hopf}. 

To establish the mixing property in Theorem~\ref{Thm : The weak Hopf--Tsuji--Sullivan--Roblin dichotomy}.\ref{Item : cas divergent} when $m$ is finite, we use Babillot's strategy of proof \cite{Babillot02}, and also some general results of Coud\`ene \cite{coudene_melange} in ergodic theory that are inspired by \cite{Babillot02}. In particular, cross-ratios of quadruples of points on the boundary of a properly convex open set are a crucial component of the proof of the mixing property. Zhu proved the mixing property \cite[Th.\,18]{zhu_conf_densities} in the case where $\Omega$ is strictly convex with $\mathcal{C}^1$ boundary by using the same strategy. 

As we explained in Section~\ref{Section : la machinerie des densites conformes}, in order to prove Theorem~\ref{Thm : proba d'entropie max dans le cas compact} we will use Theorem~\ref{Thm : The weak Hopf--Tsuji--Sullivan--Roblin dichotomy}.\ref{Item : cas divergent}; hence we need to check that the latter can be applied to the case where $\Gamma$ acts convex cocompactly on $\Omega$ and $M=\Omega/\Gamma$ is rank-one. This is the goal of the next proposition.

\begin{prop}\label{Proposition : cvxcocpct -> divergent}
 Let $\Omega\subset\PR(V)$ be a properly convex open set and $\Gamma\subset\Aut(\Omega)$ a convex cocompact discrete subgroup. Suppose $M=\Omega/\Gamma$ is rank-one and non-elementary. Then $\Gamma$ is divergent, and the Bowen--Margulis measure is finite.
\end{prop}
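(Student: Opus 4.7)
The plan is to deduce divergence of $\Gamma$ from the Hopf--Tsuji--Sullivan--Roblin dichotomy (Theorem~\ref{Thm : The weak Hopf--Tsuji--Sullivan--Roblin dichotomy}), applied to a Patterson--Sullivan density of dimension $\delta_\Gamma$, and then to prove finiteness of the Bowen--Margulis measure by showing that its support sits inside the compact set $T^1M_{\core}$.

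First I would construct, via the standard Patterson construction (Step~\ref{Item : construction de PS} of Section~\ref{Section : la machinerie des densites conformes}), a $\delta_\Gamma$-conformal density $(\nu_x)_{x\in\Omega}$ on $\partial_{\hor}\Omega$, then push it forward to $\partial\Omega$. The key observation is that its support on $\partial\Omega$ is contained in the orbital limit set $\Lambda^{\orb}$: the density arises as a weak-$*$ limit of atomic measures supported on $\Gamma\cdot o$, hence $\supp\nu_o\subset\overline{\Gamma\cdot o}\cap\partial\Omega$, which coincides with $\Lambda^{\orb}$ since convex cocompactness implies $\Lambda^{\orb}$ is closed \cite[Cor.\,4.8]{fannycvxcocpct}. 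Convex cocompactness also gives $\Lambda^{\orb}=\Lambda^{\con}$, so $\nu_o(\Lambda^{\con})=\nu_o(\partial\Omega)>0$. This rules out case~\ref{Item : cas convergent} of Theorem~\ref{Thm : The weak Hopf--Tsuji--Sullivan--Roblin dichotomy} (where $\nu_o(\Lambda^{\con})=0$), forcing case~\ref{Item : cas divergent}: $\Gamma$ is divergent, and the associated Sullivan measure is the Bowen--Margulis measure $m$.

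For finiteness, Theorem~\ref{Thm : The weak Hopf--Tsuji--Sullivan--Roblin dichotomy}.\ref{Item : cas divergent} gives $\supp(m)=T^1M_{\bip}$. Since $\Lambda^{\prox}\subset\overline{\Lambda^{\orb}}=\Lambda^{\orb}$, any lift to $T^1\Omega$ of a vector in $T^1M_{\bip}$ has both endpoints in $\Lambda^{\orb}$, so the geodesic it spans lies in $\conv(\Lambda^{\orb})=\CC^{\core}_\Omega(\Gamma)$. Hence $T^1M_{\bip}\subset T^1M_{\core}$. The convex core of $M$ is compact by hypothesis, and $T^1M_{\core}$ is a closed subset of the restriction of $T^1M$ to this compact core, whose fibres are projective spheres; therefore $T^1M_{\core}$ is compact. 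As $m$ is a locally finite Borel measure with compact support, it is finite.

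The essential technical points to verify are (i) that the Patterson construction in the present Hilbert-geometry setting yields a density whose support on $\partial\Omega$ is genuinely contained in $\Lambda^{\orb}$ (this requires some bookkeeping across the Walsh projection $\partial_{\hor}\Omega\to\partial\Omega$), and (ii) that $T^1M_{\core}$ is compact for the Hilbert-metric unit tangent bundle. Both reduce to standard arguments once the closedness of $\Lambda^{\orb}$ under convex cocompactness is invoked, so I expect no serious obstacle: the proof is essentially a packaging of the dichotomy and the definition of convex cocompactness, and its main virtue is that it reduces the problem to a result (Theorem~\ref{Thm : The weak Hopf--Tsuji--Sullivan--Roblin dichotomy}) already established at this stage of the paper.
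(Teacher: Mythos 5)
Your proposal is correct and is essentially the paper's own proof: both construct a $\delta_\Gamma$-conformal density supported on $\Lambda^{\orb}$ from the orbit $\Gamma\cdot o$ via the Patterson construction (Fact~\ref{smear}), use $\Lambda^{\orb}=\Lambda^{\con}$ to place the measure on the conical limit set, invoke Theorem~\ref{Thm : The weak Hopf--Tsuji--Sullivan--Roblin dichotomy} to rule out the convergent case, and then deduce finiteness of $m$ from compactness of $T^1M_{\core}$ (via $T^1M_{\bip}\subset T^1M_{\core}$) and the fact that $m$ is Radon. One cosmetic remark: you do not actually need $\overline{\Gamma\cdot o}\cap\partial\Omega=\Lambda^{\orb}$, only the inclusion $\subset\Lambda^{\orb}=\Lambda^{\con}$, which is immediate from the definition of the full orbital limit set.
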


In another article \cite{proxestbord}, we prove that for any non-elementary rank-one compact convex projective manifold $M=\Omega/\Gamma$, we have $\Lambda^{\prox}=\partial\Omega$. This implies that $T^1M_{\bip}=T^1M$, and from Theorem~\ref{Thm : The weak Hopf--Tsuji--Sullivan--Roblin dichotomy} and Proposition~\ref{Proposition : cvxcocpct -> divergent} we derive the following.

\begin{cor}\label{cor:supp}
 The measure of maximal entropy of any non-elementary rank-one compact convex projective manifold has full support.
\end{cor}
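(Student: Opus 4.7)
The plan is to chain together the three ingredients the author has already highlighted: Theorem~\ref{Thm : proba d'entropie max dans le cas compact}, Theorem~\ref{Thm : The weak Hopf--Tsuji--Sullivan--Roblin dichotomy} and Proposition~\ref{Proposition : cvxcocpct -> divergent}, together with the external input $\Lambda^{\prox}=\partial\Omega$ from \cite{proxestbord}. Write $M=\Omega/\Gamma$ for a non-elementary rank-one compact convex projective manifold.

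First I would observe that compactness of $M$ forces $\Gamma$ to divide $\Omega$, so by Vey's result cited in the introduction the convex core satisfies $\CC^{\core}_\Omega(\Gamma)=\Omega$; in particular $\Gamma$ acts convex cocompactly and $T^1M_{\core}=T^1M$. Thus Theorem~\ref{Thm : proba d'entropie max dans le cas compact} applies and yields a unique probability measure of maximal entropy $m$ on $T^1M$, and Proposition~\ref{Proposition : cvxcocpct -> divergent} ensures that $\Gamma$ is divergent and that the Bowen--Margulis measure is finite; by the uniqueness clause in Theorem~\ref{Thm : The weak Hopf--Tsuji--Sullivan--Roblin dichotomy}.\ref{Item : cas divergent}, this finite Bowen--Margulis measure is the measure of maximal entropy $m$ (up to normalisation).

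Next, I invoke the identification $\supp(m)=T^1M_{\bip}$ provided by Theorem~\ref{Thm : The weak Hopf--Tsuji--Sullivan--Roblin dichotomy}.\ref{Item : cas divergent}. It then remains to show $T^1M_{\bip}=T^1M$. By definition, $v\in T^1M$ lies in $T^1M_{\bip}$ if and only if for any lift $\vv\in T^1\Omega$ the endpoints $\phi_{\pm\infty}\vv$ lie in $\Lambda^{\prox}$. The cited result of \cite{proxestbord} states that under our hypotheses $\Lambda^{\prox}=\partial\Omega$, so both endpoints automatically lie in $\Lambda^{\prox}$ and therefore $T^1M_{\bip}=T^1M$.

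Combining these two facts gives $\supp(m)=T^1M_{\bip}=T^1M$, which is the desired full-support statement. There is no genuine obstacle here since all the hard work is already packaged in the preceding results; the only non-trivial input is the equality $\Lambda^{\prox}=\partial\Omega$ from \cite{proxestbord}, which must simply be cited.
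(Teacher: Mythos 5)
Your proof is correct and follows essentially the same route the paper sketches in the paragraph immediately preceding the corollary: use $\Lambda^{\prox}=\partial\Omega$ from \cite{proxestbord} to conclude $T^1M_{\bip}=T^1M$, then apply Theorem~\ref{Thm : The weak Hopf--Tsuji--Sullivan--Roblin dichotomy} together with Proposition~\ref{Proposition : cvxcocpct -> divergent} to identify $\supp(m)=T^1M_{\bip}$. The one cosmetic point worth tightening is the sentence appealing to ``the uniqueness clause in Theorem~\ref{Thm : The weak Hopf--Tsuji--Sullivan--Roblin dichotomy}.\ref{Item : cas divergent}'': what you actually need there is that the measure of maximal entropy produced by Theorem~\ref{Thm : proba d'entropie max dans le cas compact} \emph{is} the Bowen--Margulis measure, which is not quite what the uniqueness of the conformal density asserts; the correct reference is the proof of Theorem~\ref{Thm : proba d'entropie max dans le cas compact} itself (via Proposition~\ref{l'entropie de BM est max} and Section~\ref{ssection:uniciteMME}), where the Bowen--Margulis measure is shown to be the unique maximiser.
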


Benoist \cite[Prop.\,6.7]{CD1} proved that, in the setting of Corollary~\ref{cor:supp}, if $M$ is not hyperbolic, then the measure of maximal entropy is singular with respect to the Lebesgue measure.

%
%
%
%

\paragraph{Organisation of the paper}

In Section~\ref{Reminders} we collect definitions and basic properties in convex projective geometry, on conformal densities and on entropy. 

Sections~\ref{Construction of the Sullivan measures} to \ref{Section : HTSR divergent} are based on Roblin's work \cite{roblin_smf}. In Section~\ref{Construction of the Sullivan measures} we define the Hopf coordinates and the Gromov product in the setting of convex projective geometry, in order to make sense of Sullivan's formula \cite[Prop.\,11]{Sullivan79}, which defines Sullivan measures. In Section~\ref{The Shadow lemma} we state and prove a convex projective version of the Shadow lemma, a fundamental result in the study of conformal densities. In Section~\ref{Section : HTSR convergent} we establish the convergent case of the HTSR dichotomy. In Section~\ref{Section : HTSR divergent} we assume that $\Gamma$ is divergent, and follow closely Roblin's proof of HTSR dichotomy in order to establish the convex projective version of Theorem~\ref{Thm : The weak Hopf--Tsuji--Sullivan--Roblin dichotomy}.\ref{Item : cas divergent}. The proof is divided into several steps: proving that the conical limit set has full measure (Section~\ref{ssection:bcp de coniques});
proving that $\partial_{\sse}\Omega$ has full measure (Section~\ref{Ssection : negligeons les singularites}); proving that the Bowen--Margulis measure is ergodic, and moreover mixing when finite (Section~\ref{ssection:ergo+melange}); and finally proving that $\Lambda^{\prox}$ has full measure (Section~\ref{Section : supp(conf)}).

Sections~\ref{Preparatifs pour la partie Knieper} to \ref{Counting closed geodesics} are based on Knieper's work \cite{knieper_BMmeasure}. In Section~\ref{Preparatifs pour la partie Knieper} we collect some properties of convex cocompact projective actions; in particular we prove Proposition~\ref{Proposition : cvxcocpct -> divergent}, and give two refined versions of the Shadow lemma (Lemma~\ref{shadowlemma}) which will be used in Section~\ref{The measure of maximal entropy}. In Section~\ref{The measure of maximal entropy} we prove Theorem~\ref{Thm : proba d'entropie max dans le cas compact}. The main three steps are: estimating the measure of small dynamical balls (Section~\ref{the measure of dynamical balls}); bounding from below the entropy of the Bowen--Margulis measure (Section~\ref{ssection:entropie max}); proving the uniqueness of the measure of maximal entropy (Section~\ref{ssection:uniciteMME}). In Section~\ref{Counting closed geodesics} we establish Proposition~\ref{Prop : comptage dans le cas compact}. The main steps are: bounding the number of rank-one closed geodesics from below (Section~\ref{ssection:borne inf}) and from above (Section~\ref{ssection:borne sup rk1}); bounding from above the number of non-rank-one closed geodesics (Section~\ref{ssection:borne sup rksup}); and finally proving the equidistribution of closed geodesics (Section~\ref{ssection:equidistribution}).

\paragraph{Acknowledgements} I am grateful to Yves Benoist, Harrison Bray, Olivier Glorieux, Ludovic Marquis, Fr\'{e}d\'{e}ric Paulin and Samuel Tapie for helpful discussions. I thank my advisor Fanny Kassel for her time, help, advice and encouragements.

I warmly thank Feng Zhu for many fruitful discussions which led to the paper \cite{EeERfH+}, and made me realise that the strongly irreducible assumption in the main results of the present paper could be replaced by a weaker non-elementary assumption.

This project received funding from the European Research Council (ERC) under the European Union's Horizon 2020 research and innovation programme (ERC starting grant DiGGeS, grant agreement No 715982).

\section{Reminders}\label{Reminders}

\subsection{Properly convex open subsets of \texorpdfstring{$\PR(\R^{d+1})$}{PRd} and their geodesic flow}\label{distance}

In the whole paper we fix a real vector space $V=\R^{d+1}$, where $d\geq 1$. Let $\Omega\subset \PR(V)$ be a properly convex open set. Recall that $\Omega$ admits an $\Aut(\Omega)$-invariant proper metric called the \emph{Hilbert metric} and defined by the following formula: for $(a,x,y,b)\in\partial\Omega\times\Omega\times\Omega\times\partial\Omega$ aligned in this order (see Figure~\ref{figure_distance}),
\begin{equation}\label{Equation : Hilbert}
d_\Omega(x,y)=\frac{1}{2}\log([a,x,y,b]),
\end{equation}
where $[a,x,y,b]$ is the cross-ratio of the four points, normalised so that $[0,1,t,\infty]=t$. 

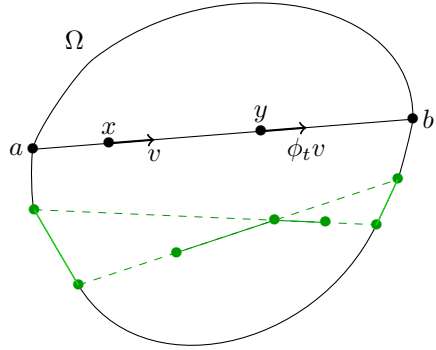
\begin{figure}
\centering
\begin{tikzpicture}[scale=2]
\coordinate (a) at (-0.7,-0.1);
\coordinate (b) at (1.8,0.1);
\coordinate (c) at (-0.4,-1);
\coordinate (d) at (1.56,-0.6);
\coordinate (e) at (-0.3,0.5);
\coordinate (f) at (1.7,-.3);
\coordinate (g) at ($ (f)!.5!(d) +(.001,0) $);
\coordinate (h) at (-.69,-.5);
\coordinate (i) at ($ (h)!.5!(c) +(-.001,0) $);

\coordinate (z) at (intersection of c--f and h--d);
\coordinate (zp) at ($ (z)!.5!(d) $);
\coordinate (zm) at ($ (c)!.5!(z) $);

\coordinate (x) at ($ (a)!0.2!(b) $);
\coordinate (y) at ($ (a)!0.6!(b) $);
\coordinate (v) at ($ (x)!0.3!(y) $);
\coordinate (phitv) at ($ (y)!.3!(b) $);


\length{e,a,h,i,c,d,g,f,b}
\cvx{e,a,h,i,c,d,g,f,b}{1}

\draw (a)--(b);

\draw (a) node{$\bullet$} node[left]{$a$};
\draw (x) node{$\bullet$} node[above]{$x$};
\draw (y) node{$\bullet$} node[above]{$y$};
\draw (b) node{$\bullet$} node[right]{$b$};
\draw (c) node[green!60!black]{$\bullet$};
\draw (d) node[green!60!black]{$\bullet$};
\draw (f) node[green!60!black]{$\bullet$};
\draw (h) node[green!60!black]{$\bullet$};
\draw (z) node[green!60!black]{$\bullet$};
\draw (zp) node[green!60!black]{$\bullet$};
\draw (zm) node[green!60!black]{$\bullet$};
\draw (e) node[above left]{$\Omega$};
\draw [->, thick] (x) -- (v) node[below]{$v$};
\draw [->, thick] (y) -- (phitv) node[below]{$\phi_tv$};
\draw [green!60!black,dashed,very thin] (c)--(f);
\draw [green!60!black,dashed,very thin] (h)--(d);
\draw [green!60!black] (zm)--(z);
\draw [green!60!black] (z)--(zp);
\draw [green] (d)--(f);
\draw [green] (h)--(c);

\end{tikzpicture}
\caption{The Hilbert metric and the geodesic flow ($t=d_\Omega(x,y)$)}\label{figure_distance}
\end{figure}

Recall that if $\Omega$ is an ellipsoid, then $(\Omega,d_\Omega)$ is the Klein model of the real hyperbolic space of dimension $d$; if $\Omega$ is a $d$-simplex, then $(\Omega,d_\Omega)$ is isometric to $\R^d$ endowed with a hexagonal norm.

Any discrete subgroup $\Gamma\subset\PGL(V)$ of automorphisms of $\Omega$ preserves $d_\Omega$, hence must act properly discontinuously on $\Omega$; therefore the quotient $M=\Omega/\Gamma$ is an orbifold. Furthermore, $M$ is a manifold if the action is free (\ie if $\Gamma$ is torsion-free, by Brouwer's fixed point theorem, applied to the convex hull of a finite orbit of a torsion element). Note that by Selberg's lemma \cite{selberg_lemma}, if $\Gamma$ is finitely generated, then it has a torsion-free finite-index subgroup. We will work in general with $\Gamma$ not necessarily torsion-free, so we set the notation $T^1M=T^1\Omega/\Gamma$. 

The straight lines of $\Omega$ can be parametrised to be geodesics, which are said to be \emph{straight}. However, an interesting feature in the non-strictly convex case is that when there are two coplanar non-trivial segments in $\partial\Omega$, one can construct geodesics which are not straight, see the broken green segment in Figure~\ref{figure_distance}. In order to define the geodesic flow we only take into account straight geodesics: for $v$ in $T^1\Omega$, let $t\mapsto c(t)$ be the parametrisation of the projective line tangent to $v$ such that $c$ is an isometric embedding from $\R$ to $\Omega$ and $c'(0)=v$. For $t\in\R$ we set $\phi_t(v)=c'(t)\in T^1\Omega$. See Figure~\ref{figure_distance}.


The geodesic flow on $T^1\Omega/\Gamma$ is well defined because the two actions of $\Aut(\Omega)$ and $(\phi_t)_{t\in\R}$ on $T^1\Omega$ commute. We denote by $\pi :T^1M\rightarrow M$ and $\pi : T^1\Omega\rightarrow \Omega$ the projections, we define the following metrics:
\begin{align*}
 \forall x,y\in M, \ d_M(x,y) & = \min\{d_\Omega(\xx,\yy): \xx,\yy\in\Omega \text{ lifts of } x,y\},\\
 \forall v,w\in T^1\Omega, \ d_{T^1\Omega}(v,w) & =  \max_{0\leq t\leq 1}d_\Omega(\pi\phi_tv,\pi\phi_tw),\\
 \forall v,w\in T^1M, \ d_{T^1M}(v,w) & = \max_{0\leq t\leq 1}d_\Omega(\pi\phi_tv,\pi\phi_tw) \leq \min\{d_{T^1\Omega}(\vv,\ww): \vv,\ww\in T^1\Omega \text{ lifts of } v,w\}.
\end{align*}

A very useful inequality is provided by the following lemma.

\begin{lemma}[\!{\!\cite[Lem.\,8.3]{crampon}}]\label{crampon}
Let $\Omega$ be a properly convex open subset of $\PR(V)$. Let $c_1$ and $c_2$ be two straight geodesics parametrised with constant speed, but not necessarily with the same speed. Then for all $0\leq t\leq T$,
\[d_{\Omega}(c_1(t),c_2(t))\leq d_\Omega(c_1(0),c_2(0))+d_\Omega(c_1(T),c_2(T)).\]
\end{lemma}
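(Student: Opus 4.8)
The plan is to reduce the statement to a convexity property of the Hilbert metric along straight lines, exploiting the cross-ratio formula \eqref{Equation : Hilbert}. The key observation is that for a straight geodesic $c$ parametrised with constant speed, the map $t\mapsto \pi c(t)$ is an affine parametrisation (up to a projective change of variable) of a segment of a projective line, and the Hilbert distance between two such segments can be controlled via the logarithm of a cross-ratio. So first I would set up an affine chart in which $\Omega$ is a bounded convex body, and write the two geodesics as $c_i(s) = a_i + \sigma_i(s)(b_i - a_i)$ where $a_i, b_i \in \partial\Omega$ are the endpoints and $\sigma_i$ is the projective reparametrisation making $c_i$ an isometry; the constant-speed hypothesis translates into $\sigma_i$ being a specific Möbius function of $s$.

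Next, the heart of the matter: I would show that $t\mapsto d_\Omega(c_1(t), c_2(t))$, although not itself convex in general, is \emph{subadditive-type} in the sense claimed, namely that its value at an interior time is bounded by the sum of its boundary values on $[0,T]$. Here I expect the cleanest route is to use the known fact (see e.g.\ the proof in \cite{crampon}) that the Hilbert metric $d_\Omega$ is a \emph{convex function along pairs of straight lines} when the two lines are parametrised \emph{projectively} (i.e.\ so that cross-ratios with the endpoints are affine), but that reparametrising to constant speed only distorts this by a controlled amount. Concretely, one compares the constant-speed parameter $t$ with the projective parameter and checks that the logarithmic reparametrisation, combined with the triangle inequality applied at the two endpoints, yields exactly the stated bound $d_\Omega(c_1(t),c_2(t)) \le d_\Omega(c_1(0),c_2(0)) + d_\Omega(c_1(T),c_2(T))$. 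Since this is quoted verbatim as \cite[Lem.\,8.3]{crampon}, I would present the argument at the level of: fix the affine chart, reduce to the two-dimensional case spanned by the four endpoints (or rather the plane containing both geodesics, noting the geodesics may be skew but the inequality localises), and invoke the logarithmic convexity of the cross-ratio.

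The main obstacle, and the place where care is needed, is that the two geodesics need \emph{not} be coplanar, and they are parametrised with \emph{different} speeds. The different-speed issue is handled by observing that the inequality is about the endpoints of the \emph{given} parametrisation, so one simply works with whatever affine functions $\sigma_i$ arise. The non-coplanarity is the genuinely delicate point: one cannot directly draw a single picture. The standard fix is to introduce, for each fixed $t$, the point $c_1(t)$ and the point $c_2(t)$, consider the projective line through them, and track how it meets $\partial\Omega$; then use monotonicity of cross-ratios under the inclusion of convex sets (restricting $\Omega$ to a two-plane) to bound $d_\Omega(c_1(t), c_2(t))$ by the analogous quantity in a suitable $2$-dimensional slice, where the coplanar argument applies. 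I would therefore structure the proof as: (i) reduction to an affine chart and to a $2$-dimensional slice by cross-ratio monotonicity; (ii) explicit parametrisation of the two straight segments in that slice with the constant-speed Möbius reparametrisations; (iii) the computation showing $\log$ of the relevant cross-ratio is a convex function of the projective parameter, hence dominated by its endpoint values; (iv) translating back through the reparametrisation and the slicing inequality to conclude. Given that the paper merely cites this as a lemma from \cite{crampon}, a reasonable write-up here is a few lines recalling that the statement is \cite[Lem.\,8.3]{crampon} and sketching the cross-ratio convexity idea, rather than a full reproduction.
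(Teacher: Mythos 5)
You should first note what the paper actually does here: it gives no proof of this lemma at all — it quotes it as \cite[Lem.\,8.3]{crampon} and points to \cite[\S A]{topmixing} for a write-up that fixes a missing detail in the original argument. So your closing suggestion to ``cite and sketch'' matches the paper's practice; the problem is that the sketch you offer would not stand as a proof, and the places where it is vague are exactly the places where the known argument needed care.

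Concretely, there are three gaps. First, the reduction to a $2$-dimensional slice is not legitimate: no $2$-plane contains both geodesics in general, so the ``coplanar argument'' cannot be run inside a slice through the two points $c_1(t),c_2(t)$; moreover a constant-speed parametrisation for $d_\Omega$ is not constant-speed for the slice metric; and, most importantly, monotonicity of the Hilbert metric under inclusion goes the wrong way for your purpose: for $\omega=\Omega\cap P$ one has $d_\omega\geq d_\Omega$ on $\omega$, so while $d_\Omega(c_1(t),c_2(t))\leq d_\omega(c_1(t),c_2(t))$ is true, an inequality proved inside the slice would bound this by the \emph{slice} distances of the endpoint pairs, which dominate, rather than are dominated by, $d_\Omega(c_1(0),c_2(0))+d_\Omega(c_1(T),c_2(T))$; nothing is concluded. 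Second, the pivotal claim — that $d_\Omega$ is convex along pairs of straight lines in the projective parametrisation and that passing to constant speed ``only distorts this by a controlled amount'' — is asserted without justification, and the only precise statement in that vicinity, namely convexity of $t\mapsto d_\Omega(c_1(t),c_2(t))$ for constant-speed straight geodesics (which would give the bound by the \emph{maximum} of the endpoint values), is false for every Hilbert geometry other than the ellipsoid (Kelly--Straus); this is precisely why the right-hand side of the lemma is a sum and why no naive convexity argument can work. Third, the different speeds are not a side issue you can wave away: equal times $t$ on the two geodesics correspond to two unrelated M\"obius parameters on the two lines, so even granting some convexity along each line separately, you have not identified any single function of $t$ to which convexity applies. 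The proofs in the cited references are of a different nature — roughly, one compares each $c_i$ with an auxiliary straight constant-speed geodesic sharing an endpoint with it (so that each comparison involves coplanar segments) and then uses the triangle inequality, which is where the sum of the two endpoint distances comes from — and none of that mechanism is visible in your outline. As it stands, the proposal is a plausible-sounding plan whose two load-bearing steps (the slice reduction and the convexity-plus-reparametrisation step) do not work as stated.
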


See \cite[\S A]{topmixing} for a proof of the above lemma that fill in a missing detail in the original proof.

\subsection{Terminology on convex sets and duality}\label{duality}


We recall here some terminology on convex sets.

\begin{nota}
 For any subset $X$ of the projective space $\PR(V)$, the closure (\resp interior \resp boundary) of $X$, denoted by $\overline{X}$ (\resp $\interior(X)$ \resp $\partial X$), will always be considered \emph{with respect to $\PR(V)$}.
\end{nota}

Let $K\subset\PR(V)$ be properly convex. 

\begin{itemize}
 \item The \emph{relative interior} (\resp \emph{relative boundary}) of $K$, denoted by $\interior_{rel}(K)$ (\resp $\partial_{rel}K$) is its topological interior (\resp boundary) with respect to the projective subspace it spans. 
 \item For $x\in \overline{K}$, the \emph{open face} of $x$ in $\overline{K}$, denoted by $F_{K}(x)$, consists of the points $y\in \overline{K}$ such that $[x,y]$ is contained in the relative interior of a segment contained in $\overline{K}$. The \emph{closed face} of $x$ is $\overline{F}_K(x)=\overline{F_K(x)}$. 
 \item A point $x\in\partial_{rel}{K}$ is said to be \emph{extremal} (\resp \emph{strongly extremal}) if $F_K(x)=\{x\}$ (\resp $x\not\in\overline{F}_K(y)$ for $y\in\partial_{rel} K\smallsetminus\{x\}$); one says that $K$ is \emph{strictly convex} if all the points in the relative boundary are extremal (and hence strongly extremal).
 \item Assume that $K$ spans $\PR(V)$ and let $\xi\in\partial K$. A \emph{supporting hyperplane} of $K$ at $\xi$ is a hyperplane which contains $\xi$ but does not intersect $\interior (K)$. Note that there always exists such a hyperplane. The point $\xi$ is said to be a \emph{smooth} point of $\partial K$ if there is only one supporting hyperplane of $K$ at $\xi$, denoted by $T_\xi\partial K$.
\end{itemize}
 
Let us recall the notion of duality for properly convex open sets. We identify the dual projective space $\PR(V^*)$ with the set of projective hyperplanes of $\PR(V)$. Let $\Omega$ be a properly convex open subset of $\PR(V)$. The dual of $\Omega$, denoted by $\Omega^*$, is the properly convex open subset of $\PR(V^*)$ defined as the set of projective hyperplanes which do not intersect $\overline{\Omega}$. We naturally identify $\PGL(V)$ and $\PGL(V^*)$, then $\Aut(\Omega)$ identifies with $\Aut(\Omega^*)$, and the attracting (\resp repelling) fixed point of the action on $\PR(V^*)$ of any biproximal element $g\in\PGL(V)$ is $x_g^+\oplus x_g^0$ (\resp $x_g^-\oplus x_g^0$).

Observe that a hyperplane $H$ is a smooth point of $\partial\Omega^*$ if and only if its intersection with $\overline{\Omega}$ is reduced to a singleton.

\subsection{More metrics and upper semi-continuity of balls}

In this section we define two metrics, one on the usual boundary $\partial\Omega$ of any properly convex open set $\Omega\subset\PR(V)$, and one on the closure $\overline\Omega$. These notions are used in the proofs of Lemma~\ref{l ombre grandit a la frontiere} and Proposition~\ref{l'ensemble limite conique a mesure non nulle}.

\begin{defi}\label{les metriques du bord}
Let $x\in\Omega\subset\PR(V)$ be a pointed properly convex open set, and $t>0$ be a positive parameter. 
\begin{itemize}
\item Let $\xi,\eta\in\partial\Omega$. The \emph{simplicial distance} between $\xi$ and $\eta$ is defined as follows (and it is possibly infinite).
\[d_{\mathrm{spl}}(\xi,\eta):=\inf\{k : \exists a_0,\dots,a_k\in\partial\Omega : a_0=\xi, a_k=\eta, \forall 0\leq i<k, [a_i,a_{i+1}]\subset\partial\Omega\}.\]
\item Let $\xi,\eta\in\overline\Omega$. When $\xi$ and $\eta$ are on the same open face $F$ of $\overline\Omega$, we set $d_{\overline{\Omega}}(\xi,\eta):=d_F(\xi,\eta)$, and otherwise we set $d_{\overline{\Omega}}(\xi,\eta)$ to be infinite.
\end{itemize}
\end{defi}

The open balls of radius $R$ and centred at $\xi$ are respectively denoted by $B_{\spl}(\xi,R)$ and $B_{\overline{\Omega}}(\xi,R)$.

Observe that $d_{\spl}$ and $d_{\overline{\Omega}}$ are lower semi-continuous. In particular, closed balls for the metric $d_{\overline{\Omega}}$ are upper semi-continuous with respect to the Hausdorff topology (in the sense of Fact~\ref{semi-continuite superieure} below). Given a metrisable locally compact topological space $X$, recall that the \emph{Hausdorff topology} on the set of compact subsets of $X$ is a metrisable topology such that a sequence of compact subsets $(K_n)_n$ converges to $K\subset X$ compact if and only if any converging sequence $(x_n)_n\in \prod_nK_n$ has its limit in $K$, and any point of $K$ is the limit of a sequence $(x_n)_n\in \prod_nK_n$.
 
\begin{fait}\label{semi-continuite superieure}
 Let $\Omega\subset\PR(V)$ be a properly convex open set. For any $R>0$, the map
 \begin{equation*}
  \begin{array}{lllll}
   \overline{B}_{\overline{\Omega}}(\cdot,R) & : & \overline{\Omega} & \longrightarrow & \{\text{compact subsets of }\overline{\Omega}\}\\
   && \xi & \mapsto & \overline{B}_{\overline{\Omega}}(\xi,R)
  \end{array}
 \end{equation*}
 is upper semi-continuous in the following sense: all accumulation points of $\overline{B}_{\overline{\Omega}}(\eta,R)$ when $\eta\to\xi$ must be contained in $\overline{B}_{\overline{\Omega}}(\xi,R)$.
\end{fait}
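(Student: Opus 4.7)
The plan is to deduce the fact directly from the lower semi-continuity of $d_{\overline{\Omega}}$ observed just before the statement. Given a sequence $\eta_n\to\xi$ in $\overline{\Omega}$ and points $x_n\in\overline{B}_{\overline{\Omega}}(\eta_n,R)$ with $x_n\to x$, the lower semi-continuity yields $d_{\overline{\Omega}}(\xi,x)\leq\liminf_n d_{\overline{\Omega}}(\eta_n,x_n)\leq R$, so $x\in \overline{B}_{\overline{\Omega}}(\xi,R)$, which is exactly the asserted semi-continuity. Compactness of the balls is automatic, since $\overline{B}_{\overline{\Omega}}(\xi,R)$ coincides with the closed Hilbert ball of radius $R$ centered at $\xi$ in the open face $F_{\overline{\Omega}}(\xi)$, and the Hilbert metric on that face is proper.

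The real content is therefore the lower semi-continuity of $d_{\overline{\Omega}}$ itself, which I would verify through the cross-ratio formula (\ref{Equation : Hilbert}). Suppose $(\xi_n,\eta_n)\to(\xi,\eta)$ with $d_{\overline{\Omega}}(\xi_n,\eta_n)\leq L<\infty$ and $\xi\neq\eta$ (the equal case being trivial). Then $\xi_n,\eta_n$ lie in a common open face $F_n$; letting $a_n,b_n\in\partial_{rel}\overline{F_n}$ be the endpoints of the chord through them, the four points $a_n,\xi_n,\eta_n,b_n$ are collinear in that order and $[a_n,\xi_n,\eta_n,b_n]\leq e^{2L}$. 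After extracting subsequential limits $a_n\to a$ and $b_n\to b$ in $\overline{\Omega}$, the key (and only delicate) step is to rule out the degenerate collisions $a=\xi$ or $b=\eta$: since $|\xi_n-\eta_n|$ is bounded below by $|\xi-\eta|/2$ eventually, such a collision would force the corresponding factor in the denominator of the cross ratio to tend to $0$ while the numerator stays bounded away from $0$, contradicting the uniform bound $e^{2L}$. One thereby obtains four distinct collinear points $a,\xi,\eta,b$ in that order, with $[a,\xi,\eta,b]\leq e^{2L}$ by continuity.

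Since $[a,b]\subset\overline{\Omega}$ by convexity and $\xi$ lies in the relative interior of this segment, the minimality of the closed face through $\xi$ forces $[a,b]\subset\overline{F_{\overline{\Omega}}(\xi)}$; the symmetric argument for $\eta$ then shows that $\xi$ and $\eta$ share a common open face $F:=F_{\overline{\Omega}}(\xi)=F_{\overline{\Omega}}(\eta)$. Letting $a',b'\in\partial_{rel}\overline{F}$ denote the endpoints of the full chord through $\xi,\eta$ in $\overline{F}$, we have $[a',b']\supset[a,b]$; a one-variable computation shows that enlarging the chord endpoints strictly decreases the cross ratio, hence $d_F(\xi,\eta)=\tfrac{1}{2}\log[a',\xi,\eta,b']\leq\tfrac{1}{2}\log[a,\xi,\eta,b]\leq L$, as required. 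The main obstacle is thus the non-degeneracy of the limit cross ratio; once that is in hand, the rest is elementary convex and projective geometry.
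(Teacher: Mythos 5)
Your proof is correct and follows the same route as the paper: the paper simply observes that $d_{\overline{\Omega}}$ is lower semi\nobreakdash-continuous and notes that Fact~\ref{semi-continuite superieure} follows; the paper does not supply a proof of that lower semi\nobreakdash-continuity. Your first paragraph is exactly this reduction (and the compactness remark is fine: $\overline{B}_{\overline{\Omega}}(\xi,R)$ is a closed Hilbert ball in the open face of $\xi$, hence compact by properness of the Hilbert metric). Your second and third paragraphs then actually prove the lower semi\nobreakdash-continuity, and the argument is sound: the non-degeneracy of the limiting chord endpoints $(a,b)$ follows from the uniform cross-ratio bound because a collision $a=\xi$ would send the factor $\lvert a_n-\eta_n\rvert/\lvert a_n-\xi_n\rvert$ to infinity while the other factor stays $\geq 1$ by the ordering, and symmetrically for $b=\eta$; the limit segment $[a,b]\subset\overline{\Omega}$ contains both $\xi$ and $\eta$ in its relative interior, so $\eta\in F_{\overline{\Omega}}(\xi)$ by the definition of open face; and the monotonicity of the cross ratio under enlarging the chord endpoints finishes the estimate $d_F(\xi,\eta)\leq L$. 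So you prove a bit more than the paper needed to write down, and that supplement is correct.
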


\subsection{Benz\'ecri's compactness theorem and proper densities}

In this section we recall Benz\'ecri's famous compactness theorem, and we see a first consequence for $\PGL(V)$-equivariant volume form on properly convex open sets. We denote by $\Ecal_V$ (resp. $\Ecal_V^\bullet$) the set of (resp. pointed) properly convex open set of $\PR(V)$ (resp. endowed with the pointed Hausdorff topology).

\begin{fait}[\!{\!\cite[Ch.\,5,\S2,Th.\,2]{benz_varlocproj}}]\label{benz}
 The action of $\PGL(V)$ on $\Ecal_V^\bullet$ is continuous, proper and cocompact.
\end{fait}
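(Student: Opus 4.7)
The plan is to establish continuity, cocompactness and properness separately, the substance of the statement being cocompactness and properness.

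Continuity of the action is routine: for $g_n\to g$ in $\PGL(V)$ and $(\Omega_n,x_n)\to(\Omega,x)$ in the pointed Hausdorff topology, the convergence $g_n x_n\to gx$ follows from continuity of the $\PGL(V)$-action on $\PR(V)$, and the equicontinuity of $\{g_n\}$ on the compact space $\PR(V)$ combined with $\overline{\Omega_n}\to\overline\Omega$ yields $g_n\overline{\Omega_n}\to g\overline\Omega$ in the Hausdorff topology.

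For cocompactness I would use the John ellipsoid. For any properly convex $\Omega\subset\PR(V)$, pick an affine chart $U$ containing $\overline\Omega$ (possible since $\Omega$ is properly convex); in $U\cong\R^d$ the bounded convex body $\Omega$ admits a unique maximal-volume inscribed ellipsoid $E_\Omega$, which satisfies the John sandwich
\[ E_\Omega\subset\Omega\subset d\cdot E_\Omega \]
with the dilation taken about the centre of $E_\Omega$. Uniqueness makes $E_\Omega$ intrinsic to $\Omega$ (in particular independent of the chart) and $\PGL(V)$-equivariant, and it depends continuously on $\Omega$ in the Hausdorff topology. Since $\PGL(V)$ acts transitively on properly embedded ellipsoids, one can normalise each $(\Omega,x)\in\Ecal_V^\bullet$ via some $g\in\PGL(V)$ so that $gE_\Omega$ equals a fixed reference round ball $B_0\subset\PR(V)$; the normalised pair $(g\Omega,gx)$ then lies in
\[ K:=\bigl\{(\Omega',x')\in\Ecal_V^\bullet : B_0\subset\Omega'\subset d\cdot B_0,\ x'\in d\cdot\overline{B_0}\bigr\}, \]
which is compact in $\Ecal_V^\bullet$ by Blaschke's selection theorem applied to convex bodies sandwiched between two fixed ellipsoids. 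Hence $\PGL(V)\cdot K=\Ecal_V^\bullet$.

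For properness, suppose $(\Omega_n,x_n)\to(\Omega,x)$ and $g_n(\Omega_n,x_n)\to(\Omega',x')$; I must show that $(g_n)_n$ is relatively compact in $\PGL(V)$. The key observation is that each $g_n$ is an isometry between Hilbert metrics $(\Omega_n,d_{\Omega_n})\to(g_n\Omega_n,d_{g_n\Omega_n})$, and under pointed Hausdorff convergence of properly convex open sets these Hilbert metric spaces converge on both sides in the pointed Gromov--Hausdorff sense. Applying Arzelà--Ascoli to the restrictions of $g_n$ to a fixed Hilbert ball $\overline{B}_{\Omega_n}(x_n,R)$, which form a precompact family of subsets of $\PR(V)$, one extracts a subsequential uniform limit of $g_n$ on such a ball, where one can find $(d+2)$-tuples in general position. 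Since a projective transformation is determined by its action on a $(d+2)$-tuple in general position, this uniform convergence upgrades to convergence of $(g_n)_n$ in $\PGL(V)$.

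The hard parts will be the intrinsic and equivariant character of the John ellipsoid construction across different affine charts (handled by its uniqueness characterisation), and the Arzelà--Ascoli step for properness, where one must verify that limits of $(d+2)$-tuples in general position under Hilbert isometries between properly convex open sets remain in general position.
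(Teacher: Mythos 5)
The paper does not prove this statement but quotes it directly from Benzécri's thesis \cite[Ch.\,5,\S2,Th.\,2]{benz_varlocproj}, so there is no ``paper's own proof'' to compare against; I will assess your argument on its own merits. Your overall structure (continuity / cocompactness via John-ellipsoid normalisation / properness via Hilbert isometries and Arzel\`a--Ascoli) is a standard modern route to Benz\'ecri's theorem, and the continuity part is fine. There are, however, two genuine issues.

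\emph{Cocompactness.} The assertion that the John ellipsoid is ``independent of the chart'' and hence ``$\PGL(V)$-equivariant'' is false. The John ellipsoid is only \emph{affinely} equivariant: a projective change of chart maps ellipsoids to ellipsoids, but it does not scale Euclidean volume by a constant, so the volume-maximising inscribed ellipsoid is chart-dependent. For a concrete counterexample, take $\Omega$ the open triangle $\{x+y+z=1,\ x,y,z>0\}$ in $\PR(\R^3)$: its John ellipse in that chart is the incircle, which is not preserved by the diagonal torus in $\PGL_3(\R)$ stabilising $\Omega$. Uniqueness of a maximiser does not make the maximiser invariant under changes of the functional being maximised. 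Fortunately you never actually use this claimed equivariance; but a real gap appears just after it. Once you choose $g$ with $gE_\Omega=B_0$, you need $g\Omega\subset g(d\cdot E_\Omega)=d\cdot B_0$. This last equality holds only if $g$ transforms the concentric dilation ``$d\cdot$'' in the chart $U$ (where $E_\Omega$ was computed) into the concentric dilation in the chart hosting $B_0$; a generic projective $g$ with $gE_\Omega=B_0$ does not do this, and $g(d\cdot E_\Omega)$ may be a very eccentric ellipsoid around $B_0$. The fix is easy but must be said: choose $g$ to be an affine isomorphism from $U$ onto the fixed chart containing $d\,\overline{B_0}$, which exists because the affine group acts transitively on bounded ellipsoids of $\R^d$ and you may compose with the affine transition between the two charts.

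\emph{Properness.} The outline is correct but the step you flag at the end --- that the images of a $(d+2)$-tuple in general position stay in general position in the limit --- is precisely where the substance lies and it is not filled in. It does \emph{not} follow from preservation of pairwise Hilbert distances: a tuple of collinear points in a properly convex set can have any prescribed bounded pairwise Hilbert distances, so metric bounds alone do not prevent the limit tuple from degenerating onto a proper projective subspace. You need an argument that exploits the non-degeneracy of the limit domain $\Omega'$ --- for instance, that the closed Hilbert ball $\overline B_{\Omega'}(x',R)$ contains a Euclidean ball of definite radius, combined with a simplex-volume or Cartan-decomposition estimate forcing the singular values of $g_n$ to stay bounded. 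Until this is supplied, properness is not proved.
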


We recall the notion of a proper density on the set of properly convex open sets. They prescribe the way to choose a volume form on each properly convex open set in a $\PGL(V)$-equivariant manner. For the whole paper we fix a density $\Vol_{\PR(V)}$ on $\PR(V)$, seen as a measure.

\begin{defi}\label{Definition : densite propre}
 A \emph{proper density} on $\Ecal_V$ is a map of the form $\Omega\mapsto\Vol_\Omega$, where $\Omega\subset\PR(V)$ is a properly convex open set and $\Vol_\Omega$ is a density on $\Omega$ with Radon--Nikodym derivative $f(x,\Omega)>0$ with respect to $\Vol_{\PR(V)}$, satisfying the following three conditions.
 \begin{itemize}
  \item (Continuity) The function $f : \Ecal_V^\bullet\rightarrow \R_{>0}$ is continuous.
  \item (Monotone decreasing) Let $(x,\Omega)$ and $(y,\Omega')\in \Ecal_V$. If $x=y\in\Omega\subset\Omega'$ then 
  \[f(x,\Omega')\leq f(x,\Omega).\]
  \item ($\PGL(V)$-equivariance) For any $T\in\PGL(V)$, 
  \[T_*\Vol_\Omega=\Vol_{T(\Omega)}.\]
 \end{itemize}
\end{defi}

 See \cite{vernicosapprox2017} for more details and examples. We fix for the whole paper a proper density $\Omega\mapsto\Vol_\Omega$ on $\Ecal_V$. One of the key observations that we will need on proper densities is that for any $R>0$, the following quantities are positive and finite (this is a direct consequence of Fact~\ref{benz}).
 \begin{gather}\label{Equation : chi}
  0<\chi_-(R) := \min_{(x,\Omega)\in\Ecal_V^\bullet}\Vol_\Omega(\overline{B}_\Omega(x,R))\leq\chi_+(R) := \max_{(x,\Omega)\in\Ecal_V^\bullet}\Vol_\Omega(\overline{B}_\Omega(x,R))<\infty.
 \end{gather}

\subsection{The critical exponent}\label{Section : Exposant critique}

Let $\Gamma\subset\PGL(V)$ be a discrete subgroup which preserves a properly convex open set $\Omega\subset\PR(V)$. We defined in the introduction (Sections~\ref{Section : Exposant critique} and \ref{Section : la machinerie des densites conformes}) the critical exponent of $\Gamma$ and when $\Gamma$ is divergent. In this section we explain why these definitions do not depend on $\Omega$.

Recall that $V=\R^{d+1}$ has a canonical Euclidean structure. For any $g\in\GL(V)=\GL_{d+1}(\R)$, we denote by $\mu_1(g)\geq\dots\geq\mu_{d+1}(g)$ the singular values of $g$. For any $g\in\PGL(V)$, we set 
\[\kappa(g):=\frac{1}{2}\log\left(\frac{\mu_1(\tilde{g})}{\mu_{d+1}(\tilde{g})}\right),\]
where $\tilde{g}\in\GL(V)$ is any lift of $g$.

\begin{fait}[\!{\!\cite[Prop.\,10.1]{fannycvxcocpct}}]
Let $\Omega\subset\PR(V)$ be a properly convex open subset and let $x\in\Omega$. Then there exists a constant $C>0$ such that for any $g\in\Aut(\Omega)$, one has
\[|d_\Omega(x,gx) - \kappa(g)| \leq C.\]
\end{fait}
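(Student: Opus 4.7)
The plan is to combine Benzécri's compactness theorem (Fact~\ref{benz}), the Cartan $KAK$ decomposition of $\PGL(V)$, and the projective invariance of the Hilbert metric. Fix a maximal compact subgroup $K\subset\PGL(V)$ (the image of $O(d+1)$). Every $g\in\PGL(V)$ admits a decomposition $g=k_1\,a\,k_2$ with $k_1,k_2\in K$ and $a$ a diagonal representative whose singular values are those of $g$; in particular $\kappa(a)=\kappa(g)$. Since every $h\in\PGL(V)$ induces a projective isometry $(\Omega,d_\Omega)\to(h\Omega,d_{h\Omega})$, applying this with $h=k_1^{-1}$ gives
\[
d_\Omega(x,gx)\;=\;d_{k_1^{-1}\Omega}\bigl(k_1^{-1}x,\,a\,k_2 x\bigr).
\]

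Next, as $(k_1,k_2)$ ranges over the compact group $K\times K$, the pointed convex set $(k_1^{-1}x,\,k_1^{-1}\Omega)$ stays in a compact subset $\mathcal K\subset\Ecal_V^\bullet$ and $k_2 x$ stays in a compact subset $L\subset\PR(V)$. It therefore suffices to prove a uniform estimate
\[
\bigl|d_{\Omega'}(y,az)-\kappa(a)\bigr|\;\leq\;C
\]
for $(y,\Omega')\in\mathcal K$, $z\in L$ with $az\in\Omega'$, and $a$ diagonal. I would use Benzécri again to sandwich $\mathcal K$ uniformly by Euclidean balls: there exist $0<r<R$ such that, in a suitable affine chart adapted to $(y,\Omega')$, one has $B_{\mathrm{eucl}}(y,r)\subset\Omega'\subset B_{\mathrm{eucl}}(y,R)$. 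Together with the monotonicity $d_{\Omega_2}\leq d_{\Omega_1}$ whenever $\Omega_1\subset\Omega_2$, and the fact that the Hilbert metric of a Euclidean ball coincides with its Klein hyperbolic metric, this reduces the estimate to an explicit cross-ratio computation in hyperbolic space.

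The main obstacle is handling the regime where $az$ approaches $\partial\Omega'$, so that both $d_{\Omega'}(y,az)$ and $\kappa(a)$ diverge. Writing $d_{\Omega'}(y,az)$ as $\tfrac12\log$ of the cross-ratio with the two boundary intersections of the line through $y$ and $az$: the intersection on the $y$ side contributes $O(1)$ uniformly in $\mathcal K$, while the intersection on the $az$ side contributes $-\tfrac12\log\mathrm{dist}(az,\partial\Omega')+O(1)$. This last term can be related to $\kappa(a)$ by noting that $a$ contracts distances transverse to its top eigendirection at rate $\mu_{d+1}(a)/\mu_1(a)=e^{-2\kappa(a)}$, so $\mathrm{dist}(az,\partial\Omega')\asymp e^{-2\kappa(a)}$ up to multiplicative constants uniform in the compact data $\mathcal K$ and $L$. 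Combining these estimates in both directions yields the uniform constant $C$ and establishes the fact.
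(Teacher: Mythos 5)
Your $KAK$-plus-Benz\'ecri plan is reasonable in outline, and the first rewriting $d_\Omega(x,gx)=d_{k_1^{-1}\Omega}(k_1^{-1}x,ak_2x)$ is correct, but the reduction that follows is where the argument breaks. You claim that it suffices to prove $|d_{\Omega'}(y,az)-\kappa(a)|\leq C$ uniformly over $(y,\Omega')\in\mathcal K$, $z\in L$ and $a$ diagonal with $az\in\Omega'$. This intermediate statement is false, because it discards the only hypothesis tying $a$ to $\Omega'$, namely that $g=k_1ak_2\in\Aut(\Omega)$, equivalently $a\,(k_2\Omega)=k_1^{-1}\Omega$. Once $a$ is a free diagonal element, both one-sided inequalities fail. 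For instance with $\Omega'$ the Euclidean unit ball in an affine chart, $y=0$, $z=(\tfrac12,0,\dots,0)$: taking $a=\diag(1,\lambda^{-1},1,\dots,1)$ gives $az\to y$ while $\kappa(a)=\tfrac12\log\lambda\to\infty$, so $d_{\Omega'}(y,az)\geq\kappa(a)-C$ fails; taking $a=\diag(1,\mu,1,\dots,1)$ with $\mu\uparrow 2$ gives $az\to\partial\Omega'$ so $d_{\Omega'}(y,az)\to\infty$ while $\kappa(a)\leq\tfrac12\log 2$, so $d_{\Omega'}(y,az)\leq\kappa(a)+C$ fails. In both cases $az\in\Omega'$ and all data range over compact sets, so the estimate you reduce to is simply not true.

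The heuristic $\mathrm{dist}(az,\partial\Omega')\asymp e^{-2\kappa(a)}$ is also off: the distance from $az$ to $\partial\Omega'$ depends on how $a$ interacts with $\Omega'$ (which $a$ does not preserve), not on $\kappa(a)$ alone, and for a diagonal proximal map the contraction rate towards its attracting fixed point is $\mu_2(a)/\mu_1(a)$, not $\mu_{d+1}(a)/\mu_1(a)=e^{-2\kappa(a)}$. The hypothesis $g\in\Aut(\Omega)$ is exactly what must supply the missing leverage: for example, $g$ being a $d_\Omega$-isometry gives $gB_\Omega(x,1)=B_\Omega(gx,1)\subset\Omega$, and combining this inclusion with a singular-value bound on how much $g$ can Euclideanly shrink a fixed neighbourhood of $x$ yields $\mathrm{dist}(gx,\partial\Omega)\gtrsim e^{-2\kappa(g)}$ and hence $d_\Omega(x,gx)\leq\kappa(g)+C$; the reverse bound $d_\Omega(x,gx)\geq\kappa(g)-C$ needs a companion argument (for instance via the dual action on $\Omega^*$, using $\Aut(\Omega)=\Aut(\Omega^*)$). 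In a correct proof the $KAK$ data $(k_1,a,k_2)$ must be carried together with the constraint $k_1ak_2\in\Aut(\Omega)$, not forgotten at the reduction step.
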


As a consequence, if $\Gamma\subset\PGL(V)$ is a discrete subgroup which preserves a properly convex open set $\Omega\subset\PR(V)$, then
\begin{equation}\label{Equation : exposant critique}
\delta_\Gamma=\limsup_{r\to\infty}\frac{1}{r}\log\left(\#\{\gamma\in\Gamma : \kappa(\gamma)\leq r\}\right),
\end{equation}
and  
\begin{equation}\label{Equation : divergent}
\Gamma \text{ is divergent if and only if } \sum_{\gamma\in\Gamma}e^{-\delta_\Gamma \kappa(\gamma)}=\infty.
\end{equation}
If $\Gamma$ does not preserve any properly convex open set, then we take \eqref{Equation : exposant critique} and \eqref{Equation : divergent} as definitions.

We end this section with the following classical fact, which is a consequence for instance of the Tits alternative and of the sub-additivity of $\kappa$. It applies to all strongly irreducible discrete subgroups of $\PGL(V)$, and non-elementary rank-one discrete groups of properly convex open sets.

\begin{fait}\label{exposant critique non nul}
 Let $\Gamma\subset\PGL(V)$ be a discrete subgroup that is not virtually solvable. Then $\delta_\Gamma>0$.
\end{fait}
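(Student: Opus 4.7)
The plan is to exploit the Tits alternative together with subadditivity of $\kappa$ on $\PGL(V)$. More precisely, since $\Gamma$ is not virtually solvable, the Tits alternative for linear groups (applied to any lift of $\Gamma$ to $\GL_{d+1}(\R)$) provides two elements $a,b\in\Gamma$ that generate a non-abelian free subgroup $F=\langle a,b\rangle\subset\Gamma$. All elements of $F$ are distinct in $\Gamma$, so it suffices to obtain a lower bound on the exponential growth rate of $F$ counted with respect to $\kappa$.

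First I would record that $\kappa:\PGL(V)\to[0,\infty)$ is subadditive, i.e.\ $\kappa(gh)\leq \kappa(g)+\kappa(h)$ for all $g,h\in\PGL(V)$, and symmetric in the sense that $\kappa(g^{-1})=\kappa(g)$ (since the singular values of any lift $\tilde g$ of $g^{-1}$ are the inverses of those of a lift of $g$ read in reverse order). Setting
\[
K := \max\bigl\{\kappa(a),\,\kappa(a^{-1}),\,\kappa(b),\,\kappa(b^{-1})\bigr\} \;<\;\infty,
\]
subadditivity then yields $\kappa(\gamma)\leq nK$ for every element $\gamma\in F$ that can be expressed as a reduced word of length $n$ in $\{a^{\pm1},b^{\pm1}\}$.

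Next, since $F$ is free on $\{a,b\}$, the number of reduced words of length exactly $n$ equals $4\cdot 3^{n-1}$, and these all represent distinct elements of $\Gamma$. Consequently, for every $n\geq 1$,
\[
\#\{\gamma\in\Gamma:\kappa(\gamma)\leq nK\}\;\geq\; 4\cdot 3^{n-1}.
\]
Plugging this into the formula \eqref{Equation : exposant critique} for $\delta_\Gamma$ (applied along the sequence $r_n=nK$) gives
\[
\delta_\Gamma \;\geq\; \limsup_{n\to\infty}\frac{\log(4\cdot 3^{n-1})}{nK}\;=\;\frac{\log 3}{K}\;>\;0,
\]
which is the desired conclusion.

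There is no real obstacle here: the only point that deserves a line of justification is the subadditivity of $\kappa$ (which follows from the standard inequalities $\mu_1(\tilde g\tilde h)\leq \mu_1(\tilde g)\mu_1(\tilde h)$ and $\mu_{d+1}(\tilde g\tilde h)\geq \mu_{d+1}(\tilde g)\mu_{d+1}(\tilde h)$ for lifts in $\GL(V)$), after which the Tits alternative and an elementary word count do all the work. Note that no hypothesis on $\Omega$ is used: the statement and proof are purely about the discrete subgroup $\Gamma\subset\PGL(V)$.
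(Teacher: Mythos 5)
Correct, and this is precisely the argument the paper has in mind: the sentence introducing the fact says it "is a consequence for instance of the Tits alternative and of the sub-additivity of $\kappa$," and your write-up spells out exactly that (Tits alternative produces a free subgroup; subadditivity of $\kappa$ gives linear control of $\kappa$ in terms of word length; free groups grow exponentially). The only cosmetic point worth a half-line is that $K>0$, which is automatic: $\{g\in\PGL(V):\kappa(g)=0\}$ is the compact image of $\mathrm{O}(d+1)$, so a free subgroup of a discrete group cannot lie in it, hence $\kappa(a)$ and $\kappa(b)$ are not both zero.
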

 
\subsection{Proximal linear transformations}\label{Proximal linear transformations}

In this section we recall the notion of a proximal linear transformation, which was used in the definition of the proximal limit set $\Lambda^{\prox}$ and the biproximal unit tangent bundle $T^1M_{\bip}$.

\begin{nota}\label{oplus}
If $W_1$ and $W_2$ are two subspaces of $V$ such that $W_1\cap W_2=\{0\}$, we write $W_1\oplus W_2\subset V$ for their direct sum and $\PR(W_1)\oplus\PR(W_2)=\PR(W_1\oplus W_2)$ for its projectivisation. In particular, if $x,y\in \PR(V)$ are two distinct points, we write $x\oplus y$ for the projective line through $x$ and $y$.
\end{nota}

\begin{defi}\label{def_prox}
A linear transformation $g\in \End(V)$ is \emph{proximal} if it has exactly one complex eigenvalue with maximal modulus among all eigenvalues, and if this eigenvalue has multiplicity $1$. The associated eigenline in $\PR(V)$ is the attracting fixed point of $g$ and is denoted by $x_g^+$.

An invertible linear transformation $g\in\GL(V)$ is said to be \emph{biproximal} if $g$ and $g^{-1}$ are proximal. The attracting fixed point of $g^{-1}$ is the repelling fixed point of $g$ and is denoted by $x_g^-$. The projective line $x_g^+\oplus x_g^-$ (see Notation~\ref{oplus}) is the \emph{axis} of $g$ and is denoted by $\axis(g)$. The $g$-invariant complementary subspace to the axis of $g$ is denoted by $x_g^0$. Note that the notions of biproximality, attracting/repelling fixed point, and axis, are well defined for the image of $g$ in $\PGL(V)$.
\end{defi}

More generally for any projective transformation $g\in\PGL(V)$, one can define the attracting subspace $x_g^+$ of $g$ as the span of all generalised eigenspaces associated to eigenvalues with maximal norm, and $x_g^0$ and $x_g^-$ can be defined similarly.

\begin{rqq}\label{prox_are_open}
The set of proximal linear transformations is open in $\End(V)$, and the map sending a proximal linear transformation to the pair (attracting fixed point, maximal eigenvalue) is continuous.
\end{rqq}

\begin{rqq}\label{minimality}
 As observed by Benoist \cite[Lem.\,3.6.ii]{BenoistPropAsymp}, for any irreducible subgroup $\Gamma\subset\PGL(V)$ which contains a proximal element, the proximal limit set is the smallest closed $\Gamma$-invariant non-empty subset of $\PR(V)$; in particular, the action of $\Gamma$ on  $\Lambda^{\prox}$ is minimal (\ie any orbit is dense). Indeed, consider any proximal element $\gamma\in\Gamma$, and let $\PR(W)\subset \PR(V)$ be the $\gamma$-invariant complementary subspace  to $x_\gamma^+$. By irreducibility, any closed $\Gamma$-invariant non-empty subset $X\subset\PR(V)$ contains a point $x$ outside $\PR(W)$, and then $x_\gamma^+$, which is the limit of the sequence $(\gamma^nx)_{n\in\N}$, belongs to $X$.
\end{rqq}

\subsection{Periodic straight geodesics and elements of \texorpdfstring{$\Gamma$}{Gamma}}\label{Periodic orbits and elements of Gamma}

In this section we recall the link between periodic geodesics in $T^1\Omega/\Gamma$ and conjugacy classes of $\Gamma$. Let $\Omega\subset \PR(V)$ be a properly convex open set. Let $g\in \Aut(\Omega)$. Then
\begin{equation}\label{longueur de translation}
\ell(g)=\inf\{d_\Omega(x,g\cdot x): \ x\in\Omega\}\geq 0.
\end{equation}
Where $\ell(g)$ was defined in \eqref{ell}. The right-hand side of \eqref{longueur de translation} is called the \emph{translation length} of $g$. See \cite[Prop.\,2.1]{CLT2015cvxisom} for a proof.

Combined with an elementary computation, \eqref{longueur de translation} yields:

\begin{fait}\label{period_is_translation_length} Let $\Omega\subset\PR(V)$ be a properly convex open set and $\Gamma\subset\Aut(\Omega)$ a discrete subgroup; denote $\Omega/\Gamma$ by $M$. Then for any lift in $\Omega$ of any periodic straight geodesic of $M$, there is an automorphism $\gamma\in\Gamma$ which preserves it and acts by positive translation on it. Let $\tilde{\gamma}\in\GL(V)$ be a lift of $\gamma$. The endpoints in $\partial\Omega$ of the geodesic are fixed by $\gamma$, the associated eigenvalues of $\tilde{\gamma}$ are $\lambda_1(\tilde{\gamma})$ and $\lambda_{d+1}(\tilde{\gamma})$, and the length of the geodesic in $M$ is the translation length of $\gamma$. If furthermore these endpoints are extremal (for example if $\gamma$ is rank-one, see Definition~\ref{def rank-one}), then $\gamma$ is biproximal.
\end{fait}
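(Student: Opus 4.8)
The plan is to recover the automorphism $\gamma$ directly from the periodicity and then identify its linear-algebraic structure using the Hilbert-metric formula \eqref{Equation : Hilbert} and the translation-length identity \eqref{longueur de translation}.

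First I would start with a lift $\tilde c:\R\to\Omega$ of the periodic straight geodesic, parametrised by arc length. Periodicity of the geodesic downstairs means there is a minimal period $\tau>0$ and an element $\gamma\in\Gamma$ with $\gamma\cdot\tilde c(t)=\tilde c(t+\tau)$ for all $t$ (choosing the sign of the generator so that it translates in the positive direction; $\tau$ equals the length of the closed geodesic in $M$). Since $\gamma$ preserves the projective line $L$ spanned by $v=\tilde c'(0)$, it fixes the two endpoints $\xi^-=\tilde c(-\infty)$ and $\xi^+=\tilde c(+\infty)$ in $\partial\Omega$. Lifting $\gamma$ to $\tilde\gamma\in\GL(V)$, the two lines $\xi^-,\xi^+\subset V$ are eigenlines; call the corresponding eigenvalues $\alpha^-,\alpha^+$. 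The key computation is then: applying $\gamma$ to four aligned points $a=\xi^-$, $x=\tilde c(0)$, $y=\tilde c(\tau)$, $b=\xi^+$, invariance of the cross-ratio under $\PGL(V)$ together with \eqref{Equation : Hilbert} gives $\tfrac12\log[\xi^-,\tilde c(0),\tilde c(\tau),\xi^+]=d_\Omega(\tilde c(0),\tilde c(\tau))=\tau$, while a direct evaluation of the cross-ratio in an affine chart where $L$ is a coordinate axis shows $[\xi^-,\tilde c(0),\tilde c(\tau),\xi^+]=|\alpha^+/\alpha^-|$ (the eigenvalue ratio governing how $\gamma$ acts on the axis). Hence $|\alpha^+/\alpha^-|=e^{2\tau}>1$, so in particular $|\alpha^+|>|\alpha^-|$; moreover $\ell(\gamma)=\tau$ by \eqref{longueur de translation} combined with the fact that the infimum there is attained along the axis (this is exactly where one uses that a straight geodesic realises the Hilbert distance).

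Next I would pin down that $\alpha^+=\lambda_1(\tilde\gamma)$ and $\alpha^-=\lambda_{d+1}(\tilde\gamma)$, i.e. these are the eigenvalues of largest and smallest modulus. For this, note that $\gamma$ preserves $\overline\Omega$, which is a properly convex compact set spanning $\PR(V)$. A standard argument shows that for any $g\in\Aut(\Omega)$ and any lift $\tilde g$, all eigenvalues of $\tilde g$ have modulus between $|\lambda_{d+1}(\tilde g)|$ and $|\lambda_1(\tilde g)|$ with these bounds witnessed by eigenlines in $\partial\Omega$ (this is the content of \eqref{longueur de translation} once one unwinds the definition \eqref{ell} of $\ell(g)$ in terms of $\lambda_1/\lambda_{d+1}$: $\ell(\gamma)=\tfrac12\log(\lambda_1(\tilde\gamma)/\lambda_{d+1}(\tilde\gamma))$, and we have just shown this equals $\tau=\tfrac12\log|\alpha^+/\alpha^-|$, forcing $|\alpha^+|=\lambda_1(\tilde\gamma)$ and $|\alpha^-|=\lambda_{d+1}(\tilde\gamma)$). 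Since $\tau>0$ these are distinct, giving the first assertions of the Fact.

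Finally, for the biproximality statement, assume the endpoints $\xi^\pm$ are extremal points of $\overline\Omega$. I would argue that extremality of the eigenline $\xi^+$ forces the eigenvalue $\lambda_1(\tilde\gamma)$ to be a real simple eigenvalue with no larger-modulus companion: if $\lambda_1(\tilde\gamma)$ had multiplicity $\ge 2$ (counting a nontrivial Jordan block or a complex conjugate pair or a genuine multiplicity), then iterating $\tilde\gamma$ on a generic point of $\Omega$ would push its orbit towards the projectivisation of the whole top generalised eigenspace $x_\gamma^+$, whose intersection with $\partial\Omega$ would be a nontrivial convex subset through $\xi^+$ — contradicting extremality of $\xi^+$. (Here I use that the attracting subspace $x_\gamma^+$ meets $\overline\Omega$ nontrivially because $\Omega$ is $\gamma$-invariant, and that a convex $\gamma$-invariant set inside a projective subspace on which $\gamma$ acts with a single modulus but non-semisimply, or with complex eigenvalues, cannot be a single point.) The symmetric argument at $\xi^-$, applied to $\gamma^{-1}$, shows $\lambda_{d+1}(\tilde\gamma)$ is likewise real, simple, and isolated in modulus. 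This is exactly the statement that $\tilde\gamma$ and $\tilde\gamma^{-1}$ are proximal, i.e. $\gamma$ is biproximal.

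The main obstacle I anticipate is the last step: carefully showing that extremality of $\xi^+$ rules out a Jordan block or a complex pair at the top eigenvalue. One must be precise about what the $\gamma$-orbit closure of a point of $\Omega$ looks like, and that it lands inside $\overline\Omega\cap\PR(x_\gamma^+)$ with nonempty relative interior there whenever the top modulus is attained non-simply; the convexity bookkeeping (that this intersection is a convex set of positive dimension containing $\xi^+$ in its relative interior or at least on a nontrivial segment) is the delicate point. Everything else reduces to the cross-ratio computation and the already-quoted identity \eqref{longueur de translation}.
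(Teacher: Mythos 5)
Your identification of $\gamma$, the cross-ratio computation, and the squeeze argument are correct, and they are exactly the ``elementary computation'' the paper alludes to (the paper gives no further proof): from $\gamma$ fixing $\xi^{\pm}$ with real eigenvalues $\alpha^{\pm}$ and translating the axis by $\tau$, invariance of the cross-ratio in \eqref{Equation : Hilbert} gives $\tau=\tfrac12\log(\alpha^+/\alpha^-)$, and then $\tau\geq\inf_x d_\Omega(x,\gamma x)=\ell(\gamma)=\tfrac12\log\bigl(\lambda_1(\tilde\gamma)/\lambda_{d+1}(\tilde\gamma)\bigr)\geq\tfrac12\log|\alpha^+/\alpha^-|=\tau$ by \eqref{longueur de translation} and \eqref{ell}, forcing $|\alpha^+|=\lambda_1(\tilde\gamma)$, $|\alpha^-|=\lambda_{d+1}(\tilde\gamma)$ and $\ell(\gamma)=\tau$. (Your first-paragraph appeal to ``the infimum is attained along the axis'' is unnecessary and circular as phrased; the squeeze you give afterwards is the actual argument.)

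The biproximality step, however, has a genuine gap, exactly where you flag it, and it is not just bookkeeping. Two problems. First, producing ``a nontrivial convex subset of $\partial\Omega$ through $\xi^+$'' does not by itself contradict extremality: $\xi^+$ could be an extreme point of $\overline\Omega\cap\PR(x_\gamma^+)$; to contradict extremality you must exhibit a segment of $\overline\Omega$ containing $\xi^+$ in its \emph{relative interior}, and nothing in your argument does this. Second, your intermediate claim is false in the non-semisimple case: if the top modulus carried a nontrivial Jordan block, every forward orbit outside a proper subspace converges to the single point $\xi^+$, and an easy invariance argument shows $\overline\Omega\cap\PR(x_\gamma^+)$ would then be exactly $\{\xi^+\}$, so no contradiction with extremality is available there. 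That case is impossible for a different reason, which uses no extremality at all: with $\ell\geq1$ the maximal nilpotency degree at modulus $\lambda_1$, the vectors spanning $\xi^+$ and $\xi^-$ lie in the kernel of any limit $T$ of $\bigl(n^{-\ell}\tilde\gamma^n\bigr)_n$, so the axis would meet $\PR(\Ker T)$; writing an axis point as the midpoint of two nearby points of $\Omega$ and using that the cone over $\Omega$ contains no line forces $T$ to vanish on an open set, a contradiction (this is the content of Fact~\ref{fait:spec gap} and of the analysis in Lemma~\ref{lem:realell}, which is where the paper actually does this kind of work). In the remaining semisimple case, where the modulus-$\lambda_1$ eigenspace $E$ has $\dim E\geq2$, extremality is indeed what gets contradicted, but not via the orbit closure of one generic point (which is a single point, or a finite set, when the extra top eigenvalues are real): you need that $\gamma|_E$ lies in a compact group, so $\gamma^{n_k}|_E\to\id_E$ along some $n_k\to\infty$; then for every $q\in\Omega$ near the axis the projectivised $E$-component $[u_q]$ is a limit of $(\gamma^{n_k}q)_k$, hence lies in $\overline\Omega$, and since the linear projection $V\to E$ is open these points fill a whole neighbourhood of $\xi^+$ in $\PR(E)$, giving a segment of $\overline\Omega$ with $\xi^+$ in its relative interior — the desired contradiction. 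Running both cases for $\gamma^{-1}$ at $\xi^-$, as you indicate, then yields biproximality.
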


\begin{defi}\label{biprox_orbit}
 Let $\Omega\subset \PR(V)$ be a properly convex open set and $\Gamma\subset\Aut(\Omega)$ a discrete subgroup. Let $\gamma\in\Gamma$ be a biproximal element whose axis meets $\Omega$. Then the periodic geodesic associated to $\gamma$ is said to be \emph{biproximal}, and the vectors along this geodesic are said to be \emph{biproximal} periodic.
\end{defi}

There are cases where $\gamma\in\Gamma$ is biproximal but its axis does not intersect $\Omega$ (e.g.\ when $\Omega$ is a triangle, or is symmetric). Then we cannot make sense of a straight periodic geodesic associated to~$\gamma$.

\subsection{Biproximal vs rank-one periodic geodesics}\label{bip VS rk1}

In this section we explain when a biproximal periodic geodesic is rank-one. We saw in Fact~\ref{period_is_translation_length} that a rank-one periodic geodesic is always biproximal. Conversely, the endpoints of a biproximal periodic geodesic are smooth, but not always strongly extremal.

\begin{fait}[\!{\!\cite[Lem.\,3.1]{topmixing}}]\label{les extremites des geodesiques periodiques biproximales sont lisses}
Let $\Omega\subset \PR(V)$ be a properly convex open set. Let $g$ be a biproximal automorphism of $\Omega$. Then $\axis(g)\cap\Omega$ is non-empty if and only if $x_g^+$ is smooth; in this case $T_{x_g^+}\partial\Omega=x_g^+\oplus x_g^0$.
\end{fait}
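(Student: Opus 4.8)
The plan is to reduce the equivalence, via projective duality, to the elementary dynamics of the proximal transformation that $g$ induces on the space of supporting hyperplanes of $\Omega$ at $x_g^+$. First I would record a few preliminary facts used in both directions. Since $g$ is biproximal, $\ell(g)=\tfrac12\log\bigl(\lambda_1(\tilde g)/\lambda_{d+1}(\tilde g)\bigr)>0$, so by \eqref{longueur de translation} $g$ fixes no point of $\Omega$; iterating $g^{\pm n}$ on a point of $\Omega$ outside the relevant invariant hyperplanes shows $x_g^{\pm}\in\overline\Omega$, hence $x_g^{\pm}\in\partial\Omega$. Then $\axis(g)\cap\overline\Omega$ is a closed sub-arc of the projective line $\axis(g)$ (not the whole line, by proper convexity) having $x_g^+$ and $x_g^-$ as endpoints, and when non-empty $\axis(g)\cap\Omega$ is its relative interior; thus $\axis(g)\cap\Omega\neq\emptyset$ if and only if $[x_g^+,x_g^-]\not\subseteq\partial\Omega$. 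Moreover $H_A:=x_g^+\oplus x_g^0$ is a supporting hyperplane of $\Omega$ at $x_g^+$: by the duality described in Section~\ref{duality}, $H_A$ is the attracting fixed point of $g$ acting on $\PR(V^*)$, hence lies in the $g$-invariant compact set $\overline{\Omega^*}$, i.e.\ it is a hyperplane avoiding $\Omega$ and containing $x_g^+$. Finally, $x_g^-\notin H_A$, since the eigenline for $\lambda_{d+1}(\tilde g)$ is not contained in $x_g^+\oplus x_g^0$.

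For the implication ``$x_g^+$ smooth $\Rightarrow\axis(g)\cap\Omega\neq\emptyset$'', by the reduction above it suffices to prove the contrapositive statement that if $[x_g^+,x_g^-]\subseteq\partial\Omega$ then $x_g^+$ is not smooth. I would pick $p$ in the relative interior of $[x_g^+,x_g^-]$ and a supporting hyperplane $H_p$ of $\Omega$ at $p$. Writing $\overline\Omega$ on one side of $H_p$ (in a lift to the sphere), a convex combination of $x_g^+$ and $x_g^-$ lying on $H_p$ forces both $x_g^+$ and $x_g^-$ to lie on $H_p$; hence $H_p$ is a supporting hyperplane of $\Omega$ at $x_g^+$ that contains $x_g^-$, so $H_p\neq H_A$, and $x_g^+$ admits at least two supporting hyperplanes.

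For the converse, suppose $p\in\axis(g)\cap\Omega$ and let $\mathcal S\subseteq\PR(V^*)$ be the set of supporting hyperplanes of $\Omega$ at $x_g^+$; since $x_g^+$ is smooth precisely when $\mathcal S$ is a singleton and $H_A\in\mathcal S$, it is enough to show $\mathcal S=\{H_A\}$. Now $\mathcal S$ is the intersection of $\overline{\Omega^*}$ with the projective subspace of hyperplanes through $x_g^+$, hence a non-empty, compact, properly convex set, and it is $g$-invariant (both $g$ and $g^{-1}$ preserve $\overline{\Omega^*}$ and fix $x_g^+$, so $g\mathcal S=\mathcal S$). An eigenvalue computation shows that $g$ acts on the $d$-dimensional space of hyperplanes through $x_g^+$ as a proximal transformation with attracting fixed point $H_A$ and repelling (invariant complementary) hyperplane $R=\{H:\ x_g^-\in H\}$. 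No $H\in\mathcal S$ lies in $R$: such an $H$ would contain $x_g^+$ and $x_g^-$, hence the line $\axis(g)$, hence the point $p\in\Omega$, contradicting that $H$ avoids $\Omega$. Thus $\mathcal S$ is a compact subset of the complement of $R$, on which $g^n$ converges uniformly to $H_A$; hence $\mathcal S=g^n(\mathcal S)$ converges to $\{H_A\}$ in the Hausdorff topology, forcing $\mathcal S=\{H_A\}$. This proves that $x_g^+$ is smooth with $T_{x_g^+}\partial\Omega=x_g^+\oplus x_g^0$.

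The substantive part is the converse direction, and inside it the two points that (i) the induced action of $g$ on the space of hyperplanes through $x_g^+$ is proximal with the stated attracting/repelling data, and (ii) the family $\mathcal S$ misses the repelling hyperplane $R$ — the latter being exactly where the hypothesis $\axis(g)\cap\Omega\neq\emptyset$ is used. Once these are in place, the standard uniform-convergence property of proximal maps on complements of their repelling hyperplane finishes the argument, and the remaining verifications (proper convexity and $g$-invariance of $\mathcal S$, the eigenvalue bookkeeping) are routine.
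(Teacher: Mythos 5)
Your argument is correct, and note that the paper itself gives no proof of this statement --- it is imported from \cite[Lem.\,3.1]{topmixing} --- so there is no internal proof to compare against; your dual--dynamics argument is the natural one and would serve as a complete substitute. Both directions check out: the contrapositive of ``smooth $\Rightarrow$ axis meets $\Omega$'' correctly produces a second supporting hyperplane through $x_g^-$ at $x_g^+$ (distinct from $x_g^+\oplus x_g^0$, which you rightly identify as a supporting hyperplane via the dual attracting fixed point lying in $\overline{\Omega^*}$), and the converse correctly exploits that $g$ acts proximally on $\PR\bigl((x_g^+)^{\perp}\bigr)$ with attracting point $x_g^+\oplus x_g^0$ and repelling hyperplane the set of hyperplanes containing $\axis(g)$, which the supporting hyperplanes at $x_g^+$ avoid precisely because the axis meets $\Omega$; uniform convergence of $g^n$ on the compact $g$-invariant set $\mathcal S$ then forces $\mathcal S=\{x_g^+\oplus x_g^0\}$, giving both smoothness and the identification of $T_{x_g^+}\partial\Omega$. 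Two cosmetic points: the space of hyperplanes through $x_g^+$ is a projective space of dimension $d-1$ (its underlying vector space $(x_g^+)^{\perp}\subset V^*$ has dimension $d$), and the assertion that the dual attracting fixed point lies in $\overline{\Omega^*}$ does not follow from invariance and compactness alone --- it needs the same one-line iteration you used for $x_g^{\pm}\in\overline\Omega$, namely iterating $g$ on a point of the nonempty open set $\Omega^*$ off the dual repelling hyperplane; spelling that out would close the only small gloss in the write-up.
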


The following result is later completed by Corollary~\ref{cor:carac rgun}.

\begin{fait}[\!{\!\cite[Lem.\,3.2]{topmixing},\ \cite[Prop.\,6.3]{islam_rank_one}}]\label{equivalences rang un}
Let $\Omega\subset \PR(V)$ be a properly convex open set. Let $g\in\PGL(V)$ be a biproximal automorphism of $\Omega$. Then the following are equivalent: 
\begin{enumerate}[label=(\alph*)]
\item $g$ is rank-one;
\item $x_g^+,x_g^-\in\partial\Omega$ are smooth and strongly extremal points;
\item $x_g^+$ is strongly extremal;
\item the element $g$ seen as an automorphism of $\Omega^*$ is rank-one;
\item \label{Item : dualite de rg1}the axis of $g$ in $\PR(V)$ intersects $\Omega$, and the axis of $g$ in $\PR(V^*)$ intersects $\Omega^*$;
\item \label{Item : d_spl>2} $d_{\spl}(x_g^+,x_g^-)\geq 3$.
\end{enumerate}
\end{fait}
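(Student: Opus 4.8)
The plan is to prove the six equivalences in Fact~\ref{equivalences rang un} in a cycle, using duality as the key tool to trade statements about $\Omega$ for statements about $\Omega^*$. First I would recall that, since $g$ is biproximal, Fact~\ref{les extremites des geodesiques periodiques biproximales sont lisses} tells us exactly when $\axis(g)$ meets $\Omega$, namely precisely when $x_g^+$ (equivalently $x_g^-$, applying the fact to $g^{-1}$) is a smooth point, and in that case $T_{x_g^+}\partial\Omega = x_g^+\oplus x_g^0$. Dually, since $\Aut(\Omega)$ identifies with $\Aut(\Omega^*)$ and $g$ acts biproximally on $\Omega^*$ with attracting fixed point $x_g^+\oplus x_g^0$ and repelling fixed point $x_g^-\oplus x_g^0$ (as recalled in Section~\ref{duality}), the same fact applied in the dual gives: $\axis(g)$ meets $\Omega^*$ iff $x_g^+\oplus x_g^0 \in\partial\Omega^*$ is smooth, which (by the observation in Section~\ref{duality} that a hyperplane is smooth in $\partial\Omega^*$ iff it meets $\overline{\Omega}$ in a single point) holds iff the supporting hyperplane $x_g^+\oplus x_g^0$ touches $\overline\Omega$ only at $x_g^+$.

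The core of the argument is then the following chain. The equivalence (a)$\Leftrightarrow$(b) is essentially the definition of rank-one (Definition~\ref{def rank-one}) applied to the unique rank-one geodesic, which for a biproximal $g$ whose axis meets $\Omega$ is $\axis(g)\cap\Omega$, with endpoints $x_g^\pm$; one must note that $g$ rank-one forces $\axis(g)$ to meet $\Omega$ (since rank-one requires $x_g^\pm\in\partial_{\sse}\Omega$, hence smooth, hence by Fact~\ref{les extremites des geodesiques periodiques biproximales sont lisses} the axis meets $\Omega$), so the endpoints of that rank-one geodesic are exactly $x_g^\pm$. For (b)$\Leftrightarrow$(c): one direction is trivial; conversely if $x_g^+$ is strongly extremal then it is in particular extremal, and I claim $x_g^-$ is too — if $x_g^-$ lay on a nontrivial segment in $\partial\Omega$, pushing that segment forward by high powers $g^n$ would produce a nontrivial segment through $x_g^+$ contained in $\partial\Omega$ (the segment stays in $\partial\Omega$ since $\partial\Omega$ is closed and $g$-invariant, and its length does not collapse to zero because $x_g^-\notin$ that segment would be needed — here one uses that $g$ expands transverse directions), contradicting strong extremality of $x_g^+$; smoothness of both points follows from Fact~\ref{les extremites des geodesiques periodiques biproximales sont lisses} once we know the axis meets $\Omega$, and the axis meets $\Omega$ because... this is where (e) enters.

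The real workhorse is (c)$\Leftrightarrow$(e)$\Leftrightarrow$(f), passing through duality. For (e)$\Rightarrow$ the rest: if $\axis(g)$ meets both $\Omega$ and $\Omega^*$, then by Fact~\ref{les extremites des geodesiques periodiques biproximales sont lisses} (in $\Omega$ and in $\Omega^*$) the point $x_g^+$ is smooth with $T_{x_g^+}\partial\Omega=x_g^+\oplus x_g^0$, and the hyperplane $x_g^+\oplus x_g^0$ meets $\overline\Omega$ only at $x_g^+$; since any supporting hyperplane at a point of the open face $F_\Omega(x_g^+)$ must — being $g$-invariant in the limit or by the face being contained in a supporting hyperplane — be $x_g^+\oplus x_g^0$, the face reduces to $\{x_g^+\}$, giving strong extremality, i.e.\ (c), and symmetrically for $x_g^-$, giving (b) and then (a). The characterisation (f) via the simplicial distance $d_{\spl}(x_g^+,x_g^-)\ge 3$ I would handle by showing: $d_{\spl}(x_g^+,x_g^-)\le 2$ means there is a point $\zeta\in\partial\Omega$ with $[x_g^+,\zeta]\subset\partial\Omega$ and $[\zeta,x_g^-]\subset\partial\Omega$ (the case $d_{\spl}\le 1$, i.e.\ $[x_g^+,x_g^-]\subset\partial\Omega$, being impossible since $g$ biproximal forces $x_g^+\ne x_g^-$ and a segment joining them in $\partial\Omega$ would make $\axis(g)\subset\partial\Omega$, contradicting $x_g^+$ attracting); such a $\zeta$ forces $x_g^+$ to be non-strongly-extremal (as $x_g^+\in\overline F_\Omega(\zeta)$ or a segment through $x_g^+$ sits in $\partial\Omega$ after applying $g^n$), negating (c), and conversely if (c) fails one produces such a $\zeta$ by taking a segment witnessing non-extremality (or non-strong-extremality) and iterating $g$. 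Finally (a)$\Leftrightarrow$(d) is immediate from the symmetry of condition (e) in $\Omega$ and $\Omega^*$ — "$g$ rank-one for $\Omega$" and "$g$ rank-one for $\Omega^*$" are both equivalent to (e), which is manifestly self-dual.

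The main obstacle I anticipate is the careful bookkeeping in the iteration arguments — controlling that pushing a boundary segment forward by $g^n$ does not degenerate to a point — and making precise the interplay between "the face $F_\Omega(x_g^+)$ is a point", "$x_g^+$ is strongly extremal", and the dual statement "the supporting-hyperplane face at $x_g^+$ is a point"; all of these are genuinely equivalent for a $g$-fixed point but require invoking $g$-invariance of $\partial\Omega$ together with the attracting dynamics to rule out the degenerate configurations, and the cleanest route is to do everything via Fact~\ref{les extremites des geodesiques periodiques biproximales sont lisses} and its dual rather than by hand.
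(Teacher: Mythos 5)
There is a genuine gap, and it sits exactly where the paper's own added argument does its work: nowhere in your cycle do you prove any implication \emph{into} (e) (or (d)). You prove (e)$\Rightarrow$(c) and relate (c) and (f) to each other, but you never show that (a), (b), (c) or (f) forces the dual half of (e), namely that the axis of $g$ in $\PR(V^*)$ meets $\Omega^*$ — equivalently that $x_g^0\cap\overline{\Omega}=\emptyset$, equivalently that the supporting hyperplane $x_g^+\oplus x_g^0$ touches $\overline{\Omega}$ only at $x_g^+$. This is the nontrivial content of the cited results, and it is precisely what the paper supplies in its short proof: it only reproves the equivalence of the new item (f), citing (a)--(e) from \cite{topmixing} and \cite{islam_rank_one}, and the key step is the observation that a point $\zeta\in x_g^0\cap\partial\Omega$ would give $[x_g^+,\zeta]\subset(x_g^+\oplus x_g^0)\cap\overline{\Omega}\subset\partial\Omega$ and $[\zeta,x_g^-]\subset(x_g^-\oplus x_g^0)\cap\overline{\Omega}\subset\partial\Omega$, hence $d_{\spl}(x_g^+,x_g^-)\leq 2$; so (f) yields $x_g^0\cap\overline{\Omega}=\emptyset$ and therefore (e). You have no substitute for this step, and consequently your claim that (a)$\Leftrightarrow$(d) is ``immediate from the self-duality of (e)'' collapses: you have only shown (e) is sufficient for the other conditions, not necessary.

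A second, more local problem: in your (e)$\Rightarrow$(c) step, ``$F_\Omega(x_g^+)=\{x_g^+\}$'' gives \emph{extremality}, not \emph{strong} extremality. The open face is contained in the unique supporting hyperplane, but a segment of $\partial\Omega$ having $x_g^+$ as an \emph{endpoint} need not lie in $T_{x_g^+}\partial\Omega$, so the face argument does not exclude it; ruling it out requires the dynamical iteration (push such a segment by $g^{\mp n}$ to produce, in the Hausdorff limit, an arc of $\axis(g)$ inside $\partial\Omega$, contradicting $\axis(g)\cap\Omega\neq\emptyset$), which you gesture at elsewhere but omit at the one place where it is needed. Relatedly, your iteration proving ``(c)$\Rightarrow x_g^-$ extremal'' fails for a segment contained in $x_g^-\oplus x_g^0$ (its points do not converge to $x_g^+$ under $g^n$); excluding that case again needs the dual information you have not yet established. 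Two smaller remarks: your justification of (a)$\Rightarrow$(b) is circular as written (``rank-one requires $x_g^\pm\in\partial_{\sse}\Omega$'' is what is to be proved; the correct route is Fact~\ref{period_is_translation_length}, identifying the endpoints of the preserved geodesic with the $\lambda_1$- and $\lambda_{d+1}$-eigenlines, hence with $x_g^\pm$ by proximality), and the parenthetical ``a segment joining them in $\partial\Omega$ would make $\axis(g)\subset\partial\Omega$, contradicting $x_g^+$ attracting'' is not a contradiction at all (for a simplex and $g$ diagonal, the axis lies in $\partial\Omega$ while $x_g^+$ is attracting); the correct point is simply that such a segment violates strong extremality.
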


\begin{proof}
 In \cite{topmixing}, the point $(g)$ was missing, so we briefly check that it is equivalent to the other points.
 \begin{itemize}
  \item $(c)$ implies $(g)$ because it implies that $d_{\spl}(x_g^+,x_g^-)=\infty$.
  \item If $d_{\spl}(x_g^+,x_g^-)\geq 3$, then the axis of $g$ in $\PR(V)$ intersects $\Omega$ so $x_g^+$ is smooth. Furthermore $x_g^0$ does intersect $\partial\Omega$, because otherwise $d_{\spl}(x_g^+,x_g^-)\leq 2$, since $x_g^+\oplus x_g^0$ and $x_g^-\oplus x_g^0$ are supporting hyperplanes of $\Omega$.\qedhere
 \end{itemize}
\end{proof}

These characterisations of rank-one automorphisms yield a characterisation of rank-one manifolds. To see this we need the following fact.

\begin{fait}[\!{\!\cite[Prop.\,1.1]{benoist2000automorphismes}}\ \&\ {\cite[Lem.\,3.6.iv]{BenoistPropAsymp}}]\label{Fait : des bips partout}
 Let $\Gamma\subset \PGL(V)$ be a strongly irreducible subgroup.
\begin{enumerate}[label=(\arabic*)]
\item If $\Gamma$ preserves a properly convex open set $\Omega\subset\PR(V)$, then it contains a proximal element.
\item \label{Item : des bips partout} If $\Gamma$ contains a proximal element, then $\{(x_\gamma^+,x_\gamma^-): \gamma\in \Gamma \text{ biproximal}\}$ is dense in $\Lambda^{\prox}\times\Lambda^{\prox}$.
\end{enumerate}
\end{fait}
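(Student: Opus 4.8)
Both statements go back to Benoist, and my plan is to reconstruct his arguments. For part (1), I would first note that strong irreducibility forces $\Gamma$ to be infinite (the trivial subgroup would otherwise be a non-irreducible finite-index subgroup), so its isometric action on the proper metric space $(\Omega,d_\Omega)$ is properly discontinuous with unbounded orbits; picking $(\gamma_n)$ with $d_\Omega(o,\gamma_n o)\to\infty$ yields $\kappa(\gamma_n)\to\infty$ by \cite[Prop.\,10.1]{fannycvxcocpct}, i.e. a sequence with diverging singular-value spread $\mu_1/\mu_{d+1}$. The crux is to upgrade this to a sequence with \emph{diverging top gap} $\mu_1(\tilde\gamma_n)/\mu_2(\tilde\gamma_n)\to\infty$. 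Here I would use proper convexity together with strong irreducibility: writing a Cartan ($\mathrm{KAK}$) decomposition $\tilde\gamma_n=k_na_n\ell_n$ and passing to a subsequence with $k_n\to k$, $\ell_n\to\ell$, the "attracting flat" of $\gamma_n$ degenerates into a closed face of $\overline{\Omega}$ carried by the top singular block; if this block had dimension $\geq 2$ for every sequence one could extract from these faces (and from the analogous faces in $\overline{\Omega^*}$ for the dual action) a proper $\Gamma$-invariant structure, contradicting strong irreducibility — this is precisely Benoist's statement \cite[Prop.\,1.1]{benoist2000automorphismes} that a strongly irreducible subgroup preserving a properly convex open set has proximal dimension one. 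Once $\mu_1/\mu_2\to\infty$, the rescaled maps $\tilde\gamma_n/\mu_1(\tilde\gamma_n)$ converge to a rank-one operator $u\otimes\phi$, so $\gamma_n$ converges off the hyperplane $[\ker\phi]$ uniformly on compacta to the constant $[u]$; hence $\gamma_n$ is proximal for $n$ large with $x_{\gamma_n}^+\to[u]$, provided $[u]\notin[\ker\phi]$. If that transversality fails, I would replace $\gamma_n$ by $h\gamma_n$ for a fixed $h\in\Gamma$ with $hu\notin\ker\phi$; such $h$ exists because otherwise $\Gamma\cdot\langle u\rangle\subset\ker\phi$ would span a proper $\Gamma$-invariant subspace. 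This produces a proximal element of $\Gamma$.

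For part (2) I would exploit Remark~\ref{minimality}: $\Lambda:=\Lambda^{\prox}$ is the unique minimal closed $\Gamma$-invariant subset of $\PR(V)$, so $\{x_\gamma^+:\gamma\in\Gamma\text{ proximal}\}$ is dense in $\Lambda$. The plan then splits into two steps. \emph{(a) Produce one biproximal element.} Applying part (1) to the strongly irreducible action of $\Gamma$ on $\PR(V^*)$ preserving $\Omega^*$ yields $f\in\Gamma$ with $f^{-1}$ proximal on $V$; together with a proximal $g\in\Gamma$ I would consider $\delta_m:=g^mf^m$ and check that for $m$ large both $\delta_m$ and $\delta_m^{-1}=f^{-m}g^{-m}$ are proximal. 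Indeed, after rescaling, $g^m$ converges to the rank-one projection onto $x_g^+$ and $f^{-m}$ to the rank-one projection onto $x_{f^{-1}}^+$, so $\delta_m$ and $\delta_m^{-1}$ are well approximated by rank-one operators; these are proximal as soon as the relevant attracting lines avoid the relevant kernels, a general-position condition I would arrange by replacing $f$ with a generic conjugate (using, as above, that the violating elements of $\Gamma$ form a proper subvariety not containing $\Gamma$). \emph{(b) Joint density.} Given a biproximal $\gamma_0$, minimality makes the attracting points of the conjugates $g\gamma_0g^{-1}$ dense in $\Lambda$, hence — taking inverses — their repelling points are also dense in $\Lambda$. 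For a target $(\xi,\eta)\in\Lambda\times\Lambda$ I would then choose biproximal $\gamma$ with $x_\gamma^+$ near $\xi$ and biproximal $h$ with $x_h^-$ near $\eta$, in general position, and observe by the same rank-one degeneration that $\gamma^mh^m$ is biproximal for $m$ large with attracting point near $x_\gamma^+\approx\xi$ and repelling point near $x_h^-\approx\eta$; letting the two approximations tighten gives density in $\Lambda\times\Lambda$.

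The main difficulty, in both parts, is the geometric input of part (1): the interplay between proper convexity and strong irreducibility that forces proximal dimension one (Benoist \cite[Prop.\,1.1]{benoist2000automorphismes}); everything downstream is the softer "ping-pong with high powers" technology, essentially \cite[Lem.\,3.6]{BenoistPropAsymp}. A secondary recurring technical point, which I would isolate once as a small lemma and reuse, is to verify at each general-position step that the set of $\gamma\in\Gamma$ violating the desired transversality is a proper Zariski-closed subset, so that (strong) irreducibility guarantees a good element exists.
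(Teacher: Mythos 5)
Your proposal cannot be compared against an internal argument, because the paper does not prove this statement: it is imported as a black box, with citations to Benoist (\cite{benoist2000automorphismes}, \cite{BenoistPropAsymp}); so what you are really reconstructing is Benoist's proof. Your soft machinery does match his: rescaling a divergent sequence to a rank-one limit $u\otimes\phi$, restoring transversality by composing with some $h\in\Gamma$ (justified by the span-of-the-orbit argument), high powers and ping-pong to manufacture biproximal elements, and minimality of $\Lambda^{\prox}$ (Remark~\ref{minimality}) to move attracting and repelling fixed points densely. The problem is the crux of part (1): the step upgrading $\mu_1/\mu_{d+1}\to\infty$ to a sequence with $\mu_1/\mu_2\to\infty$ (proximal dimension one) \emph{is} the proposition being proved, and at exactly that point you write that ``this is precisely Benoist's statement''. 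That is circular, and the one-line sketch offered in its place (that top singular blocks of dimension $\geq 2$ would yield ``a proper $\Gamma$-invariant structure'') is not an argument: the limiting faces depend on the chosen sequence and do not obviously assemble into an invariant subspace or invariant tensor. The genuine content of Benoist's proof --- for instance taking a non-zero limit $\pi$ of minimal rank in $\overline{\R\,\Gamma}\subset\End(V)$, observing that its image meets $\overline\Omega$ while its kernel avoids $\Omega$ (cf.\ Fact~\ref{fait:spec gap}), and playing the invariant cone against its dual to force that minimal rank down to $1$ --- is missing, so part (1) has a real gap at its core.

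Two secondary points. In part (2) you invoke part (1) for the dual action on $\Omega^*$, but part (2) has no invariant convex set among its hypotheses, and none is needed: since $\Gamma$ is a group, if $g\in\Gamma$ is proximal then the image of $g^{-1}$ in the dual representation is ${}^tg$, which has the same eigenvalues as $g$ and is therefore proximal; so dual proximality is automatic, and your $\gamma^mh^m$ construction should be run with $g$ and a suitable conjugate of $g^{-1}$. Finally, the recurring inference ``the violating set is a proper Zariski-closed subset, hence strong irreducibility provides a good element'' is not valid as stated: strong irreducibility is much weaker than Zariski density (for example $\Gamma$ may preserve a quadratic form), so each transversality requirement should be settled the way you did it once --- if every $\gamma\in\Gamma$ sent the relevant point into a fixed hyperplane, the span of its $\Gamma$-orbit would be a proper invariant subspace --- with the small extra care Benoist takes when two such conditions must hold simultaneously (e.g.\ passing to the identity component of the Zariski closure, which still acts irreducibly).
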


\begin{cor}\label{Corollaire : rang 1 = sse non vide}
 Let $\Omega\subset \PR(V)$ be a properly convex open set and $\Gamma\subset\Aut(\Omega)$ a strongly irreducible discrete subgroup. Then $M=\Omega/\Gamma$ is rank-one if and only if $\Lambda^{\prox}$ contains a two points at simplicial distance at least $3$.
\end{cor}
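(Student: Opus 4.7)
The plan is to reduce both directions to the characterisation of rank-one biproximal automorphisms by the simplicial distance of their fixed points, namely Fact~\ref{equivalences rang un}\ref{Item : d_spl>2}, combined with the density of biproximal fixed-point pairs in $\Lambda^{\prox}\times\Lambda^{\prox}$ given by Fact~\ref{Fait : des bips partout}\ref{Item : des bips partout}.

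For the forward direction, assume $M$ is rank-one, so there exists a rank-one element $\gamma\in\Gamma$. By Fact~\ref{period_is_translation_length}, the endpoints of the rank-one geodesic preserved by $\gamma$ are fixed by $\gamma$ and correspond to the top and bottom eigenvalues of a lift, so $\gamma$ is biproximal and its attracting and repelling fixed points $x_\gamma^\pm$ lie in $\Lambda^{\prox}$. By the equivalence (a)$\Leftrightarrow$\ref{Item : d_spl>2} of Fact~\ref{equivalences rang un}, we have $d_{\spl}(x_\gamma^+,x_\gamma^-)\geq 3$, which gives the desired pair.

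For the converse, suppose we have $\xi,\eta\in\Lambda^{\prox}$ with $d_{\spl}(\xi,\eta)\geq 3$. In particular $\Lambda^{\prox}$ is non-empty, so $\Gamma$ contains a proximal element (which also follows directly from Fact~\ref{Fait : des bips partout}(1) since $\Gamma$ is strongly irreducible and preserves $\Omega$). Strong irreducibility and the existence of a proximal element put us in the hypotheses of Fact~\ref{Fait : des bips partout}\ref{Item : des bips partout}, which produces a sequence $(\gamma_n)_{n\in\N}$ of biproximal elements in $\Gamma$ with $(x_{\gamma_n}^+,x_{\gamma_n}^-)\to (\xi,\eta)$ in $\PR(V)\times\PR(V)$.

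Since $d_{\spl}$ is lower semi-continuous on $\partial\Omega\times\partial\Omega$ (noted just after Definition~\ref{les metriques du bord}), we have $\liminf_n d_{\spl}(x_{\gamma_n}^+,x_{\gamma_n}^-)\geq d_{\spl}(\xi,\eta)\geq 3$, so for all sufficiently large $n$ the biproximal element $\gamma_n$ satisfies $d_{\spl}(x_{\gamma_n}^+,x_{\gamma_n}^-)\geq 3$. Applying Fact~\ref{equivalences rang un}\ref{Item : d_spl>2} to $\gamma_n$ yields a rank-one element of $\Gamma$, hence $M$ is rank-one. There is no real obstacle here; the whole corollary is a packaging of the two preceding facts via the lower semi-continuity of $d_{\spl}$, and the only point that requires a small verification is precisely that semi-continuity propagates the strict lower bound $\geq 3$ to the approximating pairs.
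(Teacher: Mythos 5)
Your proof is correct and follows essentially the same route as the paper: the forward implication reduces to the characterisation of rank-one biproximal elements via $d_{\spl}$ (or, equivalently, to strong extremality of the endpoints, as the paper phrases it), and the converse combines the density of $\{(x_\gamma^+,x_\gamma^-):\gamma\text{ biproximal}\}$ in $\Lambda^{\prox}\times\Lambda^{\prox}$ with the lower semi-continuity of $d_{\spl}$ and then applies Fact~\ref{equivalences rang un}.\ref{Item : d_spl>2}. The only small stylistic difference is that in the forward direction the paper just observes that rank-one endpoints are strongly extremal (hence at infinite simplicial distance), whereas you route through the equivalence in Fact~\ref{equivalences rang un}; both are fine.
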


\begin{proof}
 If $M$ is rank-one, then $\Lambda^{\prox}$ contains two strongly extremal points by definition. Conversely, if there exists $\xi,\eta\in\Lambda^{\prox}$ at simplicial distance at least $3$. By lower semi-continuity of $d_{\spl}$ and by Fact~\ref{Fait : des bips partout}.\ref{Item : des bips partout}, there exists $\gamma\in\Gamma$ such that $d_{\spl}(x_\gamma^+,x_\gamma^-)\geq 3$. By Fact~\ref{equivalences rang un}.\ref{Item : d_spl>2}, $\gamma$ is rank-one and $M$ as well.
\end{proof}
%

Exemple~\ref{Exemple : dim3->rg1} below is an elementary application of the previous result. It can also be seen as a consequence of Zimmer's higher-rank rigidity \cite[Th.\,1.4]{zimmer_higher_rank}.

\begin{ex}\label{Exemple : dim3->rg1}
 Any $3$-dimensional irreducible compact convex projective manifold is rank-one.
\end{ex}

\begin{proof}
 Let $M=\Omega/\Gamma$ be an irreducible compact convex projective manifold of dimension $3$. By \cite[Prop.\,3.10.a]{CD4}, $\Lambda^{\prox}=\partial\Omega$, and by \cite[Th.\,1.1.d \& Prop.\,3.8]{CD4}, $\partial\Omega\smallsetminus\partial_{\sse}\Omega$ is contained in the union of countably many properly embedded triangles (PET) (namely PES's of dimension $2$) of $\Omega$. These cannot cover the whole boundary $\partial\Omega$ (for instance because they have Lebesque measure zero), hence there is a point in $\partial_{\sse}\Omega=\partial_{\sse}\Omega\cap\Lambda^{\prox}$. By irreducibility, there is another one, and we  can conclude using Corollary~\ref{Corollaire : rang 1 = sse non vide}.
\end{proof}

Remark~\ref{minimality} and Fact~\ref{Fait : des bips partout} are useful results, and we will need them to hold for certain discrete groups $\Gamma\subset\PGL(V)$ that are not necessarily irreducible. The following fact is exactly what will be needed. Its proof is quite similar to the proof of Fact~\ref{Fait : des bips partout}, yet somehow easier, because rank-one elements are easier to manipulate than general biproximal elements (in particular when there is no invariant properly convex open set).
\begin{fait}[\!{\!\cite{EeERfH+}}]\label{fait:des rk1 partout}
 Let $\Omega\subset\PR(V)$ be a properly convex open set and $\Gamma\subset\Aut(\Omega)$ a non-elementary rank-one discrete subgroup. Then $\Lambda^{\prox}\subset\overline{\Omega}$ is the smallest $\Gamma$-invariant closed subset, and it has no isolated point. Furthermore, $\{(x_\gamma^+,x_\gamma^-)\in\Lambda^{\prox}\times\Lambda^{\prox} : \gamma\in\Gamma \text{ rank-one}\}$ is dense in $\Lambda^{\prox}\times\Lambda^{\prox}$.
\end{fait}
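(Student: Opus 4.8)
The plan is to prove Fact~\ref{fait:des rk1 partout} by adapting the classical arguments of Benoist (Remark~\ref{minimality} and Fact~\ref{Fait : des bips partout}) to the rank-one setting, where the key advantage is that rank-one elements remain rank-one after small perturbations and that their attracting/repelling fixed points lie in $\partial_{\sse}\Omega$. Throughout, fix a rank-one element $\gamma_0\in\Gamma$ (which exists since $M$ is rank-one), with attracting and repelling fixed points $x^\pm:=x_{\gamma_0}^\pm\in\partial_{\sse}\Omega$ and invariant complement $x_{\gamma_0}^0$; since $x^+$ is smooth, $x_{\gamma_0}^0\subset T_{x^+}\partial\Omega$ misses $\Omega$ and in fact misses $x^-$ as well (otherwise $d_\spl(x^+,x^-)\le 2$, contradicting rank-one via Fact~\ref{equivalences rang un}).

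First I would prove minimality, i.e.\ that $\Lambda^{\prox}$ is the smallest non-empty $\Gamma$-invariant closed subset of $\overline\Omega$. Let $X\subset\overline\Omega$ be any non-empty $\Gamma$-invariant closed set. The main point is to find a point of $X$ not lying on the hyperplane $\PR(x_{\gamma_0}^0)$: then applying $\gamma_0^n$ and letting $n\to\infty$ shows $x^+\in X$, and by $\Gamma$-invariance the closure of the orbit $\Gamma\cdot x^+$ lies in $X$; since every attracting fixed point $x_\gamma^+$ of a proximal $\gamma\in\Gamma$ is a limit of conjugates $\gamma^n\gamma_0 \gamma^{-n}$-images of... more directly, $x_\gamma^+ = \lim_n \gamma^n y$ for suitable $y$, so $x_\gamma^+\in X$, giving $\Lambda^{\prox}\subset X$. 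Conversely $\Lambda^{\prox}$ is itself $\Gamma$-invariant and closed and non-empty, so it is the minimum, and in particular $\Lambda^{\prox}\subset\overline\Omega$. To find a point of $X$ off $\PR(x_{\gamma_0}^0)$: if $X\subset\PR(x_{\gamma_0}^0)$, then since $X$ is $\Gamma$-invariant, the $\Gamma$-invariant subspace $\Span(X)$ is proper, and because $\Gamma$ is non-elementary (not virtually $\Z$) one shows this subspace cannot be $\Gamma$-invariant unless... this is where the non-elementary hypothesis enters: a rank-one element together with at least one more independent element of $\Gamma$ generates a group with no common invariant proper subspace containing $x^-$ but not $x^+$. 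Concretely, if every element of $\Gamma$ preserved $\PR(x_{\gamma_0}^0)$, one could use a ping-pong/Tits-alternative argument to conclude $\Gamma$ is virtually cyclic, contradicting non-elementarity; alternatively invoke Fact~\ref{exposant critique non nul}-type reasoning. I would spell this out via the following dichotomy: either there is $g\in\Gamma$ with $gx^+\ne x^+$ and $gx^+\notin\PR(x_{\gamma_0}^0)$ — in which case $g\gamma_0 g^{-1}$ is rank-one with attracting point $gx^+\notin\PR(x_{\gamma_0}^0)$, and a standard ping-pong on high powers of $\gamma_0$ and $g\gamma_0g^{-1}$ produces many rank-one elements — or else $\Gamma$ fixes $x^+$ (and dually $x^-$), which forces $\Gamma$ virtually cyclic, excluded.

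Next I would establish that $\Lambda^{\prox}$ has no isolated point. Suppose $\xi\in\Lambda^{\prox}$ were isolated; by minimality $\Gamma\cdot\xi$ is dense in $\Lambda^{\prox}$, and since $\xi$ is isolated and $\Lambda^{\prox}$ is closed, $\Lambda^{\prox}=\Gamma\cdot\xi$ is discrete, hence finite (it is compact). A finite $\Gamma$-invariant set means $\Gamma$ has a finite-index subgroup fixing a point of $\partial\Omega$; combined with the existence of a rank-one (hence biproximal, infinite-order) element this forces $\Gamma$ virtually cyclic — again contradicting non-elementarity. (One has to rule out $|\Lambda^{\prox}|=1$ too, but a single proximal fixed point cannot be $\gamma_0$-invariant unless it equals $x^+$, and then $x^-\in\overline{\Lambda^{\prox}}$ forces a second point.)

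Finally, for density of rank-one pairs in $\Lambda^{\prox}\times\Lambda^{\prox}$: given $(\xi,\eta)\in\Lambda^{\prox}\times\Lambda^{\prox}$ and a neighbourhood, use minimality to find $g,h\in\Gamma$ with $gx^+$ close to $\xi$ and $hx^-$ close to $\eta$; replacing $\gamma_0$ by the rank-one elements $g\gamma_0g^{-1}$ and $h\gamma_0^{-1}h^{-1}$ and taking a suitably high power of the ping-pong product $\gamma_n:=(g\gamma_0 g^{-1})^n (h\gamma_0^{-1}h^{-1})^{-n}$ — or more simply a product of high powers — one gets, by Remark~\ref{prox_are_open} (openness of proximality and continuity of attracting data), an element that is proximal with attracting point near $gx^+\approx\xi$ and, being a product of rank-one building blocks with strongly extremal fixed points, is itself rank-one (its attracting point is forced to be strongly extremal because it lies in $\partial_{\sse}\Omega$, which is where the relevant approximating points live). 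I expect the main obstacle to be the book-keeping in this last ping-pong step: one must verify that the constructed element is genuinely rank-one (not merely biproximal), which requires controlling that both its attracting and repelling fixed points land in $\partial_{\sse}\Omega$ — this is handled by noting that near a strongly extremal point all nearby boundary points... no, that is false, so instead one argues that the fixed points of $\gamma_n$ converge to the strongly extremal points $gx^+, h x^-$, and uses Fact~\ref{equivalences rang un}\ref{Item : d_spl>2} together with lower semicontinuity of $d_\spl$ (as in the proof of Corollary~\ref{Corollaire : rang 1 = sse non vide}) to conclude $d_\spl$ of the fixed points of $\gamma_n$ is $\ge 3$, hence $\gamma_n$ is rank-one. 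The replacement of irreducibility by non-elementarity throughout is exactly what makes the argument cleaner than Fact~\ref{Fait : des bips partout}: we never need a global invariant convex set to be absent, and rank-one elements are stable and easy to detect via the simplicial-distance criterion.
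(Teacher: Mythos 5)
First, a point of reference: the paper does not prove Fact~\ref{fait:des rk1 partout} at all --- it is quoted from \cite{EeERfH+}, with only the remark that the argument parallels Remark~\ref{minimality} and Fact~\ref{Fait : des bips partout} but is easier because rank-one elements are stable and detectable via $d_{\spl}$. Your overall strategy --- North--South dynamics of a rank-one element $\gamma_0$, conjugation to move its fixed points, products of high powers made proximal by a contraction argument, and lower semicontinuity of $d_{\spl}$ together with Fact~\ref{equivalences rang un} to certify rank-one-ness --- is exactly this kind of argument, and your treatment of the ``no isolated point'' claim and of the density of rank-one pairs is essentially right in outline, modulo the genericity bookkeeping you acknowledge. (For the isolated-point step, the clean phrasing is that the derived set of $\Lambda^{\prox}$ is closed and $\Gamma$-invariant, hence empty by minimality; your assertion that ``$\Lambda^{\prox}=\Gamma\cdot\xi$ is discrete'' does not follow as written, though the conclusion is correct.)

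The genuine gap is in the minimality step, at precisely the point where Benoist used irreducibility. First, the dynamically relevant bad set for $\gamma_0$ is the repelling hyperplane $x_{\gamma_0}^-\oplus x_{\gamma_0}^0$, not the codimension-two subspace $\PR(x_{\gamma_0}^0)$. What makes the argument work inside $\overline\Omega$ is that this hyperplane is the unique supporting hyperplane at $x^-$ (Fact~\ref{les extremites des geodesiques periodiques biproximales sont lisses}), and since $x^-$ is strongly extremal it meets $\overline\Omega$ only at $x^-$. Hence for a non-empty closed $\Gamma$-invariant $X\subset\overline\Omega$, either $X=\{x^-\}$ --- a degenerate case you must kill by showing $\Gamma$ cannot fix $x^-$ (your dichotomy ``either some $g$ moves $x^+$ off $\PR(x_{\gamma_0}^0)$ or $\Gamma$ fixes $x^+$'' is false as stated; the correct alternative is that $\Gamma$ preserves $\{x^+,x^-\}$, handled by the axis-stabiliser argument, the remaining case of two rank-one elements sharing exactly one fixed point being excluded by proper discontinuity) --- or $X$ contains a point other than $x^-$, whence $x^+\in X$, and symmetrically $x^-\in X$.

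Second, and more seriously: $\Lambda^{\prox}$ is the closure of the attracting points of \emph{all} proximal elements of $\Gamma$, so to conclude $\Lambda^{\prox}\subset X$ you must reach $x_h^+$ for an arbitrary proximal $h\in\Gamma$, and your ``$x_h^+=\lim_n h^n y$ for suitable $y$'' silently assumes a point $y\in X$ off the repelling hyperplane $H_h$ of $h$. This is exactly the step where Benoist invoked irreducibility, and you offer no substitute. The substitute is: $H_h$ is the kernel of the limit of the normalised powers of $h$, so by Fact~\ref{fait:spec gap} it does not meet $\Omega$; since the open segment $(x^-,x^+)$ lies in $\Omega$, $H_h$ cannot contain both $x^+$ and $x^-$; as both lie in $X$, a suitable $y$ exists. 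The same observation shows that $h^n\gamma_0^{\pm 1}h^{-n}$ is rank-one with attracting point $h^n x^{\pm}\to x_h^+$ (choosing the sign so that $x^{\pm}\notin H_h$), which is what makes rank-one attracting points --- and then rank-one pairs, via your last step --- dense in $\Lambda^{\prox}$. Without this observation the implication ``$x^\pm\in X\Rightarrow\Lambda^{\prox}\subset X$'' is unjustified, and your digression about $\Span(X)$ being a proper $\Gamma$-invariant subspace does not repair it, since there is no irreducibility hypothesis to contradict.
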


\subsection{Horoboundaries and Patterson--Sullivan densities}\label{rappel sur les densites conformes}

In this section we recall the classical definitions of horofunctions and horoboundary, and how they can be used to define Patterson--Sullivan densities.

\begin{defi}
 Let $(X,d)$ be a proper metric space. The \emph{horofunction} at points $x,y,z\in X$ is defined as follows :
 \[b_z(x,y)=d(x,z)-d(y,z).\]
\end{defi}

We now recall the definition of the horocompactification. By a \emph{compactification} of a topological space $X$ we mean a compact topological space $Y$ together with an embedding $X\hookrightarrow Y$ with open and dense image; then the subset $Y\smallsetminus X=\partial_Y X$ is called the boundary of the compactification. Another compactification $Z$ \emph{dominates} $Y$ if there is a continuous map from $Z$ to $Y$ which is compatible with the embeddings of $X$. Using the Arzel\`a--Ascoli theorem, one can show that the following is well defined.

\begin{defi}[\!{\!\cite[\S3]{GromovVrac}}]\label{horobord}
 Let $(X,d)$ be a proper metric space. The \emph{horocompactification} of $(X,d)$ is the smallest compactification $\overline{X}^{\hor}$ of $X$ such that $z\mapsto b_z(x,y)$ extends continuously to $\overline{X}^{\hor}$ for every $x,y\in X$. The \emph{horoboundary} $\partial_{\hor}X$ of $(X,d)$ is the boundary of $\overline{X}^{\hor}$. 
\end{defi}

Note that $\overline{X}^{\hor}$ is metrisable and the function $(\xi,x,y)\in \overline{X}^{\hor}\times X\times X \mapsto b_\xi(x,y)$ is continuous. Any isometry of $X$ extends continuously to a bi-Lipschitz homeomorphism of $\overline{X}^{\hor}$.

We now recall the definition of Patterson--Sullivan measures on the horoboundary of a proper metric space.

\begin{defi}\label{densites conformes}
 Let $(X,d)$ be a proper metric space. Let $\Gamma < \Isom(X,d)$ be a discrete subgroup. Given $\delta\in\R$, a ($\Gamma$-equivariant) \emph{$\delta$-conformal density} on $\partial_{\hor}X$ is a family of finite measures $(\mu_x)_{x\in X}$ on $\partial_{\hor}X$ such that
 \begin{itemize}
  \item $\mu_y$ is absolutely continuous with respect to $\mu_x$ for all $x,y\in X$, and the Radon--Nikodym derivative is :
  \[\frac{d\mu_y}{d\mu_x}(\xi)=e^{-\delta \,b_\xi(y,x)};\]
  (This implies that the family is entirely determined by $\mu_o$ for any $o\in X$.)
  \item for every $\gamma\in\Gamma$ and $x\in X$ the push-forward by $\gamma$ of $\mu_x$ is :
  \[\gamma_*\mu_x=\mu_{\gamma x}.\]
 \end{itemize}
\end{defi}

Let us recall the classical example of a conformal density, which we will need in this paper. For any measured metric space $(X,d,\mu)$ with infinite mass, the \emph{volume entropy} is, for any $o\in X$,
\begin{equation}\label{Equation : entropie volumique}
\delta_{\mu}:=\limsup_{r\to\infty}\frac{\log\mu(B_X(o,r))}{r}\in\R_{\geq 0}\cup\{\infty\}.
\end{equation}

\begin{fait}[\!{\!\cite[\S3]{Patterson76}}]\label{smear}
 Let $(X,d)$ be a proper metric space, let $o\in X$ be a basepoint, let $\Gamma$ be a non-compact closed subgroup of $\Isom(X,d)$ and let $\Vol$ be a $\Gamma$-invariant Radon measure on $X$. We assume that the volume entropy $\delta_{\Vol}$ is finite. Then there exists a continuous non-decreasing function $h:\R_+\rightarrow\R_{>0}$ such that
 \begin{itemize}
  \item $\int_{x\in X}h(d(o,x))e^{-\delta_{\Vol}d(o,x)}\diff\Vol(x)=\infty$,
  \item for every $\epsilon>0$, there exists $R>0$ such that $h(r+t)\leq e^{\epsilon t}h(r)$ for any $r\geq R$ and $t\geq 0$.
 \end{itemize}
 Furthermore, if we define, for $s>\delta_{\Vol}$, the probability measure $\mu_{o,s}$ on $X$ such that
 \[\mu_{o,s}(A)=\frac{\int_{x\in A}h(d(o,x))e^{-sd(o,x)}\diff \Vol(x)}{\int_{x\in X}h(d(o,x))e^{-sd(o,x)}\diff\Vol(x)},\] 
for any Borel subset $A\subset X$, then any accumulation point of $(\mu_{o,s})_{s\to\delta_{\Vol}}$ in the space $\p(\b{X}^{\hor})$ of probability measures on $\overline{X}^{\hor}$ is supported on $\partial_{\hor}\Omega$ and is a $\delta_{\Vol}$-conformal density.
 \end{fait}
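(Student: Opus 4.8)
The plan is to run the classical Patterson construction, transplanted to the horocompactification $\overline X^{\hor}$. Write $\nu:=(d(o,\cdot))_*\Vol$ for the push-forward of $\Vol$ to $\R_{\geq0}$; since $\Gamma$ is non-compact and $\Vol$ is a nonzero $\Gamma$-invariant Radon measure, $\Vol$ and hence $\nu$ have infinite total mass, and the standard formula for the abscissa of convergence of a Laplace transform identifies $\delta_{\Vol}=\limsup_{r\to\infty}\frac1r\log\nu([0,r])$ with the abscissa of convergence of $s\mapsto\int_0^\infty e^{-st}\diff\nu(t)$. First I would produce the function $h$. If $\int_0^\infty e^{-\delta_{\Vol}t}\diff\nu(t)=\infty$, take $h\equiv1$. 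Otherwise (the \emph{convergent} case) I would invoke Patterson's lemma \cite[\S3]{Patterson76}: the tail $T(r):=\int_r^\infty e^{-\delta_{\Vol}t}\diff\nu(t)$ is finite, positive (as $\nu$ has unbounded support), non-increasing, and tends to $0$; the choice $h_0:=1/T$ already makes $\int_0^\infty h_0(t)e^{-\delta_{\Vol}t}\diff\nu(t)=\int_0^\infty(-\diff T(t))/T(t)$ diverge, and one then replaces $h_0$ by a continuous non-decreasing $h$ comparable to it but smoothed, so as to enforce the slow-growth bound $h(r+t)\leq e^{\epsilon t}h(r)$ for $r$ large — this is the point at which one uses that $\delta_{\Vol}$ is \emph{exactly} the abscissa of convergence, i.e.\ that $\nu$ carries no exponentially heavy mass beyond $\delta_{\Vol}$. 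I would record that slow growth gives $h(t)\leq C_\epsilon e^{\epsilon t}$ for large $t$, so $\int_0^\infty h(t)e^{-st}\diff\nu(t)<\infty$ for all $s>\delta_{\Vol}$; in particular the normalising constant $Z_s:=\int_X h(d(o,z))e^{-sd(o,z)}\diff\Vol(z)$ is finite, and positive because $\Vol\neq0$, so each $\mu_{o,s}$ is a genuine probability measure.

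Next I would introduce, for $x\in X$ and $s>\delta_{\Vol}$, the auxiliary finite measures $\mu_{x,s}:=Z_s^{-1}\,h(d(x,\cdot))e^{-sd(x,\cdot)}\Vol$ on $X$ (so $\mu_{o,s}$ is the measure of the statement), viewed as Borel measures on the compact metrisable space $\overline X^{\hor}$. Because $d$ and $\Vol$ are $\Gamma$-invariant while $Z_s$ only involves the basepoint $o$, a change of variable gives $\gamma_*\mu_{x,s}=\mu_{\gamma x,s}$ for all $\gamma\in\Gamma$. Given an accumulation point $\mu_o$ of $(\mu_{o,s})_{s\to\delta_{\Vol}}$, I would fix a sequence $s_n\downarrow\delta_{\Vol}$ with $\mu_{o,s_n}\to\mu_o$ weak-$*$ and set $\mu_x:=e^{-\delta_{\Vol}b_{\cdot}(x,o)}\mu_o$, a finite measure equivalent to $\mu_o$; the cocycle identity $b_\xi(y,x)=b_\xi(y,o)-b_\xi(x,o)$ then delivers the Radon--Nikodym formula $\diff\mu_y/\diff\mu_x=e^{-\delta_{\Vol}b_{\cdot}(y,x)}$ for free.

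Two points would remain: that $\mu_o$ lives on $\partial_{\hor}X$, and that $\gamma_*\mu_x=\mu_{\gamma x}$. For the first: by monotone convergence $Z_{s_n}\uparrow\int_X h(d(o,z))e^{-\delta_{\Vol}d(o,z)}\diff\Vol(z)=\infty$, whereas for a compact $K\subset X$ the numerator of $\mu_{o,s_n}(K)$ stays below the finite constant $\int_K h(d(o,\cdot))e^{-\delta_{\Vol}d(o,\cdot)}\diff\Vol$; hence $\mu_{o,s_n}(K)\to0$, and exhausting $X$ by relatively compact open sets (e.g.\ balls about $o$) and applying the Portmanteau theorem yields $\mu_o(X)=0$, so $\mu_o$ and anything absolutely continuous with respect to it are carried by $\partial_{\hor}X$; in particular $\mu_o(\overline B(o,R))=0$, and so (Portmanteau again) $\mu_{o,s_n}(\overline B(o,R))\to0$. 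For the second it is enough to prove $\mu_{x,s_n}\to\mu_x$ weak-$*$ for every $x$ and then pass to the limit in $\gamma_*\mu_{x,s_n}=\mu_{\gamma x,s_n}$, using that $\gamma$ acts on $\overline X^{\hor}$ homeomorphically. Writing $\diff\mu_{x,s}=\psi_x\cdot e^{-s\,b_{\cdot}(x,o)}\diff\mu_{o,s}$ with $\psi_x:=h(d(x,\cdot))/h(d(o,\cdot))$, the factor $z\mapsto e^{-s\,b_z(x,o)}$ extends continuously to $\overline X^{\hor}$ and converges there uniformly to $e^{-\delta_{\Vol}b_{\cdot}(x,o)}$ (as $|b_{\cdot}(x,o)|\leq d(o,x)$), while $\psi_x$ is bounded on $X$ and, by the slow-growth estimate together with $|d(x,z)-d(o,z)|\leq d(o,x)$, satisfies $|\psi_x-1|\leq e^{\epsilon d(o,x)}-1$ outside a compact set depending on $\epsilon$. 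Since $\mu_o$ charges no compact subset of $X$ and the $\mu_{o,s_n}$ charge balls about $o$ by less and less, testing against $f\in C(\overline X^{\hor})$ and splitting $\psi_x=1+(\psi_x-1)$ gives $\int f\,\diff\mu_{x,s_n}\to\int f\,e^{-\delta_{\Vol}b_{\cdot}(x,o)}\diff\mu_o$, which is exactly $\mu_{x,s_n}\to\mu_x$; this finishes the verification that $(\mu_x)_{x\in X}$ is a $\delta_{\Vol}$-conformal density.

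I expect the two genuine obstacles to be: (a) the construction of $h$ in the convergent case, where one must simultaneously force the weighted integral to diverge and keep $h$ slowly growing, which really rests on $\delta_{\Vol}$ being the abscissa of convergence; and (b) the interchange of limits in the last paragraph — the density $\diff\mu_{x,s}/\diff\mu_{o,s}$ contains the ratio $\psi_x$, which does \emph{not} extend continuously to $\partial_{\hor}X$, so one genuinely needs both the slow growth of $h$ and the vanishing of $\mu_o$ on compacta to push the limit through.
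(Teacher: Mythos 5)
The paper gives no proof of this Fact (it is quoted from Patterson), so the benchmark is the classical construction. The second half of your argument --- the weak-$*$ limit producing the conformal density --- is correct and is the standard one: $Z_{s_n}\uparrow\infty$ forces any limit of $(\mu_{o,s_n})$ to vanish on compact subsets of $X$, hence to be carried by $\partial_{\hor}X$; and you treat the density $\diff\mu_{x,s}/\diff\mu_{o,s}=\psi_x\,e^{-s\,b_\cdot(x,o)}$ properly, using that $e^{-s\,b_\cdot(x,o)}$ extends continuously to $\overline{X}^{\hor}$ and converges uniformly, while the discontinuous factor $\psi_x=h(d(x,\cdot))/h(d(o,\cdot))$ is within $e^{\epsilon d(o,x)}$ of $1$ off a compact set by slow growth, and compact sets are asymptotically negligible for $\mu_{o,s_n}$. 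This does yield the cocycle formula and $\gamma_*\mu_x=\mu_{\gamma x}$, i.e.\ a $\delta_{\Vol}$-conformal density on $\partial_{\hor}X$.

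The genuine gap is your construction of $h$ in the convergent case, which you flag as obstacle (a) but then resolve with a recipe that fails. Writing $T(r)=\int_r^\infty e^{-\delta_{\Vol}t}\diff\nu(t)$, a function comparable to $1/T$ (smoothed or not) need not satisfy $h(r+t)\leq e^{\epsilon t}h(r)$, because $T$ can drop by super-exponential factors across long lacunary gaps while the abscissa of convergence stays exactly $\delta_{\Vol}$. Concretely, take $\delta_{\Vol}=\delta>0$ and $\nu=\sum_k e^{\delta m_k-\sqrt{m_k}}\,\mathcal{D}_{m_k}$ (Dirac masses) with $m_1$ large and $m_{k+1}=m_k^4$: the abscissa is $\delta$ and $\int e^{-\delta t}\diff\nu<\infty$, but $T\approx e^{-\sqrt{m_{k+1}}}=e^{-m_k^2}$ on $[m_k,m_{k+1})$, so $T(m_k)/T(m_{k+1})\approx e^{m_{k+1}^2-m_k^2}$, which dwarfs $e^{\epsilon(m_{k+1}-m_k)}$; the violation occurs at arbitrarily large $r$ and over intervals of length $\sim m_{k+1}$, so no smoothing of $1/T$, nor any function comparable to it, can be slowly growing. (Your argument is made at the level of a general $\nu$ with these two properties, so the possible extra rigidity coming from $\Gamma$-invariance is not available to you as written.) Patterson's lemma is proved by a different construction: fix $\epsilon_n\downarrow 0$ and define $h$ inductively by $h(t)=h(t_n)e^{\epsilon_n(t-t_n)}$ on $[t_n,t_{n+1}]$, choosing $t_{n+1}$ so large that the block contributes at least $1$ to $\int h(t)e^{-\delta_{\Vol}t}\diff\nu(t)$; this is possible precisely because $\int e^{-(\delta_{\Vol}-\epsilon_n)t}\diff\nu(t)=\infty$ (the abscissa is exactly $\delta_{\Vol}$), divergence then holds by construction, and slow growth is immediate since all local exponential rates beyond $t_N$ are at most $\epsilon_N$. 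In the lacunary example a correct weight is, e.g., $h(t)=e^{2\sqrt t}$, which is nowhere near $1/T$. If you replace your recipe by this block construction (or simply quote Patterson's lemma as a black box, without the $1/T$ description), the rest of your proof goes through.
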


A typical example of $\Gamma$-invariant Radon measure on $X$ is push-forward by an orbital map of the Haar measure on $\Gamma$. 

Let $\Omega\subset\PR(V)$ be a properly convex open set. One can check that the critical exponent $\delta_\Gamma$ of any discrete group $\Gamma\subset\Aut(\Omega)$ is equal to the volume entropy of the push-forward by any orbital map of the counting measure on $\Gamma$, and also that $\delta_\Gamma\leq \delta_{\Vol_\Omega}$.
 
\begin{fait}[\!{\!\cite[Th.\,2]{entropie<d-2}}]\label{entropie<d-2}
  Let $\Omega\subset\PR(V)$ be a properly convex open set. Then $\delta_{\Vol_\Omega}\leq \dim(V)-2$.
  
  In particular, for any discrete subgroup $\Gamma\subset\Aut(\Omega)$, the numbers $\delta_\Gamma$ and $\delta_{\Vol_\Omega}$ are finite.
\end{fait}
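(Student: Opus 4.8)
Here is how I would approach the proof.

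\medskip

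The plan is to compare $\Vol_\Omega$ of Hilbert balls with a boundary-shell quantity of the convex body $\Omega$ itself; the ``in particular'' clause then falls out. Write $n=\dim V=d+1$; we want $\delta_{\Vol_\Omega}\le d-1$. Since $\delta_{\Vol_\Omega}$ does not depend on the base point and is unchanged under replacing $(\Omega,o)$ by a $\PGL(V)$-translate (proper densities and the Hilbert metric being $\PGL(V)$-natural), I would first normalise via a John ellipsoid: assume that in a fixed affine chart $B_{\eucl}(o,1/d)\subset\Omega\subset B_{\eucl}(o,1)$ with $o$ at the origin. Let $\rho(v)>0$ be the Euclidean radial function of $\Omega$ from $o$, so $1/d\le\rho\le1$. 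The cross-ratio formula \eqref{Equation : Hilbert} shows that the point at Hilbert distance $r$ from $o$ along the ray of direction $v$ has Euclidean parameter at most $(1-ce^{-2r})\rho(v)$ for a universal $c=c(d)>0$; hence $B_\Omega(o,r)\subset(1-ce^{-2r})\overline\Omega$, the Euclidean homothety of $\overline\Omega$ about $o$, so $\Vol_\Omega(B_\Omega(o,r))\le\Vol_\Omega\big((1-ce^{-2r})\overline\Omega\big)$. Thus $\delta_{\Vol_\Omega}\le d-1$ --- and therefore also the ``in particular'' clause, since $\delta_\Gamma\le\delta_{\Vol_\Omega}$ was recorded just above for every discrete $\Gamma\subset\Aut(\Omega)$ --- will follow from the estimate $\Vol_\Omega\big((1-\epsilon)\overline\Omega\big)\le C\epsilon^{-(d-1)/2}$ as $\epsilon\to0$.

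\medskip

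Decomposing $\overline\Omega$ into the homothety-shells $(1-2^{k}\epsilon)\overline\Omega\setminus(1-2^{k+1}\epsilon)\overline\Omega$ and summing a geometric series (here $d\ge2$ is used; $d=1$ is immediate), this reduces to a uniform \emph{shell estimate} $\Vol_\Omega(\mathrm{Sh}_\epsilon)\le C\epsilon^{-(d-1)/2}$, where $\mathrm{Sh}_\epsilon:=(1-\epsilon)\overline\Omega\setminus(1-2\epsilon)\overline\Omega$. Two elementary facts make this tractable: by the normalisation, every $x\in\mathrm{Sh}_\epsilon$ has $\mathrm{dist}(x,\partial\Omega)\asymp\epsilon$ (the lower bound because $(1-\epsilon)\overline\Omega$ contains the Euclidean ball of radius $\epsilon/d$ about $x$, by convexity of $\mathrm{conv}(B_{\eucl}(o,1/d)\cup\{y\})$ with $x=(1-\epsilon)y$); and $\mathrm{Sh}_\epsilon$ has Euclidean volume $\asymp\epsilon$. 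Consequently a crude pointwise bound $f(x,\Omega)\lesssim\mathrm{dist}(x,\partial\Omega)^{-d}$ --- which holds uniformly, e.g.\ by comparing, through the monotonicity axiom of a proper density, with the round ball of radius $\mathrm{dist}(x,\partial\Omega)$ centred at $x$ --- already gives $\Vol_\Omega(\mathrm{Sh}_\epsilon)\lesssim\epsilon^{-(d-1)}$, hence $\delta_{\Vol_\Omega}\le2(d-1)<\infty$; this alone suffices for the finiteness assertion.

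\medskip

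For the sharp exponent I would use the finer boundary asymptotics of the proper density: on the part of $\partial\Omega$ that is smooth and strictly convex at scale $\epsilon$, $f(x,\Omega)\asymp\kappa(\xi_x)^{1/2}\,\mathrm{dist}(x,\partial\Omega)^{-(d+1)/2}$ with $\xi_x$ the nearest boundary point and $\kappa$ its Gauss curvature, so that this part contributes $\asymp\epsilon^{-(d-1)/2}\int_{\partial\Omega}\kappa^{1/2}\,\diff\mathcal{H}^{d-1}$ to $\Vol_\Omega(\mathrm{Sh}_\epsilon)$; and the integral is bounded by Cauchy--Schwarz, $\int_{\partial\Omega}\kappa^{1/2}\,\diff\mathcal{H}^{d-1}\le\big(\int_{\partial\Omega}\kappa\,\diff\mathcal{H}^{d-1}\big)^{1/2}\mathcal{H}^{d-1}(\partial\Omega)^{1/2}\le C(d)$, since the total absolutely continuous Gauss curvature of a convex body is at most $\mathcal{H}^{d-1}(S^{d-1})$ and surface area is monotone under inclusion, hence at most that of $\partial B_{\eucl}(o,1)$. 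The residual, highly curved or non-smooth part of $\partial\Omega$ (flat faces, corners, curvature $\gtrsim\epsilon^{-1}$) has $\mathcal{H}^{d-1}$-measure $\lesssim\epsilon$ (Chebyshev, using $\int\kappa\lesssim1$), over which the crude bound contributes only $\lesssim\epsilon^{-(d-2)/2}$, again subdominant. Equality throughout would force $\Omega$ to be a round ball, i.e.\ the Klein model of $\mathbb{H}^d$, whose volume entropy is exactly $d-1$ --- in line with the known rigidity, with the affine isoperimetric inequality lurking in the background.

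\medskip

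The main obstacle is making these boundary asymptotics of the proper density precise and, above all, uniform over the normalised convex bodies, and carefully partitioning $\partial\Omega$ into the ``regular'' part (where the $\kappa^{1/2}\mathrm{dist}^{-(d+1)/2}$ estimate applies) and the residual part, reconciling this partition with the homothety-shells. This is routine but genuinely technical convex and metric geometry, and it is where the convexity of $\Omega$ is used in an essential, quantitative way --- through the uniform two-sided control $\mathrm{dist}(x,\partial\Omega)\asymp\epsilon$ on $\mathrm{Sh}_\epsilon$, the monotonicity-based crude density bound, and the Gauss-curvature integral bound. Everything else is a geometric-series summation and the elementary cross-ratio computation of the first step.
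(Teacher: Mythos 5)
The paper itself gives no proof of this fact (it is quoted from the cited reference), so what matters is whether your argument stands on its own. The first half does: the John-ellipsoid normalisation, the inclusion $B_\Omega(o,r)\subset(1-e^{-2r})\overline\Omega$ coming from the cross-ratio, the dyadic shell decomposition, the lower bound $\mathrm{dist}(x,\partial\Omega)\gtrsim\epsilon$ on the shell, and the crude bound $f(x,\Omega)\lesssim\mathrm{dist}(x,\partial\Omega)^{-d}$ obtained from monotonicity against the inscribed Euclidean ball together with $\PGL(V)$-equivariance are all correct, and they do give $\delta_{\Vol_\Omega}\leq 2(d-1)<\infty$, which suffices for the ``in particular'' clause (using $\delta_\Gamma\leq\delta_{\Vol_\Omega}$). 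The genuine gap is in the step that carries the actual content of the theorem, namely the sharp shell estimate $\Vol_\Omega(\mathrm{Sh}_\epsilon)\lesssim\epsilon^{-(d-1)/2}$. The pointwise asymptotic $f(x,\Omega)\asymp\kappa(\xi_x)^{1/2}\,\mathrm{dist}(x,\partial\Omega)^{-(d+1)/2}$ is not true for a general convex body and is not merely a uniformity issue: the density at $x$ is governed by the shape of all chords through $x$, not by the nearest boundary point; for instance over the relative interior of a flat face one has $f(x,\Omega)\asymp\mathrm{dist}(x,\partial\Omega)^{-1}$, which neither side of your formula predicts. Worse, the dichotomy you use to dispose of the ``residual'' boundary fails: Chebyshev applied to $\int_{\partial\Omega}\kappa\,\diff\mathcal H^{d-1}\lesssim 1$ only bounds the set where $\kappa\geq\epsilon^{-1}$; it says nothing about the set where the absolutely continuous curvature is zero or small, and that set can have $\mathcal H^{d-1}$-measure bounded below independently of $\epsilon$ --- for a polytope it is all of $\partial\Omega$, since the curvature measure is purely atomic. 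On that part the only estimate you have is the crude $\mathrm{dist}^{-d}$ bound, which returns $\epsilon^{1-d}$ for the shell, i.e.\ no improvement on the exponent $2(d-1)$.

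This is not a bookkeeping problem one can fix by ``routine but technical'' convex geometry: your outline is essentially the Berck--Bernig--Vernicos strategy, which is only known to give the sharp exponent under boundary regularity (e.g.\ $\mathcal C^{1,1}$) or in low dimension; the general bound $\delta_{\Vol_\Omega}\leq d-1$ cited here was obtained by genuinely different means (a comparison of the Hilbert metric with a Riemannian metric of Ricci curvature bounded below by $-(d-1)$, or the flag-approximability description of the entropy). To complete your proof you would need either such an argument, or a substitute for the curvature dichotomy that also handles the flat and intermediate-curvature portions of $\partial\Omega$ uniformly in the body; the $\kappa^{1/2}$--Cauchy--Schwarz computation alone cannot do this. (The closing rigidity remark about equality forcing a round ball is likewise unsubstantiated, though it is not needed for the statement.)
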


This will allow us to apply Fact~\ref{smear} to our convex projective setting.

\subsection{The horoboundary of a properly convex open set} 

In this section we recall results on the horoboundary of a properly convex open set that were mentioned in the introduction. A direct and elementary proof of the following result may be found in \cite[Fact 6.1.2]{these}.

\begin{fait}[\!{\!\cite[Th.\,1.3]{horoboundary}}]\label{walsh}
 Let $\Omega\subset\PR(V)$ be a properly convex open set. Then the horocompactification $\overline{\Omega}^{\hor}$ dominates the projective compactification $\overline{\Omega}$.
\end{fait}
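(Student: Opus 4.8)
The plan is to exhibit a continuous extension of $\id_\Omega$ to a map $\overline{\Omega}^{\hor}\to\overline{\Omega}$; by definition of domination this is exactly what is required. Since both compactifications are metrisable and $\Omega$ is dense in $\overline{\Omega}^{\hor}$, such an extension exists (and is then automatically continuous) as soon as one checks the following: whenever a sequence $(x_n)$ in $\Omega$ converges to some $\xi\in\overline{\Omega}^{\hor}$, it also converges in $\overline{\Omega}$, to a limit depending only on $\xi$ and not on the chosen sequence. If $\xi\in\Omega$ there is nothing to prove, as $\Omega\hookrightarrow\overline{\Omega}^{\hor}$ is an open embedding; so assume $\xi\in\partial_{\hor}\Omega$, in which case $(x_n)$ eventually leaves every compact subset of $\Omega$ and all its accumulation points in $\overline{\Omega}$ lie in $\partial\Omega$. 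Fix $o\in\Omega$ and normalise $h:=b_\xi(\cdot,o)$ so that $h(o)=0$; then $d_\Omega(\cdot,x_n)-d_\Omega(o,x_n)\to h$ uniformly on compact subsets of $\Omega$, and $h$ is $1$-Lipschitz.

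The first ingredient is a lemma on unit descent along straight rays. For $p\in\partial\Omega$, let $r_p\colon[0,\infty)\to\Omega$ be the unit-speed straight ray from $o$ with endpoint $p$ (it exists because $[o,p)\subset\Omega$ while $p\in\partial\Omega$, so the straight segment from $o$ to $p$ exits $\Omega$ exactly at $p$). Then for any sequence $u_n\to p$ in $\overline{\Omega}$ and any $t\ge 0$, one has $b_{u_n}(r_p(t),o)\to-t$. Indeed, let $r_n(t)$ be the point of the straight segment $[o,u_n]$ at Hilbert distance $t$ from $o$ (defined for $n$ large, since $d_\Omega(o,u_n)\to\infty$). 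As $u_n\to p$, the projective line through $o$ and $u_n$ converges to the one through $o$ and $p$, so the exit points of $[o,u_n]$ through $\partial\Omega$ converge to those of $[o,p)$, whence $r_n(t)\to r_p(t)$ and $d_\Omega(r_n(t),r_p(t))\to0$. Since $o,r_n(t),u_n$ are aligned in this order, additivity of the Hilbert distance along straight geodesics gives $d_\Omega(o,u_n)=t+d_\Omega(r_n(t),u_n)$, and therefore $b_{u_n}(r_p(t),o)=d_\Omega(r_p(t),u_n)-d_\Omega(o,u_n)=\bigl(d_\Omega(r_p(t),u_n)-d_\Omega(r_n(t),u_n)\bigr)-t\to-t$ by the triangle inequality. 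In particular, if $x_n\to p$ in $\overline{\Omega}$ then $h(r_p(t))=-t$ for all $t\ge0$: the horofunction $\xi$ already carries a full descent ray toward every projective accumulation point of $(x_n)$.

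The crux is to upgrade this to the statement that $\xi$ \emph{determines} $p$: two sequences $(u_n)\to p$ and $(v_n)\to q$ in $\overline{\Omega}$ whose horofunctions $b_{u_n}(\cdot,o)$ and $b_{v_n}(\cdot,o)$ both converge to $h$ must satisfy $p=q$. This is the only step that is not formal, and it is where the cross-ratio structure of the Hilbert metric is essential: a full descent ray, by itself, detects only the closed face of $p$ and not $p$ itself. The idea is to compute $\lim_n b_{u_n}(\cdot,o)$ not merely along $[o,p)$ but along an auxiliary family of straight segments issuing from points of $\Omega$ in directions transverse to $[o,p)$; by formula \eqref{Equation : Hilbert} these limits are explicit expressions in the exit points through $\partial\Omega$ of the corresponding projective lines, and together they recover the supporting hyperplanes of $\Omega$ isolating $p$, hence $p$ as the intersection of the associated closed half-spaces. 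Carrying out the same computation for $(v_n)$ yields the same data and forces $q=p$. Equivalently, one first establishes an explicit normal form for the horofunctions of $(\Omega,d_\Omega)$ — each being built from one boundary point together with a Funk-type functional — from which well-definedness of $\xi\mapsto p$ is manifest; this is the content of Walsh's theorem \cite{horoboundary}, and a direct elementary treatment is given in \cite[Fact 6.1.2]{these}. Once $\xi\mapsto p$ is well defined on the dense subset $\Omega$ and carries horoconvergent sequences to projectively convergent ones, compactness and metrisability of the two spaces produce the desired continuous map $\overline{\Omega}^{\hor}\to\overline{\Omega}$ extending $\id_\Omega$, and domination follows.

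I expect this last step — recovering the projective boundary point from the horofunction, rather than just its face — to be the main obstacle, since it is precisely where non-strict convexity of $\partial\Omega$ interacts with the metric and where no soft argument suffices; the reduction to it, and the descent lemma, are comparatively routine.
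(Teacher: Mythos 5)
Your reduction (extend $\id_\Omega$ by showing that horofunction convergence forces projective convergence to a point depending only on the limiting horofunction) and your descent lemma along $[o,p)$ are both fine, but the heart of the statement is exactly the step you do not carry out. After the descent lemma you only know that the limit $h$ satisfies $h(r_p(t))=-t$ along the ray toward \emph{every} projective accumulation point $p$ of $(x_n)$, and this does not separate two accumulation points lying in the same open face of $\partial\Omega$ (nor, in fact, points in different faces). Your proposed remedy --- computing limits along transverse segments to ``recover the supporting hyperplanes of $\Omega$ isolating $p$'' --- cannot work as stated when $\partial\Omega$ is neither strictly convex nor smooth: all points of a segment in $\partial\Omega$ admit common supporting hyperplanes, so no collection of supporting hyperplanes isolates $p$ inside its face. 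What is actually needed is a quantitative estimate, proved by an explicit cross-ratio computation \eqref{Equation : Hilbert} in a projective $2$-plane through $o$, $p$, $q$, showing that if $x_n\to p$ and $y_n\to q$ projectively while $b_{x_n}(\cdot,o)$ and $b_{y_n}(\cdot,o)$ converge to the same function, then $p=q$; concretely one bounds $\sup_{x,y}|b_{\tilde\xi}(x,y)-b_{\tilde\eta}(x,y)|$ from below by the Hilbert distance between $p$ and $q$ inside their common face (interpreted as $+\infty$ for distinct faces), which forces $p=q$. None of this is in your text: at the decisive moment you write that ``this is the content of Walsh's theorem'' and cite \cite{horoboundary} and the elementary treatment in the thesis --- i.e.\ you invoke precisely the statement to be proved, which is circular as a proof attempt (though it does match how the present paper handles Fact~\ref{walsh}, namely by citation rather than proof).

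So: the framing and the descent lemma are correct but routine, and the genuine content --- distinguishing boundary points within a face by their horofunctions via cross-ratio estimates --- is missing and is replaced by a citation to the result itself. To complete the argument you would need to carry out that two-dimensional cross-ratio computation (or reproduce Walsh's normal form for Hilbert horofunctions) rather than appeal to it.
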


\begin{nota}
 Given a properly convex open set $\Omega\subset\PR(V)$, we denote by $\pi_{\hor}$ the map $\partial_{\hor}\Omega\rightarrow\partial\Omega$.
\end{nota}

\begin{defi}\label{densite conforme sur le bord de Omega}
Let $\Omega\subset\PR(V)$ be a properly convex open set and $\Gamma\subset\Aut(\Omega)$ a discrete subgroup. For $\delta\geq 0$, a \emph{$\delta$-conformal density on $\partial\Omega$} is the push-forward by $\pi_{\hor}$ of a $\delta$-conformal density on $\partial_{\hor}\Omega$. Note that such a family is made of $\Gamma$-quasi-invariant measures.
\end{defi}

If it was not true that $\partial_{\hor}\Omega$ dominates $\partial\Omega$, we could have considered a common refinement of $\partial_{\hor}\Omega$ and $\partial\Omega$, where the conformal densities are also well defined.

Thanks to the following fact, we will from now on abusively identify any smooth point on the projective boundary with its preimage in the horoboundary.

\begin{fait}[\!{\!\cite[Lem.\,3.2]{bray_ergodicity}}]\label{smooth_points_are_in_hor}
 Let $\Omega\subset\PR(V)$ be a properly convex open set. Let $\xi$ be a smooth point of $\partial\Omega$. Then it has only one preimage by $\pi_{\hor}:\partial_{\hor}\Omega\rightarrow\partial\Omega$.
\end{fait}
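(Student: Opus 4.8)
\textit{Plan.} I would prove directly that, for every pair of points $x,o\in\Omega$, the function $b_z(x,o)=d_\Omega(x,z)-d_\Omega(o,z)$ has a limit as $z\in\Omega$ tends to $\xi$, and that this limit depends only on $x$, $o$ and $\xi$. This suffices: since $\Omega$ is dense in $\overline\Omega^{\hor}$, any $\omega\in\partial_{\hor}\Omega$ is a limit of some sequence $(z_n)$ in $\Omega$; by Fact~\ref{walsh}, $\pi_{\hor}(\omega)=\lim_n z_n$ in $\overline\Omega$, and $\omega$ is identified with the horofunction $x\mapsto\lim_n b_{z_n}(x,o)$; so if every sequence $z_n\to\xi$ in $\Omega$ yields the same limit function, then $\pi_{\hor}^{-1}(\xi)$ is a single point.

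Fix $x,o\in\Omega$, an affine chart containing $\overline\Omega$ with an auxiliary Euclidean norm $|\cdot|$, and a sequence $z_n\to\xi$ in $\Omega$. For $n$ large the projective line through $x$ and $z_n$ meets $\partial\Omega$ in two points $p_n,q_n$ aligned as $p_n,x,z_n,q_n$; here $q_n\to\xi$, while $p_n\to p_\infty$, the second intersection of the line $(x\xi)$ with $\partial\Omega$, which is distinct from $\xi$ since $x\in\Omega$. Formula~\eqref{Equation : Hilbert} gives
\begin{equation*}
d_\Omega(x,z_n)=\tfrac12\log\frac{|z_n-p_n|}{|x-p_n|}+\tfrac12\log\frac{|x-q_n|}{|z_n-q_n|},
\end{equation*}
and subtracting the analogous formula for $o$ (with boundary points $p_n',q_n'$), every summand converges to a quantity depending only on $x$, $o$, $\xi$ except for $\tfrac12\log\bigl(|z_n-q_n'|/|z_n-q_n|\bigr)$. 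Thus the whole problem reduces to showing that $\rho_n:=|z_n-q_n'|/|z_n-q_n|$ converges, with limit independent of the sequence.

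Let $\ell\in V^*$ be a linear form with $\ker\ell=T_\xi\partial\Omega$, with $\ell\geq 0$ on $\overline\Omega$ (so $\ell>0$ on $\Omega$), normalised by $\ell(\xi)=0$. Writing $q_n=z_n+|z_n-q_n|\,v_n$ where $v_n$ is the unit vector from $x$ to $z_n$, and applying $\ell$, one gets $|z_n-q_n|=\bigl(\ell(z_n)-\ell(q_n)\bigr)/\bigl(-\ell(v_n)\bigr)$ with $-\ell(v_n)\to\ell(x)/|x-\xi|>0$, and similarly with $o$; hence
\begin{equation*}
\rho_n=\frac{\ell(z_n)-\ell(q_n')}{\ell(z_n)-\ell(q_n)}\cdot\frac{-\ell(v_n)}{-\ell(v_n')},
\end{equation*}
and the second factor tends to $\bigl(\ell(x)/|x-\xi|\bigr)\big/\bigl(\ell(o)/|o-\xi|\bigr)$. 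The core of the argument is that the first factor tends to $1$, and this is where smoothness of $\xi$ is used. If $H_n$ is a supporting hyperplane of $\Omega$ at $q_n$, then any accumulation point of $(H_n)$ in $\PR(V^*)$ is a supporting hyperplane of $\Omega$ at $\lim q_n=\xi$, hence equals $T_\xi\partial\Omega$ by smoothness; so $H_n\to T_\xi\partial\Omega$, and choosing defining forms $\ell_n$ of $H_n$ with $\ell_n\geq 0$ on $\overline\Omega$ and $\|\ell_n\|=\|\ell\|$, we get $\ell_n\to\ell$. Since $\ell_n(q_n)=0$ while $\ell_n\geq 0$ on $\overline\Omega\ni q_n'$, and symmetrically using a supporting hyperplane at $q_n'$, one obtains $|\ell(q_n-q_n')|\leq\max\bigl(\|\ell-\ell_n\|,\|\ell-\ell_n'\|\bigr)\,|q_n-q_n'|=o\bigl(|q_n-q_n'|\bigr)$. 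Combining this with $|q_n-q_n'|\leq|z_n-q_n|+|z_n-q_n'|$ and with the fact that $\ell(z_n)-\ell(q_n)=|z_n-q_n|\bigl(-\ell(v_n)\bigr)$ is bounded below by a positive multiple of $|z_n-q_n|$, a short computation (passing if necessary to a subsequence on which one of $|z_n-q_n|,|z_n-q_n'|$ dominates the other) gives $\ell(z_n)-\ell(q_n')=\bigl(\ell(z_n)-\ell(q_n)\bigr)(1+o(1))$. Hence the first factor tends to $1$, $\rho_n$ converges to $\bigl(\ell(x)|o-\xi|\bigr)\big/\bigl(\ell(o)|x-\xi|\bigr)$, and plugging back in one finds
\begin{equation*}
\lim_{z\to\xi}b_z(x,o)=\tfrac12\log\frac{|\xi-p_\infty|\,\ell(x)}{|x-p_\infty|}-\tfrac12\log\frac{|\xi-p_\infty'|\,\ell(o)}{|o-p_\infty'|},
\end{equation*}
which depends only on $x$, $o$ and $\xi$; hence $\pi_{\hor}^{-1}(\xi)$ is a singleton.

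The main obstacle is the estimate $|\ell(q_n-q_n')|=o(|q_n-q_n'|)$: without it one only gets that the first factor in the expression for $\rho_n$ stays bounded, which is not enough; and this estimate genuinely fails when $\xi$ is not smooth, since then supporting hyperplanes at boundary points near $\xi$ need not converge. Everything hinges on the $\mathcal C^1$ nature of $\xi$, equivalently the continuity at $\xi$ of the set-valued supporting-hyperplane map of $\overline\Omega$, which makes the Funk part of the Hilbert distance rigid enough for the horofunction at $\xi$ to be unique.
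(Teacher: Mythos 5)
Your proof is correct, and it takes a genuinely different route from the one the paper relies on. The paper simply invokes Bray's lemma, whose argument (sketched in an earlier version of this paper) is synthetic: using the cross-ratio one writes $b_z(x,y)=d_\Omega(x,y')$ for an auxiliary point $y'$ obtained by intersecting lines through the boundary points of the chords $[x,z]$ and $[y,z]$, and then one shows $y'$ converges because any accumulation line of the secants $\xi_1\oplus\xi_2$ (with $\xi_1,\xi_2\to\xi$) avoids $\Omega$ and passes through $\xi$, hence lies in $T_\xi\partial\Omega$ by smoothness. You instead work analytically in an affine chart: you split the cross-ratio formula \eqref{Equation : Hilbert} into its four factors, observe that only the ratio $|z_n-q_n'|/|z_n-q_n|$ is delicate, and control it via a defining form $\ell$ of $T_\xi\partial\Omega$ together with supporting forms $\ell_n,\ell_n'$ at $q_n,q_n'$; smoothness enters through the convergence $\ell_n,\ell_n'\to\ell$ (for this you should note explicitly that $q_n'\to\xi$ as well, which holds by the same argument as for $q_n$), and I checked that your two-sided inequality $|\ell(q_n-q_n')|\le\max(\|\ell-\ell_n\|,\|\ell-\ell_n'\|)\,|q_n-q_n'|$ and the ensuing $1+o(1)$ bookkeeping do close up, so the limit of $b_{z_n}(x,o)$ exists and is sequence-independent; combined with your (correct) reduction via Fact~\ref{walsh} and the density of $\Omega$ in $\overline{\Omega}^{\hor}$, this gives the statement. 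The synthetic argument is shorter and stays chart-free, while your computation has the added benefit of producing the explicit horofunction at a smooth point (its Funk and reverse-Funk parts), which makes the role of the unique supporting hyperplane transparent. The only cosmetic blemishes are the conflation of the linear form $\ell\in V^*$ with the induced affine functional on the chart (and its linear part acting on the unit vectors $v_n$), and the unproved but routine facts $p_n\to p_\infty$, $q_n\to\xi$; these are easily patched and do not affect the argument.
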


\subsection{The Hopf decomposition and quotients of measures}\label{Section : decompo de Hopf}

Let us fix for the whole section a locally compact, secound countable and unimodular group $G$ acting measurably on a standard Borel space $X$, and a $G$-invariant and $\sigma$-finite measure $\tilde{m}$ on $X$; fix a Haar measure on $G$, denoted by $\diff g$, and an integrable positive function $\sigma$ on $X$. For any non-negative measurable function $f$ on $X$, we denote by $\int_Gf$ the $G$-invariant measurable function defined by $\int_Gf(x)=\int_Gf(gx)\diff g$; this also denotes the induced function on $G\backslash X$.

A measurable subset $A\subset X$ is said to \emph{wandering} under the action of $G$ if for $\tilde{m}$-almost any $x\in A$, the \emph{transporter} $T(x,A):=\{g\in G : gx\in A\}$ is relatively compact. The following fact is classical, and serves as a definition of the Hopf decomposition.

\begin{fait}
 Let $\mathcal{C}:=\{\int_G\sigma=\infty\}$ and $\mathcal{D}:=\{\int_G\sigma<\infty\}\subset X$. The decomposition $X=\mathcal{C}\sqcup\mathcal{D}$ is a \emph{Hopf decomposition}, in the sense that every wandering subset of $\mathcal{C}$ has $\tilde{m}$-measure zero, and $\mathcal{D}$ is a countable union of wandering subsets of $X$. Any two Hopf decompositions agree on some $\tilde{m}$-full subset of $X$. The dynamical system $(X,G,\tilde{m})$ is said to be \emph{conservative} (\resp \emph{dissipative}) if $\tilde{m}(\mathcal{D})=0$ (\resp $\tilde{m}(\mathcal{C})=0$).
\end{fait}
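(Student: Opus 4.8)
The plan is to reduce the statement to two assertions about the particular partition $X=\mathcal{C}\sqcup\mathcal{D}$ attached to $\sigma$: (i) every wandering subset of $\mathcal{C}$ is $\tilde{m}$-null; and (ii) $\mathcal{D}$ is, up to a $\tilde{m}$-null set, a countable union of wandering subsets of $X$. Granting (i) and (ii), the uniqueness clause becomes formal: if $X=\mathcal{C}'\sqcup\mathcal{D}'$ is another partition with these two properties, write $\mathcal{D}$ (up to a null set) as $\bigcup_i W_i$ with each $W_i$ wandering; then each $W_i\cap\mathcal{C}'$ is a wandering subset of $\mathcal{C}'$, hence $\tilde{m}$-null by (i) applied to the primed partition, so $\tilde{m}(\mathcal{D}\cap\mathcal{C}')=0$, and symmetrically $\tilde{m}(\mathcal{D}'\cap\mathcal{C})=0$; thus the two partitions agree off a null set, and in particular the notions of conservativity and dissipativity are unambiguous. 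Before addressing (i) and (ii) one records the routine preliminary that, by Tonelli's theorem together with left-invariance of the Haar measure and unimodularity of $G$, the function $\Sigma:=\int_G\sigma\colon X\to[0,\infty]$, $\Sigma(x)=\int_G\sigma(gx)\diff g$, is well defined, measurable and $G$-invariant, so that $\mathcal{C}=\Sigma^{-1}(\infty)$ and $\mathcal{D}=\Sigma^{-1}\bigl([0,\infty)\bigr)$ are $G$-invariant Borel sets partitioning $X$.

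For (i), let $A\subseteq\mathcal{C}$ be wandering and fix a $\sigma$-finite exhaustion $X=\bigcup_nX_n$. Since relatively compact subsets of $G$ have finite Haar measure $\lambda$, the sets $A_N:=A\cap X_N\cap\{x:\lambda(T(x,A))\leq N\}$ cover $A$ up to a null set, so it suffices to see each $A_N$ is null. The function $x\mapsto\lambda(T(x,A_N))$ is $G$-invariant and bounded by $N$ on $A_N$, hence bounded by $N$ everywhere (it vanishes off the saturation $GA_N$). Then, using Tonelli, the substitution $y=gx$ (valid as $\tilde{m}$ is $G$-invariant) and unimodularity,
\[\int_{A_N}\Sigma\diff\tilde{m}=\int_{A_N}\!\!\int_G\sigma(gx)\diff g\,\diff\tilde{m}(x)=\int_X\sigma(y)\,\lambda\bigl(\{g:gy\in A_N\}\bigr)\diff\tilde{m}(y)\leq N\!\int_X\sigma\diff\tilde{m}<\infty,\]
the last inequality because $\sigma$ is integrable. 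Hence $\Sigma<\infty$ $\tilde{m}$-almost everywhere on $A_N$, which forces $\tilde{m}(A_N)=0$ since $A_N\subseteq\mathcal{C}=\{\Sigma=\infty\}$.

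For (ii), write $\mathcal{D}$ as the countable union, up to a null set, of the sets $B_{k,N}:=\{x\in X:\sigma(x)\geq 1/k,\ \Sigma(x)\leq N\}$ for $k,N\geq 1$ (these exhaust $\mathcal{D}$ because $\sigma>0$ and $\Sigma<\infty$ on $\mathcal{D}$). Each $B_{k,N}$ has finite $\tilde{m}$-measure (at most $k\int_X\sigma\diff\tilde{m}$), and for $x\in B_{k,N}$ the transporter $T(x,B_{k,N})$ is contained in $\{g:\sigma(gx)\geq 1/k\}$, of Haar measure at most $k\Sigma(x)\leq kN$. So every $B_{k,N}$ has transporters of finite Haar measure; it remains to upgrade "finite Haar measure'' to "relatively compact''. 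I would do this by a further measurable decomposition: fixing a compact symmetric identity neighbourhood $K$, the transporter of a point splits into its $K$-connected pieces, which one shows are relatively compact, and a measurable counting (colouring) of these pieces — exactly as in the classical Hopf decomposition theorem for measure-preserving group actions — realises $B_{k,N}$ as a countable union of genuinely wandering sets.

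The step I expect to be the real obstacle is precisely this last point of (ii): passing from a transporter of finite Haar measure to a relatively compact one, which is where local compactness and second countability of $G$, and a measurable-selection argument, genuinely enter; by contrast, (i) and the uniqueness clause are just an exercise with Tonelli's theorem and $\sigma$-finiteness. In a final version I would either carry out the colouring argument in detail or simply cite the classical Hopf decomposition, after checking that the usual proof goes through verbatim for second countable locally compact unimodular $G$.
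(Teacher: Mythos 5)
Your part (i) and the uniqueness clause are correct, and (i) is in substance the same computation as the paper's: the paper takes an arbitrary measurable $A\subset\mathcal{C}$, sets $A_R:=\{\int_G1_A\leq R\}\cap A$ (which is exactly your transporter condition $\lambda(T(x,A))\leq R$), and compares $\int_{X\times G}\sigma(gx)1_{A_R}(x)\diff g\diff\tilde{m}(x)=\infty\cdot\tilde{m}(A_R)$ with the Tonelli/unimodularity bound $\leq R\int_X\sigma\diff\tilde{m}<\infty$; your intersection with a $\sigma$-finite exhaustion is harmless but unnecessary. The uniqueness argument you give is the standard formal one and is what the paper leaves implicit.

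The genuine gap is exactly where you flagged it, in (ii), and the sketched repair does not work as stated. From $B_{k,N}=\{\sigma\geq 1/k,\ \int_G\sigma\leq N\}$ you only get transporters of Haar measure at most $kN$, and a subset of $G$ of finite Haar measure need \emph{not} have relatively compact $K$-connected pieces: in $G=\R$ with $K=[-2,2]$, the set $\bigcup_{n\geq1}[n,n+2^{-n}]$ has finite measure but forms a single unbounded $K$-chain component, and it really does occur as a transporter $\{g:\sigma(gx)\geq 1/k\}$ of one of your $B_{k,N}$ (take $X=\R$ with Lebesgue measure, $G=\R$ acting by translation, and $\sigma$ positive integrable, equal to $1$ on that union and small elsewhere). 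So the colouring step would need a different mechanism, and ``cite the classical Hopf decomposition'' is circular here, since this Fact is precisely the paper's self-contained proof of it. The paper sidesteps the whole difficulty by choosing the exhausting sets differently: for compact $K\subset G$ it sets $B_K:=\{\int_K\sigma>\tfrac12\int_G\sigma\}\subset\mathcal{D}$; if $x\in B_K$ and $gx\in B_K$ then $\int_K\sigma(x)+\int_{Kg}\sigma(x)>\int_G\sigma(x)$, so $K\cap Kg\neq\emptyset$ and $g\in K^{-1}K$ --- every transporter of $B_K$ lies in a fixed compact set, so $B_K$ is wandering outright --- and $\mathcal{D}=\bigcup_nB_{K_n}$ for a compact exhaustion $G=\bigcup_nK_n$ by monotone convergence, using $\sigma>0$. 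Replacing your $B_{k,N}$ by these $B_K$ is the missing idea and closes the gap.
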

\begin{proof}
 Let $A\subset \mathcal{C}$ be a measurable subset, and let us prove that $\int_G1_A$ is infinite on $\tilde{m}$-almost every point of $A$. On the one hand, if, for $R>0$, we denote $A_R:=\{\int_G1_A\leq R\}\cap A$, then
 \begin{align*}
  \int_{X\times G}\sigma(gx)1_{A_R}(x)\diff g\diff \tilde{m}(x) & = \infty\cdot \tilde{m}(A_R),
 \end{align*}
 while on the other hand, since $G$ is unimodular and $\tilde{m}$ is $G$-invariant,
 \begin{align*}
  \int_{X\times G}\sigma(gx)1_{A_R}(x)\diff g\diff \tilde{m}(x) &  = \int_{X\times G}\sigma(x)1_{A_R}(gx)\diff g\diff \tilde{m}(x) \\
  & = \int_X \sigma\left(\int_G1_{A_R}\right)\diff\tilde{m} \\
  & \leq R\int_X\sigma\diff\tilde{m}<\infty.
 \end{align*}
 Therefore $\tilde{m}(A_R)=0$ for any $R>0$, hence $\tilde{m}(\cup_RA_R)=0$. For any compact subset $K\subset G$, we consider $B_K:=\{\int_K\sigma > (1/2)\int_G\sigma\}\subset\mathcal{D}$, and observe that it is wandering. Indeed if $x\in B_K$ and $g\in G$ are such that $gx\in B_K$, then $\int_K\sigma(x) + \int_{Kg}\sigma(x) >\int_G\sigma(x)$, thus $K\cap Kg\neq\emptyset$ and $g\in K^{-1}\cdot K$ which is compact. Furthermore $\mathcal{D}=\cup_{n}B_{K_n}$ if $G=\cup_nK_n$.
\end{proof}
Note that if the action of $G$ on $X$ is \emph{smooth} (namely $G\backslash X$ is a standard Borel space) and has compact stabilisers, then $(X,G,\tilde{m})$ is dissipative. In particular, this observation applies when $X$ is a locally compact second countable topological space and the action of $G$ is continuous and proper.

\begin{defi}\label{Definition : quotient de mesure}
 If $(X,G,\tilde{m})$ is dissipative, the \emph{quotient} of $\tilde{m}$ on $G\backslash X$ is defined as
 \[m := \left(\int_G\sigma\right)^{-1}\pi_*(\sigma \tilde{m}),\]
 where $\pi$ denotes the projection $X\rightarrow G\backslash X$. For any non-negative (or integrable) function $f$ on $X$, 
 \begin{equation*}
  \int_Xf\diff \tilde{m} = \int_{G\backslash X}\left(\int_Gf\right)\diff m,
 \end{equation*}
 and $m$ is independent of the choice of $\sigma$ (but it depends on the choice of $\diff g$).
\end{defi}

\begin{fait}\label{Fait : conservatif passe au quotient}
 Suppose $G$ contains a unimodular, normal and closed subgroup $H$ whose action on $X$ is dissipative and smooth; fix a Haar measure on $H$. Set $\pi : X\rightarrow H\backslash X$ and $m$ to be the quotient of $\tilde{m}$ on $H\backslash X$; this measure is $G/H$-invariant. Then the Hopf decomposition of $X$ projects under $\pi$ onto the Hopf decomposition of $H\backslash X$. In particular, $(X,G,\tilde{m})$ is conservative (\resp dissipative) if and only if $(H\backslash X,G/H,m)$ is conservative (\resp dissipative).
\end{fait}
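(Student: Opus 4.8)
The plan is to reduce everything to the elementary identity $\int_G\sigma=\int_{G/H}\!\big(\int_H\sigma\big)$ for a suitably chosen auxiliary function, which puts the two Hopf decompositions in bijection. First I would fix Haar measures on $G$, $H$ and $G/H$ compatible with Weil's integration formula $\int_G f\,\diff g=\int_{G/H}\int_H f(gh)\,\diff h\,\diff\overline g$; this is possible because $G$ and $H$ are unimodular, which moreover forces $G/H$ to be unimodular, so that the triple $(H\backslash X,G/H,m)$ genuinely falls within the framework of this section once we know $m$ is $G/H$-invariant. Since $H$ is normal, the $G$-action on $X$ descends to a measurable action of $G/H$ on $H\backslash X$ with $\pi\circ g=\overline g\circ\pi$ for all $g\in G$. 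The $G/H$-invariance of $m$ itself I would get by a direct computation: pushing $\sigma\tilde m$ forward by $\overline g$ amounts to replacing the auxiliary function $\sigma$ by $\sigma\circ g^{-1}$ (using $G$-invariance of $\tilde m$ and $\pi\circ g=\overline g\circ\pi$), the normalising factor $\int_H\sigma$ transforms accordingly because conjugation by $g$ preserves the Haar measure of the normal unimodular subgroup $H$, and then independence of the quotient from the auxiliary function (Definition~\ref{Definition : quotient de mesure}) yields $(\overline g)_*m=m$.

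Next, fix an integrable positive function $\sigma$ on $X$. By right-invariance of $\diff h$, the function $x\mapsto\int_H\sigma(hx)\,\diff h$ is $H$-invariant, hence descends to a positive measurable function $\phi$ on $H\backslash X$; in the notation of this section $\phi=\int_H\sigma$. Applying the integration formula of Definition~\ref{Definition : quotient de mesure} to the (dissipative and smooth) $H$-action gives $\int_{H\backslash X}\phi\,\diff m=\int_X\sigma\,\diff\tilde m<\infty$; in particular $\phi<\infty$ $m$-almost everywhere, which is just a restatement of the dissipativity of $(X,H,\tilde m)$. Thus $\phi$ is a legitimate auxiliary function for the Hopf decomposition of $(H\backslash X,G/H,m)$.

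The core step is the pointwise identity $\big(\int_G\sigma\big)(x)=\big(\int_{G/H}\phi\big)(\pi(x))$ for every $x\in X$: by Weil's formula $\int_G\sigma(gx)\,\diff g=\int_{G/H}\int_H\sigma(ghx)\,\diff h\,\diff\overline g$, and substituting $h'=ghg^{-1}$ for each fixed $g$ (which preserves $\diff h$) rewrites the inner integral as $\int_H\sigma(h'gx)\,\diff h'=\phi(\overline g\,\pi(x))$. Consequently, if $\mathcal{C}=\{\int_G\sigma=\infty\}$, $\mathcal{D}=\{\int_G\sigma<\infty\}$ is the Hopf decomposition of $(X,G,\tilde m)$ coming from $\sigma$ and $\mathcal{C}'=\{\int_{G/H}\phi=\infty\}$, $\mathcal{D}'=\{\int_{G/H}\phi<\infty\}$ that of $(H\backslash X,G/H,m)$ coming from $\phi$, then $\mathcal{C}=\pi^{-1}(\mathcal{C}')$ and $\mathcal{D}=\pi^{-1}(\mathcal{D}')$ on the nose; since $\pi$ is onto and any two Hopf decompositions agree up to a null set, the Hopf decomposition of $X$ projects under $\pi$ onto that of $H\backslash X$. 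For the final equivalences I would use that $m=\pi_*\big(\tfrac{\sigma}{\phi\circ\pi}\,\tilde m\big)$ has a density that is positive and $\tilde m$-almost everywhere finite, so that a measurable $N\subset H\backslash X$ is $m$-null if and only if $\pi^{-1}(N)$ is $\tilde m$-null; taking $N=\mathcal{D}'$ (resp.\ $N=\mathcal{C}'$) then shows that $(X,G,\tilde m)$ is conservative (resp.\ dissipative) if and only if $(H\backslash X,G/H,m)$ is.

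The main obstacle — really the only point requiring care — is the interaction of the "integration over $H$" operator with the ambient group: both the substitution $h'=ghg^{-1}$ in the key identity and the transformation of the normalising factor $\int_H\sigma$ in the proof of $G/H$-invariance rely on conjugation by elements of $G$ preserving the Haar measure of the normal unimodular subgroup $H$ (equivalently, on the two-sided Weil formula for $G$, $H$, $G/H$ all unimodular). Once that standard fact is fixed, the rest is routine bookkeeping with Weil's formula and the definition of the quotient measure.
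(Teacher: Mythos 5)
Your overall strategy is the same as the paper's --- reduce everything to the identity $\int_G\sigma=\int_{G/H}\int_H\sigma$ and then transfer null sets through $\pi$ --- but the step you yourself single out as the "only point requiring care" is not a standard fact: it is false. You apply Weil's formula in the right-coset form $\int_G\sigma(gx)\diff g=\int_{G/H}\int_H\sigma(ghx)\diff h\,\diff\bar g$ and then substitute $h'=ghg^{-1}$, claiming that conjugation by $g\in G$ preserves the Haar measure of $H$, equivalently that $G/H$ is unimodular, and that this is forced by $G$ and $H$ being unimodular. It is not. Take $G=\R^2\rtimes\R$ with $t\in\R$ acting by $\diag(e^t,e^{-t})$ (so $G$ is unimodular) and $H=\R\times\{0\}\subset\R^2$: then $H$ is closed, normal and unimodular, conjugation by the $\R$-factor scales the Haar measure of $H$ by $e^t$, and $G/H$ is the non-unimodular affine group of the line. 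In general the modulus $\delta(g)$ of conjugation on $H$ is a nontrivial character of $G/H$, and chasing it through your two computations leaves uncancelled factors: the substitution in the key identity produces $\int_{G/H}\delta(g)^{-1}\phi(\bar g\,\pi(x))\diff\bar g$ rather than $\bigl(\int_{G/H}\phi\bigr)(\pi(x))$, and your argument for the $G/H$-invariance of $m$ (where you invoke the same fact for the normalising factor $\int_H\sigma$) gives $\bar g_*m=\delta(g)^{\mp 1}m$ instead of $\bar g_*m=m$.

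The paper's proof avoids the conjugation altogether: it fixes the Haar measure on $G/H$ to be the quotient of the Haar measure on $G$ by the $H$-action, i.e.\ it decomposes $g=hg'$ with $h$ on the \emph{left}, so that $\int_H\sigma(hg'x)\diff h=\bigl(\int_H\sigma\bigr)(\pi(g'x))$ directly and $\int_G\sigma=\int_{G/H}\int_H\sigma$ holds with no change of variables inside $H$; with that convention your null-set transfer and the final equivalences go through. (The $G/H$-invariance of $m$ asserted in the statement does genuinely require conjugation by $G$ to preserve the Haar measure of $H$; this is automatic in the only situation where the fact is used, namely $G=\Gamma\times\R$ with $H$ one of the two direct factors, where conjugation acts on $H$ by a group automorphism of a discrete group, preserving counting measure, or trivially.) So the repair is to use the left-coset/quotient-measure form of Weil's formula as the paper does, or to add the hypothesis that conjugation preserves the Haar measure of $H$ (equivalently, $G/H$ unimodular), rather than deducing it from unimodularity of $G$ and $H$.
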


\begin{proof}
 Observe that the Haar measure on $G/H$ is the quotient of the Haar measure on $G$ by the action of $H$. Therefore, if $\sigma$ is a positive integrable function on $X$, then $\int_H\sigma$ is a positive integrable function on $H\backslash X$, and $\int_G\sigma=\int_{G/H}\int_H\sigma$. Moreover, for any $G$-invariant measurable subset $\tilde{A}\subset X$ whose projection $A$ in $H\backslash X$ is measurable, $\tilde{m}(\tilde{A})=\int_{H\backslash X}\left(\int_H1_{\tilde{A}}\right)\diff m= \int_{H\backslash X} \Vert\mathrm{Haar}_H\Vert 1_{A}\diff m$ is zero if and only if $m(A)=0$ (with the convention $\infty\cdot 0=0$).
\end{proof}

Recall that if the action of $G$ is continuous, then a point $x\in X$ is said to be \emph{recurrent} if for any neighbourhood $U$ of $x$, the set $\{g\in G: gx\in U\}$ is not relatively compact; if $G=\R$, then $x$ is call \emph{forward recurrent} (\resp \emph{backward recurrent}) if $\{t>0: \phi_ty\in U\}\subset\R$ (\resp $\{t<0: \phi_ty\in U\}\subset\R$) is unbounded. The following fact is classical and will be used in Section~\ref{Ssection : negligeons les singularites}.

\begin{fait}\label{fait:ergo imp trans}
Assume that $X$ is a locally compact topological space with countable basis, and that the action of $G$ is continuous. 
\begin{enumerate}
 \item \label{item:ergo imp trans:cons} If $\tilde m$ is conservative, then $\tilde m$-almost all points are recurrent;
 \item \label{item:ergo imp trans:consR} if $\tilde m$ is conservative and $G=\R$, then $m$-almost all points are forward and backward recurrent;
 \item \label{item:ergo imp trans:ergo} if $\tilde m$ is ergodic, then $\tilde m$-almost all points have a dense $G$-orbit in $\supp(\tilde m)$;
 \item \label{item:ergo imp trans:ergoR} if $\tilde m$ is ergodic and conservative, and $G=\R$, then $\tilde m$-almost all points have a dense forward orbit and a dense backward orbit in $\supp(\tilde m)$.
\end{enumerate}
\end{fait}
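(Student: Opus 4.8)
The strategy is to extract \ref{item:ergo imp trans:cons} and \ref{item:ergo imp trans:ergo} directly from the Hopf formalism of Section~\ref{Section : decompo de Hopf}, and then to bootstrap the continuous-time statements \ref{item:ergo imp trans:consR} and \ref{item:ergo imp trans:ergoR} by discretising to the time-one map $\phi_1$. Fix a countable basis $(U_n)_{n\in\N}$ of the topology of $X$. Two remarks will be used throughout. First, since $\tilde m$ is conservative iff $\tilde m(\mathcal{D})=0$, and since every wandering subset of $\mathcal{C}$ is $\tilde m$-null while $\mathcal{D}$ is a countable union of wandering sets, conservativity of $\tilde m$ is equivalent to: \emph{every} $G$-wandering measurable subset of $X$ is $\tilde m$-null. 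Second, $\supp(\tilde m)$ is the complement of the largest open $\tilde m$-null subset of $X$; it is closed and $G$-invariant (because $\tilde m$ is $G$-invariant), a basic open set $U_n$ meets it iff $\tilde m(U_n)>0$, and those $U_n$ with $\tilde m(U_n)>0$ induce a basis of $\supp(\tilde m)$.

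For \ref{item:ergo imp trans:cons}: a point $x$ fails to be recurrent precisely when $x\in U_n$ and $T(x,U_n)$ is relatively compact for some $n$, so the set of non-recurrent points is $\bigcup_n W_n$ with $W_n:=\{x\in U_n:T(x,U_n)\text{ relatively compact}\}$; each $W_n$ is wandering because $x\in W_n$ implies $T(x,W_n)\subseteq T(x,U_n)$, so $\tilde m(W_n)=0$ by the first remark. (Measurability of $W_n$ is routine: exhausting $G$ by compact sets and using joint measurability of the action, $W_n$ is seen to be analytic, hence measurable once $\tilde m$ is completed.) For \ref{item:ergo imp trans:ergo}: for each $n$ with $\tilde m(U_n)>0$ the set $G\cdot U_n=\bigcup_{g\in G}gU_n$ is open, $G$-invariant and of positive measure, hence $\tilde m$-co-null by ergodicity; therefore $\bigcap_{n:\tilde m(U_n)>0}G\cdot U_n$ is co-null, and by the second remark every point $x$ in it has $G$-orbit meeting every nonempty relatively open subset of $\supp(\tilde m)$. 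Intersecting with the co-null set $\supp(\tilde m)$ — which, being $G$-invariant and closed, contains $\overline{Gx}$ — gives $\overline{Gx}=\supp(\tilde m)$ for $\tilde m$-a.e.\ $x$.

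For \ref{item:ergo imp trans:consR} and \ref{item:ergo imp trans:ergoR}, first observe that $\phi_1$ is conservative: pick a positive $\tilde m$-integrable $\sigma_0$ and set $g(x):=\int_0^1\sigma_0(\phi_sx)\diff s$; then $g>0$, $g$ is $\tilde m$-integrable by Fubini and $\R$-invariance of $\tilde m$, and $\sum_{k\in\Z}g(\phi_kx)=\int_\R\sigma_0(\phi_tx)\diff t$, so the $\Z$-system $(X,\phi_1,\tilde m)$ has, up to null sets, the same conservative part as the flow, which is $\tilde m$-full by hypothesis and essential uniqueness of the Hopf decomposition; hence $\phi_1$, and likewise $\phi_{-1}$ (which has the same wandering sets), is conservative. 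Now for each $n$ the set $A_n:=\{x\in U_n:\phi_kx\notin U_n\ \forall k\geq1\}$ is $\phi_1$-wandering, hence $\tilde m$-null, and the points of $U_n$ whose forward $\phi_1$-orbit meets $U_n$ only finitely often lie in $\bigcup_{k\geq0}\phi_{-k}(A_n)$, also null; so $\tilde m$-a.e.\ $x\in U_n$ satisfies $\phi_kx\in U_n$ for infinitely many $k>0$, and, applying the same to $\phi_{-1}$, for infinitely many $k<0$. Ranging over $n$ gives \ref{item:ergo imp trans:consR}. For \ref{item:ergo imp trans:ergoR}: by \ref{item:ergo imp trans:consR}, for each $n$ with $\tilde m(U_n)>0$ the $\R$-invariant measurable set $B_n:=\{x:\{t>0:\phi_tx\in U_n\}\text{ is unbounded}\}$ contains $\tilde m$-a.e.\ point of $U_n$, so $\tilde m(B_n)>0$ and $B_n$ is $\tilde m$-co-null by ergodicity; running the argument of \ref{item:ergo imp trans:ergo} with $\bigcap_{n:\tilde m(U_n)>0}B_n$ in place of $\bigcap_nG\cdot U_n$ shows $\tilde m$-a.e.\ point has a dense \emph{forward} orbit in $\supp(\tilde m)$, and using $\phi_{-t}$ instead gives a dense backward orbit; intersect the two co-null sets.

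The soft parts are \ref{item:ergo imp trans:cons} and \ref{item:ergo imp trans:ergo}; the main obstacle is \ref{item:ergo imp trans:consR}, since the wandering-set argument of \ref{item:ergo imp trans:cons} does not transfer verbatim to continuous time — the set of points whose forward orbit meets $U_n$ only within a bounded time window need not be $\R$-wandering. The remedy, discretising to $\phi_1$ and invoking the Halmos--Poincar\'e recurrence argument, is standard once one checks that conservativity descends to $\phi_1$, which is exactly what the identity $\sum_{k\in\Z}g(\phi_kx)=\int_\R\sigma_0(\phi_tx)\diff t$ delivers via the essential uniqueness of the Hopf decomposition. The remaining points (joint measurability of the flow, analyticity of the auxiliary sets $W_n$ and $B_n$) are routine and absorbed by passing to the completion of $\tilde m$.
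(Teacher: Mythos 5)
Your proposal is correct, but it takes a different route from the one the paper has in mind (the paper records Fact~\ref{fait:ergo imp trans} as classical and prints no proof; its intended argument, visible in a commented-out sketch, stays entirely in continuous time). That argument shows, for each element $V$ of a countable basis, that the sojourn time $\int_0^\infty 1_V(\phi_t y)\diff t$ is infinite for $\tilde m$-almost every $y\in V$: if not, one finds $A\subset V$ of finite positive measure with $1_A\int_0^\infty 1_A\leq R$, and the flow-invariance identity $\int_A\int_\R 1_A(\phi_t a)\diff t\diff\tilde m(a)=2\int_A\int_0^\infty 1_A(\phi_t a)\diff t\diff\tilde m(a)\leq 2R\,\tilde m(A)<\infty$ contradicts the step, already proved inside the unlabeled Hopf-decomposition fact of Section~\ref{Section : decompo de Hopf}, that $\int_G 1_A=\infty$ almost everywhere on any measurable subset of the conservative part; points \ref{item:ergo imp trans:ergo}--\ref{item:ergo imp trans:ergoR} then follow exactly as in your last step, since the sets $B_V$ are flow-invariant of positive measure, hence co-null by ergodicity. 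You instead treat \ref{item:ergo imp trans:cons} and \ref{item:ergo imp trans:ergo} for general $G$ via wandering sets (using that conservativity is equivalent to every wandering set being null, which is a correct reading of the Hopf fact), and you obtain \ref{item:ergo imp trans:consR} by discretising to the time-one map; the one genuinely new verification your route requires --- that conservativity of the flow descends to $\phi_1$ --- is handled correctly by the identity $\sum_{k\in\Z}g(\phi_k x)=\int_\R\sigma_0(\phi_t x)\diff t$ with $g(x)=\int_0^1\sigma_0(\phi_s x)\diff s$, which shows the two systems have the same dissipative part. What each approach buys: the paper's continuous-time argument is shorter for $G=\R$ and directly yields infinite sojourn times, but it does not literally cover general $G$; yours treats all four items uniformly from the wandering-set formalism at the cost of the discretisation lemma. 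One small simplification: your caution about measurability of $W_n=\{x\in U_n: T(x,U_n)\text{ relatively compact}\}$ is unnecessary --- writing $G=\bigcup_k K_k$ with $K_k$ compact, the set $\{x:\exists g\notin K_k,\ gx\in U_n\}$ is open (the projection $G\times X\to X$ is an open map), so $W_n$ is Borel and no completion or analytic-set argument is needed; the same remark makes the sets $B_n$ visibly \Gdelta.
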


\subsection{Criterions for ergodicity and for mixing}\label{Sous-section : critere ergo et melange}

Inspired by Babillot's proof of mixing \cite{Babillot02}, Coud\`ene stated and proved a criterion for the ergodicity property, and another for the mixing property, both based on the following definition.

\begin{defi}\label{Definition : W-invariant}
 Consider a Borel flow $(\phi_t)_{t\in\R}$ on a metric space $(X,d)$. For each point $x\in X$ we define the \emph{strong stable manifold} 
 \[W^{ss}(x)=\{y\in X: \ d(\phi_tx,\phi_ty)\underset{t\to\infty}{\longrightarrow} 0\}.\]
 We define the \emph{strong unstable manifold} $W^{us}(x)$ to be the strong stable manifold of the time-reversed flow. Consider a $(\phi_t)_{t\in\R}$-invariant $\sigma$-finite measure on $X$. A measurable function $f:X\rightarrow\R$ is said to be \emph{$W^{ss}$-invariant} when there exists a measurable subset $E\subset X$ with full measure such that for all $x$ and $y$ in $E$, if they are on the same strong stable manifold then $f(x)=f(y)$. The notion of \emph{$W^{su}$-invariance} is similarly defined.
\end{defi}

Coud\`ene's criteria are the following. 

\begin{fait}[\!{\!\cite{coudeneHopf}}]\label{Fait : critere d'ergodicite}
 Let $(X,d)$ be a metric space, $(\phi_t)_t$ a measurable flow on it, $m$ a conservative $(\phi_t)_t$-invariant measure, and assume that some $m$-full subset of $X$ is covered by a countable family of open sets with finite $m$-measure. If every $W^{ss}$, $W^{su}$ and $(\phi_t)_t$-invariant measurable function is essentially constant then the flow is ergodic.
\end{fait}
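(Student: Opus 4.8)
The plan is to run the classical Hopf argument, in the form isolated by Coud\`ene \cite{coudeneHopf}. First I would make two reductions. On the one hand, it suffices to prove that every bounded measurable $(\phi_t)$-invariant function is $m$-essentially constant, since applying this to the indicator of a $(\phi_t)$-invariant set shows every such set is null or co-null. On the other hand, the hypothesis that an $m$-full subset of $X$ is covered by countably many open sets $U_n$ of finite measure makes $m$ $\sigma$-finite and lets me fix, once and for all, a bounded uniformly continuous reference function $h\colon X\to[0,1]$ with $h>0$ on $\bigcup_n U_n$ and $\int_X h\,dm<\infty$ — e.g.\ $h=\sum_n 2^{-n}(1+m(U_n))^{-1}\min\!\big(1,\operatorname{dist}(\cdot,X\smallsetminus U_n)\big)$. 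Note that the bounded uniformly continuous functions supported in some $U_n$ are measure-determining on $\bigcup_n U_n$.

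Next I would invoke Hopf's ratio ergodic theorem. Since $m$ is conservative and $h>0$ $m$-a.e., one has $\int_0^{\infty}h(\phi_s x)\,ds=\infty$ for $m$-a.e.\ $x$, so for every $g\in L^1(m)$ the forward and backward ratios $\Phi^{\pm}g(x):=\lim_{T\to\infty}\big(\int_0^{\pm T}g(\phi_s x)\,ds\big)/\big(\int_0^{\pm T}h(\phi_s x)\,ds\big)$ exist for $m$-a.e.\ $x$, are $(\phi_t)$-invariant, coincide $m$-a.e., and both equal $E[g\mid\mathcal I]/E[h\mid\mathcal I]$, where $\mathcal I$ is the $\sigma$-algebra of $(\phi_t)$-invariant sets. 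The crucial point is then that when $g$ is bounded and uniformly continuous, $\Phi^{+}g$ is $W^{ss}$-invariant and $\Phi^{-}g$ is $W^{su}$-invariant: if $d(\phi_s x,\phi_s y)\to 0$ as $s\to+\infty$, uniform continuity forces $g(\phi_s x)-g(\phi_s y)\to 0$ and $h(\phi_s x)-h(\phi_s y)\to 0$, and one compares the two ratio averages. Granting this, $\Phi^{+}g=E[g\mid\mathcal I]/E[h\mid\mathcal I]$ is simultaneously $W^{ss}$-, $W^{su}$- and $(\phi_t)$-invariant, hence $m$-essentially equal to a constant $c_g$, so $E[g\mid\mathcal I]=c_g\,E[h\mid\mathcal I]$.

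To finish, I would integrate this identity against the indicator of an arbitrary $(\phi_t)$-invariant set $A$: since $\mathbf 1_A$ is $\mathcal I$-measurable, $\int_A g\,dm=c_g\int_A h\,dm$ for every bounded uniformly continuous $g$ supported in some $U_n$; taking $A=X$ gives $c_g=\int_X g\,dm/\int_X h\,dm$, hence $\int_A g\,dm=\lambda_A\int_X g\,dm$ with $\lambda_A:=\int_A h\,dm/\int_X h\,dm$, for all such $g$. Because these $g$ are measure-determining on $\bigcup_n U_n$, this yields $m(A\cap B)=\lambda_A\,m(B)$ for every Borel $B\subset\bigcup_n U_n$; if $m(X\smallsetminus A)>0$, choosing $B\subset X\smallsetminus A$ with $m(B)>0$ forces $\lambda_A=0$, hence $\int_A h\,dm=0$ and (as $h>0$ a.e.) $m(A)=0$. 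Thus every $(\phi_t)$-invariant set is null or co-null, i.e.\ the flow is ergodic.

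I expect the main obstacle to be the stable/unstable invariance step. In the infinite-measure regime the normalising integrals $\int_0^T h(\phi_s x)\,ds$ can grow much more slowly than $T$, so the equality $\Phi^{+}g(x)=\Phi^{+}g(y)$ for forward-asymptotic points does not follow simply from $g(\phi_s x)-g(\phi_s y)\to 0$; controlling the resulting error terms is the delicate part of the argument carried out in \cite{coudeneHopf}, and it is precisely why the criterion bundles the three invariance hypotheses together.
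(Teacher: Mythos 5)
Your overall strategy is the right one, and it is the same Hopf-type argument that underlies the result you are asked to reproduce (the paper itself simply cites \cite{coudeneHopf}; an earlier draft of this paper carried out exactly this ratio-ergodic-theorem argument, following Roblin \cite{roblin_smf}). The reductions, the use of the Hopf ratio ergodic theorem, and the final measure-determining argument deducing that every invariant set is null or co-null are all sound. But the proposal has a genuine gap precisely at the step you yourself flag: the $W^{ss}$-invariance of $\Phi^{+}g$ (and $W^{su}$-invariance of $\Phi^{-}g$). The justification you sketch --- ``uniform continuity forces $g(\phi_s x)-g(\phi_s y)\to 0$ and $h(\phi_s x)-h(\phi_s y)\to 0$, and one compares the two ratio averages'' --- does not suffice in infinite measure: the accumulated numerator error $\int_0^T|g(\phi_sx)-g(\phi_sy)|\,ds$ is only $o(T)$, while the normalising integral $\int_0^T h(\phi_s y)\,ds$ may grow arbitrarily slowly, so the error need not be negligible relative to the denominator. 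Since this invariance is the whole content of the criterion (without it nothing forces the ratio limit to be anything but flow-invariant), deferring it to the reference leaves the proof incomplete.

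The standard fix, and the one used in the Roblin-style proof this paper originally contained, is to make all error estimates \emph{relative} to the reference function along the orbit of $y$: one chooses the test class and the reference function so that for $s$ large, $|g(\phi_sx)-g(\phi_sy)|\le\epsilon\,h(\phi_sy)$ and $|h(\phi_sx)-h(\phi_sy)|\le\epsilon\,h(\phi_sy)$; then both error terms are bounded by $\epsilon\int_0^T h(\phi_sy)\,ds$, i.e.\ by $\epsilon$ times the denominator, and the initial segments $\int_0^{T_0}$ are killed because $\int_0^\infty h(\phi_sy)\,ds=\infty$ by conservativity. For this to work one needs $h$ bounded below (and with a multiplicative modulus of continuity, e.g.\ $h=e^{-\delta d(o,\cdot)}$ when integrable, which satisfies $|h(x')-h(y')|\le\delta\,d(x',y')\,h(y')$) on a neighbourhood of the support of $g$ --- for instance $g$ continuous with compact support, or $g$ supported in $\{x:\operatorname{dist}(x,X\smallsetminus U_n)\ge c\}$ for some $c>0$, which is still a measure-determining class. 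Your particular $h$ does not provide this: it tends to $0$ at $\partial U_n$, so near points where $h$ is small neither of the two relative bounds above holds, and the comparison of ratio averages genuinely breaks down. So the argument needs both a restricted test class and a reference function adapted to it before the key invariance step can be justified.
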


\begin{fait}[\!{\!\cite{coudenemixing}}]\label{coudene}
 Consider a Borel flow preserving a finite measure on a metric space. If every $W^{ss}$- and $W^{su}$-invariant Borel function is essentially constant then the flow is mixing.
\end{fait}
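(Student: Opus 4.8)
The plan is to run Babillot's weak-limit argument \cite{Babillot02}, of which this fact is an abstract distillation; normalise the finite invariant measure $m$ to a probability measure and write $U_t\colon f\mapsto f\circ\phi_t$ for the induced one-parameter group of unitaries of $L^2(m)$. Mixing amounts to $\langle U_tf,g\rangle\to\langle f,1\rangle\,\langle 1,g\rangle$ for all $f,g\in L^2(m)$, so by density of bounded functions and polarisation it is enough to show: for every bounded $f,g$ with $\int f\diff m=\int g\diff m=0$ and every $t_n\to+\infty$, one has $\int f\,(g\circ\phi_{t_n})\diff m\to 0$. Assume not; passing to a subsequence, $g\circ\phi_{t_n}$ converges weakly in $L^2(m)$ to some $h$, which then lies in the same $L^\infty$-ball as $g$, satisfies $\int h\diff m=0$, and has $\langle f,h\rangle\neq 0$, hence $h\neq 0$. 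It therefore suffices to prove that every such weak limit is $m$-essentially constant.

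The technical core is a contraction lemma: if $g\in L^\infty(m)$ and $g\circ\phi_{s_n}\rightharpoonup h$ weakly in $L^2(m)$ with $s_n\to+\infty$, then $h$ coincides $m$-a.e.\ with a $W^{ss}$-invariant Borel function; applying the same statement to the time-reversed flow, a weak limit taken along a sequence $s_n\to-\infty$ is $W^{su}$-invariant. I would prove the lemma by approximating $g$ in $L^2(m)$ by bounded uniformly continuous functions $g_k$: for such a $g_k$ and for $y\in W^{ss}(x)$ one has $d(\phi_{s_n}x,\phi_{s_n}y)\to 0$, hence $g_k(\phi_{s_n}x)-g_k(\phi_{s_n}y)\to0$; inserting this into the defining property of the weak limit and using the $(\phi_t)$-invariance of $m$ (so that the $W^{ss}$-holonomy averaging operators are self-adjoint contractions of $L^2(m)$) one concludes that $h$ is fixed by $W^{ss}$-averaging, i.e.\ is $W^{ss}$-measurable.

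The remaining step is to upgrade the $W^{ss}$-invariance of a \emph{forward} weak limit into invariance under \emph{both} foliations, so that the hypothesis applies and forces the limit to be constant, hence equal to its vanishing mean --- contradicting $h\neq 0$. I expect this to be the main obstacle. Here I would use Babillot's time-splitting: write $\int f\,(g\circ\phi_{t_n})\diff m=\langle g\circ\phi_{t_n/2},\,f\circ\phi_{-t_n/2}\rangle$, extract further so that $g\circ\phi_{t_n/2}\rightharpoonup\alpha$ (which is $W^{ss}$-invariant) and $f\circ\phi_{-t_n/2}\rightharpoonup\beta$ (which is $W^{su}$-invariant), and show that the pairings converge, $\langle g\circ\phi_{t_n/2},f\circ\phi_{-t_n/2}\rangle\to\langle\alpha,\beta\rangle$. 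This convergence of a bilinear pairing of two merely weakly convergent bounded sequences fails for a general measure-preserving flow, and it is exactly at this point that the finiteness of $m$ is essential: one uses that the $W^{ss}$- and $W^{su}$-holonomy averages are $L^2(m)$-contractions commuting with the relevant weak limits, which allows replacing $\alpha,\beta$ by their holonomy averages without changing the pairing and, after iterating the construction, producing a nonzero $L^2(m)$ function invariant under both $W^{ss}$ and $W^{su}$ --- impossible under the hypothesis, since such a function must be constant and hence zero. Carrying out this bookkeeping is precisely the content of Coud\`ene's proof \cite{coudenemixing}, to which I would refer for the details.
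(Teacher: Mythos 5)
You should first note that the paper never proves this statement: it is imported as a black-box Fact with a citation to Coud\`ene, so the only thing to compare your proposal with is Coud\`ene's (equivalently Babillot's) argument, which is also what you set out to reproduce. Your reduction of mixing to decay of correlations for bounded zero-mean functions along arbitrary $t_n\to\infty$, and your key lemma --- weak $L^2$-limits of $g\circ\phi_{s_n}$ with $s_n\to+\infty$ are $W^{ss}$-invariant, and symmetrically for $s_n\to-\infty$ --- are correct and are indeed the first half of the proof (though your justification of the lemma is only gestured at: the honest proof uses Banach--Saks to turn the weak limit into an a.e.\ limit of convex averages, where the uniform continuity of the approximants and $d(\phi_t x,\phi_t y)\to0$ can actually be used; ``inserting this into the defining property of the weak limit'' is not by itself an argument).

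The genuine gap is at the step you yourself flag as the main obstacle, and the mechanism you propose there does not work. The convergence $\langle g\circ\phi_{t_n/2},f\circ\phi_{-t_n/2}\rangle\to\langle\alpha,\beta\rangle$ fails for general weakly convergent sequences, and ``replacing $\alpha,\beta$ by their holonomy averages without changing the pairing'' is not available: in this purely Borel/metric setting $W^{ss}$ and $W^{su}$ are mere equivalence relations, no holonomy-invariant averaging operators are given, and even reading them as conditional expectations onto the $\sigma$-algebras of saturated sets, self-adjointness only transfers one projection across the inner product --- it neither lets you project both factors nor commutes with the weak limits in the way you need; deferring ``the bookkeeping'' to \cite{coudenemixing} is circular, since that bookkeeping is precisely the theorem. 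The argument that actually closes the gap avoids the pairing altogether: if some weak limit $F$ of $f\circ\phi_{t_n}$ is nonzero, then from $\langle f\circ\phi_{t_n},f\circ\phi_{t_m}\rangle=\langle f\circ\phi_{t_n-t_m},f\rangle\to\langle F,f\circ\phi_{t_m}\rangle\to\Vert F\Vert^2$ one extracts times so that $\langle f\circ\phi_{t_n-t_m},f\rangle\geq\Vert F\Vert^2/2$ for all $n>m$; a diagonal extraction then yields a single function $G$ that is simultaneously a weak limit of $f\circ\phi_{u}$ with $u=t_{n(m)}-t_m\to+\infty$ (hence $W^{ss}$-invariant) and of $F\circ\phi_{-t_m}$ with $t_m\to+\infty$ (hence $W^{su}$-invariant), has zero mean, and satisfies $\langle G,f\rangle\geq\Vert F\Vert^2/2>0$; the hypothesis forces $G$ to be essentially constant, hence zero, a contradiction. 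Without this (or an equivalent) step, your proposal does not prove the fact.
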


\subsection{Topological entropy}\label{rappels sur l'entropie topologique}

In this section we recall the definition of topological entropy.

\begin{defi}\label{Topological Entropy}
 Let $\phi=(\phi_t)_{t\in\R}$ be a continuous flow on a compact metric space $(X,d)$.
 \begin{itemize}
  \item Let $\epsilon>0$. A subset $S\subset X$ is \emph{$(d,\epsilon)$-separated} if $d(s,s')\geq \epsilon$ for all $s\neq s'$ in $S$. We denote by $N(d,\epsilon)$ the maximal cardinality of such a set $S$.
  \item Let $\epsilon>0$. A subset $S\subset X$ is \emph{$(d,\epsilon)$-spanning} if for any $x\in X$, there exists $s\in S$ with $d(x,s)<\epsilon$. We denote by $S(d,\epsilon)$ the minimal cardinality of such a set $S$.
  \item We take the classical notation $d^{(t)}(x,y):=\max_{0\leq s\leq t}d(\phi_sx,\phi_sy)$ for $t\geq 0$ and $x,y\in X$; this defines a family of metrics on $X$.
  \item The \emph{topological entropy} of $\phi$ on $X$ is:
  \[h_{\topp}(\phi):=\lim_{\epsilon\to0}\limsup_{t\to\infty}\frac{\log N(d^{(t)},\epsilon)}{t}=\lim_{\epsilon\to0}\limsup_{t\to\infty}\frac{\log S(d^{(t)},\epsilon)}{t}.\]
 \end{itemize}
\end{defi}

One defines similarly the topological entropy of a homeomorphism by replacing $t\in\R_{\geq 0}$ by $n\in\Z_{\geq0}$ in Definition~\ref{Topological Entropy}. Note that if $\phi=(\phi_t)_{t\in\R}$ is a continuous flow on a compact metric space $(X,d)$, then the topological entropies of the flow and of the underlying time-one map are the same. Also note that the the reparametrised flow $(\phi_{kt})_{t\in\R}$, where $k\in\R_{\neq0}$, has topological entropy equal to $|k|$ times the topological entropy of $(\phi_t)_{t\in\R}$. 

\begin{rqq}\label{Remarque : distdyn}
 Let $\Omega\subset\PR(V)$ be a properly convex open set and $\Gamma\subset\Aut(\Omega)$ a discrete subgroup; set $M=\Omega/\Gamma$. Then by definition, for any $t\geq 0$, for all $v,w\in T^1M$ and all possible lifts $\tilde{v},\tilde{w}\in T^1\Omega$,
 \begin{align*}
  d_{T^1M}^{(t)}(v,w) = \max_{0\leq s\leq t+1}d_M(\pi\phi_sv,\pi\phi_sw) \leq d_{T^1\Omega}^{(t)}(\tilde{v},\tilde{w}) = \max_{0\leq s\leq t+1}d_\Omega(\pi\phi_s\tilde{v},\pi\phi_s\tilde{w}).
 \end{align*}
 Yet, we will sometimes prefer to work with dynamical balls in the universal cover, and this is why we introduce another notation (which is a bit sloppy unfortunately). For $t\geq 0$ and $v,w\in T^1M$,
 \begin{align*}
  \tilde{d}^{(t)}_{T^1M}(v,w) := \min\{d_{T^1\Omega}^{(t)}(\tilde{v},\tilde{w}) : \tilde{v},\tilde{w}\in T^1\Omega \text{ lifts of }v,w\},
 \end{align*}
 and we denote by $\tilde{B}_{T^1M}^{(t)}(v,R)$ the open ball of radius $R\geq 0$ and centred at $v\in T^1M$ for the metric $\tilde{d}_{T^1M}^{(t)}$. Observe that $\tilde{d}^{(t)}_{T^1M}\geq d_{T^1M}^{(t)}$ and that we do not have equality in general; however we have equality on $\{d_{T^1M}^{(t)}< \inj(M)/2\}$, where $\inj(M)$ is the injectivity radius of $M$, namely
 \begin{equation}\label{Equation : rayon d'injectivite}
 \inj(M):=\inf\{d_\Omega(x,\gamma x) : x\in\Omega,\gamma\in\Gamma\smallsetminus\{\id\}\},
 \end{equation}
\end{rqq}

To conclude this section, we give a relation between the critical exponent, the volume entropy and the topological entropy, which is originally due to Manning \cite{ManningTopEntropy}. Our setting is not exactly the same as his, but the proof works perfectly thanks to Fact~\ref{crampon} and Selberg's Lemma.

\begin{fait}\label{manning}
Let $\Omega\subset\PR(V)$ be a properly convex open set and $\Gamma\subset\Aut(\Omega)$ a convex cocompact, strongly irreducible and discrete subgroup. Let $\Vol$ be a $\Gamma$-invariant Radon measure on $\mathcal{C}^{\core}_\Omega(\Gamma)$. Then 
\[\delta_\Gamma = \delta_{\Vol} \geq  h_{\topp}(T^1M_{\core},(\phi_t)_{t\in\R}) \geq h_{\topp}(T^1M_{\bip},(\phi_t)_{t\in\R}).\]
\end{fait}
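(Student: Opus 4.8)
The plan is to follow Manning's classical argument \cite{ManningTopEntropy} relating volume growth and topological entropy, taking advantage of the fact that $T^1M_{\core}$ is compact (since $\Gamma$ is convex cocompact) so that the topological entropy is well defined, and of Lemma~\ref{crampon} (Crampon's inequality) which plays the role of the convexity/non-positive-curvature estimate on the divergence of geodesics. First I would dispose of the easy equalities and inequalities: $\delta_\Gamma=\delta_{\Vol}$ for any $\Gamma$-invariant Radon measure $\Vol$ on $\CC^{\core}_\Omega(\Gamma)$ follows from cocompactness of the $\Gamma$-action on $\CC^{\core}_\Omega(\Gamma)$ (the counting measure on an orbit and $\Vol$ are comparable up to multiplicative and additive constants, using that $\CC^{\core}$ is within bounded Hausdorff distance of any orbit), and the last inequality $h_{\topp}(T^1M_{\core})\geq h_{\topp}(T^1M_{\bip})$ is just monotonicity of topological entropy under restriction to the closed invariant subset $T^1M_{\bip}\subset T^1M_{\core}$.

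The substance is the inequality $\delta_{\Vol}\geq h_{\topp}(T^1M_{\core},(\phi_t)_{t\in\R})$. I would fix a torsion-free finite-index subgroup $\Gamma_0\leq\Gamma$ using Selberg's lemma (so that $M_0=\Omega/\Gamma_0$ is a manifold and the covering $M_0\to M$ is finite, which changes neither $\delta_\Gamma$ nor $h_{\topp}$), and then argue on $M_0$. Fix $\epsilon>0$ smaller than the injectivity radius of $M_0$ along $T^1M_{0,\core}$, fix a basepoint $o\in\CC^{\core}_\Omega(\Gamma_0)$, and let $S\subset T^1M_{0,\core}$ be a maximal $(d_{T^1M_0}^{(t)},\epsilon)$-separated set, so $\#S=N(d^{(t)},\epsilon)$. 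For each $v\in S$ choose a lift $\tilde v\in T^1\Omega$ with footpoint within bounded distance of $o$ (possible by cocompactness on the convex core); then the endpoints $\pi\phi_t\tilde v$ give points $x_v:=\pi\phi_t\tilde v$ at distance roughly $t$ from $o$. The key point is that if $v\neq w$ in $S$ then $x_v$ and $x_w$ are uniformly separated: by Lemma~\ref{crampon} applied to the two straight geodesics, $d_\Omega(x_v,x_w)\geq d_{T^1\Omega}^{(t)}(\tilde v,\tilde w)-\text{(bounded term at time $0$)}\geq \epsilon - C_0$, and if one first passes to a slightly sparser separated set or runs the geodesics a fixed extra time one gets a genuine positive lower bound $\eta=\eta(\epsilon)>0$ on $d_\Omega(x_v,x_w)$. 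Hence the points $\{x_v : v\in S\}$ are $\eta$-separated and all lie in the ball $B_\Omega(o,t+C_1)$; since $\Vol$ gives each $\eta/2$-ball a volume bounded below by $\chi_-(\eta/2)>0$ (by \eqref{Equation : chi}, using that these balls sit inside a fixed cocompact region), a packing argument gives $\#S\leq \Vol(B_\Omega(o,t+C_2))/\chi_-(\eta/2)$. Taking $\tfrac1t\log$ and letting $t\to\infty$ then $\epsilon\to 0$ yields $h_{\topp}\leq\delta_{\Vol}$.

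The main obstacle, and the step that needs the most care, is the uniform separation of the endpoints $x_v$: Lemma~\ref{crampon} bounds $d_\Omega(c_1(t),c_2(t))$ by $d_\Omega(c_1(0),c_2(0))+d_\Omega(c_1(t),c_2(t))$... wait — it bounds it from \emph{above} by the sum of the endpoint distances, which is the wrong direction for a naive argument. The correct use is the contrapositive flavour: if $d_\Omega(x_v,x_w)$ were small and $d_\Omega(\pi\tilde v,\pi\tilde w)$ is bounded (both footpoints near $o$), then Lemma~\ref{crampon} forces $d_\Omega(\pi\phi_s\tilde v,\pi\phi_s\tilde w)$ to be \emph{small for all $0\le s\le t$} — contradicting $(d^{(t)},\epsilon)$-separation once $t$ is large and $\epsilon>C_0$. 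So one should choose $\epsilon$ large relative to the bounded footpoint term; equivalently, one separates in $\tilde d^{(t)}_{T^1M}$ rather than $d^{(t)}_{T^1M}$, which is legitimate below the injectivity radius (Remark~\ref{Remarque : distdyn}) and only costs a multiplicative constant in $N(d^{(t)},\epsilon)$ coming from the finite number of $\Gamma_0$-translates relevant near a compact piece. One also has to confirm that $T^1M_{\core}$ being compact and $(\phi_t)$-invariant makes $h_{\topp}$ genuinely independent of the choice of compatible metric, which is standard. Everything else — Selberg's lemma, the independence of $\delta_\Gamma$ of the convex set, the comparison $\delta_\Gamma\le\delta_{\Vol_\Omega}$, and the packing estimate via \eqref{Equation : chi} — is routine given the results already collected in Section~\ref{Reminders}.
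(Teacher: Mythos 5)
Your reduction steps are fine ($\delta_\Gamma=\delta_{\Vol}$ by cocompactness, monotonicity for $T^1M_{\bip}\subset T^1M_{\core}$, Selberg's lemma to remove torsion), but the core of your argument --- bounding $h_{\topp}$ from above by counting a $(d^{(t)},\epsilon)$-separated set through its endpoints --- has a genuine gap that your own ``fix'' does not close. As you noticed, Lemma~\ref{crampon} goes the wrong way: since the footpoints of your chosen lifts only lie in a fundamental domain of the convex core, they are at \emph{bounded} distance $C_0$ (the diameter of that domain), not small distance, so closeness of the endpoints $x_v,x_w$ merely forces the two geodesics to stay within roughly $C_0$ of each other, which does not contradict $\epsilon$-separation for small $\epsilon$. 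Your remedy ``choose $\epsilon$ large relative to $C_0$'' is not available: $h_{\topp}$ is the limit of $\limsup_t\frac1t\log N(d^{(t)},\epsilon)$ as $\epsilon\to0$, i.e.\ a supremum over $\epsilon$, so an upper bound on $N(d^{(t)},\epsilon)$ at one large scale does not bound the entropy (and the entropy-expansiveness scale $\epsilon_0/3$ of Fact~\ref{geod_is_expansive} is in general much smaller than $C_0$, so that route does not help either). Likewise, replacing $d^{(t)}$ by $\tilde d^{(t)}$ does not make the footpoint term small, and the asserted ``multiplicative constant'' is unsubstantiated. The standard repair is to separate \emph{pairs}: if both $d_\Omega(p_v,p_w)<\epsilon/3$ and $d_\Omega(x_v,x_w)<\epsilon/3$, then Lemma~\ref{crampon} does contradict $\epsilon$-separation; covering the compact fundamental domain by boundedly many $\epsilon/6$-balls, one reduces to packing the endpoints, at a definite scale, inside $B_\Omega(o,t+C)$ near the convex core, and compares with the orbit count $\#\{\gamma: d_\Omega(o,\gamma o)\le t+C'\}$. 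You did not state this step, and it is exactly the missing idea.

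A second, smaller error: your packing bound invokes \eqref{Equation : chi} for the measure $\Vol$, but $\chi_-$ concerns the fixed proper density $\Vol_\Omega$, whereas $\Vol$ is an arbitrary $\Gamma$-invariant Radon measure on $\CC^{\core}_\Omega(\Gamma)$ --- for instance the orbital counting measure $\sum_\gamma\mathcal{D}_{\gamma o}$ used in Section~\ref{divergence}, which gives many small balls measure zero. One must pack against the orbit (or against $\Vol_\Omega$) and only afterwards use $\delta_\Gamma=\delta_{\Vol}$. Note that the paper's proof sidesteps both issues by going through \emph{spanning} sets instead: from a finite $\epsilon/4$-net $A$ of $B_\Omega(o,R)$ and the orbit points within distance $r+2R$ of $o$, it builds the vectors pointing from points of $A$ to translates $\gamma a$, and Lemma~\ref{crampon}, used in its natural upper-bound direction, shows that (after moving each vector to a nearby point of $T^1M_{\core}$) these $\epsilon$-span $T^1M_{\core}$ for $d^{(r)}_{T^1M}$; their number is at most $\#A^2\cdot\#\{\gamma:d_\Omega(o,\gamma o)\le r+2R\}$, which gives $h_{\topp}(T^1M_{\core},(\phi_t)_{t\in\R})\le\delta_\Gamma$ directly, without any separation-of-endpoints claim.
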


\begin{proof}
 The fact that $h_{\topp}(T^1M_{\core},(\phi_t)_{t\in\R}) \geq h_{\topp}(T^1M_{\bip},(\phi_t)_{t\in\R})$ is immediate from the definition of topological entropy.
 
 Let us prove that $\delta_\Gamma=\delta_{\Vol}$. Consider $o\in\mathcal{C}^{\core}$ and $R>0$ large enough such that $\Gamma\cdot B_\Omega(o,R)$ contains $\mathcal{C}^{\core}$ (which is acted on cocompactly by $\Gamma$). Then, on the one hand,
 \[\Vol(B_\Omega(o,r))\leq \sum_{\gamma : d_\Omega(o,\gamma o)\leq r+R}\Vol(B_\Omega(\gamma o,R))\leq \Vol(B_\Omega(o,R)) \#\{\gamma : d_\Omega(o,\gamma o)\leq r+R\}\]
 for any $r>0$, hence $\delta_{\Vol}\leq \delta_\Gamma$, and $\Vol(B_\Omega(o,R))>0$.
 
 On the other hand, consider $C=\#\{\gamma : d_\Omega(o,\gamma o)\leq 2R\}$, so that for any $r>0$, one can find a subset $A$ of $\{\gamma : d_\Omega(o,\gamma o)\leq r\}$ with size greater than $C^{-1}\#\{\gamma : d_\Omega(o,\gamma o)\leq r\} -1$ and whose image by the orbital map $\gamma\mapsto\gamma o$ is $(2R,d_\Omega)$-separated; then
 \begin{align*}
  C^{-1}\#\{\gamma : d_\Omega(o,\gamma o)\leq r\} -1 & \leq \Vol(B_\Omega(o,R))^{-1}\sum_{\gamma \in A}\Vol(B_\Omega(\gamma o,R))\\
  & \leq \Vol(B_\Omega(o,R))^{-1}\Vol(B_\Omega(o,r+R)), 
 \end{align*}
 therefore $\delta_\Gamma\leq\delta_{\Vol}$.
 
 Let us prove that $\delta_\Gamma\geq  h_{\topp}(T^1M_{\core},(\phi_t)_{t\in\R})$. Fix $\epsilon>0$, and fix a maximal $(\epsilon/4,d_\Omega)$-spanning set $A$ of $B_\Omega(o,R)$. Let $B_r$ be the set of vectors based at a point of $A$ and pointing at a point of $\{\gamma a : a\in A, \ d_\Omega(o,\gamma o)\leq r+2R\}$; for each $\tilde{v}\in B_r$ choose $v'\in T^1M_{\core}$ at distance less than $\epsilon/2$ from the projection in $T^1M$ of $\tilde{v}$ (when such a vector exists), and denote by $B_r'$ the collection of $v'$. By Fact~\ref{crampon}, $B_r'$ is a $(\epsilon,d_{T^1M}^{(r)})$-spanning set of $T^1M_{\core}$ for any $r>0$. Furthermore $\#B_r'\leq\#B_r\leq \#A^2\cdot\#\{\gamma : d_\Omega(o,\gamma o)\leq r+2R\}$, hence $\delta_\Gamma\geq  h_{\topp}(T^1M_{\core},(\phi_t)_{t\in\R})$.
\end{proof}

We will see in Proposition~\ref{l'entropie de BM est max} that $h_{\topp}(T^1M_{\core},(\phi_t)_{t\in\R})=\delta_\Gamma$.

\subsection{Measure-theoretic entropy}\label{rappels sur l'entropie mesurable}

\begin{defi}
 Let $f$ be an inversible measurable map of a measurable space $X$, which preserves a probability measure $\mu$. Let $\p$ be a finite measurable partition of $X$.
 \begin{itemize}
  \item The \emph{entropy} of $\p$ is
  \[H_\mu(\p):=-\sum_{P\in\p}\mu(P)\log\mu(P).\]
  \item For each integer $n\geq 1$, we set
  \[\p^{(n)}:=\{P_0\cap f^{-1}P_1\cap\dots\cap f^{-n+1}P_{n-1} : P_k\in \p,\ 0\leq k\leq n-1\}.\]
  \item The entropy of $f$ with respect to $\mu$ and $\p$ is
  \[H_\mu(f,\p):=\lim_{n\to\infty}\frac{H_\mu(\p^{(n)})}{n}.\]
  \item The entropy of $f$ with respect to $\mu$ is
  \[h_\mu(f):=\sup\{H_\mu(f,\mathcal{Q}) : \mathcal{Q} \text{ measurable partition}\}.\]
 \end{itemize}
 Let us now consider a measurable flow $(\phi_t)_{t\in\R}$ on a measurable space $X$ preserving a probability measure $\mu$. The entropy of the flow is defined to be the entropy of the time-one map $\phi_1$.
\end{defi}

\begin{rqq}\label{reparam_entropy}
 The reparametrised  flow $(\phi_{kt})_{t\in\R}$ has measure-theoretic entropy equal to $|k|$ times the measure-theoretic entropy of $(\phi_t)_{t\in\R}$.
\end{rqq}

The relation between topological and measure-theoretic entropies is given by the following famous principle, for more details see \cite[Th.\,4.5.3]{katok}.

\begin{fait}[Variational Principle]\label{le principe variationnel}
 Let $\phi=(\phi_t)_{t\in\R}$ a continuous flow on a compact metric space $(X,d)$. Denote by $\p^{\phi}(X)$ the set of $\phi$-invariant probability measures on $X$. Then
 \[h_{\topp}(\phi)=\sup_{\mu\in\p^{\phi}(X)}h_\mu(\phi).\]
\end{fait}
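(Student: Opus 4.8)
The statement is classical and the paper defers to \cite[Th.\,4.5.3]{katok}, so I only outline how I would organise the proof. The plan is first to reduce to the variational principle for a single homeomorphism, and then to prove the two inequalities $h_\mu(f)\le h_{\topp}(f)$ and $h_{\topp}(f)\le\sup_\nu h_\nu(f)$ for $f:=\phi_1$ by the standard arguments. For the reduction: by the remarks preceding the statement, $h_{\topp}(\phi)=h_{\topp}(\phi_1)$ and $h_\mu(\phi)=h_\mu(\phi_1)$ for every $\mu\in\p^\phi(X)$; as every $\phi$-invariant measure is in particular $\phi_1$-invariant, this already gives $\sup_{\mu\in\p^\phi(X)}h_\mu(\phi)\le\sup\{h_\nu(\phi_1):\nu\ \phi_1\text{-invariant}\}$. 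For the reverse inequality I would, given a $\phi_1$-invariant probability $\nu$, form $\bar\nu:=\int_0^1(\phi_s)_*\nu\,\diff s\in\p^\phi(X)$: each $(\phi_s)_*\nu$ is again $\phi_1$-invariant with $h_{(\phi_s)_*\nu}(\phi_1)=h_\nu(\phi_{-s}\phi_1\phi_s)=h_\nu(\phi_1)$ since the flow commutes, so affinity of the entropy in the invariant measure gives $h_{\bar\nu}(\phi)=h_{\bar\nu}(\phi_1)=h_\nu(\phi_1)$. Hence the two suprema agree, and it suffices to prove $h_{\topp}(f)=\sup_\nu h_\nu(f)$ for the homeomorphism $f=\phi_1$ of $(X,d)$, the supremum over $f$-invariant probabilities.

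For $h_\mu(f)\le h_{\topp}(f)$ I would follow Goodwyn's argument: fix a finite Borel partition $\mathcal P=\{P_1,\dots,P_k\}$ and an $f$-invariant $\mu$, and for $a>0$ choose by regularity compact sets $B_i\subset P_i$ with $\mu(B_0)<a$ where $B_0:=X\smallsetminus\bigcup_iB_i$; set $\mathcal B=\{B_0,\dots,B_k\}$ and $\mathcal C=\{B_0,X\smallsetminus B_0\}$. Then $H_\mu(f,\mathcal P)\le H_\mu(f,\mathcal B)+H_\mu(\mathcal P\mid\mathcal B)$ with $H_\mu(\mathcal P\mid\mathcal B)\to0$ as $a\to0$. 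As $B_1,\dots,B_k$ are compact and pairwise disjoint, there is $\delta>0$ with $d(B_i,B_j)\ge\delta$ for $1\le i\ne j\le k$; hence in $\mathcal B^{(n)}$ two points lying in distinct atoms with the same pattern of occurrences of $B_0$ are $(d^{(n)},\delta)$-separated, so there are at most $N(d^{(n)},\delta)$ atoms with a fixed pattern. Writing $H_\mu(\mathcal B^{(n)})=H_\mu(\mathcal C^{(n)})+H_\mu(\mathcal B^{(n)}\mid\mathcal C^{(n)})\le nH_\mu(\mathcal C)+\log N(d^{(n)},\delta)$, dividing by $n$, and letting $n\to\infty$ then $a\to0$, yields $H_\mu(f,\mathcal P)\le h_{\topp}(f)$; the supremum over $\mathcal P$ gives $h_\mu(f)\le h_{\topp}(f)$.

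For the reverse inequality I would run the Goodman--Misiurewicz construction: fix $\epsilon>0$, take a maximal $(d^{(n)},\epsilon)$-separated set $E_n$ (so $\#E_n=N(d^{(n)},\epsilon)$), and put $\sigma_n:=\tfrac1{\#E_n}\sum_{x\in E_n}\delta_x$, $\mu_n:=\tfrac1n\sum_{j=0}^{n-1}f^j_*\sigma_n$; by compactness of the space of probabilities extract a subsequence $(n_i)$ with $\mu_{n_i}\to\mu$ weak-$*$ and $\tfrac1{n_i}\log\#E_{n_i}\to\limsup_n\tfrac1n\log N(d^{(n)},\epsilon)$, and check that $\mu$ is $f$-invariant. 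Choose a finite partition $\mathcal P$ with $\diam(P_i)<\epsilon$ and $\mu(\partial P_i)=0$ for all $i$, obtained by disjointifying a cover by small balls with $\mu$-null boundary spheres. Since $E_n$ is $(d^{(n)},\epsilon)$-separated, each atom of $\mathcal P^{(n)}$ meets $E_n$ in at most one point, so $H_{\sigma_n}(\mathcal P^{(n)})=\log\#E_n$, and the standard block decomposition of $\mathcal P^{(n)}$ combined with concavity of $H$ in the measure gives, for every fixed $q\ge1$,
\[\frac1n\log\#E_n\le\frac1qH_{\mu_n}(\mathcal P^{(q)})+\frac{2q\log k}{n}.\]
Because $\mu$ is $f$-invariant, $\mu(\partial A)=0$ for every atom $A$ of $\mathcal P^{(q)}$, whence $H_{\mu_{n_i}}(\mathcal P^{(q)})\to H_\mu(\mathcal P^{(q)})$; letting $n=n_i\to\infty$ then $q\to\infty$ gives $\limsup_n\tfrac1n\log N(d^{(n)},\epsilon)\le H_\mu(f,\mathcal P)\le h_\mu(f)$, and $\epsilon\to0$ gives $h_{\topp}(f)\le\sup_\nu h_\nu(f)$. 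Together with the reduction this proves the principle. The main obstacle is this last inequality: constructing the Misiurewicz limit measure, selecting a partition whose iterated atoms have $\mu$-null boundary, and justifying the displayed block estimate together with the convergence $H_{\mu_{n_i}}(\mathcal P^{(q)})\to H_\mu(\mathcal P^{(q)})$.
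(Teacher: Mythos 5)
The paper does not prove this Fact at all: it is quoted as a classical result with a citation to \cite[Th.\,4.5.3]{katok}, so there is no ``paper proof'' to compare against line by line. Your outline is precisely the standard textbook argument (Goodwyn's inequality for $h_\mu(f)\le h_{\topp}(f)$ and the Misiurewicz construction for the reverse), and its substance is correct; the separation estimate via the partition $\mathcal B$ and the auxiliary two-set partition $\mathcal C$, the bound $H_\mu(\mathcal P\mid\mathcal B)\le\mu(B_0)\log k$, and the block decomposition with boundary-null atoms of $\mathcal P^{(q)}$ are all as in the cited reference. The one step I would ask you to justify more carefully is the reduction: the identity $h_{\bar\nu}(\phi_1)=\int_0^1 h_{(\phi_s)_*\nu}(\phi_1)\,\diff s$ does not follow from affinity of the entropy map alone, since affinity only handles finite convex combinations; for the integral average one needs the integral version (e.g.\ via Jacobs' theorem and the ergodic decomposition, noting that the ergodic decomposition of $\bar\nu$ is the average of those of the $(\phi_s)_*\nu$). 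Alternatively, you can bypass this entirely by building the Misiurewicz limit measure to be flow-invariant from the start, replacing $\mu_n=\tfrac1n\sum_{j=0}^{n-1}f^j_*\sigma_n$ by $\tfrac1n\int_0^n(\phi_t)_*\sigma_n\,\diff t$; the rest of your argument then goes through unchanged and directly yields $h_{\topp}(\phi)\le\sup_{\mu\in\p^{\phi}(X)}h_\mu(\phi)$ without the averaging lemma.
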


We recall the definition of entropy-expansive maps from Bowen \cite{entropy-expansive}.

\begin{defi}
 Consider a homeomorphism $f$ of a metric space $(X,d)$ and $\epsilon>0$. The map $f$ is said to be \emph{$(d,\epsilon)$-entropy-expansive} if for each $x\in X$, the action of $f$ on the \emph{Bowen ball}
 \[Z_{\epsilon}(x):=\{y\in X: \forall n\in \Z, d(f^nx,f^ny)\leq\epsilon\}\]
 has zero entropy.
\end{defi}

\begin{rqq}\label{f^n_is_expansive}
 If $f$ is $(d,\epsilon)$-entropy-expansive then $f^n$ is $(d^{(n)},\epsilon)$-entropy-expansive.
\end{rqq}

The following is one of the main property of entropy-expansive maps.

\begin{fait}[\!{\!\cite[Th.\,3.5]{entropy-expansive}}]\label{bowen}
 Let $\epsilon$ be a positive number; let $f$ be a $\epsilon$-entropy-expansive homeomorphism of a metric space $(X,d)$; let $\mu$ be a $f$-invariant probability measure on $X$; let $\p$ be a finite measurable partition all of whose elements have diameter less than $\epsilon$. Then 
 \[h_{\mu}(f)=H_\mu(f,\p).\]
\end{fait}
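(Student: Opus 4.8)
The plan is to prove the two inequalities $h_\mu(f)\ge H_\mu(f,\p)$ and $h_\mu(f)\le H_\mu(f,\p)$. The first is immediate, since $H_\mu(f,\p)$ is one of the quantities over which $h_\mu(f)$ is a supremum. The content is the reverse inequality, and since $h_\mu(f)=\sup_{\mathcal Q}H_\mu(f,\mathcal Q)$ it suffices to show $H_\mu(f,\mathcal Q)\le H_\mu(f,\p)$ for every finite measurable partition $\mathcal Q$.

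The main device I would use is to pass to the factor associated with the $f$-invariant sub-$\sigma$-algebra $\mathcal C:=\bigvee_{n\in\Z}f^n\p$. Let $\pi:X\to Y$ be this measure-theoretic factor, with quotient system $(Y,g,\nu)$ where $\nu=\pi_*\mu$. Since $\p$ is $\mathcal C$-measurable it descends to a partition $\overline{\p}$ of $Y$, and because $\mathcal C=\bigvee_nf^n\p$ the partitions $g^n\overline{\p}$ generate the Borel $\sigma$-algebra of $Y$ modulo $\nu$; by the Kolmogorov--Sinai theorem $h_\nu(g)=H_\nu(g,\overline{\p})=H_\mu(f,\p)$. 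By the Abramov--Rokhlin addition formula for the entropy of an extension, $h_\mu(f)=h_\nu(g)+h_\mu(f\mid\pi)$, where $h_\mu(f\mid\pi)$ is the entropy of $f$ relative to $\pi$. Hence everything reduces to proving that this relative entropy vanishes. (One should first replace $X$ by $\supp(\mu)$, or otherwise assume $(X,\mu)$ is a standard probability space, so that the disintegration of $\mu$ over $\mathcal C$ behaves well; in the applications $X$ is compact, so this costs nothing.)

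This is the step where entropy-expansiveness is used. If $y$ belongs to the $\mathcal C$-atom of $x$, then $f^ny$ and $f^nx$ lie in a common element of $\p$ for every $n\in\Z$, so $d(f^nx,f^ny)<\epsilon$ for all $n$ because $\diam(\p)<\epsilon$; therefore that atom is contained in the Bowen ball $Z_\epsilon(x)$, which is a closed (and, when $X$ is compact, compact) subset on which $f$ has zero topological entropy by hypothesis. Thus for $\nu$-almost every $y$ the fibre $\pi^{-1}(y)$ is a zero-entropy set for $f$. To convert this into $h_\mu(f\mid\pi)=0$ one follows Bowen's combinatorial argument: cover $X$ by finitely many open sets of diameter less than $\epsilon$ with positive Lebesgue number $\rho$; if two points have Bowen distance $\max_{0\le k<n}d(f^kx,f^ky)<\rho$ they lie in a common cell of the $n$-th refined cover; feeding this into the disintegrated estimate $\tfrac1nH_\mu(\mathcal Q^{(n)}\mid\mathcal C)\le\tfrac1n\int_Y\log\#\{\text{refined cells meeting }\pi^{-1}(y)\}\,d\nu(y)$ and using that the count over each individual Bowen ball grows sub-exponentially (because $h_{\topp}(f;Z_\epsilon(x))=0$) gives $h_\mu(f,\mathcal Q\mid\mathcal C)=0$ for all $\mathcal Q$, hence $h_\mu(f\mid\pi)=0$. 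Alternatively one can invoke a Ledrappier--Walters-type relative variational principle, after passing to a topological model of the factor.

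The crux is exactly this last upgrade, from ``zero topological entropy on every fibre'' to ``zero relative measure-theoretic entropy''. The difficulty is twofold: the factor $\pi$ is only measurable, so one must handle the disintegration of $\mu$ over $\mathcal C$ carefully; and one must navigate the standard but delicate dictionary between finite measurable partitions, finite open covers with a positive Lebesgue number, and $(n,\delta)$-separated sets, since it is only through a well-chosen open cover that the ``number of cells meeting a single Bowen ball'' becomes controlled by the (vanishing) topological entropy of that ball.
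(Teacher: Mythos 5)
The paper itself does not prove this statement: it is imported verbatim from Bowen's article \cite{entropy-expansive} (Theorem 3.5 there), so the relevant comparison is with Bowen's original argument, of which your proposal is essentially a repackaging in the language of measurable factors. Your reduction is sound as far as it goes: the trivial inequality $h_\mu(f)\geq H_\mu(f,\mathcal P)$; the factor $(Y,g,\nu)$ associated with $\mathcal C=\bigvee_{n\in\Z}f^n\mathcal P$, with $h_\nu(g)=H_\mu(f,\mathcal P)$ by Kolmogorov--Sinai; the Abramov--Rokhlin formula (valid on standard spaces, so compactness of $X$ should really be added to the hypotheses, as it also is needed for Lebesgue numbers and for $Z_\epsilon(x)$ to be compact); and the observation that every $\mathcal C$-atom is contained in a Bowen ball $Z_\epsilon(x)$, hence has zero topological entropy.

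The genuine gap is exactly at what you call the crux, and your sketch does not close it. The disintegrated bound $\tfrac1nH_\mu(\mathcal Q^{(n)}\mid\mathcal C)\leq\tfrac1n\int_Y\log\#\{Q\in\mathcal Q^{(n)}:Q\cap\pi^{-1}(y)\neq\emptyset\}\,d\nu(y)$ is correct, but the number of cells of a measurable partition $\mathcal Q^{(n)}$ meeting $Z_\epsilon(x)$ is \emph{not} controlled by the $(n,\delta)$-spanning numbers of $Z_\epsilon(x)$: a single element of the refined open cover (equivalently, a single $d^{(n)}$-ball of radius the Lebesgue number) can meet up to $\#\mathcal Q$ cells of $\mathcal Q$ at each of the $n$ time steps, so the naive comparison produces a factor $(\#\mathcal Q)^n$ that survives the $\tfrac1n\log$ and is not killed by $h_{\topp}(f,Z_\epsilon(x))=0$. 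Bowen's proof (and the Misiurewicz-style fix) handles precisely this point, e.g.\ by shrinking the cells $Q_i$ to compact sets $C_i\subset Q_i$, using the cover by the sets $C_0\cup C_i$ so that each cover element meets only two cells, and then passing to the powers $f^N$ to make the resulting $\log 2$ per step negligible; none of this mechanism appears in your sketch, which conflates covering numbers of the fibre with counts of partition cells meeting it. The alternative you mention, a Ledrappier--Walters relative variational principle, does not apply as stated either: your factor is only measurable, since the itinerary map to the shift on $\{1,\dots,\#\mathcal P\}^{\Z}$ is not continuous (cells of $\mathcal P$ are not open), and after replacing it by a topological model the fibres need no longer be contained in Bowen balls, so the hypothesis ``zero entropy on every fibre'' may be lost. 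In short, the overall strategy is viable, but the step from ``zero topological entropy on each fibre'' to ``zero relative entropy'' is the actual content of Bowen's theorem and is still missing.
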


\begin{rqq}\label{l'entropie est semi-continue}
 Let $\epsilon>0$; let $f$ be an $\epsilon$-entropy-expansive homeomorphism of a metric space $(X,d)$. Fact~\ref{bowen} tells us that the measure-theoretic entropy depends upper semi-continuously on the measure (with respect to the weak* topology), and that there exists a measure of maximal entropy if $X$ is compact.
\end{rqq}

The geodesic flow on a convex projective manifold is entropy-expansive if the injectivity radius is non-zero.
\begin{fait}[\!{\!\cite[Th.\,6.2]{bray_top_mixing}}]\label{geod_is_expansive}
 Let $\Omega\subset\PR(V)$ be a properly convex open set and $\Gamma\subset\Aut(\Omega)$ a discrete subgroup. Let us assume that the injectivity radius of $M=\Omega/\Gamma$ is non-zero. Then the time-one map of the geodesic flow on $T^1M$ is $(d_{T^1M},\epsilon_0/3)$-entropy-expansive.
\end{fait}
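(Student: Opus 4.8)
The plan is to reduce the claim to a flat-strip phenomenon for straight geodesics of $\Omega$ and then read off zero entropy straight from Definition~\ref{Topological Entropy}; this follows Bray's approach \cite{bray_top_mixing}. Observe first that entropy-expansiveness for one parameter $\epsilon>0$ implies it for every smaller one, since $Z_{\epsilon'}(v)\subseteq Z_\epsilon(v)$ for $\epsilon'\leq\epsilon$; so it is enough (up to shrinking $\epsilon_0$, and this is all the applications need) to fix $v\in T^1M$ and some $\epsilon\in(0,\inj(M)/2)$ and to show that the geodesic flow restricted to the Bowen ball $Z_\epsilon(v)=\{w\in T^1M:d_{T^1M}(\phi_n v,\phi_n w)\leq\epsilon\text{ for all }n\in\Z\}$ has zero topological entropy.

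First I would lift $Z_\epsilon(v)$ to the universal cover. Fixing a lift $\vv\in T^1\Omega$ of $v$, the bound $\epsilon<\inj(M)/2$ together with Remark~\ref{Remarque : distdyn} and Lemma~\ref{crampon} (the latter used to control the behaviour between integer times) shows that each $w\in Z_\epsilon(v)$ has a lift $\ww\in T^1\Omega$ with $d_\Omega(c_{\vv}(t),c_{\ww}(t))\leq\epsilon$ for all $t\in\R$, the competing $\Gamma$-translates being pushed away by the injectivity radius. Let $\widetilde Z\subset T^1\Omega$ be the set of such lifts; it is compact because $(\Omega,d_\Omega)$ is proper, and $Z_\epsilon(v)$ is its image in $T^1M$. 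Since the projection does not increase distances, it then suffices to show that for each $\delta>0$ the minimal cardinality of a $(\delta,d_{T^1\Omega}^{(n)})$-spanning subset of $\widetilde Z$ is bounded independently of $n$.

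The crux is a flat-strip statement, which I would establish by applying Lemma~\ref{crampon} on sliding windows and using the structure of the flats — properly embedded simplices and flat strips — of the Hilbert geometry of $\Omega$, as in \cite{crampon,CD1} and as exploited by Bray: two straight geodesics $c_1,c_2$ of $\Omega$ that stay at bounded Hausdorff distance for all time have their forward endpoints on a common (possibly trivial) face of $\partial\Omega$, likewise their backward endpoints, the convex hull of $c_1\cup c_2$ is a flat, and — with compatible parametrisations — the function $t\mapsto d_\Omega(c_1(t),c_2(t))$ is bounded and in fact essentially constant. (If $v$ is rank-one this is vacuous: $Z_\epsilon(v)$ is then the flow line of $v$; the substance is the case of $v$ tangent to a flat.) Applying this to $c_{\ww},c_{\ww'}$ for $\ww,\ww'\in\widetilde Z$, which are asymptotic to $c_{\vv}$ in both directions hence biasymptotic to one another inside a common flat, yields a constant $C$ (depending only on $\epsilon$) with
\[d_{T^1\Omega}^{(n)}(\ww,\ww')=\max_{0\leq s\leq n+1}d_\Omega(c_{\ww}(s),c_{\ww'}(s))\leq C\,d_{T^1\Omega}(\ww,\ww')\qquad\text{for all }n\geq 0.\]

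Granting this, the conclusion is immediate: since $\widetilde Z$ is compact and $d_{T^1\Omega}^{(n)}\leq C\,d_{T^1\Omega}^{(0)}$ on $\widetilde Z$ for all $n$, any finite $(\delta/C,d_{T^1\Omega}^{(0)})$-spanning set of $\widetilde Z$ is $(\delta,d_{T^1\Omega}^{(n)})$-spanning for every $n$, so these spanning numbers are bounded uniformly in $n$ and $h_{\topp}(\phi_1|_{Z_\epsilon(v)})=\lim_{\delta\to0}\limsup_n\tfrac1n\log(\cdots)=0$. Hence $\phi_1$ is $(d_{T^1M},\epsilon)$-entropy-expansive, which gives the statement. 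The main obstacle is precisely the flat-strip statement: for a general, highly non-strictly-convex $\Omega$ one must understand the family of straight geodesics shadowing a fixed one and, crucially, rule out that the dynamical metrics $d_{T^1\Omega}^{(n)}$ keep refining as $n\to\infty$; this is where the Hilbert-geometric substitutes for non-positive curvature (Lemma~\ref{crampon} and its refinements) and the structure theory of flats do the real work, the remainder being formal.
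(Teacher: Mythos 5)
First, a caveat: the paper does not prove this Fact at all — it is quoted verbatim from Bray (\cite[Th.\,6.2]{bray_top_mixing}) — so your proposal can only be measured against what such a proof actually requires. Your reduction steps (monotonicity in $\epsilon$, lifting the Bowen ball to $T^1\Omega$ using $\epsilon<\inj(M)/2$, and the formal passage from uniformly bounded spanning numbers to zero entropy) are fine. The problem is the crux you rely on. The ``flat-strip'' statement — that two straight geodesics of $\Omega$ at bounded Hausdorff distance for all time span a flat and stay at essentially constant distance, whence $d^{(n)}_{T^1\Omega}(\ww,\ww')\leq C\,d_{T^1\Omega}(\ww,\ww')$ for all $n$ — is false for non-strictly convex $\Omega$, and it fails exactly in the mixed situation the Fact is designed to cover. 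Take $\Omega\subset\R^2$ the interior of the convex hull of the closed unit disk and the segment $[-1,1]\times\{1\}$, let $\xi^-=(0,-1)$ (a smooth, strongly extremal boundary point), and let $c_1,c_2$ be the chords from $\xi^-$ to two distinct points $(a_1,1),(a_2,1)$ in the relative interior of the top face $F$. These are straight geodesics at bounded Hausdorff distance, and (with $|a_i|$ small and matched parametrisations) they both lie in one Bowen ball $Z_\epsilon(v)$ of the vector $v$ tangent to $[\xi^-,(0,1)]$ in $T^1\Omega$ itself ($\Gamma$ trivial is allowed by the statement). Yet $d_\Omega(c_1(t),c_2(t))\to 0$ as $t\to-\infty$ (Fact~\ref{strongstablemanifolds} for the reversed flow, $\xi^-$ being smooth) while $d_\Omega(c_1(t),c_2(t))\to d_F\bigl((a_1,1),(a_2,1)\bigr)>0$ as $t\to+\infty$ (compute the cross-ratio along the chords at height $1-s$). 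So the distance is not essentially constant, the convex hull of $c_1\cup c_2$ is not a flat, and no such lemma can be cited from \cite{crampon,CD1}. With it gone, your uniform Lipschitz comparison $d^{(n)}\leq C\,d^{(0)}$ is unsupported, and the whole argument collapses; assuming biasymptotic geodesics sit in genuine flats is precisely assuming away the ``half strict, half flat'' behaviour of rank-one non-strictly convex geometry.

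What a correct proof (in substance, Bray's) uses is weaker and does not compare $d^{(n)}$ with $d^{(0)}$: one controls the distance at \emph{all} times by asymptotic data. One first shows that for $\ww,\ww'$ in the lifted Bowen ball the forward endpoints lie in a common open face of $\partial\Omega$ at uniformly bounded distance for the face metric $d_{\overline{\Omega}}$ of Definition~\ref{les metriques du bord} (likewise backward), and then Lemma~\ref{crampon} — which gives $\sup_{r\leq t\leq s}d_\Omega(c_{\ww}(t),c_{\ww'}(t))\leq d_\Omega(c_{\ww}(r),c_{\ww'}(r))+d_\Omega(c_{\ww}(s),c_{\ww'}(s))$ — bounds $\sup_{t\in\R}d_\Omega(c_{\ww}(t),c_{\ww'}(t))$ by these two face distances plus the time offset; this is exactly the mechanism already used in Lemma~\ref{le coeur fait bip}. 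Since the endpoints then range in bounded, hence totally bounded, subsets of two fixed open faces (each face being a proper metric space for its Hilbert metric) and the offset in a compact interval, the Bowen ball is totally bounded for the metric $\sup_n d^{(n)}_{T^1\Omega}$; a single finite set is therefore $(\rho,d^{(n)})$-spanning for every $n$, which gives zero entropy on $Z_{\epsilon_0/3}(v)$ with no flatness and no Lipschitz comparison. In my counterexample above this is visible: $d^{(n)}$ may exceed $d^{(0)}$, but it never exceeds the face separation of the endpoints plus the offset, and that quantity lives on a compact parameter space. Replacing your flat-strip lemma by this ``sup-over-all-time is controlled by face data'' estimate is the missing, essential content.
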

It applies in particular if $\Gamma$ acts convex cocompactly on $\Omega$ and \emph{if $\Gamma$ is torsion-free}. However, we will consider cases where $\Gamma$ is not torsion-free, and the geodesic flow on $T^1M$ is \emph{not} entropy-expansive: for instance if $\Omega$ is the Poincar\'e disk and $\Gamma$ is a cocompact triangle group. To overcome this issue, we will use Selberg's lemma (if $\Gamma$ acts convex cocompactly on $\Omega$, then it is finitely generated --- see \cite[Th.\,8.10]{bridsonhaeflige}) and the following elementary observation.

\begin{obs}\label{obs:revetentrop}
 Let $(\phi_t)_t$ be a measurable flow on a measurable space $X$ that preserves a probability measure $\mu$, and $G$ a finite group that acts measurably on $X$ and commutes with $(\phi_t)_t$. We denote by $\pi:X\rightarrow X/G$ the natural projection, and $(\phi_t)_t$ the induced flow on the quotient. Then $h_\mu(X,(\phi_t)_t)=h_{\pi_*\mu}(X/G,(\phi_t)_t)$.
 In particular, if $X$ is a compact topological space and the actions of $(\phi_t)_t$ and $G$ are continuous, then $h_{\topp}(X,(\phi_t)_t)=h_{\topp}(X/G,(\phi_t)_t)$.
\end{obs}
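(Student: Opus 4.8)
The plan is to prove the measure-theoretic identity first and then derive the topological one from it via the variational principle (Fact~\ref{le principe variationnel}). Since the entropy of a flow is \emph{defined} as the entropy of its time-one map, and since $\phi_1$ commutes with the finite group $G$ (so it descends to a map $\bar\phi_1$ on $X/G$ with $\pi\circ\phi_1=\bar\phi_1\circ\pi$, the time-one map of the induced flow), the whole statement reduces to the following assertion about a single invertible transformation: if $T=\phi_1$ is a $\mu$-preserving automorphism of a probability space $(X,\mu)$ and $G$ is a finite group of $\mu$-preserving automorphisms commuting with $T$, then $h_\mu(T)=h_{\pi_*\mu}(\bar T)$, where $\bar T$ is induced by $T$ on $X/G$ and $\pi_*\mu$ is the image measure. (That $\pi$ and $\bar T$ are measurable, and that $X/G$ carries a sensible Borel structure, is automatic because $G$ is finite.)

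The inequality $h_\mu(T)\ge h_{\pi_*\mu}(\bar T)$ is the easy half: for every finite measurable partition $\mathcal{Q}$ of $X/G$ the pulled-back partition $\pi^{-1}\mathcal{Q}$ of $X$ satisfies $H_\mu\big((\pi^{-1}\mathcal{Q})^{(n)}\big)=H_{\pi_*\mu}\big(\mathcal{Q}^{(n)}\big)$ for all $n$, because $\pi$ intertwines $T$ and $\bar T$ and $\pi_*\mu$ is the image of $\mu$; dividing by $n$, letting $n\to\infty$, and taking the supremum over $\mathcal{Q}$ gives the inequality. The reverse inequality $h_\mu(T)\le h_{\pi_*\mu}(\bar T)$ is the real content — it says that passing to the finite cover cannot increase entropy — and I would prove it using that $\pi$ has fibres of cardinality at most $|G|$ (each fibre being a single $G$-orbit). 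Let $\mathcal{C}:=\pi^{-1}(\text{Borel }\sigma\text{-algebra of }X/G)$, a $T$-invariant sub-$\sigma$-algebra with $h_\mu(T,\mathcal{C})=h_{\pi_*\mu}(\bar T)$. For any finite partition $\mathcal{A}$ of $X$ one has $H_\mu(\mathcal{A}\mid\mathcal{C})\le\log|G|$ (the conditional measure on a fibre with $\le|G|$ points sees at most $|G|$ atoms of $\mathcal{A}$); applying this to $\mathcal{A}=\mathcal{P}^{(n)}$ shows the relative entropy $h_\mu(T,\mathcal{P}\mid\mathcal{C})=\lim_n\frac{1}{n}H_\mu\big(\mathcal{P}^{(n)}\mid\mathcal{C}\big)$ vanishes for every finite partition $\mathcal{P}$ of $X$. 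By the Abramov--Rokhlin formula for the entropy of an extension, $h_\mu(T)=h_{\pi_*\mu}(\bar T)+h_\mu(T\mid\mathcal{C})$ with $h_\mu(T\mid\mathcal{C})=\sup_{\mathcal{P}}h_\mu(T,\mathcal{P}\mid\mathcal{C})=0$, which gives $h_\mu(T)=h_{\pi_*\mu}(\bar T)$.

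For the topological statement, assume $X$ compact metrisable and the two actions continuous; again it suffices to compare the time-one homeomorphisms. By the variational principle (Fact~\ref{le principe variationnel}), $h_{\topp}(X/G,(\phi_t)_t)=\sup_\nu h_\nu(\bar\phi_1)$ over $\bar\phi$-invariant Borel probabilities $\nu$ on $X/G$, and similarly on $X$. Given such a $\nu$, a Krylov--Bogolyubov argument (a weak-$*$ limit of Ces\`aro averages $\frac{1}{T}\int_0^T(\phi_t)_*\mu_0\,\diff t$ for any Borel probability $\mu_0$ on $X$ with $\pi_*\mu_0=\nu$, followed by averaging over $G$) produces a $(\phi_t)_t$-invariant and $G$-invariant probability $\mu$ on $X$ with $\pi_*\mu=\nu$; the first part then yields $h_\nu(\bar\phi_1)=h_\mu(\phi_1)\le h_{\topp}(X,(\phi_t)_t)$, so $h_{\topp}(X/G,(\phi_t)_t)\le h_{\topp}(X,(\phi_t)_t)$. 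Conversely, for any $(\phi_t)_t$-invariant $\mu$ on $X$ the first part gives $h_\mu(\phi_1)=h_{\pi_*\mu}(\bar\phi_1)\le h_{\topp}(X/G,(\phi_t)_t)$, whence $h_{\topp}(X,(\phi_t)_t)\le h_{\topp}(X/G,(\phi_t)_t)$ by the variational principle on $X$; equality follows. (Alternatively, one can quote Bowen's theorem that a finite-to-one topological factor has the same topological entropy.) I do not anticipate a genuine obstacle: the statement is intuitively clear and the proof just repackages classical facts about finite-to-one factors. The only point deserving care is the reverse measure-theoretic inequality — i.e.\ the vanishing of the relative entropy of a finite-fibred factor — and, in the topological half, the existence of a flow-invariant lift $\mu$ of $\nu$, which is routine precisely because $G$ is finite.
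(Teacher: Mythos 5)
Your proof is correct. The paper offers no argument at all for this statement (it is recorded as an unproved ``elementary observation''), so there is no official proof to compare against; your write-up — the easy inequality via pulled-back partitions, the reverse inequality via the bound $H_\mu(\mathcal{A}\mid\pi^{-1}\mathcal{B}_{X/G})\leq\log|G|$ on fibres of cardinality at most $|G|$, hence vanishing relative entropy and the Abramov--Rokhlin identity, then the topological statement via the variational principle and a Ces\`aro-averaged flow-invariant lift of any invariant measure on $X/G$ — is the standard way to justify it and all steps go through. Two minor remarks: in your reduction you assume $G$ preserves $\mu$, which is not part of the hypothesis, but your argument never uses it (only that $\pi$ is at most $|G|$-to-one and intertwines the dynamics), so nothing is lost; and the disintegration over the factor and the Abramov--Rokhlin formula require the underlying space to be a Lebesgue/standard Borel space rather than an arbitrary measurable space, a harmless restriction here since the observation is only applied to $T^1M$ and compact invariant subsets thereof, and in the topological half one should also note that $X/G$ is again compact metrisable so the variational principle (Fact~\ref{le principe variationnel}) applies on both sides.
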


\section{Construction of the Sullivan measures}\label{Construction of the Sullivan measures}

\subsection{The Gromov product}\label{Section : le produit de Gromov}

Let us define the Gromov product. It will be used to define the Sullivan measures. Recall that given a properly convex open set $\Omega\subset\PR(V)$ with three points $x,\xi,\eta\in\Omega$, the Gromov product is defined by 
\begin{equation}
 2\langle \xi,\eta \rangle_x:=d_\Omega(x,\xi)+d_\Omega(x,\eta)-d_\Omega(\xi,\eta)\geq0.
\end{equation}
The following are three immediate properties of the Gromov product. Let $y\in\Omega$.
\begin{gather}
 x\in[\xi,\eta]  \Rightarrow \langle \xi,\eta \rangle_x = 0; \label{Equation : Gromov 1} \\
 2\langle \xi,\eta \rangle_x = 2\langle \xi,\eta \rangle_y + b_\xi(x,y)+b_\eta(x,y); \label{Equation : Gromov 2} \\
 |\langle \xi,\eta \rangle_x - \langle \xi,\eta \rangle_y | \leq d_\Omega(x,y). \label{Equation : Gromov 3}
\end{gather}

We denote by $\Geod_{\hor}(\Omega)$ (\resp $\Geod^{\infty}_{\hor}(\Omega)$) the set of pairs $(\xi,\eta)$ in $(\overline{\Omega}^{\hor})^2$ (\resp $\partial_{\hor}\Omega^2$) such that $[\xi,\eta]\cap\Omega\neq\emptyset$, where $[\xi,\eta]:=[\pi_{\hor}(\xi),\pi_{\hor}(\eta)]$.

\begin{prop}\label{prop:gromprod}
 Let $\Omega\subset\PR(V)$ be a properly convex open set. The map $(\xi,\eta,x)\mapsto \langle \xi,\eta \rangle_x$ defined on $\Omega^3$ extends continuously to $\Geod_{\hor}(\Omega)\times\Omega$, as well as \eqref{Equation : Gromov 1}, \eqref{Equation : Gromov 2} and \eqref{Equation : Gromov 3}.
\end{prop}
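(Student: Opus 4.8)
The strategy is to reduce the continuous extension of the Gromov product to the already-known continuous extension of horofunctions to the horocompactification (Definition~\ref{horobord}), which is the whole point of using Walsh's theorem rather than working on $\partial\Omega$ directly. The key identity is \eqref{Equation : Gromov 2}: rewriting it as $2\langle\xi,\eta\rangle_x = 2\langle\xi,\eta\rangle_y + b_\xi(x,y) + b_\eta(x,y)$, we see that once we fix an auxiliary basepoint, the $x$-dependence of the Gromov product is entirely governed by horofunctions, which extend continuously in all three arguments $(\zeta,x,y)\in\overline\Omega^{\hor}\times\Omega\times\Omega$ by the remark following Definition~\ref{horobord}. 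So the real content is to define $\langle\xi,\eta\rangle_o$ for a \emph{fixed} $o\in\Omega$ and $(\xi,\eta)\in\Geod_{\hor}(\Omega)$, and show it depends continuously on $(\xi,\eta)$; the general case then follows by adding $\tfrac12(b_\xi(x,o)+b_\eta(x,o))$, a continuous function of $(\xi,\eta,x)$.

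To define and get continuity of $\langle\xi,\eta\rangle_o$ on $\Geod_{\hor}(\Omega)$: first I would handle the case $(\xi,\eta)\in\Geod_{\hor}^\infty(\Omega)$, i.e.\ both points on the horoboundary. Pick any point $p\in[\pi_{\hor}\xi,\pi_{\hor}\eta]\cap\Omega$ (nonempty by definition of $\Geod_{\hor}(\Omega)$). By \eqref{Equation : Gromov 1}, $\langle\xi,\eta\rangle_p$ should be $0$, so \eqref{Equation : Gromov 2} forces the definition $2\langle\xi,\eta\rangle_o := b_\xi(o,p) + b_\eta(o,p)$; one must check this is independent of the choice of $p$ on the segment (if $p,p'$ are both on $[\pi_{\hor}\xi,\pi_{\hor}\eta]$, then for a point $z$ of $\Omega$ converging to $\xi$ along the segment one has $b_\xi(p,p') = \lim (d_\Omega(p,z)-d_\Omega(p',z)) = \pm d_\Omega(p,p')$ with the sign matching the geometry, and similarly for $\eta$ with the opposite sign, so $b_\xi(o,p)+b_\eta(o,p) = b_\xi(o,p')+b_\eta(o,p')$). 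Then I would take a sequence $(\xi_n,\eta_n)\to(\xi,\eta)$ in $\Geod_{\hor}(\Omega)$; using the continuity of $\pi_{\hor}$ (Walsh, Fact~\ref{walsh}) one sees $[\pi_{\hor}\xi_n,\pi_{\hor}\eta_n]$ converges to $[\pi_{\hor}\xi,\pi_{\hor}\eta]$ in the Hausdorff topology, so one can choose $p_n\in[\pi_{\hor}\xi_n,\pi_{\hor}\eta_n]\cap\Omega$ with $p_n\to p\in[\pi_{\hor}\xi,\pi_{\hor}\eta]\cap\Omega$, and then continuity of $(\zeta,x,y)\mapsto b_\zeta(x,y)$ on $\overline\Omega^{\hor}\times\Omega\times\Omega$ gives $2\langle\xi_n,\eta_n\rangle_o = b_{\xi_n}(o,p_n)+b_{\eta_n}(o,p_n)\to b_\xi(o,p)+b_\eta(o,p) = 2\langle\xi,\eta\rangle_o$. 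The case where one or both of $\xi,\eta$ lie in $\Omega$ itself is handled the same way, with $b_\xi(x,y) = d_\Omega(x,\xi)-d_\Omega(y,\xi)$ when $\xi\in\Omega$, which is visibly continuous; and one checks the definition agrees with the original Gromov product on $\Omega^3$ since there $o$ works as a valid "$p$" only when $o\in[\xi,\eta]$, but more simply the formula $2\langle\xi,\eta\rangle_o = d_\Omega(o,\xi)+d_\Omega(o,\eta)-d_\Omega(\xi,\eta)$ is recovered directly because $b_\xi(o,p)+b_\eta(o,p) = d_\Omega(o,\xi)-d_\Omega(p,\xi)+d_\Omega(o,\eta)-d_\Omega(p,\eta) = d_\Omega(o,\xi)+d_\Omega(o,\eta)-d_\Omega(\xi,\eta)$ when $p\in[\xi,\eta]$.

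Finally, the continuous extension of the three identities \eqref{Equation : Gromov 1}, \eqref{Equation : Gromov 2}, \eqref{Equation : Gromov 3} is then automatic: each is a continuous identity on a dense subset ($\Omega^3$, resp.\ the relevant domains) of the space on which we have just shown all terms extend continuously, so the identities pass to the limit. For \eqref{Equation : Gromov 1} one must note that $\langle\xi,\eta\rangle_x = 0$ extends to all $x\in[\pi_{\hor}\xi,\pi_{\hor}\eta]\cap\Omega$, which is exactly how we defined things. The main obstacle I anticipate is the bookkeeping around choosing the segment points $p_n$ consistently (the Hausdorff-convergence of segments and the fact that $[\pi_{\hor}\xi,\pi_{\hor}\eta]\cap\Omega$ stays nonempty and "open" enough to push points into it), and the verification that the definition via $b_\xi(o,p)+b_\eta(o,p)$ does not depend on $p$ — this is where the geometric meaning of horofunctions restricted to a straight segment is used. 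Everything else is a routine limiting argument once Walsh's theorem and the continuity of horofunctions on $\overline\Omega^{\hor}$ are invoked.
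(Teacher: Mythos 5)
Your route is in substance the paper's: both arguments hinge on the identity $2\langle \xi,\eta\rangle_x=b_\xi(x,p)+b_\eta(x,p)$ for $p\in[\xi,\eta]\cap\Omega$, obtained by combining \eqref{Equation : Gromov 1} and \eqref{Equation : Gromov 2}, together with the continuity of $(\zeta,x,y)\mapsto b_\zeta(x,y)$ on $\overline{\Omega}^{\hor}\times\Omega\times\Omega$. The only packaging difference is that the paper passes from the segment-point formula to a function of $(\xi,\eta,x)$ via the continuous, surjective and open projection $(\xi,\eta,x,y)\mapsto(\xi,\eta,x)$ from $A:=\{(\xi,\eta,x,y):y\in[\xi,\eta]\}$ onto $\Geod_{\hor}(\Omega)\times\Omega$, whereas you do the corresponding work by hand with sequences and Hausdorff-converging segments; your remarks on extending the three identities by density (with the extra care for \eqref{Equation : Gromov 1}) match the paper's.

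There is, however, a genuine gap in your well-definedness step. You compute $b_\xi(p,p')$ by letting $z\in\Omega$ tend to $\xi$ \emph{along the segment}. But $\xi$ is a point of the horoboundary: a point tending to $\pi_{\hor}\xi$ in $\overline{\Omega}$ need not tend to $\xi$ in $\overline{\Omega}^{\hor}$, because over a non-smooth boundary point the fiber of $\pi_{\hor}$ is non-trivial (Fact~\ref{smooth_points_are_in_hor} covers only smooth points); this is precisely the phenomenon the horoboundary formalism is meant to handle. So your radial limit computes $b_{\xi'}(p,p')$ for whichever preimage $\xi'$ the radial sequence accumulates on, not a priori for the given $\xi$. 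The identity you need, $b_\xi(p,p')=d_\Omega(p,p')$ for $p'\in[p,\pi_{\hor}\xi)$, is Lemma~\ref{b=d} of the paper, but there it is \emph{deduced from} the proposition you are proving, so you cannot invoke it; you must prove it directly. The fix is short and uses the same device as your continuity step: take any sequence $z_n\to\xi$ in $\overline{\Omega}^{\hor}$; then $z_n\to\pi_{\hor}\xi$ in $\overline{\Omega}$ by Fact~\ref{walsh}, so one can choose $q_n\in[p,z_n]$ with $q_n\to p'$, and
\[
 d_\Omega(p,z_n)-d_\Omega(p',z_n)\;\geq\; d_\Omega(p,q_n)-d_\Omega(q_n,p')\;\underset{n\to\infty}{\longrightarrow}\; d_\Omega(p,p'),
\]
the reverse inequality being the triangle inequality. (Alternatively, one can show directly that $b_\xi(p,p')+b_\eta(p,p')=0$ by writing it as the limit of $2\langle z_n,w_n\rangle_p-2\langle z_n,w_n\rangle_{p'}$ and using points of $[z_n,w_n]$ converging to $p$ and to $p'$.) With this repaired, the rest of your argument goes through and coincides with the paper's proof in substance.
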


\begin{proof}
 Let $A:=\{(\xi,\eta,x,y)\in\Geod_{\hor}(\Omega)\times\Omega^2 : y\in[\xi,\eta]\}$. The projection $(\xi,\eta,x,y) \mapsto (\xi,\eta,x)$ from $A$ to $\Geod_{\hor}(\Omega)\times\Omega$ is continuous, surjective and open. Therefore, it is enough to prove that the function $(\xi,\eta,x,y)\mapsto \langle \xi,\eta \rangle_x$ defined on $A\cap\Omega^4$ extends continuously to $A$. This is a consequence of \eqref{Equation : Gromov 1} and \eqref{Equation : Gromov 2} combined, which yield, for any $(\xi,\eta,x,y)\in A\cap\Omega^4$,
 \[\langle \xi,\eta \rangle_x = b_\xi(x,y) + b_\eta(x,y).\]
 
 Note that \eqref{Equation : Gromov 1} extends to $(\xi,\eta)\in\Geod_{\hor}(\Omega)$ because the set $\{(\xi,\eta,x)\in\Omega^3 : x\in[\xi,\eta]\}$ is dense $\{(\xi,\eta,x)\in\Geod_{\hor}(\Omega)\times\Omega : x\in[\xi,\eta]\}$.
\end{proof}

\subsection{The Hopf coordinates}

Let us define the Hopf parametrisation of the unit tangent bundle $T^1\Omega$ of a properly convex open set $\Omega$, which depends on the choice of a fixed basepoint $o\in\Omega$.

\begin{defi}\label{Definition : Hopf}
 Let $\Omega\subset\PR(V)$ be a properly convex open set and fix a basepoint $o\in\Omega$. The Hopf parametrisation based at $o$ is the $(\phi_t)_t$-equivariant continuous surjective map
 \[\Hopf_o\colon \Geod^{\infty}_{\hor}(\Omega)\times\R\longrightarrow T^1\Omega,\]
 that sends $(\xi,\eta,t)\in\Geod^{\infty}_{\hor}(\Omega)\times\R$ to the vector $\Hopf_o(\xi,\eta,t)$ which is tangent to the projective line $\pi_{\hor}\xi\oplus\pi_{\hor}\eta$ and such that $b_{\eta}(o,\pi\Hopf_o(\xi,\eta,t))=t$.
\end{defi}

When the context is clear, we will simply write $\Hopf$ instead of $\Hopf_o$. Note that if $x\in\Omega$ then for any $(\xi,\eta,t)\in\Geod^\infty_{\hor}(\Omega)\times\R$,
\begin{equation}
 \Hopf_x(\xi,\eta,t)=\Hopf_o(\xi,\eta,t+b_\eta(o,x)).
\end{equation}
Therefore, if we consider the following action of $\Aut(\Omega)$:
\begin{equation}
g\cdot(\xi,\eta,t)=(g\xi,g\eta,t+b_\eta(g^{-1}o,o))
\end{equation}
for $g\in\Aut(\Omega)$ and $(\xi,\eta,t)\in\Geod^\infty_{\hor}(\Omega)$, then $\Hopf_o$ is $\Aut(\Omega)$-equivariant.

Note also that for any $(\xi,\eta,t)\in\Geod^\infty_{\hor}(\Omega)$,
\begin{equation}\label{Equation : Hopflip}
 -\Hopf_o(\xi,\eta,t) = \Hopf_o(\eta,\xi, \langle \xi,\eta \rangle_o -t),
\end{equation}
where for any $v\in T^1\Omega$, the vector $-v$ satisfies $\phi_{\pm\infty}(-v)=\phi_{\mp\infty}v$ and $\pi (-v)=\pi v$.

\subsection{The Sullivan measures}\label{The Sullivan measures}

 In this section, given a conformal density, we construct the associated $(\phi_t)_{t\in\R}$-invariant Sullivan measures on $T^1\Omega$ and on the quotient $T^1M=T^1\Omega/\Gamma$.

\begin{defi} 
 Let $\Omega\subset\PR(V)$ be a properly convex open set and $\Gamma\subset\Aut(\Omega)$ a strongly irreducible discrete subgroup; denote $M=\Omega/\Gamma$. Let $\delta\geq0$ and let $(\mu_x)_x$ be a $\delta$-conformal density on $\partial_{\hor}\Omega$. The \emph{Sullivan measure} $\tilde{m}_{\hor}$ on $\partial_{\hor}\Omega^2\times\R$ induced by $(\mu_x)_x$ is defined by the following.
 \[\diff \tilde{m}_{\hor}(\xi,\eta,t)=e^{2\delta\langle \xi,\eta\rangle_o}1_{\Geod^{\infty}_{\hor}(\Omega)}(\xi,\eta)d\mu_o(\xi)d\mu_o(\eta)dt.\]
 where $o\in\Omega$; the measure $\tilde{m}_{\hor}$ does not depend on $o$ and is $(\phi_t)_t$-invariant. Then we push it forward  via the Hopf coordinates to get the induced \emph{Sullivan measure on $T^1\Omega$}:
 \[\mm := (\Hopf_o)_*\mm_{\hor}.\]
 It does not depend on $o$, and it is $\Gamma$ and $(\phi_t)_t$-invariant. Hence it yields an induced $(\phi_t)_t$-invariant \emph{Sullivan measure on $T^1M$}, denoted by $m$.
\end{defi}

Observe that the formula $e^{2\delta\langle \xi,\eta\rangle_o}1_{\Geod^{\infty}_{\hor}(\Omega)}(\xi,\eta)d\mu_o(\xi)d\mu_o(\eta)$ yields a $\Gamma$-invariant measure on $\Geod_{hor}^\infty(\Omega)$, which is in fact the quotient of the Sullivan measure under the action of $\R$.

\begin{prop}\label{prop:Radonnonnul}
 Let $\Omega\subset\PR(V)$ be a properly convex open set and $\Gamma\subset\Aut(\Omega)$ a discrete subgroup. Let $\delta\geq0$ and let $(\mu_x)_x$ be a $\delta$-conformal density on $\partial_{\hor}\Omega$. Then the Sullivan measures on $\Geod_{\hor}(\Omega)\times\R$, $T^1\Omega$ and $T^1M$ are Radon. If $\Gamma$ is strongly irreducible and $T^1M_{\bip}\neq\emptyset$, or if $M$ is rank-one and non-elementary, then these measures are non-zero.
\end{prop}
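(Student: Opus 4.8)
The plan is to reduce both assertions to two elementary facts: the Gromov product $\langle\cdot,\cdot\rangle_o$ is continuous, hence locally bounded, on $\Geod_{\hor}(\Omega)$ (Proposition~\ref{prop:gromprod}), and $\supp\mu_o$ is a nonempty $\Gamma$-invariant closed set, whose image $\pi_{\hor}(\supp\mu_o)$ therefore contains $\Lambda^{\prox}$ and with it the endpoints of a straight geodesic of $\Omega$.

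\textbf{Radon.} Observe that $\Geod_{\hor}(\Omega)$ is open in the compact metrisable space $(\overline{\Omega}^{\hor})^2$, hence locally compact and second countable, as are $T^1\Omega$ and $T^1M$; so in each case it suffices to bound the measure on compact sets. A compact subset of $\Geod_{\hor}(\Omega)\times\R$ lies in $K\times[-C,C]$ with $K\subset\Geod_{\hor}(\Omega)$ compact; by Proposition~\ref{prop:gromprod} the weight $e^{2\delta\langle\xi,\eta\rangle_o}$ is bounded on $K$, and $\mu_o$ is finite, so $\tilde m_{\hor}(K\times[-C,C])<\infty$. For $\tilde m=(\Hopf_o)_*\tilde m_{\hor}$ and a compact $L\subset T^1\Omega$: if $(\xi,\eta,t)\in\Hopf_o^{-1}(L)$ and $v=\Hopf_o(\xi,\eta,t)$, then $\pi v\in[\xi,\eta]\cap\Omega$, so $2\langle\xi,\eta\rangle_o=b_\xi(o,\pi v)+b_\eta(o,\pi v)$ and $t=b_\eta(o,\pi v)$ by Proposition~\ref{prop:gromprod}; since $b$ is continuous on $\overline{\Omega}^{\hor}\times\Omega$ and $\partial_{\hor}\Omega\times\pi(L)$ is compact, both $\langle\xi,\eta\rangle_o$ and $t$ are bounded on $\Hopf_o^{-1}(L)$ by a constant $C'(L)$, whence $\tilde m(L)=\tilde m_{\hor}(\Hopf_o^{-1}(L))\le e^{2\delta C'(L)}\|\mu_o\|^2\cdot 2C'(L)<\infty$. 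Finally $m$ is the quotient of $\tilde m$ under the properly discontinuous $\Gamma$-action (Definition~\ref{Definition : quotient de mesure}); a compact $L\subset T^1M$ has a compact $\tilde L\subset T^1\Omega$ with $\pi(\tilde L)\supseteq L$ (as $\pi$ is an open surjection of locally compact spaces), and since $\int_\Gamma 1_{\tilde L}\ge c\,1_L$ for some $c>0$ we get $m(L)\le c^{-1}\tilde m(\tilde L)<\infty$.

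\textbf{Non-vanishing.} As $\Hopf_o$ is onto and $m$ is the $\Gamma$-quotient of $\tilde m=(\Hopf_o)_*\tilde m_{\hor}$, all three measures are nonzero once $\tilde m_{\hor}\ne 0$, and, the weight being everywhere positive, $\tilde m_{\hor}\ne 0$ amounts to $(\mu_o\times\mu_o)(\Geod^\infty_{\hor}(\Omega))>0$. The $\mu_x$ are pairwise equivalent (the Radon--Nikodym cocycle $e^{-\delta b_\xi(\cdot,\cdot)}$ being everywhere finite and positive), so $\Lambda_\mu:=\supp\mu_o$ does not depend on $o$, and by $\Gamma$-equivariance of $(\mu_x)_x$ and of $\pi_{\hor}$ its image $\pi_{\hor}(\Lambda_\mu)$ is a nonempty $\Gamma$-invariant closed subset of $\partial\Omega$. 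If $\Gamma$ is strongly irreducible it contains a proximal element (Fact~\ref{Fait : des bips partout}), so $\Lambda^{\prox}$ is the smallest such set (Remark~\ref{minimality}) and $\Lambda^{\prox}\subseteq\pi_{\hor}(\Lambda_\mu)$; moreover $T^1M_{\bip}\ne\emptyset$ provides $v\in T^1\Omega$ with $\phi_{\pm\infty}v\in\Lambda^{\prox}$ and $\pi v\in[\phi_{-\infty}v,\phi_{+\infty}v]\cap\Omega$. If instead $M$ is rank-one and non-elementary, Fact~\ref{fait:des rk1 partout} gives $\Lambda^{\prox}\subseteq\pi_{\hor}(\Lambda_\mu)$ directly, and a rank-one $\gamma\in\Gamma$ has $x_\gamma^{\pm}\in\Lambda^{\prox}$ with $\axis(\gamma)\cap\Omega\ne\emptyset$ (Fact~\ref{equivalences rang un}). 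In either case we find distinct $\xi_\infty,\eta_\infty\in\pi_{\hor}(\Lambda_\mu)$ with $[\xi_\infty,\eta_\infty]\cap\Omega\ne\emptyset$; lifting them to $\tilde\xi,\tilde\eta\in\Lambda_\mu$, the pair $(\tilde\xi,\tilde\eta)$ lies in $\Geod^\infty_{\hor}(\Omega)$, which is open in $\partial_{\hor}\Omega^2$, hence contains a product neighbourhood $U\times W$ of $(\tilde\xi,\tilde\eta)$; since $\tilde\xi,\tilde\eta\in\supp\mu_o$ we conclude $(\mu_o\times\mu_o)(\Geod^\infty_{\hor}(\Omega))\ge\mu_o(U)\mu_o(W)>0$.

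The only delicate point — anticipated by the statement working on $\Geod_{\hor}(\Omega)$ rather than $\partial_{\hor}\Omega^2$ — is that $e^{2\delta\langle\cdot,\cdot\rangle_o}$ is unbounded on $\partial_{\hor}\Omega^2$ but is locally bounded on $\Geod_{\hor}(\Omega)$ and bounded on each $\Hopf_o^{-1}(L)$ via the footpoint identity $2\langle\xi,\eta\rangle_o=b_\xi(o,\pi v)+b_\eta(o,\pi v)$; the rest is bookkeeping.
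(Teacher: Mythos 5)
Your proof is correct and follows essentially the same route as the paper: the Radon property is deduced from the continuity of the Gromov product (Proposition~\ref{prop:gromprod}), and the non-vanishing from the fact that the support of the density contains $\Lambda^{\prox}$, which by hypothesis carries a pair of endpoints of a straight geodesic of $\Omega$, giving positive $\mu_o\times\mu_o$-mass to $\Geod^\infty_{\hor}(\Omega)$. You merely spell out the bookkeeping (boundedness of the weight and of the $t$-coordinate on $\Hopf_o^{-1}(L)$, and the passage to the $\Gamma$-quotient) that the paper leaves implicit.
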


\begin{proof}
 The fact that the Sullivan measures are Radon is a direct consequence of the continuity of the Gromov product (Proposition~\ref{prop:gromprod}).
%

 Suppose that $\Gamma$ is strongly irreducible and $T^1M_{\bip}\neq\emptyset$, or that $M$ is rank-one and non-elementary. Then there exists $(\xi,\eta)\in\Geod^\infty(\Omega)\cap(\Lambda^{\prox})^2$. Let $U$ and $V$ be neighbourhoods of $\xi$ and $\eta$ in $\partial\Omega$ such that $U\times V\subset\Geod^\infty(\Omega)$. Since $\nu_o=\pi_{hor*}\mu_o$ is $\Gamma$-quasi-invariant, its support is $\Gamma$-invariant and hence contains $\Lambda^{\prox}$ (see Remark~\ref{minimality} and Fact~\ref{fait:des rk1 partout}). Therefore, $\nu_o^2(U\times V)>0$, and $\mm_{\hor}(\pi_{\hor}^{-1}(U)\times\pi_{\hor}^{-1}(V)\times \R)>0$.
\end{proof}

\section{The Shadow lemma}\label{The Shadow lemma}

In this section we establish the Shadow lemma (Lemma~\ref{shadowlemma}) which consists of estimates on the measures of shadows. The measure is a conformal density on $\partial\Omega$, and shadows are subsets of the projective boundary, defined as follows. Recall that the Shadow lemma is a classical result in the theory of conformal densities, and we adapt here its classical proof to the convex projective setting.

\begin{defi}\label{Definition : ombres}
 Let $\Omega\subset \PR(V)$ be a properly convex open set. Take $x,y\in\Omega$ and $r>0$. Set
 \[\mathcal{O}_r(x,y):=\{\xi\in\partial\Omega : [x,\xi]\cap B_\Omega(y,r)\neq\emptyset\};\]
 \[\mathcal{O}_r^+(x,y):=\{\xi\in\partial\Omega : \exists z\in B_\Omega(x,r) \text{ such that } [z,\xi]\cap B_\Omega(y,r)\neq\emptyset\};\]
 \[\mathcal{O}_r^-(x,y):=\{\xi\in\partial\Omega : \forall z\in B_\Omega(x,r), [z,\xi]\cap B_\Omega(y,r)\neq\emptyset\}.\]
\end{defi}

See in Figure~\ref{fig:lemombre} an example of a shadow.

\begin{lemma}\label{shadowlemma}
Let $o\in\Omega\subset\PR(V)$ be a pointed properly convex open set and $\Gamma\subset\Aut(\Omega)$ a discrete subgroup; set $M=\Omega/\Gamma$. Suppose that $\Gamma$ is strongly irreducible and $T^1M_{\bip}$ is non-empty, or that $M$ is rank-one and non-elementary. Consider $\delta\geq0$ and a $\delta$-conformal density $(\nu_x)_{x\in\Omega}$ on $\partial\Omega$. Then there exists $R_0>0$ such that for any $R\geq R_0$, one can find $C=C(R)>0$ such that for each $\gamma\in\Gamma$,
 \[C^{-1}e^{-\delta d_\Omega(o,\gamma o)}\leq \nu_o(\mathcal{O}_R(o,\gamma o))\leq \nu_o(\mathcal{O}_R^+(o,\gamma o))\leq Ce^{-\delta d_\Omega(o,\gamma o)}.\]
\end{lemma}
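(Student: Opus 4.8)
The strategy is the classical Sullivan argument, adapted to the Hilbert metric. First I would establish the lower bound. Fix $o$ and $\gamma\in\Gamma$. The key geometric observation is that the shadow $\mathcal{O}_R(o,\gamma o)$ contains, for $R$ large, a subset of $\partial\Omega$ of uniformly positive $\nu_{\gamma o}$-measure: more precisely, using the conformal density relation $d\nu_{\gamma o}/d\nu_o(\xi) = e^{-\delta b_\xi(\gamma o, o)}$ together with the fact that $b_\xi(\gamma o,o)\approx d_\Omega(o,\gamma o)$ up to a bounded error when $\xi\in\mathcal{O}_R(o,\gamma o)$ (this is an elementary Hilbert-geometry computation: if $[o,\xi]$ passes within $R$ of $\gamma o$ then $d_\Omega(o,\gamma o) - 2R \leq b_\xi(\gamma o, o) \leq d_\Omega(o,\gamma o)$), one gets
\[
\nu_o(\mathcal{O}_R(o,\gamma o)) \;\geq\; e^{-\delta d_\Omega(o,\gamma o)}\,\nu_{\gamma o}\bigl(\mathcal{O}_R(o,\gamma o)\bigr).
\]
So the lower bound reduces to showing $\inf_{\gamma}\nu_{\gamma o}(\mathcal{O}_R(o,\gamma o))>0$ for $R$ large. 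By $\Gamma$-equivariance $\nu_{\gamma o}(\mathcal{O}_R(o,\gamma o)) = \nu_o(\gamma^{-1}\mathcal{O}_R(o,\gamma o)) = \nu_o(\mathcal{O}_R(\gamma^{-1}o, o))$, and $\mathcal{O}_R(\gamma^{-1}o,o)$ always contains a ball $B_{\overline\Omega}$-type neighbourhood — more precisely it contains the shadow of $B_\Omega(o,R)$ seen from points far away, which by a Benzécri-compactness argument (Fact~\ref{benz}) contains a non-empty open set whose $\nu_o$-measure is bounded below uniformly, \emph{provided} $\supp(\nu_o)$ is not too degenerate. Here is where the rank-one / strongly irreducible hypothesis enters: by Remark~\ref{minimality} and Fact~\ref{fait:des rk1 partout}, $\Lambda^{\prox}\subset\supp(\nu_o)$ is the unique minimal closed $\Gamma$-invariant set, and $\Gamma$ acts minimally on it; combined with compactness one extracts a uniform lower bound on $\nu_o$ of shadows of a fixed ball $B_\Omega(o,R_0)$ seen from any direction. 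This is the step I expect to be the main obstacle — making the "uniformly over all $\gamma$" quantifier rigorous requires a compactness argument on the space of pointed convex sets together with the minimality of the $\Gamma$-action on $\Lambda^{\prox}$.

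For the upper bound, I would argue as follows. For $\xi\in\mathcal{O}^+_R(o,\gamma o)$ there is $z\in B_\Omega(o,R)$ with $[z,\xi]$ meeting $B_\Omega(\gamma o, R)$; a triangle-inequality / Lemma~\ref{crampon} argument again gives $b_\xi(\gamma o,o)\geq d_\Omega(o,\gamma o) - C_1(R)$, so
\[
\nu_o(\mathcal{O}^+_R(o,\gamma o)) = \int_{\mathcal{O}^+_R(o,\gamma o)} e^{\delta b_\xi(\gamma o,o)}\,d\nu_{\gamma o}(\xi) \;\leq\; e^{\delta C_1(R)}\,e^{\delta d_\Omega(o,\gamma o)}\cdot\nu_{\gamma o}\bigl(\mathcal{O}^+_R(o,\gamma o)\bigr),
\]
wait — the sign: since $d\nu_o/d\nu_{\gamma o}(\xi) = e^{-\delta b_\xi(o,\gamma o)} = e^{\delta b_\xi(\gamma o, o)}$, and $b_\xi(\gamma o, o)\le d_\Omega(o,\gamma o)$ always and $\ge d_\Omega(o,\gamma o)-C_1(R)$ on the shadow, I instead write $\nu_o(\mathcal{O}^+_R(o,\gamma o)) = \int e^{-\delta b_\xi(o,\gamma o)}d\nu_{\gamma o}\le e^{-\delta(d_\Omega(o,\gamma o)-C_1(R))}\nu_{\gamma o}(\partial\Omega)$, and then bound $\nu_{\gamma o}(\partial\Omega) = \|\nu_{\gamma o}\|$ uniformly. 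The total mass $\|\nu_{\gamma o}\|$ is not obviously bounded — but by the conformal relation $\|\nu_{\gamma o}\| = \int e^{-\delta b_\xi(\gamma o, o)}d\nu_o(\xi)$, and since $b_\xi(\gamma o,o)\ge -d_\Omega(o,\gamma o)$, this could grow. The fix is standard: one does \emph{not} bound the total mass but instead covers $\mathcal{O}^+_R(o,\gamma o)$ more carefully, or more simply uses that the relevant quantity is controlled on the shadow only; alternatively one uses the lower bound already proved for $\nu_o$ of shadows at other points to run a covering/packing argument. I would follow Roblin's presentation: the upper bound follows by the same Benzécri-compactness uniformity as the lower bound, exploiting that $\mathcal{O}^+_R(o,\gamma o)$ is contained in the shadow $\mathcal{O}_{R'}(o,\gamma o)$ for $R'=R'(R)$, on which $b_\xi(\gamma o,o)$ is pinned to $d_\Omega(o,\gamma o)$ up to $O(R')$, so that
\[
\nu_o(\mathcal{O}^+_R(o,\gamma o)) \leq \int_{\mathcal{O}_{R'}(o,\gamma o)} e^{-\delta b_\xi(o,\gamma o)}\,d\nu_{\gamma o}(\xi) \leq e^{-\delta d_\Omega(o,\gamma o)}\,e^{2\delta R'}\,\nu_{\gamma o}(\mathcal{O}_{R'}(o,\gamma o)),
\]
and $\nu_{\gamma o}(\mathcal{O}_{R'}(o,\gamma o))=\nu_o(\mathcal{O}_{R'}(\gamma^{-1}o,o))\le \|\nu_o\|$ is bounded by a single constant independent of $\gamma$ — this time legitimately, since it is the $\nu_o$-measure of a subset of $\partial\Omega$ with the \emph{fixed} base measure $\nu_o$.

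**Remaining pieces and the main difficulty.** The two genuinely geometric inputs are (i) the Hilbert-metric estimate $|b_\xi(\gamma o, o) - d_\Omega(o,\gamma o)| \le C(R)$ for $\xi$ in the $R$-shadow, which is a routine cross-ratio computation using that straight lines are geodesics and Lemma~\ref{crampon}; and (ii) the uniform lower bound $\inf_\gamma \nu_o(\mathcal{O}_{R_0}(\gamma^{-1}o, o)) > 0$. As noted, (ii) is the crux: one takes a sequence $\gamma_n$ realizing the infimum, applies Benzécri's theorem (Fact~\ref{benz}) to the pointed convex sets $(\gamma_n^{-1}o, \Omega)$ to extract a limiting pointed convex set, observes that the shadows $\mathcal{O}_{R_0}(\gamma_n^{-1}o, o)$ converge (up to the semicontinuity of Fact~\ref{semi-continuite superieure}) to a non-empty open subset of the boundary of the limit, and uses the fact that $\supp\nu_o \supseteq \Lambda^{\prox}$ together with minimality (Remark~\ref{minimality}, Fact~\ref{fait:des rk1 partout}) to ensure this open set carries positive mass — which passes to a uniform bound by a contradiction/compactness argument on the weak* limits of the pushed-forward measures. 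I would finally record $R_0$ as the threshold from step (ii) and take $C(R)$ to absorb all the bounded errors.
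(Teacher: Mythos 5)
Your overall skeleton is the paper's: rewrite $\nu_o$ of the shadow via the conformal cocycle with respect to the basepoint $\gamma o$ (equivalently, translate by $\gamma^{-1}$ and integrate $e^{-\delta b_{\tilde\xi}(\gamma^{-1}o,o)}$ against $\mu_o$), use that the Busemann function is pinned to $d_\Omega(o,\gamma o)$ up to $O(R)$ on the shadow (Lemma~\ref{b=d}), bound the upper estimate by the \emph{fixed} total mass $\nu_o(\partial\Omega)$, and reduce the lower estimate to a uniform lower bound $\inf_{\gamma}\nu_o(\mathcal{O}_R(\gamma^{-1}o,o))>0$. Two remarks on bookkeeping: the cocycle identity you write for $\nu$ on $\partial\Omega$ is not literally meaningful at non-smooth boundary points, since $b_\xi$ depends on the choice of preimage in $\partial_{\hor}\Omega$; the paper avoids this by pulling the shadows back under $\pi_{\hor}$ and noting that the pinning estimate holds for \emph{every} preimage. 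Also, your detour $\mathcal{O}^+_R\subset\mathcal{O}_{R'}$ is unnecessary, since Lemma~\ref{b=d} already pins $b$ on $\mathcal{O}^+_R$.

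The genuine gap is exactly at the step you flag as the crux, and the mechanism you propose for it would not work. You suggest applying Benz\'ecri's theorem to the pointed sets $(\gamma_n^{-1}o,\Omega)$ and concluding by a weak* limit of the pushed-forward measures. But the rescaling elements produced by Benz\'ecri are arbitrary elements of $\PGL(V)$, not elements of $\Gamma$: pushing $\nu_o$ forward by them destroys the quasi-invariance and the inclusion $\Lambda^{\prox}\subset\supp\nu_o$, so the limit measure has no reason to charge the limiting open set --- indeed, in the dangerous scenario one must exclude (mass of $\nu_o$ concentrating on the closed face-neighbourhood of the limit point $\xi=\lim\gamma_n^{-1}o$), the pushed-forward measures do degenerate. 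Moreover the statement needed is a bound uniform over an infinite family of shadows, not positivity of a single open set. The paper's argument (Lemma~\ref{l ombre grandit a la frontiere}) stays inside the fixed $\Omega$ and needs no Benz\'ecri: negating the uniform bound gives $x_n=\gamma_n^{-1}o\to\xi\in\partial\Omega$ and radii $R_n\to\infty$ with $\nu_o(\mathcal{O}_{R_n}(x_n,o))\to0$; a homothety argument shows every $\eta$ with $d_{\spl}(\eta,\xi)>1$ eventually lies in these shadows, so $\nu_o$ gives full mass to $\overline B_{\spl}(\xi,1)$. One then contradicts this using $\Gamma$-quasi-invariance together with the minimality of the $\Gamma$-action on $\Lambda^{\prox}$ (Remark~\ref{minimality}, Fact~\ref{fait:des rk1 partout}): translate $\xi$ by group elements towards a proximal point $\eta$, and use a second point $\eta'\in\Lambda^{\prox}$ with $(\eta\oplus\eta')\cap\Omega\neq\emptyset$ (or two strongly extremal points in the rank-one case), which must lie in $\supp\nu_o$ but outside $\overline B_{\spl}(\gamma_n\xi,1)$ for large $n$, by upper semi-continuity of the simplicial balls. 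This translation-by-$\Gamma$ step, which converts ``support contains $\Lambda^{\prox}$'' into an actual contradiction with the concentration, is the missing idea in your sketch; ``minimality ensures the open set carries positive mass'' is precisely the assertion that has to be proved, and it cannot be extracted from a Benz\'ecri limit.
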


We will actually need two more Shadow lemmas: Lemma~\ref{shadowlemma avec defaut} and Corollary~\ref{shadowlemma+}. They both make stronger assumptions on the convex projective manifold $M=\Omega/\Gamma$, and either are consequences of Lemma~\ref{shadowlemma}, or have a very similar proof.

\subsection{Preliminaries}

In this section we prove two classical intermediate lemmas, used in the proof of the Shadow lemma.

\begin{lemma}\label{b=d}
 Let $\Omega\subset\PR(V)$ be a properly convex open set. Let $\xi\in\overline{\Omega}^{\hor}$ and $x,y\in\Omega$. If $y\in [x,\xi]$, then $b_{\xi}(x,y)=d_\Omega(x,y)$. Let $r>0$. If $\pi_{\hor}(\xi)\in \mathcal{O}_r^+(x,y)$ (see Definition~\ref{Definition : ombres}), then
 \[d_\Omega(x,y)-4r\leq b_{\xi}(x,y)\leq d_\Omega(x,y).\]
\end{lemma}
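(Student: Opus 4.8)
\medskip
\noindent\emph{Proof strategy.} The plan is to prove the first assertion directly, and then to bootstrap the second one from it. Throughout I would use two elementary facts valid for every $\xi\in\overline{\Omega}^{\hor}$ and all $a,b,c\in\Omega$: the universal bound $|b_\xi(a,b)|\le d_\Omega(a,b)$ (obtained by passing to the limit in $|d_\Omega(a,z)-d_\Omega(b,z)|\le d_\Omega(a,b)$) and the cocycle relation $b_\xi(a,c)=b_\xi(a,b)+b_\xi(b,c)$ (clear for $\xi\in\Omega$, extended to $\overline{\Omega}^{\hor}$ by continuity of $\xi\mapsto b_\xi(a,b)$). In particular the upper bounds $b_\xi(x,y)\le d_\Omega(x,y)$ claimed in both assertions come for free.

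For the first assertion, with $[x,\xi]$ meaning $[x,\pi_{\hor}\xi]$: if $\pi_{\hor}\xi\in\Omega$ then $\xi=\pi_{\hor}\xi$ and the three points $x,y,\pi_{\hor}\xi$ lie on a common straight (hence geodesic) segment, so $d_\Omega(x,\pi_{\hor}\xi)=d_\Omega(x,y)+d_\Omega(y,\pi_{\hor}\xi)$ and $b_\xi(x,y)=d_\Omega(x,y)$. Otherwise $\pi_{\hor}\xi\in\partial\Omega$, and I would pick any sequence $(z_n)$ in $\Omega$ with $z_n\to\xi$ in $\overline{\Omega}^{\hor}$; then $z_n\to\pi_{\hor}\xi$ in $\overline{\Omega}$ by Fact~\ref{walsh}, and $d_\Omega(x,z_n)\to\infty$ since $d_\Omega$ is proper. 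For $n$ large I would let $y_n$ be the point of the straight segment $[x,z_n]$ at distance $d_\Omega(x,y)$ from $x$; since $y_n$ lies between $x$ and $z_n$ on a geodesic and $d_\Omega(x,y_n)=d_\Omega(x,y)$ by construction, one has $d_\Omega(x,z_n)=d_\Omega(x,y)+d_\Omega(y_n,z_n)$, whence
\[b_{z_n}(x,y)=d_\Omega(x,z_n)-d_\Omega(y,z_n)=d_\Omega(x,y)+\bigl(d_\Omega(y_n,z_n)-d_\Omega(y,z_n)\bigr),\]
and the parenthesised term has absolute value at most $d_\Omega(y_n,y)$. It then remains to check that $y_n\to y$, after which letting $n\to\infty$ yields $b_\xi(x,y)=d_\Omega(x,y)$.

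For the second assertion, I would unpack $\pi_{\hor}\xi\in\mathcal{O}_r^+(x,y)$ into the data of a point $z\in B_\Omega(x,r)$ and a point $w\in[z,\pi_{\hor}\xi]\cap B_\Omega(y,r)$. The first assertion applied to the pair $(z,w)$ gives $b_\xi(z,w)=d_\Omega(z,w)$, and the triangle inequality gives $d_\Omega(z,w)\ge d_\Omega(x,y)-d_\Omega(x,z)-d_\Omega(w,y)>d_\Omega(x,y)-2r$; then the cocycle identity together with $|b_\xi(x,z)|\le d_\Omega(x,z)<r$ and $|b_\xi(w,y)|\le d_\Omega(w,y)<r$ gives
\[b_\xi(x,y)=b_\xi(x,z)+b_\xi(z,w)+b_\xi(w,y)>-r+\bigl(d_\Omega(x,y)-2r\bigr)-r=d_\Omega(x,y)-4r,\]
which is the desired lower bound.

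The one point that is not purely formal is the convergence $y_n\to y$ in the first assertion; I expect this to be the main (though mild) obstacle. I would handle it by compactness: the points $y_n$ all lie in the compact ball $\overline{B}_\Omega(x,d_\Omega(x,y))$, any subsequential limit $y_\infty$ satisfies $d_\Omega(x,y_\infty)=d_\Omega(x,y)$ by continuity of $d_\Omega$ on $\Omega$ and lies on $[x,\pi_{\hor}\xi]$ by continuity of straight segments with respect to their endpoints, so $y_\infty=y$.
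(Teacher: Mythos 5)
Your proof is correct, and your treatment of the second inequality coincides with the paper's: the same cocycle decomposition $b_\xi(x,y)=b_\xi(x,x')+b_\xi(x',y')+b_\xi(y',y)$ along the witnesses of $\mathcal{O}_r^+(x,y)$, the universal bound $|b_\xi(\cdot,\cdot)|\le d_\Omega(\cdot,\cdot)$ on the two short legs, the first assertion on the middle leg, and the triangle inequality, with identical $4r$ bookkeeping. Where you genuinely differ is the first assertion. The paper obtains it in one line from the identity $b_\xi(x,y)-d_\Omega(x,y)=-2\langle\xi,x\rangle_y$ together with the continuous extension of the Gromov product (Proposition~\ref{prop:gromprod}) and the extension of \eqref{Equation : Gromov 1}; you instead argue from scratch, approximating $\xi$ by points $z_n\in\Omega$ converging in $\overline{\Omega}^{\hor}$ (hence in $\overline{\Omega}$ by Fact~\ref{walsh}), placing a comparison point $y_n\in[x,z_n]$ at distance $d_\Omega(x,y)$ from $x$, using additivity of $d_\Omega$ along straight segments, and controlling the error by $d_\Omega(y_n,y)\to 0$, which you justify via properness of $d_\Omega$, Hausdorff convergence of the segments $[x,z_n]$ to $[x,\pi_{\hor}\xi]$, and uniqueness of the point at a given distance along the ray — this is the only nontrivial step and your compactness argument for it is sound. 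Your route is more elementary and self-contained, bypassing the Gromov-product machinery of Section~\ref{Section : le produit de Gromov}; the paper's route is shorter precisely because Proposition~\ref{prop:gromprod} has already done the limiting work once and for all, and the same identity is reused elsewhere.
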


\begin{proof}
 One easily see that $b_{\xi}(x,y) - d_\Omega(x,y) = -2\langle \xi,x \rangle_y$ if $\xi\in\Omega$, and this extends to $\xi\in\overline{\Omega}^{\hor}$ by continuity. This, by \eqref{Equation : Gromov 1}, implies that $b_\xi(x,y)=d_\Omega(x,y)$ if $y\in[x,\xi]$. Assume that $\xi\in\mathcal{O}^+_r(x,y)$. The triangular inequality gives $b_\xi(x,y)\leq d_\Omega(x,y)$; let us establish $b_\xi(x,y)\geq d_\Omega(x,y)-4r$. By definition of $\mathcal{O}^+_r(x,y)$, we can find $x'\in B_\Omega(x,r)$ and $y'\in B_\Omega(y,r)\cap [x',\xi]$. Then
 \begin{align*}
  b_{\xi}(x,y)&=b_{\xi}(x,x')+b_{\xi}(x',y')+b_{\xi}(y',y)\\
  &\geq - d_\Omega(x,x')+d_\Omega(x',y') - d_\Omega(y',y)\\
  &\geq d_\Omega(x,y)-2d_\Omega(x,x')-2d_\Omega(y,y')\\
  &\geq d_\Omega(x,y)-4r.\qedhere
 \end{align*}
\end{proof}

\begin{lemma}\label{l ombre grandit a la frontiere}
Let $\Omega\subset\PR(V)$ be a properly convex open set and $\Gamma\subset\Aut(\Omega)$ a discrete subgroup; set $M=\Omega/\Gamma$. Suppose that $\Gamma$ is strongly irreducible and $T^1M_{\bip}$ is non-empty, or that $M$ is rank-one and non-elementary. Consider a $\Gamma$-quasi-invariant finite Borel measure $\nu$ on $\partial\Omega$. Then
there is $\epsilon>0$ and $R>0$ such that for all $x\in\Omega$, 
 \[\nu(\mathcal{O}_R(x,o))\geq \epsilon.\]
\end{lemma}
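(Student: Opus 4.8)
The plan is to argue by contradiction, adapting the classical argument with care for the non-strictly-convex geometry. Suppose the conclusion fails; since $\nu\neq 0$ (without which the statement is empty, and in the intended applications $\nu$ is a conformal density), for every $n\geq 1$ we may pick $x_n\in\Omega$ with $\nu(\mathcal{O}_n(x_n,o))<1/n$, where $\mathcal{O}_n(x_n,o)$ is the shadow of $B_\Omega(o,n)$ seen from $x_n$ (Definition~\ref{Definition : ombres}). Passing to a subsequence, $x_n\to\xi_\infty\in\overline{\Omega}$. If $\xi_\infty\in\Omega$ then $d_\Omega(x_n,o)$ stays bounded, so $x_n\in[x_n,\xi]\cap B_\Omega(o,n)$ for every $\xi\in\partial\Omega$ and $n$ large, \ie $\mathcal{O}_n(x_n,o)=\partial\Omega$, contradicting $\nu(\mathcal{O}_n(x_n,o))<1/n<\nu(\partial\Omega)$. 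So assume $\xi_\infty\in\partial\Omega$. The crux is then to control how large the shadows $\mathcal{O}_n(x_n,o)$ are: in contrast with the strictly convex case they need not exhaust $\partial\Omega\smallsetminus\{\xi_\infty\}$, but only the complement of $\mathcal{B}(\xi_\infty):=\{\eta\in\partial\Omega : [\xi_\infty,\eta]\subset\partial\Omega\}$, the closed $d_{\spl}$-ball of radius $1$ around $\xi_\infty$.

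Concretely, I would show $\limsup_n(\partial\Omega\smallsetminus\mathcal{O}_n(x_n,o))\subset\mathcal{B}(\xi_\infty)$. Let $\xi\in\partial\Omega\smallsetminus\mathcal{B}(\xi_\infty)$; then $[\xi_\infty,\xi]\not\subset\partial\Omega$, so the open segment $(\xi_\infty,\xi)$ contains a point $p\in\Omega$. Writing $p=(1-s)\xi_\infty+s\xi$ with $s\in(0,1)$ in an affine chart containing $\overline{\Omega}$ and setting $p_n:=(1-s)x_n+s\xi\in[x_n,\xi]$, we get $p_n\to p$, hence $p_n\in\Omega$ and $d_\Omega(o,p_n)\leq d_\Omega(o,p)+1$ for $n$ large; since $n\to\infty$, this gives $p_n\in[x_n,\xi]\cap B_\Omega(o,n)$, \ie $\xi\in\mathcal{O}_n(x_n,o)$, for all large $n$, proving the inclusion. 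As each $\partial\Omega\smallsetminus\mathcal{O}_n(x_n,o)$ is closed with $\nu$-mass $>\nu(\partial\Omega)-1/n$, the reverse Fatou lemma (using $\nu(\partial\Omega)<\infty$) yields $\nu(\partial\Omega)\leq\nu(\mathcal{B}(\xi_\infty))$, \ie $\supp\nu\subset\mathcal{B}(\xi_\infty)$ since $\mathcal{B}(\xi_\infty)$ is closed.

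Finally I would propagate this concentration using $\Gamma$-invariance and minimality of the proximal limit set. As $\nu$ is $\Gamma$-quasi-invariant, $\supp\nu$ is a non-empty closed $\Gamma$-invariant subset of $\PR(V)$; by Remark~\ref{minimality} (in the strongly irreducible case, $\Gamma$ containing a proximal element by Fact~\ref{Fait : des bips partout}) or Fact~\ref{fait:des rk1 partout} (in the rank-one non-elementary case), $\Lambda^{\prox}$ is the smallest such set, so $\Lambda^{\prox}\subset\supp\nu$ and $\Lambda^{\prox}\subset\overline{\Gamma\xi_\infty}$. Applying $\gamma\in\Gamma$ to $\supp\nu\subset\mathcal{B}(\xi_\infty)$ and using $\gamma\mathcal{B}(\xi_\infty)=\mathcal{B}(\gamma\xi_\infty)$ and $\gamma\supp\nu=\supp\nu$, we get $\supp\nu\subset\mathcal{B}(\gamma\xi_\infty)$ for every $\gamma$; as $\{(\alpha,\beta)\in\partial\Omega^2:[\alpha,\beta]\subset\partial\Omega\}$ is closed, this extends to $\supp\nu\subset\mathcal{B}(\eta)$ for all $\eta\in\overline{\Gamma\xi_\infty}\supset\Lambda^{\prox}$, so $[\xi,\eta]\subset\partial\Omega$ for all $\xi,\eta\in\Lambda^{\prox}$. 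This contradicts the hypothesis: if $T^1M_{\bip}\neq\emptyset$, a lift $\tilde v\in T^1\Omega$ of a vector of $T^1M_{\bip}$ has endpoints $\xi_0,\eta_0\in\Lambda^{\prox}$ with $\pi\tilde v\in[\xi_0,\eta_0]\cap\Omega$; if $M$ is rank-one, a rank-one element $\gamma_0\in\Gamma$ preserves a rank-one geodesic of $\Omega$ (Definition~\ref{def rank-one}) with endpoints $x_{\gamma_0}^{\pm}\in\Lambda^{\prox}$ and $[x_{\gamma_0}^+,x_{\gamma_0}^-]\cap\Omega\neq\emptyset$; in either case there are $\xi_0,\eta_0\in\Lambda^{\prox}$ with $[\xi_0,\eta_0]\not\subset\partial\Omega$. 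I expect the main obstacle to be the geometric step of the second paragraph — in particular recognising that $\mathcal{B}(\xi_\infty)$, rather than $\{\xi_\infty\}$, is the exceptional set — together with the observation that minimality of $\Lambda^{\prox}$ is exactly what turns "$\supp\nu$ concentrated near one boundary point" into a global contradiction.
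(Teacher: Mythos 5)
Your proof is correct and follows essentially the same route as the paper's: argue by contradiction with $x_n\to\xi_\infty\in\partial\Omega$, show via the convex-interpolation (homothety) argument that the shadows eventually contain every $\eta$ with $[\xi_\infty,\eta]\not\subset\partial\Omega$, deduce that $\supp\nu$ lies in the closed simplicial $1$-ball $\overline{B}_{\spl}(\xi_\infty,1)=\mathcal{B}(\xi_\infty)$, and then use minimality of $\Lambda^{\prox}$ together with two proximal limit points spanning a line through $\Omega$ to reach a contradiction. The only differences are cosmetic: you phrase the measure step with reverse Fatou and the final propagation via closedness of the incidence relation $\{(\alpha,\beta):[\alpha,\beta]\subset\partial\Omega\}$, where the paper uses a union of intersections of shadows and upper semi-continuity of $d_{\spl}$-balls under Hausdorff limits.
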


\begin{proof}
 By contradiction suppose that there are sequences $(R_n)_n\in\R_{>0}^\N$ and $(x_n)_n\in\Omega$ such that 
 \[R_n\underset{n\to\infty}{\longrightarrow}\infty \text{ and } \nu(\mathcal{O}_{R_n}(x_n,o))\underset{n\to\infty}{\longrightarrow}0.\]
 We can assume that $(x_n)_n$ converges to some $\xi\in\overline{\Omega}$. If $\xi\in\Omega$ then for $n$ such that ${R_n}\geq d_\Omega(o,\xi)+1$ and $d_\Omega(x_n,\xi)<1$, we have $\mathcal{O}_{R_n}(x_n,o)=\partial\Omega$ which is absurd; hence $\xi\in\partial\Omega$. We claim that $\nu(\overline{B}_{\mathrm{spl}}(\xi,1))=1$. It enough to prove that 
 \[\partial\Omega\setminus \overline{B}_{\mathrm{spl}}(\xi,1)\subset \bigcup_n\bigcap_{k\geq n}\mathcal{O}_{R_k}(x_k,o).\]
 See Figure~\ref{fig:lemombre}. Let $\eta\in\partial\Omega\setminus \overline{B}_{\mathrm{spl}}(\xi,1)$. Fixing an affine chart containing $\overline{\Omega}$, we can consider for each $n$ the following points of $\Omega$:
\[y:=\frac{1}{2}(\xi-\eta)+\eta \text{ and } y_n:=\frac{1}{2}(x_n-\eta)+\eta,\]
where the map $x\mapsto (x-\eta)/2+\eta$ is defined on the affine chart as the homothety centred at $\eta$ and with ratio one half. Then we can find $n$ large enough so that $B_\Omega(o,{R_n})$ contains a neighbourhood $U$ of $y$, and so that $y_n$ is contained in $U$. This implies that $\eta\in \mathcal{O}_{R_n}(x_n,o)$, which conludes the proof of the claim that $\nu(\overline{B}_{\mathrm{spl}}(\xi,1))=1$.
 
 Since $\Lambda^{\prox}$ is the smallest $\Gamma$-invariant closed subset of $\overline\Omega$ (Remark~\ref{minimality} and Fact~\ref{fait:des rk1 partout}), and $\nu$ is $\Gamma$-quasi-invariant, we deduce that $\Lambda^{\prox}$ is contained in the support of $\nu$. In order to get a contradiction, let us prove that $\Lambda^{\prox}$ is not contained in $\overline{B}_{\mathrm{spl}}(\xi,1)$. By assumption we can find $\eta,\eta'\in\Lambda^{\prox}$ such that $(\eta\oplus\eta')\cap\Omega\neq\emptyset$. Using again that $\Lambda^{\prox}\subset\overline\Omega$ is minimal for the action of $\Gamma$, we deduce the existence of a sequence $(\gamma_n)_n\in\Gamma^{\N}$ such that $(\gamma_n\xi)_n$ converges to $\eta$. But then $(\overline{B}_{\mathrm{spl}}(\gamma_n\xi,1))_n$ sub-converges to $\overline{B}_{\mathrm{spl}}(\eta,1)$ (\ie any accumulation point for the Hausdorff topology is contained in $\overline{B}_{\mathrm{spl}}(\eta,1)$); hence $\overline{B}_{\mathrm{spl}}(\gamma_n\xi,1)$ does not contain $\eta'$ for $n$ large enough.
 
\end{proof}

 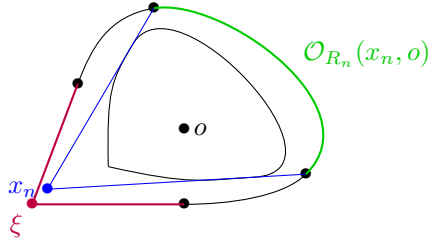
\begin{figure}
  \centering
  \begin{tikzpicture}[scale=2]
   \coordinate (xi) at (0,0);
   \coordinate (a) at (1,0);
   \coordinate (b) at (1.8,.2);
   \coordinate (c) at (.8,1.3);
   \coordinate (d) at (.3,.8);
   \coordinate (x) at (.1,.1);
   \coordinate (o) at (1,.5);
   \coordinate (y) at ($(x)!.8!(b)$);
   \coordinate (z) at ($(x)!.8!(c)$);
   \coordinate (xi') at ($(xi)!.5!(o)$);
   \coordinate (Ta) at ($.3*(a)-.3*(xi)$);
   \coordinate (Tb) at (.3,.3);
   \coordinate (Tc) at (-.3,-.05);
   \coordinate (Td) at ($.3*(xi)-.3*(d)$);
   \coordinate (Ty) at ($(b)-(y)$);
   \coordinate (Tz) at ($(z)-(c)$);
   \coordinate (Txi') at ($.5*(a)-.5*(xi')+(Ta)$);
   \coordinate (Tmxi') at ($.5*(xi')-.5*(d)+(Td)$);
   
   \draw (xi) node[purple]{$\bullet$} node[below left,purple]{$\xi$};
   \draw (a) node{$\bullet$};
   \draw (b) node{$\bullet$};
   \draw (c) node{$\bullet$};
   \draw (d) node{$\bullet$};
   \draw (o) node{$\bullet$} node[right]{$o$};
   \draw (x) node[blue]{$\bullet$} node[left,blue]{$x_n$};
   
   \draw [thick, purple] (xi)--(a);
   \draw (a) .. controls ($(a)+(Ta)$) and ($(b)-.5*(Tb)$) .. (b);
   \draw [thick, green!80!black] (b).. controls ($(b)+1.5*(Tb)$) and ($(c)-1.5*(Tc)$) ..(c);
   \draw (c).. controls ($(c)+(Tc)$) and ($(d)-(Td)$) ..(d);
   \draw [thick, purple] (d)--(xi);
   \draw (y)..controls ($(y)+2*(Ty)$) and ($(z)-2*(Tz)$)..(z);
   \draw (z)..controls ($(z)+(Tz)$) and ($(xi')-(Tmxi')$)..(xi');
   \draw (xi')..controls ($(xi')+(Txi')$) and ($(y)-(Ty)$)..(y);
   \draw [blue] (x)--(b);
   \draw [blue] (x)--(c);
   
   \draw (2.2,1) node[green!80!black]{$\mathcal{O}_{R_n}(x_n,o)$};
  \end{tikzpicture}
  \caption{The sequence of increasing shadows in the proof of Lemma~\ref{l ombre grandit a la frontiere}}
  \label{fig:lemombre}
 \end{figure}

We will need exactly twice a refined version of the previous lemma, which bounds from below the measure of \emph{scarce shadows}. It will be used to prove the refined version of the Shadow lemma for scarce shadows (Lemma~\ref{shadowlemma avec defaut}), and to prove Proposition~\ref{l'ensemble limite conique a mesure non nulle}. This version needs $M$ to be rank-one, and the statement is a bit more technical. The proof is almost the same.

\begin{lemma}\label{l'ombre en defaut grandit a la frontiere}
 Let $\Omega\subset\PR(V)$ be a properly convex open set and $\Gamma\subset\Aut(\Omega)$ a discrete subgroup. Suppose $M=\Omega/\Gamma$ is rank-one and non-elementary. Consider a $\Gamma$-quasi-invariant Borel finite measure $\nu$ on $\partial\Omega$. Then
there exist $\epsilon>0$ and $R_0>0$ such that $\nu(\mathcal{O}_R^-(x,o))\geq \epsilon$ for all $R\geq R_0$ and $x\in\Omega\smallsetminus B_\Omega(o,2R)$.
\end{lemma}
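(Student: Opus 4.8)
The plan is to mimic the contradiction scheme of the proof of Lemma~\ref{l ombre grandit a la frontiere}, the new feature being that one must control all $z\in B_\Omega(x,R)$ at once, and that the hypothesis $x\notin B_\Omega(o,2R)$ — rather than a weaker one — is used crucially. Assuming the statement false, I would choose $R_n\to\infty$ and $x_n\in\Omega$ with $d_\Omega(o,x_n)\ge 2R_n$ and $\nu(\mathcal{O}_{R_n}^-(x_n,o))\to 0$, and, after passing to a subsequence, $x_n\to\xi_0$ in $\overline{\Omega}$ — with $\xi_0\in\partial\Omega$ since $d_\Omega(o,x_n)\to\infty$ — and $x_n\to\tilde\xi_0$ in $\overline{\Omega}^{\hor}$ with $\pi_{\hor}(\tilde\xi_0)=\xi_0$. (If $\nu=0$ there is nothing to prove.) The aim is to show that $\nu$ is carried by $\overline{B}_{\mathrm{spl}}(\xi_0,2)$ and then contradict the fact that $M$ is rank-one.

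The heart of the argument is the inclusion $\partial\Omega\smallsetminus\overline{B}_{\mathrm{spl}}(\xi_0,2)\subseteq\bigcup_n\bigcap_{k\ge n}\mathcal{O}_{R_k}^-(x_k,o)$; granting it, a Fatou-type estimate gives $\nu(\partial\Omega\smallsetminus\overline{B}_{\mathrm{spl}}(\xi_0,2))\le\liminf_n\nu(\mathcal{O}_{R_n}^-(x_n,o))=0$. To prove the inclusion, fix $\eta\in\partial\Omega$ with $d_{\mathrm{spl}}(\xi_0,\eta)\ge 3$ and suppose, for contradiction, that $\eta\notin\mathcal{O}_{R_n}^-(x_n,o)$ for infinitely many $n$; along such $n$ pick $z_n\in B_\Omega(x_n,R_n)$ with $[z_n,\eta]\cap B_\Omega(o,R_n)=\emptyset$, and pass to a subsequence with $z_n\to\zeta$ in $\overline{\Omega}$ and $z_n\to\tilde\zeta$ in $\overline{\Omega}^{\hor}$. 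The key observation is that $z_n$ sits deep in the ``large Gromov product'' region of $x_n$: since $d_\Omega(o,z_n)\ge d_\Omega(o,x_n)-R_n$ and $d_\Omega(x_n,z_n)\le R_n$,
\[2\langle x_n,z_n\rangle_o=d_\Omega(o,x_n)+d_\Omega(o,z_n)-d_\Omega(x_n,z_n)\ge 2\bigl(d_\Omega(o,x_n)-R_n\bigr)\ge 2R_n\to\infty.\]
By the continuity of the Gromov product on $\Geod_{\hor}(\Omega)\times\Omega$ (Proposition~\ref{prop:gromprod}), the pair $(\tilde\xi_0,\tilde\zeta)$ cannot lie in $\Geod_{\hor}(\Omega)$, i.e.\ $[\xi_0,\zeta]\subseteq\partial\Omega$ and so $d_{\mathrm{spl}}(\xi_0,\zeta)\le 1$. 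On the other hand $d_\Omega(o,[z_n,\eta])\ge R_n\to\infty$ while $[z_n,\eta]\to[\zeta,\eta]$ in the Hausdorff topology, so $[\zeta,\eta]$ cannot meet the open set $\Omega$; hence $[\zeta,\eta]\subseteq\partial\Omega$ and $d_{\mathrm{spl}}(\zeta,\eta)\le 1$. But then $d_{\mathrm{spl}}(\xi_0,\eta)\le d_{\mathrm{spl}}(\xi_0,\zeta)+d_{\mathrm{spl}}(\zeta,\eta)\le 2$, a contradiction. Therefore $\eta\in\mathcal{O}_{R_n}^-(x_n,o)$ for all large $n$, which proves the inclusion.

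For the final contradiction: $d_{\mathrm{spl}}$ being lower semicontinuous, $\overline{B}_{\mathrm{spl}}(\xi_0,2)$ is closed and has full $\nu$-measure, hence $\supp(\nu)\subseteq\overline{B}_{\mathrm{spl}}(\xi_0,2)$, a set of $d_{\mathrm{spl}}$-diameter at most $4$. Since $M$ is rank-one and non-elementary, $\supp(\nu)$ is a nonempty $\Gamma$-invariant closed set and therefore contains $\Lambda^{\prox}$ (Fact~\ref{fait:des rk1 partout}); picking a rank-one $\gamma\in\Gamma$, the point $x_\gamma^+\in\Lambda^{\prox}$ is strongly extremal (Fact~\ref{equivalences rang un}) and distinct from $x_\gamma^-\in\Lambda^{\prox}$. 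Strong extremality of $x_\gamma^+$ forces $d_{\mathrm{spl}}(x_\gamma^+,x_\gamma^-)=\infty$: any segment $[x_\gamma^+,a]\subseteq\partial\Omega$ with $a\ne x_\gamma^+$ has a relative-interior point $m$ with $x_\gamma^+\in\overline{F}_\Omega(m)$ and $m\ne x_\gamma^+$, which is impossible; iterating along the first step of a putative finite $d_{\mathrm{spl}}$-path from $x_\gamma^+$ gives the claim. This contradicts the diameter bound and completes the proof.

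The only genuinely new difficulty compared with Lemma~\ref{l ombre grandit a la frontiere} is precisely this step, which controls $[z,\eta]$ for all $z\in B_\Omega(x_n,R_n)$: the homothety/convexity argument of that lemma, which handles the single point $z=x_n$, becomes useless once $z$ ranges over a ball of radius $R_n\to\infty$. The Gromov-product collapse above — which says $B_\Omega(x_n,R_n)$ accumulates only on the closed face of $\xi_0$, and which is where the factor $2$ in $d_\Omega(o,x_n)\ge 2R_n$ enters — is the substitute, and it is also the reason the conclusion involves the simplicial ball of radius $2$ rather than $1$.
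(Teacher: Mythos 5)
Your proof is correct, and it follows the paper's overall scheme (argue by contradiction, show $\nu$ is carried by $\overline{B}_{\spl}(\xi_0,2)$, then clash with rank-one via minimality of $\Lambda^{\prox}$, Fact~\ref{fait:des rk1 partout}), but both key steps are executed differently. For the inclusion $\partial\Omega\smallsetminus\overline{B}_{\spl}(\xi_0,2)\subset\bigcup_n\bigcap_{k\geq n}\mathcal{O}^-_{R_k}(x_k,o)$, the paper fixes $\eta$ and applies the half-homothety centred at $\eta$ to the balls $\overline{B}_\Omega(x_n,R_n)$: since any Hausdorff accumulation point of these balls is a convex subset of $\partial\Omega$ containing $\xi_0$, hence lies in $\overline{B}_{\spl}(\xi_0,1)$, the homothety images eventually enter a fixed compact subset of $\Omega$ and therefore $B_\Omega(o,R_n)$. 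You instead take a witness $z_n\in B_\Omega(x_n,R_n)$ of failure, note $\langle x_n,z_n\rangle_o\geq d_\Omega(o,x_n)-R_n\to\infty$, and use the continuity of the Gromov product (Proposition~\ref{prop:gromprod}) to force $[\xi_0,\zeta]\subset\partial\Omega$, while the avoided-ball condition forces $[\zeta,\eta]\subset\partial\Omega$; both routes use the hypothesis only through $d_\Omega(o,x_n)-R_n\to\infty$, and yours trades the homothety/Hausdorff-limits-of-balls argument for the Gromov product (in fact the elementary inequality $\langle x,z\rangle_o\leq d_\Omega(o,w)$ for $w\in[x,z]$, valid because straight segments are geodesics, would let you bypass the horoboundary entirely). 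For the endgame, the paper translates $\xi_0$ by group elements so that $\overline{B}_{\spl}(\gamma_n\xi_0,2)$ collapses onto one point of $\Lambda^{\prox}\cap\partial_{\sse}\Omega$ and misses a second one, using upper semicontinuity of simplicial balls; you observe directly that $\supp(\nu)\subset\overline{B}_{\spl}(\xi_0,2)$ has simplicial diameter at most $4$ yet must contain $x_\gamma^{\pm}$ for a rank-one $\gamma\in\Gamma$, which are at infinite simplicial distance by strong extremality (Fact~\ref{equivalences rang un}) — a slightly more economical contradiction. Two cosmetic caveats: the parenthetical ``if $\nu=0$ there is nothing to prove'' is backwards, since the conclusion fails for $\nu=0$; the lemma (like the paper's own proof, which normalises $\nu$ to a probability) implicitly assumes $\nu\neq0$, and this is exactly what you need to get $\supp(\nu)\neq\emptyset$. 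Also, as in the paper, the measurability of $\mathcal{O}^-_R(x,o)$ is taken for granted (working with outer measure handles this).
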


\begin{proof}
 By contradiction we suppose the existence of sequences $(R_n)_n\in\R_{>0}^{\N}$ and $(x_n)_n\in\Omega^{\N}$ such that for all $n\in\N$, 
\[R_n\underset{n\to\infty}{\longrightarrow}\infty, \text{ while } d_\Omega(x_n,o)-R_n\underset{n\to\infty}{\longrightarrow}\infty \text{ and } \nu(\mathcal{O}_{R_n}^-(x_n,o))\underset{n\to\infty}{\longrightarrow}0.\]
 We can assume, up to extracting, that $(x_n)_n$ converges to some point $\xi\in\partial\Omega$. We claim that $\nu(\overline{B}_{\mathrm{spl}}(\xi,2))=1$ (see Definition~\ref{les metriques du bord}). It is enough to prove that 
 \[\partial\Omega\setminus \overline{B}_{\mathrm{spl}}(\xi,2)\subset \bigcup_n\bigcap_{k\geq n}\mathcal{O}_{R_k}^-(x_k,o).\]
 Let $\eta\in\partial\Omega\setminus \overline{B}_{\mathrm{spl}}(\xi,2)$. Fixing an affine chart containing $\overline{\Omega}$, we can consider for each $n$ the following compact subsets of $\Omega$:
\[K:=\frac{1}{2}(\overline{B}_{\mathrm{spl}}(\xi,1)-\eta)+\eta \text{ and } K_n:=\frac{1}{2}(\overline{B}_\Omega(x_n,{R_n})-\eta)+\eta,\]
where the map $x\mapsto (x-\eta)/2+\eta$ is defined on the affine chart as the homothety centred at $\eta$ and with ratio one half. We observe that all accumulation points of the sequence $(K_n)_n$ are contained in $K$. Indeed, any accumulation point of $(\overline{B}_\Omega(x_n,{R_n}))_n$ is convex, contains $\xi$ and is moreover contained in $\partial\Omega$ since $x_n$ goes faster to infinity than $R_n$; hence it is contained in $\overline{B}_{\mathrm{spl}}(\xi,1)$. Therefore we can find $n$ large enough so that $B_\Omega(o,{R_n})$ contains a neighbourhood $U$ of $K$, and so that $K_n$ is contained in $U$.
 
 Since $\Lambda^{\prox}$ is the smallest $\Gamma$-invariant closed subset of $\overline\Omega$ (Fact~\ref{fait:des rk1 partout}), and $\nu$ is $\Gamma$-quasi-invariant, we deduce that $\Lambda^{\prox}$ is contained in the support of $\nu$. In order to get a contradiction, let us prove that $\Lambda^{\prox}$ is not contained in $\overline{B}_{\mathrm{spl}}(\xi,2)$. By assumption we can find two distinct points $\eta,\eta'$ in $\Lambda^{\prox}\cap\partial_{\sse}\Omega$. Using again that $\Lambda^{\prox}\subset\overline\Omega$ is minimal for the action of $\Gamma$, we deduce the existence of a sequence $(\gamma_n)_n\in\Gamma^{\N}$ such that $(\gamma_n\xi)_n$ converges to $\eta$. But then $(\overline{B}_{\mathrm{spl}}(\gamma_n\xi,2))_n$ sub-converges to $\overline{B}_{\mathrm{spl}}(\eta,2)=\{\eta\}$; hence $\overline{B}_{\mathrm{spl}}(\gamma_n\xi,2)$ does not contain $\eta'$ for $n$ large enough.
\end{proof}

\subsection{Proof of Lemma~\ref{shadowlemma} and another Shadow lemma}

\begin{proof}[Proof of Lemma~\ref{shadowlemma}]
We compute for $\alpha\in\{\emptyset,+,-\}$:
 \begin{align*}
  \nu_o(\mathcal{O}_R^\alpha(o,\gamma o)) &= \mu_o(\pi_{\hor}^{-1}(\mathcal{O}_R^\alpha(o,\gamma o)))\\
  &= \mu_{\gamma^{-1}o}(\pi_{\hor}^{-1}(\mathcal{O}_R^\alpha(\gamma^{-1}o,o)))\\
  &= \int_{\pi_{\hor}^{-1}(\mathcal{O}_R^\alpha(\gamma^{-1}o,o))}e^{-\delta b_{\xxi}(\gamma^{-1}o,o)}d\mu_o(\xxi).
 \end{align*}
 
 Hence on one hand $b_{\xxi}(\gamma^{-1}o,o)\geq d_\Omega(o,\gamma^{-1}o)-4R$  because of Lemma~\ref{b=d}, so 
 \begin{align*}
  \nu_o(\mathcal{O}_R^+(o,\gamma o))&\leq \int_{\pi_{\hor}^{-1}(\mathcal{O}_R^+(\gamma^{-1}o,o))}e^{-\delta d_\Omega(o,\gamma o)+4\delta R}d\mu_o(\xxi)\\
  &\leq e^{4\delta R} e^{-\delta d_\Omega(o,\gamma o)}\nu_o(\mathcal{O}_R^+(\gamma^{-1}o,o))\\
  &\leq e^{4\delta R} e^{-\delta d_\Omega(o,\gamma o)}\nu_o(\partial\Omega).
 \end{align*}

 On the other hand, we can use Lemma~\ref{l ombre grandit a la frontiere}, to obtain $\epsilon>0$ and $R_0$ such that for $R\geq R_0$ and for $\gamma\in\Gamma$ such that $d_\Omega(o,\gamma o)\geq R_0$,
 \begin{align*}
  \nu_o(\mathcal{O}_R(o,\gamma o))&\geq \int_{\pi_{\hor}^{-1}(\mathcal{O}_R(\gamma^{-1}o,o))}e^{-\delta d_\Omega(o,\gamma o)}d\mu_o(\xxi)\\
  &\geq \nu_o(\mathcal{O}_R(\gamma^{-1}o,o)) e^{-\delta d_\Omega(o,\gamma o)}\\
  &\geq \epsilon e^{-\delta d_\Omega(o,\gamma o)}.\qedhere
 \end{align*}
\end{proof}

As for Lemma~\ref{l ombre grandit a la frontiere}, there exists a refined version of the Shadow lemma (Lemma~\ref{shadowlemma}) for scarce shadows. We will only need it once, for the proof of Proposition~\ref{l'ensemble limite conique a mesure non nulle}. Its proof is exactly the same as that of Lemma~\ref{shadowlemma}, except that we use instead Lemma~\ref{l ombre grandit a la frontiere}.

\begin{lemma}\label{shadowlemma avec defaut}
Let $o\in\Omega\subset\PR(V)$ be a pointed properly convex open set and $\Gamma\subset\Aut(\Omega)$ a discrete subgroup. Suppose $M=\Omega/\Gamma$ is rank-one and non-elementary. Consider $\delta\geq0$ and a $\delta$-conformal density $(\nu_x)_{x\in\Omega}$ on $\partial\Omega$. Then there exists $R_0>0$ such that for any $R\geq R_0$, one can find $C=C(R)>0$ such that for each $\gamma\in\Gamma$ satisfying $d_\Omega(o,\gamma o)\geq 2R$,
 \[C^{-1}e^{-\delta d_\Omega(o,\gamma o)}\leq \nu_o(\mathcal{O}_R^-(o,\gamma o)).\]
\end{lemma}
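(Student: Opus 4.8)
The plan is to mimic the proof of Lemma~\ref{shadowlemma}, replacing the role of Lemma~\ref{l ombre grandit a la frontiere} by its refined version Lemma~\ref{l'ombre en defaut grandit a la frontiere}, which is available precisely because $M$ is now assumed rank-one and non-elementary. Concretely, fix $\delta\geq 0$ and a $\delta$-conformal density $(\nu_x)_{x\in\Omega}$ on $\partial\Omega$ coming from a $\delta$-conformal density $(\mu_x)_x$ on $\partial_{\hor}\Omega$. For $\gamma\in\Gamma$, I would start from the same change-of-basepoint computation:
\begin{align*}
 \nu_o(\mathcal{O}_R^-(o,\gamma o)) &= \mu_o(\pi_{\hor}^{-1}(\mathcal{O}_R^-(o,\gamma o)))\\
 &= \mu_{\gamma^{-1}o}(\pi_{\hor}^{-1}(\mathcal{O}_R^-(\gamma^{-1}o,o)))\\
 &= \int_{\pi_{\hor}^{-1}(\mathcal{O}_R^-(\gamma^{-1}o,o))}e^{-\delta b_{\xxi}(\gamma^{-1}o,o)}\diff\mu_o(\xxi),
\end{align*}
using $\Gamma$-equivariance of the density and the Radon--Nikodym formula from Definition~\ref{densites conformes}. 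The second equality is the equivariance $\gamma_*\mu_{\gamma^{-1}o}=\mu_o$ together with the fact that $\gamma^{-1}\mathcal{O}_R^-(o,\gamma o)=\mathcal{O}_R^-(\gamma^{-1}o,o)$ (the shadow sets are equivariant, since the Hilbert metric and the balls are $\Gamma$-invariant).

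Next I would bound the horofunction from above. Since $\mathcal{O}_R^-(\gamma^{-1}o,o)\subset\mathcal{O}_R^+(\gamma^{-1}o,o)$, Lemma~\ref{b=d} gives, for every $\xxi\in\pi_{\hor}^{-1}(\mathcal{O}_R^-(\gamma^{-1}o,o))$,
\[
 b_{\xxi}(\gamma^{-1}o,o)\leq d_\Omega(\gamma^{-1}o,o)=d_\Omega(o,\gamma o),
\]
so that
\[
 \nu_o(\mathcal{O}_R^-(o,\gamma o))\geq e^{-\delta d_\Omega(o,\gamma o)}\,\nu_o(\mathcal{O}_R^-(\gamma^{-1}o,o)).
\]
(Note the inequality only needs the upper bound on $b_\xxi$, not the two-sided estimate; the $-4R$ term plays no role for the lower bound we are after.) Finally I would apply Lemma~\ref{l'ombre en defaut grandit a la frontiere} to the $\Gamma$-quasi-invariant finite measure $\nu:=\nu_o$ on $\partial\Omega$: it yields $\epsilon>0$ and $R_0>0$ such that $\nu_o(\mathcal{O}_R^-(x,o))\geq\epsilon$ whenever $R\geq R_0$ and $x\in\Omega\smallsetminus B_\Omega(o,2R)$. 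Taking $x=\gamma^{-1}o$, which satisfies $d_\Omega(o,\gamma^{-1}o)=d_\Omega(o,\gamma o)\geq 2R$ by hypothesis, we get $\nu_o(\mathcal{O}_R^-(\gamma^{-1}o,o))\geq\epsilon$, hence
\[
 \nu_o(\mathcal{O}_R^-(o,\gamma o))\geq \epsilon\, e^{-\delta d_\Omega(o,\gamma o)},
\]
and setting $C(R):=\epsilon^{-1}$ finishes the proof.

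There is essentially no obstacle here, since all the work is contained in the auxiliary lemmas already proved; the only points requiring a line of care are: (i) checking the equivariance identity $\gamma^{-1}\mathcal{O}_R^-(o,\gamma o)=\mathcal{O}_R^-(\gamma^{-1}o,o)$, which is immediate from the $\Gamma$-invariance of $d_\Omega$ and the definition of $\mathcal{O}_R^-$; (ii) justifying that $\mathcal{O}_R^-(\gamma^{-1}o,o)\subset\mathcal{O}_R^+(\gamma^{-1}o,o)$ so Lemma~\ref{b=d} applies (clear: the ``for all $z$'' condition implies the ``there exists $z$'' condition, taking $z=\gamma^{-1}o\in B_\Omega(\gamma^{-1}o,r)$); and (iii) making sure the measurability/pullback through $\pi_{\hor}$ is handled exactly as in Lemma~\ref{shadowlemma}, which it is, verbatim. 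So the proof is a short adaptation, with Lemma~\ref{l'ombre en defaut grandit a la frontiere} doing the essential geometric work.
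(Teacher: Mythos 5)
Your proposal is correct and follows essentially the same route as the paper: the same change-of-basepoint computation via the conformal density, the bound $b_{\xxi}(\gamma^{-1}o,o)\leq d_\Omega(o,\gamma o)$ (which in fact holds for any $\xxi$ by the triangle inequality), and then Lemma~\ref{l'ombre en defaut grandit a la frontiere} applied at $x=\gamma^{-1}o$ to bound $\nu_o(\mathcal{O}_R^-(\gamma^{-1}o,o))$ from below. Nothing is missing.
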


\begin{proof}
We make the same computation as in the proof of Lemma~\ref{shadowlemma}:
 \begin{align*}
  \nu_o(\mathcal{O}_R^-(o,\gamma o)) = \int_{\pi_{\hor}^{-1}(\mathcal{O}_R^-(\gamma^{-1}o,o))}e^{-\delta b_{\xxi}(\gamma^{-1}o,o)}d\mu_o(\xxi).
 \end{align*}

 We can use Lemma~\ref{l'ombre en defaut grandit a la frontiere}, to obtain $\epsilon>0$ and $R_0$ such that for $R\geq R_0$ and for $\gamma\in\Gamma$ such that $d_\Omega(o,\gamma o)\geq 2R$,
 \begin{align*}
  \nu_o(\mathcal{O}_R^-(o,\gamma o))&\geq \int_{\pi_{\hor}^{-1}(\mathcal{O}_R^-(\gamma^{-1}o,o))}e^{-\delta d_\Omega(o,\gamma o)}d\mu_o(\xxi)\\
  &\geq \nu_o(\mathcal{O}_R^-(\gamma^{-1}o,o)) e^{-\delta d_\Omega(o,\gamma o)}\\
  &\geq \epsilon e^{-\delta d_\Omega(o,\gamma o)}.\qedhere
 \end{align*}
\end{proof}

\subsection{First consequences}

In this section we deduce from the Shadow lemma that there is no conformal density with parameter $0\leq\delta<\delta_\Gamma$. We also prove that open faces of the conical limit set are neglected by conformal densities.

\begin{prop}\label{delta<deltaGamma}
 Let $o\in\Omega\subset\PR(V)$ be a pointed properly convex open set and $\Gamma\subset\Aut(\Omega)$ a discrete subgroup; denote $M=\Omega/\Gamma$. Suppose that $\Gamma$ is strongly irreducible and $T^1M_{\bip}$ is non-empty, or that $M$ is rank-one and non-elementary. Consider $\delta\geq0$ such that there exists a $\delta$-conformal density on $\partial\Omega$. Then $\delta\geq\delta_\Gamma$, and there is some constant $C>0$ such that 
 \[\#\{\gamma\in\Gamma : d_\Omega(o,\gamma o)\leq r\}\leq C e^{\delta_\Gamma r}.\]
\end{prop}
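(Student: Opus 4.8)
The plan is to use the Shadow lemma (Lemma~\ref{shadowlemma}) to convert a growth estimate on the orbit into a statement about the total mass of the conformal density, which is finite by assumption.

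First I would fix $R\geq R_0$ given by Lemma~\ref{shadowlemma}, so that for every $\gamma\in\Gamma$ one has $\nu_o(\mathcal{O}_R(o,\gamma o))\geq C^{-1}e^{-\delta d_\Omega(o,\gamma o)}$. The key geometric input is a bounded-multiplicity statement: there is an integer $N=N(R)$ such that any point $\xi\in\partial\Omega$ lies in $\mathcal{O}_R(o,\gamma o)$ for at most $N$ elements $\gamma$ with $d_\Omega(o,\gamma o)$ in a fixed window of length $1$ (say $n\leq d_\Omega(o,\gamma o)<n+1$). This follows because if $\xi\in\mathcal{O}_R(o,\gamma o)$ then $\gamma o$ lies within distance $R$ of the ray $[o,\xi)$, hence within distance $R$ of a fixed point $p_n$ on that ray at distance $\approx n$ from $o$; the number of $\gamma\in\Gamma$ with $\gamma o$ in a ball of radius $R+1$ is bounded independently of the center by proper discontinuity together with Benzécri compactness (one can also invoke \eqref{Equation : chi} with the counting-measure proper density, or simply a packing argument).

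Then I would sum: for each $n\in\N$,
\[
C^{-1}\!\!\sum_{\substack{\gamma\in\Gamma\\ n\leq d_\Omega(o,\gamma o)<n+1}}\!\! e^{-\delta d_\Omega(o,\gamma o)}\ \leq\ \sum_{\substack{\gamma\in\Gamma\\ n\leq d_\Omega(o,\gamma o)<n+1}}\!\!\nu_o(\mathcal{O}_R(o,\gamma o))\ \leq\ N\,\nu_o(\partial\Omega).
\]
Since $e^{-\delta d_\Omega(o,\gamma o)}\geq e^{-\delta(n+1)}$ on this window, this gives $\#\{\gamma : n\leq d_\Omega(o,\gamma o)<n+1\}\leq CN\nu_o(\partial\Omega)\,e^{\delta(n+1)}$, and summing over $n$ from $0$ to $\lfloor r\rfloor$ yields $\#\{\gamma : d_\Omega(o,\gamma o)\leq r\}\leq C' e^{\delta r}$ for a new constant $C'$. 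Taking $\frac1r\log$ and letting $r\to\infty$ gives $\delta_\Gamma\leq\delta$ by definition of the critical exponent. Finally, with $\delta\geq\delta_\Gamma$ now known, the same windowed estimate with $\delta$ replaced by $\delta_\Gamma$ in the exponent on the right (using $e^{-\delta d_\Omega(o,\gamma o)}\ge e^{-\delta(n+1)}$ and $\delta\ge\delta_\Gamma$ only affects constants, so one reruns the argument keeping $e^{\delta_\Gamma r}$) gives the stated bound $\#\{\gamma : d_\Omega(o,\gamma o)\leq r\}\leq Ce^{\delta_\Gamma r}$; here one should be slightly careful and note that the inequality $\delta\ge\delta_\Gamma$ together with the Shadow lemma bound actually forces $\delta=\delta_\Gamma$ whenever $\delta<\delta_\Gamma$ is to be excluded, but for the counting bound it suffices to observe that if $\delta>\delta_\Gamma$ the bound $Ce^{\delta_\Gamma r}$ is the stronger statement and must be proved directly — which one does by replacing the crude lower bound $e^{-\delta d_\Omega(o,\gamma o)}\ge e^{-\delta(n+1)}$ on each window; I would instead phrase the final counting bound by summing the Shadow lemma inequality against $\nu_o$ over \emph{all} $\gamma$ with $d_\Omega(o,\gamma o)\le r$ after grouping into the at-most-$N$-fold-covered windows, which directly produces $e^{\delta r}$, and then remark that combined with $\delta\ge\delta_\Gamma$ and the reverse inequality $\delta\le\delta_\Gamma$ (which will be established later, or: take $\delta$ to be exactly $\delta_\Gamma$ once we know a $\delta_\Gamma$-density exists by Fact~\ref{smear}) we get the claim.

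The main obstacle is the bounded-multiplicity / local-finiteness step — making precise that shadows $\mathcal{O}_R(o,\gamma o)$ for $\gamma$ in a bounded annulus of the orbit have uniformly bounded overlap. This is where proper discontinuity of $\Gamma$ on $\Omega$ and the uniform ball-volume bounds from Benzécri's theorem (Fact~\ref{benz}, \eqref{Equation : chi}) enter; once that is in hand the rest is the routine summation above. I would present this cleanly by first recording the overlap lemma as a short separate paragraph, then doing the two-line telescoping sum.
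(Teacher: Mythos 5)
Your proof is correct and follows essentially the same route as the paper: the Shadow lemma lower bound, a packing/finite-overlap argument for shadows of orbit points at comparable distance from $o$ (the paper phrases this as disjointness of the shadows over a maximal $(1+4R)$-separated subset of the orbit in an annulus, you phrase it as uniformly bounded multiplicity via proper discontinuity, which is the same idea), summation against the finite total mass $\nu_o(\partial\Omega)$, and finally an appeal to Fact~\ref{smear} to rerun the count with a $\delta_\Gamma$-conformal density. The hedging in your last paragraph is unnecessary: applying the identical windowed estimate to the $\delta_\Gamma$-density furnished by Fact~\ref{smear} is exactly how the paper obtains the stated bound $C e^{\delta_\Gamma r}$.
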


\begin{proof}
 Let $(\nu_x)_{x\in\Omega}$ be a $\delta$-conformal density on $\partial\Omega$. We consider $R$ and $C>0$ from the Shadow lemma (Lemma~\ref{shadowlemma}), such that for each automorphism $\gamma\in\Gamma$, $\nu_o(\mathcal{O}_R(o,\gamma o))\geq C^{-1}e^{-\delta d_\Omega(o,\gamma o)}$. For each $r>0$ we give ourselves a maximal $(1+4R)$-separated subset of $\Gamma\cdot o\cap B_\Omega(o,r+1)\smallsetminus B_\Omega(o,r)$. One can easily see that the shadows $(\mathcal{O}_R(o,x))_{x\in F_r}$ are pairwise disjoint, therefore
\begin{align*}
 1&\geq \sum_{x\in F_r}\nu_o(\mathcal{O}_R(o,x))\\
&\geq C^{-1}\sum_{x\in F_r}e^{-\delta d_\Omega(o,x)}\\
&\geq C^{-1}e^{-\delta}e^{-\delta r}\#F_r\\
&\geq C^{-1}e^{-\delta}\#\{\gamma : d_\Omega(o,\gamma o)\leq 1+4R\}^{-1}e^{-\delta r}\#\{\gamma : \gamma o\in B_\Omega(o,r+1)\smallsetminus B_\Omega(o,r)\}.
\end{align*}
This implies that $\#\{\gamma : d_\Omega(o,\gamma o)\leq r\}\leq C'e^{\delta r}$ for any $r>0$, for some $C'>0$ independent of $r$. By definition, this implies that $\delta\geq \delta_\Gamma$, and since by Fact~\ref{smear} there exists a $\delta_\Gamma$-conformal density, $\#\{\gamma : d_\Omega(o,\gamma o)\leq r\}\leq C''e^{\delta_\Gamma r}$ for any $r>0$, for some $C''>0$ independent of $r$.
\end{proof}

\begin{prop}\label{Proposition : faces coniques negligees}
 Let $\Omega\subset\PR(V)$ be a properly convex open set and $\Gamma\subset\Aut(\Omega)$ a discrete subgroup; set $M=\Omega/\Gamma$. Suppose that $\Gamma$ is strongly irreducible and $T^1M_{\bip}$ is non-empty, or that $M$ is rank-one and non-elementary. Consider $\delta\geq0$ and a $\delta$-conformal density $(\nu_x)_{x\in\Omega}$ on $\partial\Omega$. Then $\nu_x(F_\Omega(\xi))=0$ for all $x\in\Omega$ and $\xi\in\Lambda^{\con}$.
\end{prop}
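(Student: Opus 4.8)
The plan is to reduce everything, by $\Gamma$-equivariance, to a single uniform estimate on the Busemann cocycle over the face of the conical point, and then to prove that estimate by a two-dimensional cross-ratio argument.

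\textbf{Reduction to $o$ and the conical data.} Since $\nu_x$ and $\nu_o$ are push-forwards under $\pi_{\hor}$ of the mutually absolutely continuous measures $\mu_x,\mu_o$ on $\partial_{\hor}\Omega$, they are mutually absolutely continuous; so it suffices to prove $\nu_o(F_\Omega(\xi))=0$ for a fixed basepoint $o$. As $\xi\in\Lambda^{\con}$, fix $r>0$ and a sequence $\gamma_n\to\infty$ together with points $p_n\in[o,\xi)$ with $p_n\to\xi$ and $d_\Omega(p_n,\gamma_no)\le r$. Applying the isometry $\gamma_n^{-1}$ (which commutes with $\pi_{\hor}$) and setting $x_n:=\gamma_n^{-1}o$, $\zeta_n:=\gamma_n^{-1}\xi$, $q_n:=\gamma_n^{-1}p_n\in[x_n,\zeta_n)$, we get $d_\Omega(q_n,o)\le r$ (so the straight ray $[x_n,\zeta_n)$ passes $r$-close to $o$) and $F_\Omega(\zeta_n)=\gamma_n^{-1}F_\Omega(\xi)$.

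\textbf{Equivariance.} Using $(\gamma_n^{-1})_*\nu_o=\nu_{x_n}$, $\gamma_n^{-1}F_\Omega(\xi)=F_\Omega(\zeta_n)$, $\nu_x=\pi_{\hor*}\mu_x$ and $\tfrac{d\mu_{x_n}}{d\mu_o}(\tilde\eta)=e^{-\delta b_{\tilde\eta}(x_n,o)}$, one obtains, for every $n$,
\[
\nu_o(F_\Omega(\xi))=\nu_{x_n}(F_\Omega(\zeta_n))=\int_{\pi_{\hor}^{-1}(F_\Omega(\zeta_n))}e^{-\delta b_{\tilde\eta}(x_n,o)}\,d\mu_o(\tilde\eta).
\]
Hence the proposition follows once we establish the uniform estimate
\[
b_{\tilde\eta}(x_n,o)\ge d_\Omega(x_n,o)-C\qquad(\star)
\]
for all $n$ and all $\tilde\eta\in\pi_{\hor}^{-1}(F_\Omega(\zeta_n))$, with $C=C(r)$ independent of $n$: indeed $(\star)$ gives $\nu_o(F_\Omega(\xi))\le e^{\delta C}\,\mu_o(\partial_{\hor}\Omega)\,e^{-\delta d_\Omega(o,\gamma_no)}\xrightarrow[n\to\infty]{}0$.

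\textbf{The estimate $(\star)$, and the main obstacle.} Writing $b_{\tilde\eta}(x_n,o)=b_{\tilde\eta}(x_n,q_n)+b_{\tilde\eta}(q_n,o)$ with $|b_{\tilde\eta}(q_n,o)|\le r$ and $|d_\Omega(x_n,o)-d_\Omega(x_n,q_n)|\le r$, it is enough to show $b_{\tilde\eta}(x_n,q_n)\ge d_\Omega(x_n,q_n)-C'$; by the identity $b_\xi(x,y)-d_\Omega(x,y)=-2\langle\xi,x\rangle_y$ (from the proof of Lemma~\ref{b=d}) this is equivalent to $\langle\tilde\eta,x_n\rangle_{q_n}\le C'/2$. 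Now $\langle\tilde\zeta,x_n\rangle_{q_n}=0$ for every $\tilde\zeta\in\pi_{\hor}^{-1}(\zeta_n)$, since $q_n\in[x_n,\zeta_n)$ (extension of \eqref{Equation : Gromov 1} in Proposition~\ref{prop:gromprod}); so $(\star)$ amounts to saying that this Gromov product, which vanishes at $\zeta_n$, stays bounded by $C'/2$ as the boundary point moves over the whole face $F_\Omega(\zeta_n)$ and over all its horoboundary preimages. The plan to prove this is to work in the projective $2$-plane $P$ spanned by $x_n$, $\zeta_n$ and $\eta$ — which contains $q_n$ and the segment $[\zeta_n,\eta]$, hence the maximal boundary segment $[\alpha,\beta]$ of $\Omega\cap P$ in whose relative interior $\zeta_n$ and $\eta$ both lie — and to bound $\langle\eta,x_n\rangle_{q_n}$ by the cross-ratio of the four boundary points of $\Omega\cap P$ cut out by the lines $x_n\zeta_n$ and $q_n\eta$: because the extension of $[\zeta_n,\eta]$ is an honest boundary segment with $\zeta_n,\eta$ interior to it, the exit points of these two lines are confined to $[\alpha,\beta]$ and to a controlled region of $\partial(\Omega\cap P)$ near it, forcing that cross-ratio to be bounded, and since $[x_n,\zeta_n)$ continues through $q_n$ exactly along $P$, the broken path $x_n\to q_n\to\eta$ differs from a geodesic by the bounded amount $2\langle\eta,x_n\rangle_{q_n}$; uniformity over $\tilde\eta\in\pi_{\hor}^{-1}(\eta)$ comes from $\langle\tilde\eta,x_n\rangle_{q_n}=b_{\tilde\eta}(q_n,m)+b_{x_n}(q_n,m)$ for $m\in[\eta,x_n]\cap\Omega$ together with the same $2$-plane estimate, and uniformity over $n$ from the fact that $\gamma_n$ intertwines the picture at $(\zeta_n,x_n)$ with the fixed picture at $(\xi,o)$. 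The hard part is precisely this last step: controlling the Gromov product (equivalently the Busemann cocycle) uniformly over an entire non-trivial face of a conical point and over all of its horoboundary preimages, which is where the geometry of faces of $\Omega$ and of non-straight geodesics of the Hilbert metric genuinely enters, and where it matters that $[\zeta_n,\eta]$ sits inside a boundary segment rather than being an arbitrary pair of points of $\partial\Omega$.
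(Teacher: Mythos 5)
Your reduction by equivariance and the conformal cocycle is sound, and granted your estimate $(\star)$ the computation does give $\nu_o(F_\Omega(\xi))\le e^{\delta C}\mu_o(\partial_{\hor}\Omega)e^{-\delta d_\Omega(o,\gamma_no)}$. But the proposal has a genuine gap at its core: $(\star)$ is exactly the hard content, and you do not prove it --- you only describe a plan for it. As stated, $(\star)$ demands a single constant $C(r)$ bounding $\langle\tilde\eta,x_n\rangle_{q_n}$ simultaneously for \emph{all} $\eta$ in the entire open face and \emph{all} horoboundary preimages $\tilde\eta$, uniformly in $n$. Your two-plane cross-ratio sketch does not deliver this: the open face is unbounded for its own Hilbert metric, and as $\eta$ goes to infinity in the face the section $\Omega\cap P$ and the exit points of the line through $q_n$ and $\eta$ degenerate towards the endpoints of the boundary segment, so the claimed boundedness of the cross-ratio is precisely what needs proof; moreover $b_{\tilde\eta}$ is a limit over sequences ranging in all of $\Omega$, so an estimate carried out inside the plane $P$ does not by itself control every preimage $\tilde\eta$ of a non-smooth $\eta$. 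The natural a priori bound on this Gromov product (compare Lemma~\ref{crampon}) is of the order of the face distance $d_{\overline\Omega}(\zeta_n,\eta)$, which is bounded on face-balls but not over the whole face, so the uniform version you assert is at best a nontrivial claim requiring an argument you have not given. The paper avoids needing any such uniform statement: it writes $F_\Omega(\xi)=\bigcup_{R\in\N}B_{\overline\Omega}(\xi,R)$ and, for each fixed $R$, only uses the easy Busemann estimate of Lemma~\ref{b=d} on enlarged shadows, via the inclusions $B_{\overline\Omega}(\xi,R)\subset\mathcal{O}_R(o,x_n)\subset\mathcal{O}_{R+R'}(o,\gamma_no)$ and the Shadow lemma (Lemma~\ref{shadowlemma}), whose constant is allowed to depend on $R$; letting $n\to\infty$ gives measure zero for each face-ball, hence for the face. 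You could repair your approach in the same way: prove $(\star)$ only for $\eta\in B_{\overline\Omega}(\zeta_n,R)$, with $C=C(r,R)$, and conclude by the countable union.

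A second, smaller but real gap: your final limit requires $\delta>0$, whereas the hypothesis only gives $\delta\ge0$; for $\delta=0$ your bound is a constant and proves nothing. The paper closes this by invoking Proposition~\ref{delta<deltaGamma} (any conformal density has $\delta\ge\delta_\Gamma$) together with Fact~\ref{exposant critique non nul} ($\delta_\Gamma>0$). This is also the only place where the standing hypotheses (rank-one and non-elementary, or strongly irreducible with $T^1M_{\bip}\neq\emptyset$) enter; your proposal never uses them, which is a warning sign that the degenerate case $\delta<\delta_\Gamma$, in particular $\delta=0$, has not been ruled out.
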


\begin{proof}
 Let $o\in\Omega$. Observe that the open face $F_\Omega(\xi)$ is contained in $\Lambda^{\con}$ by \cite[Cor.\,4.10]{fannycvxcocpct}. It's enough to prove that $\nu_o(B_{\overline{\Omega}}(\xi,R))=0$ for any $R>0$. By definition, there are sequences $(\gamma_n)_n\in\Gamma^\N$ and $(x_n)_n\in[o,\xi)^\N$ going to infinity such that $(d_\Omega(\gamma_no,x_n)_n$ is bounded; denote $R'=\sup_nd_\Omega(\gamma_no,x_n)$. Using the Shadow lemma (Lemma~\ref{shadowlemma}), we can find $C>0$ such that for any $n$,
 \[\nu_o(B_{\overline{\Omega}}(\xi,R))\leq \nu_o(\mathcal{O}_R(o,x_n))\leq\nu_o(\mathcal{O}_{R+R'}(o,\gamma_no))\leq Ce^{-\delta d_\Omega(o,\gamma_no)}.\]
 This last term goes to zero as $n$ tends to infinity since $\delta\geq\delta_\Gamma>0$ (Fact~\ref{exposant critique non nul} and Proposition~\ref{delta<deltaGamma}).
\end{proof}

\section{The convergent case of the HTSR dichotomy}\label{Section : HTSR convergent}

In this section, we establish the convergent case of the HTSR dichotomy (Theorem~\ref{Thm : The weak Hopf--Tsuji--Sullivan--Roblin dichotomy}.\ref{Item : cas convergent}).

\subsection{The conical limit set has zero measure}\label{ensemble conique est nul}

We prove that, in the convergent case of the HTSR dichotomy, any $\delta$-conformal density gives zero measure to the conical limit set.

\begin{prop}\label{Prop : ensemble conique est nul}
 Let $\Omega\subset\PR(V)$ be a properly convex open set and $\Gamma\subset\Aut(\Omega)$ a discrete subgroup. Suppose that $\Gamma$ is strongly irreducible and $T^1M_{\bip}$ is non-empty, or that $M$ is rank-one and non-elementary. Let $\delta\geq\delta_\Gamma$ with $\sum_{\gamma\in\Gamma} e^{-\delta d_\Omega(o,\gamma o)}$ finite, and consider a $\delta$-conformal density $(\nu_x)_{x\in\Omega}$ on $\partial\Omega$. Then $\nu_x(\Lambda^{\con})=0$ for any $x\in\Omega$.
\end{prop}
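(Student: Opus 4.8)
The plan is to use the standard Borel--Cantelli argument. Recall the conical limit set admits the description: $\xi\in\Lambda^{\con}$ if and only if there exists $R>0$ and a sequence $(\gamma_n)_n\in\Gamma^\N$ going to infinity such that $\xi\in\mathcal{O}_R(o,\gamma_n o)$ for all $n$ (this follows by unwinding the definition: $d_\Omega(\gamma_no,[o,\xi))$ bounded means $[o,\xi)$ passes within a fixed distance of $\gamma_n o$). Consequently, for every $R>0$,
\[
\Lambda^{\con}\ \subset\ \bigcap_{N\in\N}\ \bigcup_{\substack{\gamma\in\Gamma\\ d_\Omega(o,\gamma o)\geq N}}\mathcal{O}_R(o,\gamma o).
\]
First I would fix $R\geq R_0$, where $R_0$ is the threshold from the Shadow lemma (Lemma~\ref{shadowlemma}), so that there is $C=C(R)>0$ with $\nu_o(\mathcal{O}_R(o,\gamma o))\leq \nu_o(\mathcal{O}_R^+(o,\gamma o))\leq Ce^{-\delta d_\Omega(o,\gamma o)}$ for every $\gamma\in\Gamma$. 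Then for every $N$,
\[
\nu_o(\Lambda^{\con})\ \leq\ \sum_{\substack{\gamma\in\Gamma\\ d_\Omega(o,\gamma o)\geq N}}\nu_o(\mathcal{O}_R(o,\gamma o))\ \leq\ C\sum_{\substack{\gamma\in\Gamma\\ d_\Omega(o,\gamma o)\geq N}}e^{-\delta d_\Omega(o,\gamma o)}.
\]
By the hypothesis that $\sum_{\gamma\in\Gamma}e^{-\delta d_\Omega(o,\gamma o)}<\infty$, the right-hand side is the tail of a convergent series, hence tends to $0$ as $N\to\infty$. Therefore $\nu_o(\Lambda^{\con})=0$.

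To conclude for arbitrary $x\in\Omega$ rather than just $o$: since $(\nu_x)_{x\in\Omega}$ is a $\delta$-conformal density on $\partial\Omega$, it is by definition the push-forward by $\pi_{\hor}$ of a $\delta$-conformal density $(\mu_x)_x$ on $\partial_{\hor}\Omega$, and $\mu_x$ is absolutely continuous with respect to $\mu_o$ (with the explicit Radon--Nikodym cocycle $e^{-\delta b_\xi(x,o)}$, which is finite everywhere). Hence $\nu_x$ is absolutely continuous with respect to $\nu_o$, and $\nu_o(\Lambda^{\con})=0$ forces $\nu_x(\Lambda^{\con})=0$. (Alternatively one reruns the same estimate with $o$ replaced by $x$, using that $\mathcal{O}_R(x,\gamma o)\subset\mathcal{O}_{R+d_\Omega(o,x)}(o,\gamma o)$ and that $d_\Omega(x,\gamma o)=d_\Omega(o,\gamma o)+O(1)$, so the corresponding series still converges.)

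There is essentially no obstacle here: the content is entirely front-loaded into the Shadow lemma, which has already been established, and into the elementary covering of $\Lambda^{\con}$ by shadows of orbit points far from $o$. The only minor point requiring a line of care is checking that the shadow description of the conical limit set used above is the one matching Definition of $\Lambda^{\con}$ in the introduction (boundedness of $d_\Omega(\gamma_no,[o,\xi))$), which is immediate: $d_\Omega(\gamma_n o,[o,\xi))\leq R$ exactly says $[o,\xi)$ meets $B_\Omega(\gamma_no,R)$, i.e. $\xi\in\mathcal{O}_R(o,\gamma_no)$ — and one may harmlessly enlarge $R$ to exceed $R_0$. This Borel--Cantelli step is exactly the classical argument (Patterson, Sullivan) transplanted verbatim to the convex projective setting.
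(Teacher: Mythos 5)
Your proposal follows the same route as the paper: cover conical points by shadows of far-away orbit points, bound their measure via the Shadow lemma (Lemma~\ref{shadowlemma}), and let the tail of the convergent series $\sum_\gamma e^{-\delta d_\Omega(o,\gamma o)}$ go to zero; the passage from $\nu_o$ to $\nu_x$ by absolute continuity is also fine.

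The one step that fails as written is the displayed inclusion: for a \emph{fixed} $R$ it is not true that $\Lambda^{\con}\subset\bigcap_{N}\bigcup_{\gamma\,:\,d_\Omega(o,\gamma o)\geq N}\mathcal{O}_R(o,\gamma o)$, because the radius in the definition of conicality depends on the point $\xi$: a point that is conical only for some $R'>R$ need not lie in any $R$-shadow of a distant orbit point (nothing in the hypotheses makes the orbit come uniformly $R$-close to every ray ending at a conical point). What your argument actually yields is $\Lambda^{\con}_R\subset\bigcap_{N}\bigcup_{\gamma\,:\,d_\Omega(o,\gamma o)\geq N}\mathcal{O}_R(o,\gamma o)$, hence $\nu_o(\Lambda^{\con}_R)=0$ for every $R\geq R_0$; since $\Lambda^{\con}_R$ is nondecreasing in $R$, one then writes $\Lambda^{\con}=\bigcup_{k\in\N,\;k\geq R_0}\Lambda^{\con}_k$ as a countable union of null sets and concludes. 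This is precisely how the paper argues (it fixes $R$ and proves $\nu_o(\Lambda^{\con}_R)=0$), so after this one-line repair your proof coincides with the paper's.
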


\begin{proof}
 We consider $R>0$ and prove that $\nu_o(\Lambda^{\con}_R)=0$. By definition, for any $r>0$, the set $\Lambda^{\con}_R$ is contained in the union, over all $\gamma\in \Gamma$ such that $d_\Omega(o,\gamma o)\geq r$, of the shadows $\mathcal{O}_R(o,\gamma o)$. As a consequence, by the Shadow lemma (Lemma~\ref{shadowlemma}), we can find a constant $C>0$ such that 
\[\nu_o(\Lambda^{\con}_R)\leq C\sum_{\gamma : d_\Omega(o,\gamma o)\geq r} e^{-\delta d_\Omega(o,\gamma o)}\]
 for any $r>0$, and this last quantity converges to zero as $r$ goes to infinity.
\end{proof}

\subsection{Proof of Theorem~\ref{Thm : The weak Hopf--Tsuji--Sullivan--Roblin dichotomy}.\ref{Item : cas convergent}}

Let $\mu_o$ be a $\delta_\Gamma$-conformal density on $\partial_{\hor}\Omega$ such that $(\pi_{\hor})_*\mu_o=\nu_o$, and let $\mm$ be the Sullivan measure on $T^1\Omega$ induced by $\mu_o$. According to Proposition~\ref{delta<deltaGamma}, we have $\delta\geq \delta_\Gamma$. Let us assume that $\sum_\gamma e^{-\delta d_\Omega(o,\gamma o)}$ is convergent.

By Fact~\ref{Fait : conservatif passe au quotient}, in order to prove that $(T^1M,\phi_t,m)$ and $(\Geod^\infty(\Omega),\Gamma,\nu_o^2)$ are dissipative, it is enough to prove that $(T^1\Omega,\Gamma\times \R,\tilde{m})$ is dissipative. If by contradiction it is not the case, then the conservative part contains a compact subset $K$ of positive measure. This means that for almost any vector $v\in K$, using notations from Section~\ref{Section : decompo de Hopf},
\begin{equation*}
 \infty=\int_{\Gamma\times\R}1_K(v)= \sum_{\gamma\in\Gamma}\int_{-\infty}^\infty1_{K}(\gamma\phi_t v)\diff t,
\end{equation*}
hence there exist diverging sequences $(\gamma_n)_n\in\Gamma^\N$ and $(t_n)_n\in\R^\N$ such that $\gamma_n\phi_{t_n}v\in K$; if $(t_n)_n$ tends to $\infty$ (\resp $-\infty$), then $\phi_{\infty}v$ (\resp $\phi_{-\infty}v$) is in $\Lambda^{\con}$, which contradicts Proposition~\ref{Prop : ensemble conique est nul} and the definition of $\tilde{m}$ since for almost every vector $v$ in $T^1\Omega$, the endpoints $\phi_{\pm\infty} v$ are not in $\Lambda^{\con}$.
 
Suppose by contradiction that $(T^1M,\phi_t,m)$ is ergodic. Let $K\subset T^1\Omega$ be a compact neighbourhood of a vector $v$ in $T^1\Omega_{\bip}:=\pi_{\Gamma}^{-1}T^1M_{\bip}$. By Fact~\ref{fait:ergo imp trans}.\ref{item:ergo imp trans:ergo}, almost any $(\phi_t)_t$ orbit is dense in the support of $m$, which contains $T^1M_{\bip}$ (this was basically proved in Proposition~\ref{prop:Radonnonnul}). By dissipativity (and because $m$ is Radon), almost any orbit passes a finite time in $\pi_\Gamma K$. Hence there exists a vector $w\in T^1\Omega$ and a time $T>0$ such that $\pi_\Gamma (K)\cap T^1M_{\bip}\subset \phi_{[-T,T]}\pi_\Gamma w$; in other words, as the action of $\Gamma$ on $T^1\Omega$ is properly discontinuous, we may find a finite subset $A\subset\Gamma$ such that $K\cap T^1\Omega_{\bip}\subset A\phi_{[-T,T]}w$. Thus, $\phi_{\infty}(K\cap T^1\Omega_{\bip})$, which is a neighbourhood in $\Lambda^{\prox}$ of $\phi_{\infty}v$, is contained in $\phi_{\infty}A\phi_{[-T,T]}w$, which is equal to $A\phi_{\infty}w$ and hence is finite. This contradicts the fact that $\Lambda^{\prox}$ has no isolated point (Remark~\ref{minimality} and Fact~\ref{fait:des rk1 partout}).
 

\section{The divergent case of the HTSR dichotomy}\label{Section : HTSR divergent}

In this section we adapt some proofs of Roblin \cite[p.\,19-23]{roblin_smf} to our convex projective setting, in order to establish Theorem~\ref{Thm : The weak Hopf--Tsuji--Sullivan--Roblin dichotomy}. We will fix a Patterson--Sullivan density, and need several time to prove that some $\Gamma$-invariant subset $A$ of $\partial\Omega$ is given full measure by the Patterson--Sullivan density. Thanks to the following elementary observation, it is often enough to show that $A$ has non-zero measure.

\begin{obs}\label{obs:obsconf}
 Let $\Omega\subset\PR(V)$ be a properly convex open set, $\Gamma\subset\Aut(\Omega)$ a closed subgroup, $A\subset\partial\Omega$ a $\Gamma$-invariant measurable subset, and $\delta\geq 0$. Then for any $\delta$-conformal density $(\nu_x)_{x\in\Omega}$ on $\partial\Omega$, the restrictions $(\nu_x{}_{|A})_{x\in\Omega}$ also form a $\delta$-conformal density. As a consequence, if $\nu_x(A)>0$ for any $x\in\Omega$ and any (non-zero) $\delta$-conformal density $(\nu_y)_{y\in\Omega}$, then $\nu_x(\partial\Omega\smallsetminus A)=0$ for any $x\in\Omega$ and any $\delta$-conformal density $(\nu_y)_{y\in\Omega}$.
\end{obs}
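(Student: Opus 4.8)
The plan is to verify the two assertions directly from the definitions; no genuine geometry intervenes, only the formal behaviour of conformal densities under restriction to a $\Gamma$-invariant measurable subset.

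For the first assertion, I would start from Definition~\ref{densite conforme sur le bord de Omega}: a $\delta$-conformal density $(\nu_x)_{x\in\Omega}$ on $\partial\Omega$ is the push-forward under $\pi_{\hor}$ of a $\delta$-conformal density $(\mu_x)_{x\in\Omega}$ on $\partial_{\hor}\Omega$. Since $\Omega$ is dense in $\overline\Omega^{\hor}$ and in $\overline\Omega$, the map $\pi_{\hor}$ is the unique continuous extension of the identity, hence it is $\Gamma$-equivariant; therefore $B:=\pi_{\hor}^{-1}(A)$ is $\Gamma$-invariant and measurable, and $\nu_x{}_{|A}=(\pi_{\hor})_*(\mu_x{}_{|B})$. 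It then suffices to check that $(\mu_x{}_{|B})_{x\in\Omega}$ is again a $\delta$-conformal density on $\partial_{\hor}\Omega$ in the sense of Definition~\ref{densites conformes}. Finiteness is clear. The Radon--Nikodym relation: for any Borel $E\subset\partial_{\hor}\Omega$, $\mu_y{}_{|B}(E)=\mu_y(E\cap B)=\int_{E\cap B}e^{-\delta b_\xi(y,x)}\,d\mu_x(\xi)=\int_E e^{-\delta b_\xi(y,x)}\,d(\mu_x{}_{|B})(\xi)$, which is exactly the required formula. Equivariance: using $\gamma B=B$, one has $\gamma_*(\mu_x{}_{|B})(E)=\mu_x(\gamma^{-1}E\cap B)=\mu_x(\gamma^{-1}(E\cap B))=\mu_{\gamma x}(E\cap B)=\mu_{\gamma x}{}_{|B}(E)$. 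This finishes the first part.

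For the second assertion I would argue by contradiction, applying the first part to the complement. Fix a $\delta$-conformal density $(\nu_x)_{x\in\Omega}$ and suppose $\nu_{x_0}(\partial\Omega\smallsetminus A)>0$ for some $x_0\in\Omega$. The set $\partial\Omega\smallsetminus A$ is $\Gamma$-invariant and measurable, so by the first part $(\nu_x{}_{|\partial\Omega\smallsetminus A})_{x\in\Omega}$ is a $\delta$-conformal density; it is non-zero because the Radon--Nikodym relation makes all the $\nu_x{}_{|\partial\Omega\smallsetminus A}$ mutually absolutely continuous, so they are all non-zero as soon as one of them is. Applying the hypothesis to this density yields $\nu_x{}_{|\partial\Omega\smallsetminus A}(A)>0$ for every $x\in\Omega$; but $\nu_x{}_{|\partial\Omega\smallsetminus A}(A)=\nu_x\big(A\cap(\partial\Omega\smallsetminus A)\big)=0$, a contradiction. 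Hence $\nu_x(\partial\Omega\smallsetminus A)=0$ for all $x\in\Omega$.

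I do not expect any real obstacle. The only points deserving a word of care are the measurability and $\Gamma$-invariance of $\pi_{\hor}^{-1}(A)$, which follow from continuity and equivariance of $\pi_{\hor}$ (itself a consequence of Fact~\ref{walsh} and density of $\Omega$), and the elementary remark that a conformal density is either identically zero or nowhere zero, which is what makes the phrase ``non-zero $\delta$-conformal density'' in the statement unambiguous.
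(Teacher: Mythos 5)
Your verification is correct: the paper states this as an elementary observation without proof, and your argument (pulling $A$ back to $B=\pi_{\hor}^{-1}(A)$, checking that $(\mu_x{}_{|B})_x$ still satisfies the Radon--Nikodym and equivariance conditions of Definition~\ref{densites conformes}, then running the contradiction with the restriction to $\partial\Omega\smallsetminus A$) is precisely the intended routine check, including the small but necessary points that $\pi_{\hor}$ is $\Gamma$-equivariant and that a conformal density is non-zero as soon as one of its members is. Nothing is missing.
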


\subsection{The conical limit set has full measure}\label{ssection:bcp de coniques}

Let $o\in\Omega\subset\PR(V)$ be a pointed properly convex open set and $\Gamma\subset\Aut(\Omega)$ a discrete subgroup. Recall that the conical limit set, denoted by $\Lambda^{\con}$, is the union, over $R>0$, of the sets $\Lambda^{\con}_{R}(\Gamma,\Omega,o)=\Lambda^{\con}_R$, consisting of points $\xi\in\partial\Omega$  for which there exists a sequence $(\gamma_n)_{n\in\N}\in\Gamma^\N$ going to infinity such that $d_\Omega([o,\xi),\gamma_no)<R$ for each $n\in\N$.


Thanks to Observation~\ref{obs:obsconf}, we only need to prove that the conical limit set has non-zero measure. Roblin's proof of this \cite[Item\,$(f)$ p.\,19]{roblin_smf} in CAT($-1$) geometry actually works verbatim in the present context; we rewrite it down for the convenience of the (non-french-speaking) reader.

\begin{prop}\label{l'ensemble limite conique a mesure non nulle}
 Let $\Omega\subset\PR(V)$ be a properly convex open set and $\Gamma\subset\Aut(\Omega)$ a discrete subgroup. Suppose $\Gamma$ is divergent, and $M=\Omega/\Gamma$ is rank-one and non-elementary. Consider a $\delta_\Gamma$-conformal density $(\nu_x)_{x\in\Omega}$ on $\partial\Omega$. Then $\nu_x(\partial\Omega\smallsetminus\Lambda^{\con})=0$ for any $x\in\Omega$.
\end{prop}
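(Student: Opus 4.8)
The plan is to follow the classical Borel--Cantelli-type argument of Patterson--Sullivan--Roblin, using the divergence of $\Gamma$ to show that $\nu_o$-almost every boundary point is conical. By Observation~\ref{obs:obsconf} it suffices to prove $\nu_o(\Lambda^{\con})>0$; in fact I will aim directly for $\nu_o(\Lambda^{\con})=\nu_o(\partial\Omega)$ by a more careful argument, but the ``non-zero measure'' reduction is the safety net. The key tool is the refined Shadow lemma for scarce shadows (Lemma~\ref{shadowlemma avec defaut}): there exist $R_0>0$ and, for each $R\geq R_0$, a constant $C=C(R)>0$ with $\nu_o(\mathcal{O}_R^-(o,\gamma o))\geq C^{-1}e^{-\delta_\Gamma d_\Omega(o,\gamma o)}$ for all $\gamma$ with $d_\Omega(o,\gamma o)\geq 2R$.

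The heart of the argument is the following. Fix $R\geq R_0$. For an integer $n$, consider the annulus $\Gamma_n:=\{\gamma\in\Gamma : n\leq d_\Omega(o,\gamma o)<n+1\}$ and the union of shadows $A_n:=\bigcup_{\gamma\in\Gamma_n}\mathcal{O}_R^-(o,\gamma o)$. I want to bound $\nu_o(A_n)$ from below uniformly in $n$ (along a subsequence if necessary). For a lower bound one uses a maximal $(1+4R)$-separated subset $F_n\subset\Gamma_n\cdot o$ together with the scarce Shadow lemma and the upper estimate $\nu_o(\mathcal{O}_R^+(o,\gamma o))\leq C e^{-\delta_\Gamma d_\Omega(o,\gamma o)}$ from Lemma~\ref{shadowlemma} to control overlaps: the sets $\mathcal{O}_R^-(o,x)$, $x\in F_n$, have bounded multiplicity of intersection (each point of $\partial\Omega$ lies in boundedly many of them, because the corresponding orbit points are $(1+4R)$-separated on a sphere of radius $\approx n$ and an inclusion-minimal shadow pins down the direction), so
\[
\nu_o(A_n)\ \gg\ \sum_{x\in F_n}\nu_o(\mathcal{O}_R^-(o,x))\ \gg\ e^{-\delta_\Gamma n}\#F_n\ \gg\ e^{-\delta_\Gamma n}\#\Gamma_n .
\]
Now divergence of $\Gamma$, i.e.\ $\sum_\gamma e^{-\delta_\Gamma d_\Omega(o,\gamma o)}=\sum_n e^{-\delta_\Gamma n}\#\Gamma_n(1+o(1))=\infty$ (using also the upper bound $\#\Gamma_n\leq C''e^{\delta_\Gamma n}$ from Proposition~\ref{delta<deltaGamma} to pass between $d_\Omega(o,\gamma o)$ and its integer part), forces $\sum_n \nu_o(A_n)=\infty$. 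Since every point of $\limsup_n A_n$ is conical (if $\xi\in\mathcal{O}_R^-(o,\gamma o)$ then $d_\Omega([o,\xi),\gamma o)\leq \text{something like } 2R$, and $\xi$ lies in infinitely many such shadows with $\gamma\to\infty$), the Borel--Cantelli lemma — in the easy divergence direction, which needs no independence for the $\limsup$ to have positive measure only after a second-moment input, so instead I will use that $\nu_o$ is finite and $\sum\nu_o(A_n)=\infty$ only gives positive measure of $\limsup$ if the $A_n$ are ``quasi-independent'' — hmm.

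Here is where the real work is, and I expect it to be the main obstacle: a bare divergent sum $\sum\nu_o(A_n)=\infty$ with $\nu_o$ finite does \emph{not} imply $\nu_o(\limsup A_n)>0$ without a second-moment/quasi-independence estimate. Roblin circumvents this exactly as Sullivan did: one shows that $\Lambda^{\con}$ is $\Gamma$-invariant (clear from its definition, independent of basepoint), so by Observation~\ref{obs:obsconf} it is enough to show $\nu_o(\Lambda^{\con})>0$, and for \emph{this} one argues by contradiction. If $\nu_o(\Lambda^{\con})=0$, then $\nu_o$ is supported (up to null sets) on $\partial\Omega\smallsetminus\Lambda^{\con}$; but then one can extract, for $\nu_o$-a.e.\ $\xi$, a ``last-visit'' time after which the ray $[o,\xi)$ never returns within $R$ of the orbit — and summing the scarce-shadow lower bounds over a single such escaping family of $\gamma$'s along $[o,\xi)$ yields $\sum_\gamma e^{-\delta_\Gamma d_\Omega(o,\gamma o)}<\infty$, contradicting divergence. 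Concretely: for $\xi\notin\Lambda^{\con}_R$ the shadows $\mathcal{O}_R^-(o,\gamma o)$ containing $\xi$ come from a finite set of $\gamma$, so by a maximality/covering argument on annuli the disjointified pieces give $1\geq \nu_o(\partial\Omega)\geq \sum \nu_o(\mathcal{O}_R^-(o,\gamma o)) \geq C^{-1}\sum e^{-\delta_\Gamma d_\Omega(o,\gamma o)}$ over a suitable $(1+4R)$-separated family exhausting $\Gamma\cdot o$, contradiction.

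So the proof I would write is: (1) recall divergence $\iff \sum e^{-\delta_\Gamma\kappa(\gamma)}=\infty$ and the orbit-counting upper bound of Proposition~\ref{delta<deltaGamma}; (2) fix $R\geq R_0$ from Lemma~\ref{shadowlemma avec defaut}, and for each annulus pick a maximal $(1+4R)$-separated subset of the orbit; (3) observe the associated scarce shadows $\mathcal{O}_R^-(o,x)$ are pairwise disjoint (this is the point of taking the \emph{inclusion-minimal} shadow: if $\xi\in\mathcal{O}_R^-(o,x)\cap\mathcal{O}_R^-(o,x')$ with $x,x'$ on the same sphere, then every ray from a point of $B_\Omega(o,R)$ toward $\xi$ meets both balls, forcing $d_\Omega(x,x')\leq 4R<1+4R$); (4) combining disjointness with the lower bound $\nu_o(\mathcal{O}_R^-(o,x))\geq C^{-1}e^{-\delta_\Gamma d_\Omega(o,x)}$ gives $C^{-1}\sum_{x}e^{-\delta_\Gamma d_\Omega(o,x)}\leq \nu_o(\partial\Omega)<\infty$ over any finite separated subfamily, hence over all of $\Gamma\cdot o$ after controlling the separation defect by Proposition~\ref{delta<deltaGamma}; (5) this contradicts divergence \emph{unless} the scarce shadows in each annulus cover a definite fraction of $\partial\Omega$ infinitely often, i.e.\ unless $\nu_o(\limsup_n A_n)>0$ where $A_n$ is the union over the $n$-th annulus; and (6) $\limsup_n A_n\subset\Lambda^{\con}$, giving $\nu_o(\Lambda^{\con})>0$, whence $=\nu_o(\partial\Omega)$ by Observation~\ref{obs:obsconf}. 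The delicate point throughout — and what I'd spend the most care on — is making the overlap/disjointness bookkeeping of shadows precise enough that divergence really does translate into positive $\limsup$-measure; this is exactly the step where Roblin's verbatim argument in CAT$(-1)$ is invoked, and one has to check that the only geometric inputs it uses are the two Shadow lemmas and the triangle-type inequality for the Hilbert metric (Lemma~\ref{crampon}), all of which are available here.
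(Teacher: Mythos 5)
Your instinct at the midpoint of the proposal is the right one: a divergent sum of shadow measures does not by itself give positive measure to the limsup, and some second-moment/quasi-independence input is indispensable. The gap is that you then set this aside and claim the difficulty is circumvented by a contradiction argument, and that argument does not work. Assuming $\nu_o(\Lambda^{\con})=0$ gives, for $\nu_o$-a.e.\ $\xi$ and each fixed $R$, only that the multiplicity function $N_R(\xi)=\#\{\gamma\in\Gamma:\xi\in\mathcal{O}_R^-(o,\gamma o)\}$ is \emph{finite}; it gives no uniform bound. Your inequality $\nu_o(\partial\Omega)\geq\sum_\gamma\nu_o(\mathcal{O}_R^-(o,\gamma o))$ ``over a separated family exhausting $\Gamma\cdot o$'' would require $N_R$ to be bounded, since $\int N_R\,d\nu_o=\sum_\gamma\nu_o(\mathcal{O}_R^-(o,\gamma o))$, and an a.e.-finite function can perfectly well have infinite integral. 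Indeed, under divergence the scarce Shadow lemma forces $\int N_R\,d\nu_o\geq C^{-1}\sum_\gamma e^{-\delta_\Gamma d_\Omega(o,\gamma o)}=\infty$, so no contradiction with $\nu_o(\Lambda^{\con})=0$ can be reached along these lines. Likewise the per-annulus disjointness in your steps (3)--(4) only reproduces the orbit-counting bound of Proposition~\ref{delta<deltaGamma}: each annulus contributes a bounded amount to the Poincar\'e series, which is entirely compatible with the full series diverging, so no finiteness of $\sum_\gamma e^{-\delta_\Gamma d_\Omega(o,\gamma o)}$ ever appears.

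The paper does carry out exactly the second-moment argument you abandoned, but on the unit tangent bundle rather than on the boundary: with $m$ the induced Sullivan measure and $K$ the projection to $T^1M$ of a large ball $\overline{B}_{T^1\Omega}(v_o,R)$, one sets $A_t=K\cap\phi_{-t}K$ and proves in Lemma~\ref{Lemme : estimees moches} a lower bound $\int_0^T m(A_t)\,dt\geq C^{-1}\sum_{d_\Omega(o,\gamma o)\leq T}e^{-\delta_\Gamma d_\Omega(o,\gamma o)}$, using the scarce Shadow lemmas (Lemmas~\ref{shadowlemma avec defaut} and~\ref{l'ombre en defaut grandit a la frontiere}), together with an upper bound for the double integral $\int_0^T\int_0^T m(A_t\cap A_{t+s})\,ds\,dt$ by the square of the same partial Poincar\'e sum, using the ordinary Shadow lemma (Lemma~\ref{shadowlemma}). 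R\'enyi's quantitative Borel--Cantelli lemma (Fact~\ref{Borel-Cantelli}) then yields a positive-$m$-measure set of vectors of $K$ whose forward orbit spends infinite time in $K$; the forward endpoints of such vectors are conical, whence $\nu_o(\Lambda^{\con})>0$, and Observation~\ref{obs:obsconf} upgrades this to full measure. To repair your proposal you would have to supply analogues of these two moment estimates (on the boundary or on $T^1M$); the ``last-visit time'' contradiction cannot substitute for them, and divergence is used precisely, and only, to make the first-moment integral blow up in the R\'enyi argument.
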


The idea of the proof of Proposition~\ref{l'ensemble limite conique a mesure non nulle} is to find a compact subset $K\subset T^1M$ such that the set of vectors $v\in K$ whose geodesic come back infinitely often to $K$ has positive $m$-measure where $m$ is a Sullivan measure induced by $(\nu_x)_{x\in\Omega}$. There are two main ingredients. The first one is a generalisation of the Borel--Cantelli lemma, due to R\'enyi. One can find a proof of it in \cite[Lem.\,2]{rational_ergodicity}; it is the consequence of the following estimate : for any non-negative square-integrable function $g$ on a probability space, for any $0<a<E(g)$, where $E(g)$ is the expectation of $g$, one has
\[P(g>a) \geq \frac{E(g)^2}{E(g^2)}\left(1+\frac{a}{E(g)-a}\right)^{-2}.\]

\begin{fait}[\!{\!\cite[p.\,391]{renyi}}]\label{Borel-Cantelli}
 Let $(X,\mu)$ be a measurable space equipped with a finite positive measure. Let $(A_t)_{t\geq 0}$ be a family of subsets of $X$ such that the function $(x,t)\in X\times\R_{\geq0}\mapsto 1_{A_t}(x)$ is measurable. Let us assume that $\int_0^\infty \mu(A_t)\diff t = \infty$, and that, for some constant $C>0$, 
\begin{equation}\label{Equation : hypothese renyi}
\int_0^T \int_0^T\mu(A_t\cap A_s)\diff t\diff s \leq C \left(\int_0^T\mu(A_t)\diff t\right)^2,
\end{equation}
for $T$ large enough. Then the set of $x\in X$ such that $\int_0^\infty 1_{A_t}(x)\diff t=\infty$ has $\mu$-measure greater than or equal to $1/C$.
\end{fait}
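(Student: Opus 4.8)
The plan is to deduce this from the second-moment (Paley--Zygmund) inequality recorded just above the statement, applied to the non-negative functions $g_T:=\int_0^T 1_{A_t}\diff t$, and then to let $T\to\infty$. First I would record the elementary preliminaries: by the measurability hypothesis on $(x,t)\mapsto 1_{A_t}(x)$ and the Tonelli theorem, each $g_T$ is measurable, $\int_X g_T\diff\mu=\int_0^T\mu(A_t)\diff t=:I_T$, and $\int_X g_T^2\diff\mu=\int_0^T\int_0^T\mu(A_t\cap A_s)\diff t\diff s$, which by \eqref{Equation : hypothese renyi} is $\leq CI_T^2<\infty$ for $T$ large; in particular $g_T\in L^2(\mu)$. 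Since $\int_0^\infty\mu(A_t)\diff t=\infty$ we have $I_T\to\infty$.

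Next, normalise by setting $P:=\mu/\mu(X)$, a probability measure (recall $0<\mu(X)<\infty$). Then $E_P(g_T)=I_T/\mu(X)\to\infty$ and $E_P(g_T^2)\leq CI_T^2/\mu(X)$, so $E_P(g_T)^2/E_P(g_T^2)\geq 1/(C\mu(X))$ once $T$ is large. Fix $a>0$; for $T$ large enough that $a<E_P(g_T)$, the quoted estimate gives
\[
P(g_T>a)\ \geq\ \frac{E_P(g_T)^2}{E_P(g_T^2)}\Bigl(1+\tfrac{a}{E_P(g_T)-a}\Bigr)^{-2}\ \geq\ \frac{1}{C\mu(X)}\Bigl(1+\tfrac{a}{E_P(g_T)-a}\Bigr)^{-2}.
\]
Since $g_T$ increases pointwise to $g_\infty:=\int_0^\infty 1_{A_t}\diff t$, the events $\{g_T>a\}$ increase to $\{g_\infty>a\}$, and letting $T\to\infty$ (so that $E_P(g_T)\to\infty$) yields $P(g_\infty>a)\geq 1/(C\mu(X))$. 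Finally $\{g_\infty=\infty\}=\bigcap_{n\in\N}\{g_\infty>n\}$ is a decreasing intersection of events, so $P(g_\infty=\infty)=\lim_n P(g_\infty>n)\geq 1/(C\mu(X))$, and therefore $\mu\bigl(\{x:\int_0^\infty 1_{A_t}(x)\diff t=\infty\}\bigr)=\mu(X)\,P(g_\infty=\infty)\geq 1/C$, as claimed.

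The only step needing a word of care is the second-moment inequality itself: for non-negative $g\in L^2$ with $0<a<E(g)$, write $E(g)=E(g\,1_{\{g\leq a\}})+E(g\,1_{\{g>a\}})\leq a+E(g^2)^{1/2}P(g>a)^{1/2}$ by Cauchy--Schwarz, then rearrange to get $P(g>a)\geq (E(g)-a)^2/E(g^2)=\frac{E(g)^2}{E(g^2)}(1+\frac{a}{E(g)-a})^{-2}$. Since this estimate is already stated in the excerpt I would simply invoke it, so there is no substantial obstacle; the whole argument is a routine application of the second moment method together with monotone convergence in $T$.
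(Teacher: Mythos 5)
Your proof is correct, and it follows exactly the route the paper indicates: the paper does not write out a proof but cites R\'enyi and records precisely the second-moment estimate $P(g>a)\geq \frac{E(g)^2}{E(g^2)}\bigl(1+\frac{a}{E(g)-a}\bigr)^{-2}$, which you apply to $g_T=\int_0^T 1_{A_t}\diff t$ and pass to the limit as in the standard argument. The Tonelli computations, the normalisation to a probability measure, and the monotone-limit steps are all handled correctly, so there is nothing to fix.
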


This lemma will be applied to $A_t=K\cap\phi_{-t}K$. The second ingredient consists of estimates that will allow us to check that the assumptions of Fact~\ref{Borel-Cantelli} are satisfied.

\begin{lemma}\label{Lemme : estimees moches}
 Let $o\in\Omega\subset\PR(V)$ be a pointed properly convex open set and $\Gamma\subset\Aut(\Omega)$ a discrete subgroup. Suppose $M=\Omega/\Gamma$ is rank-one and non-elementary. Consider a $\delta_\Gamma$-conformal density $(\nu_x)_{x\in\Omega}$ on $\partial\Omega$, with induced Sullivan measure $m$ on $T^1M$. Let $v_o\in T^1_o\Omega$. For $R>0$ large enough, if we denote by $K$ the projection in $T^1M$ of $\overline{B}_{T^1\Omega}(v_o,R)$, then there exist constants $C>0$ and $T_0$ such that for any $T>T_0$, we have the estimates:
 \begin{equation}\label{Equation : minoration moche}
  \int_0^T m(K\cap\phi_{-t}K)\diff t \geq C^{-1}\sum_{\substack{g\in\Gamma \\ d_\Omega(o,go)\leq T}}e^{-\delta_\Gamma d_\Omega(o,go)},
 \end{equation}
 \begin{equation}\label{Equation : majoration moche}
  \int_0^T\int_0^T m(K\cap\phi_{-t}K\cap\phi_{-t-s}K)\diff s\diff t \leq C \left(\sum_{\substack{g\in\Gamma \\ d_\Omega(o,go)\leq T}}e^{-\delta_\Gamma d_\Omega(o,go)}\right)^2.
 \end{equation}
\end{lemma}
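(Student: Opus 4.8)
The plan is to unfold both sides through the Hopf parametrisation and the definition of the Sullivan measure, and to recognise $m(K\cap\phi_{-t}K)$ (resp.\ the triple intersection) as a sum over $\Gamma$ of contributions that are, up to multiplicative constants, of the shape $e^{2\delta_\Gamma\langle\cdot,\cdot\rangle_o}\nu_o(\text{shadow})\nu_o(\text{shadow})$. First I would choose $R$ large enough so that the lift of $K$ to $T^1\Omega$ is $\bigcup_{g\in\Gamma}g\overline B_{T^1\Omega}(v_o,R)$ with controlled overlaps: by proper discontinuity and Fact~\ref{benz}, a vector $w\in T^1\Omega$ lies in at most boundedly many of the translates $g\overline B_{T^1\Omega}(v_o,R)$, the bound depending only on $R$ and $\inj$-type data, not on $w$. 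Then $m(K\cap\phi_{-t}K)=\tilde m\big(\overline B_{T^1\Omega}(v_o,R)\cap\phi_{-t}(\Gamma\cdot\overline B_{T^1\Omega}(v_o,R))\big)$ up to a bounded factor, i.e.\ it is comparable to $\sum_{g\in\Gamma}\tilde m\big(\overline B_{T^1\Omega}(v_o,R)\cap\phi_{-t}g\overline B_{T^1\Omega}(v_o,R)\big)$. A vector $w$ in $\overline B_{T^1\Omega}(v_o,R)$ with $\phi_tw\in g\overline B_{T^1\Omega}(v_o,R)$ forces (by Lemma~\ref{crampon} and the triangle inequality) $|t-d_\Omega(o,go)|\le 2R$ and $\phi_{+\infty}w\in\mathcal O_{R}^{+}(o,go)$, $\phi_{-\infty}w\in\mathcal O_{R}^{+}(o,g^{-1}o)$ roughly; conversely any such $w$ with $t$ in the right window lies in the intersection up to enlarging $R$. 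Using the explicit density $e^{2\delta_\Gamma\langle\xi,\eta\rangle_o}\diff\mu_o(\xi)\diff\mu_o(\eta)\diff t$ together with the Gromov-product identity \eqref{Equation : Gromov 2} and Lemma~\ref{b=d} (which on a shadow $\mathcal O_R^{\pm}$ pins $\langle\xi,\eta\rangle_o$ and the horofunctions to within $O(R)$ of $0$ and $d_\Omega(o,go)$), one gets
\begin{equation*}
 \int_0^T m(K\cap\phi_{-t}K)\diff t \asymp \sum_{\substack{g\in\Gamma\\ d_\Omega(o,go)\le T}} e^{2\delta_\Gamma\langle\phi_{-\infty}w,\phi_{+\infty}w\rangle_o}\,\nu_o(\mathcal O_R^{-}(o,g^{-1}o))\,\nu_o(\mathcal O_R^{-}(o,go)),
\end{equation*}
and applying the Shadow lemma (Lemma~\ref{shadowlemma}) and its scarce-shadow refinement (Lemma~\ref{shadowlemma avec defaut}) to bound these shadow measures above and below by $e^{-\delta_\Gamma d_\Omega(o,go)}$ (noting that $\langle\phi_{-\infty}w,\phi_{+\infty}w\rangle_o=O(R)$ for $w$ near $v_o$), I obtain \eqref{Equation : minoration moche}. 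The integration in $t$ produces the factor comparable to $1$ because the admissible window for $t$ has length $\asymp R$.

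For \eqref{Equation : majoration moche} the same unfolding applies, now with a double sum over pairs $(g,h)\in\Gamma^2$: $m(K\cap\phi_{-t}K\cap\phi_{-t-s}K)$ is comparable to $\sum_{g,h}\tilde m\big(\overline B(v_o,R)\cap\phi_{-t}g\overline B(v_o,R)\cap\phi_{-t-s}h\overline B(v_o,R)\big)$. A non-empty term forces $|t-d_\Omega(o,go)|\le 2R$ and $|s-d_\Omega(go,ho)|\le 2R$ (so $s$ and $t$ are determined up to $O(R)$ by $g$ and $gh^{-1}$-type data), and the boundary endpoints of the contributing geodesic must lie in $\mathcal O_R^+(o,g^{-1}o)$ and in $\mathcal O_R^+(o,ho)$, with in addition $go$ lying within $R$ of the geodesic $[o,\phi_{+\infty}w)$ — which, via Lemma~\ref{b=d} applied at the intermediate point, forces $\langle\phi_{-\infty}w,\phi_{+\infty}w\rangle_o=O(R)$ again and also $d_\Omega(o,go)+d_\Omega(go,ho)\le d_\Omega(o,ho)+O(R)$. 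Hence, bounding the density as before and summing the $O(R)$-length windows in $s$ and $t$,
\begin{equation*}
 \int_0^T\!\!\int_0^T m(K\cap\phi_{-t}K\cap\phi_{-t-s}K)\diff s\diff t \lesssim \sum_{\substack{g,h\in\Gamma\\ d_\Omega(o,go)\le T\\ d_\Omega(o,ho)\le T}} e^{-\delta_\Gamma d_\Omega(o,go)}\,e^{-\delta_\Gamma d_\Omega(go,ho)},
\end{equation*}
and since $d_\Omega(o,go)+d_\Omega(go,ho)=d_\Omega(o,go)+d_\Omega(o,g^{-1}ho)$, re-indexing by $(g,k)=(g,g^{-1}h)$ makes the right-hand side factor as $\big(\sum_{d_\Omega(o,go)\le T}e^{-\delta_\Gamma d_\Omega(o,go)}\big)\big(\sum_{d_\Omega(o,ko)\le T}e^{-\delta_\Gamma d_\Omega(o,ko)}\big)$, which is the claimed bound (here one uses that the constraint $d_\Omega(o,ho)\le T$ is implied, up to $O(R)$, by the two constraints $d_\Omega(o,go)\le T$, $d_\Omega(go,ho)\le T$ only in one direction, so one simply drops it to get an upper bound).

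The main obstacle I expect is the bookkeeping in the upper bound \eqref{Equation : majoration moche}: one must show that for a \emph{non-empty} triple intersection the three points $o$, $go$, $ho$ are almost-aligned along a single geodesic, so that the three Gromov products / horofunction cocycle terms telescope additively with error $O(R)$ — this is what turns $e^{-\delta_\Gamma d_\Omega(o,go)}\,e^{-\delta_\Gamma d_\Omega(go,ho)}$ into something summable to a product rather than a genuine double sum. This is exactly the role played by Lemma~\ref{b=d} (the quantitative ``$b_\xi(x,y)\ge d_\Omega(x,y)-4R$ on a shadow'' statement) together with Lemma~\ref{crampon}; the convexity of the Hilbert metric is strong enough to run Roblin's original argument without change. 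A secondary technical point is controlling the overlap multiplicity of the translates $\{g\overline B_{T^1\Omega}(v_o,R)\}_{g\in\Gamma}$ uniformly, which follows from proper discontinuity of $\Gamma$ on $T^1\Omega$ together with the local finiteness provided by Fact~\ref{benz}; and one should be slightly careful, when $\Gamma$ has torsion, to work with $d_{T^1\Omega}$-balls in the universal cover as in Remark~\ref{Remarque : distdyn} rather than on $T^1M$ directly.
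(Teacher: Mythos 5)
Your overall skeleton---unfolding $m$ to $\tilde m$ on $T^1\Omega$ with bounded overlap multiplicity, localising $t$ (and $s$) in windows of length $O(R)$ around $d_\Omega(o,go)$ (and $d_\Omega(go,ho)$), controlling the density via the Gromov product and Lemma~\ref{b=d}, and factoring the double sum through the almost-additivity $d_\Omega(o,go)+d_\Omega(go,ho)\le d_\Omega(o,ho)+O(R)$---is exactly the paper's (Roblin's) argument, and your treatment of the upper bound \eqref{Equation : majoration moche} is correct up to routine bookkeeping: the windows naturally give sums over $d_\Omega(o,go)\le T+2R$ rather than $\le T$, and the paper absorbs this tail by a separate shadow-counting estimate, a step you should add.

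There is, however, a genuine error in your lower bound. For a vector $w$ with $\pi w\in B_\Omega(o,R)$ and $\pi\phi_t w\in B_\Omega(go,R)$, the backward endpoint $\phi_{-\infty}w$ lies in the shadow of the ball at $o$ seen from $go$, i.e.\ in $\mathcal{O}^+_R(go,o)$, and \emph{not} in $\mathcal{O}^+_R(o,g^{-1}o)$ as you assert (these two sets differ by the translation $g$ and are genuinely different subsets of $\partial\Omega$). Consequently the product in your display, $\nu_o(\mathcal{O}^-_R(o,g^{-1}o))\,\nu_o(\mathcal{O}^-_R(o,go))$, is the wrong quantity: by Lemmas~\ref{shadowlemma} and \ref{shadowlemma avec defaut} \emph{both} factors are $\asymp e^{-\delta_\Gamma d_\Omega(o,go)}$, so carrying out your plan yields a lower bound of order $\sum e^{-2\delta_\Gamma d_\Omega(o,go)}$, strictly weaker than \eqref{Equation : minoration moche} and useless for the R\'enyi/Borel--Cantelli application the lemma feeds into; your claimed $\asymp$ with the left-hand side is false, since the true magnitude of each term is $e^{-\delta_\Gamma d_\Omega(o,go)}$. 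The correct pairing is $\xi\in\mathcal{O}^-_{R'}(go,o)$ and $\eta\in\mathcal{O}^-_{R'}(o,go)$: the factor $\nu_o(\mathcal{O}^-_{R'}(go,o))$ is bounded below by a \emph{uniform} constant by Lemma~\ref{l'ombre en defaut grandit a la frontiere} (which you never invoke), and only the forward factor contributes $e^{-\delta_\Gamma d_\Omega(o,go)}$ via Lemma~\ref{shadowlemma avec defaut}. One also needs the elementary convexity fact (Observation~\ref{LR<OR*OR}) showing that for this pairing the line $(\xi,\eta)$ really meets both balls, so that the corresponding Hopf box is contained in $\tilde K\cap\phi_{-t}g\tilde K$; with your pairing that inclusion fails in general (a geodesic with endpoints in those two shadows need not pass near $o$ when $\langle go,g^{-1}o\rangle_o$ is large), so the ``conversely, up to enlarging $R$'' step cannot be repaired without switching to the reverse scarce shadow.
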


\begin{proof}
We assume that $\nu_o$ is a probability measure. Let $(\mu_x)_{x\in\Omega}$ be a $\delta_\Gamma$-conformal density on $\partial_{\hor}\Omega$ that induces $(\nu_x)_{x\in\Omega}$ and $m$; we denote by $\tilde{m}$ the induced Sullivan measure on $T^1\Omega$. We fix $R>0$ large enough so that we can apply the Shadow lemma (Lemmas~\ref{shadowlemma} and \ref{shadowlemma avec defaut}) and Lemmas~\ref{l ombre grandit a la frontiere} and \ref{l'ombre en defaut grandit a la frontiere} to it and to $R' := (R-2)/6$. One can find a constant $C_1>0$ such that 
\begin{gather*}
 m(K\cap\phi_{-t}K)\geq C_1^{-1} \sum_{g\in\Gamma}\tilde{m}(\tilde{K}\cap\phi_{-t}g \tilde{K}), \quad \text{and} \\
 m(K\cap\phi_{-t}K\cap\phi_{-t-s}K)\leq \sum_{g,h\in\Gamma}\tilde{m}(\tilde{K}\cap\phi_{-t}g \tilde{K}\cap \phi_{-t-s}h\tilde{K})
\end{gather*}
for all $t,s\geq 0$. Indeed, we just have to recall the definition of $m$, which is the quotient of $\tilde{m}$ under the action of $\Gamma$ (Definition~\ref{Definition : quotient de mesure}), then
\begin{align*}
 \sum_{g\in\Gamma}\tilde{m}(\tilde{K}\cap\phi_{-t}g\tilde{K}) & = \int_{T^1\Omega}\sum_{g\in\Gamma}1_{\tilde{K}}1_{g\phi_{-t}\tilde{K}}\diff \tilde{m} \\
 & = \int_{T^1M}\sum_{h\in\Gamma}\sum_{g\in\Gamma}(1_{\tilde{K}}\circ h)\cdot (1_{g\phi_{-t}\tilde{K}}\circ h)\diff m \\
 & = \int_{T^1M}\underbrace{\left(\sum_{h\in\Gamma}1_{h\tilde{K}}\right)}_{\leq C_1 1_K}\cdot \underbrace{\left(\sum_{g\in\Gamma}1_{g\phi_{-t}\tilde{K}}\right)}_{\leq C_1 1_{\phi_{-t}K}}\diff m,
\end{align*}
where $C_1>0$ is a constant which is independent of $t$. Similarly,
\begin{align*}
 \sum_{g,h\in\Gamma}\tilde{m}(\tilde{K}\cap\phi_{-t}g\tilde{K}\cap\phi_{-t-s}h\tilde{K}) & = \int_{T^1M}\sum_{k\in\Gamma}\sum_{g,h\in\Gamma}(1_{\tilde{K}}\circ k)\cdot (1_{g\phi_{-t}\tilde{K}}\circ k)\cdot(1_{h\phi_{-t-s}\tilde{K}}\circ k)\diff m \\
 & = \int_{T^1M}\underbrace{\left(\sum_{k\in\Gamma}1_{k\tilde{K}}\right)}_{\geq 1_K}\cdot \underbrace{\left(\sum_{g\in\Gamma}1_{g\phi_{-t}\tilde{K}}\right)}_{\geq 1_{\phi_{-t}K}}\cdot \underbrace{\left(\sum_{h\in\Gamma}1_{h\phi_{-t-s}\tilde{K}}\right)}_{\geq 1_{\phi_{-t-s}K}}\diff m
\end{align*}

Therefore, in order to prove Lemma~\ref{Lemme : estimees moches}, it is enough to find a constant $C_{2}>0$ such that
\[a:=\int_{0}^{T}\sum_{g\in \Gamma}\tilde{m}(\tilde{K}\cap \phi_{-t}g\tilde{K})\diff t \geq C_{2}^{-1}\sum_{g : d_{\Omega}(o,go)\leq T}e^{-\delta_\Gamma d_{\Omega}(o,go)},\]
and
\[b:=\int_{0}^{T}\int_{0}^{T}\sum_{g,h\in \Gamma}\tilde{m}(\tilde{K}\cap \phi_{-t}g\tilde{K}\cap \phi_{-t-s}h\tilde{K})\diff s\diff t \leq C_{2}\left(\sum_{g : d_{\Omega}(o,go)\leq T}e^{-\delta_\Gamma d_{\Omega}(o,go)}\right)^2.\]
We first establish the estimate of $b$. For all $0\leq t,s\leq T$ and $g,h\in\Gamma$, for any triple $(\xi,\eta,\tau)$ in $\Hopf^{-1}(\tilde{K}\cap \phi_{-t-s}h\tilde{K})$, one observes that $\eta\in\pi_{\hor}^{-1}\mathcal{O}^+_{R}(o,ho)$ and $d_\Omega (o,\pi \Hopf (\xi,\eta,\tau))\leq R$; this last inequality implies that $\vert \tau \vert \leq R$ and  $\langle \xi, \eta \rangle_{o} \leq R$; then by the Shadow lemma (Lemma~\ref{shadowlemma}), and by definition of $\tilde{m}$,
\[\tilde{m}(\tilde{K}\cap \phi_{-t}g\tilde{K}\cap \phi_{-t-s}h\tilde{K})\leq 2Re^{2\delta_\Gamma R}\nu_{0}(\mathcal{O}^+_{R}(o,ho))\leq 2Re^{2\delta_\Gamma R}Ce^{-\delta_\Gamma d_{\Omega}(o,ho)}.\]
Furthermore, by triangular inequality, if $\tilde{K}\cap \phi_{-t}g\tilde{K}\cap \phi_{-t-s}h\tilde{K}$ is non-empty, then
\begin{gather*}
 \vert d_{\Omega}(o,go)-t\vert, \vert d_{\Omega}(go,ho)-s\vert, \vert d_{\Omega}(o,ho)-t-s\vert \leq 2R, \quad \text{and}\\
 d_{\Omega}(o,go)+d_{\Omega}(go,ho)\leq d_{\Omega}(o,ho)+6R.
\end{gather*}
Combining these estimates we obtain:
\begin{align*}
b & \leq \int_{0}^{T}\int_{0}^{T}\sum_{g,h\in \Gamma}2Re^{2\delta_\Gamma R}Ce^{-\delta_\Gamma d_\Omega (o,ho)}1_{ \Big\{ \substack{\vert d_{\Omega}(o,go)-t\vert, \vert d_{\Omega}(go,ho)-s\vert  \leq 2R \\ d_{\Omega}(o,go)+d_{\Omega}(go,ho)\leq d_{\Omega}(o,ho)+6R} \Big\} } \diff s\diff t \\
& \leq 2R2R2Re^{2\delta_\Gamma R}C\sum_{\substack{d_\Omega (o,go) \leq T+2R \\ d_\Omega (o,g^{-1}ho)\leq T+2R}}e^{-\delta_\Gamma (d_\Omega (o,go) + d_{\Omega}(o,g^{-1}ho) -6R)} \\
& \leq 8R^3Ce^{8\delta_\Gamma R}\left( \sum_{d_\Omega (o,go)\leq T+2R}e^{-\delta_\Gamma d_\Omega (o,go)})\right)^2.
\end{align*}
We end the estimation of $b$ by noting that for all $T\geq 0$ and $A\geq 0$, using again the Shadow lemma (Lemma~\ref{shadowlemma}),
\begin{align*}
\sum_{T\leq d_\Omega (o,go) \leq T+A}e^{-\delta_\Gamma d_\Omega (o,go)} & \leq C \sum_{T\leq d_\Omega (o,go) \leq T+A}\nu_{o}(\mathcal{O}_{R}(o,go)) \\
& \leq C\int_{\partial\Omega} \sum_{T\leq d_\Omega (o,go) \leq T+A}1_{\mathcal{O}_{R}(o,go)}(\xi)\diff \nu_{o}(\xi) \\
& \leq C \# \{g : d_\Omega (o,go) \leq 4R+2A\}.
\end{align*}

We now proceed to the minoration of $a$. Take $g\in \Gamma$, take $t\geq 0$ at distance less than $R'$ from $d_\Omega(o,go)$; let us prove that 
\[\Hopf\left(\pi_{\hor}^{-1}(\mathcal{O}^-_{R'}(go,o))\times \pi_{\hor}^{-1}(\mathcal{O}^-_{R'}(o,go))\times [0,R'] \right) \subset \tilde{K}\cap \phi_{-t}g\tilde{K}.\]
Take $(\xi,\eta,\tau)\in \pi_{\hor}^{-1}(\mathcal{O}^-_{R'}(go,o))\times \pi_{\hor}^{-1}(\mathcal{O}^-_{R'}(o,go))\times [0,R']$. According to Observation~\ref{LR<OR*OR} below, there exists $s_1<s_2$ such that $d_\Omega(\pi\Hopf(\xi,\eta,s_1),o)\leq R'$ and $d_\Omega(\pi\Hopf(\xi,\eta,s_2),go)\leq R'$. This implies that $|s_1|=|b_\eta(\pi\Hopf(\xi,\eta,s_1),o)|\leq R'$, hence 
\[d_\Omega(\pi\Hopf(\xi,\eta,\tau),o)\leq |\tau|+|s_1|+d_\Omega(\pi\Hopf(\xi,\eta,s_1),o)\leq 3R'.\]
Finally $d_{T^1\Omega}(\Hopf(\xi,\eta,\tau),v_o)\leq 3R'+2\leq R$, which means that $\Hopf(\xi,\eta,\tau)\in \tilde{K}$. 

In order to prove the inclusion in $\phi_{-t}g\tilde{K}$, we note that since $s_2-s_1$ is the distance between $\pi\Hopf(\xi,\eta,s_2)$ and $\pi\Hopf(\xi,\eta,s_1)$, then by triangular inequality $|s_2-d_\Omega(o,go)|\leq 3R'$; therefore
\begin{align*}
d_\Omega(\pi\Hopf(\xi,\eta,t+\tau),go)&\leq |\tau|+|t-d_\Omega(o,go)|+|d_\Omega(o,go)-s_2|+d_\Omega(\pi\Hopf(\xi,\eta,s_2),go)\\
&\leq 6R'.
\end{align*}
Finally $d_{T^1\Omega}(\Hopf(\xi,\eta,t+\tau),gv_o)\leq 6R'+2=R$, which means that $\Hopf(\xi,\eta,\tau)\in \phi_{-t}g\tilde{K}$.

As a consequence, if $d_\Omega(o,go)\geq 2R'$, then by the Shadow lemma (Lemma~\ref{shadowlemma avec defaut}) and Lemma~\ref{l'ombre en defaut grandit a la frontiere},
\[\tilde{m}(\tilde{K}\cap\phi_{-t}g\tilde{K})\geq \nu_o(\mathcal{O}^-_{R'}(o,go))\nu_o(\mathcal{O}^-_{R'}(go,o))R'\geq R'C^{-2}e^{-\delta_\Gamma d_\Omega(o,go)}.\]
We conclude that for $T\geq R'$,
\[\int_{0}^{T}\sum_{g\in \Gamma}\tilde{m}(\tilde{K}\cap \phi_{-t}g\tilde{K})\diff t \geq R'{}^2C^{-2}\left(\sum_{g : d_\Omega(o,go)\leq T}e^{-\delta_\Gamma d_\Omega(o,go)}-\sum_{g : d_\Omega(o,go)\leq 2R'}e^{-\delta_\Gamma d_\Omega(o,go)}\right).\qedhere\]
\end{proof}

\begin{obs}\label{LR<OR*OR}
Let $n\geq 1$ be an integer, $A$ and $B$ be two non-empty disjoint compact subsets of $\R^n$, and $\xi,\eta\in\R^n$ be two points. If for all $a\in A$ and $b\in B$, the intersections $[a,\eta]\cap B$ and $[b,\xi]\cap A$ are non-empty, then one can find $a\in A$ and $b\in B$ such that $\xi,a,b,\eta$ are aligned in this order.
\end{obs}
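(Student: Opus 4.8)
**Proof sketch for Observation~\ref{LR<OR*OR}.**

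The plan is to run a fixed-point argument on the compact set $A$. For each point $a \in A$, the hypothesis gives a nonempty intersection $[a,\eta] \cap B$; similarly for each $b \in B$, the segment $[b,\xi]$ meets $A$. The key geometric observation is that once we are allowed to move $a$ along $A$, we can arrange the order $\xi, a, b, \eta$ simply by pushing $a$ towards $\xi$ and reading off the corresponding $b$. Concretely, I would consider the map that sends $a \in A$ to the point of $A$ obtained as follows: take any $b \in [a,\eta] \cap B$, then take any $a' \in [b,\xi] \cap A$, and set $\Phi(a) = a'$. The heart of the matter is that iterating $\Phi$ moves $a$ monotonically closer (in the affine sense, projecting onto the direction towards $\xi$) to $\xi$, so a suitable limit point $a_\infty$ of the sequence $(\Phi^n(a))_n$ will be a "fixed configuration": there is $b_\infty \in [a_\infty,\eta]\cap B$ with $[b_\infty,\xi]\cap A \ni a_\infty$, i.e.\ $a_\infty$ lies on $[b_\infty,\xi]$ and $b_\infty$ lies on $[a_\infty,\eta]$. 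This is exactly the alignment $\xi, a_\infty, b_\infty, \eta$ in this order.

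More carefully, since $A$ and $B$ are disjoint and compact, there is $\epsilon > 0$ with $\mathrm{dist}(A,B) \geq \epsilon$. The relevant monotone quantity is the following: for $a \in A$, let $b \in [a,\eta]\cap B$ and $a' \in [b,\xi]\cap A$; then $a'$ lies on the segment from $b$ to $\xi$ and $b$ lies on the segment from $a$ to $\eta$, so comparing the two configurations one sees that $a'$ is "on the $\xi$-side" of $a$ relative to the broken path. The clean way to make this precise is to parametrise: write $b = (1-s)a + s\eta$ with $s \in (0,1]$ (note $s$ is bounded below away from $0$ because $\|a - b\| \geq \epsilon$ while $\|a-\eta\|$ is bounded above, so $s \geq \epsilon / \diam$), and $a' = (1-u)b + u\xi$ with $u \in [0,1)$ (here $u$ is bounded below away from $0$ since $\|b - a'\| \geq \epsilon$). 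One then checks that the functional $a \mapsto \langle a - \xi, \eta - \xi\rangle$ (or a similar affine functional measuring progress from $\xi$ towards $\eta$) strictly decreases by a definite amount under $\Phi$ unless $a$, $b$, $\xi$, $\eta$ are already in the claimed order; since this functional is bounded on the compact set $A$, the decrease cannot continue indefinitely, forcing the existence of the desired aligned configuration.

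An alternative and perhaps cleaner route avoids iteration entirely: restrict attention to a minimal configuration. Among all pairs $(a,b) \in A \times B$ with $b \in [a,\eta]$ (nonempty by hypothesis), choose one minimising $\|a - \xi\|$; this minimum is attained by compactness. I claim that for this minimal $(a,b)$ we automatically have $a \in [b,\xi]$, which is the conclusion. Indeed, the hypothesis applied to $b$ gives a point $a'' \in [b,\xi]\cap A$. If $a'' \neq a$, then by the hypothesis applied to $a''$ there is $b'' \in [a'',\eta]\cap B$, and a short convexity argument (the triangle with vertices $\xi$, $\eta$, and the picture along the two segments $[a'',\eta]$ and $[b,\xi]$) shows $\|a'' - \xi\| < \|a - \xi\|$, contradicting minimality — unless $a'' = a$ already lies on $[b,\xi]$. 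I would adopt whichever of these two arguments turns out to need the fewest case distinctions when written out in full; both reduce to the same elementary planar picture inside the $2$-plane spanned by $\xi$, $\eta$, and the segments involved.

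The main obstacle is purely bookkeeping: verifying that the monotone quantity genuinely decreases by a uniform amount (so that the process terminates) and handling the degenerate cases where some of the points coincide or are collinear in an unexpected way. There is no deep idea — the statement is a soft compactness/convexity fact in $\R^n$ that effectively takes place in a $2$-dimensional affine slice — so the proof should be short once the right extremal configuration is chosen.
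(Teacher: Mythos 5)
There is a genuine gap, and it sits exactly at the step you dismiss as ``bookkeeping'': neither of the two quantities you propose actually works. For the iteration, the functional $a\mapsto\langle a-\xi,\eta-\xi\rangle$ need not decrease under $\Phi$: take $\xi=(0,0)$, $\eta=(10,0)$, $a=(0.1,1)$, $b=(5.05,0.5)\in[a,\eta]$, $a'=(1.01,0.1)\in[b,\xi]$; the functional jumps from $1$ to $10.1$. For the extremal route, the inequality $\|a''-\xi\|<\|a-\xi\|$ is simply false in general: from $a''\in[b,\xi]$ you only get $\|a''-\xi\|\le\|b-\xi\|$, and $b$, lying on $[a,\eta]$, is typically much farther from $\xi$ than $a$. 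Worse, the minimiser of $\|a-\xi\|$ need not witness the conclusion at all: with $\xi=(0,0)$, $\eta=(10,0)$, $A=\{(0.1,1),(5,0),(1.01,0.1)\}$, $B=\{(5.05,0.5),(7,0),(10,0)\}$, all hypotheses hold and the conclusion holds (via $(5,0)$ and $(7,0)$), but the pair with $b\in[a,\eta]$ minimising $\|a-\xi\|$ is $a=(0.1,1)$, $b=(5.05,0.5)$, for which $a\notin[b,\xi]$, and the competitor $a''=(1.01,0.1)$ produced by your argument is \emph{farther} from $\xi$ than $a$, so no contradiction with minimality arises. (Incidentally, the paper states this observation without proof, so your argument has to stand on its own.)

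The scheme is salvageable once you replace both quantities by the distance to the segment $[\xi,\eta]$. The function $x\mapsto\mathrm{dist}(x,[\xi,\eta])$ is convex and vanishes at $\xi$ and $\eta$, so writing $b=(1-s)a+s\eta$ and $a'=(1-u)b+u\xi$ gives $\mathrm{dist}(b,[\xi,\eta])\le(1-s)\,\mathrm{dist}(a,[\xi,\eta])$ and $\mathrm{dist}(a',[\xi,\eta])\le(1-u)\,\mathrm{dist}(b,[\xi,\eta])$; your lower bounds $s,u\ge \mathrm{dist}(A,B)/\max(\|a-\eta\|,\|b-\xi\|)>0$ are correct (note the hypotheses together with $A\cap B=\emptyset$ force $\eta\notin A$ and $\xi\notin B$, so the denominators do not degenerate). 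Cleanest version, no iteration needed: let $a^*\in A$ minimise $\mathrm{dist}(\cdot,[\xi,\eta])$ over the compact set $A$; if this minimum were positive, one application of $\Phi$ would strictly decrease it, a contradiction, so $a^*\in[\xi,\eta]$. Then a single use of the hypothesis gives $b\in[a^*,\eta]\cap B\subset[\xi,\eta]$, and $\xi,a^*,b,\eta$ are aligned in this order --- no ``fixed configuration'' of $\Phi$ is required.
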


\begin{proof}[Proof of Proposition~\ref{l'ensemble limite conique a mesure non nulle}]
We let $m$ be a Sullivan measure associated to $(\nu_x)_{x\in\Omega}$ on $T^1M$. By definition of $m$ and of the conical limit set, it is enough to find a compact set $K\subset T^1M$ large enough so that the set of vectors $v\in K$ such that $\int_0^\infty 1_K(\phi_t v)\diff t=\infty$ has non-zero $m$-measure.

Let $R>0$ be large enough so that we can apply Lemma~\ref{Lemme : estimees moches}, and let $K$ be the projection in $T^1M$ of $\overline{B}_{T^1\Omega}(v_o,R)$, where $v_o\in T^1_o\Omega$ and $o\in\Omega$. We want to apply Fact~\ref{Borel-Cantelli} for $(X,\mu)=(T^1M,m_{|K})$ and $A_t:= K\cap \phi_{-t}K$ for all $t\geq 0$. The measure $\mu$ is finite because $m$ is Radon and $K$ is compact. Since $\Gamma$ is divergent (this is important since it is the only place where we need this assumption), and by the estimate \eqref{Equation : minoration moche} of Lemma~\ref{Lemme : estimees moches}, the integral $\int_0^\infty \mu(A_t)\diff t$ diverges. It remains to check that \eqref{Equation : hypothese renyi} is satisfied, but this is a direct consequence of the estimates \eqref{Equation : minoration moche} and \eqref{Equation : majoration moche} of Lemma~\ref{Lemme : estimees moches}, and of the fact that
\[\int_0^T \int_0^T\mu(A_t\cap A_s)\diff t\diff s \leq 2\int_0^T \int_0^T\mu(A_t\cap A_{t+s})\diff t\diff s.\qedhere\]\end{proof}

\subsection{The geodesic flow is conservative}\label{conservativite}

In this section we prove, that, in the setting of Theorem~\ref{Thm : The weak Hopf--Tsuji--Sullivan--Roblin dichotomy}.\ref{Item : cas divergent}, the geodesic flow is conservative.

\begin{prop}\label{Proposition : convervativite}
 Let $\Omega\subset\PR(V)$ be a properly convex open set and $\Gamma\subset\Aut(\Omega)$ a divergent discrete subgroup. Suppose $M=\Omega/\Gamma$ is rank-one and non-elementary. Let $(\nu_x)_{x\in\Omega}$ be a $\delta_\Gamma$-conformal density on $\partial\Omega$ and $m$ an induced Sullivan measure on $T^1M$. Then $m$ is conservative under the action of $(\phi_t)_t$.
\end{prop}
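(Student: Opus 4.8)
The plan is to reduce the conservativity of $(T^1M,\phi_t,m)$ to a recurrence statement that we already have in hand, namely Proposition~\ref{l'ensemble limite conique a mesure non nulle} together with the Hopf decomposition machinery of Section~\ref{Section : decompo de Hopf}. First I would lift everything to $T^1\Omega$: by Fact~\ref{Fait : conservatif passe au quotient}, applied to the normal subgroup $H=\R$ (acting by the geodesic flow) inside $G=\Gamma\times\R$, the system $(T^1M,\phi_t,m)$ is conservative if and only if $(T^1\Omega,\Gamma\times\R,\tilde m)$ is conservative, where $\tilde m$ is the Sullivan measure on $T^1\Omega$ lifting $m$; and by a second application of the same fact (with $H=\Gamma$ normal in $G=\Gamma\times\R$) this is equivalent to conservativity of $(\Geod^\infty_{\hor}(\Omega),\Gamma,\bar m)$, where $\bar m$ is the $\R$-quotient measure, which is equivalent in the Hopf coordinates to $e^{2\delta_\Gamma\langle\xi,\eta\rangle_o}\mu_o(\xi)\mu_o(\eta)$ on $\Geod^\infty_{\hor}(\Omega)$. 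So it suffices to prove that the $\Gamma$-action on $(\Geod^\infty_{\hor}(\Omega),\bar m)$ is conservative.

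Next I would identify the wandering set and show it is $\bar m$-null. A measurable $\Gamma$-invariant set $W$ on which the transporters are relatively compact: by proper discontinuity of the $\Gamma$-action on $\Omega$, a transporter $T((\xi,\eta),W)$ is relatively compact in $\Gamma$ precisely when it is finite, i.e.\ when the $\Gamma$-orbit of $(\xi,\eta)$ meets $W$ only finitely often. Projecting to $\partial\Omega$ via $\pi_{\hor}$ (using Fact~\ref{smooth_points_are_in_hor}, and that $\mu_o$-almost every point is in $\partial_{\sse}\Omega$ once we are in the divergent case — but to avoid circularity I would instead argue directly on $\partial_{\hor}\Omega$), the key point is that if $(\xi,\eta)$ is such that $\xi$ (equivalently its projection) lies in the conical limit set $\Lambda^{\con}$, then the geodesic from $\eta$ to $\xi$ returns infinitely often to a fixed compact part of $T^1M$, hence the $\Gamma$-orbit of $(\xi,\eta)$ returns infinitely often to any fixed compact neighbourhood, so $(\xi,\eta)$ cannot lie in a wandering set whose transporters are finite. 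More precisely: cover $\Geod^\infty_{\hor}(\Omega)$ by countably many relatively compact Borel sets $K_n$; a wandering set $W$ is contained (up to null sets) in $\bigcup_n\{x\in K_n: T(x,W)\text{ finite}\}$, and for $x=(\xi,\eta)$ with $\pi_{\hor}\xi\in\Lambda^{\con}$ the orbit $\Gamma x$ accumulates in $\Geod^\infty_{\hor}(\Omega)$ onto the fibre over $\Lambda^{\prox}$ and in particular returns to $K_n$ infinitely often whenever $x\in K_n$; so $W$ is disjoint (mod null sets) from $\{(\xi,\eta): \pi_{\hor}\xi\in\Lambda^{\con}\}$. By Proposition~\ref{l'ensemble limite conique a mesure non nulle}, $\nu_o(\partial\Omega\smallsetminus\Lambda^{\con})=0$, hence $\mu_o(\partial_{\hor}\Omega\smallsetminus\pi_{\hor}^{-1}\Lambda^{\con})=0$, and therefore $\bar m(W)=0$. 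Thus every wandering set is null, which by the Hopf decomposition (the Fact in Section~\ref{Section : decompo de Hopf}) says exactly that the dissipative part $\mathcal D$ is $\bar m$-null, i.e.\ $(\Geod^\infty_{\hor}(\Omega),\Gamma,\bar m)$ is conservative.

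The main obstacle I anticipate is the bookkeeping in the step "$\pi_{\hor}\xi\in\Lambda^{\con}\Rightarrow$ the $\Gamma$-orbit of $(\xi,\eta)$ returns infinitely often to a fixed compact set of $\Geod^\infty_{\hor}(\Omega)$": one must check that conical convergence of $\xi$ alone (with $\eta$ arbitrary on the opposite side, so that $[\xi,\eta]\cap\Omega\neq\emptyset$) forces the pairs $(\gamma_n^{-1}\xi,\gamma_n^{-1}\eta)$ to stay in a compact part of $\Geod^\infty_{\hor}(\Omega)$ along the relevant subsequence — this uses that $d_\Omega(\gamma_n o,[o,\xi))$ is bounded together with an estimate showing the translates of $\eta$ cannot escape to the boundary face of the translate of $\xi$, which is precisely where rank-one-ness (smoothness and strong extremality, via Lemma~\ref{l'ombre en defaut grandit a la frontiere}-type arguments, or Proposition~\ref{Proposition : faces coniques negligees}) enters to control the Gromov product $\langle\gamma_n^{-1}\xi,\gamma_n^{-1}\eta\rangle_o$. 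Alternatively — and this is probably the cleanest route, the one I would ultimately write — one bypasses this by invoking Fact~\ref{fait:ergo imp trans}\ref{item:ergo imp trans:cons}/\ref{item:ergo imp trans:consR} in reverse: it is enough to exhibit, for $m$-almost every $v$, infinitely many return times of $\phi_t v$ to a fixed compact $K\subset T^1M$, and the proof of Proposition~\ref{l'ensemble limite conique a mesure non nulle} (via Fact~\ref{Borel-Cantelli} applied to $A_t=K\cap\phi_{-t}K$) literally produces, for $K$ large enough, a positive-measure set of such $v$; then a standard saturation argument — the set of $v$ with $\int_0^\infty 1_K(\phi_t v)\,dt=\infty$ for this fixed $K$ is $\phi$-invariant up to the flow, and enlarging $K$ along an exhaustion and using that $\Gamma$ is divergent so that the hypothesis of Fact~\ref{Borel-Cantelli} holds for every such $K$ — shows the conservative part has full measure. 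I would present this second argument as the proof, since it reuses Lemma~\ref{Lemme : estimees moches} verbatim and needs no new geometric input.
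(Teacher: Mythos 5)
Your first sketch is, up to packaging, the paper's own proof. The paper applies Fact~\ref{Fait : conservatif passe au quotient} once, reducing to conservativity of $(T^1\Omega,\Gamma\times\R,\tilde m)$, and then checks the Hopf criterion of Section~\ref{Section : decompo de Hopf} directly: for a positive continuous integrable $\sigma$ and $\tilde m$-almost every $v$ one has $\int_{\Gamma\times\R}\sigma(v)=\infty$, because by Proposition~\ref{l'ensemble limite conique a mesure non nulle} almost every $v$ satisfies $\phi_\infty v\in\Lambda^{\con}_R$ for some $R$, so the $\Gamma\times\R$-orbit of $v$ meets $\{w : d_\Omega(\pi w,o)\leq R\}$ along infinitely many distinct group elements, each contributing at least $2\min\{\sigma(w):d_\Omega(\pi w,o)\leq R+1\}>0$. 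Your wandering-set argument on $\Geod^\infty_{\hor}(\Omega)$ is an equivalent formulation (note you swapped the two applications of Fact~\ref{Fait : conservatif passe au quotient}: quotienting by $H=\Gamma$ gives $T^1M$, quotienting by $H=\R$ gives $\Geod^\infty_{\hor}(\Omega)$, and one should check that $\R$ acts properly on $T^1\Omega$, which holds since $d_\Omega(\pi v,\pi\phi_t v)=|t|$). The ``obstacle'' you anticipate is not one: the conical limit set is independent of the basepoint (equivalently, two rays asymptotic to the same boundary point stay at bounded Hausdorff distance, by convexity, Lemma~\ref{crampon}), so conicality of $\phi_\infty v$ already produces infinitely many distinct $\gamma_n$ and times $t_n$ with $d_\Omega(\pi\phi_{t_n}\gamma_n v,o)\leq R$; no control of Gromov products, and no rank-one input beyond what is already used inside Proposition~\ref{l'ensemble limite conique a mesure non nulle}, is needed.

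The route you say you would actually write, however, has a genuine gap at the ``saturation'' step. Fact~\ref{Borel-Cantelli} combined with Lemma~\ref{Lemme : estimees moches} yields, for each compact $K=K_R$, only a set of $m$-measure at least $1/C(R)>0$ of vectors spending infinite time in $K_R$, with a constant that degrades as $R$ grows. The set $E_R$ of such vectors is flow-invariant, and so is $\bigcup_R E_R$, but flow-invariance plus positive measure does not give full measure: ergodicity is not available at this stage (it is proved afterwards, in Corollary~\ref{Corollaire : ergo et melange}, \emph{using} conservativity), so a priori the dissipative part could be a flow-invariant set of positive measure disjoint from every $E_R$. In the paper the upgrade from positive to full measure does not take place at the level of the flow at all; it takes place on the boundary, via Observation~\ref{obs:obsconf} (the restriction of a conformal density to a $\Gamma$-invariant set is again a conformal density), which is exactly what converts the R\'enyi estimate into the full-measure statement $\nu_o(\partial\Omega\smallsetminus\Lambda^{\con})=0$ of Proposition~\ref{l'ensemble limite conique a mesure non nulle}. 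That full-measure statement is precisely the input your second route tries to bypass, and without it the exhaustion argument does not close. So keep your first argument (or the paper's shorter version of it) and drop the second.
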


\begin{proof}
 Let $(\mu_x)_{x\in\Omega}$ be a $\delta_\Gamma$-conformal density on $\partial_{\hor}\Omega$ such that $(\pi_{\hor})_*\mu_x=\nu_x$ for any $x\in\Omega$, and let $\mm$ be the induced Sullivan measure on $T^1\Omega$. 
 
 According to Fact~\ref{Fait : conservatif passe au quotient}, it is enough to prove that $(T^1\Omega,\Gamma\times \R,\tilde{m})$ is conservative. Let $\sigma$ be an integrable, positive and continuous function on $T^1\Omega$, and let us prove that $\int_{\Gamma\times\R}\sigma=\infty$ almost surely (recall the notation from Section~\ref{Section : decompo de Hopf}). For almost any vector $v\in T^1\Omega$, we have $\phi_\infty v \in \Lambda^{\con}$ by Proposition~\ref{l'ensemble limite conique a mesure non nulle}, and by definition of $\tilde{m}$; let $v$ be such a vector. Then we can find $R>0$ such that $\phi_\infty v\in\Lambda_R^{\con}$; in other words there exist $(\gamma_n)_n\in\Gamma^\N$ and $(t_n)_n\in\R^\N$ such that $\gamma_n\neq\gamma_k$ and $d_\Omega(\pi\phi_{t_n}\gamma_nv,o)\leq R$ for $n\neq k$. Let $\epsilon:=\min\{\sigma(w) : d_\Omega(\pi w,o)\leq R+1\}>0$. Then
 \begin{equation*}
  \int_{\Gamma\times\R}\sigma(v) \geq \sum_{n\geq 0}\int_{t=t_n-1}^{t_n+1}\sigma(\phi_t\gamma v)\diff t \geq \sum_{n\geq 0}2\epsilon =\infty.\qedhere
 \end{equation*}
\end{proof}

\subsection{The smooth and strongly extremal points have full measure}\label{Ssection : negligeons les singularites}

In this section we combine the conservativity of Sullivan's measures with a result of Benzécri to establish that in the divergent case, the smooth and strongly extremal points have full measure. The following result is a particular case of a more general result of Benz\'ecri. Recall that $\Ecal_V^\bullet$ is the space of pointed properly convex open subsets of $\PR(V)$, endowed with the Hausdorff topology, on which $\PGL(V)$ acts properly cocompactly (Fact~\ref{benz}).

\begin{fait}[\!{\!\cite[Prop.\,5.3.9]{benz_varlocproj}}]\label{Fait : triangle limite}
 Suppose that $\dim(V)=3$. Let $(x,\Omega)\in\Ecal_V^\bullet$ and $\xi\in\partial_{\sing}\Omega$. Let $T\in \Ecal_V$ be a triangle and $o\in T$. Then we have the following convergence in $\Ecal_V^\bullet/\PGL(V)$.
 \[[y,\Omega] \underset{\substack{y\to\xi\\y\in[x,\xi)}}{\longrightarrow} [o,T].\]
\end{fait}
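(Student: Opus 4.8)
This is a statement about the local geometry of a properly convex domain near a singular (non-smooth) boundary point in dimension $d=2$ (so $\dim V=3$), phrased in the language of Benzécri's compactness theorem (Fact~\ref{benz}): as we let $y$ run along the segment $[x,\xi)$ towards a singular point $\xi\in\partial_{\sing}\Omega$, the normalised pairs $[y,\Omega]$ converge in $\Ecal_V^\bullet/\PGL(V)$ to the (unique) class of a pointed triangle. The key geometric observation is that, since $\xi$ is singular, there are at least two distinct supporting lines $H_1,H_2$ of $\Omega$ at $\xi$; a third supporting line is obtained from any supporting line at a boundary point $\zeta\neq\xi$ lying "ahead" of $y$ in an affine picture. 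First I would fix an affine chart in which $\overline\Omega$ is compact and convex, with $\xi$ on its boundary and $x$ in its interior, and pass to a subsequence $y_n\to\xi$ along $[x,\xi)$. By Fact~\ref{benz} the sequence $[y_n,\Omega]$ sub-converges in $\Ecal_V^\bullet/\PGL(V)$; it suffices to show every accumulation point is the class of a pointed triangle, and then (by uniqueness of $[o,T]$ up to projective equivalence, since $\PGL(V)$ acts transitively on pointed triangles) to conclude.

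\textbf{Key steps.} The heart of the argument is to apply projective transformations $g_n\in\PGL(V)$ normalising $(y_n,\Omega)$ — e.g. affine dilations centred appropriately at $\xi$ together with a bounded correction keeping $g_n y_n$ in a fixed compact part of $g_n\Omega$ — and to identify the Hausdorff limit $\Omega_\infty$ of $g_n\overline\Omega$. I would argue that $\Omega_\infty$ is contained in a triangle: the two distinct supporting lines $H_1,H_2$ at $\xi$ survive in the limit as two distinct sides (this uses that the dilations fix $\xi$ and expand transversally, so the angle between $H_1$ and $H_2$ does not collapse), and a supporting line at a point $\zeta\in\partial\Omega$ on the far side of $y_n$ provides, in the limit, a third side cutting off a bounded region. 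Conversely $\Omega_\infty$ contains a triangle: one produces three boundary points of $\Omega_\infty$ in "general position" by rescaling the directions at $\xi$ inside $\Omega$ and the direction of $[y_n,\zeta)$. Combining the two inclusions and using that $\Omega_\infty$ is properly convex and open forces $\Omega_\infty$ to be a triangle. Finally, $g_n y_n$ lies in a fixed compact subset of $g_n\Omega$ by construction, so a further subsequence makes $g_n y_n$ converge to some interior point $o$ of $\Omega_\infty$, giving $[y_n,\Omega]\to[o,\Omega_\infty]=[o,T]$; since the limit class does not depend on the subsequence, the full net $[y,\Omega]$ converges.

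\textbf{Alternative and the main obstacle.} An alternative, which may be cleaner to write, is to invoke Fact~\ref{benz} directly: the orbit map $g\mapsto g\cdot(x,\Omega)$ from $\PGL(V)$ to $\Ecal_V^\bullet$ is proper, and $\Ecal_V^\bullet/\PGL(V)$ is compact, so one only needs to check (i) the sequence $[y_n,\Omega]$ has a convergent subsequence (automatic by compactness) and (ii) \emph{any} accumulation point lies in the closure of the orbit of a pointed triangle. For (ii) one shows that any limit domain $\Omega_\infty$ has a boundary point ($=$ the image of $\xi$) which is simultaneously non-smooth \emph{and} non-extremal, and that moreover the whole limit picture is a triangle. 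The main obstacle, and the place requiring genuine care rather than formula-crunching, is controlling the normalising sequence $g_n$ so that both (a) the two supporting lines at $\xi$ remain at bounded-away-from-zero mutual angle (non-degeneracy, so the limit is not a strip or half-plane), and (b) the footpoint $g_n y_n$ stays in a compact part of $g_n\Omega$ while the "opposite" supporting line stays at bounded-away-from-zero distance (so the limit is bounded, i.e. properly convex). Getting these two competing normalisations simultaneously is the crux; once the limit domain is pinned down to be a triangle, proper convexity and openness of $\Omega_\infty$ finish the identification immediately.
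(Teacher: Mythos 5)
There is a genuine gap: your write-up is a plan whose central step is left undone, and you say so yourself ("getting these two competing normalisations simultaneously is the crux"). Everything before that point (Benz\'ecri compactness, passing to subsequences, uniqueness of the class of a pointed triangle because $\PGL(V)$ acts transitively on pointed triangles) is fine, but the actual content of the statement is precisely the identification of the limit domain, and for that you only offer a vague normalisation ("affine dilations centred appropriately at $\xi$ together with a bounded correction") and a two-inclusion scheme. That scheme is moreover logically incomplete as stated: knowing that $\Omega_\infty$ contains some triangle and is contained in some triangle does not force $\Omega_\infty$ to be a triangle unless you show the inner and outer triangles coincide, which is exactly the quantitative control you have not supplied. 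So as it stands the proposal does not prove the fact.

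For comparison, the paper closes this in one stroke by a better choice of normalising maps. Pick a projective line $\PR(W)$ disjoint from $[x,\xi]$ and let $g_y$ be the unique homology fixing $\xi$ and fixing $\PR(W)$ pointwise with $g_yy=x$; in the affine chart where $\PR(W)$ is the line at infinity these are genuine dilations centred at $\xi$, so the basepoint is pinned at $x$ for free (your constraint (b)) and no "bounded correction" is needed. As $y\to\xi$ the sets $g_y\Omega$ converge to the convex hull of $\{\xi\}$ and the stereographic image $p(\Omega)\subset\PR(W)$ of $\Omega$ from $\xi$; the singularity of $\xi$ enters exactly here, guaranteeing that $p(\Omega)$ is a properly convex open subset of the line $\PR(W)$, i.e.\ the interior of a segment (this is your constraint (a), the two supporting lines at $\xi$ not collapsing), so the limit is a triangle containing $x$, and the convergence $[y,\Omega]\to[o,T]$ follows. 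If you want to keep your softer compactness-based framing, you still need to exhibit some such explicit $g_y$ and verify the Hausdorff convergence of $g_y\overline\Omega$; with the homology above this verification is immediate, whereas with an unspecified sequence it is not.
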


\begin{proof}
 We briefly recall the proof of this fact, which is very easy in this particular case. Consider a projective line $\PR(W)$ that does not intersect $[x,\xi]$, and for each $y\in[x,\xi)$, denote by $g_y\in\PGL(V)$ the unique element that fixes $\xi$ and $\PR(W)$, and such that $g_yy=x$. Let $p : \PR(V)\smallsetminus\{\xi\}\rightarrow \PR(W)$ be the stereographic projection. Since $\xi$ is a singular point of $\partial\Omega$, the image $p(\Omega)$ is a properly convex open subset of $\PR(W)$, \ie the interior of a segment. As $y$ tends to $\xi$, the properly convex open set $g_y\Omega$ converges to the convex hull of $p(\Omega)$ and $\xi$, which is a triangle containing $x$.
\end{proof}


\begin{lemma}\label{Lemme : recurrent + rg1 faible -> lisse}
 Let $\Omega\subset\PR(V)$ be a properly convex open set and $\Gamma\subset\Aut(\Omega)$ a discrete subgroup; set $M=\Omega/\Gamma$. Let $v\in T^1\Omega$ with a forward recurrent projection in $T^1M$. If $\phi_\infty v\not\in\partial_{sse}\Omega$, then $d_{\spl}(\phi_{-\infty}v,\phi_{\infty}v)=2$.
\end{lemma}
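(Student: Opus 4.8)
Let $v\in T^1\Omega$ with forward recurrent projection $\bar v\in T^1M$, and suppose $\xi^+:=\phi_\infty v\notin\partial_{\sse}\Omega$; set $\xi^-:=\phi_{-\infty}v$. Since $\xi^-,\xi^+$ are the endpoints of a straight geodesic, they are distinct and $[\xi^-,\xi^+]\cap\Omega\neq\emptyset$, so $d_{\spl}(\xi^-,\xi^+)\geq 1$. The point of the lemma is that, in fact, $d_{\spl}(\xi^-,\xi^+)$ cannot be $1$ and cannot be $\geq 3$, so it must equal $2$. The assumption $\xi^+\notin\partial_{\sse}\Omega$ means $\xi^+$ is either not smooth or not strongly extremal.

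\textbf{Step 1: ruling out $d_{\spl}=1$.} If $d_{\spl}(\xi^-,\xi^+)=1$, then $[\xi^-,\xi^+]\subset\partial\Omega$, contradicting $[\xi^-,\xi^+]\cap\Omega\neq\emptyset$. So $d_{\spl}(\xi^-,\xi^+)\geq 2$.

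\textbf{Step 2: ruling out $d_{\spl}\geq 3$, using recurrence and Benz\'ecri.} This is where forward recurrence enters and is the main obstacle. The idea, in the spirit of the $d=3$ case treated in Fact~\ref{Fait : triangle limite} but now adapted to general dimension, is: if $\xi^+$ is singular or not strongly extremal, one extracts from the orbit $(\pi\phi_t v)_{t\geq 0}$ and the recurrence a sequence of times $t_n\to\infty$ and elements $\gamma_n\in\Gamma$ such that $\gamma_n\phi_{t_n}v$ stays in a fixed compact set of $T^1\Omega$. Pushing the pointed convex sets $(\pi\phi_{t_n}v,\Omega)$ forward by $\gamma_n$, one gets, up to extraction, a limit pointed convex set $(x_\infty,\Omega_\infty)$ in $\mathcal{E}_V^\bullet$ by Benz\'ecri's compactness theorem (Fact~\ref{benz}). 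Because $\xi^+$ fails to be in $\partial_{\sse}\Omega$, one shows the rescaled limit $\Omega_\infty$ must contain a properly embedded segment or a PES through the corresponding limit endpoint; tracking which face/supporting-hyperplane data degenerates, one reads off that the limiting endpoints $\eta^-,\eta^+$ of the geodesic of the limit vector satisfy $d_{\spl}(\eta^-,\eta^+)\leq 2$. Since $d_{\spl}$ is lower semicontinuous and $\gamma_n$ preserves $d_{\spl}$, this forces $d_{\spl}(\xi^-,\xi^+)=d_{\spl}(\gamma_n\xi^-,\gamma_n\xi^+)\leq \liminf_n d_{\spl}(\text{limit endpoints})+(\text{slack})$, and a careful version of this argument (using that singularity of $\xi^+$ exactly means the supporting hyperplane at $\xi^+$ meets $\overline\Omega$ in more than a point, and non-strong-extremality means $\xi^+$ lies in the closed face of another boundary point) shows $d_{\spl}(\xi^-,\xi^+)\leq 2$.

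\textbf{Step 3: conclusion.} Combining Steps 1 and 2, $d_{\spl}(\xi^-,\xi^+)=2$, as claimed. I expect Step 2 to be the crux: the delicate part is organizing the Benz\'ecri limit argument so that ``$\xi^+$ not smooth'' and ``$\xi^+$ not strongly extremal'' are handled uniformly, and so that the semicontinuity of $d_{\spl}$ is applied to the right pair of points after pulling back by $\gamma_n$. The recurrence hypothesis is essential precisely to guarantee that the rescaling elements $\gamma_n$ exist and that the limit is a genuine (nondegenerate-enough) pointed convex set rather than collapsing entirely.
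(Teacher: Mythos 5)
Your Step 1 is the same trivial observation as in the paper, but Step 2 — the whole content of the lemma — is asserted rather than proved, and the specific mechanism you propose does not work. If you push the pointed sets $(\pi\phi_{t_n}v,\Omega)$ by the recurrence elements $\gamma_n\in\Gamma$, you get $(\gamma_n\pi\phi_{t_n}v,\gamma_n\Omega)=(\gamma_n\pi\phi_{t_n}v,\Omega)$, which converges to $(\pi v,\Omega)$: since $\Omega$ is $\Gamma$-invariant, nothing degenerates, no Benz\'ecri extraction is needed, and no new segment or PES appears ``in the limit'' — the limit is $\Omega$ itself. Your claim that failure of $\xi^+\in\partial_{\sse}\Omega$ forces the limit to contain a segment/PES through the endpoint is also unjustified in the non-smooth case: a non-smooth point can perfectly well be extremal, so there is no segment through $\xi^+$ to track. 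Finally, the semicontinuity step is vacuous as written: because $\gamma_n\phi_{t_n}v\to v$, the endpoints satisfy $\gamma_n\phi_{\pm\infty}v\to\phi_{\pm\infty}v$, so lower semicontinuity of $d_{\spl}$ only yields $d_{\spl}(\xi^-,\xi^+)\le d_{\spl}(\xi^-,\xi^+)$; it cannot by itself produce the upper bound $\le 2$. What is needed is to exhibit explicit segments of $\partial\Omega$ joining $\xi^-$ and $\xi^+$ through an intermediate point, and that requires degenerating objects that are \emph{not} $\Gamma$-invariant.

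This is exactly how the paper argues, in two separate cases. If $\xi^+$ is non-smooth, choose a projective plane $\PR(W)$ containing the geodesic of $v$ in which $\xi^+$ is still a singular boundary point of $\Omega\cap\PR(W)$; the sections $\Omega\cap\gamma_n\PR(W)$ (these do move, unlike $\Omega$) subconverge, and by the two-dimensional Benz\'ecri degeneration (Fact~\ref{Fait : triangle limite}) the limit section is a triangle whose relative boundary lies in $\partial\Omega$ and contains both $\phi_{\pm\infty}v$, giving $d_{\spl}\le 2$. If $\xi^+$ is not strongly extremal, pick an extremal $\xi$ with $[\xi,\xi^+]\subset\partial\Omega$, extract $\gamma_n\xi\to\xi'$, get $[\xi^+,\xi']\subset\partial\Omega$ as a limit of boundary segments, and then show $[\xi^-,\xi']\subset\partial\Omega$ by proving that $d_\Omega(\pi\phi_{t_n}v,[\xi^-,\xi]\cap\Omega)\to\infty$ (this uses extremality of $\xi$), so that the pushed segments $\gamma_n([\xi^-,\xi]\cap\Omega)$ leave every compact subset of $\Omega$ while $\gamma_n\pi\phi_{t_n}v$ stays near $\pi v$. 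Your sketch contains neither the plane-section device nor the distance-blow-up argument, so as it stands there is a genuine gap at the crux of the proof.
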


\begin{proof}
 Observe that the inequality $d_{\spl}(\phi_{-\infty}w,\phi_{\infty}w)\geq2$ holds for any vector $w$.
 Since the projection of $v$ in $T^1M$ is forward recurrent, there exist diverging sequences $(t_n)_n\in[0,\infty)^\N$ and $(\gamma_n)_n\in\Gamma^\N$ such that $(\gamma_n\phi_{t_n}v)_n$ tends to $v$.
 
 Suppose that $\phi_\infty v$ is singular. Then there exists a projective plane $\PR(W)\subset\PR(V)$ which contains $v$ and such that $\xi$ is a singular point of $\partial\Omega\cap\PR(W)$. Up to extraction, we can assume that $(\gamma_n\PR(W))_n$ converges to some $\PR(W')$. By construction, $(\gamma_n\pi\phi_{t_n}v,\Omega\cap\gamma_n\PR(W))_n$ converges to $(\pi v,\Omega\cap\PR(W')$. Since $\phi_\infty v$ is singular, we can apply Fact~\ref{Fait : triangle limite}, and we obtain that $\Omega\cap\PR(W')$ is a triangle that contains $\phi_{\pm\infty}v$, hence $d_{\spl}(\phi_{-\infty}v,\phi_\infty v)\leq 2$.
 
 Suppose there exists $\xi\in\partial\Omega\smallsetminus\{\phi_\infty v\}$ such that $[\xi,\phi_\infty v]\subset\partial\Omega$. We can take $\xi$ extremal. Up to extraction we can assume that $(\gamma_n\xi)_n$ converges to some $\xi'\in\partial\Omega$. Observe that $[\phi_\infty v,\xi']\subset\partial\Omega$, since it is the limit of the sequence of segments $([\gamma_n\phi_\infty v,\gamma_n \xi])_n$ that are contained in the boundary (see Figure~\ref{fig:rec+rk1->lisse}). Since $\xi$ is extremal, the Hilbert distance of $\pi\phi_{t_n}v$ to $[\phi_{-\infty}v,\xi]\cap\Omega$ tends to infinity with $n$; this implies that $[\phi_{-\infty}v,\xi']\subset\partial\Omega$. Thus $d_{\spl}(\phi_{-\infty}v,\phi_\infty v)\leq 2$.
 \begin{figure}
\centering
\begin{tikzpicture}[scale=2]
\coordinate (phiminfv) at (-.3,-1);
\coordinate (phiinfv) at (.2,1.05);
\coordinate (v) at ($(phiminfv)!.5!(phiinfv)$);
\coordinate (xi) at (1,.2);
\coordinate (gphiminfv) at (-0.7,-.8);
\coordinate (gphiinfv) at (-.35,1.1);
\coordinate (gv) at ($(gphiminfv)!.45!(gphiinfv)$);
\coordinate (gxi) at (-1.5,.5);
\coordinate (xi') at (-1.6,.1);
\coordinate (seg) at ($(xi)!.5!(phiinfv) + (.0001,.0001)$);
\coordinate (gseg) at ($(gxi)!.5!(gphiinfv) + (-.0001,.0001)$);
\coordinate (phiv) at ($(v)!.5!(phiinfv)$);

\length{phiminfv,xi,seg,phiinfv,gphiinfv,gseg,gxi,xi',gphiminfv}
\cvx{phiminfv,xi,seg,phiinfv,gphiinfv,gseg,gxi,xi',gphiminfv}{1}

\draw (phiminfv)--(phiinfv);
\draw (phiminfv)--(xi);
\draw [->,very thick,red] (v)--($(v)!.2!(phiinfv)$);
\draw [->,very thick,red] (phiv)--($(phiv)!.4!(phiinfv)$);
\draw (gphiminfv)--(gphiinfv);
\draw [->,very thick,orange!50!red] (gv)--($(gv)!.2!(gphiinfv)$);
\draw (gphiminfv)--(gxi);
\draw [thick,blue] (xi)--(phiinfv);
\draw [thick,blue] (gxi)--(gphiinfv);

\draw (phiinfv) node{$\bullet$} node[above right]{$\phi_\infty v$};
\draw (gphiinfv) node{$\bullet$} node[above]{$\gamma_n\phi_\infty v$};
\draw (gxi) node{$\bullet$} node[above left]{$\gamma_n\xi$};
\draw (xi) node{$\bullet$} node[above right]{$\xi$};
\draw (phiminfv) node{$\bullet$} node[below]{$\phi_{-\infty}v$};
\draw (gphiminfv) node{$\bullet$} node[below left]{$\gamma_n\phi_{-\infty}v$};
\draw (xi') node{$\bullet$} node[left]{$\xi'$};
\draw (v) node[red,right]{$v$};
\draw (phiv) node[red,right]{$\phi_{t_n}v$};
\draw (gv) node[orange!50!red,left]{$\gamma_n\phi_{t_n}v$};
\end{tikzpicture}
\caption{Illustration of the second part of the proof of Lemma~\ref{Lemme : recurrent + rg1 faible -> lisse}}\label{fig:rec+rk1->lisse}
\end{figure}
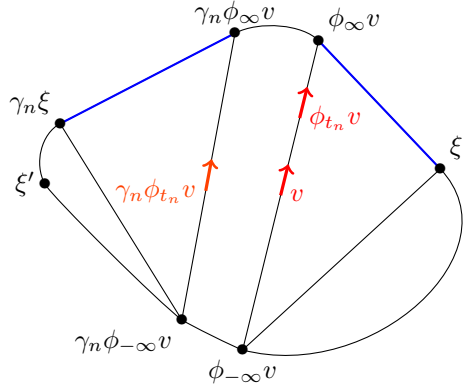
\end{proof}

Observe that the following corollary of Lemma~\ref{Lemme : recurrent + rg1 faible -> lisse} generalises with a shorter proof a result of Islam \cite[Prop.\,6.3]{islam_rank_one}. It can also be deduced from Lemma~\ref{lem:realell}.\ref{item:realellpasr1}.


\begin{cor}\label{cor:carac rgun}
 Let $\Omega\subset \PR(V)$ be a properly convex open set and $g\in\Aut(\Omega)$ with $\ell(g)>0$ and such that $g$ fixes two points  $\xi,\eta\in\partial\Omega$ with $d_{\spl}(\xi,\eta)>2$. Then $g$ is rank-one and $\{\xi,\eta\}=\{x_g^+,x_g^-\}$.
\end{cor}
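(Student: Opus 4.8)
The goal is to deduce Corollary~\ref{cor:carac rgun} from Lemma~\ref{Lemme : recurrent + rg1 faible -> lisse} together with the characterisations of rank-one automorphisms in Fact~\ref{equivalences rang un}. The idea is that $g$ acts on its axis (or more precisely on the straight geodesic between $\xi$ and $\eta$) in a way that makes every point of that geodesic periodic, hence trivially forward recurrent, so that Lemma~\ref{Lemme : recurrent + rg1 faible -> lisse} forces $\xi$ and $\eta$ to be strongly extremal; then Fact~\ref{equivalences rang un} finishes the job.

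First I would show that the segment $[\xi,\eta]$ meets $\Omega$. Since $\ell(g)>0$, by \eqref{longueur de translation} $g$ has positive translation length, so it is not elliptic; as it fixes the two distinct boundary points $\xi,\eta$, the restriction of (a suitable lift of) $g$ to the plane spanned by the line $\xi\oplus\eta$ and a third generic fixed direction shows that $\xi$ and $\eta$ carry the eigenvalues of largest and smallest modulus. Concretely, by Fact~\ref{period_is_translation_length} applied to any straight periodic geodesic — or, since we do not yet know $[\xi,\eta]\cap\Omega\neq\emptyset$, by a direct elementary argument on the action of $g$ on the projective line $\xi\oplus\eta$ — the hypothesis $d_{\spl}(\xi,\eta)>2$ guarantees that $[\xi,\eta]$ is not contained in $\partial\Omega$, hence $(\xi\oplus\eta)\cap\Omega$ is a non-empty open straight geodesic $c$ preserved by $g$, and $g$ translates $c$ by $\pm\ell(g)\neq 0$. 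Replacing $g$ by $g^{-1}$ if necessary, assume $g$ translates $c$ in the positive direction.

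Next, pick any vector $v\in T^1\Omega$ tangent to $c$; then its projection in $T^1M$ is periodic under the geodesic flow, with $\phi_{\ell(g)}v$ and $gv$ projecting to the same vector. In particular this projection is forward recurrent. By construction $\phi_{\infty}v$ and $\phi_{-\infty}v$ are the two endpoints of $c$, i.e.\ $\{\phi_{\infty}v,\phi_{-\infty}v\}=\{\xi,\eta\}$. If $\phi_{\infty}v\notin\partial_{\sse}\Omega$, then Lemma~\ref{Lemme : recurrent + rg1 faible -> lisse} would give $d_{\spl}(\phi_{-\infty}v,\phi_{\infty}v)=2$, i.e.\ $d_{\spl}(\xi,\eta)=2$, contradicting the hypothesis. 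Hence $\phi_{\infty}v\in\partial_{\sse}\Omega$; applying the same reasoning to $-v$ (whose projection in $T^1M$ is also periodic, hence forward recurrent) gives $\phi_{-\infty}v\in\partial_{\sse}\Omega$. So both $\xi$ and $\eta$ are smooth strongly extremal points, and in particular each of them is strongly extremal.

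Finally, I would invoke Fact~\ref{equivalences rang un}: since $c=(\xi\oplus\eta)\cap\Omega\neq\emptyset$, the line $\xi\oplus\eta$ is the axis of a biproximal automorphism — indeed, the endpoints being strongly extremal (hence extremal), Fact~\ref{period_is_translation_length} tells us $g$ is biproximal with $\{x_g^+,x_g^-\}=\{\xi,\eta\}$ and $\axis(g)=\xi\oplus\eta$. Then $x_g^+=\xi$ or $\eta$ is strongly extremal, so by the implication $(c)\Rightarrow(a)$ of Fact~\ref{equivalences rang un}, $g$ is rank-one, and $\{\xi,\eta\}=\{x_g^+,x_g^-\}$ as claimed. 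The only slightly delicate point is the very first step, namely checking cleanly that $d_{\spl}(\xi,\eta)>2$ forces $(\xi\oplus\eta)\cap\Omega\neq\emptyset$ and that $g$ acts on this geodesic as a non-trivial translation; but this is exactly the kind of elementary argument already used implicitly around Facts~\ref{period_is_translation_length}–\ref{equivalences rang un} (if $x_g^0$ met $\partial\Omega$ then $\xi\oplus x_g^0$ and $\eta\oplus x_g^0$ would be supporting hyperplanes witnessing $d_{\spl}(\xi,\eta)\leq 2$), and I expect no real obstacle there.
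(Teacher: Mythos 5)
Your proposal is correct and follows essentially the same route as the paper, whose proof is exactly this: apply Lemma~\ref{Lemme : recurrent + rg1 faible -> lisse} to a vector tangent to $[\xi,\eta]$, whose projection in the quotient of $T^1\Omega$ by the group generated by $g$ is periodic and hence forward recurrent, and then conclude via Facts~\ref{period_is_translation_length} and \ref{equivalences rang un}. The only thing to make explicit is that the manifold $M$ you invoke is $\Omega/\langle g\rangle$, with $\langle g\rangle$ discrete because $g$ translates the geodesic $(\xi\oplus\eta)\cap\Omega$ by $\ell(g)>0$ (your justification via \eqref{longueur de translation} of this non-trivial translation is exactly the needed point).
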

\begin{proof}
 We apply Lemma~\ref{Lemme : recurrent + rg1 faible -> lisse} to any vector tangent to $[\xi,\eta]$, whose projection in the quotient of $T^1\Omega$ by the group generated by $g$ is periodic, and hence forward recurrent.
\end{proof}

We now combine Lemma~\ref{Lemme : recurrent + rg1 faible -> lisse} with Poincar\'e's recurrence theorem (Fact~\ref{fait:ergo imp trans}.\ref{item:ergo imp trans:consR}) to establish the following.

\begin{prop}\label{Proposition : sse est plein}
 Let $\Omega\subset\PR(V)$ be a properly convex open set, and $\Gamma\subset\Aut(\Omega)$ a divergent discrete subgroup with $M=\Omega/\Gamma$ rank-one and non-elementary. Then any $\delta_\Gamma$-conformal density on $\partial\Omega$ gives full measure to $\partial_{\sse}\Omega$.
\end{prop}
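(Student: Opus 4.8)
The plan is to use Observation~\ref{obs:obsconf}: since $\partial_{\sse}\Omega$ is $\Gamma$-invariant (the set of smooth points and the set of strongly extremal points are both preserved by $\Aut(\Omega)$), it suffices to prove that any non-zero $\delta_\Gamma$-conformal density $(\nu_x)_{x\in\Omega}$ on $\partial\Omega$ gives $\partial_{\sse}\Omega$ positive measure; equivalently, that $\nu_o(\partial\Omega\smallsetminus\partial_{\sse}\Omega)<\nu_o(\partial\Omega)$ for some $o\in\Omega$ would already follow from showing the complement is null, which is what I will do directly. The engine is the combination of Proposition~\ref{l'ensemble limite conique a mesure non nulle} (the conical limit set has full measure, using divergence), Proposition~\ref{Proposition : convervativite} (the Sullivan measure $m$ on $T^1M$ is conservative), Poincar\'e recurrence in the form of Fact~\ref{fait:ergo imp trans}.\ref{item:ergo imp trans:consR} ($m$-almost every vector is forward recurrent), and Lemma~\ref{Lemme : recurrent + rg1 faible -> lisse}.

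First I would fix $o\in\Omega$, a $\delta_\Gamma$-conformal density $(\nu_x)_{x\in\Omega}$ on $\partial\Omega$ coming from a density $(\mu_x)_{x\in\Omega}$ on $\partial_{\hor}\Omega$, and the induced Sullivan measures $\tilde m$ on $T^1\Omega$ and $m$ on $T^1M$. By Proposition~\ref{Proposition : convervativite}, $m$ is conservative; hence by Fact~\ref{fait:ergo imp trans}.\ref{item:ergo imp trans:consR}, $m$-almost every $v\in T^1M$ is forward recurrent. Lift to $T^1\Omega$: $\tilde m$-almost every $\tilde v\in T^1\Omega$ has forward-recurrent projection in $T^1M$. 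By Lemma~\ref{Lemme : recurrent + rg1 faible -> lisse}, for such $\tilde v$ either $\phi_\infty\tilde v\in\partial_{\sse}\Omega$ or $d_{\spl}(\phi_{-\infty}\tilde v,\phi_\infty\tilde v)=2$. Now I would rule out the second alternative on a full set. The point is that for $\tilde m$-almost every $\tilde v$, the endpoint $\phi_\infty\tilde v$ lies in $\Lambda^{\con}$: indeed by Proposition~\ref{l'ensemble limite conique a mesure non nulle}, $\nu_o(\partial\Omega\smallsetminus\Lambda^{\con})=0$, and by the very definition of $\tilde m$ (it is absolutely continuous with respect to $d\nu_o(\xi)\,d\nu_o(\eta)\,dt$ in Hopf coordinates), the set of $\tilde v$ with $\phi_{\pm\infty}\tilde v\notin\Lambda^{\con}$ is $\tilde m$-null. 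So $\tilde m$-a.e.\ $\tilde v$ has forward-recurrent projection, $\phi_\infty\tilde v\in\Lambda^{\con}$, and also $\phi_{-\infty}\tilde v\in\Lambda^{\con}$.

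The key geometric observation to finish is that a conical limit point cannot be an endpoint of a boundary segment whose other endpoint is also in $\overline\Omega$ in a way compatible with $d_{\spl}=2$; more carefully, if $d_{\spl}(\phi_{-\infty}\tilde v,\phi_\infty\tilde v)=2$ then the straight geodesic spanned by $\tilde v$ lies in the relative interior of a segment of $\overline\Omega$ of ``simplicial radius two'', and in particular $\phi_\infty\tilde v$ lies in a non-trivial open face $F_\Omega(\phi_\infty\tilde v)$ of $\overline\Omega$ (it is not strongly extremal, hence there is $\eta\ne\phi_\infty\tilde v$ with $[\eta,\phi_\infty\tilde v]\subset\partial\Omega$, i.e.\ $\phi_\infty\tilde v$ belongs to a face of dimension $\geq 1$). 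But Proposition~\ref{Proposition : faces coniques negligees} says precisely that $\nu_x(F_\Omega(\xi))=0$ for every $\xi\in\Lambda^{\con}$; since there are countably many such faces meeting any fixed compact piece — or rather, one invokes that the set of $\xi\in\Lambda^{\con}$ lying on a non-trivial face has measure zero because each such face is itself contained in $\Lambda^{\con}$ and is null, and $\partial\Omega$ is covered by countably many $\nu_o$-null faces together with the strongly extremal points — we conclude that $\nu_o(\{\xi\in\Lambda^{\con}: \xi\notin\partial_{\sse}\Omega\text{ due to non-extremality}\})=0$. Combining with Lemma~\ref{Lemme : recurrent + rg1 faible -> lisse}: for $\tilde m$-a.e.\ $\tilde v$, $\phi_\infty\tilde v\in\partial_{\sse}\Omega$ (the alternative $d_{\spl}=2$ forces $\phi_\infty\tilde v$ to lie on a non-trivial face, a null event). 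Therefore $\nu_o(\Lambda^{\con}\smallsetminus\partial_{\sse}\Omega)=0$, and since $\nu_o(\partial\Omega\smallsetminus\Lambda^{\con})=0$, we get $\nu_o(\partial\Omega\smallsetminus\partial_{\sse}\Omega)=0$, as desired.

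\textbf{Expected main obstacle.} The delicate point is the passage ``$d_{\spl}(\phi_{-\infty}\tilde v,\phi_\infty\tilde v)=2 \Rightarrow \phi_\infty\tilde v$ lies in a non-trivial open face of $\overline\Omega$, hence in a null set,'' and more precisely handling the measure-theoretic bookkeeping: a priori there could be uncountably many faces, so one must argue that the non-strongly-extremal part of $\Lambda^{\con}$ is still $\nu_o$-null. The clean way is: a point $\xi$ fails to be strongly extremal iff $\xi\in\overline F_\Omega(\eta)$ for some $\eta$ in a strictly larger face, i.e.\ $\xi\in F_\Omega(\xi)$ with $\dim F_\Omega(\xi)\ge1$ \emph{or} $\xi$ is not smooth; for the segment-endpoint case, $\xi\in F_\Omega(\xi)$ and by Proposition~\ref{Proposition : faces coniques negligees} applied with $\xi\in\Lambda^{\con}$ this single face has $\nu_o(F_\Omega(\xi))=0$ — but that's for one $\xi$. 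To get the full conclusion I would instead feed Lemma~\ref{Lemme : recurrent + rg1 faible -> lisse} in the contrapositive form directly to $\tilde m$: the set $E$ of recurrent $\tilde v$ with $\phi_\infty\tilde v\notin\partial_{\sse}\Omega$ has, by the lemma, $d_{\spl}(\phi_{-\infty}\tilde v,\phi_\infty\tilde v)=2$, so the straight line through $\tilde v$ lies in $\partial\Omega\cup\{$that line$\}$ — more usefully, $\phi_\infty\tilde v\in F_\Omega(\phi_\infty\tilde v)$ nontrivially, so $\phi_\infty(E)\subset\bigcup_{\xi}F_\Omega(\xi)$ where the union is over the at-most-countably-many maximal faces meeting a fixed relatively compact set, OR one argues via Fubini that $\tilde m(E)>0$ would force $\nu_o$-positive mass on $\{\xi:\xi\notin\partial_{\sse}\Omega\}\cap\Lambda^{\con}$, and then pick a density point and reduce to a single face via Fact~\ref{Fait : triangle limite}-type limiting. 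I expect the referee-proof route is essentially the one taken earlier in the paper for the analogous ``null set'' arguments; I would model the write-up on the proof of Proposition~\ref{Proposition : faces coniques negligees} and on Observation~\ref{obs:obsconf}.
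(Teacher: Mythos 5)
Your first half coincides with the paper's own argument: the reduction via Observation~\ref{obs:obsconf}, conservativity (Proposition~\ref{Proposition : convervativite}), Poincar\'e recurrence (Fact~\ref{fait:ergo imp trans}.\ref{item:ergo imp trans:consR}), and Lemma~\ref{Lemme : recurrent + rg1 faible -> lisse} are exactly the right ingredients. The gap is in your endgame. The implication you lean on --- that if $\phi_\infty \tilde v$ fails to be smooth and strongly extremal (equivalently, by the lemma, $d_{\spl}(\phi_{-\infty}\tilde v,\phi_\infty\tilde v)=2$) then $\phi_\infty\tilde v$ lies in a \emph{non-trivial open face} $F_\Omega(\phi_\infty\tilde v)$ --- is false: a point that is merely an endpoint of a non-trivial boundary segment (a vertex of a properly embedded triangle, say, or more generally a non-smooth extremal point) has trivial open face, yet it is not in $\partial_{\sse}\Omega$. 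For such points Proposition~\ref{Proposition : faces coniques negligees} only yields non-atomicity and cannot kill the bad set. Moreover, even for the part of the bad set lying on genuinely non-trivial faces, the bookkeeping you flag yourself is not resolved: there may be uncountably many faces, the assertion that only countably many maximal faces meet a compact piece is a non-trivial isolation property not available at this level of generality (it holds in special situations such as Benoist's $3$-dimensional description or relatively hyperbolic $\Gamma$, neither of which is assumed), and the Fubini/density-point/Benz\'ecri alternatives are only sketched. So the proposal does not close.

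The missing idea --- and the paper's actual conclusion --- is much softer and avoids faces entirely: argue by contradiction at the level of supports. Suppose $\nu_o(\partial_{\sse}\Omega)=0$. By your first half, $\tilde m$ gives full measure to the set $K\subset T^1\Omega$ of vectors $v$ with $d_{\spl}(\phi_{-\infty}v,\phi_\infty v)=2$; note that $K$ is closed, by lower semi-continuity of $d_{\spl}$ and continuity of $v\mapsto\phi_{\pm\infty}v$ (for $v\in T^1\Omega$ one always has $d_{\spl}\geq 2$, so $K=\{d_{\spl}\leq 2\}$). Since $M$ is rank-one, there is a rank-one periodic vector $v\in T^1\Omega$: its endpoints $x_\gamma^\pm$ lie in $\Lambda^{\prox}\cap\partial_{\sse}\Omega\subset\supp(\nu_o)$ (Fact~\ref{fait:des rk1 partout}), so $v\in\supp(\tilde m)$ (as in the proof of Proposition~\ref{prop:Radonnonnul}), while $d_{\spl}(x_\gamma^-,x_\gamma^+)\geq 3$ (Fact~\ref{equivalences rang un}) places $v$ in the open set $T^1\Omega\smallsetminus K$. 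Hence $\tilde m(T^1\Omega\smallsetminus K)>0$, contradicting that $K$ has full measure. No control of the family of faces, and no appeal to Proposition~\ref{Proposition : faces coniques negligees}, is needed; I recommend replacing your final step by this support argument.
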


\begin{proof}
 Let $o\in\Omega$ and $(\nu_x)_{x\in\Omega}$ a $\delta_\Gamma$-conformal density on $\partial\Omega$. Observe that $\partial_{\sse}\Omega$ is $\Gamma$-invariant and measurable, hence, by Observation~\ref{obs:obsconf}, in order to show that $\partial_{\sse}\Omega$ has full $\nu_o$-measure, it is enough to prove that $\nu_o(\partial_{\sse}\Omega)>0$. Let us assume by contradiction that $\nu_o(\partial_{\sse}\Omega)=0$. Let $m$ be an induced Sullivan measure on $T^1M$. By assumption, $m$ gives zero measure to the set of vectors in $T^1M$ who have a lift $v\in T^1\Omega$ such that $\phi_\infty v$ is smooth and strongly extremal. By Proposition~\ref{Proposition : convervativite} and Fact~\ref{fait:ergo imp trans}.\ref{item:ergo imp trans:consR}, $m$ gives full measure to the set of forward recurrent vectors. Let $K\subset T^1\Omega$ denote the closed set of vectors $v\in T^1\Omega$ such that $d_{\spl}(\phi_{-\infty}v,\phi_\infty v)=2$. By Lemma~\ref{Lemme : recurrent + rg1 faible -> lisse}, $\tilde m$ gives full measure to the set of vectors $v\in K$ with a forward recurrent projection in $T^1M$. Let $v\in T^1\Omega$ be a periodic rank-one vector; it is in the support of $\tilde m$ and in the open set $T^1\Omega\smallsetminus K$, so $\tilde m(T^1\Omega\smallsetminus K)>0$. This is a contradiction.
\end{proof}

\subsection{Smooth points and strong stable manifolds}

Let $\Omega\subset\PR(V)$ be a properly convex open set. The following fact describes the strong stable manifolds (Definition~\ref{Definition : W-invariant}) of the action of the geodesic flow on $T^1\Omega$.

\begin{fait}[\!{\!\cite[Cor.\,4.5]{topmixing}}] \label{strongstablemanifolds}
 Let $\Omega\subset \PR(V)$ be a properly convex open set. Let $v,w\in T^1\Omega$ be such that $\phi_\infty v=\phi_\infty w$ is a smooth point of $\partial\Omega$ and $b_{\phi_\infty v}(\pi v,\pi w)=0$ (see Figure~\ref{figure_strongstablemanifold}). Then $(d_{T^1\Omega}(\phi_tv,\phi_tw))_t$ converges non-increasingly to zero as $t$ goes to infinity, \ie $w\in W^{ss}(v)$.
\end{fait}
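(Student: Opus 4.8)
The plan is to reduce the statement, \emph{in all dimensions at once}, to an explicit affine model adapted to the smooth point $\xi:=\phi_\infty v=\phi_\infty w$, and then to a short cross-ratio computation. First I would choose an affine chart of $\PR(V)$ whose hyperplane at infinity is the (unique) supporting hyperplane $T_\xi\partial\Omega$; then $\xi$ is a point at infinity, say the ``vertical'' direction $(0,1)\in\R^{d-1}\times\R$. Since $\xi\in\partial\Omega\subset\overline{\Omega}$, the direction $\xi$ lies in the recession cone of the convex set $\Omega$, so $\Omega$ is invariant under upward vertical translation and hence $\Omega=\{(x,y)\in\R^{d-1}\times\R:\ y>\beta(x)\}$ for a convex function $\beta$ with values in $\R\cup\{+\infty\}$; moreover $\beta$ is finite on all of $\R^{d-1}$, because a point in the relative boundary of $\{\beta<+\infty\}$ would provide a supporting hyperplane of $\Omega$ containing $\xi$ and distinct from $T_\xi\partial\Omega$, contradicting smoothness of $\xi$. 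In this model the projective lines through $\xi$ meeting $\Omega$ are exactly the vertical lines, so the geodesics with forward endpoint $\xi$ are the upward vertical rays, and from the definition of the Hilbert metric one gets, along $\{x=x_0\}$,
\[
d_\Omega\big((x_0,y_1),(x_0,y_2)\big)=\tfrac12\log\tfrac{y_2-\beta(x_0)}{y_1-\beta(x_0)}\qquad(\beta(x_0)<y_1<y_2).
\]

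Next I would identify the horofunction at $\xi$. Using Fact~\ref{smooth_points_are_in_hor} (a smooth point has a single preimage in the horoboundary, so $b_\xi$ is well defined by any approaching sequence), I would take $z=(x_1,Y)$ with $Y\to\infty$; the line through a fixed point $(x_2,y_2)$ and $z$ becomes vertical as $Y\to\infty$, so the formula above yields, after letting $Y\to\infty$,
\[
b_\xi\big((x_1,y_1),(x_2,y_2)\big)=\tfrac12\log\tfrac{y_2-\beta(x_2)}{y_1-\beta(x_1)}.
\]
Thus the horoballs centred at $\xi$ are the superlevel sets of $h(x,y):=y-\beta(x)$, and the hypothesis $b_\xi(\pi v,\pi w)=0$ says exactly $h(\pi v)=h(\pi w)=:h_0>0$. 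Writing $\pi v=(x^v,\beta(x^v)+h_0)$ and $\pi w=(x^w,\beta(x^w)+h_0)$ and using that $(\phi_t)$ is the unit-speed flow along the vertical rays (so, by the Hilbert-metric formula above, the $h$-value is multiplied by $e^{2t}$), we get $\pi\phi_tv=(x^v,\beta(x^v)+h_0e^{2t})$ and $\pi\phi_tw=(x^w,\beta(x^w)+h_0e^{2t})$. If $v=w$ there is nothing to prove, so I may assume $x^v\neq x^w$.

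The key point is then that the line $L_t$ joining $\pi\phi_tv$ and $\pi\phi_tw$ has direction $(x^w-x^v,\ \beta(x^w)-\beta(x^v))$, \emph{independent of $t$}. Parametrising $L_t$ by the affine parameter $s$ with $s=0$ and $s=1$ giving $\pi\phi_tv$ and $\pi\phi_tw$, membership in $\Omega$ is equivalent to $\phi(s)<\phi(0)+h_0e^{2t}$, where $\phi(s):=\beta\big(x^v+s(x^w-x^v)\big)-s\big(\beta(x^w)-\beta(x^v)\big)$ is convex and satisfies $\phi(0)=\phi(1)$. Hence $L_t\cap\Omega$ is the sublevel interval of $\phi$ at height $\phi(0)+h_0e^{2t}$, with endpoints at parameters $s_-(t)<0<1<s_+(t)$; as $t$ increases, $s_-(t)$ decreases to $-\infty$ and $s_+(t)$ increases to $+\infty$ (since $\phi\big(s_\pm(t)\big)\to+\infty$ and $\phi$ is finite and continuous). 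The cross-ratio definition of $d_\Omega$ then gives
\[
d_\Omega(\pi\phi_tv,\pi\phi_tw)=\tfrac12\log\!\left(\Big(1+\tfrac{1}{|s_-(t)|}\Big)\Big(1+\tfrac{1}{s_+(t)-1}\Big)\right),
\]
a product of two factors which are $>1$, non-increasing in $t$, and converge to $1$ as $t\to+\infty$. Therefore $t\mapsto d_\Omega(\pi\phi_tv,\pi\phi_tw)$ is non-increasing on $\R$ and tends to $0$; and then $d_{T^1\Omega}(\phi_tv,\phi_tw)=\max_{0\le s\le1}d_\Omega(\pi\phi_{t+s}v,\pi\phi_{t+s}w)=d_\Omega(\pi\phi_tv,\pi\phi_tw)$ by monotonicity, which is precisely the claim.

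The main obstacle I anticipate is the bookkeeping in Steps~1--2: carefully verifying that the affine normalization really puts $\Omega$ in the form $\{y>\beta(x)\}$ with $\beta$ finite everywhere (this is exactly where smoothness of $\xi$ enters), and making the Busemann-function computation rigorous, i.e.\ controlling the error terms in the limit $Y\to+\infty$ defining $b_\xi$. Once the model is in place, the monotonicity argument in Steps~3--4 is elementary.
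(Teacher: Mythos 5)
Your argument is correct, and it is necessarily a different route from the paper's, because the paper does not prove this statement at all: it is imported as a black box from \cite[Cor.\,4.5]{topmixing}. Your reduction to the affine model is sound: taking $T_\xi\partial\Omega$ as the hyperplane at infinity, upward invariance of $\Omega$ follows from $\xi\in\overline\Omega$ (the open segment from a point of an open convex set to a point of its closure stays inside), proper convexity gives $\beta>-\infty$, and smoothness of $\xi$ is exactly what forces $\beta$ to be finite on all of $\R^{d-1}$ (a vertical affine hyperplane over a supporting hyperplane of the projection of $\Omega$ would otherwise be a second supporting hyperplane at $\xi$). The monotonicity step is also right: the sublevel intervals of the convex function $\phi$ are nested in $t$ and exhaust $\R$ because $\phi$ is finite everywhere, so both cross-ratio factors are non-increasing and tend to $1$ (the degenerate case $s_\pm(t)=\pm\infty$ just makes the corresponding factor equal to $1$, and both cannot be infinite by proper convexity), and passing from $d_\Omega$ to $d_{T^1\Omega}$ via monotonicity is fine.

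The one step you flagged, the identification $b_\xi\big((x_1,y_1),(x_2,y_2)\big)=\tfrac12\log\tfrac{y_2-\beta(x_2)}{y_1-\beta(x_1)}$, is true but needs slightly more than ``the line becomes vertical'': with $q_Y=(x_1,Y)$, the chord through $(x_2,y_2)$ and $q_Y$ has its near endpoint converging to $(x_2,\beta(x_2))$ (continuity of the radial function of a convex set in directions where it is finite), and its far endpoint at affine distance $t_Y$ from $(x_2,y_2)$ with $t_Y/Y\to\infty$; the latter holds because if $t_Y=O(Y)$ the exit point would have horizontal coordinate in a fixed compact set, on which $\beta$ is bounded, contradicting that its height tends to infinity. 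These two estimates make the cross-ratio difference converge to the stated formula, and Fact~\ref{smooth_points_are_in_hor} is what legitimises computing $b_\xi$ along an arbitrary sequence tending to $\xi$. Alternatively, you can bypass the asymptotic computation entirely: first run your monotonicity argument for two points with the same height $h=y-\beta(x)$; this convergence, applied with $z\to\xi$ along one of the two vertical rays, shows that equal height implies $b_\xi=0$; then the cocycle relation for $b_\xi$ together with Lemma~\ref{b=d} (which identifies $b_\xi$ with $\pm d_\Omega$ along a vertical line) shows conversely that $b_\xi(\pi v,\pi w)=0$ forces equal heights, which is all your hypothesis is used for.
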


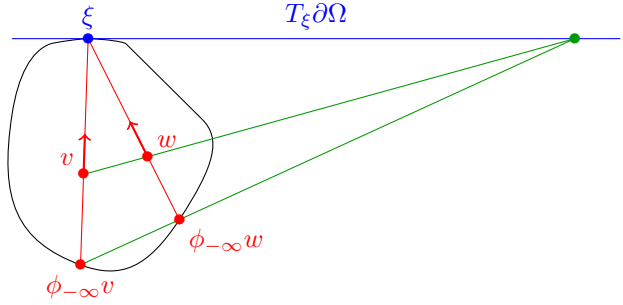
\begin{figure}
\centering
\begin{tikzpicture}
 \coordinate (xi) at (0,0);
 \coordinate (h) at (7,0);
 \coordinate (h') at (-1,0);
 \coordinate (phiv) at (-0.1,-3);
 \coordinate (phiw) at (1.2,-2.4);
 \coordinate (v) at ($(xi)!0.6!(phiv)$);
 \coordinate (v') at ($(v)!0.3!(xi)$);
 \coordinate (x) at (intersection of xi--h and phiv--phiw);
 \coordinate (w) at (intersection of xi--phiw and v--x);
 \coordinate (w') at ($(w)!0.3!(xi)$);
 \coordinate (A) at (0.5,-0.07);
 \coordinate (B) at (1,-0.57);
 \coordinate (C) at (1.5,-1.07);
 \coordinate (D) at (-1,-1);
 \coordinate (E) at (-0.5,-0.07);
  
 \length{xi,E,D,phiv,phiw,C,B,A}
 \cvx{xi,E,D,phiv,phiw,C,B,A}{1.5}
 
 \draw [blue] (h')--(h) node[midway,above,blue] {$T_\xi\partial\Omega$};
 \draw [red] (phiv)--(xi);
 \draw [red] (phiw)--(xi);
 \draw [red,thick,->] (v)--(v');
 \draw [red,thick,->] (w)--(w');
 \draw [green!60!black] (phiv)--(x);
 \draw [green!60!black] (v)--(x);
 \draw (x) node[green!60!black]{$\bullet$};
 \draw (xi) node[blue]{$\bullet$} node[above,blue]{$\xi$};
 \draw (v) node[red]{$\bullet$} node[above left,red]{$v$};
 \draw (w) node[red]{$\bullet$} node[above right,red]{$w$};
 \draw (phiv) node[red]{$\bullet$} node[below,red]{$\phi_{-\infty}v$};
 \draw (phiw) node[red]{$\bullet$} node[below right,red]{$\phi_{-\infty}w$};
\end{tikzpicture}
\caption{$v$ and $w$ in the same strong stable manifold}\label{figure_strongstablemanifold}
\end{figure}

Let us examine how Fact~\ref{strongstablemanifolds} can be rephrased using the Hopf parametrisation (Definition~\ref{Definition : Hopf}). Fix $o\in\Omega$. Let $(\xi,\eta,t)\in\Geod^\infty_{\hor}(\Omega)\times\R$. If $\pi_{\hor}(\eta)$ is a smooth point of $\partial\Omega$, then by Fact~\ref{strongstablemanifolds},
\begin{equation}\label{eq:varstab}
\Hopf_o(\xi',\eta,t)\in W^{ss}(\Hopf_o(\xi,\eta,t))
\end{equation}
for any $\xi'\in\partial_{\hor}\Omega$ such that $(\xi',\eta)\in\Geod^\infty_{\hor}(\Omega)$. The parametrisation of the unstable manifolds is more subtle. Using \eqref{Equation : Hopflip} and the fact that the unstable manifolds are the stable manifolds of the reversed flow, one can see that if $\pi_{\hor}\xi$ is smooth, then
\begin{equation}\label{eq:varinstab}
\Hopf_o(\xi,\eta',t+\rho^o_{\xi,\eta}(\eta'))\in W^{su}(\Hopf_o(\xi,\eta,t))
\end{equation}
for any $\eta'\in\partial_{\hor}\Omega$ such that $(\xi,\eta')\in\Geod_{\hor}(\Omega)$, where
\begin{equation}
 \rho^o_{\xi,\eta}(\eta') := 2\langle \xi,\eta' \rangle_o - 2\langle \xi,\eta \rangle_o.
\end{equation}

\subsection{A cross-ratio on the boundary}

Following the article \cite{birapport}, whose setting is that of negatively curved manifolds, let us define a cross-ratio, denoted by $B$, for four points on the boundary of a properly convex open set $\Omega\subset\PR(V)$. It should not be confused with the cross-ratio of four aligned points of the projective space, denoted with brackets, and used to defined the Hilbert metric $d_\Omega$ (see \eqref{Equation : Hilbert}). 

Recall that the cross-ratio of four points $\xi,\xi',\eta,\eta'\in\Omega$ is defined as\begin{equation}\label{Equation : birapport}
 B(\xi,\xi',\eta,\eta'):=\dd(\xi,\eta)+\dd(\xi',\eta')-\dd(\xi,\eta')-\dd(\xi',\eta).
\end{equation}
Fix $o\in\Omega$. One can check that
\begin{equation}\label{Equation : birapport 2}
 B(\xi,\xi',\eta,\eta') = \rho_{\xi,\eta}^o(\eta') + \rho^o_{\xi',\eta'}(\eta),
\end{equation}
and this implies that the $B$ extends continuously to the set of quadruples $(\xi,\xi',\eta,\eta')\in(\overline{\Omega}^{\hor})^4$ such that $(\xi,\eta)$, $(\xi',\eta)$, $(\xi,\eta')$ and $(\xi',\eta')$ belong to $\Geod_{\hor}\Omega$. See Figure~\ref{fig:birapport} for a geometric interpretation using horospheres (which also works with spheres when $\xi,\xi',\eta,\eta'$ belong to $\Omega$).

The next lemma is classical and relates special values of $B$, called \emph{periods}, with \eqref{longueur de translation}.

\begin{lemma}\label{birapport=longueur de translation}
 Let $\Omega\subset\PR(V)$ be a properly convex open set. Let $g\in\Aut(\Omega)$ be a biproximal automorphism whose axis intersect $\Omega$. Let $\xi\in\overline{\Omega}{}^{\hor}$ be such that $(\xi,x_g^+)$ and $(\xi,x_g^-)$ are in $\Geod_{\hor}\Omega$ (recall that $x_g^\pm$ is a smooth point of $\partial\Omega$ by Fact~\ref{les extremites des geodesiques periodiques biproximales sont lisses}, so it identifies with its preimage in $\partial_{\hor}\Omega$). Then $B(x_g^+,x_g^-,\xi,g\xi)=2\ell(g)$.
\end{lemma}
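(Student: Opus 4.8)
The strategy is to reduce the period $B(x_g^+,x_g^-,\xi,g\xi)$ to a sum of Busemann cocycles evaluated along the axis of $g$, and then to recognise that sum as twice the translation length $\ell(g)$. First I would use the continuous extension of $B$ together with \eqref{Equation : birapport 2}, which gives
\[
B(x_g^+,x_g^-,\xi,g\xi)=\rho^o_{x_g^+,\xi}(g\xi)+\rho^o_{x_g^-,g\xi}(\xi),
\]
so it suffices to compute these two quantities. Here I would exploit the fact (Fact~\ref{les extremites des geodesiques periodiques biproximales sont lisses}) that $x_g^+$ and $x_g^-$ are smooth points of $\partial\Omega$, so they identify with points of $\partial_{\hor}\Omega$ and the Busemann functions $b_{x_g^\pm}$ are well-defined; and the fact (Fact~\ref{equivalences rang un}\ref{Item : dualite de rg1}) that $\axis(g)\cap\Omega\neq\emptyset$, so we may pick the basepoint $o$ on the axis itself.

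\textbf{Key computation.} With $o\in\axis(g)\cap\Omega$, the point $go$ also lies on $\axis(g)$, and $g$ translates along this straight geodesic by $d_\Omega(o,go)=\ell(g)$ (using \eqref{longueur de translation} and Fact~\ref{period_is_translation_length}, since $o$ realises the infimum when it is on the axis). I would compute, using the definition $\rho^o_{\xi,\eta}(\eta')=2\langle\xi,\eta'\rangle_o-2\langle\xi,\eta\rangle_o$ and the identity $2\langle\xi,\eta\rangle_o=b_\xi(o,y)+b_\eta(o,y)$ for any $y\in[\xi,\eta]$ (Proposition~\ref{prop:gromprod}), that $\rho^o_{x_g^+,\xi}(g\xi)=b_{x_g^+}(o,\cdot)$-type terms collapse because $o$ lies on the axis $[x_g^+,x_g^-]$, so $\langle x_g^+,x_g^-\rangle_o=0$. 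The $\Gamma$-equivariance of the Busemann cocycle, namely $b_{g\eta}(go,gy)=b_\eta(o,y)$ together with the cocycle relation $b_\eta(o,go)=b_\eta(o,y)+b_\eta(y,go)$, lets me rewrite $\rho^o_{x_g^+,\xi}(g\xi)$ and $\rho^o_{x_g^-,g\xi}(\xi)$ each in terms of $b_{x_g^+}(o,go)$ and $b_{x_g^-}(o,go)$. Since $o,go\in\axis(g)$ with $g$ translating by $\ell(g)$ in the positive direction (the direction of $x_g^+$), one has $b_{x_g^-}(o,go)=d_\Omega(o,go)=\ell(g)$ and $b_{x_g^+}(o,go)=-d_\Omega(o,go)=-\ell(g)$, by Lemma~\ref{b=d}. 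Assembling these, the two $\rho$-terms should each contribute $\ell(g)$, giving $B(x_g^+,x_g^-,\xi,g\xi)=2\ell(g)$. A cleaner alternative is to first prove the formula when $\xi=g^{-1}\xi_0$ for a convenient auxiliary point, or simply to verify directly using \eqref{Equation : birapport} on four points of $\Omega$ and pass to the limit, but the Busemann-cocycle route above is the most transparent.

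\textbf{Main obstacle.} The bookkeeping of signs and of which endpoint is attracting versus repelling is the delicate part: one must be careful that $g$ acts on $\partial\Omega$ with $x_g^+$ attracting, hence translates points of the axis \emph{towards} $x_g^+$, which fixes the sign of $b_{x_g^\pm}(o,go)$ and ensures the period is $+2\ell(g)$ rather than $-2\ell(g)$ or $0$. A secondary point to handle carefully is the passage to the horoboundary: one needs $(\xi,x_g^+)$ and $(\xi,x_g^-)\in\Geod_{\hor}\Omega$ as hypothesised so that all four Gromov products in the definition of $B$ extend continuously (Proposition~\ref{prop:gromprod}), and one should note that the answer does not depend on $\xi$ — this independence is really the content of the lemma and follows from the cocycle cancellations once the computation is set up correctly. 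I expect no serious difficulty beyond this sign/limiting bookkeeping, since the statement is formally identical to the classical negatively-curved case treated in \cite{birapport}.
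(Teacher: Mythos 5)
Your route is essentially the paper's: reduce the period to Busemann-cocycle quantities evaluated along the axis, using $g$-equivariance and the cocycle relation (the paper does this in basepoint-free form, first reducing to $\xi\in\Omega$ by continuity and writing $B(x_g^+,x_g^-,\xi,g\xi)=b_{x_g^+}(\xi,g\xi)+b_{x_g^-}(g\xi,\xi)$, then inserting a point $x\in\axis(g)$ via the cocycle relation and Lemma~\ref{b=d}). So the plan is sound. However, the one step you yourself flag as the crux --- the sign bookkeeping --- is wrong as stated. With the paper's convention $b_z(x,y)=d_\Omega(x,z)-d_\Omega(y,z)$ and Lemma~\ref{b=d}, since $g$ translates $o\in\axis(g)\cap\Omega$ \emph{towards} the attracting point $x_g^+$, we have $go\in[o,x_g^+)$ and $g^{-1}o\in[o,x_g^-)$, hence
\[
b_{x_g^+}(o,go)=+d_\Omega(o,go)=+\ell(g),\qquad b_{x_g^-}(o,go)=-\ell(g),
\]
exactly the opposite of what you assert. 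Carrying out your own scheme correctly, one finds $\rho^o_{x_g^+,\xi}(g\xi)=b_{x_g^+}(o,go)+b_{g\xi}(o,go)$ and $\rho^o_{x_g^-,g\xi}(\xi)=b_{x_g^-}(o,g^{-1}o)-b_{g\xi}(o,go)$, so that by \eqref{Equation : birapport 2} the sum is $b_{x_g^+}(o,go)+b_{x_g^-}(o,g^{-1}o)=\ell(g)+\ell(g^{-1})=2\ell(g)$; plugging in your claimed values into this assembly would instead output $-2\ell(g)$. Since getting $+2\ell(g)$ rather than $-2\ell(g)$ is precisely the point of the period computation (and what later feeds Fact~\ref{non arithmeticite locale}), this is a genuine error, albeit a local and fixable one.

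A secondary inaccuracy: the cancellation is not driven by $\langle x_g^+,x_g^-\rangle_o=0$ --- that Gromov product never appears in the two $\rho$-terms. What cancels are the $\xi$-dependent terms $b_{g\xi}(o,go)$, via $g$-equivariance of the horofunctions and the cocycle relation; the only role of choosing $o$ on the axis is to evaluate $b_{x_g^+}(o,go)$ and $b_{x_g^-}(o,g^{-1}o)$ as $\ell(g)$ through Lemma~\ref{b=d}. (Indeed the paper's identity $B=b_{x_g^+}(\xi,g\xi)+b_{x_g^-}(g\xi,\xi)$ is basepoint-independent, and the axis point is only inserted afterwards.) Once these two points are corrected, your argument coincides with the paper's proof.
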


\begin{proof}
 By continuity of the cross-ratio, we can assume that $\xi\in\Omega$. Then using \eqref{Equation : birapport} or \eqref{Equation : birapport 2}, one can show that
\begin{equation*}
B(x_g^+,x_g^-,\xi,g\xi) = b_{x_g^+}(\xi,g\xi)+b_{x_g^-}(g\xi,\xi).
\end{equation*}
Now consider $x\in\Omega$ on the axis of $g$. We compute
\begin{align*}
b_{x_g^+}(\xi,g\xi) & = b_{x_g^+}(\xi,x)+b_{x_g^+}(x,gx)+b_{x_g^+}(gx,g\xi)\\
& = d_\Omega(x,gx)+b_{x_g^+}(\xi,x)+b_{g^{-1}x_g^+}(x,\xi)\quad \text{(by Lemma~\ref{b=d})}\\ 
& = \ell(g).
\end{align*}
By taking the inverse we get:
\begin{equation*}
b_{x_g^-}(g\xi,\xi)=b_{x_{g^{-1}}^+}(g\xi,g^{-1}g\xi)=\ell(g^{-1})=\ell(g).\qedhere
\end{equation*}
\end{proof}

\subsection{\texorpdfstring{$W^{ss}$}{Wss} and \texorpdfstring{$W^{su}$}{Wsu}-invariant functions are essentially constant}\label{ssection:ergo+melange}

In this section we prove that the $W^{ss}$ and $W^{su}$-invariant functions on $T^1M$ are essentially constant, and we derive as a corollary that the flow is ergodic and mixing. We will need the following result, about the local non-arithmeticity of the length spectrum; it is similar to \cite[Prop.\,4.1]{topmixing} but does not need the assumption of strong irreducibility.
\begin{fait}[\!{\!\cite{EeERfH+}}]\label{non arithmeticite locale}
  Let $\Omega\subset \PR(V)$ be a properly convex open set and $\Gamma\subset\Aut(\Omega)$ a discrete subgroup with $M=\Omega/\Gamma$ rank-one and non-elementary. Then the set of translation lengths $\ell(\gamma)$ of rank-one elements $\gamma\in\Gamma$ generates a dense subgroup of $\R$.
\end{fait}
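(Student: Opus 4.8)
The plan is to argue by contradiction, adapting the classical Dal'bo--Kim type argument (the source \cite[Prop.\,4.1]{topmixing} does this under strong irreducibility). Since $M$ is rank-one, $\Gamma$ contains a rank-one element, which is biproximal (Fact~\ref{period_is_translation_length}) and hence of positive translation length, so the subgroup of $\R$ generated by the translation lengths of rank-one elements of $\Gamma$ is non-trivial; if it is not dense it equals $c\Z$ for some $c>0$, and I would derive a contradiction. The idea is to build a ping-pong family of rank-one elements whose translation lengths admit an asymptotic expansion in which an alternating combination converges to a boundary cross-ratio; this forces that cross-ratio into $c\Z$, which contradicts its continuous dependence on densely realised configurations.

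\textbf{Step 1 (ping-pong).} Using Fact~\ref{fait:des rk1 partout} (density of fixed-point pairs of rank-one elements in $\Lambda^{\prox}\times\Lambda^{\prox}$, and absence of isolated points in $\Lambda^{\prox}$), I would fix two rank-one elements $g,h\in\Gamma$ with $\{x_g^+,x_g^-\}\cap\{x_h^+,x_h^-\}=\emptyset$. Rank-one fixed points are smooth and strongly extremal, and a point of $\partial\Omega$ at simplicial distance at most $2$ from a strongly extremal point coincides with it; hence $d_{\spl}(x,y)\geq 3$ for all $x\in\{x_g^\pm\}$, $y\in\{x_h^\pm\}$, and moreover $y\notin T_x\partial\Omega$ (the unique supporting hyperplane at a smooth strongly extremal point meets $\overline{\Omega}$ only there). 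For $\epsilon_1,\epsilon_2\in\{+,-\}$ and $m,n$ large, this transversality makes $g^{\epsilon_1 m}h^{\epsilon_2 n}$ biproximal by North--South dynamics, with attracting (resp.\ repelling) fixed point converging to $x_g^{\epsilon_1}$ (resp.\ $x_h^{-\epsilon_2}$); since ``$d_{\spl}\geq 3$'' is an open condition and, for biproximal automorphisms, equivalent to being rank-one (Fact~\ref{equivalences rang un}.\ref{Item : d_spl>2}), the element $g^{\epsilon_1 m}h^{\epsilon_2 n}$ is rank-one for $m,n$ large, so its translation length lies in $c\Z$.

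\textbf{Step 2 (asymptotics).} Standard estimates on products of proximal maps --- equivalently, an analysis of the Hilbert geometry along the quasi-geodesic from $o$ to $g^{\epsilon_1 m}h^{\epsilon_2 n}o$, using the continuity of the Gromov product (Proposition~\ref{prop:gromprod}) and of the boundary cross-ratio \eqref{Equation : birapport}--\eqref{Equation : birapport 2}, with the description of strong stable manifolds (Fact~\ref{strongstablemanifolds}) to control the limiting geodesics --- should give an expansion
\[
\ell\!\left(g^{\epsilon_1 m}h^{\epsilon_2 n}\right)=m\,\ell(g)+n\,\ell(h)+\beta(\epsilon_1,\epsilon_2)+o(1)\qquad(m,n\to\infty),
\]
in which $\beta(\epsilon_1,\epsilon_2)$ depends only on the points $x_g^{\epsilon_1},x_h^{-\epsilon_2}$ and on the eigendata of $g,h$, and --- this identity being the technical core, of the same flavour as Lemma~\ref{birapport=longueur de translation} --- arranged so that
\[
\sum_{\epsilon_1,\epsilon_2\in\{+,-\}}\epsilon_1\epsilon_2\,\ell\!\left(g^{\epsilon_1 m}h^{\epsilon_2 n}\right)\ \underset{m,n\to\infty}{\longrightarrow}\ B\!\left(x_g^+,x_g^-,x_h^+,x_h^-\right),
\]
all contributions of $\ell(g),\ell(h)$ and of the eigendata cancelling. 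By Step~1 the left-hand side lies in $c\Z$ for all large $m,n$, hence $B(x_g^+,x_g^-,x_h^+,x_h^-)\in c\Z$.

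\textbf{Step 3 (conclusion) and the main obstacle.} As $(g,h)$ ranges over pairs of rank-one elements with four distinct fixed points, the quadruples $(x_g^+,x_g^-,x_h^+,x_h^-)$ are dense, by Fact~\ref{fait:des rk1 partout}, in the open set $Q$ of quadruples of pairwise distinct points of $\Lambda^{\prox}$ whose four relevant pairs span projective lines meeting $\Omega$, on which $B$ is continuous; since $B$ takes values in the closed discrete set $c\Z$ on a dense subset of $Q$, it lies in $c\Z$ on all of $Q$ and is therefore locally constant. But by \eqref{Equation : birapport 2}, local constancy of $\eta\mapsto B(\xi_1,\xi_2,\xi_3,\eta)$ makes $\eta\mapsto\langle \xi_1,\eta\rangle_o-\langle \xi_2,\eta\rangle_o$ locally constant on $\Lambda^{\prox}$ for all admissible $\xi_1,\xi_2$, and then evaluating at a fixed $\eta'$ bounds $\langle \xi_1,\eta\rangle_o$ uniformly for $\xi_1$ near $\eta$, while $\langle \xi_1,\eta\rangle_o\to+\infty$ as $\xi_1\to\eta$ (using that $\Lambda^{\prox}$ has no isolated point and that the Gromov product is proper near the diagonal) --- a contradiction. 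The substantive difficulty is Step~2: establishing the expansion and, above all, verifying that the alternating combination of the four correction terms is exactly the boundary cross-ratio. In the strictly convex $\mathcal{C}^1$ case this is a routine adaptation of the classical $\mathrm{CAT}(-1)$ computation, but here it must be run using only smooth boundary points and on the horoboundary, and the ping-pong control of $g^{\epsilon_1 m}h^{\epsilon_2 n}$ in Step~1 (proximality, convergence of fixed points, and persistence of rank-one-ness) depends on the openness of $\{d_{\spl}\geq 3\}$ and on strong extremality --- the genuinely non-formal inputs from convex projective geometry.
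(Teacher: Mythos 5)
First, a point of order: the paper does not prove this statement — it is imported as a Fact from the companion paper \cite{EeERfH+} (the text only remarks that it is ``similar to \cite[Prop.\,4.1]{topmixing} but does not need the assumption of strong irreducibility''). So there is no in-paper proof to compare against, and your proposal has to stand on its own. Its overall strategy (contradiction, ping-pong with two rank-one elements, length asymptotics of $g^{\epsilon_1 m}h^{\epsilon_2 n}$ producing a boundary cross-ratio, then density of rank-one fixed-point pairs from Fact~\ref{fait:des rk1 partout} plus continuity of $B$) is indeed the classical Dal'bo-type route that such proofs follow, and your Step~1 is essentially sound: strong extremality does force $d_{\spl}\geq 3$ and triviality of the exposed face at a smooth strongly extremal point, and openness of $\{d_{\spl}\geq 3\}$ together with Fact~\ref{equivalences rang un} does give rank-one-ness of the products once proximality and convergence of fixed points are established quantitatively.

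However, as written this is not a proof, and you say so yourself: Step~2 — the expansion $\ell(g^{\epsilon_1 m}h^{\epsilon_2 n})=m\ell(g)+n\ell(h)+\beta(\epsilon_1,\epsilon_2)+o(1)$ and, crucially, the identification of the alternating combination of the four correction terms with $B(x_g^+,x_g^-,x_h^+,x_h^-)$ — is the entire arithmetic content of the argument, and it is only asserted by analogy with the $\mathrm{CAT}(-1)$ case. In the Hilbert setting one must control axes and Gromov products of the products $g^{\epsilon_1m}h^{\epsilon_2n}$ using only smoothness and strong extremality of the four endpoints (the tools of Sections~\ref{Construction of the Sullivan measures} and the period formula of Lemma~\ref{birapport=longueur de translation} are relevant but do not by themselves yield the expansion); without this, nothing is proved. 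A second, unacknowledged gap sits in Step~3: your final contradiction rests on the claim that $\langle\xi_1,\eta\rangle_o\to+\infty$ as $\xi_1\to\eta$ along $\Lambda^{\prox}$. The paper only gives $\langle\xi_1,\eta\rangle_o\leq d_\Omega(o,[\xi_1,\eta])$-type control and continuity on $\Geod_{\hor}(\Omega)\times\Omega$ (Proposition~\ref{prop:gromprod}); divergence of the Gromov product near the diagonal is a hyperbolicity-type statement that fails in general Hilbert geometries (e.g.\ in a simplex) and needs a genuine argument at smooth strongly extremal points — especially since $\Lambda^{\prox}$ may be totally disconnected, so ``locally constant'' alone gives no contradiction and your blow-up argument is really carrying the load. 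Both of these steps would have to be supplied before the proposal can be counted as a proof of Fact~\ref{non arithmeticite locale}.
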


\begin{prop}\label{Proposition : invariant sur les varstab implique constant}
 Let $\Omega\subset\PR(V)$ be a properly convex open set and $\Gamma\subset\Aut(\Omega)$ a discrete subgroup with $M=\Omega/\Gamma$ divergent rank-one and non-elementary. Let $m$ be the Sullivan measure on $T^1M$ induced by a $\delta_\Gamma$-conformal density $(\nu_x)_{x\in\Omega}$. Then any $W^{ss}$ and $W^{su}$-invariant function on $T^1M$ is essentially constant with respect to $m$.
\end{prop}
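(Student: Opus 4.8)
The plan is to follow Babillot's strategy \cite{Babillot02}, in the form adapted by Roblin, which reduces everything to showing that a $W^{ss}$- and $W^{su}$-invariant function becomes, after passing to Hopf coordinates, a function that depends neither on the boundary variables nor on the flow variable. Fix $o\in\Omega$, a $\delta_\Gamma$-conformal density $(\mu_x)_x$ on $\partial_{\hor}\Omega$ inducing $(\nu_x)_x$, with induced Sullivan measures $\tilde m$ on $T^1\Omega$ and $m$ on $T^1M$; recall that in the Hopf parametrisation (Definition~\ref{Definition : Hopf}) $\tilde m$ equals, up to a positive continuous density, the product $\mu_o\otimes\mu_o\otimes\Leb$ on $\Geod^\infty_{\hor}(\Omega)\times\R$. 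Given $f$ on $T^1M$ that is $W^{ss}$- and $W^{su}$-invariant, I would lift it to a $\Gamma$-invariant, $W^{ss}$- and $W^{su}$-invariant function $\tilde f$ on $T^1\Omega$, and view $\tilde f$ as a measurable function of $(\xi,\eta,t)\in\Geod^\infty_{\hor}(\Omega)\times\R$.

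\emph{First reduction (stable direction).} By Proposition~\ref{Proposition : sse est plein} the set $\partial_{\sse}\Omega$ has full $\nu_o$-measure, so a.e.\ point of $\partial_{\hor}\Omega$ (for $\mu_o$) is smooth; by \eqref{eq:varstab}, for a.e.\ $(\eta,t)$ and a.e.\ pair $\xi,\xi'$ the vectors $\Hopf_o(\xi,\eta,t)$ and $\Hopf_o(\xi',\eta,t)$ lie on a common strong stable manifold. A Fubini argument then yields a measurable function $F$ on $\partial_{\hor}\Omega\times\R$ with $\tilde f(\xi,\eta,t)=F(\eta,t)$ for $\tilde m$-a.e.\ $(\xi,\eta,t)$. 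Symmetrically, using \eqref{eq:varinstab} and again the smoothness of a.e.\ boundary point, the $W^{su}$-invariance of $\tilde f$ gives the cohomological relation
\[F(\eta,t)=F\bigl(\eta',\,t+\rho^o_{\xi,\eta}(\eta')\bigr)\qquad\text{for $\tilde m$-a.e.\ }(\xi,\eta,\eta',t),\]
where $\rho^o_{\xi,\eta}(\eta')=2\langle\xi,\eta'\rangle_o-2\langle\xi,\eta\rangle_o$ (the relevant Gromov products extend continuously by Proposition~\ref{prop:gromprod}).

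\emph{The crux: $F$ does not depend on $t$.} Comparing the relation above for two choices $\xi_1,\xi_2$ of the first point and invoking \eqref{Equation : birapport 2}, one gets that for a.e.\ $\eta'$ the closed subgroup $G_{\eta'}:=\{c\in\R:\ F(\eta',\cdot+c)=F(\eta',\cdot)\ \text{a.e.}\}$ contains the cross-ratio $B(\xi_1,\xi_2,\eta,\eta')$ of \eqref{Equation : birapport} for a.e.\ $(\xi_1,\xi_2,\eta)$. On the other hand, the $\Aut(\Omega)$-equivariance of $\Hopf_o$ turns the $\Gamma$-invariance of $\tilde f$ into $F(g\eta,t)=F(\eta,t-b_\eta(g^{-1}o,o))$, so $\eta\mapsto G_\eta$ is a.e.\ $\Gamma$-invariant. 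Plugging in the period formula $B(x_g^+,x_g^-,\xi,g\xi)=2\ell(g)$ for rank-one $g$ (Lemma~\ref{birapport=longueur de translation}), the density of rank-one fixed-point pairs in $\Lambda^{\prox}\times\Lambda^{\prox}$ and minimality of the $\Gamma$-action on $\Lambda^{\prox}=\supp\nu_o$ (Fact~\ref{fait:des rk1 partout}), together with the local non-arithmeticity of the rank-one length spectrum (Fact~\ref{non arithmeticite locale}), should force $G_{\eta'}=\R$ for a.e.\ $\eta'$, i.e.\ $F(\eta',\cdot)$ essentially constant. I expect this to be the main obstacle: the sets on which the cross-ratio identities hold are disjoint from the configuration space carrying $\mu_o\otimes\mu_o$ (they sit on orbit curves), so one cannot simply substitute $\eta'=x_g^-$; the passage is made, as in Roblin, by combining the $\Gamma$-invariance of $\eta\mapsto G_\eta$ with the convergence $\gamma^n\eta\to x_\gamma^+$ and the continuity of the Gromov product/cross-ratio, so that $2\ell(\gamma)$ appears as a limit of admissible translations inside the \emph{closed} group $G_\eta$, for a.e.\ $\eta$.

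\emph{Conclusion.} Once $F$ is known to be independent of $t$, the cohomological relation becomes $F(\eta)=F(\eta')$ for a.e.\ $(\xi,\eta,\eta')$; since for a.e.\ $\eta$ (smooth and strongly extremal) and a.e.\ $\xi$ one has $(\xi,\eta)\in\Geod^\infty_{\hor}(\Omega)$, this gives $F(\eta)=F(\eta')$ for a.e.\ $(\eta,\eta')$, hence $F$ is $\nu_o$-essentially constant, and therefore $\tilde f$ and $f$ are essentially constant. This is exactly the hypothesis feeding into Coud\`ene's criteria (Facts~\ref{coudene} and \ref{Fait : critere d'ergodicite}), which will then deliver mixing (when $m$ is finite) and ergodicity.
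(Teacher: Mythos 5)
Your proposal is correct and follows essentially the same route as the paper's proof of Proposition~\ref{Proposition : invariant sur les varstab implique constant}: reduce via Hopf coordinates and the full measure of $\partial_{\sse}\Omega$ (Proposition~\ref{Proposition : sse est plein}), show that the closed group of essential periods contains the cross-ratios $B(\xi,\xi',\eta,\eta')$ for almost every configuration, and then capture $2\ell(\gamma)$ for rank-one $\gamma$ by combining closedness of that group, continuity of $B$, the period formula of Lemma~\ref{birapport=longueur de translation} and Fact~\ref{non arithmeticite locale} --- the paper simply works with a single function $g(t)=\tilde f(\xi_0,\eta_0,t)$ and a single closed subgroup $H\subset\R$ instead of your family $G_\eta$, which makes the approximation of the quadruple $(x_\gamma^+,x_\gamma^-,\xi,\gamma\xi)$ by full-measure quadruples slightly cleaner than pinning the fourth argument to $\eta'$. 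One caution: the equality $\Lambda^{\prox}=\supp\nu_o$ you invoke is only established later (Proposition~\ref{Myrberg est plein}, whose proof uses the ergodicity derived from this very proposition), so quoting it here would be circular; your argument, like the paper's, only needs the inclusion $\Lambda^{\prox}\subset\supp\nu_o$, which follows directly from Fact~\ref{fait:des rk1 partout} and the $\Gamma$-quasi-invariance of $\nu_o$.
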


\begin{proof}
 We fix $o\in\Omega$, and may assume that $\nu_o(\partial\Omega)=1$.
 Let $f$ be a $W^{ss}$ and $W^{su}$-invariant function on $T^1M$. By Proposition~\ref{Proposition : sse est plein}, the measure $m$ gives full measure to the (measurable) set $T^1M_{\sse}\subset T^1M$ of vectors $v$ with $\phi_{\infty}v,\phi_{-\infty}v\in\partial_{\sse}\Omega$, hence it suffices to show that the restriction $f_{|T^1M_{\sse}}$ is essentially constant. We lift $f_{|T^1M_{\sse}}$, via the Hopf parametrisation, to a function $\tilde f$ on $\{(\xi,\eta)\in\partial_{\sse}\Omega :\xi\neq\eta\}\times\R$ that we extend to $\partial_{\sse}\Omega^2\times\R$ by setting $\tilde f(\xi,\xi)=0$ for any $\xi$ (recall that we abusively identify $\partial_{\sse}\Omega$ with its preimage in $\partial_{\hor}\Omega$). Let us show that $\tilde f$ is $\nu_o^2$ times Lebesgue-essentially constant. Recall that $\nu_o$ is non-atomic by Propositions~\ref{Proposition : faces coniques negligees} and \ref{l'ensemble limite conique a mesure non nulle}, thus for $\nu_o$-almost all $\xi,\eta\in\partial_{\sse}\Omega$ we have $\xi\neq\eta$.
 
 The function $f$ is $W^{ss}$ and $W^{su}$-invariant, so by \eqref{eq:varstab} and \eqref{eq:varinstab}, for $\nu_o$-almost $\xi,\xi',\eta,\eta'\in\partial_{\sse}\Omega$ and Lebesgue-almost any $t\in\R$, the quantity $\rho_{\xi,\eta}(\eta'))$ is well-defined and
 \[\ff(\xi,\eta,t)=\ff(\xi',\eta,t)=\ff(\xi,\eta',t+\rho_{\xi,\eta}(\eta')).\]
 This implies in particular that there exist $\xi_0\neq\eta_0\in\partial_{\sse}\Omega$ such that, if we denote $g(t):=\tilde f(\xi_0,\eta_0,t)$ for any $t\in\R$, then for $\nu_o$-almost all $\xi,\eta\in\partial_{\sse}\Omega$ and Lebesgue-almost any $t\in\R$, the quantity $\rho_{\xi_0,\eta_0}(\eta)$ is well-defined and
 \[\ff(\xi,\eta,t+\rho_{\xi_0,\eta_0}(\eta))=g(t).\]
 
 It is enough to establish that the measurable function $g$ is essentially constant with respect to the Lebesgue measure. We denote by $H$ the additive real subgroup consisting of numbers $\tau$ such that $g(t+\tau)=g(t)$ for Lebesgue-almost every $t\in\R$. A classical result says that $H$ is a closed subgroup of $\R$. (To see this first reduce to the case where $g$ is bounded and then note that $H$ is the stabiliser of $g(t)\diff t$ for the continuous action of $\R$ on the space of Radon measures on $\R$.) To finish the proof of Proposition~\ref{Proposition : invariant sur les varstab implique constant} it is enough, according to Fact~\ref{non arithmeticite locale}, to prove that $2\ell(\gamma)\in H$ for any rank-one element $\gamma\in\Gamma$. To this end we observe that many cross-ratios belong to $H$: for $\nu_o$-almost all $\xi,\eta,\xi',\eta'\in\partial_{\sse}\Omega$ and Lebesgue-almost every $t\in\R$, the quantities $\rho_{\xi_0,\eta_0}(\eta)$, $\rho_{\xi,\eta}(\eta')$, $\rho_{\xi',\eta'}(\eta)$ and $B(\xi,\xi',\eta,\eta')$ are well-defined and we have the following serie of equalities (see Figure~\ref{fig:birapport} for a geometric interpretation).
 \begin{align}\label{eq:birapport}
  g(t) & = \tilde{f}(\xi,\eta,t+\rho_{\xi_0,\eta_0}(\eta)) \nonumber \\
  & = \tilde{f}(\xi,\eta',t+\rho_{\xi_0,\eta_0}(\eta)+\rho_{\xi,\eta}(\eta')) \nonumber \\
  & = \tilde{f}(\xi',\eta',t+\rho_{\xi_0,\eta_0}(\eta)+\rho_{\xi,\eta}(\eta')) \nonumber \\
  & = \tilde{f}(\xi',\eta,t+\rho_{\xi_0,\eta_0}(\eta)+\rho_{\xi,\eta}(\eta')+\rho_{\xi',\eta'}(\eta)) \nonumber \\
  & = \tilde{f}(\xi,\eta,t+\rho_{\xi_0,\eta_0}(\eta)+B(\xi,\xi',\eta,\eta')) \nonumber \\
  & = g(t+B(\xi,\xi',\eta,\eta')).
 \end{align}
 Consider a rank-one element $\gamma\in\Gamma$ and a point $\xi\in\partial_{\sse}\Omega\cap\supp(\nu_o)\smallsetminus\{x_g^-,x_g^+\}$. Then $2\ell(\gamma)$ is equal to $B(x_\gamma^+,x_\gamma^-,\xi,\gamma\xi)$ by Lemma~\ref{birapport=longueur de translation}, and furthermore this quantity belongs to $H$ because $H$ is closed, $B$ is continuous and $x_\gamma^+,x_\gamma^-,\xi,\gamma\xi$ are in $\supp(\mu_o)$.
 
 \begin{figure}
  \centering
  \begin{tikzpicture}[scale=3]

   \coordinate (xi) at (0,0);
   \coordinate (eta') at (1,.1);
   \coordinate (xi') at (.9,1.1);
   \coordinate (eta) at (0,1);
   
   \coordinate (a) at ($(xi)!.3!(eta)$);
   \coordinate (b) at ($(xi)!.4!(eta')$);
   \coordinate (c) at ($(eta')!.6!(xi')$);
   \coordinate (d) at ($(xi')!.35!(eta)$);
   \coordinate (e) at ($(eta)!.5!(xi)$);
   
   \draw (xi)--(eta')--(xi')--(eta)--(xi);
   
   \coordinate (Txi) at (.4,-.2);
   \coordinate (Teta') at (.3,.3);
   \coordinate (Txi') at (-.37,.25);
   \coordinate (Teta) at (-.19,-.4);
   \coordinate (Ta) at (-.2,-.05);
   \coordinate (Tb) at (-.01,.2);
   \coordinate (Tc) at (-.24,.01);
   \coordinate (Td) at (.03,-.2);
   \coordinate (Te) at (.2,.01);
   
   \draw (xi) node{$\bullet$} node[below left]{$\xi$};
   \draw (eta') node{$\bullet$} node[below right]{$\eta'$};
   \draw (xi') node{$\bullet$} node[above right]{$\xi'$};
   \draw (eta) node{$\bullet$} node[above left]{$\eta$};
   \draw (a) node{$\bullet$} node[below right]{$x$};
   \draw (b) node{$\bullet$};
   \draw (c) node{$\bullet$};
   \draw (d) node{$\bullet$};
   \draw (e) node{$\bullet$} node[above right]{$y$};
   
    \draw [thick, blue!70!black , ->] (a)--++($(0,0)-(Ta)$);
   
   \draw (xi)..controls ($(xi)+(Txi)$) and ($(eta')-(Teta')$)..(eta');
   \draw (eta')..controls ($(eta')+(Teta')$) and ($(xi')-(Txi')$)..(xi');
   \draw (xi')..controls ($(xi')+(Txi')$) and ($(eta)-(Teta)$)..(eta);
   \draw (eta)..controls ($(eta)+(Teta)$) and ($(xi)-(Txi)$)..(xi);
   
   \draw [blue!70!black](xi)..controls ($(xi)+.5*(Txi)$) and ($(b)-(Tb)$)..(b);
   \draw [very thick,blue!70!black] (b)..controls ($(b)+(Tb)$) and ($(a)-(Ta)$)..(a);
   \draw [blue!70!black] (a)..controls ($(a)+(Ta)$) and ($(xi)-.5*(Txi)$)..(xi);
   \draw [blue!70!black] (eta')..controls ($(eta')+.5*(Teta')$) and ($(c)-(Tc)$)..(c);
   \draw [very thick,blue!70!black] (c)..controls ($(c)+(Tc)$) and ($(b)+(Tb)$)..(b);
   \draw [blue!70!black] (b)..controls ($(b)-(Tb)$) and ($(eta')-.5*(Teta')$)..(eta');
   \draw [blue!70!black] (xi')..controls ($(xi')+.5*(Txi')$) and ($(d)-(Td)$)..(d);
   \draw [very thick,blue!70!black] (d)..controls ($(d)+(Td)$) and ($(c)+(Tc)$)..(c);
   \draw [blue!70!black] (c)..controls ($(c)-(Tc)$) and ($(xi')-.5*(Txi')$)..(xi');
   \draw [blue!70!black] (eta)..controls ($(eta)+.5*(Teta)$) and ($(e)-(Te)$)..(e);
   \draw [very thick,blue!70!black] (e)..controls ($(e)+(Te)$) and ($(d)+(Td)$)..(d);
   \draw [blue!70!black] (d)..controls ($(d)-(Td)$) and ($(eta)-.5*(Teta)$)..(eta);
  \end{tikzpicture}
  \caption{Illustration of computations \eqref{eq:birapport}, \\ where $d_\Omega(x,y)=B(\xi,\xi',\eta,\eta')$}
  \label{fig:birapport}
 \end{figure}
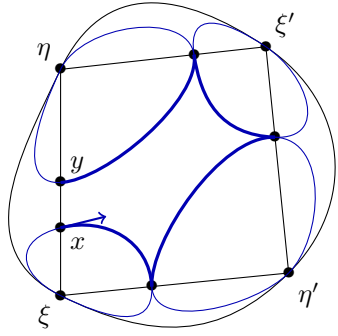

\end{proof}

\begin{cor}\label{Corollaire : ergo et melange}
 In the setting of Proposition~\ref{Proposition : invariant sur les varstab implique constant}, if $\Gamma$ is divergent, then $m$ is ergodic under the action of the geodesic flow. Furthermore, the $\Gamma$-quasi-invariant measures $\nu_o^2$ and $\nu_o$ on $\partial\Omega^2$ and $\partial\Omega$ are ergodic under the action of $\Gamma$. In particular the $\delta_\Gamma$-conformal density is unique up to a scalar multiple.
 
 If moreover $m$ is finite, then it is mixing.
\end{cor}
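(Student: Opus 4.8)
The plan is to deduce Corollary~\ref{Corollaire : ergo et melange} from Proposition~\ref{Proposition : invariant sur les varstab implique constant} together with the general ergodic-theory criteria recalled in Section~\ref{Sous-section : critere ergo et melange}, namely Coud\`ene's criterion for ergodicity (Fact~\ref{Fait : critere d'ergodicite}) and his criterion for mixing (Fact~\ref{coudene}). First I would establish ergodicity of $(T^1M,\phi_t,m)$. By Proposition~\ref{Proposition : convervativite}, $m$ is conservative; by Proposition~\ref{prop:Radonnonnul} it is a non-zero Radon measure on the locally compact second-countable space $T^1M$, so some $m$-full subset is covered by countably many open sets of finite measure. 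Proposition~\ref{Proposition : invariant sur les varstab implique constant} shows that every $W^{ss}$ and $W^{su}$-invariant function is essentially constant, a fortiori every $W^{ss}$, $W^{su}$ and $(\phi_t)_t$-invariant function is; hence Fact~\ref{Fait : critere d'ergodicite} applies and $m$ is ergodic under the geodesic flow.

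Next I would transfer ergodicity of $m$ on $T^1M$ to ergodicity of $\tilde m$ on $T^1\Omega$, of $\nu_o^2$ (or rather the $\Gamma$-invariant measure $e^{2\delta_\Gamma\langle\xi,\eta\rangle_o}\nu_o(\diff\xi)\nu_o(\diff\eta)$) on $\Geod^\infty(\Omega)$, and of $\nu_o$ on $\partial\Omega$. For the first two, I would use Fact~\ref{Fait : conservatif passe au quotient}: $\Gamma$ acts dissipatively and smoothly on $T^1\Omega$ (proper discontinuous action), and $\R$ acts smoothly on $\Geod^\infty_{\hor}(\Omega)\times\R$ minus its $\R$-factor, so $m$ being the quotient of $\tilde m$ one identifies invariant sets across the quotient maps; ergodicity of $m$ under $\phi_t$ is then equivalent to ergodicity of $\tilde m$ under $\Gamma\times\R$, which in turn is equivalent to ergodicity of the $\R$-quotient $\Geod^\infty(\Omega)$ under $\Gamma$, which (since the measure is a product of $\nu_o$ with itself up to the Gromov-product density and $\nu_o$ is non-atomic by Propositions~\ref{Proposition : faces coniques negligees} and \ref{l'ensemble limite conique a mesure non nulle}) forces ergodicity of the diagonal action on $\partial\Omega^2$; a standard Fubini argument then gives ergodicity of $\nu_o$ on $\partial\Omega$. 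Uniqueness of the $\delta_\Gamma$-conformal density up to scalar follows from Observation~\ref{obs:obsconf} combined with ergodicity of $\nu_o$: if $(\nu_x')_x$ were another $\delta_\Gamma$-conformal density, then, after checking $\nu_o'\ll\nu_o$ (both are carried by $\supp$ determined by $\Lambda^{\prox}$ and the Radon--Nikodym cocycles of $\nu_o$ and $\nu_o'$ coincide, being both equal to $e^{-\delta_\Gamma b_\xi(\cdot,\cdot)}$), the Radon--Nikodym derivative $\diff\nu_o'/\diff\nu_o$ is $\Gamma$-invariant hence $\nu_o$-essentially constant.

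Finally, assuming $m$ is finite, mixing is immediate: Proposition~\ref{Proposition : invariant sur les varstab implique constant} says every $W^{ss}$- and $W^{su}$-invariant Borel function is essentially constant, which is exactly the hypothesis of Coud\`ene's mixing criterion Fact~\ref{coudene} for a Borel flow preserving a finite measure on a metric space; hence $(T^1M,\phi_t,m)$ is mixing. I expect the main obstacle to be the bookkeeping in the transfer step: carefully justifying that the $\sigma$-algebras of invariant sets match up under the quotient maps $T^1\Omega\to T^1M$ and $\Geod^\infty_{\hor}(\Omega)\times\R\to\Geod^\infty(\Omega)$, and that non-atomicity of $\nu_o$ lets one pass between ergodicity of the $\Gamma$-action on $\partial\Omega^2$ and on $\partial\Omega$ — everything else is a direct citation of the criteria already assembled in the reminders.
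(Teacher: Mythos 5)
Your arguments for ergodicity of $m$ (Coud\`ene's criterion Fact~\ref{Fait : critere d'ergodicite} plus Propositions~\ref{Proposition : convervativite} and \ref{Proposition : invariant sur les varstab implique constant}, with Radon-ness supplying the countable cover of finite-measure open sets), for the transfer to $\nu_o^2$ and $\nu_o$ via the correspondence of invariant sets across the quotients, and for mixing via Fact~\ref{coudene} are exactly the paper's route and are fine. The problem is in the uniqueness step. You assert $\nu_o'\ll\nu_o$ ``after checking'' it, with the justification that both densities have the same support and the same conformal cocycle $e^{-\delta_\Gamma b_\xi(\cdot,\cdot)}$. That cocycle only relates $\nu_x'$ to $\nu_y'$ (respectively $\nu_x$ to $\nu_y$) within each family; it says nothing about how $\nu_o'$ compares to $\nu_o$ as measures, and two quasi-invariant measures with the same support and the same cocycle can perfectly well be mutually singular. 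So the Radon--Nikodym derivative $\diff\nu_o'/\diff\nu_o$ you want to invoke need not exist, and the step as written does not go through. (One could salvage absolute continuity by a shadow-comparison/covering argument, but that is a separate nontrivial argument not available in the paper and not sketched by you.)

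The paper avoids this with a small but essential trick: normalise $\nu_o'(\partial\Omega)=\nu_o(\partial\Omega)$ and consider the family $(\nu_x+\nu_x')_{x\in\Omega}$, which is again a $\delta_\Gamma$-conformal density; since $\Gamma$ is divergent, the whole chain (conservativity, $W^{ss}$/$W^{su}$-invariance, Coud\`ene) applies to it, so the $\Gamma$-action on $(\partial\Omega,\nu_o+\nu_o')$ is ergodic. Now $\nu_o\ll\nu_o+\nu_o'$ and $\nu_o'\ll\nu_o+\nu_o'$ hold trivially, the Radon--Nikodym derivatives are $\Gamma$-invariant by the conformality relations, hence $(\nu_o+\nu_o')$-essentially constant by ergodicity, and the normalisation forces $\nu_o=\nu_o'$. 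If you replace your absolute-continuity claim by this sum argument (or by an independently proved mutual absolute continuity), the rest of your proof stands as is.
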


\begin{proof}
 The ergodicity of $m$ is a direct consequence of Fact~\ref{Fait : critere d'ergodicite}, Proposition~\ref{Proposition : convervativite} and Proposition~\ref{Proposition : invariant sur les varstab implique constant}, which can be applied since $m$ is Radon. 
 
 The ergodicity of $\nu_o^2$ is a direct consequence of that of $m$, since there is correspondence between $\Gamma$-invariant measurable subsets of $\Geod^\infty\!\Omega$ (which has full $\nu_o^2$-measure) and $(\phi_t)_t$-invariant measurable subsets of $T^1M$, sending sets with full (\resp null) measure to sets with full (\resp null) measure. For similar reasons, the ergodicity of $\nu_o$ is a direct consequence of that of $\nu_o^2$.
 
 Let $(\nu'_x)_{x\in\Omega}$ be another $\delta_\Gamma$-conformal density such that $\nu'_o(\partial\Omega)=\nu_o(\partial\Omega)$. Then the family $(\nu_x+\nu_x')_{x\in\Omega}$ is also a conformal density. The measures $\nu_o$ and $\nu_o'$ are absolutely continuous with respect $\nu_o+\nu_o'$; the Radon--Nikodym derivatives are $\Gamma$-invariant, by definition of conformal densities, and hence constant $(\nu_o+\nu_o')$-almost surely by ergodicity. Thus $\nu_o=\nu_o'$.
 
 If $m$ is finite, then it is mixing by Fact~\ref{coudene} and Proposition~\ref{Proposition : invariant sur les varstab implique constant}.
\end{proof}

\subsection{The support of conformal densities}\label{Section : supp(conf)}

Let us establish that the support of the $\delta_\Gamma$-conformal density, for $\Gamma$ divergent and $M=\Omega/\Gamma$ rank-one, is exactly the proximal limit set. This is a consequence of conservativity and ergodicity of the Bowen--Margulis measure, Fact~\ref{fait:ergo imp trans}.\ref{item:ergo imp trans:ergoR}, and the fact that $T^1M_{\bip}$ is maximal among the flow-invariant closed subsets of $T^1M$ on which the geodesic flow is forward topologically transitive (\ie has a dense forward orbit); this last result was proved in \cite{topmixing}.

\begin{prop}\label{Myrberg est plein}
 Let $\Omega\subset\PR(V)$ be a properly convex open set and $\Gamma\subset\Aut(\Omega)$ a divergent discrete subgroup with $M=\Omega/\Gamma$ rank-one and non-elementary. Then $\Lambda^{\prox}$ is the support of any $\delta_\Gamma$-confomal density and $T^1M_{\bip}$ is the support of any Bowen--Margulis measure.
\end{prop}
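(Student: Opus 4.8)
The statement has two parts, and the second follows from the first together with the identification of supports under the Hopf parametrisation, so I would focus on showing that $\Lambda^{\prox}$ is the support of any $\delta_\Gamma$-conformal density $(\nu_x)_{x\in\Omega}$ on $\partial\Omega$. One inclusion is cheap: since $\nu_o$ is $\Gamma$-quasi-invariant, its support is a non-empty closed $\Gamma$-invariant subset of $\overline\Omega$, and by Fact~\ref{fait:des rk1 partout} the smallest such subset is $\Lambda^{\prox}$, so $\Lambda^{\prox}\subset\supp(\nu_o)$. The work is in the reverse inclusion $\supp(\nu_o)\subset\Lambda^{\prox}$.

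For this I would exploit the dynamical picture already built up. By Corollary~\ref{Corollaire : ergo et melange} the Bowen--Margulis measure $m$ on $T^1M$ is conservative and ergodic, so by Fact~\ref{fait:ergo imp trans}.\ref{item:ergo imp trans:ergoR}, $m$-almost every $v\in T^1M$ has a dense forward orbit (and dense backward orbit) in $\supp(m)$. Pick such a generic $v$; lifting to $T^1\Omega$, its endpoints $\phi_{\pm\infty}\tilde v$ lie in $\Lambda^{\con}$ (Proposition~\ref{l'ensemble limite conique a mesure non nulle}) and in $\partial_{\sse}\Omega$ (Proposition~\ref{Proposition : sse est plein}), and in particular in $\supp(\nu_o)$. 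Now one should compare $\supp(m)$ with the biproximal unit tangent bundle $T^1M_{\bip}$: on the one hand $\supp(m)\subset T^1M_{\bip}$ because, as in the proof of Proposition~\ref{prop:Radonnonnul}, the $\R$-quotient of $\tilde m$ lives on $\Geod^\infty(\Omega)$ with $\mu_o\otimes\mu_o$-class, so both endpoints of an $\tilde m$-generic geodesic lie in $\supp(\nu_o)=\overline{\supp(\nu_o)}$, but I need this to land inside $\Lambda^{\prox}$. The cleaner route is: by \cite{topmixing}, $T^1M_{\bip}$ is the \emph{maximal} $(\phi_t)_t$-invariant closed subset of $T^1M$ on which the geodesic flow is forward topologically transitive. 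Since $\supp(m)$ is a $(\phi_t)_t$-invariant closed set carrying a vector with dense forward orbit in it, the flow is forward topologically transitive on $\supp(m)$, hence $\supp(m)\subset T^1M_{\bip}$. Conversely, $T^1M_{\bip}\subset\supp(m)$: given any biproximal periodic vector $w$ with rank-one axis (these are dense in $T^1M_{\bip}$ by Fact~\ref{fait:des rk1 partout} applied to endpoints), its endpoints $x_\gamma^{\pm}$ lie in $\Lambda^{\prox}\subset\supp(\nu_o)$, so the corresponding point of $\Geod^\infty(\Omega)\times\R$ lies in the support of the Sullivan measure $\tilde m_{\hor}$, whence $w\in\supp(\tilde m)$ and its projection is in $\supp(m)$; density then gives $T^1M_{\bip}\subset\supp(m)$. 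Therefore $\supp(m)=T^1M_{\bip}$, which is exactly the second assertion of the proposition.

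Finally, to get $\supp(\nu_o)\subset\Lambda^{\prox}$, project: the footpoint-and-endpoint map identifies $\supp(\tilde m)$ (in $\Geod^\infty_{\hor}(\Omega)\times\R$, via the Hopf parametrisation) with a set whose image in $\partial_{\hor}\Omega^2$ is contained in $\overline{\Lambda^{\prox}}\times\overline{\Lambda^{\prox}}$ precisely because $\supp(m)=T^1M_{\bip}$; pushing the first coordinate forward by $\pi_{\hor}$ shows every point of $\supp(\nu_o)$ is an endpoint of some geodesic in $\supp(\tilde m)$, hence lies in $\Lambda^{\prox}$. Concretely: for $\xi\in\supp(\nu_o)$, since $\nu_o$ is non-atomic (Corollary~\ref{Corollaire : ergo et melange}/Propositions~\ref{Proposition : faces coniques negligees} and \ref{l'ensemble limite conique a mesure non nulle}) and $\Lambda^{\prox}$ is perfect, one can find $\eta\in\supp(\nu_o)$ arbitrarily close to a point with $(\xi,\eta)\in\Geod^\infty(\Omega)$, so $(\xi,\eta)\in\supp(\nu_o^2)$ and the associated geodesic is in $\supp(\tilde m)$, forcing $\xi\in\Lambda^{\prox}$ by $\supp(m)=T^1M_{\bip}$. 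This closes the loop: $\supp(\nu_o)=\Lambda^{\prox}$.

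\textbf{Main obstacle.} The delicate point is the equality $\supp(m)=T^1M_{\bip}$, specifically the inclusion $\supp(m)\subset T^1M_{\bip}$: one must invoke the maximality characterisation of $T^1M_{\bip}$ from \cite{topmixing} and check that topological transitivity of the flow on $\supp(m)$ is genuinely forward transitivity (which is why Fact~\ref{fait:ergo imp trans}.\ref{item:ergo imp trans:ergoR}, giving dense \emph{forward} orbits, is needed rather than just ergodicity). The rest is bookkeeping with the Hopf coordinates and the non-atomicity of $\nu_o$ already established.
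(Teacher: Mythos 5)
Your proposal is correct and follows essentially the same route as the paper: minimality of $\Lambda^{\prox}$ (Fact~\ref{fait:des rk1 partout}) for the inclusions $\Lambda^{\prox}\subset\supp(\nu_o)$ and $T^1M_{\bip}\subset\supp(m)$, then conservativity and ergodicity plus Fact~\ref{fait:ergo imp trans}.\ref{item:ergo imp trans:ergoR} to get forward topological transitivity on $\supp(m)$ and hence $\supp(m)=T^1M_{\bip}$ via the maximality statement of \cite[Th.\,1.2]{topmixing}, and finally reading $\supp(\nu_o)\subset\Lambda^{\prox}$ off the endpoints of geodesics in the support. The only cosmetic difference is in the last step, where the paper pairs a hypothetical $\xi\in\supp(\nu_o)\smallsetminus\Lambda^{\prox}$ with a strongly extremal point $\eta\in\Lambda^{\prox}$ (so that $(\xi,\eta)\in\Geod^{\infty}(\Omega)$ is automatic), which is cleaner than your appeal to non-atomicity and perfectness to produce such an $\eta$.
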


\begin{proof}
 Let $(\nu_x)_{x\in\Omega}$ be the $\delta_\Gamma$-conformal density on $\partial\Omega$, and $o\in\Omega$. Since $\Lambda^{\prox}$ is the smallest $\Gamma$-invariant closed subset of $\overline\Omega$ (Fact~\ref{fait:des rk1 partout}) and $\nu_o$ is $\Gamma$-quasi-invariant, we have $\Lambda^{\prox}\subset\supp(\nu_o)$ and $T^1M_{\bip}\subset\supp(m)$. Since $m$ is conservative and ergodic by Proposition~\ref{Proposition : convervativite} and Corollary~\ref{Corollaire : ergo et melange}, Fact~\ref{fait:ergo imp trans}.\ref{item:ergo imp trans:ergoR} implies that the geodesic flow is forward topologically transitive on $\supp(m)$ (\ie has a dense forward orbit). According to \cite[Th.\,1.2]{topmixing}, this implies that $\supp(m)=T^1M_{\bip}$.
 
 Suppose by contradiction there is a point $\xi\in\supp(\nu_o)\smallsetminus\Lambda^{\prox}$. Since $M$ is rank-one, we can find a strongly extremal point $\eta\in\Lambda^{\prox}$. Then we can find a neighbourhood $U\times V\subset\Geod^\infty(\Omega)$ of $(\xi,\eta)$ such that $U\cap\Lambda^{\prox}$ is empty. Note that $\nu_o(U)\nu_o(V)>0$ since $\xi,\eta\in\supp(\nu_o)$. Hence $m$ gives non-zero measure to the set of vectors $v$ with lifts $\tilde{v}\in T^1\Omega$ such that $(\phi_{-\infty}v,\phi_\infty v)\in U\times V$; but this set is not contained in $T^1M_{\bip}$: this is a contradiction.
\end{proof}

\subsection{Proof of Theorem~\ref{Thm : The weak Hopf--Tsuji--Sullivan--Roblin dichotomy}.\ref{Item : cas divergent}}

Theorem~\ref{Thm : The weak Hopf--Tsuji--Sullivan--Roblin dichotomy}.\ref{Item : cas divergent} is a direct consequence of Propositions~\ref{delta<deltaGamma}, \ref{Proposition : faces coniques negligees}, \ref{Proposition : convervativite}, \ref{Proposition : sse est plein} and \ref{Myrberg est plein}, Corollary~\ref{Corollaire : ergo et melange} and Fact~\ref{Fait : conservatif passe au quotient}. Observe that $(\partial\Omega^2,\nu_o^2,\Gamma)$ being conservative and ergodic is due to the fact that $\Geod^\infty(\Omega)\subset\partial\Omega^2$ has full $\nu_o^2$-measure, since $\nu_o$ is non-atomic and gives full measure to the set of strongly extremal points

\section{Preparatory results on convex cocompact projective manifolds}\label{Preparatifs pour la partie Knieper}

In real hyperbolic geometry, many properties of compact manifolds are actually also true for a broader class of manifolds: the convex cocompact ones, and the proofs of these properties generally work verbatim. This observation remains valid in convex projective geometry, thanks to the recent notion of convex cocompactness developped by Danciger, Gu\'eritaud and Kassel in this setting (see Definition~\ref{Def : convexe cocompacite}). Note that most of the results of the remainder of the paper concern convex cocompact actions. The present section gathers results on convex cocompact actions, and in particular more precise Shadow lemmas. An important result of Danciger--Gu\'eritaud--Kassel that we will need is the following.
\begin{fait}[\!{\!\cite[Prop.\,5.10]{fannycvxcocpct}}]\label{Fait : fannycvxcocpct}
 Let $\Omega\in\PR(V)$ be a properly convex open set and $\Gamma\subset\PGL(V)$ a discrete subgroup. Suppose that $\Gamma$ acts convex cocompactly on $\Omega$ and $\Omega^*$. Then $\overline\Omega\smallsetminus\Lambda^{\orb}_\Omega(\Gamma)$  \emph{has bisaturated boundary}, in the sense that $[\xi,\eta]\cap\Omega$ is non-empty for all $\xi\in\Lambda^{\orb}$ and $\eta\in\overline\Omega\smallsetminus\Lambda^{\orb}$.
\end{fait}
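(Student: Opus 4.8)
The plan is to argue by contradiction, and to reduce everything to a statement about segments in $\partial\Omega$. It suffices to treat $\eta\in\partial\Omega\smallsetminus\Lambda^{\orb}$, since if $\eta\in\Omega$ the conclusion is immediate, and if $\xi=\eta$ then $\xi\in\Lambda^{\orb}$ contradicts $\eta\notin\Lambda^{\orb}$; so assume $\xi\neq\eta$ and, for contradiction, $[\xi,\eta]\subset\partial\Omega$. I would use two consequences of the hypotheses, valid for $\Omega$ and (by $\Aut(\Omega)=\Aut(\Omega^*)$) also for $\Omega^*$: by \cite[Cor.\,4.8 \& Lem.\,4.18]{fannycvxcocpct}, $\Lambda^{\orb}_\Omega(\Gamma)=\Lambda^{\con}_\Omega(\Gamma)$ and this set is closed; and by \cite[Cor.\,4.10]{fannycvxcocpct}, $\zeta\in\Lambda^{\con}_\Omega(\Gamma)$ implies $F_\Omega(\zeta)\subset\Lambda^{\con}_\Omega(\Gamma)$. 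Since ``$\zeta'\in F_\Omega(\zeta)$'' is a symmetric relation (open faces partition $\partial\Omega$), these combine into the \emph{saturation principle}: $\eta\notin\Lambda^{\orb}$ forces $F_\Omega(\eta)\cap\Lambda^{\orb}=\emptyset$ (and likewise in $\Omega^*$).

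The generic part of the argument is then short. Enlarge $[\xi,\eta]$ to a maximal segment $S=[a,b]\subset\partial\Omega$ with $a,\xi,\eta,b$ in this order. If $a\neq\xi$, then for every $\eta'$ in the open segment $(\xi,\eta)$ one has $[\eta',\xi]\subset\interior_{rel}(S)$, hence $\xi\in F_\Omega(\eta')$, equivalently $\eta'\in F_\Omega(\xi)\subset\Lambda^{\orb}$ (using $\xi\in\Lambda^{\orb}=\Lambda^{\con}$); letting $\eta'\to\eta$ and using that $\Lambda^{\orb}$ is closed gives $\eta\in\Lambda^{\orb}$, a contradiction. So we must have $a=\xi$, i.e. $S=[\xi,b]$ and $\xi$ is an endpoint of $S$. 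Moreover the saturation principle applied to $\eta$ (noting $(\xi,b)\subset F_\Omega(\eta)$) shows $(\xi,b)\cap\Lambda^{\orb}=\emptyset$. Everything therefore reduces to ruling out the configuration
\[(\star)\qquad S=[\xi,b]\ \text{a maximal segment of}\ \partial\Omega,\qquad \xi\in\Lambda^{\orb}_\Omega(\Gamma),\qquad S\not\subset\Lambda^{\orb}_\Omega(\Gamma).\]

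This last step is where convex cocompactness on $\Omega^*$ is essential, and I expect it to be the main obstacle. The idea is to dualise: pick $\gamma_n\to\infty$ realising $\xi\in\Lambda^{\con}_\Omega$, so $\gamma_n o\to\xi$ while staying within bounded Hilbert distance of the ray $[o,\xi)$. Using the singular value decomposition of (lifts of) the $\gamma_n$ and the identification $\Aut(\Omega)=\Aut(\Omega^*)$, the orbit $\gamma_n\!\cdot o^\ast$ in $\Omega^*$ subconverges to a point $H\in\partial\Omega^*$ — a supporting hyperplane of $\Omega$ — which lies in $\Lambda^{\orb}_{\Omega^*}(\Gamma)$ (being an accumulation point of a $\Gamma$-orbit in $\Omega^*$), which contains $\xi$, and which, because the $\gamma_n o$ track the ray towards the endpoint $\xi$ of $S$, should in fact contain the whole segment $S$. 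One then runs the package above on $\Omega^*$: $\Lambda^{\orb}_{\Omega^*}=\Lambda^{\con}_{\Omega^*}$ is closed and the dual saturation principle holds; unwinding the duality (the exposed face $H\cap\overline\Omega\supseteq S$ dualises to the face $\overline F_{\Omega^*}(H)$), one should be able to exhibit, for each point of $S\smallsetminus\{\xi\}$, a supporting hyperplane through it that is a conical limit point for the action on $\Omega^*$ lying in the dual face of $H$, hence in $\Lambda^{\con}_{\Omega^*}$, and then translate this back to put that point of $S$ into $\Lambda^{\orb}_\Omega$, contradicting $(\star)$. The genuinely delicate ingredient — and the one I would expect to occupy most of the work, matching what Danciger--Gu\'eritaud--Kassel do in \cite[Prop.\,5.10]{fannycvxcocpct} — is the precise dictionary converting a conical sequence for $\Omega$ into one for $\Omega^*$ and controlling which faces of $\overline{\Omega}$ and $\overline{\Omega^*}$ the various limit objects land in.
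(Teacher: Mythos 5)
You should first be aware that the paper does not prove this statement at all: it is imported verbatim from Danciger--Gu\'eritaud--Kassel \cite[Prop.\,5.10]{fannycvxcocpct}, so your proposal has to stand on its own. Its first half does: the reduction is sound. Assuming $[\xi,\eta]\subset\partial\Omega$, extending to a maximal segment $S=[a,b]$, and using $\Lambda^{\orb}=\Lambda^{\con}$ closed together with $F_\Omega(\zeta)\subset\Lambda^{\con}$ for $\zeta\in\Lambda^{\con}$ correctly forces $a=\xi$ (if $\xi$ were interior to $S$, points of $(\xi,\eta)$ would lie in $F_\Omega(\xi)\subset\Lambda^{\orb}$ and closedness would put $\eta$ in $\Lambda^{\orb}$). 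A tiny slip: when $\eta=b$ the inclusion $(\xi,b)\subset F_\Omega(\eta)$ you assert is false, but it is harmless, since your configuration $(\star)$ only needs $S\not\subset\Lambda^{\orb}$, which $\eta$ itself witnesses.

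The genuine gap is the second half, i.e.\ exactly the step where the hypothesis on $\Omega^*$ is used, and you defer it rather than prove it. Two sub-claims are unsupported. First, that the limit $H$ of $\gamma_n\cdot o^*$ in $\overline{\Omega^*}$ is a supporting hyperplane of $\Omega$ containing $\xi$ and lying in $\Lambda^{\orb}_{\Omega^*}$ needs the (standard but nontrivial) duality between conical dynamics for $\Omega$ and for $\Omega^*$; you only gesture at singular value decompositions. Second, and more seriously, the claim that $H$ ``should contain the whole segment $S$'' is not automatic and is precisely the kind of statement that can fail: a supporting hyperplane at $\xi$ is only forced to contain the open face $F_\Omega(\xi)$, and by your own reduction $\xi$ is an \emph{endpoint} of the maximal segment $S$, so $(\xi,b)\not\subset F_\Omega(\xi)$ --- think of $\xi$ a vertex of a triangle and $S$ an adjacent edge: a supporting line at the vertex need not contain the edge. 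So the hyperplane produced by your limiting procedure may miss $S\smallsetminus\{\xi\}$ entirely, and the subsequent ``unwinding of the duality'' that is supposed to place points of $S\smallsetminus\{\xi\}$ back into $\Lambda^{\orb}_\Omega$ is never specified. Since ruling out $(\star)$ is where all the content of the fact lies, the proposal as written is an outline with its decisive step missing, not a proof; to complete it you would essentially have to reproduce the face/duality analysis of \cite[\S5]{fannycvxcocpct} (in particular the correspondence between $\Lambda^{\orb}_\Omega$, its faces, and $\Lambda^{\orb}_{\Omega^*}$), or simply cite that result as the paper does.
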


Observe that, given a strongly irreducible discrete subgroup $\Gamma\subset\PGL(V)$ that acts convex cocompactly on a properly convex open $\Omega\subset\PR(V)$, if we want to understand the dynamics of the geodesic flow on $(T^1\Omega/\Gamma)_{\core}$, then we can assume that $\Gamma$ also acts convex cocompactly on $\Omega^*$, since otherwise we may take another $\Gamma$-invariant properly convex open set $\Omega'$ such that $\Gamma$ acts convex cocompactly on both $\Omega'$ and $(\Omega')^*$, and we have $(T^1\Omega/\Gamma)_{\core}=(T^1\Omega'/\Gamma)_{\core}$ (see \cite[Prop.\,5.10 \& Cor.\,4.17]{fannycvxcocpct}).

\subsection{Proof of Proposition~\ref{Proposition : cvxcocpct -> divergent}}\label{divergence}

Consider $o\in\CC^{\core}_\Omega(\Gamma)$. We set $\Vol:=\sum_{\gamma\in\Gamma}\mathcal{D}_{\gamma o}$, where $\mathcal{D}_x$ denotes the Dirac mass at $x\in\Omega$; this defines a $\Gamma$-invariant Radon measure on $\Omega$, which is supported on $\mathcal{C}^{\core}_\Omega(\Gamma)$. We apply Fact~\ref{smear} to $\Vol$, in order to obtain a $\delta_\Gamma$-conformal density $\nu_o$ on $\partial\Omega$ which is supported on $\Lambda^{\orb}$. Recall that $\Lambda^{\orb}=\Lambda^{\con}$ (see Section~\ref{Ssection : intro MME}), hence $\nu_o(\Lambda^{\con})=1$ and therefore $\Gamma$ is divergent by Theorem~\ref{Thm : The weak Hopf--Tsuji--Sullivan--Roblin dichotomy}. By the same theorem, the Bowen--Margulis measure $m$ is supported on $T^1M_{\core}$. Since $m$ is Radon and $T^1M_{\core}$ is compact, $m$ must be finite.

\subsection{A Shadow lemma for convex cocompact actions}\label{Sous-section : Lemme de l'ombre convex cocompact}

In this section we prove a Shadow lemma for when $\Gamma$ satisfies the stronger assumption that it acts convex cocompactly on $\Omega$. The main interest of the new Shadow lemma is that it estimates the measure of \emph{small} shadows.

 We denote $\mathrm{ray}_x(A)=\bigcup_{\xi\in A}[x,\xi)$ for $x\in \Omega$ and $A\subset\partial \Omega$.

\begin{lemma}\label{shadowlemma+ sans cvxcpcpt}
Let $\Omega\subset\PR(V)$ be a properly convex open set and $\Gamma\subset\Aut(\Omega)$ a discrete subgroup. Suppose that $\Gamma$ is strongly irreducible and $T^1M_{\bip}$ is non-empty, or that $M$ is rank-one and non-elementary. Consider $\delta\geq 0$ and a $\delta$-conformal density $(\nu_x)_{x\in\Omega}$ on~$\partial\Omega$. Let $K\subset\Omega$ be a compact subset and $\mathcal{C}=\Gamma\cdot K$. Then there exists $R_0>0$ such that for any $R>0$, we can find a constant $C=C(R)>0$ such that for all $x,y\in\mathcal{C}$,
\[C^{-1}e^{-\delta d_\Omega(x,y)}\leq \nu_x(\mathcal{O}_{R_0+R}(x,y))\leq \nu_x(\mathcal{O}^+_{R_0+R}(x,y))\leq Ce^{-\delta d_\Omega(x,y)},\]
and if furthermore $y\in\mathrm{ray}_x(\supp(\nu_o))$, then
\[\nu_y(\mathcal{O}_R(x,y))\geq C^{-1} \ \text{and} \ \nu_x(\mathcal{O}_R(x,y))\geq C^{-1}e^{-\delta d_\Omega(x,y)}.\]
\end{lemma}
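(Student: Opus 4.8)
The strategy is to reduce to the original Shadow lemma (Lemma~\ref{shadowlemma}) by exploiting convex cocompactness to replace arbitrary pairs $x,y\in\mathcal{C}$ with pairs of the form $(o,\gamma o)$, up to a bounded error. First I would fix a point $o\in\Omega$; since $\mathcal{C}=\Gamma\cdot K$ and $K$ is compact, there is $R_1>0$ such that for every $x\in\mathcal{C}$ there is $\gamma_x\in\Gamma$ with $d_\Omega(x,\gamma_xo)\leq R_1$. Then $|d_\Omega(x,y)-d_\Omega(\gamma_xo,\gamma_yo)|\leq 2R_1$ for all $x,y\in\mathcal{C}$. The key geometric observation is that shadows are ``stable'' under bounded perturbations of the basepoints: there is a constant depending only on $R_1$ and the radius such that $\mathcal{O}_{R}(x,y)\subset\mathcal{O}_{R+4R_1}^+(\gamma_xo,\gamma_yo)$ and $\mathcal{O}^-_{R-4R_1}(\gamma_xo,\gamma_yo)\subset\mathcal{O}_R(x,y)$, provided the radii are large enough (say $\geq R_0$ for a suitable threshold $R_0=R_0(R_1)$ coming from Lemmas~\ref{shadowlemma} and \ref{shadowlemma avec defaut}); these inclusions follow from the triangle inequality applied to the definitions in Definition~\ref{Definition : ombres}. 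Similarly, the conformality relation $\frac{d\nu_x}{d\nu_{\gamma_xo}}(\xi)=e^{-\delta b_\xi(x,\gamma_xo)}$ together with the bound $|b_\xi(x,\gamma_xo)|\leq d_\Omega(x,\gamma_xo)\leq R_1$ shows that $\nu_x$ and $\nu_{\gamma_xo}$ are uniformly comparable (ratio in $[e^{-\delta R_1},e^{\delta R_1}]$), and by $\Gamma$-equivariance $\nu_{\gamma_xo}=(\gamma_x)_*\nu_o$ so $\nu_{\gamma_xo}(\gamma_x A)=\nu_o(A)$.

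Putting these together: for the upper bound, $\nu_x(\mathcal{O}^+_{R_0+R}(x,y))\leq e^{\delta R_1}\nu_{\gamma_xo}(\mathcal{O}^+_{R_0+R}(x,y))\leq e^{\delta R_1}\nu_{\gamma_xo}(\mathcal{O}^+_{R_0+R+4R_1}(\gamma_xo,\gamma_yo))=e^{\delta R_1}\nu_o(\mathcal{O}^+_{R_0+R+4R_1}(o,\gamma_x^{-1}\gamma_yo))$, and Lemma~\ref{shadowlemma} bounds this by $C'e^{-\delta d_\Omega(o,\gamma_x^{-1}\gamma_yo)}\leq C'e^{2\delta R_1}e^{-\delta d_\Omega(x,y)}$. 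For the lower bound on $\nu_x(\mathcal{O}_{R_0+R}(x,y))$ one argues symmetrically using the left-hand inequality of Lemma~\ref{shadowlemma}. This gives the first chain of inequalities for $R_0$ chosen large enough; note $R_0$ depends only on $K$ (through $R_1$) and not on $R$, as required, and the constant $C$ depends on $R$.

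For the second statement, assume additionally $y\in\mathrm{ray}_x(\supp(\nu_o))$, i.e.\ $y\in[x,\xi)$ for some $\xi\in\supp(\nu_o)$. The bound $\nu_x(\mathcal{O}_R(x,y))\geq C^{-1}e^{-\delta d_\Omega(x,y)}$ for $R\geq R_0$ follows from what we just proved (it is a special case). The genuinely new content is $\nu_y(\mathcal{O}_R(x,y))\geq C^{-1}$ with $R$ possibly small. Here I would use Lemma~\ref{l ombre grandit a la frontiere}: applied to the $\Gamma$-quasi-invariant finite measure $\nu_o$ it gives $\epsilon>0$ and $R_2>0$ with $\nu_z(\mathcal{O}_{R_2}(w,z))\geq\epsilon\cdot(\text{comparability constant})$ uniformly; more precisely, conjugating by $\gamma_y$ and using the comparability of $\nu_y$ with $\nu_{\gamma_yo}=(\gamma_y)_*\nu_o$, it suffices to bound $\nu_o(\mathcal{O}_{R'}(\gamma_y^{-1}x,o))$ from below for $R'$ slightly larger than $R$, which is exactly (a conjugated form of) the conclusion of Lemma~\ref{l ombre grandit a la frontiere} since $\gamma_y^{-1}x$ ranges over points of $\Omega$. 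The hypothesis $y\in\mathrm{ray}_x(\supp(\nu_o))$ enters when $R$ is small: one needs $\mathcal{O}_R(x,y)$ to meet $\supp(\nu_y)=\supp(\nu_o)$ in positive measure, which holds because $\xi\in\mathcal{O}_R(x,y)$ (as $y\in[x,\xi)$ so the segment $[x,\xi]$ passes through $y\in B_\Omega(y,R)$) and $\xi\in\supp(\nu_o)$, combined with the lower semi-continuity arguments already present in the proof of Lemma~\ref{l ombre grandit a la frontiere}.

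\textbf{Main obstacle.} The delicate point is the small-radius lower bound $\nu_y(\mathcal{O}_R(x,y))\geq C^{-1}$: one cannot simply invoke Lemma~\ref{shadowlemma}, whose threshold $R_0$ is forced to be large, so one must genuinely re-run the compactness/contradiction argument of Lemma~\ref{l ombre grandit a la frontiere}, carefully using that $y$ lies on a ray towards $\supp(\nu_o)$ to ensure the shadow captures mass of $\nu_y$ even when $R$ is below the natural threshold. I expect the rest (the $(o,\gamma o)$ reduction and the conformality comparisons) to be routine bookkeeping with triangle inequalities.
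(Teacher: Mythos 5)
Your first chain of inequalities (radius $R_0+R$) is fine and is essentially the paper's argument: reduce to basepoints of the form $\gamma o$ via the bounded-perturbation inclusions for shadows and the uniform comparability $\nu_x\asymp\nu_{\gamma_x o}$, then quote Lemma~\ref{shadowlemma}. The problem is in the second estimate, which you yourself identify as the crux. Your reduction ``it suffices to bound $\nu_o(\mathcal{O}_{R'}(\gamma_y^{-1}x,o))$ from below for $R'$ slightly larger than $R$'' goes the wrong way. Writing $y'=\gamma_y^{-1}y$ and $x'=\gamma_y^{-1}x$, moving the ball centre from $y'$ to $o$ while \emph{enlarging} the radius gives $\mathcal{O}_R(x',y')\subset\mathcal{O}_{R+d_\Omega(o,y')}(x',o)$; a lower bound on the measure of the larger recentred shadow says nothing about $\nu_{y'}(\mathcal{O}_R(x',y'))$. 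To keep the containment in the useful direction you would need radius $R-d_\Omega(o,y')$, which is negative exactly in the small-$R$ regime that the ray hypothesis is there to handle. So for small $R$ you cannot recentre at $o$, and in particular the bound is not ``a conjugated form of'' Lemma~\ref{l ombre grandit a la frontiere} (whose conclusion is anyway only for radii above its own threshold). The correct device, which is what the paper does, is to keep the ball centred at the varying point of the compact set: set $\epsilon:=\inf\{\nu_x(\mathcal{O}_R(z,x')) : x\in K,\ z\in\Omega,\ x'\in K\cap\mathrm{ray}_z(\supp\nu_o)\}$ and prove $\epsilon>0$ by the compactness/contradiction argument you allude to (if $x'\in[z,\xi)$ with $\xi\in\supp\nu_o$, then $\xi\in\mathcal{O}_R(z,x')$, and membership in the shadow is an open condition jointly in $(z,x',\xi)$, so along a degenerating sequence the shadows contain a fixed neighbourhood of a support point, whose $\nu_x$-mass is bounded below uniformly in $x\in K$). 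Your ``re-run the argument'' remark points in this direction, but as written the concrete step you propose would fail.

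A second, smaller omission: the statement also requires $\nu_x(\mathcal{O}_R(x,y))\geq C^{-1}e^{-\delta d_\Omega(x,y)}$ for \emph{all} $R>0$ under the ray hypothesis, whereas you only obtain it for $R\geq R_0$ as a special case of the first chain. Once the uniform bound $\nu_y(\mathcal{O}_R(x,y))\geq\epsilon'$ is in place, this follows in one line from the cocycle: on the horoboundary $b_{\tilde\xi}(x,y)\leq d_\Omega(x,y)$ for every $\tilde\xi$, hence
\begin{equation*}
\nu_x(\mathcal{O}_R(x,y))=\int_{\pi_{\hor}^{-1}(\mathcal{O}_R(x,y))}e^{-\delta b_{\tilde\xi}(x,y)}\diff\mu_y(\tilde\xi)\;\geq\; e^{-\delta d_\Omega(x,y)}\,\nu_y(\mathcal{O}_R(x,y)),
\end{equation*}
which is exactly the computation the paper performs after translating so that $\gamma^{-1}y\in K$. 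You should state this step explicitly; without it the small-$R$ part of the second estimate is not covered by your proposal.
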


\begin{proof}
 Let $R>0$. By definition of the conformal density, $\nu_x\leq e^{\delta d_\Omega(x,y)}\nu_y$ for any $x,y\in\Omega$. Using the triangular inequality and Fact~\ref{crampon}, it is elementary to see that for all $x,y,x',y'\in\Omega$,
 \[\mathcal{O}^\alpha_R(x,y)\subset \mathcal{O}^\alpha_{R+d_\Omega(x,x')+d_\Omega(y,y')}(x',y') \text{ for } \alpha=\emptyset \text{ or } +.\]
 By $\Gamma$-equivariance it is enough to prove the lemma for $x\in K$. Let $D$ be the diameter of $K$ for the Hilbert metric, and fix $o\in K$. For any $y\in\mathcal{C}$, there exists $\gamma\in\Gamma$ such that $d_\Omega(y,\gamma o)\leq D$, and then for any $x\in K$, 
 \[e^{-\delta_\Gamma D}\nu_o(\mathcal{O}_{R}(o,\gamma o))\leq \nu_x(\mathcal{O}_{R+2D}(x,y))\leq \nu_x(\mathcal{O}^+_{R+2D}(x,y))\leq e^{\delta_\Gamma D}\nu_o(\mathcal{O}^+_{R+4D}(o,\gamma o)).\]
 Now we can use the Shadow lemma (Lemma~\ref{shadowlemma}) to bound the right-most and left-most terms when $R$ is greater than some $R_0$, and we obtain the first estimate.

 Let us show the second estimate. We set
 \[\epsilon:=\inf\{\nu_x(\mathcal{O}_R(y,x')) : x\in K,\ y\in\Omega,\ x'\in K\cap\mathrm{ray}_y(\supp\nu_o)\}>0.\]
 We then make the same computations as in the Shadow lemma (Lemma~\ref{shadowlemma}). We take a $\delta$-conformal density $(\mu_x)_{x\in\Omega}$ on $\partial_{\hor}\Omega$ such that $(\pi_{hor})_*\mu_x=\nu_x$ for any $x\in\Omega$. Let $x\in K$ and $y\in\mathrm{ray}_x(\supp(\nu_o))\cap\mathcal{C}$; by definition of $K$ there exists $\gamma\in\Gamma$ such that $\gamma^{-1}y\in K$. Then

 \begin{align*}
  \nu_x(\mathcal{O}_R(x,y)) &= \mu_x(\pi_{hor}^{-1}(\mathcal{O}_R(x,y)))\\
  &= \mu_{\gamma^{-1}x}(\pi_{hor}^{-1}(\mathcal{O}_R(\gamma^{-1}x,\gamma^{-1}y)))\\
  &= \int_{\pi_{hor}^{-1}(\mathcal{O}_R(\gamma^{-1}x,\gamma^{-1}y))}e^{-\delta b_{\xxi}(\gamma^{-1}x,\gamma^{-1}y)}d\mu_x(\xxi) \\
  &\geq \int_{\pi_{hor}^{-1}(\mathcal{O}_R(\gamma^{-1}x,\gamma^{-1}y))}e^{-\delta d_\Omega(x,y)}d\mu_x(\xxi)\\
  &\geq \nu_x(\mathcal{O}_R(\gamma^{-1}x,\gamma^{-1}y)) e^{-\delta d_\Omega(x,y)}\\
  &\geq \epsilon e^{-\delta d_\Omega(x,y)}.\qedhere
 \end{align*}
\end{proof}

As an immediate corollary, we obtain the following.

\begin{cor}\label{shadowlemma+}
Let $\Omega\subset\PR(V)$ be a properly convex open set and $\Gamma\subset\Aut(\Omega)$ a convex cocompact and discrete subgroup. Suppose that $\Gamma$ is strongly irreducible and $T^1M_{\bip}$ is non-empty, or that $M$ is rank-one and non-elementary. Consider $\delta\geq 0$ and a $\delta$-conformal density $(\nu_x)_{x\in\Omega}$ on $\partial\Omega$. Then there exists $R_0>0$ such that for any $R>0$, we can find a constant $C=C(R)>0$ such that for all $x,y\in\mathcal{C}^{\core}$,
\[C^{-1}e^{-\delta d_\Omega(x,y)}\leq \nu_x(\mathcal{O}_{R_0+R}(x,y))\leq Ce^{-\delta d_\Omega(x,y)},\]
and if furthermore $y\in\mathrm{ray}_x(\supp(\nu_o))$, then
\[\nu_y(\mathcal{O}_R(x,y))\geq C^{-1} \ \text{and} \ \nu_x(\mathcal{O}_R(x,y))\geq C^{-1}e^{-\delta d_\Omega(x,y)}.\]
\end{cor}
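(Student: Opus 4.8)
The plan is to obtain Corollary~\ref{shadowlemma+} as a direct specialisation of Lemma~\ref{shadowlemma+ sans cvxcpcpt}, the only task being to choose the compact set $K$ there so that the orbit $\mathcal{C}=\Gamma\cdot K$ becomes exactly $\CC^{\core}=\CC^{\core}_\Omega(\Gamma)$. First I would record that, since $\Gamma$ acts convex cocompactly on $\Omega$, the set $\CC^{\core}$ is non-empty, closed, $\Gamma$-invariant and has compact quotient by $\Gamma$ (Definition~\ref{Def : convexe cocompacite}). Because $\Gamma\subset\Aut(\Omega)$ is discrete and preserves the proper metric $d_\Omega$, its action on $\Omega$ — and hence on the closed $\Gamma$-invariant subset $\CC^{\core}$ — is properly discontinuous. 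A standard argument (cover the compact space $\CC^{\core}/\Gamma$ by finitely many images of small compact balls of $\CC^{\core}$ and take $K$ to be their union) then produces a compact subset $K\subset\CC^{\core}\subset\Omega$ with $\Gamma\cdot K=\CC^{\core}$.

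With this choice one has $\mathcal{C}=\CC^{\core}$, so Lemma~\ref{shadowlemma+ sans cvxcpcpt} applies verbatim under the hypotheses of the Corollary (they are the same hypotheses on $\Gamma$, $M$ and $(\nu_x)$, plus convex cocompactness which we have just used). It furnishes an $R_0>0$ such that for every $R>0$ there is $C=C(R)>0$ with
\[C^{-1}e^{-\delta d_\Omega(x,y)}\leq \nu_x(\mathcal{O}_{R_0+R}(x,y))\leq \nu_x(\mathcal{O}^+_{R_0+R}(x,y))\leq Ce^{-\delta d_\Omega(x,y)}\]
for all $x,y\in\CC^{\core}$, together with $\nu_y(\mathcal{O}_R(x,y))\geq C^{-1}$ and $\nu_x(\mathcal{O}_R(x,y))\geq C^{-1}e^{-\delta d_\Omega(x,y)}$ whenever $y\in\mathrm{ray}_x(\supp(\nu_o))$. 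Since $\mathcal{O}_{R_0+R}(x,y)\subset\mathcal{O}^+_{R_0+R}(x,y)$, dropping the middle term in the displayed chain gives precisely the two-sided estimate claimed in Corollary~\ref{shadowlemma+}, and the last two inequalities are already in the required form. This finishes the deduction.

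I expect no genuine obstacle: the argument is purely bookkeeping, and the only step needing a (routine) justification is the existence of a compact $K\subset\Omega$ with $\Gamma\cdot K=\CC^{\core}$, which is immediate from cocompactness of the $\Gamma$-action on $\CC^{\core}$. I would keep the write-up to a couple of sentences, citing Definition~\ref{Def : convexe cocompacite} and Lemma~\ref{shadowlemma+ sans cvxcpcpt}.
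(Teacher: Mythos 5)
Your proposal is correct and matches the paper's treatment: the paper states Corollary~\ref{shadowlemma+} as an immediate consequence of Lemma~\ref{shadowlemma+ sans cvxcpcpt}, exactly by taking $K$ to be a compact set with $\Gamma\cdot K=\CC^{\core}_\Omega(\Gamma)$, which exists by cocompactness of the action on the convex core. Your write-up just makes this specialisation explicit.
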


\subsection{Non-straight closed geodesics}

In this section we investigate non-rank-one automorphisms of properly convex open sets which realise their translation length.

\subsubsection{Some standard facts}

The following fact says that the only case where a non-straight geodesic can appear is the one shown in Figure~\ref{figure_distance}.
\begin{fait}\label{fait:geodcassee}
 Let $\Omega\subset\PR(V)$ be a properly convex open set. For $x,y\in\Omega$ distinct, consider $(a_{xy},b_{xy})$ in $\partial\Omega^2$ such that $a_{xy},x,y,b_{xy}$ are aligned in this order. Let $x,y,z\in\Omega$ be pairwise distinct. Then $d_{\Omega}(x,z)=d_\Omega(x,y)+d_\Omega(y,z)$ if and only if $a_{xz}\in[a_{xy},a_{yz}]$ and $b_{xz}\in[b_{xy},b_{yz}]$.
\end{fait}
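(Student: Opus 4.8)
The statement is a characterization of when the triangle inequality $d_\Omega(x,z)\le d_\Omega(x,y)+d_\Omega(y,z)$ is an equality, phrased in terms of the boundary endpoints of the three projective segments $[a_{xy},b_{xy}]$, $[a_{yz},b_{yz}]$ and $[a_{xz},b_{xz}]$. The plan is to reduce everything to the two-dimensional plane $P$ spanned by $x,y,z$ (or to a line if $x,y,z$ are collinear, in which case the statement is immediate), replace $\Omega$ by the properly convex open set $\Omega\cap P$, and then work inside a fixed affine chart. In that chart the problem becomes a planar Hilbert-geometry computation about whether the Hilbert geodesic from $x$ to $z$ can be taken to pass through $y$.

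First I would recall (from Section~\ref{distance}) that the straight line $[a_{xz},b_{xz}]\cap\Omega$ is \emph{a} geodesic from $x$ to $z$, but not necessarily the unique one: when two non-trivial segments in $\partial\Omega$ are coplanar with $[x,z]$, there are ``broken'' geodesics of the same length, as in Figure~\ref{figure_distance}. The key point, which I would establish first, is that $d_\Omega(x,z)=d_\Omega(x,y)+d_\Omega(y,z)$ holds exactly when the concatenation of the straight segment from $x$ to $y$ with the straight segment from $y$ to $z$ is itself a (possibly broken) geodesic of $\Omega$. For the ``if'' direction one uses the explicit cross-ratio formula \eqref{Equation : Hilbert} and the classical fact that, for $a_{xz}\in[a_{xy},a_{yz}]$ and $b_{xz}\in[b_{xy},b_{yz}]$, the four pairs of collinear points witnessing $d_\Omega(x,y)$, $d_\Omega(y,z)$ and $d_\Omega(x,z)$ satisfy the multiplicativity of the cross-ratio along the broken path (this is the standard ``convexity'' computation for the Hilbert metric, essentially the content that broken paths bending ``away'' from the convex set realize the distance). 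Concretely, in an affine chart, write $a_{xz}$ as the point where the line through $a_{xy}$ and $a_{yz}$ ``sees'' $x$; comparing the three cross-ratios $[a_{xy},x,y,b_{xy}]$, $[a_{yz},y,z,b_{yz}]$, $[a_{xz},x,z,b_{xz}]$ via a projection centred appropriately gives the product relation, hence the additivity of the logarithms.

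For the ``only if'' direction I would argue by contraposition: if $a_{xz}\notin[a_{xy},a_{yz}]$ or $b_{xz}\notin[b_{xy},b_{yz}]$, then the broken path $x\to y\to z$ is strictly longer than the straight segment $[x,z]\cap\Omega$, because the cross-ratio comparison now goes the other way — the support line at $a_{xz}$ strictly separates, and strict convexity of the cross-ratio function forces a strict inequality. Again this is a one-plane, one-affine-chart computation: it suffices to check it for the extreme case where $\partial\Omega$ near $a_{xz}$ (resp. near $b_{xz}$) is a segment, since replacing $\Omega$ by the larger convex set bounded by the support lines only decreases distances and makes the inequality harder to achieve. The main obstacle here is purely bookkeeping: setting up the affine chart and the auxiliary projections so that the three cross-ratios line up into a single product is fiddly, and one has to be careful that ``aligned in this order'' is preserved under the reduction to $\Omega\cap P$ (which it is, since $\partial(\Omega\cap P)=\partial\Omega\cap P$). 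I expect no conceptual difficulty — this is a classical lemma of Hilbert geometry — so the proof can be kept short by citing the cross-ratio multiplicativity and invoking Lemma~\ref{crampon} or a direct chart computation, rather than grinding through coordinates.
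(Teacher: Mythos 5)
The paper itself gives no proof of this fact (it is quoted as classical), so I am judging your plan on its own merits. Your ``if'' direction is essentially the standard argument, though under-specified: once $a_{xz}\in[a_{xy},a_{yz}]$ and $b_{xz}\in[b_{xy},b_{yz}]$, the projection has to be centred at the intersection point $s$ of the two lines $a_{xy}\oplus a_{yz}$ and $b_{xy}\oplus b_{yz}$; this single perspectivity sends $(a_{xy},x,y,b_{xy})$ and $(a_{yz},y,z,b_{yz})$ to quadruples of the line $x\oplus z$ with outer points $a_{xz},b_{xz}$ and a common image $y'$ of $y$, and multiplicativity of the cross-ratio along one line then gives $d_\Omega(x,y)+d_\Omega(y,z)=\frac12\log[a_{xz},x,z,b_{xz}]=d_\Omega(x,z)$. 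A projection centred anywhere else does not match both pairs of endpoints at once, so you should say which centre you take; note also that for this direction only the alignment of the two triples is used.

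The genuine gap is in your ``only if'' direction. The reduction ``replace $\Omega$ by the larger convex set bounded by the support lines, which only decreases distances and makes the inequality harder to achieve'' does not work: enlarging $\Omega$ decreases all three of $d_\Omega(x,y)$, $d_\Omega(y,z)$, $d_\Omega(x,z)$, so neither the strict inequality nor its failure passes between $\Omega$ and the enlarged set, and the enlargement moves the boundary points $a_{xy},b_{xy},\dots$, i.e.\ the very condition being characterised. What is actually needed is the equality analysis inside the classical proof of the triangle inequality: for $x,y,z$ not collinear, set $p:=[a_{xy},a_{yz}]\cap(x\oplus z)$ and $q:=[b_{xy},b_{yz}]\cap(x\oplus z)$, which exist and lie on the closed chord $[a_{xz},b_{xz}]$ because $a_{xy}$ and $a_{yz}$ (and likewise $b_{xy}$ and $b_{yz}$) lie strictly on opposite sides of the line $x\oplus z$; the perspectivity from $s$ yields the exact identity $d_\Omega(x,y)+d_\Omega(y,z)=\frac12\log[p,x,z,q]$, and strict monotonicity of the cross-ratio in $p$ and in $q$ then forces $p=a_{xz}$ and $q=b_{xz}$ when equality holds, which is precisely the asserted betweenness. (Alternatively, a Funk-metric/supporting-functional argument shows equality produces a supporting line through $b_{xy},b_{yz},b_{xz}$ and one through the $a$'s, and a Menelaus-type argument gives the betweenness.) Your sketch contains neither ingredient, and the vague appeal to ``strict convexity of the cross-ratio'' does not substitute for it. Finally, the collinear case is not ``immediate'': if $y\notin[x,z]$, then $a_{xz}$ and $b_{xz}$ are endpoints of the relevant segments, so the stated condition holds while the equality fails; a careful write-up must treat, or explicitly exclude, that configuration — the real content of the statement is the non-collinear case.
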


This property can be used to prove the following.
\begin{fait}[\!{\!\cite{foertsch2005hilbert}}]\label{fait:limgeodcassee}
 Let $\Omega\subset\PR(V)$ be a properly convex open set, $I\subset\R$ a non-trivial interval and $c:I\rightarrow\Omega$ an isometric embedding. For all $t<s\in I$, consider $(a_{ts},b_{ts})$ in $\partial\Omega^2$ such that $a_{ts},c(t),c(s),b_{ts}$ are aligned in this order. Let $F_+$ (\resp $F_-$) be the smallest closed face of $\Omega$ that contains $\{b_{ts} : t<s\in I\}$ (\resp $\{a_{ts} : t<s\in I\}$). Then $F_+$ and $F_-$ are proper faces of $\Omega$, whose dimension is the dimension of the convex hull of $c(I)$ minus $1$.
 
 Moreover, if $\sup I=\infty$ (\resp $\inf I=-\infty$), then $(c(t))_t$ converges to a point of $F_+$ (\resp $F_-$) when $t$ goes to $+\infty$ (\resp $-\infty$).
\end{fait}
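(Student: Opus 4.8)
\textbf{Proof plan for Fact~\ref{fait:limgeodcassee}.}

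The plan is to deduce the statement from Fact~\ref{fait:geodcassee} by a monotonicity argument on the endpoints. First I would fix a basepoint and observe that, by Fact~\ref{fait:geodcassee}, whenever $t_1 < t_2 < t_3$ in $I$ the additivity $d_\Omega(c(t_1),c(t_3)) = d_\Omega(c(t_1),c(t_2)) + d_\Omega(c(t_2),c(t_3))$ holds (since $c$ is an isometric embedding of an interval), hence $b_{t_1 t_3} \in [b_{t_1 t_2}, b_{t_2 t_3}]$ and $a_{t_1 t_3} \in [a_{t_1 t_2}, a_{t_2 t_3}]$. Iterating this, for any finite collection of parameters the associated forward endpoints $b_{ts}$ all lie in a common segment of $\partial\Omega$, and likewise for the $a_{ts}$; this shows that $\{b_{ts} : t<s\in I\}$ is contained in a single face of $\partial\Omega$ and, by taking the convex hull, that $F_+$ is a genuine (proper, since it lies in $\partial\Omega$) closed face, and similarly for $F_-$. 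The dimension count follows because the convex hull of $c(I)$ is, up to the ``interior direction'' coming from $c$ itself, spanned by these boundary segments: one shows the convex hull of $c(I)$ is the join of $F_+$-part, $F_-$-part, and a one-parameter family, so $\dim F_\pm = \dim\operatorname{Conv}(c(I)) - 1$. I would set this up by choosing an affine chart and writing the convex hull of $c(I)$ explicitly as the union of the segments $[a_{ts}, b_{ts}]$ over $t<s$, then checking that its relative boundary decomposes into $F_+$, $F_-$ and lower-dimensional pieces.

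For the convergence statement, suppose $\sup I = \infty$. Since $c$ is an isometric embedding and $\Omega$ is properly convex with compact closure in an affine chart, $(c(t))_{t}$ has accumulation points in $\overline\Omega$ as $t\to\infty$; I must show the accumulation point is unique and lies in $F_+$. Fix any $t_0 \in I$; for $t > t_0$ the point $c(t)$ lies on the segment $[c(t_0), b_{t_0 t}]$, and by the monotonicity above the points $b_{t_0 t}$ move monotonically along a segment contained in $F_+$ as $t$ increases, hence converge to some $b_\infty \in \overline{F_+} = F_+$ (using that $F_+$ is closed). Moreover $d_\Omega(c(t_0), c(t)) = \sum$ of increments $\to \infty$, so $c(t)$ exits every compact subset of $\Omega$; combined with $c(t) \in [c(t_0), b_{t_0 t}]$ and $b_{t_0 t} \to b_\infty$, this forces $c(t) \to b_\infty \in F_+$. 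The same argument with the time-reversed geodesic handles $\inf I = -\infty$.

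The main obstacle I anticipate is the dimension computation: proving that $\dim F_+ = \dim F_- = \dim\operatorname{Conv}(c(I)) - 1$ rather than merely $\leq$. The easy direction is that $F_\pm$ lies in the relative boundary of $\operatorname{Conv}(c(I))$, giving $\dim F_\pm \leq \dim\operatorname{Conv}(c(I)) - 1$. For the reverse inequality one needs to see that $\operatorname{Conv}(c(I))$ is spanned by $F_+ \cup F_- \cup \{c(t_0)\}$ (or a similar finite augmentation), which requires a careful analysis of how the segments $[a_{ts}, b_{ts}]$ sweep out the hull — this is where I would spend most of the effort, likely by an induction on $\dim\operatorname{Conv}(c(I))$ or by directly exhibiting an affine frame. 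Since the paper cites \cite{foertsch2005hilbert} for this fact, I would lean on that reference for the technical core and present the argument above as the conceptual skeleton.
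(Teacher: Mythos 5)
The paper itself gives no proof of Fact~\ref{fait:limgeodcassee}: it is cited from \cite{foertsch2005hilbert}, with only the remark that Fact~\ref{fait:geodcassee} ``can be used to prove'' it. So your overall strategy is the intended one, but as written it has a genuine gap, and it sits exactly at the nontrivial point. Fact~\ref{fait:geodcassee} only gives the three-point relation $b_{t_1t_3}\in[b_{t_1t_2},b_{t_2t_3}]$; your claim that, by iteration, \emph{all} the forward endpoints $b_{ts}$ lie in a common segment of $\partial\Omega$ is false in general. When $\dim\conv(c(I))\geq 3$ the set $\{b_{ts}\}$ need not be contained in a projective line: for the broken geodesics of Lemma~\ref{lem:realell} the forward endpoints contain the orbit $\{g^nb\}_n$, which is in general non-collinear, and indeed the fact asserts that $F_+$ can have dimension $\geq 2$. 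Since you derive both the properness of $F_+$ (``contained in a single face'') and the convergence of $b_{t_0t}$ (``move monotonically along a segment, hence converge'') from this collinearity, both halves of the statement remain unproved. Note that knowing the $b_{ts}$ are pairwise, or consecutively, joined by segments inside $\partial\Omega$ does \emph{not} put them all in one proper closed face --- what must be shown is that the convex hull of the whole family $\{b_{ts}\}$ avoids $\Omega$, and that is precisely the content of the first assertion. Likewise, the relation coming from Fact~\ref{fait:geodcassee} is $b_{t_0s'}\in[b_{t_0s},b_{ss'}]$, where the ``target'' $b_{ss'}$ itself moves with $s,s'$; there is no fixed segment and no monotonicity to invoke, so the convergence of $b_{t_0t}$, hence of $c(t)$, still requires an argument (your reduction ``$b_{t_0t}\to b_\infty$ implies $c(t)\to b_\infty$'' is fine, but the hypothesis is exactly what is missing).

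The dimension count is in no better shape. The ``easy direction'' you assert ($F_\pm\subset\partial_{rel}\conv(c(I))$, hence $\dim F_\pm\leq\dim\conv(c(I))-1$) is not correct: the points $b_{ts}$ are obtained by prolonging the chords all the way to $\partial\Omega$, so they do not lie in $\overline{\conv(c(I))}$ when $I$ is bounded, and the smallest closed face of $\Omega$ containing them is in general not visible inside $\conv(c(I))$ at all (for a straight geodesic of a triangle aimed at an interior point of an edge, that face is the whole closed edge). The dimension statement really has to be controlled against the projective span of $c(I)$ --- this is exactly how it is consumed in the proof of Lemma~\ref{lem:realell}, where one needs $F_\pm$ to span a hyperplane of $\PR(W)$ --- and your join/sweeping picture never produces enough points of $F_\pm$ to give the lower bound, which you then explicitly defer to \cite{foertsch2005hilbert}. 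So the proposal is a reasonable reading of the paper's hint, but as it stands neither the properness/dimension assertion nor the convergence assertion is actually proved.
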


The following generalises the fact that $x_g^+\in\partial\Omega$ and $x_g^+\oplus x_g^0\cap\Omega=\emptyset$ for any biproximal automorphism $g$ of a properly convex open set $\Omega$.

\begin{fait}\label{fait:spec gap}
 Let $\Omega\subset\PR(V)$ be a properly convex open set. Let $g\in\PR(\End(V))$ be in the closure of $\Aut(\Omega)$, then its kernel and its image intersect $\overline\Omega$ but not $\Omega$.
\end{fait}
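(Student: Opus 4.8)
The statement to prove is Fact~\ref{fait:spec gap}: for $\Omega\subset\PR(V)$ properly convex open and $g\in\PR(\End(V))$ in the closure of $\Aut(\Omega)$, the kernel $\Ker(g)$ and the image $\operatorname{Im}(g)$ (viewed as projective subspaces, wherever defined) intersect $\overline\Omega$ but do not intersect $\Omega$.

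My plan is as follows. Write $g$ as a limit in $\PR(\End(V))$ of a sequence $g_n\in\Aut(\Omega)$; choose lifts $\tilde g,\tilde g_n\in\End(V)$ with $\tilde g_n\to\tilde g$. If $g$ is invertible there is nothing to prove (kernel and image are empty and all of $\PR(V)$ respectively, but then $g\in\PGL(V)$ acts on $\overline\Omega$ and $\Omega$, so the assertion is vacuous/trivially adjusted), so assume $g$ is singular. First I would show $\operatorname{Im}(g)\cap\Omega=\emptyset$: if $y\in\Omega$ lies in the image, pick $x$ with $g$-lift mapping to $y$; since $\Omega$ is open and $g_n\to g$, the points $g_nx$ converge to $y$, but also choosing $x\in\overline\Omega$ appropriately one sees $g_n$ collapses a positive-diameter subset of $\overline\Omega$ towards $\operatorname{Im}(g)$, contradicting that each $g_n$ is a projective isomorphism preserving the proper convex set — more precisely, the standard argument: $g(\overline\Omega)$ is the limit (in Hausdorff topology on compact subsets of the domain of definition) of $g_n(\overline\Omega)=\overline\Omega$, so $g(\overline\Omega\setminus\PR(\Ker\tilde g))\subset\overline\Omega$; combined with the fact that $g$ has rank $<d+1$, the image of $\Omega$ under $g$ is contained in the relative boundary of $g(\overline\Omega)$, which lies in $\partial\Omega$ (using that $g(\overline\Omega)$ is properly convex of lower dimension, hence equal to its relative boundary union relative interior, and the relative interior has empty interior in $\PR(V)$). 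So $\operatorname{Im}(g)\cap\Omega=\emptyset$, while $\operatorname{Im}(g)\cap\overline\Omega\supset g(\overline\Omega\setminus\PR(\Ker\tilde g))\neq\emptyset$ gives the intersection with $\overline\Omega$.

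Next, for the kernel: apply the same reasoning to the sequence $g_n^{-1}\in\Aut(\Omega)$. It need not converge, but we may extract a subsequence so that $g_n^{-1}\to h$ in $\PR(\End(V))$ for some $h$ in the closure of $\Aut(\Omega)$. A linear-algebra lemma (e.g.\ from Benoist's work on limits of projective transformations, or a direct singular-value decomposition argument) shows that $\operatorname{Im}(h)\supseteq\PR(\Ker\tilde g)$ and $\Ker(h)\supseteq\PR(\operatorname{Im}\tilde g)^{\perp}$-type relations hold; concretely, writing $\tilde g_n = k_n a_n k_n'$ in $KAK$ with the largest singular value normalised to $1$, the limit $\tilde g$ has image spanned by the limit of the top singular directions of $k_n$ and kernel containing the limit of the bottom singular directions of $k_n'$; for $\tilde g_n^{-1}$ the roles of top and bottom swap, so $\operatorname{Im}(h)=\lim$ of bottom directions of $k_n'$, which contains $\PR(\Ker\tilde g)$. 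Hence by the first part applied to $h$, $\operatorname{Im}(h)\cap\Omega=\emptyset$, so \emph{a fortiori} $\PR(\Ker\tilde g)\cap\Omega=\emptyset$. And $\operatorname{Im}(h)\cap\overline\Omega\neq\emptyset$ and $\operatorname{Im}(h)\subseteq$ a subspace; to get that $\PR(\Ker\tilde g)$ itself meets $\overline\Omega$ one argues directly: $\Ker\tilde g$ is the limit of subspaces each of which meets $\overline\Omega$ trivially only if... actually the cleanest route is symmetric — $\PR(\Ker\tilde g)=\operatorname{Im}(h')$ for a suitable limit $h'$ of a subsequence of $g_n^{-1}$ after a further normalisation isolating the relevant singular block, and then invoke the first part for $h'$. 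I expect the main obstacle to be exactly this bookkeeping with singular value decompositions to identify $\Ker\tilde g$ and $\operatorname{Im}\tilde g$ with images of appropriate limits of $\Aut(\Omega)$-elements; once that is set up, the geometric input (a lower-dimensional properly convex image cannot meet the open set $\Omega$) is short.

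Finally I would assemble: combining the two halves, $\operatorname{Im}(g)\cap\Omega=\emptyset$, $\Ker(g)\cap\Omega=\emptyset$, and $\operatorname{Im}(g)\cap\overline\Omega\neq\emptyset$, $\Ker(g)\cap\overline\Omega\neq\emptyset$. The non-emptiness of $\Ker(g)\cap\overline\Omega$ can alternatively be obtained for free by a dimension count: $g(\overline\Omega)$ spans $\operatorname{Im}(g)$ which has dimension $r<d+1$, and $\overline\Omega$ spans $\PR(V)$, so $\tilde g$ must annihilate a $(d+1-r)$-dimensional subspace spanned by vectors representing points of $\overline\Omega$ — i.e.\ one can choose a basis of $\overline\Omega$-points, and since $\tilde g$ has rank $r$, some nontrivial combination mapping to a point of $\overline\Omega$ (by convexity, barycentres of $\overline\Omega$-points lie in $\overline\Omega$) lies in $\Ker\tilde g$; care is needed that the combination has nonnegative coefficients, which one arranges by a Carathéodory-type / separation argument. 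This last point, ensuring the kernel hits $\overline\Omega$ rather than just some projective subspace, is the secondary delicate step, and I would handle it via the $h$-argument above to keep everything uniform.
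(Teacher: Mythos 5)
The paper states Fact~\ref{fait:spec gap} without proof (it is listed among ``standard facts''), so your argument must stand on its own, and as written it has a genuine gap in each half. For $\operatorname{Im}(g)\cap\Omega=\emptyset$, your ``more precisely'' argument uses only that $g$ is singular and that $g(\overline\Omega\smallsetminus\PR(\Ker\tilde g))\subset\overline\Omega$, and no argument from these hypotheses alone can work: the projection $[x:y:z]\mapsto[x:0:z]$, applied to the square $\{|x|,|y|<1\}$ in the chart $z=1$, maps $\overline\Omega$ into $\overline\Omega$ and is singular, yet its image is the line $\{y=0\}$ through the centre of $\Omega$. Concretely, the step ``the image of $\Omega$ under $g$ is contained in the relative boundary of $g(\overline\Omega)$, which lies in $\partial\Omega$'' is unjustified and false in general: for $g$ a normalised limit of powers $\gamma^{n}$ of a biproximal automorphism, $g(\overline\Omega\smallsetminus\PR(\Ker\tilde g))$ is the single point $x_\gamma^+$, whose relative boundary is empty; and ``lower-dimensional, hence empty interior'' does not give disjointness of the whole subspace $\PR(\operatorname{Im}\tilde g)$ from the open set $\Omega$. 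The missing ingredient is exactly the dynamical input your first sentence gestures at but never carries out: if $y=g(x)\in\Omega$, then $g_nx\in\Omega$ for large $n$, hence $x=g_n^{-1}(g_nx)\in g_n^{-1}\Omega=\Omega$, so $d_\Omega(x,g_nx)$ stays bounded; by properness of the $\Aut(\Omega)$-action on $(\Omega,d_\Omega)$ (e.g.\ via Fact~\ref{benz}) a subsequence of $(g_n)$ would then converge in $\PGL(V)$, contradicting the singularity of $g$. (The two assertions about meeting $\overline\Omega$ are the unproblematic ones.)

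The kernel half rests on the containment $\operatorname{Im}(h)\supseteq\PR(\Ker\tilde g)$ for a limit $h$ of $(g_n^{-1})$, and this is false: from $\tilde g_n\tilde g_n^{-1}=\mathrm{id}$ one only gets $\tilde g\tilde h=0$, i.e.\ $\operatorname{Im}\tilde h\subseteq\Ker\tilde g$. Take $\tilde g_n=\diag(n^2,n,1)$, automorphisms of a triangle: $\Ker\tilde g$ is $2$-dimensional, while every projective limit of $(g_n^{-1})$ is $\diag(0,0,1)$, with $1$-dimensional image, and no scalar renormalisation of $\tilde g_n^{-1}$ can ``isolate the relevant singular block''; so your \emph{a fortiori} step goes through the wrong inclusion and $\Ker(g)\cap\Omega=\emptyset$ is not proved, while the closing Carath\'eodory/barycentre sketch for $\Ker(g)\cap\overline\Omega\neq\emptyset$ leaves precisely the non-negativity of the coefficients, i.e.\ the whole point, unresolved. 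Both kernel statements have short correct substitutes: if $\PR(\Ker\tilde g)\cap\overline\Omega=\emptyset$, then $g_n\to g$ uniformly on the compact set $\overline\Omega$, so $g(\overline\Omega)=\lim g_n(\overline\Omega)=\overline\Omega$ spans $\PR(V)$, contradicting singularity — this gives $\Ker(g)\cap\overline\Omega\neq\emptyset$ (alternatively, your correct inclusion $\operatorname{Im}\tilde h\subseteq\Ker\tilde g$ together with the easy part applied to $h$ gives it); and $\Ker(g)\cap\Omega=\emptyset$ follows by duality from the easy part applied to ${}^t g_n\in\Aut(\Omega^*)$, since any point of $\PR(\operatorname{Im}{}^t\tilde g)\cap\overline{\Omega^*}$ is a hyperplane containing $\PR(\Ker\tilde g)$ and disjoint from $\Omega$ (or by a direct sign argument with a linear form positive on the sharp cone above $\Omega$, writing $\tilde p=\tilde x+\tilde z$ for $p\in\Omega\cap\PR(\Ker\tilde g)$ and deriving $\tilde g\tilde x=-\tilde g\tilde z$).
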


\subsubsection{Analysis of an automorphism that attains its translation length}

\begin{lemma}\label{lem:realell}
 Let $\Omega$ be a properly convex open set. Let $g\in\Aut(\Omega)$  with $\ell(g)>0$ and suppose there exists $x\in\Omega$ such that $d_\Omega(x,gx)=\ell(g)$. Consider $a,b\in\partial\Omega$ such that $a,x,gx,b$ are aligned in this order. Then
 \begin{enumerate}[label=(\arabic*)]
 \item \label{item:realellgeod} the path $c:\R\rightarrow \Omega$, defined by $c(t)\in [g^nx,g^{n+1}x]$ and $d_\Omega(c(t),g^nx)=t-n$ for all $n\in\Z$ and $t\in[n,n+1]$, is a geodesic;
 \item \label{item:realelldiag} the restriction of $g$ to $x_g^+\oplus x_g^-$ is diagonalisable over $\C$;
 \item \label{item:realellbip} the restriction of $g$ to the span $\PR(W)$ of $\{g^nx\}_n$ is biproximal;
 \item \label{item:realellface} $x_g^+\oplus x_g^0\cap\PR(W)\cap\overline\Omega$ (\resp $x_g^-\oplus x_g^0\cap\PR(W)\cap\overline\Omega$) is the smallest $g$-invariant closed face of $\Omega$ that contains $b$ (\resp $a$);
 \item \label{item:realellpasaligne} if $x,gx,g^2x$ are not aligned, then $x_g^0\cap\overline\Omega$ is non-empty;
 \item \label{item:realellpasr1} if $g$ is not rank-one, then $d_{\spl}(a,b)=2$.
 \end{enumerate}
\end{lemma}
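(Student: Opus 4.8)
The plan is to prove the six items of Lemma~\ref{lem:realell} more or less in the order listed, since later items rely on earlier ones. The standing hypothesis is that $g$ has positive translation length, realised at some point $x\in\Omega$, i.e.\ $d_\Omega(x,gx)=\ell(g)$. Applying $g^n$ we get $d_\Omega(g^nx,g^{n+1}x)=\ell(g)$ for all $n$, and by \eqref{longueur de translation} and the triangle inequality, $d_\Omega(g^mx,g^nx)=|n-m|\ell(g)$ for all $m,n\in\Z$. This already gives \ref{item:realellgeod}: concatenating the segments $[g^nx,g^{n+1}x]$ with arc-length parametrisation yields an isometric embedding $c:\R\to\Omega$, because additivity of the distance along consecutive points forces the path to be geodesic (this is the ``only if'' content one reads off from Fact~\ref{fait:geodcassee}, iterated). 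In particular $c$ is a (possibly non-straight) biinfinite geodesic invariant under $g$, and $g$ acts on it by the positive translation of amplitude $\ell(g)$.

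Next I would apply Fact~\ref{fait:limgeodcassee} to $c$: as $t\to+\infty$ the points $c(t)$ converge to a point $\xi_+$ lying in a proper face $F_+$ of $\Omega$, and as $t\to-\infty$ they converge to $\xi_-\in F_-$, where $F_\pm$ are the smallest closed faces containing the relevant boundary endpoints $b_{ts}$, resp.\ $a_{ts}$; moreover $\dim F_\pm$ equals $\dim(\conv c(\R))-1$. Since $g$ preserves $c$ and commutes with the limits, $\xi_\pm$ and the faces $F_\pm$ are $g$-invariant; by Fact~\ref{fait:spec gap} (applied to limits of $g^{\pm n}/\|g^{\pm n}\|$) these are attracting/repelling phenomena, which lets me identify $\xi_+$ with the attracting direction of $g$ restricted to $W:=\Span\{g^nx\}_n$ and similarly $\xi_-$ with the repelling one. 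Unwinding, $g|_W$ has a dominant eigenvalue realised at $\xi_+$ and a smallest one at $\xi_-$; because $c$ is an \emph{isometric} line whose two ends go to these two faces, the intermediate eigenvalues of $g|_W$ must be strictly between, and the top/bottom eigenvalues must have multiplicity one with no Jordan block — otherwise the distance $d_\Omega(g^nx,g^{-n}x)$ would not grow linearly like $2n\ell(g)$, or the convergence $c(t)\to\xi_\pm$ would fail to be to an extreme enough point. This yields \ref{item:realellbip} ($g|_W$ biproximal) and, restricting attention to the plane $x_g^+\oplus x_g^-\subset\PR(W)$, the diagonalisability statement \ref{item:realelldiag}: the action there has eigenvalues $\lambda_1(\tilde g)$ and $\lambda_{d+1}(\tilde g)$, each simple, and a $2$-dimensional invariant subspace with two distinct-modulus eigenvalues is automatically diagonalisable over $\C$.

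For \ref{item:realellface}, I would combine the description of $F_\pm$ from Fact~\ref{fait:limgeodcassee} with the eigenspace decomposition: the smallest $g$-invariant closed face of $\Omega$ containing $b$ is exactly $F_+$, which sits inside the hyperplane $x_g^+\oplus x_g^0$ (the $g$-invariant complement to $x_g^-$ inside $\PR(W)$, extended appropriately) intersected with $\PR(W)\cap\overline\Omega$; the reverse containment follows because this intersection is itself a $g$-invariant closed convex subset of $\overline\Omega$ containing $b$, hence contains the smallest such face. Symmetrically for $a$. Item \ref{item:realellpasaligne} is then immediate: if $x,gx,g^2x$ are not aligned then $\dim(\conv c(\R))\geq 2$, so $\dim F_+\geq 1$, so $F_+$ is a nondegenerate face lying in $x_g^+\oplus x_g^0$; since $F_+$ is proper it must meet $x_g^0$ (a hyperplane in $\PR(W)$ inside $x_g^+\oplus x_g^0$), giving a point of $x_g^0\cap\overline\Omega$. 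Finally \ref{item:realellpasr1}: suppose $g$ is not rank-one. We already know (Fact~\ref{period_is_translation_length}, since $\ell(g)>0$) that if the endpoints $a,b$ were extremal then $g$ would be biproximal and hence, by Fact~\ref{equivalences rang un}.\ref{Item : d_spl>2} contrapositive, $d_{\spl}(a,b)\le 2$; and if the endpoints fail to be strongly extremal the face analysis above shows they lie in proper $g$-invariant faces with $a\in F_-$, $b\in F_+$, $[a,b']\subset\partial\Omega$ for suitable $b'$, forcing a path of length $2$ in $\partial\Omega$ from $a$ to $b$ (through a common supporting-hyperplane argument exactly as in the proof right after Fact~\ref{equivalences rang un}). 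Since always $d_{\spl}(a,b)\geq 2$ for endpoints of a geodesic, we conclude $d_{\spl}(a,b)=2$.

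The main obstacle I anticipate is \ref{item:realellbip}–\ref{item:realelldiag}: turning the soft statements ``$c$ is an isometric line, its ends converge to faces $F_\pm$'' into the hard spectral conclusion that the extreme eigenvalues of $g|_W$ are simple with trivial Jordan blocks. The delicate point is ruling out a nontrivial Jordan block at the top eigenvalue while still having linear distance growth $d_\Omega(g^nx,g^{-n}x)=2n\ell(g)$; I expect one must either invoke an explicit computation of the Hilbert distance along $g$-orbits in terms of $\log\lambda_1-\log\lambda_{d+1}$ (the exactness $d_\Omega(x,gx)=\ell(g)$ is what pins it down, since a Jordan block would make $d_\Omega(x,g^nx)/n$ strictly exceed $\ell(g)$ at finite $n$ — contradiction with additivity), or argue via Fact~\ref{fait:spec gap} that a non-simple or non-diagonalisable top eigenvalue would make the limiting rank-one idempotent $\lim g^n/\|g^n\|$ have image meeting $\Omega$, which is forbidden. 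I would work out that distance computation carefully, as it is the crux on which the cleaner items \ref{item:realellface}–\ref{item:realellpasr1} rest.
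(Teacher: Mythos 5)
Your item \ref{item:realellgeod} is proved exactly as in the paper, and your reduction of \ref{item:realellface}--\ref{item:realellpasr1} to the spectral statements follows the paper's structure (Fact~\ref{fait:limgeodcassee} for the faces $F_\pm$, then the face analysis). But the crux, items \ref{item:realelldiag}--\ref{item:realellbip}, is not actually proved, and the reasons you sketch in the main text are partly wrong. ``Otherwise $d_\Omega(g^nx,g^{-n}x)$ would not grow linearly'' does not rule out a top eigenvalue of multiplicity $\geq 2$: take $\Omega$ a triangle and $g=\mathrm{diag}(\lambda,\lambda,\mu)$; the translation length is attained at every point and the distance grows exactly linearly, yet the top eigenvalue is double. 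Your second suggested route is also incorrect as stated: Fact~\ref{fait:spec gap} only says that limits of automorphisms have image meeting $\overline\Omega$ (not $\Omega$) and kernel missing $\Omega$, and in the triangle example $\lim g^n/\Vert g^n\Vert$ is a rank-two projector whose image is an edge of the triangle, so no contradiction arises; moreover calling the limit a ``rank-one idempotent'' presupposes proximality, which is what has to be proved. The real issue for \ref{item:realellbip} is not multiplicity of $\lambda_1$ as such (the cyclic subspace $W$ generated by $\tilde x$ meets a semisimple $\lambda_1$-eigenspace in at most a line), but the possible presence in $\tilde x$ of components along \emph{other} eigenvalues of maximal modulus ($-\lambda_1$ or complex pairs), which would put a two-dimensional top-modulus block inside $W$ and destroy proximality of $g|_W$. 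The paper eliminates these by writing a full Jordan-type decomposition of a lift $\tilde g$, showing via Fact~\ref{fait:spec gap} that the renormalised limits $\frac{\ell!}{n^\ell}\tilde g^n\tilde x$ are nonzero, and using the convergence $g^nx\to\xi_+$ (your Fact~\ref{fait:limgeodcassee}) to force all those components of $\tilde x$ to vanish; diagonalisability (\ref{item:realelldiag}) is then obtained by a separate contradiction: if a Jordan block were present, the boundary point $y=\stereo_{\xi_+}^{x}\in F_-$ would satisfy $g^ny\to\xi_+$, forcing the closed face $F_-$ to contain $[y,\xi_+]\ni x\subset\Omega$, absurd. None of this is in your proposal, and you concede the step is open.

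Two further remarks. First, your ``distance computation'' strategy can be salvaged for \ref{item:realelldiag} alone: the comparison $|d_\Omega(x,gx)-\kappa(g)|\leq C$ uniformly over $\Aut(\Omega)$ (Section 2.5) plus the singular-value asymptotics $\kappa(g^n)=n\ell(g)+\tfrac{\ell_++\ell_-}{2}\log n+O(1)$ (where $\ell_\pm$ are the nilpotency degrees at the extreme eigenvalues) contradicts $d_\Omega(x,g^nx)=n\ell(g)$ unless $\ell_\pm=0$; this is a genuinely different and shorter route to \ref{item:realelldiag}, but it does not give \ref{item:realellbip}, for the reason above. Second, your argument for \ref{item:realellpasr1} is loose: when $x,gx,g^2x$ are not aligned, $a$ and $b$ are not fixed by $g$, so Fact~\ref{period_is_translation_length} and Fact~\ref{equivalences rang un} do not apply to them; the paper instead splits on alignment, using Corollary~\ref{cor:carac rgun} in the aligned case and, in the non-aligned case, a point $\xi\in x_g^0\cap F_+\cap F_-$ (produced by \ref{item:realellface}--\ref{item:realellpasaligne}) through which $d_{\spl}(a,b)\leq 2$.
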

\begin{proof}
 Let us check that \ref{item:realellgeod} holds. Consider three real numbers $r\leq s\leq t$, pick three integers $k\leq n\leq m$ such that $k\leq r\leq k+1$ and $n\leq s\leq n+1$ and $m\leq t\leq m+1$. By \eqref{longueur de translation}, we have 
 \begin{align*}
  d_\Omega(c(r),c(t)) & \geq d_\Omega(g^kx,g^{m+1}x) - d_\Omega(g^kx,c(r)) -d_\Omega(c(t),g^{m+1}x) \\
  & \geq \ell(g^{m+1-k})- d_\Omega(g^kx,c(r)) -d_\Omega(c(t),g^{m+1}x)\\
  & = \sum_{i=k}^m d_\Omega(g^ix,g^{i+1}x)- d_\Omega(g^kx,c(r)) -d_\Omega(c(t),g^{m+1}x) \\
  &  = d_\Omega(c(r),g^{k+1}x) + \sum_{i=k+1}^{m-1}d_\Omega(g^ix,g^{i+1}x) + d_\Omega(g^mx,c(t))\\
  & \geq d_\Omega(c(r),c(s))+d_\Omega(c(s),c(t)).
 \end{align*}

 For all distinct pair of points $(y,z)\in\overline\Omega^2$, let us denote by $\stereo_y^z\in\partial\Omega$ the point such that $y,z,\stereo_y^z$ are aligned in this order. Let $\PR(W)\subset \PR(V)$ be the span of $\{g^nx\}_{n\in\Z}$, with dimension $k\geq 1$; we set $\Omega'=\Omega\cap\PR(W)$. By Fact~\ref{fait:limgeodcassee}, the smallest closed face of $\Omega$ that contains $\{g^nb\}_{n\in\Z}$ (\resp $\{g^na\}_{n\in\Z}$), denoted by $F_+$ (\resp $F_-$), is proper, and therefore its span $\PR(W_+)$ (\resp $\PR(W_-)$) has dimension $k-1$. Moreover $\stereo_{g^nx}^{g^mx}\in F_+$ (\resp $F_-$) for all $m>n$ (\resp $m<n$), and $(g^nx)_n$ converges to some point $\xi_+\in F_+\cap x_g^+$ (\resp $\xi_-\in F_-\cap x_g^-$) which is fixed by $g$, when $n$ goes to $+\infty$ (\resp $-\infty$).
 
 Consider a lift $\tilde g\in\GL(V)$ of $g$ that preserves one connected component $C\subset V\smallsetminus\{0\}$ of the preimage of $\Omega$, and such that $\lambda_1(\tilde g)=1$. Let us examine the Jordan normal form of $\tilde g$: there exists $\ell\geq0$ and a decomposition
 \begin{equation}\label{eq:decomp g}
 \tilde g=u_1+\dots+u_\alpha - (u_1'+\dots+u'_{\alpha'}) + r_1^{\theta_1}v_1+\dots+r_\beta^{\theta_\beta}v_\beta + h,
 \end{equation}
 which satisfies the following. The product of any two matrices in this sum is zero. The integers $\alpha,\beta\geq 0$ are not both zero. The sequence $(\frac{1}{n^\ell}h^n)_n$ tends to zero.  The matrices $u_1,\dots,u_\alpha,u_1',\dots,u'_{\alpha'}$ are all conjugate to the matrix with zeros everywhere except on the upper-left block of size $\ell+1$, which is the exponential of the upper-triangular matrix whose $(i,j)$-entry is $1$ if $j=i+1$ and zero otherwise. For $\theta\in \R$ and $1\leq i\leq \beta$, the matrices $r_i^\theta$ and $v_i$ are simultaneously conjugate to the matrices with zeros everywhere except on the upper-left block of size $2\ell+2$, where they are respectively of the form
 \[\begin{pmatrix}
   \mathrm{rot}^\theta & & \\
   & \ddots & \\
   & & \mathrm{rot}^\theta \\
  \end{pmatrix}
   \ \text{and} \ \exp
  \begin{pmatrix}
   0 & I_2 & & \\
   & \ddots & \ddots & \\
   & & \ddots & I_2 & \\
   & & &0 \\
  \end{pmatrix}, \ \text{with} \quad \mathrm{rot}^\theta=\begin{pmatrix} \cos\theta & \sin\theta \\ -\sin\theta & \cos\theta \end{pmatrix} \ \text{and} \ I_2=\mathrm{rot}^0.\]
 Finally $\theta_1,\dots,\theta_\beta\in\R\smallsetminus \pi\Z$. Let $\bar u_i=\lim_{n\to\infty}\frac{\ell!}{n^\ell}u_i^n$ and $\bar u_i'=\lim_{n\to\infty}\frac{\ell!}{n^\ell}u_i'{}^n$ for $1\leq i\leq \alpha$ and $ \bar v_i=\lim_{n\to\infty}\frac{\ell!}{n^\ell}v_i^n$ for $1\leq i\leq \beta$; the set of accumulation points of $(r_i^{n\theta_i})_n$ is $\{r_i^{\theta}:\theta\in\overline{\theta_i\Z+2\pi\Z}\}$ for $1\leq i\leq \beta$. Let $\tilde x\in C$ be a lift of $x\in\Omega$. The accumulation points of $(\frac{\ell!}{n^\ell}\tilde g^n\tilde x)_n$ are 
 $$\left\{ \sum_i\bar u_i\tilde x + (-1)^{m}\sum_i\bar u_i'\tilde x + \sum_ir_i^{\theta_i'}\bar v_i\tilde x : m\in\{0,1\}, \ \theta_i'\in\overline{\theta_i(2\Z+m)+2\pi\Z}\right\},$$
 which are non-zero by Fact~\ref{fait:spec gap}. Since $(g^nx)_n$ converges to $\xi^+$, these accumulation points are all in $\xi^+\cap C$, and this imply that $\bar u_i'\tilde x=0$ for $1\leq i\leq \alpha'$ and $\bar v_i\tilde x=0$ for $1\leq i\leq \beta$. Up to considering another basis, we can assume that $\bar u_i\tilde x=0$ for any $2\leq i\leq\alpha$. 
 
 The element $\tilde g$ commutes with every matrix of its decomposition \eqref{eq:decomp g}, and hence also with $\{\bar u_i\}_{1\leq i\leq \alpha}$, and $\{\bar u'_i\}_{1\leq i\leq \alpha'}$, and $\{r_i^\theta\}_{1\leq i\leq \beta,\ \theta\in\R}$ and $\{\bar v_i\}_{1\leq i\leq \beta}$. Thus $\{\bar u_i\tilde g^n\tilde x\}_{2\leq i\leq \alpha}$,  $\{\bar u'_i\tilde g^n\tilde x\}_{1\leq i\leq \alpha'}$, and $\{\bar v_i\tilde g^n\tilde x\}_{1\leq i\leq \beta}$ are zero for any $n\geq0$.
 By construction of $W$, all elements $\{\bar u_i\}_{2\leq i\leq \alpha}$, and $\{\bar u'_i\}_{1\leq i\leq \alpha'}$, and $\{\bar v_i\}_{1\leq i\leq \beta}$ are zero on $W$.
 
 Suppose by contradiction that the restriction of $g$ to $x_g^+$ is not diagonalisable over $\C$, \ie that $\ell>0$. Then $\bar u_1 \tilde \xi_+=0$ for any lift $\tilde \xi_+\in W$ of $\xi_+$. This, and the fact that $\bar u_1\tilde x\neq 0$, imply that $\bar u_1\tilde y\neq 0$ for any lift $\tilde y\in C$ of $y:=\stereo_{\xi_+}^x\in F_-$. As a consequence, $(g^ny)_n$ converges to $\xi_+$. Since $F_-$ is closed, it contains $\xi_+$, as well as $x$: contradiction. For the same reasons, the restriction of $g$ to $x_g^-$ is diagonalisable over $\C$, and this concludes the proof of \ref{item:realelldiag}.
 
 The sequence of restrictions $(\tilde g^n_{|W})_n$ converges to $\bar u_{1|W}$ which is proximal because it is a projector with rank $1$. Therefore $\tilde g^n_{|W}$ is proximal for $n$ large enough, and so is $\tilde g_{|W}$. For the same reasons, $\tilde g_{|W}^{-1}$ is proximal, and this concludes the proof of \ref{item:realellbip}.
 
 In order to establish \ref{item:realellface}, it is enough to prove that $F_+$ (\resp $F_-$) is contained in $x_g^+\oplus x_g^0$ (\resp $x_g^-\oplus x_g^0$), since its dimension is $k-1$. Pick $\xi\in F_+$; the sequence $(g^n\xi)_{n\geq 0}$ cannot converge to $\xi_-$ since this point does not belong to $F_+$. That $g_{|\PR(W)}$ is biproximal implies that $\xi\in x_g^+\oplus x_g^0$. The proof of $F_-\subset x_g^-\oplus x_g^0$ is similar.
 
 Suppose that $x,gx,g^2x$ are not aligned. Then $b\in F_+\smallsetminus \{\xi_+\}\subset x_g^+\oplus x_g^0\smallsetminus x_g^+$, hence $(g^{-n} b)_n$ accumulates in $x_g^0$. This proves \ref{item:realellpasaligne}.
 
 Suppose that $g$ is not rank-one. Since $x\in[a,b]\cap\Omega$, the simplicial distance between $a$ and $b$ is at least $2$. If $x,gx,g^2x$ are aligned then $a=\xi_-$ and $b=\xi_+$ are fixed by $g$, and $d_{\spl}(a,b)\leq 2$ by Corollary~\ref{cor:carac rgun}. Otherwise, there is $\xi\in x_g^0\cap\partial\Omega'$, and $d_{\spl}(a,b)\leq d_{\spl}(a,\xi)+d_{\spl}(\xi,b)\leq 2$ since $\xi\in F_+\cap F_-$. This proves \ref{item:realellpasr1}.
\end{proof}

\subsection{Non-straight closed geodesics on convex cocompact manifolds}

Let $\Gamma\subset\PGL(V)$ be a discrete subgroup that preserves a properly convex open set $\Omega\subset\PR(V)$; set $M:=\Omega/\Gamma$. In Section~\ref{Periodic orbits and elements of Gamma}, we have associated to each periodic geodesic of $M$ a conjugacy class of $\Gamma$. When $\Gamma$ acts convex cocompactly on $\Omega$, one can produce a weak converse of this; to each automorphism of $\gamma\in\Gamma$, we are able to associate a geodesic segment of length $\ell(\gamma)$, which is closed on $M$ and is freely homotopic to $\gamma$, but which is not necessarily closed in $T^1M$.

\begin{fait}[\!{\!\cite[Prop.\,10.4]{fannycvxcocpct}}]\label{une geodesique presque periodique dans le coeur convexe}
Consider a discrete subgroup $\Gamma\subset\PGL(V)$ that acts convex cocompactly on a properly convex open sets $\Omega\subset\PR(V)$. Then for any $\gamma\in\Gamma$, there exists $x\in\mathcal{C}^{\core}_\Omega(\Gamma)$ such that $d_\Omega(x,\gamma x)=\ell(\gamma)$.
\end{fait}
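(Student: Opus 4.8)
The statement (Fact~\ref{une geodesique presque periodique dans le coeur convexe}) is attributed to \cite[Prop.\,10.4]{fannycvxcocpct}, so strictly the plan is simply to cite that reference; but let me sketch how one would prove it directly, since the ingredients are all available above. Fix $\gamma\in\Gamma$. The core $\mathcal{C}^{\core}_\Omega(\Gamma)$ is a non-empty, closed, $\Gamma$-invariant convex subset of $\Omega$ on which $\Gamma$ acts cocompactly (Definition~\ref{Def : convexe cocompacite}). The function $x\mapsto d_\Omega(x,\gamma x)$ is continuous and $\Gamma$-equivariant in the sense that it is constant along $\Gamma$-orbits conjugating $\gamma$; more precisely it descends to a continuous function on the compact quotient $\mathcal{C}^{\core}_\Omega(\Gamma)/\langle\gamma\rangle$ (or rather on the relevant quotient). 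The plan is therefore: first, restrict attention to $\mathcal{C}^{\core}:=\mathcal{C}^{\core}_\Omega(\Gamma)$; second, show that the infimum of $d_\Omega(\cdot,\gamma\cdot)$ over $\mathcal{C}^{\core}$ equals $\ell(\gamma)$ (the infimum over all of $\Omega$, by \eqref{longueur de translation}); third, show this infimum is attained.

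\textbf{Key steps.} For the infimum being attained: let $(x_n)_n$ be a sequence in $\mathcal{C}^{\core}$ with $d_\Omega(x_n,\gamma x_n)\to\ell(\gamma)$. Since $\Gamma$ acts cocompactly on $\mathcal{C}^{\core}$, choose $g_n\in\Gamma$ with $g_nx_n$ lying in a fixed compact fundamental domain $K\subset\mathcal{C}^{\core}$; up to extracting, $g_nx_n\to x_\infty\in K$. Now $d_\Omega(g_nx_n,\,g_n\gamma g_n^{-1}\,g_nx_n)=d_\Omega(x_n,\gamma x_n)\to\ell(\gamma)$. The issue is that the conjugates $g_n\gamma g_n^{-1}$ need not stabilize; however, since $d_\Omega(g_nx_n,(g_n\gamma g_n^{-1})g_nx_n)$ is bounded and $g_nx_n$ stays in a compact set, and since $\Gamma$ acts properly discontinuously on $\Omega$, the set $\{g_n\gamma g_n^{-1}:n\}$ is finite (only finitely many elements of $\Gamma$ can move a point of a fixed compact set a bounded distance). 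So up to further extraction $g_n\gamma g_n^{-1}=\gamma'$ is constant, with $\ell(\gamma')=\ell(\gamma)$, and $d_\Omega(x_\infty,\gamma'x_\infty)=\lim d_\Omega(g_nx_n,\gamma' g_nx_n)=\ell(\gamma)=\ell(\gamma')$. This gives a point realizing the translation length of a conjugate $\gamma'$ of $\gamma$; conjugating back by the appropriate group element produces a point $x\in\mathcal{C}^{\core}$ with $d_\Omega(x,\gamma x)=\ell(\gamma)$, since $\mathcal{C}^{\core}$ is $\Gamma$-invariant.

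\textbf{The infimum over the core equals $\ell(\gamma)$.} This is the point that requires a little care: a priori $\inf_{x\in\mathcal{C}^{\core}}d_\Omega(x,\gamma x)$ could exceed $\inf_{x\in\Omega}d_\Omega(x,\gamma x)=\ell(\gamma)$. When $\ell(\gamma)>0$ and $\gamma$ is biproximal with axis meeting $\Omega$, the axis is the natural minimizing set, but the axis need not lie in $\mathcal{C}^{\core}$ in general. One argues instead as follows: given $\varepsilon>0$ pick $y\in\Omega$ with $d_\Omega(y,\gamma y)\le\ell(\gamma)+\varepsilon$; then all points $\gamma^n y$, $n\in\mathbb{Z}$, lie within bounded distance of the geodesic path through them, which by Fact~\ref{fait:limgeodcassee} and \eqref{longueur de translation}-type estimates stays a bounded distance from $\mathcal{C}^{\core}$ (using that $\mathcal{C}^{\core}$ contains all conical/orbital limit points of $\gamma$, in particular $x_\gamma^\pm$ when $\gamma$ is biproximal, or more precisely the relevant endpoints). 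Projecting $y$ to a nearest point of the convex set $\mathcal{C}^{\core}$ is $1$-Lipschitz for $d_\Omega$ only up to a bounded additive error in the Hilbert metric, so one gets a point of $\mathcal{C}^{\core}$ moved by at most $\ell(\gamma)+\varepsilon+C$ for a uniform $C$; combined with the attainment argument and minimality of $\ell(\gamma)$, and letting $\varepsilon\to0$ after a limiting argument, one concludes the infimum over $\mathcal{C}^{\core}$ is exactly $\ell(\gamma)$. \textbf{The main obstacle} is precisely this last step --- controlling that the almost-minimizing axis of $\gamma$ does not escape $\mathcal{C}^{\core}$ --- and the cleanest route is to invoke \cite[Prop.\,10.4]{fannycvxcocpct} directly, which is why the paper states it as a Fact.
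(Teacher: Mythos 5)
The paper does not prove this statement at all: it is imported verbatim as a Fact with the citation \cite[Prop.\,10.4]{fannycvxcocpct}, so your final decision to ``invoke the reference'' is exactly what the paper does, and to that extent the proposal is consistent with it. Your first ingredient --- attainment of $\inf_{x\in\mathcal{C}^{\core}}d_\Omega(x,\gamma x)$ via cocompactness, proper discontinuity (boundedly many conjugates $g_n\gamma g_n^{-1}$ moving a point of a fixed compact set a bounded distance), extraction, and conjugating back using the $\Gamma$-invariance of $\mathcal{C}^{\core}$ --- is correct and standard.

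However, the sketched direct argument for the crucial step, namely that $\inf_{x\in\mathcal{C}^{\core}}d_\Omega(x,\gamma x)=\ell(\gamma)$ rather than merely $\geq\ell(\gamma)$, has a genuine gap, and you should not present it as ``a little care'' away from a proof. First, the nearest-point projection onto a closed convex subset of a Hilbert geometry is in general \emph{not} $1$-Lipschitz (this is a CAT(0) phenomenon that fails for non-Riemannian Hilbert metrics), and your ``$1$-Lipschitz up to a bounded additive error'' claim is unjustified; even granting it, you would only obtain $\inf_{\mathcal{C}^{\core}}d_\Omega(\cdot,\gamma\cdot)\leq\ell(\gamma)+\varepsilon+C$ with a constant $C$ that does not tend to $0$ as $\varepsilon\to0$, so the proposed limiting argument yields $\ell(\gamma)+C$, not $\ell(\gamma)$. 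Second, the assertion that an almost-minimizing $\langle\gamma\rangle$-orbit ``stays a bounded distance from $\mathcal{C}^{\core}$'' is precisely the nontrivial content one is trying to prove: $\gamma$ need not be biproximal, $x_\gamma^\pm$ need not exist or lie in $\Lambda^{\orb}$ a priori, and controlling which boundary faces the broken geodesic of Fact~\ref{fait:limgeodcassee} converges to, and why they meet $\Lambda^{\orb}$, requires the structural input (e.g.\ bisaturation of $\overline\Omega\smallsetminus\Lambda^{\orb}$ and the eigenvalue analysis as in Lemma~\ref{lem:realell}) that constitutes the actual proof in \cite{fannycvxcocpct}. So either cite the reference outright, or replace the projection step by an argument that exhibits a $\gamma$-invariant line through $\mathcal{C}^{\core}$ spanned by suitable fixed points in $\overline{\mathcal{C}^{\core}}$ on which the displacement is exactly $\tfrac12\log(\lambda_1/\lambda_{d+1})$.
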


The following corollary ensures the closed curve can be taken tangent to $T^1M_{\core}$.

\begin{cor}\label{Corollaire : real d'ell dans core}
 Let $\Gamma\subset\PGL(V)$ be a convex cocompact discrete subgroup. Consider a $\Gamma$-invariant properly convex open set $\Omega\subset\PR(V)$ such that $\Gamma$ acts convex cocompactly on $\Omega$ and $\Omega^*$; set $M=\Omega/\Gamma$. Let $\gamma\in\Gamma$ and $v\in T^1\Omega$ be such that $\ell(\gamma)>0$ and $\gamma\pi v=\pi\phi_{\ell(\gamma)}v$. Then $\phi_{\pm\infty}v\in\Lambda^{\orb}$ and $\pi_\Gamma v\in T^1M_{\core}$.
\end{cor}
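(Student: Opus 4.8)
The plan is to exploit the analysis of automorphisms realising their translation length from Lemma~\ref{lem:realell} together with the bisaturated-boundary property of Fact~\ref{Fait : fannycvxcocpct}. First I would reduce to the case where $v$ is tangent to the geodesic $c$ built in Lemma~\ref{lem:realell}.\ref{item:realellgeod}: since $\gamma\pi v=\pi\phi_{\ell(\gamma)}v$ and $\ell(\gamma)=d_\Omega(\pi v,\gamma\pi v)>0$, the point $x:=\pi v$ satisfies $d_\Omega(x,\gamma x)=\ell(\gamma)$, and $v$ points along $[x,\gamma x]$ (after possibly replacing $v$ by $-v$, which only swaps $\phi_{+\infty}$ and $\phi_{-\infty}$, so it is harmless for the conclusion). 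Thus we may apply Lemma~\ref{lem:realell} with $g=\gamma$, and by part~\ref{item:realellbip} the restriction of $\gamma$ to the span $\PR(W)$ of $\{\gamma^nx\}_n$ is biproximal, with attracting and repelling fixed points $\xi_+=\phi_{+\infty}v$ and $\xi_-=\phi_{-\infty}v$ (these are exactly the limits of $\gamma^{\pm n}x$, which lie on the geodesic $c$, hence are $\phi_{\pm\infty}v$).

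Next I would show $\xi_\pm\in\Lambda^{\orb}$. Observe that $\xi_+=\lim_n\gamma^n x$ is an accumulation point of the orbit $\Gamma\cdot x$ with $x\in\mathcal C^{\core}_\Omega(\Gamma)$ (we may take $x\in\mathcal C^{\core}$ by Fact~\ref{une geodesique presque periodique dans le coeur convexe}, after noting that the realising point is unique up to the geometry along the geodesic and that $v$ can be taken based at such a point); hence $\xi_+\in\Lambda^{\orb}$, and similarly $\xi_-\in\Lambda^{\orb}$. Actually the cleaner route: $\xi_\pm$ are fixed points of the biproximal element $\gamma$ restricted to $\PR(W)$, and $\Lambda^{\orb}$ is $\Gamma$-invariant and closed (Danciger--Gu\'eritaud--Kassel, \cite[Cor.\,4.8]{fannycvxcocpct}), so once we know one point of the geodesic $c(\R)$ accumulates to $\Lambda^{\orb}$ we are done; but $c(\R)$ stays within bounded distance of the $\Gamma$-orbit of $x$ (it is the orbit of $x$ under $\langle\gamma\rangle$ up to interpolation), which lies in $\mathcal C^{\core}$, so both endpoints are in $\overline{\mathcal C^{\core}}\cap\partial\Omega=\Lambda^{\orb}$.

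Finally I would upgrade $\phi_{\pm\infty}v\in\Lambda^{\orb}$ to $\pi_\Gamma v\in T^1M_{\core}$, i.e.\ that the \emph{entire} geodesic through $v$ is contained in $\CC^{\core}_\Omega(\Gamma)$. By definition $T^1M_{\core}$ consists of vectors whose lifted geodesic has both endpoints in $\Lambda^{\orb}$, so once $\phi_{\pm\infty}v\in\Lambda^{\orb}$ this is essentially immediate: the straight geodesic $(\phi_{-\infty}v,\phi_{+\infty}v)$ has both ideal endpoints in $\Lambda^{\orb}$, hence lies in the convex hull $\CC^{\core}_\Omega(\Gamma)$ of $\Lambda^{\orb}$ (using that $[\phi_{-\infty}v,\phi_{+\infty}v]\cap\Omega\ne\emptyset$, which holds since it contains $x$). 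I expect the only subtle point to be confirming that the two endpoints really are the attracting/repelling fixed points of a \emph{biproximal} restriction --- that is, checking that $x,\gamma x,\gamma^2 x$ span a genuine geodesic whose endpoints are the $\gamma$-fixed points $\xi_\pm$ rather than some face --- but this is exactly the content of Lemma~\ref{lem:realell}.\ref{item:realellbip} and \ref{item:realellface}, so the work is already done upstream; the present corollary is then a short assembly. The main obstacle, if any, is bookkeeping the possible sign ambiguity $v$ versus $-v$ and making sure $x$ may indeed be chosen in $\mathcal C^{\core}$, for which we invoke Fact~\ref{une geodesique presque periodique dans le coeur convexe} and the hypothesis that $\Gamma$ acts convex cocompactly on both $\Omega$ and $\Omega^*$.
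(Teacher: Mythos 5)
There is a genuine gap, and it sits exactly at the point you flagged as ``already done upstream''. Your argument identifies $\phi_{\pm\infty}v$ with $\xi_\pm:=\lim_{n\to\infty}\gamma^{\pm n}\pi v$, on the grounds that these limits ``lie on the geodesic $c$''. But the geodesic $c$ of Lemma~\ref{lem:realell}.\ref{item:realellgeod} is in general a \emph{broken} geodesic: nothing forces $\pi v,\gamma\pi v,\gamma^2\pi v$ to be aligned, and when they are not (the typical situation for a non-rank-one $\gamma$, e.g.\ translating inside a properly embedded simplex with the minimizing point $x=\pi v$ chosen off the straight axis direction), the vector $v$ is tangent only to the straight segment $[x,\gamma x]$, whose forward ideal endpoint $b=\phi_{+\infty}v$ is \emph{not} $\xi_+$ and need not even be fixed by $\gamma$. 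What Lemma~\ref{lem:realell}.\ref{item:realellface} actually gives is weaker: $b$ and $\xi_+$ lie in the same closed face $x_\gamma^+\oplus x_\gamma^0\cap\PR(W)\cap\overline\Omega$ of $\partial\Omega$. So your argument proves $\xi_\pm\in\Lambda^{\orb}$ (which, by the way, needs no relocation of $x$ into $\CC^{\core}_\Omega(\Gamma)$ — the full orbital limit set is taken over all basepoints, and the given $v$ cannot be moved anyway), but it does not prove $\phi_{\pm\infty}v\in\Lambda^{\orb}$, which is the actual assertion. The auxiliary claims you use to bridge this — that the straight geodesic through $v$ stays within bounded distance of the $\langle\gamma\rangle$-orbit, and that $\overline{\CC^{\core}}\cap\partial\Omega=\Lambda^{\orb}$ applies to it — fail or are unjustified precisely in the broken case.

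The missing idea is the one the hypothesis ``$\Gamma$ acts convex cocompactly on $\Omega$ \emph{and} $\Omega^*$'' is there for, and which your proposal never invokes: by Fact~\ref{Fait : fannycvxcocpct}, $\overline\Omega\smallsetminus\Lambda^{\orb}$ has bisaturated boundary. Since $b$ and $\xi_+$ lie in a common closed face, the segment $[\xi_+,b]$ is contained in $\partial\Omega$; since $\xi_+\in\Lambda^{\orb}$, bisaturation forces $b\in\Lambda^{\orb}$ (otherwise $[\xi_+,b]$ would have to meet $\Omega$), and symmetrically for $a=\phi_{-\infty}v$. Only then does the final step (both endpoints in $\Lambda^{\orb}$, hence $\pi_\Gamma v\in T^1M_{\core}$ by definition) go through. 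So the assembly you propose works once this duality/bisaturation step replaces the erroneous identification of endpoints; as written, the proof would only cover the case where the minimizing geodesic is straight.
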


\begin{proof}
According to Fact~\ref{Fait : fannycvxcocpct}, the properly convex set $\overline\Omega\smallsetminus \Lambda^{\orb}$ has bisaturated boundary (\ie $[\xi,\eta]\cap\Omega\neq\emptyset$ for all $\xi\in\Lambda^{\orb}$ and $\eta\in\overline\Omega\smallsetminus\Lambda^{\orb}$). By Lemma~\ref{lem:realell}, $\phi_{\pm\infty}v\in x_\gamma^\pm\oplus x_\gamma^0\cap\partial\Omega$, hence the segment between $\phi_{\pm\infty}v$ and the limit of $(\gamma^{\pm n}\pi v)_n$ is contained in $\partial\Omega$. This implies that $\phi_{\pm\infty}v\in\Lambda^{\orb}$ since $\overline\Omega\smallsetminus \Lambda^{\orb}$ has bisaturated boundary and since $\lim_{n\to\infty}\gamma^n\pi v\in\Lambda^{\orb}$. Thus $v$ belongs to $ T^1M_{\core}$.
\end{proof}

The following corollary ensures that when $\Gamma$ is convex cocompact, if $(T^1\Omega_{\max}/\Gamma)_{\bip}$ is non-empty then we can find $\Omega\subset\Omega_{\max}$ such that $\Omega/\Gamma$ is rank-one. This corollary generalises results of Islam \cite[Lem.\,6.4]{islam_rank_one} and Zimmer \cite[Th.\,7.1]{zimmer_higher_rank} for compact convex projective manifolds; one may also compare it to a remark of Islam \cite[Rem.\,A.1.C]{islam_rank_one} which concerns not necessarily compact convex projective manifolds with a compact convex core.

\begin{cor}\label{Corollaire : bip* => rg1}
 Let $\Gamma\subset\PGL(V)$ be a discrete subgroup that acts convex cocompactly on a properly convex open set $\Omega\subset\PR(V)$.
 \begin{itemize}
  \item Any biproximal element of $\Gamma$ whose dual axis in $\PR(V^*)$ intersects $\Omega^*$ is rank-one; in particular, if $T^1(\Omega^*/\Gamma)_{\bip}\neq\emptyset$, then $\Omega/\Gamma$ is rank-one.
  \item Suppose that $\Gamma$ acts convex cocompactly on $\Omega^*$, then any biproximal element of $\Gamma$ whose axis intersects $\Omega$ is rank-one; in particular, if $(T^1\Omega/\Gamma)_{\bip}\neq\emptyset$, then $\Omega/\Gamma$ is rank-one.
 \end{itemize}
\end{cor}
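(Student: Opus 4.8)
The plan is to reduce the whole statement to the single assertion

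\noindent($\star$) \emph{if $\Gamma$ acts convex cocompactly on $\Omega$ and $g\in\Gamma$ is biproximal with the axis of $g$ in $\PR(V^*)$ meeting $\Omega^*$, then $g$ is rank-one.}

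\noindent Granting ($\star$), the second displayed item about elements is ($\star$) applied to the pair $(\Omega^*,\Gamma)$ instead of $(\Omega,\Gamma)$: biproximality is intrinsic to $g\in\PGL(V)=\PGL(V^*)$ (Definition~\ref{def_prox}), rank-oneness is unchanged under $\Omega\leftrightarrow\Omega^*$ by Fact~\ref{equivalences rang un}, $\Omega^{**}=\Omega$, and $\Gamma\subset\Aut(\Omega)=\Aut(\Omega^*)$. Each ``in particular'' clause then follows from the corresponding element statement: for $N$ equal to $\Omega^*/\Gamma$, \resp $\Omega/\Gamma$, the hypothesis $T^1N_{\bip}\neq\emptyset$ forces the existence of a biproximal periodic vector on $N$, equivalently (Definition~\ref{biprox_orbit}) of a biproximal element of $\Gamma$ whose axis in the relevant projective space meets the relevant properly convex open set, because biproximal periodic vectors are dense in $T^1N_{\bip}$ by \cite{topmixing}.

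To prove ($\star$): since $g$ is biproximal, $\lambda_1(\tilde g)>\lambda_{d+1}(\tilde g)$ for any lift $\tilde g\in\GL(V)$, hence $\ell(g)>0$ by \eqref{ell}. By convex cocompactness and Fact~\ref{une geodesique presque periodique dans le coeur convexe} there is $x\in\Omega$ with $d_\Omega(x,gx)=\ell(g)$, so Lemma~\ref{lem:realell} applies to $g$ and $x$; fix $a,x,gx,b\in\partial\Omega$ aligned in this order. Viewing $g$ as a (still biproximal) automorphism of $\Omega^*$, its attracting and repelling fixed points in $\PR(V^*)$ are the hyperplanes $x_g^+\oplus x_g^0$ and $x_g^-\oplus x_g^0$ (Section~\ref{duality}); since $\axis(g)=\axis(g^{-1})$ meets $\Omega^*$, applying Fact~\ref{les extremites des geodesiques periodiques biproximales sont lisses} to $\Omega^*$ with the biproximal automorphisms $g$ and $g^{-1}$ shows that both of these hyperplanes are smooth points of $\partial\Omega^*$, hence (Section~\ref{duality}) each meets $\overline\Omega$ in a single point; as $x_g^\pm\in\overline\Omega$, these singletons are $\{x_g^+\}$ and $\{x_g^-\}$. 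Now Lemma~\ref{lem:realell}.\ref{item:realellface} identifies the smallest $g$-invariant closed face of $\Omega$ containing $b$ (\resp $a$) with a subset of $(x_g^+\oplus x_g^0)\cap\overline\Omega=\{x_g^+\}$ (\resp $(x_g^-\oplus x_g^0)\cap\overline\Omega=\{x_g^-\}$); since that face is nonempty it equals the singleton, so $b=x_g^+$ and $a=x_g^-$. Therefore $x\in[a,b]\cap\Omega=\axis(g)\cap\Omega$, so the axis of $g$ meets $\Omega$ in $\PR(V)$ as well, and by Fact~\ref{equivalences rang un}.\ref{Item : dualite de rg1} the element $g$ is rank-one.

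The core argument is short once Lemma~\ref{lem:realell} is available. The main point of care is the first step of the last paragraph: translating the dual-axis hypothesis into the statement that $x_g^+\oplus x_g^0$, \resp $x_g^-\oplus x_g^0$, touches $\overline\Omega$ only at $x_g^+$, \resp $x_g^-$, which is exactly the input needed to run Lemma~\ref{lem:realell}.\ref{item:realellface}. The other delicate point is the reduction of the ``in particular'' clauses, which rests on the fact that a nonempty biproximal unit tangent bundle is the closure of its biproximal periodic vectors (\cite{topmixing}); one could instead try to produce a biproximal element directly, by a ping-pong argument from attracting fixed points approximating a pair of points of $\Lambda^{\prox}$ spanning a line through the convex core, but guaranteeing that the resulting product is biproximal rather than merely proximal is precisely the difficulty, so I would invoke \cite{topmixing}.
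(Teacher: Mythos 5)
Your proof is correct and follows essentially the paper's own route: both arguments reduce the second bullet to the first by exchanging $\Omega$ and $\Omega^*$ (via Fact~\ref{equivalences rang un}), and both prove the first bullet by realising the translation length at some $x\in\mathcal{C}^{\core}_\Omega(\Gamma)$ (Fact~\ref{une geodesique presque periodique dans le coeur convexe}), feeding this into Lemma~\ref{lem:realell}, and concluding with the duality criterion of Fact~\ref{equivalences rang un}.\ref{Item : dualite de rg1}. The only divergence is which sub-item of Lemma~\ref{lem:realell} carries the weight: you use \ref{item:realellface} together with the smoothness of the dual fixed points $x_g^\pm\oplus x_g^0$ in $\partial\Omega^*$ to identify $a=x_g^-$ and $b=x_g^+$, whereas the paper invokes \ref{item:realellpasaligne}, noting that a point of the dual axis lying in $\Omega^*$ is a hyperplane containing $x_g^0$ and disjoint from $\overline\Omega$, so $x_g^0\cap\overline\Omega=\emptyset$ and $x,gx,g^2x$ must be aligned, whence the axis meets $\Omega$; the two variants are interchangeable. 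One caveat concerns the ``in particular'' clauses, which the paper's proof passes over in silence: your appeal to the density of biproximal periodic vectors in $T^1N_{\bip}$ from \cite{topmixing} is the natural move, but that density rests on Benoist-type density of fixed-point pairs (as in Fact~\ref{Fait : des bips partout}), which is proved under a strong irreducibility assumption absent from the corollary's hypotheses; so either that assumption should be recorded (it does hold in the situation where the corollary is meant to be applied), or one must supply a direct approximation argument --- and, as you rightly point out, making a ping-pong product biproximal rather than merely proximal is exactly the non-trivial point. This is a gap in the statement's unproved ``in particular'' as much as in your write-up, not a defect of your main argument.
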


\begin{proof}
 The first point is an immediate consequence of Lemma~\ref{lem:realell}.\ref{item:realellpasaligne} and Fact~\ref{une geodesique presque periodique dans le coeur convexe}. Let us establish the second point. Let $\gamma\in\Gamma$ be biproximal with $\axis(\gamma)\cap\Omega\neq\emptyset$. Since $\Gamma$ acts convex cocompactly on $\Omega^*$, the first point implies that $\gamma$, seen as a rank-one automorphism of $\Omega^*$, is rank-one. Thus, $\gamma$ is a rank-one automorphism of $\Omega$ by Fact~\ref{equivalences rang un}.
\end{proof}

\subsection{A closing lemma}

In this section we state a closing lemma, generalising \cite[Th.\,4.4]{bray_top_mixing} and weaker than the classical one from Anosov \cite[Lem.\,13.1]{anosov}. We briefly recall the idea: whenever a geodesic segment comes back sufficiently close to its starting point (no matter how long it is), we can find closed geodesic which tracks it. The following version a more geometrical formulation of the closing lemma. We state the dynamical version below.

\begin{lemma}\label{lem:closinggen}
 Let $\Omega\subset\PR(V)$ be a properly convex open set, $x\in\Omega$ and $(\xi_-,\xi_+)\in\partial\Omega^2$. Assume that $d_{\spl}(\xi_+,\overline F_\Omega(\xi_-))\geq 2$ and $d_{\spl}(\xi_-,\overline F_\Omega(\xi_+))\geq 2$. Then there exists $R>0$ such that for any neighbourhood $W$ of $\overline B_{\overline \Omega}(\xi_-,R)\times \overline B_{\overline\Omega}(\xi_+,R)$ in $\overline \Omega^2$, there exists a neighbourhood $U$ of $(\xi_-,\xi_+)$ such that for any $g\in\Aut(\Omega)$, if $(g^{-1}x,gx)\in U$, then $g$ is biproximal and $(x_g^-,x_g^+)\in W$.
\end{lemma}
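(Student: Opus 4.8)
The strategy is the classical ``attracting/repelling neighbourhood'' argument for proximality, combined with the geometric control provided by the hypothesis $d_{\spl}(\xi_+,\overline F_\Omega(\xi_-))\geq 2$ and its symmetric counterpart. First I would fix an affine chart containing $\overline\Omega$ and work with linear representatives. The key point extracted from the simplicial-distance hypothesis is a \emph{transversality} statement: because $d_{\spl}(\xi_+,\overline F_\Omega(\xi_-))\geq 2$, no supporting hyperplane of $\Omega$ at a point of $\overline F_\Omega(\xi_-)$ can contain $\xi_+$ (such a hyperplane meeting $\overline\Omega$ exactly along a face $F\ni\xi_+$ would give $d_{\spl}(\xi_+,\overline F_\Omega(\xi_-))\leq 1$), and likewise with the roles of $\xi_\pm$ swapped. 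Geometrically this means that $\overline F_\Omega(\xi_-)$ and $\overline F_\Omega(\xi_+)$ are ``in general position'': they span complementary-type subspaces, and a small ball $\overline B_{\overline\Omega}(\xi_-,R)$ stays off every supporting hyperplane at points of $\overline B_{\overline\Omega}(\xi_+,R)$ for $R$ small enough. Choosing $R>0$ small enough to guarantee this separation is the first step, and then $W$ (a neighbourhood of $\overline B_{\overline\Omega}(\xi_-,R)\times\overline B_{\overline\Omega}(\xi_+,R)$) can be shrunk so that the same transversality persists on $W$ with uniform constants.

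\textbf{Main construction.} Given such a $W$, I would build neighbourhoods $V_\pm$ of $\xi_\pm$ in $\PR(V)$, together with a compact ``cone'' $\mathcal{K}_\pm$ separating them, as follows. Take $V_+\subset \overline B_{\overline\Omega}(\xi_+,R)$-adapted open set and let $H$ be the (closed) set of hyperplanes that are supporting hyperplanes of $\Omega$ at some point of $\overline B_{\overline\Omega}(\xi_-,R)$; by the transversality above, $\overline{V_+}$ misses $\bigcup_{H\in\mathcal H}H$, so there is $\epsilon>0$ such that every such hyperplane stays $\epsilon$-far from $\overline{V_+}$, and symmetrically. Now suppose $g\in\Aut(\Omega)$ satisfies $(g^{-1}x,gx)\in U$ for a sufficiently small neighbourhood $U$ of $(\xi_-,\xi_+)$. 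Since $gx$ is close to $\xi_+$ and $g$ is an automorphism of $\Omega$, the image $g\overline\Omega$ is a convex set containing $gx$ near $\xi_+$; using that $g^{-1}x$ is near $\xi_-$, one shows that $g$ maps the complement of a neighbourhood of $\overline F_\Omega(\xi_-)$ inside a small neighbourhood of $\overline F_\Omega(\xi_+)$ (this is the Hilbert-geometry analogue of ``$g$ contracts towards its attracting set''). More precisely, after passing to a linear lift $\tilde g$ normalised by $\lambda_1(\tilde g)=1$, the hypothesis that $g^{-1}x\to\xi_-$ and $gx\to\xi_+$ forces the operator norm of $\tilde g$ restricted to a complement of the span of $\overline F_\Omega(\xi_-)$ to be small, while $\tilde g$ is close to the identity on the span of $\overline F_\Omega(\xi_+)$ direction. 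Combined with the transversality of the two faces (their spans intersect only in a controlled low-dimensional piece, or are complementary), a standard fixed-point/contraction argument on $\PR(V)$ — iterating $g$ on the compact convex neighbourhood $\overline{V_+}$, which $g$ maps strictly inside itself — shows $g$ has an attracting fixed point $x_g^+\in\overline{V_+}$, with a gap between the top eigenvalue modulus and the rest; that is, $g$ is proximal with $x_g^+\in \overline B_{\overline\Omega}(\xi_+,R)$ (up to enlarging to $W$). Applying the same reasoning to $g^{-1}$, using $g^{-1}x$ near $\xi_-$, yields that $g^{-1}$ is proximal with $x_{g^{-1}}^+=x_g^-\in\overline B_{\overline\Omega}(\xi_-,R)$. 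Hence $g$ is biproximal and $(x_g^-,x_g^+)\in W$.

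\textbf{Expected obstacle.} The delicate point is making the contraction argument uniform and correctly exploiting the \emph{exact} strength of the simplicial hypothesis $d_{\spl}\geq 2$ (rather than strict convexity or smoothness). One must be careful that $\xi_\pm$ need not be extremal or smooth: they can lie on nontrivial faces, so one genuinely has to contract towards the whole face $\overline F_\Omega(\xi_+)$, not towards a point, and only after the contraction is established does a fixed point inside that face emerge. The right technical device is to phrase everything in terms of the action of $g$ on pairs (point of $\overline\Omega$, supporting hyperplane), i.e.\ to work with the attracting set $\overline F_\Omega(\xi_+)$ and the repelling hyperplane-set dual to $\overline F_\Omega(\xi_-)$, using Fact~\ref{fait:spec gap} (applied to accumulation points of $g^n/\|g^n\|$) to rule out the degenerate case where the top ``eigenspace'' is not a line. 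I would also invoke the upper semi-continuity of the maps $\overline B_{\overline\Omega}(\cdot,R)$ (Fact~\ref{semi-continuite superieure}) and openness of the set of proximal transformations with continuity of the attracting point (Remark~\ref{prox_are_open}) to package the limiting argument cleanly: if the lemma failed, one would get a sequence $g_n$ with $(g_n^{-1}x,g_nx)\to(\xi_-,\xi_+)$ but $g_n$ not biproximal or $(x_{g_n}^-,x_{g_n}^+)\notin W$, and a compactness argument on $\overline{\PR(\End V)}$ together with Fact~\ref{fait:spec gap} would contradict the transversality set up in the first step.
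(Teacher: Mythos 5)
Your plan goes off track at the very first step, on the role of $R$. You propose to choose $R$ \emph{small}, so that the transversality of the two faces persists on small balls, and then to show that $g$ maps a small compact convex neighbourhood $\overline{V_+}$ of $\xi_+$ strictly into itself, producing an attracting fixed point $x_g^+$ close to $\xi_+$. That is a strictly stronger statement than the lemma, and it is false in general: when $\xi_+$ lies in the relative interior of a nontrivial face $F_\Omega(\xi_+)$, an automorphism $g$ with $gx$ very close to $\xi_+$ can perfectly well have its attracting fixed point elsewhere in $\overline F_\Omega(\xi_+)$, at a definite $d_{\overline\Omega}$-distance from $\xi_+$; nothing forces $g\overline{V_+}\subset V_+$ for a small $V_+$, and if that inclusion did hold you would get $x_g^+\in V_+$, i.e.\ $x_g^+$ arbitrarily close to $\xi_+$. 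This is exactly why the lemma is stated with balls $\overline B_{\overline\Omega}(\xi_\pm,R)$ inside the faces for \emph{some} $R$ (larger $R$ makes the conclusion weaker), and why Corollary~\ref{Corollaire : fermeture} only upgrades to ``$(x_g^-,x_g^+)$ in an arbitrary neighbourhood of $(\xi_-,\xi_+)$'' under the extra hypothesis that $\xi_\pm$ are extremal. In the paper's proof $R$ is taken \emph{large}: large enough that the shadow $\mathcal{O}_R(\xi_\pm,x)$ contains the whole closed face $\overline F_\Omega(\xi_\mp)$ (this is where $d_{\spl}(\xi_+,\overline F_\Omega(\xi_-))\geq2$ and compactness enter), and the contraction is a contraction of shadows, $g\,\overline{\mathcal O}_R(g^{-1}x,x)=\overline{\mathcal O}_R(x,gx)\subset\mathcal O_R(g^{-1}x,x)$, which via Brouwer gives a fixed point $\eta_+$ located only in a neighbourhood of $\overline B_{\overline\Omega}(\xi_+,R)$ — not near $\xi_+$. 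Your ``expected obstacle'' paragraph correctly senses that one must contract towards the whole face rather than towards a point, but the main construction you wrote contradicts that insight, and the argument as proposed would fail at this point.

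The second gap is in how you get proximality. The assertions that $\tilde g$ has small operator norm on a complement of the span of $\overline F_\Omega(\xi_-)$ and is ``close to the identity'' on the span of $\overline F_\Omega(\xi_+)$ do not follow from knowing only where the two points $g^{\pm1}x$ sit; a single group element is not constrained in this way. Likewise, the closing compactness argument (a sequence $g_n$ with $(g_n^{-1}x,g_nx)\to(\xi_-,\xi_+)$, accumulation of $g_n$ in $\PR(\End V)$, Fact~\ref{fait:spec gap}) does not by itself contradict failure of biproximality: limits of normalised $g_n$ say nothing about eigenvalue multiplicities of each $g_n$. What is missing is the mechanism that replaces this: after the Brouwer step producing fixed points $\eta_\pm$ in the two (large) shadow regions, one runs a second shadow contraction based at $\eta_-$, applies Birkhoff's criterion (Fact~\ref{Lemme : contraction -> prox}) in the quotient $\PR(V/\eta_-)$ to get proximality there and a second fixed point $\zeta_+$ with $\eta_-\oplus\zeta_+$ meeting $\Omega$, and then uses Fact~\ref{fait:spec gap} together with $\dim V\geq3$ to rule out the wrong matching and conclude $\eta_-\in x_g^-$, $\zeta_+=x_g^+$, with the symmetric argument for $g^{-1}$ giving biproximality and $(x_g^-,x_g^+)=(\eta_-,\eta_+)\in W$. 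Without some substitute for these steps, your sketch does not reach the conclusion.
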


To prove the previous result, we need the following generalisation of the fact that projective transformations with an attracting fixed point are proximal.
\begin{fait}[\!{\!\cite{Birkhoff_perronfrob}}]\label{Lemme : contraction -> prox}
 Let $g\in\PGL(V)$ contract a properly convex open set, in the sense that there exists a properly convex open set $\Omega\subset\PR(V)$ such that $g(\overline\Omega)\subset\Omega$. Then $g$ is proximal.
\end{fait}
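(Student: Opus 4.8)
The statement to prove is the cited Fact~\ref{Lemme : contraction -> prox}: if $g\in\PGL(V)$ satisfies $g(\overline\Omega)\subset\Omega$ for some properly convex open $\Omega\subset\PR(V)$, then $g$ is proximal. Since this is quoted from Birkhoff's work on the Perron--Frobenius theorem, the expected treatment is a short self-contained argument rather than a long computation. The plan is to lift everything to a cone in $V$, show that $g$ strictly contracts the Hilbert metric of $\Omega$, and conclude by a fixed-point/spectral-gap argument.

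\textbf{Step 1: set-up in the cone.} Choose a lift $\tilde g\in\GL(V)$ and a connected component $C\subset V\smallsetminus\{0\}$ of the preimage of $\Omega$ such that $\tilde g(\overline C)\subset C\cup\{0\}$; here $\overline C$ denotes the closed cone. Since $\overline\Omega$ is compact and contained in the open set $g^{-1}(\Omega)$... actually more simply: since $g(\overline\Omega)\subset\Omega$ and $\overline\Omega$ is compact, there exists $\epsilon>0$ such that $g(\overline\Omega)$ is contained in the $\epsilon$-neighbourhood (for $d_\Omega$, or for a background metric on $\PR(V)$) of $\overline\Omega$ is still inside $\Omega$; more to the point, $g(\overline\Omega)$ is a compact subset of the open set $\Omega$.

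\textbf{Step 2: strict contraction of the Hilbert metric.} The key classical fact (Birkhoff) is that $g$ contracts $d_\Omega$ by a factor $<1$: there is $0<\lambda<1$ with $d_\Omega(gx,gy)\leq \lambda\, d_\Omega(x,y)$ for all $x,y\in\Omega$. I would prove this as follows. For $x,y\in\Omega$ with $x\neq y$, let $a,b\in\partial\Omega$ be the endpoints of the segment through $x,y$ aligned as $a,x,y,b$. The points $ga,gb\in\partial(g\Omega)\subset\Omega$ lie strictly inside $\Omega$, so if $a',b'\in\partial\Omega$ are the endpoints of the line through $gx,gy$ with $a',gx,gy,b'$ aligned, then $a'$ lies beyond $ga$ and $b'$ beyond $gb$ on that line. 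By the projective invariance of the cross-ratio, $[a,x,y,b]=[ga,gx,gy,gb]$, and since the cross-ratio $[a',gx,gy,b']$ with $a',b'$ further out is strictly smaller, $d_\Omega(gx,gy)<d_\Omega(x,y)$. To get a uniform factor $\lambda$, use compactness of $g(\overline\Omega)$ inside $\Omega$: the quantity measuring "how much room is left between $\partial(g\Omega)$ and $\partial\Omega$" is bounded below, which forces a uniform contraction ratio. I expect \emph{this uniform estimate is the main obstacle}: one has to turn the pointwise strict inequality into a uniform one, which requires a careful compactness argument (or an explicit estimate involving the Hilbert diameter of $g\overline\Omega$ inside $\Omega$, via the formula for Hilbert distance and the fact that $d_\Omega(x,y)\leq d_{g\Omega}(x,y)$ for $x,y\in g\Omega$ combined with $\diam_{d_\Omega}(g\overline\Omega)<\infty$).

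\textbf{Step 3: existence of an attracting fixed point.} Since $(\Omega,d_\Omega)$ restricted to the complete metric space $\overline{g\Omega}$ (complete because it is compact) and $g$ maps $\overline{g\Omega}$ into itself and contracts $d_\Omega$ by $\lambda<1$, the Banach fixed-point theorem gives a unique fixed point $x_0\in\overline{g\Omega}\subset\Omega$, and $g^n y\to x_0$ for every $y\in\Omega$, exponentially fast in $d_\Omega$.

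\textbf{Step 4: from attracting fixed point to proximality.} It remains to deduce that $\tilde g$ has a unique eigenvalue of maximal modulus, simple. Let $\tilde x_0\in C$ be a lift of $x_0$ with $\tilde g\tilde x_0=\mu\tilde x_0$, $\mu>0$ (eigenvalue positive since $\tilde g$ preserves the cone $C$). Rescale so $\mu=1$. For any $v\in C$, the sequence $\tilde g^n v/\|\tilde g^n v\|$ converges to (the unit vector along) $\tilde x_0$ because $g^n[v]\to x_0$ in $\PR(V)$; since $C$ spans $V$, every generalised eigenspace of $\tilde g$ for an eigenvalue of modulus $\geq 1$ other than the line $\R\tilde x_0$ must be disjoint from... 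Here I would argue: write $V=\R\tilde x_0\oplus V'$ with $V'$ a $\tilde g$-invariant complement (exists: take any hyperplane supporting $\overline\Omega$ at... no — take $V'$ spanned by the remaining generalised eigenspaces). If some eigenvalue of $\tilde g|_{V'}$ had modulus $\geq 1$, pick $v\in C$; decompose $v=t\tilde x_0+v'$ with $t>0$ (as $[v]\in\Omega$ and $x_0\in\Omega$, $t$ has the right sign). Then $\tilde g^n v=t\tilde x_0+\tilde g^n v'$; for $g^n[v]\to x_0$ we need $\|\tilde g^n v'\|/\|\tilde g^n v\|\to 0$, i.e. $\tilde g^n v'$ stays bounded while... but if $\tilde g|_{V'}$ has spectral radius $\geq 1$ one can choose $v'$-components making $\|\tilde g^n v'\|$ grow or not decay, giving a contradiction for a suitable $v\in C$ (using that $C$ is open, so $t\tilde x_0+sv'\in C$ for small $s$ and a dense set of $v'\in V'$). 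This shows every eigenvalue of $\tilde g|_{V'}$ has modulus $<1$, so $\mu=1$ is the unique eigenvalue of maximal modulus and it is simple; hence $g$ is proximal with $x_g^+=x_0$. $\square$

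\textbf{Remark on the main obstacle.} The delicate point is Step 2 (uniform contraction); Steps 3--4 are then routine. An alternative that sidesteps the uniform-ratio issue: directly show $\cap_n g^n(\overline\Omega)$ is a single point using the \emph{nested} compact convex sets $g^n(\overline\Omega)$ and that the Hilbert diameter of $g(\overline\Omega)$ measured in $\Omega$ is finite, so the $d_\Omega$-diameter of $g^n(\overline\Omega)$ is $\leq \lambda^{n-1}\cdot(\text{that finite diameter})$ — which again needs Step 2, or one can use strict convexity of the situation more cleverly. I would present the compactness argument for Step 2 as the core of the proof.
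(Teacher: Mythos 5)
The paper offers no proof of this Fact (it is quoted from Birkhoff's Perron--Frobenius paper), so the only question is whether your argument is complete. Steps 1--3 are fine \emph{granted} the uniform contraction ratio: that ratio is exactly Birkhoff's theorem ($\lambda=\tanh(\Delta/4)$ with $\Delta$ the $d_\Omega$-diameter of $g\overline\Omega$, finite because $g\overline\Omega$ is a compact subset of $\Omega$), and your appeal to ``compactness'' alone does not produce it, since the pairs $(x,y)$ range over the non-compact set $\Omega\times\Omega$; you do flag this, so it is an acknowledged omission rather than an error. The genuine gap is Step 4.

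In Step 4 you decompose $V=\R\tilde x_0\oplus V'$ with $V'$ ``spanned by the remaining generalised eigenspaces''. This silently assumes that the generalised eigenspace of the top eigenvalue $\mu$ is the line $\R\tilde x_0$ -- which is precisely the multiplicity-one statement to be proved; if $\mu$ carries a Jordan block, $\R\tilde x_0$ has no $\tilde g$-invariant complement and your decomposition does not exist. Worse, the only dynamical input you use is the qualitative fact that an open set of points converges to $x_0$, and that cannot rule out a Jordan block: the non-proximal element $\tilde g$ with a unipotent $2\times 2$ Jordan block in the top slot and a contracting eigenvalue below (e.g.\ eigenvalues $1,1,\tfrac12$) has an attracting fixed point whose basin is the complement of a projective hyperplane. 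So you must exploit quantitative information. Either use the exponential rate from Banach: on a compact neighbourhood of $x_0$ the metric $d_\Omega$ is bi-Lipschitz to an ambient metric on $\PR(V)$, so $g^ny\to x_0$ at rate $O(\lambda^n)$ uniformly near $x_0$, whereas a Jordan block at $\mu$ forces convergence no faster than $1/n$ for suitable points arbitrarily close to $x_0$ (and geometric multiplicity $\geq 2$ at $\mu$ is excluded by the uniqueness of the Banach fixed point, since it would give other fixed points in $\Omega$). Or replace Step 4 altogether: from $g\overline\Omega\subset\Omega$ one checks $g^{-1}\overline{\Omega^*}\subset\Omega^*$, so Brouwer gives a $g$-invariant hyperplane $H_0$ with $H_0\cap\overline\Omega=\emptyset$; in the affine chart with $H_0$ at infinity, $g$ is affine, fixes $x_0$, and its linear part maps the convex body $\overline\Omega-x_0$ into its interior, hence has spectral radius $<1$ by a Minkowski-gauge estimate -- which yields proximality with no case analysis on Jordan structure.
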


\begin{proof}[Proof of Lemma~\ref{lem:closinggen}]
 The case where $\dim(V)=2$ is trivial, and we assume that $\dim(V)\geq 3$.
 \begin{enumerate}[label=(\arabic*)]
  \item \label{item:closing1} By assumption, we can find $R>0$ large enough so that $\mathcal{O}_R(\xi_\pm,x)$ contains $\overline F_{\Omega}(\xi_\mp)$.
  \item \label{item:closing2} By lower semi-continuity of $d_{\overline\Omega}$, we may find a neighbourhood $U_\pm\subset\overline\Omega\smallsetminus \overline B_\Omega(x,R)$ of $\xi_\pm$ such that $\mathcal{O}_R(\xi,x)$ contains $\overline{\mathcal{O}}_R(x,y)$ for all $\xi\in U_\pm$ and $y\in\Omega\cap U_\mp$.
  \item \label{item:closing3} Our assumption ensures that $d_{\spl}(\xi,\overline F_\Omega(\xi_{\mp}))\geq 2$ for any $\xi\in \overline B_{\overline\Omega}(\xi_\pm,R)$, so we can find $R'\geq R$ large enough so that $\mathcal{O}_{R'}(\xi,x)$ contains $\overline F_{\Omega}(\xi_\mp)$ for any $\xi\in \overline B_{\overline\Omega}(\xi_\pm,R)$.
  \item \label{item:closing4} By lower semi-continuity of $d_{\overline\Omega}$, we may find a neighbourhood $W_\pm\subset\overline\Omega\smallsetminus\overline B_\Omega(x,R')$ of $\overline B_{\overline\Omega}(\xi_\pm,R)$ such that $\mathcal{O}_{R'}(\xi,x)$ contains $\overline{\mathcal{O}}_{R'}(\xi,y)$ for all $\xi\in W_\pm$ and $y\in\Omega\cap W_\mp$, and such that $W_-\times W_+\subset W$.
  \item \label{item:closing5} Take a neighbourhood $U_\pm'\subset U_\pm$ of $\xi_\pm$ such that $\overline{\mathcal{O}}_R(x,y)$ is contained in $W_\pm$ for any $y\in \Omega\cap U_\pm'$.
 \end{enumerate}

 Consider $g\in\Aut(\Omega)$ such that $g^{\pm1}x\in U_\pm'$, and let us show that $g$ is biproximal with $(x_g^-,x_g^+)$ in $W$. By \ref{item:closing2} and since $gx\in U_+$ and $g^{-1}x\in U_-$, we have 
 $$g\overline{\mathcal{O}}_R(g^{-1}x,x)=\overline{\mathcal{O}}_R(x,gx)\subset \mathcal{O}_R(g^{-1}x,x).$$
 Hence $g$ fixes some point $\eta_+\in\overline{\mathcal{O}}_R(x,gx)$ by the Brouwer fixed point theorem. Symmetrically, $g$ fixes some point $\eta_-\in\overline{\mathcal{O}}_R(x,g^{-1}x)$.
 
 By \ref{item:closing5}, the point $\eta_+$ lies in $W_+$, and $\eta_-$ lies in $W_-$. By \ref{item:closing4}, this implies that
 $$g\overline{\mathcal{O}}_{R'}(\eta_-,x)=\overline{\mathcal{O}}_{R'}(\eta_-,gx)\subset \mathcal{O}_{R'}(\eta_-,x).$$
 Therefore, according to Fact~\ref{Lemme : contraction -> prox}, the projection $g'\in\PGL(V/\eta_-)$ of $g$ is proximal, and its attracting fixed point corresponds in $\PR(V)$ to a line of the form $\eta_-\oplus\zeta_+$, where $\zeta_+\in\overline{\mathcal{O}}_{R'}(\eta_-,gx)$ is fixed by $g$.
 
 By Fact~\ref{fait:spec gap}, since $\ell(g)\geq \ell(g')>0$ and since $\eta_-\oplus\zeta_+$ intersects $\Omega$, we either have $(\eta_-,\zeta_+)\in x_g^-\times x_g^+$ or $(\eta_-,\zeta_+)\in x_g^+\times x_g^-$. The latter case contradicts the fact that $\dim(V)\geq 3$ and $g'$ is proximal. Hence $\eta_-\in x_g^-$ and $\zeta_+\in x_g^+$, and $g$ is proximal with $\zeta_+=x_g^+$. Symmetrically, $g^{-1}$ is also proximal and $\eta_+\in x_g^+$. We have proved that $g$ is biproximal with $(x_g^-,x_g^+)=(\eta_-,\eta_+)\in W$.
\end{proof}

\begin{cor}\label{Corollaire : fermeture}
 Let $\Omega\subset\PR(V)$ be a properly convex open set. Let $x\in\Omega$, $(\xi_-,\xi_+)\in\partial\Omega^2$ and $W$ a neighbourhood of $(\xi_-,\xi_+)$. Then there exists a neighbourhood $U$ of $(\xi_-,\xi_+)$ such that for any $g\in\Aut(\Omega)$ with $(g^{-1}x,gx)\in U$,
 \begin{itemize} 
  \item if $d_{\spl}(\xi_-,\xi_+)\geq 3$, then $g$ is rank-one;
  \item if $\xi_-$ and $\xi_+$ are extremal and $d_{\spl}(\xi_-,\xi_+)\geq 2$, then  $g$ is biproximal and $(x_g^-,x_g^+)\in W$;
  \item if $\xi_-$ and $\xi_+$ are distinct and strongly extremal, then $g$ is rank-one and $(x_g^-,x_g^+)\in W$.
 \end{itemize}
\end{cor}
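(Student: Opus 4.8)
The plan is to derive all three items of Corollary~\ref{Corollaire : fermeture} from Lemma~\ref{lem:closinggen} together with the characterisations of rank-one automorphisms in Fact~\ref{equivalences rang un}. The point of Lemma~\ref{lem:closinggen} is that it applies as soon as $d_{\spl}(\xi_+,\overline F_\Omega(\xi_-))\geq 2$ and $d_{\spl}(\xi_-,\overline F_\Omega(\xi_+))\geq 2$, so the main work is to check that each of the three hypotheses implies these simplicial-distance conditions, and then to strengthen ``biproximal'' to ``rank-one'' when warranted.

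First I would treat the second item. If $\xi_\pm$ are extremal then $\overline F_\Omega(\xi_\pm)=\{\xi_\pm\}$, so the hypothesis $d_{\spl}(\xi_-,\xi_+)\geq 2$ is exactly the hypothesis needed to apply Lemma~\ref{lem:closinggen}; choosing $R$ as in that lemma and shrinking the given neighbourhood $W$ if necessary so that it is contained in a neighbourhood of $\overline B_{\overline\Omega}(\xi_-,R)\times\overline B_{\overline\Omega}(\xi_+,R)$ (possible because $\overline B_{\overline\Omega}(\xi_\pm,0)=\{\xi_\pm\}$, i.e.\ one can take $R$ small — wait, $R$ is produced by the lemma, so instead one applies the lemma to $R$ and notes that $(\xi_-,\xi_+)$ itself lies in the interior of $\overline B_{\overline\Omega}(\xi_-,R)\times\overline B_{\overline\Omega}(\xi_+,R)$ only after intersecting with $W$; more carefully, apply Lemma~\ref{lem:closinggen} with its $R$, and take $W':=W\cap W''$ where $W''$ is a neighbourhood of $\overline B_{\overline\Omega}(\xi_-,R)\times\overline B_{\overline\Omega}(\xi_+,R)$, obtaining $U$ with the property that $(g^{-1}x,gx)\in U$ forces $g$ biproximal with $(x_g^-,x_g^+)\in W'\subset W$). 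This gives the second item directly.

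Next, the third item: if $\xi_-\ne\xi_+$ are strongly extremal, then in particular $d_{\spl}(\xi_-,\xi_+)=\infty\geq 3$, and they are smooth is not yet known — but strong extremality of $\xi_+$ alone, for $g$ biproximal with $x_g^+=\xi_+$ (as provided by the second item, whose hypotheses are satisfied here), gives via Fact~\ref{equivalences rang un} that $g$ is rank-one; and then $(x_g^-,x_g^+)=(\xi_-,\xi_+)$-close, i.e.\ in $W$, again from Lemma~\ref{lem:closinggen}. So the third item follows from the second plus Fact~\ref{equivalences rang un}.\ref{Item : d_spl>2} (or rather point (c) of that fact). Finally the first item: if $d_{\spl}(\xi_-,\xi_+)\geq 3$, I would argue that $\overline F_\Omega(\xi_\pm)=\{\xi_\pm\}$ — indeed if $\xi_+$ belonged to a nontrivial segment in $\partial\Omega$ through some $\eta\ne\xi_+$, then $d_{\spl}(\eta,\xi_+)\le 1$ would give $d_{\spl}(\xi_-,\xi_+)\le d_{\spl}(\xi_-,\eta)+1$, and pushing this one step further, $d_{\spl}(\xi_+,\overline F_\Omega(\xi_-))\geq d_{\spl}(\xi_-,\xi_+)-\operatorname{diam}_{\spl}\overline F_\Omega(\xi_-)\geq 3-1=2$ once one checks $\overline F_\Omega(\xi_-)$ has simplicial diameter $\le 1$; a symmetric estimate gives the other inequality. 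Hence Lemma~\ref{lem:closinggen} applies, $g$ is biproximal with $(x_g^-,x_g^+)$ in a prescribed neighbourhood; shrinking $U$ so that this neighbourhood lies inside $\{d_{\spl}\geq 3\}$ (using lower semicontinuity of $d_{\spl}$, Definition~\ref{les metriques du bord}), we get $d_{\spl}(x_g^-,x_g^+)\geq 3$, whence $g$ is rank-one by Fact~\ref{equivalences rang un}.\ref{Item : d_spl>2}.

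The main obstacle I anticipate is the bookkeeping around simplicial distances and closed faces: verifying cleanly that $d_{\spl}(\xi_-,\xi_+)\geq 3$ (resp.\ strong extremality) implies the two hypotheses $d_{\spl}(\xi_\pm,\overline F_\Omega(\xi_\mp))\geq 2$ of Lemma~\ref{lem:closinggen}, which amounts to the elementary but slightly fiddly fact that the closed face of a point has simplicial diameter at most one, together with the triangle-type inequality for $d_{\spl}$. Everything else is a matter of shrinking neighbourhoods using lower semicontinuity of $d_{\spl}$ and $d_{\overline\Omega}$ (Fact~\ref{semi-continuite superieure}) and invoking the equivalences of Fact~\ref{equivalences rang un}. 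The proof is therefore short; I would write it as: ``Apply Lemma~\ref{lem:closinggen} after checking its hypotheses, then use Fact~\ref{equivalences rang un} and lower semicontinuity of $d_{\spl}$ to upgrade to rank-one.''
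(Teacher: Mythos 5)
Your overall route --- verify the hypotheses of Lemma~\ref{lem:closinggen}, apply it, then upgrade ``biproximal'' to ``rank-one'' via Fact~\ref{equivalences rang un} --- is the natural (and surely intended) one, and your treatment of the second item is correct: for extremal $\xi_\pm$ the balls $\overline B_{\overline\Omega}(\xi_\pm,R)$ are the singletons $\{\xi_\pm\}$, so any neighbourhood $W$ of $(\xi_-,\xi_+)$ is admissible in the lemma (your detour through $W\cap W''$ is unnecessary but harmless). Your verification of the lemma's hypotheses in the first item is also fine (closed faces are convex subsets of $\partial\Omega$, hence have simplicial diameter at most $1$, and $d_{\spl}$ satisfies the triangle inequality); note that your aborted claim $\overline F_\Omega(\xi_\pm)=\{\xi_\pm\}$ is false in general, but you do not use it. The first genuine problem is in the third item: you assert that the second item gives $g$ biproximal \emph{with} $x_g^+=\xi_+$. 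It does not; it only gives $(x_g^-,x_g^+)$ close to $(\xi_-,\xi_+)$, and strong extremality is neither an open nor a closed condition, so item (c) of Fact~\ref{equivalences rang un} cannot be applied to $x_g^+$. The fix is cheap here: distinct strongly extremal points satisfy $d_{\spl}(\xi_-,\xi_+)=\infty$, the face-balls are singletons, so one may apply the lemma with the neighbourhood $W\cap\{d_{\spl}\ge 3\}$ (open by lower semicontinuity of $d_{\spl}$) and conclude with Fact~\ref{equivalences rang un}.\ref{Item : d_spl>2}.

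The more serious gap is in the first item, at the step ``shrinking $U$ so that this neighbourhood lies inside $\{d_{\spl}\geq 3\}$''. In Lemma~\ref{lem:closinggen} the neighbourhood $W$ cannot be shrunk below the compact set $\overline B_{\overline\Omega}(\xi_-,R)\times\overline B_{\overline\Omega}(\xi_+,R)$, which sits inside the product of the (possibly non-trivial) open faces $F_\Omega(\xi_-)\times F_\Omega(\xi_+)$; lower semicontinuity of $d_{\spl}$ at the single point $(\xi_-,\xi_+)$ says nothing about this set, and your own estimate (face diameter $\le 1$ plus the triangle inequality) only yields $d_{\spl}\ge 3-1-1=1$ there, which is not enough for Fact~\ref{equivalences rang un}.\ref{Item : d_spl>2}. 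What is missing is the observation that $d_{\spl}$ does not decrease when the endpoints move inside their open faces: if $\eta\in F_\Omega(\xi)$ and $[\eta,\zeta]\subset\partial\Omega$, then $[\xi,\zeta]\subset\partial\Omega$ (take a segment of $\partial\Omega$ containing $\xi$ and $\eta$ in its relative interior; the triangle spanned by its endpoints and $\zeta$ has relative interior disjoint from $\Omega$ --- otherwise a point of the relative interior of $[\eta,\zeta]$ would lie on an open segment from a point of $\Omega$ to a point of $\overline\Omega$ --- hence the whole triangle, and in particular $[\xi,\zeta]$, lies in $\partial\Omega$). Consequently any chain of boundary segments from $\eta_-\in F_\Omega(\xi_-)$ to $\eta_+\in F_\Omega(\xi_+)$ converts into a chain of the same length from $\xi_-$ to $\xi_+$, so $d_{\spl}(\eta_-,\eta_+)\ge d_{\spl}(\xi_-,\xi_+)\ge 3$ on all of $F_\Omega(\xi_-)\times F_\Omega(\xi_+)$. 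With this, $\overline B_{\overline\Omega}(\xi_-,R)\times\overline B_{\overline\Omega}(\xi_+,R)\subset\{d_{\spl}\ge3\}$, the latter set is open by lower semicontinuity, and by compactness of the ball product you may legitimately choose the lemma's $W$ inside it; then $d_{\spl}(x_g^-,x_g^+)\ge3$ and Fact~\ref{equivalences rang un}.\ref{Item : d_spl>2} gives that $g$ is rank-one. Without some such face-invariance argument the first item (and hence your shortcut for the third item via the first) is not proved.
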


Given a convex projective orbifold $M$, we use $\tilde{B}_{T^1M}^{(t)}$ to denote the open balls for the metric $\tilde d_{T^1\Omega}^{(t)}$ (see Section~\ref{rappels sur l'entropie topologique}).

\begin{lemma}\label{closinglemma}
 Let $\Omega\subset\PR(V)$ be a properly convex open set and $\Gamma\subset\Aut(\Omega)$ a discrete subgroup; denote $M=\Omega/\Gamma$. Consider $\alpha>0$ and $v_0\in T^1M$ such that the endpoints $\phi_{-\infty}\vv_0$ and $\phi_\infty \vv_0$ of any lift $\vv_0$ are extremal (\resp strongly extremal). Then there exists $\epsilon>0$ satisfying the following. For any $v\in B_{T^1M}(v_0,\epsilon)$, and for any time $t>\alpha$, if $\phi_tv\in B_{T^1M}(v_0,\epsilon)$, then one can find a biproximal (\resp rank-one) periodic vector $w\in \tilde{B}_{T^1M}^{(t)}(v,\alpha)$ with period in $[t-\alpha,t+\alpha]$.
\end{lemma}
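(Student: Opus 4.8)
The statement is the dynamical counterpart of the geometric closing lemma (Corollary~\ref{Corollaire : fermeture}), so the strategy is to lift everything to $T^1\Omega$ and invoke that corollary. First I would fix a lift $\vv_0\in T^1\Omega$ of $v_0$, set $o=\pi\vv_0$, and write $\xi_\pm=\phi_{\pm\infty}\vv_0\in\partial\Omega$, which by hypothesis are extremal (resp.\ strongly extremal). Apply Corollary~\ref{Corollaire : fermeture} to $(\xi_-,\xi_+)$: given a neighbourhood $W$ of $(\xi_-,\xi_+)$ — I will choose $W$ shortly, small enough that the geometric conclusions translate into $\alpha$-closeness of the periodic vector — we obtain a neighbourhood $U$ of $(\xi_-,\xi_+)$ in $\partial\Omega^2$ such that any $g\in\Gamma$ with $(g^{-1}o,go)\in U$ is biproximal (resp.\ rank-one) with $(x_g^-,x_g^+)\in W$.

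\textbf{From the dynamical hypothesis to $U$.} The core point is that if $v\in B_{T^1M}(v_0,\epsilon)$ and $\phi_tv\in B_{T^1M}(v_0,\epsilon)$ with $t>\alpha$, then, choosing a lift $\vv$ of $v$ with $\vv\in B_{T^1\Omega}(\vv_0,\epsilon)$ (possible if $\epsilon<\inj(M)/2$, or by minimising over lifts), there is $g\in\Gamma$ with $g\phi_t\vv$ close to $\vv_0$; concretely $d_{T^1\Omega}(g\phi_t\vv,\vv_0)$ is controlled by $\epsilon$. Then $g^{-1}o$ is close to $\pi\phi_t\vv=\pi\vv$, which is close to $o$; and $go$ is close to $\pi\phi_{-t}\vv_0$. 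Here one uses that as $t\to\infty$ the forward endpoint $\phi_\infty\vv$ converges to $\phi_\infty\vv_0=\xi_+$ when $\vv$ is close to $\vv_0$ (because $\xi_+$ is smooth — this follows from extremality via Corollary~\ref{cor:carac rgun}-type reasoning, or more simply: $\vv$ close to $\vv_0$ forces $\phi_\infty\vv$ close to $\phi_\infty\vv_0$ by continuity of the Hopf endpoints at points whose geodesic is ``good''), together with the fact that $\pi\phi_t\vv$ lies on $[\,\pi\vv,\phi_\infty\vv)$ and $t>\alpha$ is large, so $go=\pi(g\vv)$ is pushed toward $\phi_{-\infty}(g\phi_t\vv)$, which is close to $\phi_{-\infty}\vv_0=\xi_-$; symmetrically $g^{-1}o$ is close to $\xi_+$. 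Thus, shrinking $\epsilon$ if necessary, $(g^{-1}o,go)\in U$.

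\textbf{Producing the periodic vector and tracking.} Corollary~\ref{Corollaire : fermeture} then gives that $g$ is biproximal (resp.\ rank-one) with axis meeting $\Omega$ (since $(x_g^-,x_g^+)\in W$ and $W$ can be taken so that $\axis(g)\cap\Omega\neq\emptyset$ — this is automatic when $x_g^+$ is smooth, Fact~\ref{les extremites des geodesiques periodiques biproximales sont lisses}, which holds for the rank-one case, and in the merely-extremal case one arranges $W$ inside the bisaturated-type region, or notes the axis passes near $o$). Let $w_0\in T^1\Omega$ be tangent to $\axis(g)$ with $g$ acting by positive translation; its projection $w\in T^1M$ is periodic with period $\ell(g)$. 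It remains to check $w\in\tilde B_{T^1M}^{(t)}(v,\alpha)$ and $\ell(g)\in[t-\alpha,t+\alpha]$. Both follow from Lemma~\ref{crampon} (the key convexity inequality for Hilbert geodesics): the straight geodesic through $w_0$ and the geodesic through $\vv$ have endpoints within a controlled distance (governed by $W$ and $\epsilon$), the basepoints $\pi w_0$ and $\pi\vv\approx o$ are close, and after time $\approx t$ both geodesics return near $o$; Lemma~\ref{crampon} then bounds $d_\Omega(\pi\phi_sw_0,\pi\phi_s\vv)$ by the sum of the endpoint discrepancies for all $0\le s\le t$, which can be made $<\alpha$. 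The period estimate $|\ell(g)-t|\le\alpha$ comes from comparing the translation length $\ell(g)=d_\Omega(\pi w_0,g\pi w_0)$ with $d_\Omega(\pi\vv,\pi\phi_t\vv)=t$ via the triangle inequality and the smallness of $\epsilon$. Finally, adjusting $w_0$ along its axis by at most $\alpha$ (reparametrising) places $w$ exactly in the dynamical ball; since $\alpha$ is at our disposal from the start, taking $W$ and then $\epsilon$ small enough relative to $\alpha$ closes the argument.

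\textbf{Expected main obstacle.} The routine parts are the triangle-inequality bookkeeping and the application of Lemma~\ref{crampon}. The delicate point is the passage ``$v$ close to $v_0$, $\phi_tv$ close to $v_0$, $t$ large $\Rightarrow (g^{-1}o,go)\in U$'': one must be careful that in the merely-extremal (not smooth) case the Hopf parametrisation and the endpoint maps need not be continuous, so the convergence $\phi_\infty\vv\to\xi_+$ as $\vv\to\vv_0$ and $t\to\infty$ has to be argued using extremality of $\xi_\pm$ (which forces uniqueness of the limit geodesic through nearby basepoints pointing near $\xi_\pm$), rather than by invoking continuity of $\Hopf$. This is precisely the kind of subtlety that Lemma~\ref{Lemme : recurrent + rg1 faible -> lisse} and Fact~\ref{fait:geodcassee} handle, and I would lean on those to make the endpoint-convergence step rigorous.
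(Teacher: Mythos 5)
Your overall route is the same as the paper's: lift to $T^1\Omega$, feed the pair of orbit points into Corollary~\ref{Corollaire : fermeture} applied at $x=\pi\vv_0$, pick a vector on the axis of the resulting biproximal (\resp rank-one) element $g$ lying close to $\vv_0$ (this is exactly what the neighbourhood $W$ is designed for: lines with endpoints in $W$ pass within a prescribed distance of $\vv_0$, which in particular forces $\axis(g)\cap\Omega\neq\emptyset$ — no ``bisaturated'' considerations are needed), and then track with Lemma~\ref{crampon} and the triangle inequality to get $w\in\tilde B^{(t)}_{T^1M}(v,\alpha)$ and $|\ell(g)-t|\leq\alpha$. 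That part is sound, up to a garbled clause in your second step ($g^{-1}o$ is close to $\pi\phi_t\vv$, which for large $t$ is near $\xi_+$, not near $o$; your $g$ is the inverse of the paper's $\gamma$, which is harmless).

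The genuine gap is the range of times. Your passage to ``$(g^{-1}o,go)$ lies in $U$'' uses that $\pi\phi_{t}\vv$ and $\pi\phi_{-t}\vv_0$ have already entered the prescribed neighbourhoods of $\xi_\pm$; this holds only for $t\geq t_0$, where $t_0$ depends on $U$, hence on $W$, hence on $\alpha$ and $v_0$ — it does not follow from $t>\alpha$, since $\alpha$ may be arbitrarily small. For returns at times $\alpha<t\leq t_0$ the deck transformation $g$ may be trivial or elliptic, Corollary~\ref{Corollaire : fermeture} gives nothing, and yet the statement still demands a periodic vector; shrinking $\epsilon$ does not repair your argument for these $t$. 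The paper treats this case separately: if $v_0$ is not periodic, one shrinks $\epsilon$ below (half of) $\min_{\alpha\leq t\leq t_0}d_{T^1M}(\phi_tv_0,v_0)$ so that such returns simply cannot occur; if $v_0$ is periodic, then $v_0$ itself is the required periodic vector, and here the extremality (\resp strong extremality) of $\phi_{\pm\infty}\vv_0$ is used once more, via Fact~\ref{period_is_translation_length}, to see that it is biproximal (\resp rank-one). Without this extra step the $\epsilon$ you produce does not satisfy the statement as quantified. A smaller remark: the obstacle you flag is not where the difficulty lies — the endpoint maps $\phi_{\pm\infty}\colon T^1\Omega\to\partial\Omega$ are continuous, and the uniform inclusion $\overline B_\Omega(\pi\phi_{\pm t}\vv,\epsilon_1)\subset U_\pm$ for $\vv$ near $\vv_0$ and $t\geq t_0$ follows from a compactness argument together with Fact~\ref{semi-continuite superieure}; neither Lemma~\ref{Lemme : recurrent + rg1 faible -> lisse} nor Fact~\ref{fait:geodcassee} is needed there. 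Extremality enters only through Corollary~\ref{Corollaire : fermeture} and through the bounded-time case just described.
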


\begin{proof}
 Let $W$ be a neighbourhood of $(\phi_{-\infty}\vv_0,\phi_{\infty}v_0)$ such that $[\xi_-,\xi_+]\cap B_{T^1\Omega}(\phi_t\vv_0,\alpha/8)\neq\emptyset$ for any $0\leq t\leq 1$. By Corollary~\ref{Corollaire : fermeture}, we can find a neighbourhood $U=U_-\times U_+\subset W$ of $\phi_{\pm\infty}\vv_0$ such that for any $\gamma\in\Gamma$, if $(\gamma^{-1}\pi\vv_0,\gamma\pi\vv_0)\in U$ then $\gamma$ is biproximal (\resp rank-one) and $(x_\gamma^-,x_\gamma^+)\in W$. Let $t_0>0$ and $\epsilon_1<\alpha/8$ be such that $\overline B_\Omega(\pi\phi_{\pm t}\vv,\epsilon_1)\subset U_\pm$ for any $\vv\in \overline B_{T^1\Omega}(\vv_0,\epsilon_1)$ and $t\geq t_0$.
 
 Now consider $t\geq t_0$ and $v\in B_{T^1M}(v_0,\epsilon_1)$ such that $\phi_tv\in B_{T^1M}(v_0,\epsilon_1)$. We can find a lift $\vv\in B_{T^1\Omega}(\vv_0,\epsilon_1)$ and an element $\gamma\in\Gamma$ such that $\phi_t\vv\in B_{T^1\Omega}(\gamma\vv_0,\epsilon_1)$. Then $(\gamma^{-1}\pi\vv_0,\gamma\pi\vv_0)\in U$, hence $\gamma$ is biproximal (\resp rank-one) and $(x_\gamma^-,x_\gamma^+)\in W$. Be definition of $W$, we can find $\ww\in B_{T^1\Omega}(\vv_0,\alpha/8)$ tangent to the axis of $\gamma$. Then $d_{T^1\Omega}(\ww,\vv)\leq \alpha/4$ and $d_{T^1\Omega}(\gamma\ww,\phi_t\vv)\leq\alpha/4$; since $\gamma\ww=\phi_{\ell(\gamma)}\ww$, by triangular inequality we have $|\ell(\gamma)-t|\leq \alpha/2$, and $d_{T^1\Omega}^{(t)}(\ww,\vv)\leq \alpha$.
 
 To finish the prooof, it remains to find $\epsilon<\epsilon_1$ such that for all $\alpha\leq t\leq t_0$ and $v\in B_{T^1M}(v_0,\epsilon)$, if $\phi_tv\in B_{T^1M}(v_0,\epsilon)$, then one can find a biproximal (\resp rank-one) periodic vector $w\in \tilde{B}_{T^1M}^{(t)}(v,\alpha)$ with period in $[t-\alpha,t+\alpha]$. If $v_0$ is not periodic, then we take $\epsilon<\epsilon_1$ small enough so that $B_{T^1M}(v_0,\epsilon)\subset B_{T^1M}^{(t_0)}(v_0,\epsilon_2/2)$, where $\epsilon_2:=\min_{\alpha\leq t\leq t_0}d_{T^1M}(\phi_tv_0,v_0)$. If $v_0$ is periodic, then it is biproximal (\resp rank-one) since $\phi_{\pm\infty}\vv_0$ are extremal (\resp strongly extremal). We then take $\epsilon<\epsilon_1$ small enough so that $B_{T^1M}(v_0,\epsilon)\subset B_{T^1M}^{(t_0)}(v_0,\epsilon_2/2)$, where $\epsilon_2:=\min_{t\in\mathcal{T}}d_{T^1M}(\phi_tv_0,v_0)$ and $\mathcal{T}$ is the set of times $\alpha\leq t\leq t_0$ such that $\phi_sv_0\neq v_0$ for any $t-\alpha^< s< t+\alpha$.
\end{proof}

\section{The measure of maximal entropy}\label{The measure of maximal entropy}

In this section we prove that on a non-elementary rank-one convex projective manifold $M=\Omega/\Gamma$, if $\Gamma$ acts convex cocompactly on $\Omega$, then the \emph{Bowen-Margulis probability measure} (\ie the unique Bowen--Margulis measure with total mass $1$) is the unique measure with maximal entropy.

\subsection{The measure of dynamical balls}\label{the measure of dynamical balls}

In this section we derive from the Shadow lemma (Corollary~\ref{shadowlemma+}) an estimate for the measure of dynamical balls (namely balls for the metrics $(d^{(t)})_{t\geq 0}$ defined in Definition~\ref{Topological Entropy}). The idea is very similar to the computations in the proof of Proposition~\ref{l'ensemble limite conique a mesure non nulle}, which are actually Roblin's computations; the difference here is that our shadows are a priori \emph{small}, and this is why we need the stronger Shadow lemma (Corollary~\ref{shadowlemma+}), which works for small shadows. However, Corollary~\ref{shadowlemma+} only works for usual shadows $\mathcal{O}_R(x,y)$, and not for the scarce shadows of the form $\mathcal{O}^-_R(x,y)$; to overcome this issue we use the following lemma, which is a consequence of Benz\'ecri's compactness theorem (Fact~\ref{benz}).

\begin{lemma}\label{trapeze ecrase}
 For any $\epsilon>0$, there exists $\epsilon'>0$ such that for any properly convex open set $\Omega\subset\PR(V)$, for any $x,y\in\Omega$ at distance at least $1$, if we have two points on the boundary $\xi\in\mathcal{O}_{\epsilon'}(y,x)$ and $\eta\in\mathcal{O}_{\epsilon'}(x,y)$, then the line $\xi\oplus\eta$ intersects both balls $B_\Omega(x,\epsilon)$ and $B_\Omega(y,\epsilon)$.
\end{lemma}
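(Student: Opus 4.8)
The plan is to argue by contradiction using Benz\'ecri's compactness theorem (Fact~\ref{benz}). Suppose the statement fails for some $\epsilon>0$. Then there are sequences of properly convex open sets $\Omega_n\subset\PR(V)$, points $x_n,y_n\in\Omega_n$ with $d_{\Omega_n}(x_n,y_n)\geq 1$, and boundary points $\xi_n\in\mathcal{O}_{1/n}(y_n,x_n)$, $\eta_n\in\mathcal{O}_{1/n}(x_n,y_n)$, such that the line $\xi_n\oplus\eta_n$ misses $B_{\Omega_n}(x_n,\epsilon)$ or misses $B_{\Omega_n}(y_n,\epsilon)$; by symmetry we may assume it always misses $B_{\Omega_n}(x_n,\epsilon)$.

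Next I would normalise using Fact~\ref{benz}: acting by $\PGL(V)$, I can assume the pointed convex sets $(x_n,\Omega_n)$ converge in $\Ecal_V^\bullet$ to some $(x,\Omega)$. Since $d_{\Omega_n}(x_n,y_n)\geq 1$, the point $y_n$ stays at Hilbert distance $\geq 1$ from $x_n$, and since $(\Omega_n,x_n)$ converges, up to extraction $y_n$ converges either to a point $y\in\Omega$ with $d_\Omega(x,y)\geq 1$ (recall balls for the Hilbert metric are compact and vary upper semi-continuously, Fact~\ref{semi-continuite superieure}), or $y_n$ escapes to $\partial\Omega$; in the latter case one can further act by a bounded amount to keep $y_n$ at distance exactly, say, in $[1,2]$, or more simply replace $y_n$ by the point on $[x_n,y_n]$ at distance $1$ from $x_n$, which shrinks the shadows $\mathcal{O}_{1/n}(x_n,y_n)$ but not $\mathcal{O}_{1/n}(y_n,x_n)$ in a harmful way — here I would be slightly careful and instead directly take $y_n$ with $d_{\Omega_n}(x_n,y_n)=1$, which is harmless since enlarging the segment only helps. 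So assume $y_n\to y\in\overline\Omega$ with $d_\Omega(x,y)\geq 1$ when $y\in\Omega$. Up to extraction, $\xi_n\to\xi$ and $\eta_n\to\eta$ in $\overline\Omega$, and by definition of shadows together with upper semi-continuity, $[x,\xi]$ passes within $\epsilon'$-neighbourhoods... more precisely $\xi\in\overline{\mathcal{O}}_0(y,x)$, i.e.\ $x\in[y,\xi]$, and $\eta\in\overline{\mathcal{O}}_0(x,y)$, i.e.\ $y\in[x,\eta]$ (the radius $1/n\to0$). Hence $x,y$ lie on the line $\xi\oplus\eta$, with $\xi,y,x,\eta$ aligned in this order (or $\xi=y$, etc., degenerate cases to handle separately but similarly).

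Then the line $\xi_n\oplus\eta_n$ converges to the line $\xi\oplus\eta$ (as an element of the Grassmannian), which passes through $x\in\Omega=\Omega_\infty$, hence meets $B_\Omega(x,\epsilon/2)$; by the convergence $(\Omega_n,x_n)\to(\Omega,x)$ and convergence of the lines, $\xi_n\oplus\eta_n$ meets $B_{\Omega_n}(x_n,\epsilon)$ for $n$ large, contradicting our assumption. The symmetric statement for $B_{\Omega_n}(y_n,\epsilon)$ is obtained either by swapping the roles of $x$ and $y$ throughout (which is legitimate since the hypotheses are symmetric in $(x,\xi)\leftrightarrow(y,\eta)$ after relabelling the shadows), or by running the same compactness argument with $(y_n,\Omega_n)$ as basepoints.

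The main obstacle I anticipate is the bookkeeping around the case $y_n\to\partial\Omega$ and the degenerate configurations where some of $\xi,\eta,x,y$ collide on the limiting line; the cleanest fix is to reparametrise from the start so that $d_{\Omega_n}(x_n,y_n)$ is pinned (say equal to $1$) and to observe that once $d_\Omega(x,y)\geq 1$ the limiting points $\xi,\eta$ are forced to be genuine distinct endpoints of the straight line through $x$ and $y$, so the limit line meets $\Omega$ in a full open segment containing $x$ and $y$; continuity of ``a line meeting a fixed open ball'' under joint convergence of the line and the pointed convex body then closes the argument. I would also make explicit that lines meeting $\overline B_\Omega(x,\epsilon/2)$ is an open condition in the appropriate topology, which is what makes the final contradiction go through with the slightly larger radius $\epsilon$.
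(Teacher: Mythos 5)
Your overall strategy (argue by contradiction, normalise with Benz\'ecri's theorem, then force $x$ onto the limit line) is the same as the paper's, but your treatment of the only delicate case --- $d_{\Omega_n}(x_n,y_n)\to\infty$, i.e.\ $y_n$ escaping to $\partial\Omega$ after normalising $(\Omega_n,x_n)\to(\Omega,x)$ --- does not work as written, and that case is the crux of the lemma. First, ``acting by a bounded amount to keep $y_n$ at distance in $[1,2]$'' is vacuous: $\PGL(V)$ acts by Hilbert isometries, so once $x_n$ is pinned no further projective normalisation can change $d_{\Omega_n}(x_n,y_n)$. Second, replacing $y_n$ by the point $y_n'\in[x_n,y_n]$ at Hilbert distance $1$ from $x_n$ is not ``harmless'': the condition $\eta_n\in\mathcal{O}_{\epsilon'_n}(x_n,y_n)$ does transfer to $y_n'$ (by Lemma~\ref{crampon}, the segment $[x_n,\eta_n]$ then passes within $\epsilon'_n$ of $y_n'$), but the condition $\xi_n\in\mathcal{O}_{\epsilon'_n}(y_n,x_n)$ involves moving the \emph{viewpoint} from $y_n$ to $y_n'$, and you would need that $[y_n',\xi_n]$ still passes within $h(\epsilon'_n)$ of $x_n$ with $h(\epsilon')\to 0$ uniformly over all pointed properly convex sets and all configurations. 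That statement happens to be true, but ``enlarging the segment only helps'' is not an argument for it (there is no such monotonicity for the shadow seen from a moved viewpoint, in contrast with the moved target), and its natural proof is itself a Benz\'ecri compactness argument of exactly the same nature as the lemma; so invoking it without proof leaves the main difficulty untouched.

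The fix is to drop the reduction altogether, as the paper does: keep the original $y_n$, let $y_n\to y\in\overline\Omega$ (allowing $y\in\partial\Omega$), and prove that the balls of shrinking radius converge to singletons, $B_{\Omega_n}(x_n,\epsilon'_n)\to\{x\}$ and $B_{\Omega_n}(y_n,\epsilon'_n)\to\{y\}$, for the Hausdorff topology. This is the one place where an actual estimate is required, and it is supplied by the inclusion $B_{\Omega_n}(y_n,\epsilon'_n)\subset(1-e^{-2\epsilon'_n})(\Omega_n-y_n)+y_n$, valid in an affine chart in which all the $\Omega_n$ are uniformly bounded; crucially it still gives control when $y_n$ degenerates to the boundary, where the Hilbert metric itself does not. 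With this in hand, the shadow conditions pass to the limit and give $x\in[y,\xi]$ and $y\in[x,\eta]$, hence $\xi\neq\eta$ (because $x\neq y$) and $x\in\xi\oplus\eta$; since $\xi_n\oplus\eta_n\to\xi\oplus\eta$ and the Hilbert metric varies continuously on $\Ecal_V^\bullet$, this contradicts $(\xi_n\oplus\eta_n)\cap B_{\Omega_n}(x_n,\epsilon)=\emptyset$, exactly as in the interior case you did treat correctly. In short: your main line is right, but the pinning of $d_{\Omega_n}(x_n,y_n)$ must be abandoned (or the viewpoint-moving claim proved), and the missing ball-shrinking estimate must be made explicit.
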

 
\begin{proof}
 By symmetry it is enough to prove that $\xi\oplus\eta$ meets $B_\Omega(x,\epsilon)$. We assume by contradiction that there exist sequence $(\epsilon_n')_{n\in\N}\in\R_{>0}^\N$ converging to $0$ and a sequence $(\Omega_n)_{n\in\N}\in\mathcal{E}_V^\N$ such that for each $n\in\N$ we can find $x_n,y_n\in\Omega_n$ at distance at least $1$ and $\xi_n\in\mathcal{O}_{\epsilon_n'}(y_n,x_n)$ and $\eta_n\in\mathcal{O}_{\epsilon_n'}(x_n,y_n)$ such that $(\xi_n\oplus\eta_n)\cap B_\Omega(x_n,\epsilon)=\emptyset$. 
 
 By Benz\'ecri's compactness theorem (Fact~\ref{benz}), we can assume, up to extraction, that $((\Omega_n,x_n))_{n}$ converges to some pointed properly convex open set $(\Omega,x)\in\mathcal{E}_V^\bullet$, bounded in some affine chart that we fix. Up to extraction we assume that $\Omega_n$ is also bounded in the affine chart for any $n\in\N$, and that $(y_n)_{n\in\N}$ (\resp $(\xi_n)_{n\in\N}$ and $(\eta_n)_{n\in\N}$) converges to $y\in\overline{\Omega}\smallsetminus B_\Omega(x,1)$ (\resp $\xi$ and $\eta\in\partial\Omega$) such that $(\xi\oplus\eta)\cap B_\Omega(x,\epsilon)=\emptyset$. 
 
 By definition of the Hilbert metric one checks that 
 \[B_{\Omega_n}(x_n,\epsilon_n')\subset (1-e^{-2\epsilon_n'})(\Omega_n-x_n)+x_n,\]
 hence $(B_{\Omega_n}(x_n,\epsilon_n'))_{n\in\N}$ converges to the singleton $\{x\}$ for the Hausdorff topology, and similarly $(B_{\Omega_n}(y_n,\epsilon_n'))_{n\in\N}$ converges to the singleton $\{y\}$. This implies that $x\in[\xi,\eta]$, which contradicts $(\xi\oplus\eta)\cap B_\Omega(x,\epsilon)=\emptyset$.
\end{proof}

\begin{lemma}\label{measure_of_dyn_ball sans cvxcocpct}
 Let $\Omega\subset\PR(V)$ be a properly convex open set, and $\Gamma\subset\Aut(\Omega)$ a discrete subgroup; set $M=\Omega/\Gamma$. Suppose that $\Gamma$ is strongly irreducible and $T^1M_{\bip}\neq\emptyset$, or that $M$ is rank-one and non-elementary. Let $m$ be a Sullivan measure on $T^1M$ induced by a $\delta_\Gamma$-conformal density. Then for any compact subset $K\subset T^1M$, for any $r>0$, there exists a constant $C>0$ such that given any time $t>0$, for any $v\in \Gamma\cdot K$ such that $\phi_tv\in\Gamma\cdot K$,
 \[m(\tilde{B}^{(t)}_{T^1M}(v,r))\leq Ce^{-\delta_\Gamma t},\]
 and if $v\in T^1M_{\bip}$, then
 \[C^{-1}e^{-\delta_\Gamma t}\leq m(\tilde{B}^{(t)}_{T^1M}(v,r)).\]
\end{lemma}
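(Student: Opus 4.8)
The plan is to reduce the statement to an estimate about shadows on $\partial\Omega$, exactly as in Roblin's computations reused in the proof of Proposition~\ref{l'ensemble limite conique a mesure non nulle}, but now exploiting that Corollary~\ref{shadowlemma+} controls \emph{small} shadows. First I would fix a compact $K\subset T^1M$ and $r>0$, and pick a lift: choose $o\in\Omega$ and a compact $\tilde K\subset T^1\Omega$ projecting onto $K$, so that any $v,w\in T^1M$ with $v,w\in\Gamma\cdot K$ and a geodesic segment between them lift to vectors based in $\Gamma\cdot\{o\}$ up to bounded error. Writing $v=\pi_\Gamma\tilde v$ with $\pi\tilde v$ close to $o$ and $\phi_t\tilde v$ close to $\gamma o$ for some $\gamma\in\Gamma$, the dynamical ball $\tilde B^{(t)}_{T^1M}(v,r)$ lifts to $\tilde B^{(t)}_{T^1\Omega}(\tilde v,r)$, and by Fact~\ref{crampon} a vector $\tilde w\in\tilde B^{(t)}_{T^1\Omega}(\tilde v,r)$ has $\pi\tilde w$ within bounded distance of $o$ and $\pi\phi_t\tilde w$ within bounded distance of $\gamma o$; in Hopf coordinates $\Hopf^{-1}(\tilde w)=(\xi,\eta,\tau)$ one gets $\phi_{-\infty}\tilde w=\xi\in\mathcal{O}^+_{R}(\gamma o,o)$, $\phi_\infty\tilde w=\eta\in\mathcal{O}^+_{R}(o,\gamma o)$, $|\tau|\leq R$ and $\langle\xi,\eta\rangle_o\leq R$ for a constant $R$ depending only on $K$ and $r$.

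For the upper bound, this containment together with the definition of the Sullivan measure $m$ (the quotient of $\tilde m$, whose density in Hopf coordinates is $e^{2\delta_\Gamma\langle\xi,\eta\rangle_o}\,d\nu_o\,d\nu_o\,dt$) gives, after summing over the finitely many group elements that can contribute (as in the $b$-estimate of Lemma~\ref{Lemme : estimees moches}),
\[
m(\tilde B^{(t)}_{T^1M}(v,r))\ \leq\ C_1\,e^{2\delta_\Gamma R}\,\nu_o\big(\mathcal{O}^+_R(o,\gamma o)\big)\ \leq\ C_2\,e^{-\delta_\Gamma d_\Omega(o,\gamma o)}\ \leq\ C\,e^{-\delta_\Gamma t},
\]
where the middle inequality is the upper bound of the Shadow lemma for convex cocompact actions (Corollary~\ref{shadowlemma+}, using that $o$ and $\gamma o$ lie in $\mathcal{C}^{\core}$), and the last uses $|d_\Omega(o,\gamma o)-t|\leq \mathrm{cst}$. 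Here I must be slightly careful: the hypothesis of the present lemma is only "$\Gamma$ strongly irreducible with $T^1M_{\bip}\neq\emptyset$, or $M$ rank-one non-elementary", not convex cocompactness, so for the bare upper bound I would instead invoke the weaker Shadow lemma (Lemma~\ref{shadowlemma}), which suffices since it already gives $\nu_o(\mathcal{O}^+_R(o,\gamma o))\leq Ce^{-\delta_\Gamma d_\Omega(o,\gamma o)}$ for $R$ large; the convex cocompact refinement is only needed for the companion corollary restricted to $\mathcal{C}^{\core}$.

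For the lower bound, assume $v\in T^1M_{\bip}$, so a lift $\tilde v$ has $\phi_{\pm\infty}\tilde v\in\Lambda^{\prox}=\supp(\nu_o)$. I would show that for suitable $R'$ comparable to $r$,
\[
\Hopf\Big(\pi_{\hor}^{-1}\mathcal{O}_{R'}(\gamma o,o)\times\pi_{\hor}^{-1}\mathcal{O}_{R'}(o,\gamma o)\times[0,R']\Big)\ \subset\ \tilde B^{(t)}_{T^1\Omega}(\tilde v,r),
\]
by the same alignment argument as in the minoration step of Lemma~\ref{Lemme : estimees moches} (using Observation~\ref{LR<OR*OR} and Fact~\ref{crampon}); then the Sullivan measure density is $\geq R'\,\nu_o(\mathcal{O}_{R'}(\gamma o,o))\,\nu_o(\mathcal{O}_{R'}(o,\gamma o))$, and since the endpoints $\phi_{\pm\infty}\tilde v\in\supp(\nu_o)$ lie in the relevant rays, Lemma~\ref{shadowlemma+ sans cvxcpcpt} (the second estimate, valid under the present hypotheses) gives $\nu_o(\mathcal{O}_{R'}(\gamma o,o))\geq C^{-1}e^{-\delta_\Gamma d_\Omega(o,\gamma o)}$ and $\nu_o(\mathcal{O}_{R'}(o,\gamma o))\geq C^{-1}$, whence $m(\tilde B^{(t)}_{T^1M}(v,r))\geq C^{-1}e^{-\delta_\Gamma t}$. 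The main obstacle I anticipate is the bookkeeping in passing between $\tilde m$ and $m$: one must check the multiplicity of the covering $T^1\Omega\to T^1M$ on the relevant sets is bounded (so that $\sum_{g}\mathbf 1_{g\tilde K}$ is comparable to $\mathbf 1_K$, as in Lemma~\ref{Lemme : estimees moches}), and one must verify that the Hopf-coordinate inclusions hold with constants depending only on $K$ and $r$ and not on $t$ or $v$; both are routine given Fact~\ref{crampon} and the proper discontinuity of the $\Gamma$-action, but they are where the care is needed. A secondary subtlety is that for the lower bound one genuinely needs $v\in T^1M_{\bip}$ so that the endpoints are in $\supp(\nu_o)=\Lambda^{\prox}$; this is exactly what lets Lemma~\ref{shadowlemma+ sans cvxcpcpt}'s ray-hypothesis $y\in\mathrm{ray}_x(\supp\nu_o)$ be satisfied.
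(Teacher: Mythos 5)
Your upper bound is essentially the paper's argument: pass to a lift, observe that vectors of the dynamical ball have forward endpoint in a shadow $\mathcal{O}^+_{R}$ of bounded radius, bounded Gromov product and bounded time-fiber, and apply the upper bound of the Shadow lemma (your hand-made reduction to Lemma~\ref{shadowlemma} by moving the basepoints a bounded distance into the orbit is exactly how Lemma~\ref{shadowlemma+ sans cvxcpcpt} is proved, and that lemma --- unlike Corollary~\ref{shadowlemma+} --- does not assume convex cocompactness, so the paper simply cites it). That half is fine.

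The lower bound has a genuine gap. You claim the inclusion
\[
\Hopf\Big(\pi_{\hor}^{-1}\mathcal{O}_{R'}(\gamma o,o)\times\pi_{\hor}^{-1}\mathcal{O}_{R'}(o,\gamma o)\times[0,R']\Big)\subset \tilde B^{(t)}_{T^1\Omega}(\tilde v,r)
\]
``by the same alignment argument as in the minoration step of Lemma~\ref{Lemme : estimees moches}, using Observation~\ref{LR<OR*OR}''. But Observation~\ref{LR<OR*OR} requires its hypothesis for \emph{all} $a\in A$, $b\in B$, which is precisely the scarce-shadow condition: in Lemma~\ref{Lemme : estimees moches} the minoration is run with $\mathcal{O}^-_{R'}$, not $\mathcal{O}_{R'}$. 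With ordinary shadows, knowing that $[\gamma o,\xi]$ passes near $o$ and $[o,\eta]$ passes near $\gamma o$ does \emph{not} force the line $\xi\oplus\eta$ to pass near either point: this is exactly the failure of Gromov hyperbolicity in non-strictly convex Hilbert geometries, so the inclusion is false in general for $R'$ ``comparable to $r$''. You also cannot repair it by switching to scarce shadows: the scarce-shadow lower bounds (Lemmas~\ref{l'ombre en defaut grandit a la frontiere} and \ref{shadowlemma avec defaut}) are only available at orbit basepoints and for radii $\geq R_0$, so after the bounded-distance adjustments you would only control dynamical balls of radius on the order of $R_0$ plus the diameter of the lift of $K$, losing the statement for arbitrary $r>0$; and Lemma~\ref{shadowlemma+ sans cvxcpcpt}, which is the tool adapted to points of $\Gamma\cdot K$ and to small radii, controls only ordinary shadows. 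The paper's essential extra ingredient, absent from your proposal, is Lemma~\ref{trapeze ecrase}: by Benz\'ecri's compactness theorem, for the target precision $\epsilon=r/16$ there is a (possibly much smaller) radius $\epsilon'>0$, uniform over all properly convex sets and all pairs of points at distance $\geq 1$, such that $\xi\in\mathcal{O}_{\epsilon'}(y,x)$ and $\eta\in\mathcal{O}_{\epsilon'}(x,y)$ force the line $\xi\oplus\eta$ to meet both $B_\Omega(x,\epsilon)$ and $B_\Omega(y,\epsilon)$ (the paper applies it to $x=\pi\tilde v$ and $y=\pi\phi_{t+1}\tilde v$ to guarantee distance at least $1$). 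With that substitute for the alignment step, the rest of your computation --- time-fiber of length $\geq r$, nonnegativity of the Gromov product, and the lower bounds of Lemma~\ref{shadowlemma+ sans cvxcpcpt} using that $v\in T^1M_{\bip}$ puts the endpoints in $\Lambda^{\prox}\subset\supp(\nu_o)$ so the ray-hypothesis holds --- does go through as in the paper.
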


\begin{proof}
 Let $(\mu_x)_{x\in\Omega}$ be a $\delta_\Gamma$-conformal density on $\partial_{\hor}\Omega$ which induces a $\delta_\Gamma$-conformal density $(\nu_x)_{x\in\Omega}$ on $\partial\Omega$ and the Sullivan measures $\tilde{m}$ on $T^1\Omega$ and $m$ on $T^1M$. Let $\tilde{v}\in T^1\Omega$ be a lift of $v$. We have
 \[ C_0^{-1} \tilde{m}(B^{(t)}_{T^1\Omega}(\tilde{v},r)) \leq m(\tilde{B}^{(t)}_{T^1M}(v,r)) \leq \tilde{m}(B^{(t)}_{T^1\Omega}(\tilde{v},r))\]
 for any $t\geq 0$, where 
 \[C_0=\max_{\tilde{w}\in\pi^{-1}K}\#\{\gamma : d_\Omega(\pi \tilde{w},\gamma\pi \tilde{w})\leq 4r\}.\]
 Let us prove the upper bound in the lemma. Consider $\tilde{w}\in B^{(t)}_{T^1\Omega}(\tilde{v},r)$. We make the following observations.
 \begin{itemize} 
  \item The Lebesgue measure of the set of times $s\in\R$ such that $\phi_s\tilde{w}\in B^{(t)}_{T^1\Omega}(\tilde{v},r)$ is less than $2r$;
  \item $\phi_\infty \tilde{w}\in \mathcal{O}^+_{r}(\pi \tilde{v},\pi\phi_t\tilde{v})$;
  \item $\langle \xi,\eta \rangle_{\pi \tilde{v}}\leq r$ for all $\xi\in\pi_{\hor}^{-1}(\phi_{-\infty}\tilde{w})$ and $\eta\in\pi_{\hor}^{-1}(\phi_{\infty}\tilde{w})$.
 \end{itemize}
Combined with the definition of $\tilde{m}$ (see Section~\ref{The Sullivan measures}), they yield:
 \[\tilde{m}(B^{(t)}_{T^1\Omega}(\tilde{v},r))\leq e^{2\delta_\Gamma r}\cdot\nu_{\pi\tilde{v}}(\partial\Omega)\cdot \nu_{\pi \tilde{v}}(\mathcal{O}^+_{r}(\pi \tilde{v},\pi\phi_t \tilde{v}))\cdot 2r.\]
 We deduce from this and the Shadow lemma (Lemma~\ref{shadowlemma+ sans cvxcpcpt}) the desired upper bound.
 
 Let us prove the lower bound. We apply Lemma~\ref{trapeze ecrase} to $\epsilon:=r/16$ to obtain $\epsilon'>0$. Then for all $\eta\in \mathcal{O}_{\epsilon'}(\pi\tilde{v},\pi\phi_{t+1}\tilde{v})$ and $\xi\in \mathcal{O}_{\epsilon'}(\pi\phi_{t+1}\tilde{v},\pi\tilde{v})$, one can find $\tilde{w}\in B^{(t)}_{T^1\Omega}(\tilde{v},r/2)$ tangent to $\xi\oplus\eta$. Observe that the Lebesgue measure of the set of times $s\in\R$ such that $\phi_s\tilde{w}\in B^{(t)}_{T^1\Omega}(\tilde{v},r)$ is greater than~$r$. This means (remembering that the Gromov product is always non-negative) that 
 \[ \tilde{m}(B^{(t)}_{T^1\Omega}(\tilde{v},r)) \geq \nu_{\pi v}(\mathcal{O}_{\epsilon'}(\pi\tilde{v},\pi\phi_{t+1}\tilde{v}))\cdot \nu_{\pi v}(\mathcal{O}_{\epsilon'}(\pi\phi_{t+1}\tilde{v},\pi\tilde{v})) \cdot r. \]
 We conclude thanks to the Shadow lemma (Lemma~\ref{shadowlemma+ sans cvxcpcpt}), and the fact that $\Lambda^{\prox}\subset\supp(\nu_o)$.
\end{proof}

In the convex cocompact case, this yields the following.

\begin{cor}\label{measure_of_dyn_ball}
 Let $\Omega\subset\PR(V)$ be a properly convex open set, and $\Gamma\subset\Aut(\Omega)$ a convex cocompact discrete subgroup; set $M=\Omega/\Gamma$. Suppose that $\Gamma$ is strongly irreducible and $T^1M_{\bip}\neq\emptyset$, or that $M$ is rank-one and non-elementary. Let $m$ be a Sullivan measure on $T^1M$ induced by a $\delta_\Gamma$-conformal density. Then for any $r>0$, there exists a constant $C>0$ such that given any time $t>0$, for any $v\in T^1M_{\core}$,
 \[m(\tilde{B}^{(t)}_{T^1M}(v,r))\leq Ce^{-\delta_\Gamma t},\]
 and if $v\in T^1M_{\bip}$, then
 \[C^{-1}e^{-\delta_\Gamma t}\leq m(\tilde{B}^{(t)}_{T^1M}(v,r)).\]
\end{cor}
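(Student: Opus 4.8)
The plan is to deduce Corollary~\ref{measure_of_dyn_ball} from Lemma~\ref{measure_of_dyn_ball sans cvxcocpct} by an appropriate choice of compact set $K$, exploiting the hypothesis of convex cocompactness. Recall that $\Gamma$ acts convex cocompactly on $\Omega$ means $\CC^{\core}_\Omega(\Gamma)$ is non-empty with compact quotient by $\Gamma$. First I would fix a compact set $K_0\subset \CC^{\core}_\Omega(\Gamma)$ whose $\Gamma$-translates cover $\CC^{\core}_\Omega(\Gamma)$, and then let $K$ be the projection in $T^1M$ of the (compact) set of vectors $v\in T^1\Omega$ with footpoint in $K_0$; up to enlarging $K_0$ slightly we may and do assume that $K$ is a compact neighbourhood of $T^1M_{\core}$, and in particular $K\supset T^1M_{\bip}$. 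The key point is that any vector $v\in T^1M_{\core}$ satisfies $v\in \Gamma\cdot K$ (trivially, since its footpoint lies in $\CC^{\core}_\Omega(\Gamma)$, hence in some $\gamma K_0$) and, moreover, $\phi_t v\in T^1M_{\core}\subset \Gamma\cdot K$ for every $t$, because $T^1M_{\core}$ is invariant under the geodesic flow. Thus the hypotheses ``$v\in\Gamma\cdot K$ and $\phi_tv\in\Gamma\cdot K$'' of Lemma~\ref{measure_of_dyn_ball sans cvxcocpct} are automatically satisfied for all $v\in T^1M_{\core}$ and all $t>0$.

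With this choice, the upper bound $m(\tilde B^{(t)}_{T^1M}(v,r))\leq Ce^{-\delta_\Gamma t}$ for $v\in T^1M_{\core}$ is an immediate application of Lemma~\ref{measure_of_dyn_ball sans cvxcocpct} with this $K$, noting that the constant $C$ produced there depends only on $K$ and $r$ (and on $m$), not on $v$ or $t$. For the lower bound, the hypothesis $v\in T^1M_{\bip}$ in the corollary is exactly the hypothesis $v\in T^1M_{\bip}$ in Lemma~\ref{measure_of_dyn_ball sans cvxcocpct}, so the estimate $C^{-1}e^{-\delta_\Gamma t}\leq m(\tilde B^{(t)}_{T^1M}(v,r))$ transfers verbatim. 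I would also recall, for completeness, that the standing assumptions are consistent: when $\Gamma$ acts convex cocompactly and $M$ is rank-one and non-elementary, Proposition~\ref{Proposition : cvxcocpct -> divergent} guarantees $\Gamma$ is divergent and the Bowen--Margulis measure is finite, but this is not actually needed for the present deduction --- only the existence of the Sullivan measure $m$ induced by a $\delta_\Gamma$-conformal density (Fact~\ref{smear}, Fact~\ref{entropie<d-2} and Proposition~\ref{prop:Radonnonnul}) and the validity of Lemma~\ref{measure_of_dyn_ball sans cvxcocpct} matter.

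There is essentially no obstacle here: the corollary is a routine specialisation, and the only mild care needed is to check that a single compact $K$ works uniformly, i.e.\ that one can choose $K$ so that $\Gamma\cdot K$ contains all of $T^1M_{\core}$ --- which follows directly from the definition of convex cocompactness (the fundamental-domain compactness) --- and that flow-invariance of $T^1M_{\core}$ handles the point $\phi_tv$. I would therefore write the proof as a short paragraph invoking Lemma~\ref{measure_of_dyn_ball sans cvxcocpct} with this explicit $K$.

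\begin{proof}
 Fix a compact subset $K_0$ of $\CC^{\core}_\Omega(\Gamma)$ such that $\Gamma\cdot K_0=\CC^{\core}_\Omega(\Gamma)$ (this exists because $\Gamma$ acts cocompactly on $\CC^{\core}_\Omega(\Gamma)$), and let $K\subset T^1M$ be the projection of the compact set of vectors of $T^1\Omega$ with footpoint in $K_0$. Then $T^1M_{\core}\subset\Gamma\cdot K$, since by definition the footpoint of any vector of $T^1M_{\core}$ lies in $\CC^{\core}_\Omega(\Gamma)$. Moreover $T^1M_{\core}$ is $(\phi_t)_t$-invariant, so for any $v\in T^1M_{\core}$ and any $t>0$ we have $v,\phi_tv\in T^1M_{\core}\subset\Gamma\cdot K$. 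Both estimates now follow immediately from Lemma~\ref{measure_of_dyn_ball sans cvxcocpct} applied to this compact set $K$, together with the inclusion $T^1M_{\bip}\subset T^1M_{\core}$.
\end{proof}
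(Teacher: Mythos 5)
Your deduction is correct and is exactly the argument the paper intends: it states the corollary with no separate proof ("In the convex cocompact case, this yields the following"), i.e.\ apply Lemma~\ref{measure_of_dyn_ball sans cvxcocpct} with $K$ coming from a compact fundamental set for the action on $\CC^{\core}_\Omega(\Gamma)$, using flow-invariance of $T^1M_{\core}$ and the inclusion $T^1M_{\bip}\subset T^1M_{\core}$ (valid since $\Lambda^{\prox}\subset\overline{\Lambda^{\orb}}=\Lambda^{\orb}$ in the convex cocompact case). The only superfluous element is the remark about enlarging $K_0$ so that $K$ is a neighbourhood of $T^1M_{\core}$ --- this is neither needed nor used in your final proof, which is fine as written.
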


\subsection{The Bowen--Margulis measure has maximal entropy}\label{ssection:entropie max}

In this section we prove that any Sullivan measure induced by a $\delta_\Gamma$-conformal density has maximal entropy, which is equal to $\delta_\Gamma$. For this we do not need Theorem~\ref{Thm : The weak Hopf--Tsuji--Sullivan--Roblin dichotomy}.

\begin{prop}\label{l'entropie de BM est max}
 Let $\Omega\subset\PR(V)$ be a properly convex open set, and $\Gamma\subset\Aut(\Omega)$ a convex cocompact discrete subgroup with $M=\Omega/\Gamma$ rank-one and non-elementary. Let $m$ be the Bowen--Margulis probability measure on $T^1M$. Then $m$ has maximal entropy on $T^1M_{\core}$; in other words,
 \[h_m(T^1M_{\bip},(\phi_t)_t)=h_{\topp}(T^1M_{\bip},(\phi_t)_t)=h_{\topp}(T^1M_{\core},(\phi_t)_t)=\delta_\Gamma.\]
\end{prop}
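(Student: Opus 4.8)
The plan is to prove the chain of equalities by combining the topological upper bound from Manning's inequality (Fact~\ref{manning}) with a measure-theoretic lower bound coming from the estimate on dynamical balls (Corollary~\ref{measure_of_dyn_ball}) applied through the variational principle. First I would record that, by Selberg's lemma and Observation~\ref{obs:revetentrop}, one may reduce to the torsion-free case, so that $\inj(M)>0$, $T^1M_{\core}$ is compact, and the geodesic flow on $T^1M_{\core}$ is entropy-expansive by Fact~\ref{geod_is_expansive}; this legitimates the use of the variational principle on the compact invariant set $T^1M_{\core}$ (and similarly $T^1M_{\bip}$). The inequality $h_{\topp}(T^1M_{\bip},(\phi_t)_t)\le h_{\topp}(T^1M_{\core},(\phi_t)_t)$ is trivial since $T^1M_{\bip}\subset T^1M_{\core}$, and Fact~\ref{manning} gives $h_{\topp}(T^1M_{\core},(\phi_t)_t)\le\delta_\Gamma$. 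So the whole statement reduces to the single inequality $h_m(T^1M_{\bip},(\phi_t)_t)\ge\delta_\Gamma$, since then
\[\delta_\Gamma\le h_m(T^1M_{\bip},(\phi_t)_t)\le h_{\topp}(T^1M_{\bip},(\phi_t)_t)\le h_{\topp}(T^1M_{\core},(\phi_t)_t)\le\delta_\Gamma\]
forces equality everywhere.

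To bound $h_m$ from below I would use the Brin--Katok-type argument, or more directly the classical fact that if a $\phi_1$-invariant probability measure $m$ satisfies $m(\tilde B^{(n)}_{T^1M}(v,r))\le Ce^{-\delta_\Gamma n}$ for $m$-almost every $v$ and all $n$, then $h_m(\phi_1)\ge\delta_\Gamma$. Concretely: fix $r$ small (less than $\inj(M)/2$, so that $\tilde d^{(n)}_{T^1M}$ and $d^{(n)}_{T^1M}$ agree on the relevant balls, by Remark~\ref{Remarque : distdyn}), and apply Corollary~\ref{measure_of_dyn_ball}: since $m$ is the Bowen--Margulis probability measure, by Theorem~\ref{Thm : The weak Hopf--Tsuji--Sullivan--Roblin dichotomy}.\ref{Item : cas divergent} (valid here by Proposition~\ref{Proposition : cvxcocpct -> divergent}) we have $\supp(m)=T^1M_{\bip}$, so $m$-almost every $v$ lies in $T^1M_{\bip}$ and hence $m(\tilde B^{(n)}_{T^1M}(v,r))\le Ce^{-\delta_\Gamma n}$. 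Now pick a finite measurable partition $\mathcal P$ of $T^1M_{\bip}$ whose atoms have diameter less than $r$; then each atom of $\mathcal P^{(n)}$ (for the time-one map) is contained in a dynamical ball $B^{(n)}_{T^1M}(v,r)$ for any of its points $v$, hence has $m$-measure at most $Ce^{-\delta_\Gamma n}$. The Shannon--McMillan--Breiman theorem (or just the elementary inequality $H_m(\mathcal P^{(n)})\ge -\log(\max_{P}m(P))$ is too weak; one needs SMB or the standard counting estimate) then gives $h_m(\phi_1,\mathcal P)\ge\delta_\Gamma$, and since the atoms of $\mathcal P$ have diameter less than the entropy-expansiveness constant, Fact~\ref{bowen} yields $h_m(\phi_1)=H_m(\phi_1,\mathcal P)\ge\delta_\Gamma$. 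Finally $h_m((\phi_t)_t)=h_m(\phi_1)$.

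The main technical point to get right is the passage from the bound on dynamical balls to the entropy lower bound: the cleanest route is to invoke the Brin--Katok local entropy formula, which states $h_m(\phi_1)=\int h_m(\phi_1,v)\,dm(v)$ with $h_m(\phi_1,v)=\lim_{r\to0}\limsup_{n\to\infty}-\tfrac1n\log m(B^{(n)}(v,r))$ for ergodic $m$ (and $m$ is ergodic by Corollary~\ref{Corollaire : ergo et melange}); then Corollary~\ref{measure_of_dyn_ball} gives $h_m(\phi_1,v)\ge\delta_\Gamma$ for $m$-a.e.\ $v$, hence $h_m(\phi_1)\ge\delta_\Gamma$. I expect the only real subtlety is bookkeeping the difference between $\tilde d^{(t)}_{T^1M}$ and $d^{(t)}_{T^1M}$ and making sure the compact set $T^1M_{\bip}$ (rather than the full $T^1M$) is the right arena — both handled by restricting $r<\inj(M)/2$ and by Remark~\ref{Remarque : distdyn}. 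Everything else is a routine assembly of the cited facts.
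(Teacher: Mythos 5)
Your proposal is correct and its skeleton is the paper's: reduce to the torsion-free case via Selberg and Observation~\ref{obs:revetentrop}, squeeze $\delta_\Gamma\le h_m\le h_{\topp}(T^1M_{\bip})\le h_{\topp}(T^1M_{\core})\le\delta_\Gamma$ using Fact~\ref{manning} and the variational principle (Fact~\ref{le principe variationnel}), and get the lower bound on $h_m$ from the dynamical-ball estimate of Corollary~\ref{measure_of_dyn_ball} applied to a partition with small atoms. Where you diverge is the last bookkeeping step, and there your parenthetical assessment is backwards: the elementary inequality is \emph{not} too weak. Since every atom $P$ of $\p^{(n)}$ has $m(P)\le Ce^{-\delta_\Gamma n}$ (uniformly, not just almost everywhere), one has directly $H_m(\p^{(n)})=-\sum_P m(P)\log m(P)\ge -\sum_P m(P)\log(Ce^{-\delta_\Gamma n})=\delta_\Gamma n-\log C$, hence $H_m(\phi_1,\p)\ge\delta_\Gamma$; this one-line computation is exactly the paper's proof, and it needs neither SMB, nor Brin--Katok, nor any ergodicity. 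Your Brin--Katok route does work, but as written it has a small imprecision: Corollary~\ref{Corollaire : ergo et melange} gives ergodicity of the \emph{flow}, not of the time-one map $\phi_1$, so you should invoke the integral (non-ergodic) form of the Brin--Katok formula; that is enough here because the ball bound of Corollary~\ref{measure_of_dyn_ball} holds for every $v\in T^1M_{\core}\supset\supp(m)$, so the local entropy is $\ge\delta_\Gamma$ pointwise on the support. Two further simplifications: Fact~\ref{bowen} and entropy-expansiveness are not needed for this proposition, since $h_m(\phi_1)\ge H_m(\phi_1,\p)$ holds by definition of entropy as a supremum over partitions (expansiveness only becomes essential in the uniqueness argument of Theorem~\ref{Thm : proba d'entropie max dans le cas compact}); and your handling of $\tilde d^{(t)}_{T^1M}$ versus $d^{(t)}_{T^1M}$ via Remark~\ref{Remarque : distdyn} and atoms of diameter below the injectivity radius is the same device the paper uses (with diameter $<\epsilon_0/3$).
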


\begin{proof}
 We can assume without loss of generality that $\Gamma$ is torsion-free by Observation~\ref{obs:revetentrop} and since for any finite-index subgroup $\Gamma'$ of $\Gamma$, the measure $m$ is the push-forward of the Bowen--Margulis probability measure on $T^1\Omega/\Gamma'$. Let $\epsilon_0$ be the injectivity radius of $M$. Let $\p$ be a finite measurable partition of $T^1M_{\core}$ whose elements have diameter less than $\frac{\epsilon_0}{3}$. According to the definition of the measure-theoretic entropy, to the variational principle (Fact~\ref{le principe variationnel}), and to Fact~\ref{manning}, it is enough to prove that 
 \[H_m(\phi_1,\p)\geq\delta_\Gamma.\]
 
 For $n\geq 1$, we observe that by definition (and by Lemma~\ref{crampon}), any element of $\p^{(n)}$ has diameter less than $\epsilon_0/3$ with respect to the metric $d_{T^1M}^{(n)}$. Therefore by Corollary~\ref{measure_of_dyn_ball}, there exists a constant $C>0$ such that for any $n\geq 1$, any element of $\p^{(n)}$ has an $m$-measure less than $Ce^{-\delta_\Gamma n}$. We now conclude the proof by the following computation.
 \begin{align*}
  H_m(\p^{(n)})&=-\sum_{P\in\p^{(n)}}m(P)\log(m(P))\\
  &\geq -\sum_{P\in\p^{(n)}}m(P)\log(C e^{-\delta_\Gamma n})\\
  &\geq \delta_\Gamma n -\log(C).
 \end{align*}
 The inequality $H_m(\phi_1,\p)\geq \delta_\Gamma$ follows immediately.
\end{proof}

\subsection{Separated sets in dynamical balls}\label{Ssection : separons les boules dyn}

In this section we prove a technical lemma which bound from above the size a separated set in a dynamical ball. To perform this estimate we will use the notion of proper densities (Definition~\ref{Definition : densite propre}).

\begin{cor}\label{ensemble separe d'une boule dynamique}
 Consider $0<r<R$. Let $\Omega\in\mathcal{E}_V$, let $\Gamma\subset\Aut(\Omega)$ be a discrete subgroup, let $M=\Omega/\Gamma$ and let $t\geq0$.
 \begin{enumerate}[label=(\arabic*)]
  \item \label{item:ensemble separe d'une boule dynamique1} For any vector $v\in T^1M$, the cardinality of any $(\tilde{d}_{T^1M}^{(t)},r)$-separated set of $\tilde{B}_{T^1M}^{(t)}(v,R)$ is less than $\chi_+(R+r/4)^2\cdot\chi_-(r/4)^{-2}$ (see \eqref{Equation : chi}).
  \item \label{item:ensemble separe d'une boule dynamique2} Consider a $(\tilde{d}_{T^1M}^{(t)},r)$-separated subset $\{v_1,\dots,v_k\}$ of $T^1M$ (and of size $k$), take a vector $w_i\in \tilde{B}_{T^1M}^{(t)}(v_i,R)$ for each $i=1,\dots,k$. Then one can find a subset $I$ of $\{1,\dots,k\}$ of size greater than $k\cdot \chi_+(2R+r/2)^{-2}\cdot\chi_-(r/4)^{2}$ such that $\{w_i : i\in I\}$ is $(\tilde{d}_{T^1M}^{(t)},r)$-separated.
 \end{enumerate}
\end{cor}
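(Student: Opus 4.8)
The plan is to reduce both statements to the volume-counting estimate \eqref{Equation : chi} via the Hilbert metric, using the fact that $\tilde d^{(t)}_{T^1M}$-balls project to $d_\Omega$-balls under the footpoint and endpoint maps. The key geometric observation is that if $w,w'\in T^1\Omega$ satisfy $\tilde d^{(t)}_{T^1\Omega}(w,w')<r$, then in particular $d_\Omega(\pi w,\pi w')<r$ and $d_\Omega(\pi\phi_t w,\pi\phi_t w')<r$; conversely, by Lemma~\ref{crampon} (Fact~\ref{crampon}), if $d_\Omega(\pi w,\pi w')<r/2$ and $d_\Omega(\pi\phi_t w,\pi\phi_t w')<r/2$ then $\tilde d^{(t)}_{T^1\Omega}(w,w')<r$. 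So a $(\tilde d^{(t)}_{T^1M},r)$-separated set injects, via $v\mapsto(\pi\tilde v,\pi\phi_t\tilde v)$ for a chosen lift, into pairs of points that are not simultaneously $r/2$-close; more precisely I would use the standard packing trick: if $\{v_1,\dots,v_k\}$ is $(\tilde d^{(t)}_{T^1M},r)$-separated inside $\tilde B^{(t)}_{T^1M}(v,R)$, choose lifts $\tilde v_i$ of $v_i$ all lying in $B^{(t)}_{T^1\Omega}(\tilde v,R)$; then the open balls $B_\Omega(\pi\tilde v_i,r/4)\times B_\Omega(\pi\phi_t\tilde v_i,r/4)$ are pairwise disjoint (if two overlapped in both factors, Fact~\ref{crampon} would force $\tilde d^{(t)}_{T^1\Omega}(\tilde v_i,\tilde v_j)<r$, hence $\tilde d^{(t)}_{T^1M}(v_i,v_j)<r$), and they all sit inside $B_\Omega(\pi\tilde v,R+r/4)\times B_\Omega(\pi\phi_t\tilde v,R+r/4)$.

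For item \ref{item:ensemble separe d'une boule dynamique1} I would then apply the proper density $\Vol_\Omega$ and the bounds \eqref{Equation : chi}: each $B_\Omega(\cdot,r/4)$ has $\Vol_\Omega$-mass at least $\chi_-(r/4)$, the big ball $B_\Omega(\cdot,R+r/4)$ has mass at most $\chi_+(R+r/4)$, and taking the product over the two factors gives
\[
k\cdot\chi_-(r/4)^2\ \leq\ \chi_+(R+r/4)^2,
\]
which is exactly the claimed bound $k\leq\chi_+(R+r/4)^2\chi_-(r/4)^{-2}$. Here I must be a little careful that the relevant pointed convex sets $(\pi\tilde v_i,\Omega)$ all lie in $\Ecal^\bullet_V$ so that \eqref{Equation : chi} applies uniformly, which it does since $\Vol$ is a fixed proper density on all of $\Ecal_V$; the estimates in \eqref{Equation : chi} are uniform over $\Ecal^\bullet_V$ by Benzécri (Fact~\ref{benz}), so no compactness of $M$ or discreteness of $\Gamma$ is needed.

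For item \ref{item:ensemble separe d'une boule dynamique2}, the idea is a greedy/pigeonhole selection. Given the $(\tilde d^{(t)}_{T^1M},r)$-separated set $\{v_1,\dots,v_k\}$ and $w_i\in\tilde B^{(t)}_{T^1M}(v_i,R)$, I would lift everything coherently: fix lifts $\tilde w_i$ of $w_i$ and $\tilde v_i$ of $v_i$ with $\tilde w_i\in B^{(t)}_{T^1\Omega}(\tilde v_i,R)$. Build a graph on $\{1,\dots,k\}$ joining $i\sim j$ when $\tilde d^{(t)}_{T^1M}(w_i,w_j)<r$; by the same Fact~\ref{crampon} argument this forces $d_\Omega(\pi\tilde v_i,\pi\tilde v_j)<2R+r/2$ (and likewise at time $t$), so the neighbours of a fixed $i$ have their $\tilde v_j$-footpoints inside $B_\Omega(\pi\tilde v_i,2R+r/2)\times B_\Omega(\pi\phi_t\tilde v_i,2R+r/2)$, while the $v_j$ being $r$-separated means the balls $B_\Omega(\pi\tilde v_j,r/4)\times B_\Omega(\pi\phi_t\tilde v_j,r/4)$ are disjoint; volume-counting bounds the degree by $\chi_+(2R+r/2)^2\chi_-(r/4)^{-2}$. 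Then a standard independent-set extraction (or iteratively removing a vertex together with its neighbourhood) yields $I\subset\{1,\dots,k\}$ with $|I|\geq k/\bigl(\chi_+(2R+r/2)^2\chi_-(r/4)^{-2}\bigr)=k\cdot\chi_+(2R+r/2)^{-2}\chi_-(r/4)^2$ on which $\{w_i:i\in I\}$ is $(\tilde d^{(t)}_{T^1M},r)$-separated. The only mildly delicate point — the place I expect to spend the most care — is the bookkeeping of lifts so that the $\Gamma$-action does not let two distinct $w_i$ project to balls that overlap "accidentally", but this is handled exactly as in the standard proof: work upstairs in $T^1\Omega$ with a single coherent choice of lifts dictated by the inclusions $\tilde w_i\in B^{(t)}_{T^1\Omega}(\tilde v_i,R)$, and only pass to the quotient at the very end when invoking $\tilde d^{(t)}_{T^1M}\le \tilde d^{(t)}_{T^1\Omega}$ (Remark~\ref{Remarque : distdyn}).
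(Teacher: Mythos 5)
Your proposal is correct and follows essentially the same route as the paper's proof: for \ref{item:ensemble separe d'une boule dynamique1} you lift the separated set into $B^{(t)}_{T^1\Omega}(\tilde v,R)$, send each vector to its pair of footpoints at times $0$ and $t$, use Lemma~\ref{crampon} to convert $(\tilde d^{(t)}_{T^1M},r)$-separation into disjointness of the product balls of radius $r/4$, and conclude with the Benz\'ecri-uniform bounds \eqref{Equation : chi}; for \ref{item:ensemble separe d'une boule dynamique2} your greedy/independent-set extraction with the degree bounded via \ref{item:ensemble separe d'une boule dynamique1} is exactly the paper's iterative construction of the sets $I_j$. The only discrepancy is cosmetic radius bookkeeping in \ref{item:ensemble separe d'une boule dynamique2} (the triangle inequality gives footpoints within $2R+r$ rather than $2R+r/2$, so the natural constant is $\chi_+(2R+5r/4)$), a slippage the paper's own constants share and which is harmless since all that is used later is some constant depending only on $R$ and $r$.
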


\begin{proof}
 Let us prove \ref{item:ensemble separe d'une boule dynamique1} when $\Gamma$ is trivial.
 Let $A\subset B_{T^1\Omega}^{(t)}(v,R)$ be a $(d_{T^1\Omega}^{(t)},r)$-separated set. We set
 \[B:=\{(\pi w,\pi\phi_tw) : w\in A\}\subset B_\Omega(\pi v,R)\times B_\Omega(\pi\phi_tv,R)\}.\]
 By Lemma~\ref{crampon}, and since $A$ is $(d_{T^1\Omega}^{(t)},r)$-separated, we see that $B$ is $(d,r/2)$-separated for the metric $d$ on $\Omega^2$, defined by $d((x,y),(x',y'))=\max(d_\Omega(x,x'),d_\Omega(y,y'))$ for $(x,y),(x',y')\in\Omega^2$; this exactly means that for all $(x,y)\neq (x',y')\in B$,
 \[(B_\Omega(x,r/4)\times B_\Omega(y,r/4))\cap (B_\Omega(x',r/4)\times B_\Omega(y',r/4))=\emptyset.\]
 As a consequence,
 \begin{align*}
  \#A &= \#B \\
  &\leq \chi_-(r/4)^{-2}\sum_{(x,y)\in B}\Vol_\Omega(B_\Omega(x,r/4))\Vol_\Omega(B_\Omega(y,r/4))\\
  &\leq \chi_-(r/4)^{-2} \Vol_\Omega^2\left(\bigsqcup_{(x,y)\in B}B_\Omega(x,r/4)\times B_\Omega(y,r/4)\right) \\
  &\leq \chi_-(r/4)^{-2} \Vol_\Omega^2(B_\Omega(\pi v,R+r/4)\times B_\Omega(\pi\phi_t v,R+r/4)) \\
  &\leq \chi_+(R+r/4)^2\chi_-(r/4)^{-2}.
 \end{align*}
 
 Let us prove \ref{item:ensemble separe d'une boule dynamique1} when $\Gamma$ is not necessarily trivial. Let $A\subset \tilde{B}_{T^1M}^{(t)}(v,R)$ be a $(\tilde{d}_{T^1M}^{(t)},r)$-separated set. Consider a lift $\vv\in T^1\Omega$ of $v$, and a lift $\tilde{A}\subset B_{T^1\Omega}^{(t)}(\vv,R)$ of $A$. Then $\tilde{A}$ is $(d_{T^1\Omega}^{(t)},r)$-separated, therefore it has cardinality less than $\chi_+(R+r/4)^2\chi_-(r/4)^{-2}$, and so do $A$.
 
 Let us establish \ref{item:ensemble separe d'une boule dynamique2}. We construct $I$ by induction. We set $i_0=0$ and $I_0:=\{1,\dots,k\}$. For $j\geq 1$, if $(I_0,\dots,I_{j-1})$ and $(i_0,\dots,i_{j-1})$ have been constructed, we set $i_j:=\min I_{j-1}>i_{j-1}$ and 
 \[I_j:=\{i\in I_{j-1} : \tilde{d}_{T^1M}^{(t)}(w_{i_j},w_i)\geq r\}\varsubsetneq I_{j-1}.\]
 This process eventually stops, at the $n$-th step for some $n\in\{1,\dots,k\}$ such that $I_n=\emptyset$. The set $\{w_{i_j} : 1\leq j\leq n\}$ is $(\tilde{d}_{T^1M}^{(t)},r)$-separated by construction. In order to prove that $k$ is bounded above by $n\cdot \chi_+(2R+r/2)^{2}\cdot\chi_-(r/4)^{-2} $, it is enough to see that for each $0\leq j\leq n-1$,
 \[\#I_{j+1} \geq \#I_j - \chi_+(2R+r/2)^{2}\cdot\chi_-(r/4)^{-2}.\]
 This is a consequence of \ref{item:ensemble separe d'une boule dynamique1} and of the fact that $I_j\smallsetminus I_{j+1}$ is contained in the set of indices $i$ such that $v_i\in \tilde{B}_{T^1\Omega}^{(t)}(v_{j+1},r+2R)$.
\end{proof}

\subsection{The measure of maximal entropy is unique}\label{ssection:uniciteMME}

In order to prove the uniqueness of the measure of maximal entropy, we will use our estimates on the size of the dynamical balls Corollary~\ref{measure_of_dyn_ball}. However, one of these estimates only holds for balls centered at vectors in $T^1M_{\bip}$, whereas we would like the uniqueness of the measure of maximal entropy on $T^1M_{\core}$. This is why we will need Corollary~\ref{ensemble separe d'une boule dynamique} and the following lemma.

For the rest of this section, given a convex projective manifold $M=\Omega/\Gamma$, we denote by $T^1\Omega_{\core}$ (\resp $T^1\Omega_{\bip}$) the set (depending on $\Gamma$) of vectors $v\in T^1\Omega$ such that $\phi_{\pm\infty}v\in\Lambda^{\orb}_\Omega(\Gamma)$ (\resp $\Lambda^{\prox}(\Gamma)$).

\begin{lemma}\label{le coeur fait bip}
 Let $\Omega\subset\PR(V)$ be a properly convex open set, and $\Gamma\subset\Aut(\Omega)$ a convex cocompact discrete subgroup. Suppose $M=\Omega/\Gamma$ is rank-one and non-elementary. Then there exists $R>0$ such that for every point $\xi\in\Lambda^{\orb}$ we can find $\eta\in\Lambda^{\prox}$ such that $d_{\overline{\Omega}}(\xi,\eta)\leq R$.
 
 In particular, according to Lemma~\ref{crampon}, for any $v\in T^1\Omega_{\core}$, we can find a vector $w\in T^1\Omega_{\bip}$ such that $d_{T^1\Omega}(\phi_tv,\phi_tw)\leq 2R$ for any $t\in\R$.
\end{lemma}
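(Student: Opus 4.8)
The plan is to prove the ``$d_{\overline\Omega}$-boundedness'' statement and then derive the vector statement as an immediate corollary via Lemma~\ref{crampon}. The first claim is: there exists $R>0$ such that every $\xi\in\Lambda^{\orb}$ lies within $d_{\overline\Omega}$-distance $R$ of some point of $\Lambda^{\prox}$. Since $d_{\overline\Omega}(\xi,\eta)<\infty$ exactly when $\xi,\eta$ lie on the same open face of $\overline\Omega$, the underlying geometric content is that every point of $\Lambda^{\orb}$ sits on an open face that meets $\Lambda^{\prox}$, and that one can do this with uniform distance. The natural tool is a compactness argument using Benz\'ecri's theorem (Fact~\ref{benz}), combined with the fact that $\Gamma$ acts cocompactly on $\CC^{\core}_\Omega(\Gamma)$, whose closure's boundary contains $\Lambda^{\orb}=\Lambda^{\con}$.

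First I would recall from Fact~\ref{fait:des rk1 partout} that $\Lambda^{\prox}$ is the smallest $\Gamma$-invariant closed subset of $\overline\Omega$, so $\Lambda^{\prox}\subset\overline{\Lambda^{\orb}}=\Lambda^{\orb}$ (the latter is closed by \cite[Cor.\,4.8]{fannycvxcocpct} in the convex cocompact case). Now fix $o\in\CC^{\core}:=\CC^{\core}_\Omega(\Gamma)$ and a compact fundamental domain $D$ for the $\Gamma$-action on $\CC^{\core}$. Given $\xi\in\Lambda^{\orb}$, pick a point $x\in\CC^{\core}$ on the segment $[o,\xi)$; as $x\to\xi$ along this ray, write $x=\gamma_x y_x$ with $y_x\in D$ and $\gamma_x\in\Gamma$. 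I would then look at the pointed convex sets $(\gamma_x^{-1}x,\gamma_x^{-1}\Omega)=(y_x,\gamma_x^{-1}\Omega)$; by Benz\'ecri compactness these lie in a compact subset of $\Ecal_V^\bullet/\PGL(V)$, so along a subsequence $\gamma_{x_n}^{-1}x_n\to\xi'$ converges in a limiting pointed convex set, and $\gamma_{x_n}^{-1}\xi_n$ (where $\xi_n\in\partial\Omega$ is the endpoint of the ray through $y_{x_n}$ and $\gamma_{x_n}^{-1}x_n$, i.e.\ $\xi_n=\gamma_{x_n}^{-1}\xi$) converges to a point $\xi''$ on the boundary of the limit. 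The key point: in the limit convex set, since $\xi''$ is an accumulation point of the orbit (the geodesic from $o$ through $y_{x_n}$ is long and passes through points at bounded distance from the orbit point $\gamma_{x_n}^{-1}o$), one can arrange that $\xi''$ lies on a bounded-diameter open face meeting the limit of $\gamma_{x_n}^{-1}(\Lambda^{\prox}\cap\partial\Omega)$ — and here one uses that $M$ is rank-one so $\Lambda^{\prox}$ contains strongly extremal points that survive in the limit. Pulling back by $\gamma_{x_n}$ and using $\PGL(V)$-invariance of neither $d_{\overline\Omega}$ nor $d_{\spl}$ naively, I would instead phrase the whole argument as: the function $\xi\mapsto \inf\{d_{\overline\Omega}(\xi,\eta):\eta\in\Lambda^{\prox}\}$ is a $\Gamma$-invariant function on $\Lambda^{\orb}$, and by the compactness/limiting argument it is bounded on a set of representatives, hence bounded everywhere.

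Actually the cleanest route, which I expect to be the one intended, is to argue by contradiction: suppose there is a sequence $\xi_n\in\Lambda^{\orb}$ with $\inf\{d_{\overline\Omega}(\xi_n,\eta):\eta\in\Lambda^{\prox}\}\to\infty$ (in particular $\xi_n$ lies on an open face $F_n$ of $\overline\Omega$ of ``large internal diameter'' containing no point of $\Lambda^{\prox}$). Using $\Gamma$-invariance and cocompactness of the core we may translate so that a point of $[o,\xi_n)\cap\CC^{\core}$ stays in $D$; then after passing to a Benz\'ecri limit of $(\Omega, \text{translated basepoint})$ together with the translated $\Lambda^{\orb}$ and $\Lambda^{\prox}$, we get a limiting properly convex set in which there is a boundary point on an infinite open face disjoint from the limit of $\Lambda^{\prox}$. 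But since $M$ is rank-one, $\Lambda^{\prox}$ contains two strongly extremal points (hence after translating, by minimality of $\Lambda^{\prox}$, one finds translates of $\Lambda^{\prox}$ that are ``spread out'' — this is exactly the mechanism used in the proof of Lemma~\ref{l ombre grandit a la frontiere} and Lemma~\ref{l'ombre en defaut grandit a la frontiere}), which forces the limiting face to be trivial, a contradiction. The main obstacle will be making the limiting argument rigorous: controlling how faces of $\Omega$ degenerate under the Benz\'ecri limit (faces can only ``grow'' in the limit, by lower semicontinuity of $d_{\overline\Omega}$, which is Fact~\ref{semi-continuite superieure}) and simultaneously tracking that $\Lambda^{\prox}$ cannot be swallowed into a single face — this is where rank-one-ness and the non-elementary hypothesis (giving at least two strongly extremal points in $\Lambda^{\prox}$, with dense orbit by minimality) are essential.

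Finally, for the ``in particular'' statement: given $v\in T^1\Omega_{\core}$, its endpoints $\phi_{\pm\infty}v\in\Lambda^{\orb}$; choose $\eta_\pm\in\Lambda^{\prox}$ with $d_{\overline\Omega}(\phi_{\pm\infty}v,\eta_\pm)\le R$. In particular $\eta_\pm$ lies on the same open face as $\phi_{\pm\infty}v$, so $[\eta_-,\eta_+]\cap\Omega\neq\emptyset$ (the segment $[\phi_{-\infty}v,\phi_{\infty}v]$ meets $\Omega$, and moving each endpoint within its face keeps the segment meeting $\Omega$). Let $w$ be a unit tangent vector to the straight line through $\eta_-,\eta_+$, with footpoint chosen so that $\pi w$ is close to $\pi v$; then $w\in T^1\Omega_{\bip}$, and applying Lemma~\ref{crampon} to the two straight geodesics spanned by $v$ and $w$ (reparametrised appropriately, and noting that $d_\Omega$ at both $\pm\infty$ ends is controlled by $R$ via the definition of $d_{\overline\Omega}$ as a Hilbert distance inside a face, extended to the behaviour of the geodesics) gives $d_{T^1\Omega}(\phi_t v,\phi_t w)\le 2R$ for all $t\in\R$, after possibly enlarging $R$ by a universal constant. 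I would double-check the precise bookkeeping in this last step — relating $d_{\overline\Omega}$ on the boundary to the asymptotic distance between the two geodesics — but it is routine given Lemma~\ref{crampon} and the definition of the face metric.
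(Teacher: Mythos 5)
Your reduction of the ``in particular'' part is fine (the observation that $[\eta_-,\eta_+]$ still meets $\Omega$ because a segment of $\partial\Omega$ lies in a supporting hyperplane, which then contains the closed faces of both endpoints, is correct, and the bookkeeping with Lemma~\ref{crampon} is at the level of detail the paper itself uses). The gap is in the main statement. Your contradiction/Benz\'ecri argument breaks down at its central step. Translating by $\gamma_n\in\Gamma$ produces no new limit object: $\Gamma$ preserves $\Omega$, $\Lambda^{\orb}$, $\Lambda^{\prox}$ and the face metric, so the function $\xi\mapsto\inf_{\eta\in\Lambda^{\prox}}d_{\overline{\Omega}}(\xi,\eta)$ is $\Gamma$-invariant and the ``limit'' is just $\Omega$ with a moved basepoint. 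More seriously, after extracting a limit $\xi_n\to\xi$ you cannot conclude anything about the face of $\xi$: $d_{\overline{\Omega}}$ is only \emph{lower} semi-continuous, so the distance to $\Lambda^{\prox}$ can collapse in the limit (a sequence of strongly extremal points outside $\Lambda^{\prox}$, at infinite face-distance from everything, may converge to a point of $\Lambda^{\prox}$). Hence the claim that the limiting configuration exhibits ``a boundary point on an infinite open face disjoint from the limit of $\Lambda^{\prox}$'' does not follow. Finally, the mechanism you invoke to finish --- ``rank-one-ness forces the limiting face to be trivial'' --- is not a real mechanism: rank-one and non-elementary only give two strongly extremal points in $\Lambda^{\prox}$, minimality, and density of rank-one axis endpoints, and this is perfectly compatible with $\overline{\Omega}$ having large faces (e.g.\ the non-strictly convex divisible examples contain properly embedded triangles, hence nontrivial faces, coexisting with many strongly extremal points of $\Lambda^{\prox}$); so nothing in the limit forces faces to degenerate.

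What makes the lemma work is a measure-theoretic input that your sketch bypasses but does not replace. The paper's proof takes $o$ in the convex hull of $\Lambda^{\orb}$ and the $\delta_\Gamma$-conformal density $\nu_o$: by Proposition~\ref{Proposition : cvxcocpct -> divergent} and Theorem~\ref{Thm : The weak Hopf--Tsuji--Sullivan--Roblin dichotomy}, $\nu_o(\Lambda^{\prox})=\nu_o(\partial\Omega)$, while the convex cocompact Shadow lemma (Corollary~\ref{shadowlemma+}) provides $R>0$ with $\nu_o(\mathcal{O}_R(o,x))>0$ for every $x\in[o,\xi)\subset\CC^{\core}_\Omega(\Gamma)$. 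Hence every such shadow meets $\Lambda^{\prox}$: there is $\eta_x\in\Lambda^{\prox}$ with $[o,\eta_x]\cap \overline{B}_\Omega(x,R)\neq\emptyset$. Letting $x\to\xi$, the points of $[o,\eta_x]\cap\overline{B}_\Omega(x,R)$ accumulate in $\overline{B}_{\overline{\Omega}}(\xi,R)$ by Fact~\ref{semi-continuite superieure} (this is the benign direction of semi-continuity), and since such a limit lies in $\partial\Omega\cap[o,\lim\eta_x]=\{\lim\eta_x\}$, any accumulation point of $(\eta_x)$ is a point of $\Lambda^{\prox}$ with $d_{\overline{\Omega}}(\xi,\cdot)\leq R$. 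If you want to avoid the conformal density here, you would need a genuinely new geometric argument; the soft compactness argument as proposed does not supply one.
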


\begin{proof}
 Consider $o$ in the convex hull of $\Lambda^{\orb}$ in $\Omega$ and the $\delta_\Gamma$-conformal density $\nu_o$ on $\partial\Omega$. Let $R>0$ be given by the Shadow lemma (Corollary~\ref{shadowlemma+}). According to Fact~\ref{semi-continuite superieure}, it is enough to show that for any $x\in[o,\xi)$, the shadow $\mathcal{O}_R(o,x)$ intersect $\Lambda^{\prox}$; this is implied by the fact that $\nu_o(\Lambda^{\prox})=1$ (by Theorem~\ref{Thm : The weak Hopf--Tsuji--Sullivan--Roblin dichotomy} and Proposition~\ref{Proposition : cvxcocpct -> divergent}) and $\nu_o(\mathcal{O}_R(o,x))>0$.
\end{proof}

In the case where $M$ is compact, the proof of Theorem~\ref{Thm : proba d'entropie max dans le cas compact} below can be shorten by using the following result; for instance, we do not need Section~\ref{Ssection : separons les boules dyn} and Lemma~\ref{le coeur fait bip} in this case. 
 
\begin{fait}\label{Proposition : prox=bord}
 Let $\Omega\subset\PR(V)$ be a properly convex open set and $\Gamma\subset\Aut(\Omega)$ a non-elementary rank-one discrete subgroup that acts cocompactly on $\Omega$. Then $\Lambda^{\prox}=\partial\Omega$.
\end{fait}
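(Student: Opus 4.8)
Here is how I would attack Fact~\ref{Proposition : prox=bord}.

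First the easy half and the reduction. Since any proximal element of $\Aut(\Omega)$ has its attracting fixed point in $\overline\Omega$ but not in $\Omega$ (a discrete group has no infinite‑order element fixing a point of $\Omega$), we get $\Lambda^{\prox}\subseteq\partial\Omega$; and by Fact~\ref{fait:des rk1 partout}, $\Lambda^{\prox}$ is the smallest closed $\Gamma$‑invariant subset of $\overline\Omega$. So the whole content is the reverse inclusion $\partial\Omega\subseteq\Lambda^{\prox}$.

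The first main step would be to show $\overline{\conv}(\Lambda^{\prox})=\overline\Omega$. I would argue that $\Lambda^{\prox}$ spans $\PR(V)$: otherwise its span $\PR(W)$ is a proper $\Gamma$‑invariant subspace, and then Vey's theorem~\cite{vey} (the convex hull of any $\Gamma$‑orbit in $\Omega$ equals $\Omega$) forces $\PR(W)\cap\Omega=\emptyset$, contradicting the fact that the axis of a rank‑one element of $\Gamma$ lies in $\PR(W)$ and meets $\Omega$ (Fact~\ref{equivalences rang un}.\ref{Item : dualite de rg1}). Hence $\mathcal C:=\overline{\conv}(\Lambda^{\prox})$ has non‑empty interior, which is contained in $\Omega$; choosing a basepoint $o$ in that interior, $\mathcal C$ is a closed convex $\Gamma$‑invariant set containing $\Gamma\cdot o$, so Vey again gives $\overline\Omega=\overline{\conv}(\Gamma\cdot o)\subseteq\mathcal C\subseteq\overline\Omega$. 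In particular every extreme point of $\overline\Omega$ is in $\Lambda^{\prox}$ (a singleton is a face, and every face of $\overline\Omega$ meets $\Lambda^{\prox}$ since its convex hull fills $\overline\Omega$); in particular $\partial_{\sse}\Omega\subseteq\Lambda^{\prox}$. The next step is to dualise: $\Gamma$ also divides $\Omega^*$, and $\Omega^*/\Gamma$ is again rank‑one and non‑elementary (Fact~\ref{equivalences rang un}), so the same argument gives $\overline{\conv}(\Lambda^{\prox}(\Gamma^*))=\overline{\Omega^*}$; translated back, every $\xi\in\partial\Omega$ lies on a supporting hyperplane $H$ of $\Omega$ belonging to $\Lambda^{\prox}(\Gamma^*)$, and such an $H$ is a limit of tangent hyperplanes $T_{x_{g_n}^+}\partial\Omega=x_{g_n}^+\oplus x_{g_n}^0$ at attracting fixed points of rank‑one elements $g_n\in\Gamma$ (using Fact~\ref{fait:des rk1 partout} for $\Gamma^*$ and Fact~\ref{les extremites des geodesiques periodiques biproximales sont lisses}).

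It then remains to handle a point $\xi$ in the relative interior of a positive‑dimensional face $F$ of $\overline\Omega$. Two routes seem viable. The cleanest is structural: a nontrivial join $\Omega_1\oplus\Omega_2$ divided by $\Gamma$ admits no element whose axis meets $\Omega$ together with a strongly extremal attracting fixed point (the axis of such an element is forced to keep its coordinate in one factor pinned at the factor's boundary), so a rank‑one $\Gamma$ forbids $\Omega$ from being a nontrivial join; by Benoist's structure theory of divisible convex sets this reduces the problem to a finite‑index irreducible subgroup $\Gamma'$, for which $\Lambda^{\prox}(\Gamma')=\partial\Omega$ is Benoist's theorem \cite{CD1}, and then $\Lambda^{\prox}(\Gamma)\supseteq\Lambda^{\prox}(\Gamma')=\partial\Omega$. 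A self‑contained route, avoiding the structure theory, would instead use that $\Lambda^{\con}=\partial\Omega$ (since $\Gamma$ divides $\Omega$), so $\xi$ is a conical limit point: choosing a geodesic ray to $\xi$ and $\gamma_n\to\infty$ tracking it, the pairs $(\gamma_n^{-1}o,\gamma_n o)$ converge to a pair of boundary points, the first of which degenerates onto $\overline F_\Omega(\xi)$; after perturbing $\gamma_n$ by the density of rank‑one pairs (Fact~\ref{fait:des rk1 partout}) so that the endpoints become strongly extremal, the closing lemma (Corollary~\ref{Corollaire : fermeture}) produces rank‑one elements whose attracting fixed points converge to $\xi$, whence $\xi\in\Lambda^{\prox}$.

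The delicate point — the main obstacle in the self‑contained route — is exactly this last step: controlling how faces of $\overline\Omega$ degenerate under the $\Gamma$‑action so that the closing lemma's extremality (or simplicial‑distance) hypotheses can be met; equivalently, proving that $\overline F_\Omega(\eta)\subseteq\Lambda^{\prox}$ for every $\eta\in\Lambda^{\prox}$, which together with $\overline{\conv}(\Lambda^{\prox})=\overline\Omega$ immediately gives $\Lambda^{\prox}=\partial\Omega$.
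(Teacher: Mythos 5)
First, a point of comparison: the paper does not prove Fact~\ref{Proposition : prox=bord} at all --- it is imported from the companion article \cite{proxestbord}, as announced in the introduction. So there is no internal proof to measure you against, and the existence of a separate paper devoted to this statement is already a signal that the remaining step in your sketch is the actual mathematical content. Your first step is correct and useful: $\Lambda^{\prox}$ spans $\PR(V)$ because the axis of a rank-one element lies in its span and meets $\Omega$, Vey's theorem then gives $\conv(\Lambda^{\prox})=\overline\Omega$, and Milman's converse to Krein--Milman gives that every extreme point of $\overline\Omega$ (in particular every point of $\partial_{\sse}\Omega$) lies in $\Lambda^{\prox}$; the dual statement is obtained the same way. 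But this only handles extreme points, and the whole difficulty is the points lying in positive-dimensional faces.

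Neither of your two routes closes that gap. Route A rests on a citation that does not exist: there is no theorem of Benoist asserting $\Lambda^{\prox}=\partial\Omega$ for irreducible divisible convex sets. Without the rank-one hypothesis the statement is false --- for the higher-rank symmetric divisible convex sets (e.g.\ the projectivised cone of positive semi-definite symmetric matrices divided by the image of a cocompact lattice of $\SL_3(\R)$, which is strongly irreducible) the proximal limit set is contained in the set of rank-one classes, i.e.\ in the extreme points, a proper closed subset of $\partial\Omega$ --- and with the rank-one hypothesis it is precisely the fact to be proved, namely the main theorem of \cite{proxestbord}; Benoist establishes it only in dimension $3$ (\cite[Prop.\,3.10]{CD4}) and in the strictly convex case. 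Your reduction to an irreducible finite-index subgroup discards the rank-one hypothesis exactly where it is indispensable. Route B you yourself leave open at the decisive step, and the ``equivalent'' reformulation you propose is not sufficient even formally: a closed $\Gamma$-invariant set containing $\overline F_\Omega(\eta)$ for each of its points $\eta$ and with convex hull equal to $\overline\Omega$ need not contain $\partial\Omega$ (the vertex set of a simplex satisfies both conditions). What would actually suffice, on top of your step 1, is the stronger propagation statement that every closed face of $\overline\Omega$ which \emph{meets} $\Lambda^{\prox}$ is entirely contained in $\Lambda^{\prox}$: then any $\xi\in\partial\Omega$ lies in a closed face whose extreme points are extreme in $\overline\Omega$, hence in $\Lambda^{\prox}$, hence the face, hence $\xi$. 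Proving this propagation is exactly where the closing lemma (Corollary~\ref{Corollaire : fermeture}) cannot be invoked --- its extremality/simplicial-distance hypotheses fail by construction for limits landing inside a face --- and ``perturbing $\gamma_n$ by density of rank-one pairs'' does not control where the resulting attracting fixed points go. This is the argument carried out in \cite{proxestbord}, and it is missing from your proposal.
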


\begin{proof}[Proof of Theorem~\ref{Thm : proba d'entropie max dans le cas compact}]
 It is enough to prove that the Bowen--Margulis probability measure $m$ on $T^1M$ is the unique measure of maximal entropy, since $m$ is mixing by Theorem~\ref{Thm : The weak Hopf--Tsuji--Sullivan--Roblin dichotomy}. 

 We can assume that $\Gamma$ is torsion-free by Observation~\ref{obs:revetentrop} and since for any finite-index subgroup $\Gamma'$ of $\Gamma$, the measure $m$ is the push-forward of the Bowen--Margulis probability measure on $T^1\Omega/\Gamma'$; let $\epsilon_0$ be the injectivity radius of $\Omega/\Gamma$. Consider a $(\phi_t)_{t\in\R}$-invariant probability measure $m'$ on $T^1M_{\core}$ which is different from $m$. Since $m$ is ergodic (Theorem~\ref{Thm : The weak Hopf--Tsuji--Sullivan--Roblin dichotomy} and Proposition~\ref{Proposition : cvxcocpct -> divergent}), $m'$ cannot be absolutely continuous with respect to $m$. By Radon--Nikodym Theorem we can decompose $m'$ into a sum $tm''+(1-t)m$ where $0<t\leq 1$ and $m''$ is $(\phi_t)_{t\in\R}$-invariant and singular with respect to $m$. Then $h_{m'}(\phi)=th_{m''}(\phi)+(1-t)\delta_\Gamma$ (see \cite[Cor.\,4.3.17]{katok}), and we only need to prove that $h_{m''}(\phi)<\delta_\Gamma$. Note that since $\Lambda^{\orb}\smallsetminus\Lambda^{\prox}$ does not intersect $\partial_{\sse}\Omega$. Without loss of generality we assume that $m=m'$ is singular with respect to $m$. Let $A\subset T^1M_{\sse}$ be a flow-invariant measurable subset such that $m(A)=1$ while $m'(A)=0$.
 
 Fix $\epsilon>0$ and let $K_1\subset A\cap T^1M_{\core}$ and $K_2\subset T^1M_{\core}\smallsetminus A$ be compact subsets such that $m(K_1)\geq 1-\epsilon$ and $m'(K_2)\geq 1-\epsilon$. Observe that 
 \begin{equation}\label{K1 loin de K2}
 \min\{\tilde{d}_{T^1M}^{(2t)}(\phi_{-t}v,\phi_{-t}w): v\in K_1, w \in K_2\}\underset{t\to\infty}{\longrightarrow}\infty.
 \end{equation}
 Indeed,otherwise there would exist $v\in K_1$ and $w\in K_2$ such that $\sup_{t>0}\tilde{d}_{T^1M}^{(2t)}(\phi_{-t}v,\phi_{-t}w)<\infty$. Then we could find lifts $\vv,\ww\in T^1\Omega$ such that $\sup_{t>0}{d}_{T^1\Omega}^{(2t)}(\phi_{-t}\vv,\phi_{-t}\ww)<\infty$, which implies, since $\phi_\infty \vv$ and $\phi_{-\infty}\vv$ are extremal, that $\phi_{\pm\infty}\vv=\phi_{\pm\infty}\ww$ hence $w\in \phi_\R v\subset A$, which is a contradiction.
 
 Let $n\geq 1$ and consider a maximal $(d_{T^1M}^{(2n)},\epsilon_0/8)$-separated set $\{v_1,\dots,v_k\}\subset T^1M_{\core}$, which is ordered so that for any $i=1,\dots,k$, the ball $B_{T^1M_{\core}}^{(2n)}(v_i,\epsilon_0/8)$ intersects $\phi_{-n} K_2$ if and only if $i\leq l$, where $1\leq l\leq k$ is some integer.
 
 Construct by induction the finite measurable partition $\p=\{P_1,\dots,P_k\}$ of $T^1M_{\core}$ so that 
 \[B_{T^1M_{\core}}^{(2n)}(v_{i+1},\epsilon_0/16)\subset P_{i+1}:=B_{T^1M_{\core}}^{(2n)}(v_{i+1},\epsilon_0/8)\smallsetminus (P_1\cup\dots\cup P_i)\subset B_{T^1M_{\core}}^{(2n)}(v_{i+1},\epsilon_0/8).\]
 $P_i$ diameter less than $\epsilon_0/3$ with respect to $d_{T^1M}^{(2n)}$ for each $1\leq i\leq k$, therefore we can combine Remarks~\ref{reparam_entropy} and \ref{f^n_is_expansive} with Facts~\ref{geod_is_expansive} and \ref{bowen} to obtain:
 \[h_{m'}(\phi_1)=\frac{1}{2n}h_{m'}(\phi_{2n})=\frac{1}{2n}H_{m'}(\phi_{2n},\p)\leq \frac{1}{2n}H_{m'}(\p).\]
 Now we use the classical fact that for all $q\in\N_{>0}$ and $a_1,\dots,a_q>0$, if $s:=a_1+\dots+a_q\leq 1$ then 
 \[-\sum_i a_i\log(a_i)\leq -s\log s +s\log q \leq  s\log(q) + 1/e,\]
   and we compute:
 \begin{align*}
  H_{m'}(\p) & = -\sum_{i=1}^km'(P_i)\log(m'(P_i))\\
  & \leq m'\left(\bigcup_{i=1}^lP_i\right)\log(l)+m'\left(\bigcup_{i=l+1}^kP_i\right)\log(k)+2/e.
 \end{align*}
 Note that on one hand, $\phi_{-n}K_2$ is contained in $\bigcup_{i=1}^lP_i$, hence $m'\left(\cup_{i=1}^lP_i\right)\geq 1-\epsilon$.
 On the other hand $\phi_{-n}K_1$ does not intersect $\bigcup_{i=1}^lP_i$ for $n$ large enough, indeed if there exist $i\in\{1,\dots,l\}$ and $v\in \phi_{-n}K_1\cap P_i$, then we take a vector $w\in\phi_{-n}K_2\cap B_{T^1M}^{(2n)}(v_i,\epsilon_0/8)$ and use the triangular inequality to obtain that $\tilde{d}_{T^1M}^{(2n)}(\phi_{-n}\phi_nv,\phi_{-n}\phi_nw)\leq \epsilon_0/4$, which is not compatible with \eqref{K1 loin de K2} for $n$ large. As a consequence, $m\left(\cup_{i=1}^lP_i\right)\leq\epsilon$.
 
 We now need to bound from above $k$ and $l$. If $M$ is compact then it is easier to conclude the proof: we can use Fact~\ref{Proposition : prox=bord}, which implies  that $T^1M=T^1M_{\core}=T^1M_{\bip}$, and apply directly Corollary~\ref{measure_of_dyn_ball} to get:

\[l\leq \frac{m(\bigcup_{i=1}^lP_i)}{\min\{m(P_i):1\leq i\leq l\}}\leq \epsilon C e^{2n\delta_\Gamma},\]
 and similarly $k\leq Ce^{2n\delta_\Gamma}$, where $C$ only depends on $\epsilon_0$. Thus we obtain
 \[2nh_{m'}(\phi_1)\leq (1-\epsilon)\log(\epsilon)+\log(C)+2n\delta_\Gamma+2/e,\]
 which is strictly less than $2n\delta_\Gamma$ for $\epsilon$ small enough.
 
 In the general case, we need to take into account that $T^1M_{\core}$ and $T^1M_{\bip}$ might be different and that Corollary~\ref{measure_of_dyn_ball} only holds on $T^1M_{\bip}$; this is where we need Corollary~\ref{ensemble separe d'une boule dynamique} and Lemma~\ref{le coeur fait bip}. Thanks to the latter, there exists $R>0$, which only depends on $\Gamma$ and $\Omega$, and such that for each $1\leq i\leq k$, we can find $w_i\in \tilde{B}_{T^1M}^{(2n)}(v_i,R)\cap T^1M_{\bip}$. Then we can use Corollary~\ref{ensemble separe d'une boule dynamique}.\ref{item:ensemble separe d'une boule dynamique2} to find $I\subset\{1,\dots,k\}$ such that $\{w_i : i\in I\}$ is $(d_{T^1M}^{(2n)},\epsilon_0/8)$-separated and 
 \begin{align*}
  k &\leq \#I\cdot \chi_+(2R+\epsilon_0/16)^2\chi_-(\epsilon_0/32)^{-2}\\
  &\leq \frac{m(\bigcup_{i\in I}B_{T^1M}^{(2n)}(w_i,\epsilon_0/16))}{\min\{m(B_{T^1M}^{(2n)}(w_i,\epsilon_0/16)):i\in I\}}\chi_+(2R+\epsilon_0/16)^2\chi_-(\epsilon_0/32)^{-2}\\
  &\leq C'e^{2n\delta_\Gamma}
 \end{align*}  
 where $C'$ only depends on $\Gamma$ and $\Omega$. Similarly, we can find $w_i'\in \phi_{-n}K_2\cap B_{T^1M}^{(2n)}(v_i,\epsilon_0/8)$ and $w_i''\in \tilde{B}_{T^1M}^{(2n)}(w_i',R)\cap T^1M_{\bip}$ for each $i=1,\dots,l$. Corollary~\ref{ensemble separe d'une boule dynamique}.\ref{item:ensemble separe d'une boule dynamique2} gives us $I'\subset\{1,\dots,l\}$ such that $\{w_i'':i\in I'\}$ is $(d_{T^1M}^{(2n)},\epsilon_0/8)$-separated and
 \begin{align*}
  l &\leq \#I'\cdot \chi_+(2R+\epsilon_0/8)^2\chi_-(\epsilon_0/32)^{-2}\\
  &\leq \frac{m(\bigcup_{i\in I'}B_{T^1M}^{(2n)}(w_i'',\epsilon_0/16))}{\min\{m(B_{T^1M}^{(2n)}(w_i'',\epsilon_0/16)):i\in I'\}}\chi_+(2R+\epsilon_0/8)^2\chi_-(\epsilon_0/32)^{-2}\\
  &\leq \epsilon C''e^{2n\delta_\Gamma}
 \end{align*}
 where $C''$ only depends on $\Gamma$ and $\Omega$, and we have used the fact that $B_{T^1M}^{(2n)}(w_i'',\epsilon_0/16)$ does not intersect $\phi_{-n}K_1$ for any $i=1,\dots,l$ and for $n$ large enough (again because of \eqref{K1 loin de K2}). As we explained in the compact case, this implies that
 \begin{align*}
  2nh_{m'}(\phi_1)  \leq (1-\epsilon)\log(\epsilon)+\log\max(C',C'')+2n\delta_\Gamma+2/e,
 \end{align*}
 which is strictly less that $2n\delta_\Gamma$ for $\epsilon$ small enough.
\end{proof}

\section{Counting closed geodesics}\label{Counting closed geodesics}

In this section we keep on adapting Knieper's article \cite{knieper_BMmeasure} in order to prove Proposition~\ref{Prop : comptage dans le cas compact}, which gives asymptotic estimates for the number of closed geodesics of length less than $t$, when $t$ goes to infinity.

These estimates do not all need the results of the previous sections. More precisely, to prove the upper bound on the number of rank-one close geodesics in \ref{Item : comptage 1} we do not need Theorem~\ref{Thm : The weak Hopf--Tsuji--Sullivan--Roblin dichotomy}. To prove the lower bound in \ref{Item : comptage 1} we need the mixing property of the Bowen--Margulis measure, but not the uniqueness of the measure of maximal entropy. To establish the upper bound on the number of non-rank-one closed geodesics in \ref{Item : comptage 2} and the equidistribution of closed geodesics we need uniqueness of the measure of maximal entropy.

Recall that $[\Gamma]$ is the set of conjugacy classes of $\Gamma$, and $[\Gamma]^{\rgun}\subset[\Gamma]$ (\resp $[\Gamma]^{\sing}$) consists of conjugacy classes of rank-one (\resp non-rank-one) elements of $\Gamma$. For each subset $A\subset [\Gamma]$  and interval $I\subset \R$, we set $A_I=\{c\in A:\ell(c)\in I\}$, and we write $A_T=A_{[0,T]}$ for any $T\geq 0$.

\subsection{The lower bound}\label{ssection:borne inf}

In this section we use the mixing of the Bowen--Margulis measure to obtain a lower bound on the number of closed geodesics.

\begin{prop}\label{minoration}
Let $\Omega\subset\PR(V)$ be a properly convex open set, and $\Gamma\subset\Aut(\Omega)$ a convex cocompact discrete subgroup with $M=\Omega/\Gamma$ rank-one and non-elementary. Then there exists a constant $C>0$ such that for any $T>C$,
 \[\#[\Gamma]^{\rgun}_T\geq \frac{C}{T}e^{\delta_\Gamma T}.\]
\end{prop}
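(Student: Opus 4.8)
The strategy is the classical one, going back to Margulis, of using the mixing of the Bowen--Margulis measure to count closed geodesics from below. Fix a lift $v_0\in T^1\Omega$ of a rank-one periodic vector $v$ of $M$, so that $\phi_{-\infty}\vv_0$ and $\phi_\infty \vv_0$ are strongly extremal, and apply the closing lemma (Lemma~\ref{closinglemma}) with some $\alpha>0$ to obtain $\epsilon>0$: whenever $w\in B_{T^1M}(v_0,\epsilon)$ and $\phi_t w\in B_{T^1M}(v_0,\epsilon)$ with $t>\alpha$, there is a rank-one periodic vector in $\tilde B_{T^1M}^{(t)}(w,\alpha)$ with period in $[t-\alpha,t+\alpha]$. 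Let $K=\overline B_{T^1M}(v_0,\epsilon/2)$; by Proposition~\ref{Myrberg est plein} and since $v_0$ is rank-one, $v_0\in T^1M_{\bip}=\supp(m)$, so $m(K)>0$. Since $m$ is finite (Proposition~\ref{Proposition : cvxcocpct -> divergent}) and mixing (Theorem~\ref{Thm : The weak Hopf--Tsuji--Sullivan--Roblin dichotomy}) with entropy $\delta_\Gamma$ (Proposition~\ref{l'entropie de BM est max}; but we don't even need the entropy here, only mixing), we get $m(\phi_{-t}K\cap K)\to m(K)^2/m(T^1M)>0$ as $t\to\infty$; in particular there is $C_1>0$ and $T_0$ with $m(\phi_{-t}K\cap K)\geq C_1$ for all $t\geq T_0$.

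\textbf{From mixing to counting.} The next step is to cover the set $\phi_{-t}K\cap K$ by dynamical balls $\tilde B_{T^1M}^{(t)}(\cdot,r)$ for a small fixed $r$, each of which, by Corollary~\ref{measure_of_dyn_ball} (applicable since the centers can be taken in $T^1M_{\bip}$ after moving them a bounded distance using Lemma~\ref{le coeur fait bip}, or more simply by the upper bound of that corollary which holds on $T^1M_{\core}$), has $m$-measure at most $C_2 e^{-\delta_\Gamma t}$. Taking a maximal $(\tilde d_{T^1M}^{(t)},r)$-separated subset of $\phi_{-t}K\cap K$, of cardinality $N_t$, the corresponding balls of radius $2r$ cover $\phi_{-t}K\cap K$, so $C_1\leq m(\phi_{-t}K\cap K)\leq N_t\cdot C_2 e^{-\delta_\Gamma t}$, giving $N_t\geq (C_1/C_2)e^{\delta_\Gamma t}$. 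For each such separated vector $w$ with $w,\phi_tw\in K\subset B_{T^1M}(v_0,\epsilon)$ and $t>\alpha$, the closing lemma produces a rank-one periodic $w'\in\tilde B_{T^1M}^{(t)}(w,\alpha)$ with period $\ell(w')\in[t-\alpha,t+\alpha]$. Choosing $r$ bigger than $2\alpha$ ensures that distinct separated $w$'s in a window $t\in[n,n+1]$ give distinct periodic orbits, up to a bounded multiplicity: if two produce the same closed geodesic then their $w'$ agree up to the flow, and a triangle inequality bounds how many separated points can map to one orbit by Corollary~\ref{ensemble separe d'une boule dynamique}.\ref{item:ensemble separe d'une boule dynamique1}. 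Hence the number of distinct rank-one closed geodesics with length in $[n-\alpha,n+1+\alpha]$ is at least $C_3 e^{\delta_\Gamma n}$.

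\textbf{Summing over windows.} Finally, summing the bound over integers $n$ from $1$ to roughly $T$ and using that each closed geodesic is counted in at most a bounded number of consecutive windows, one obtains
\[
\#[\Gamma]^{\rgun}_T \;\geq\; \frac{1}{C_4}\sum_{n=1}^{\lfloor T\rfloor} e^{\delta_\Gamma n}\Big/ e^{\delta_\Gamma}
\;\geq\; \frac{1}{C_5}\, e^{\delta_\Gamma T},
\]
which is actually stronger than the claimed $\frac{C}{T}e^{\delta_\Gamma T}$; so in fact one only needs to keep the single window $n=\lfloor T\rfloor$ to get the stated bound with room to spare. (The factor $1/T$ in the statement is there to match the upper bound in Proposition~\ref{Prop : comptage dans le cas compact}.\ref{Item : comptage 1}, whose proof is different; the lower bound is easier.) One must double-check one subtlety: the periodic vector $w'$ given by the closing lemma represents a conjugacy class $[\gamma]$ with $\ell(\gamma)\in[t-\alpha,t+\alpha]$ only up to the ambiguity of which power of a primitive element it is — but since $\ell(\gamma)\geq \inj(M)>0$ (the injectivity radius being positive for convex cocompact torsion-free $\Gamma$, and we may reduce to that case by Selberg and Observation~\ref{obs:revetentrop}), a geodesic of length $\leq t+\alpha$ can be a proper power in only boundedly many ways, which is absorbed into $C_4$.

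\textbf{Main obstacle.} The delicate point is the injectivity of the map from separated vectors to closed geodesics: one must ensure that distinct $(\tilde d_{T^1M}^{(t)},r)$-separated points $w$, once pushed to periodic vectors $w'$ of nearby period, yield genuinely distinct conjugacy classes in $[\Gamma]^{\rgun}$ (and not the same geodesic traversed from a nearby point or a power of a shorter one). This is handled exactly as in Knieper's argument by choosing the separation scale $r$ large compared to the closing-lemma tolerance $\alpha$ and invoking the uniform bound of Corollary~\ref{ensemble separe d'une boule dynamique}.\ref{item:ensemble separe d'une boule dynamique1} on how many separated vectors can lie in a single dynamical ball of fixed radius; everything else is bookkeeping with the Shadow-lemma estimates on measures of dynamical balls.
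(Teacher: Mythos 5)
Your overall strategy (mixing of the Bowen--Margulis measure, the closing lemma, and the upper bound on the measure of dynamical balls) is the same as the paper's, but the crucial ``bounded multiplicity'' step is not correct, and in fact your final claim $\#[\Gamma]^{\rgun}_T\geq C^{-1}e^{\delta_\Gamma T}$ cannot hold: it contradicts Proposition~\ref{premiere majoration}, which gives $\#[\Gamma]^{\rgun}_T\leq \frac{C}{T}e^{\delta_\Gamma T}$. The factor $1/T$ in the lower bound is not there merely ``to match the upper bound''; it is forced at exactly the point where you invoke Corollary~\ref{ensemble separe d'une boule dynamique}.\ref{item:ensemble separe d'une boule dynamique1}.

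Concretely, the fibres of your map $w\mapsto[\gamma]$ are not uniformly bounded. That corollary controls the number of $(\tilde d^{(t)}_{T^1M},r)$-separated points inside \emph{one} dynamical ball $\tilde B^{(t)}_{T^1M}(\cdot,R)$ of bounded radius $R$; but the set of $w\in\phi_{-t}K\cap K$ that are $\alpha$-shadowed by a fixed closed orbit of period roughly $t$ is only contained in a union of on the order of $t$ such balls, namely the balls centred at the phase shifts $\phi_s w'$ of the periodic vector along the whole orbit. A closed geodesic of length about $t$ can (and, by equidistribution, typically does) return to the $\epsilon$-ball around $v_0$ a number of times proportional to $t$, and two points $w_1,w_2\in K$ that shadow it starting from two distant return phases $s_1\neq s_2$ are at $\tilde d^{(t)}$-distance comparable to the separation of the corresponding arcs in $M$ at intermediate times, hence are $r$-separated while yielding the \emph{same} conjugacy class. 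So the multiplicity is $O(t)$, not $O(1)$, and dividing $N_t\gtrsim e^{\delta_\Gamma t}$ by it gives precisely $\frac{c}{t}e^{\delta_\Gamma t}$ --- the stated bound and nothing stronger. This is exactly how the paper's proof of Proposition~\ref{minoration} handles the issue: it bounds $m(R_t)$ from above by summing the measures of the $\lfloor 6(t+1)/\epsilon_0\rfloor+1$ dynamical balls $B^{(t)}_{T^1M}(\phi_{k\epsilon_0/6}v_c,\epsilon_0/3)$ placed \emph{along each closed orbit} $c\in[\Gamma]^{\rgun}_{t+1}$, so the linear-in-$t$ multiplicity is built into the covering and produces the $1/T$. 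With that correction your argument goes through (and the ``summing over windows'' paragraph becomes unnecessary, since a single window already gives the statement); as written, however, the injectivity-up-to-constants claim is a genuine gap.
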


\begin{proof}
 Without loss of generality, we may assume that $\Gamma$ is torsion-free, since for any finite-index subgroup $\Gamma'\subset\Gamma$, for any element $\gamma\in\Gamma'$, there at most $[\Gamma:\Gamma']$ conjugacy classes of $\Gamma'$ inside the conjugacy class of $\gamma$ in $\Gamma$. Let $\epsilon_0>0$ be the injectivity radius of $M$ and $m$ the Bowen--Margulis probability measure. Fix a compact subset $K\subset T^1M_{\sse}$ whose measure $m(K)$ is positive. Using Lemma~\ref{closinglemma} we can find $0<\epsilon<\epsilon_0/3$ such that for any vector $v\in K$, for any time $t\geq 1$, if $d_{T^1M}(v,\phi_t v)\leq\epsilon$ then there exists a rank-one periodic vector $w\in B_{T^1M}^{(t)}(v,\epsilon_0/6)$ with period in $[t-1,t+1]$. Let us denote by $R_t\subset K$ the subset of vectors $v\in K$ such that $d_{T^1M}(v,\phi_tv)\leq\epsilon$; we are going to bound from below its measure. To that end take $\p$ a finite measurable partition of $T^1M_{\bip}$ with diameter less than $\epsilon$, and compute the limit, using the mixing property, established in Theorem~\ref{Thm : The weak Hopf--Tsuji--Sullivan--Roblin dichotomy}:
 \[m(R_t)\geq \sum_{P\in\p}m(P\cap K\cap\phi_t(P\cap K)) \underset{t\to\infty}{\longrightarrow}\sum_{P\in\p}m(P\cap K)^2\geq \frac{m(K)^2}{\#\p}>0.\]
 On the other hand we can bound from above this measure thanks to the closing lemma and Corollary~\ref{measure_of_dyn_ball}. For any conjugacy class $c\in[\Gamma]^{\rgun}$, fix a vector $v_c\in T^1M$ tangent to the projection in $T^1M$ of the axis of any element of $c$.
 \[m(R_t)\leq \sum_{c\in[\Gamma]^{\rgun}_{t+1}}\sum_{k=0}^{\lfloor\frac{6(t+1)}{\epsilon_0}\rfloor}m(B_{T^1M}^{(t)}(\phi_{k\frac{\epsilon_0}6}v_c,\frac{\epsilon_0}3))\leq Ce^{-\delta_\Gamma t}(\frac{6(t+1)}{\epsilon_0}+1)\#[\Gamma]_{t+1}.\]
 This ends the proof.
\end{proof}

\subsection{The upper bound on the number of rank-one closed geodesics}\label{ssection:borne sup rk1}

To bound from above the number of rank-one closed geodesics we do not need Theorem~\ref{Thm : The weak Hopf--Tsuji--Sullivan--Roblin dichotomy}.

\begin{prop}\label{premiere majoration}
 Let $\Omega\subset\PR(V)$ be a properly convex open set, and $\Gamma\subset\Aut(\Omega)$ a convex cocompact discrete subgroup with $M=\Omega/\Gamma$ rank-one and non-elementary. Then there exists a constant $C>0$ such that for any $T>0$,
 \[\#[\Gamma]^{\rgun}_T\leq \frac{C}{T}e^{\delta_\Gamma T}.\]
\end{prop}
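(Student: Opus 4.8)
The goal is to count rank-one conjugacy classes with $\ell(\gamma)\leq T$, and the key point is to extract the extra factor $1/T$ (as opposed to the cruder bound $Ce^{\delta_\Gamma T}$). The plan is to use the two refined Shadow lemmas for convex cocompact actions that are already available --- Corollary~\ref{shadowlemma+} and the estimate on dynamical balls Corollary~\ref{measure_of_dyn_ball} --- together with the fact that closed rank-one geodesics of length in a \emph{window} $[t,t+1]$ correspond to disjoint dynamical balls in $T^1M_{\bip}$, each of Bowen--Margulis measure comparable to $e^{-\delta_\Gamma t}$. Summing over a \emph{single window} $[t,t+1]$ gives $\#[\Gamma]^{\rgun}_{[t,t+1]}\leq Ce^{\delta_\Gamma t}$; the factor $1/T$ then comes from summing these window estimates over $t\in\{0,1,\dots,\lfloor T\rfloor\}$ and using the geometric-series convexity $\sum_{t\leq T}e^{\delta_\Gamma t}\leq \frac{C}{T}e^{\delta_\Gamma T}\cdot T = Ce^{\delta_\Gamma T}$... no: the point is precisely the reverse. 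Let me restate: the window bound is $\#[\Gamma]^{\rgun}_{[t-1,t]}\leq Ce^{\delta_\Gamma t}$, and summing over windows gives only $Ce^{\delta_\Gamma T}$. To get $\frac{1}{T}e^{\delta_\Gamma T}$ one must instead bound a window \emph{by the Bowen--Margulis mass of a set of total measure $\leq 1$, divided among $\asymp e^{\delta_\Gamma t}$ balls each of mass $\asymp e^{-\delta_\Gamma t}$}, which only recovers $Ce^{\delta_\Gamma t}$ per window. The genuine source of the $1/T$ is that the geodesics in $[t-1,t]$ with distinct endpoints-at-infinity cannot all be packed: rather, one uses that each closed geodesic of length $\ell$ is also a closed geodesic of every multiple, so one counts \emph{primitive} classes, OR --- the standard Knieper argument --- one partitions $[\Gamma]^{\rgun}_T$ by length windows and, within the window $[t,t+1]$, the closed geodesics are in bijection with a $(d^{(t)}_{T^1M},\epsilon)$-separated subset of $T^1M_{\bip}$, so that
\[
\#[\Gamma]^{\rgun}_{[t,t+1]}\cdot \min_{v}m\bigl(\tilde B^{(t)}_{T^1M}(v,\epsilon)\bigr)\ \leq\ m(T^1M_{\bip})\ \leq\ 1,
\]
hence $\#[\Gamma]^{\rgun}_{[t,t+1]}\leq Ce^{\delta_\Gamma t}$ by Corollary~\ref{measure_of_dyn_ball}; then $\#[\Gamma]^{\rgun}_T\leq \sum_{t=0}^{\lfloor T\rfloor}Ce^{\delta_\Gamma t}\leq C'e^{\delta_\Gamma T}$, which is \emph{not} the claimed bound. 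I therefore reorganize as follows: the $1/T$ must come from the entropy-expansiveness of the geodesic flow (Facts~\ref{geod_is_expansive}, \ref{bowen}) combined with Proposition~\ref{l'entropie de BM est max} ($h_m=\delta_\Gamma$) --- this is Knieper's actual mechanism.

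\textbf{Key steps.} First I would reduce to $\Gamma$ torsion-free by Selberg's lemma and Observation~\ref{obs:revetentrop}, noting that passing to a finite-index subgroup multiplies conjugacy-class counts by a bounded factor (as in the proof of Proposition~\ref{minoration}). Fix the injectivity radius $\epsilon_0>0$ and a scale $0<\epsilon<\epsilon_0/3$. Second, for each $t>0$ I would choose, for every $c\in[\Gamma]^{\rgun}_{[t,t+1]}$, a vector $v_c\in T^1M_{\bip}$ tangent to the closed rank-one geodesic of $c$, and observe that the set $\{v_c\}$ can be taken $(d^{(t)}_{T^1M},\epsilon_0/3)$-separated after replacing it by a subset of comparable size --- here one uses that two closed geodesics of lengths in $[t,t+1]$ that stay $\epsilon_0/3$-close for all times $0\le s\le t$ must coincide, because their lifts have the same extremal endpoints (the endpoints are strongly extremal, cf.\ the argument in the proof of Theorem~\ref{Thm : proba d'entropie max dans le cas compact} around \eqref{K1 loin de K2}), together with Corollary~\ref{ensemble separe d'une boule dynamique}.\ref{item:ensemble separe d'une boule dynamique1} to control the packing multiplicity. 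Third, applying Corollary~\ref{measure_of_dyn_ball} (the lower bound, valid on $T^1M_{\bip}$) gives $m(\tilde B^{(t)}_{T^1M}(v_c,\epsilon))\geq C^{-1}e^{-\delta_\Gamma t}$; since these balls have bounded overlap (by Corollary~\ref{ensemble separe d'une boule dynamique}.\ref{item:ensemble separe d'une boule dynamique1} applied to the separated set), summing over $c$ and using $m(T^1M)=1$ yields $\#[\Gamma]^{\rgun}_{[t,t+1]}\leq Ce^{\delta_\Gamma t}$. Fourth --- and this is where the $1/T$ enters --- instead of summing the window bounds naively, I would bound $\#[\Gamma]^{\rgun}_T$ below and above simultaneously against the entropy: using the variational principle and entropy-expansiveness (Facts~\ref{le principe variationnel}, \ref{geod_is_expansive}, \ref{bowen}, Remarks~\ref{reparam_entropy}, \ref{f^n_is_expansive}), together with the fact that each $c\in[\Gamma]_{[t,t+1]}$ contributes a periodic orbit which is detected by any partition of diameter $<\epsilon_0/3$, one gets that $\frac1T\log\#[\Gamma]^{\rgun}_T\to\delta_\Gamma$; the precise polynomial correction $\#[\Gamma]^{\rgun}_T\leq \frac{C}{T}e^{\delta_\Gamma T}$ follows from Knieper's refinement (his Theorem~5.6), which I would adapt: one writes $\#[\Gamma]^{\rgun}_T=\sum_{t<T}\#[\Gamma]^{\rgun}_{[t,t+1]}$, applies the window bound $\#[\Gamma]^{\rgun}_{[t,t+1]}\le m\bigl(\bigcup_c \tilde B^{(t)}(v_c,\epsilon)\bigr)Ce^{\delta_\Gamma t}$ but \emph{after} noticing that the union over $c\in[\Gamma]^{\rgun}_{[t,t+1]}$ of these balls is contained in a set whose $m$-measure decays like $1/T$ because the total mass $1$ is distributed over $\asymp T$ disjoint windows worth of recurrence --- concretely, the sets $\bigcup_{t\le kT/\lfloor T\rfloor,\ t\in[\cdot]}(\cdots)$ are essentially disjoint for different $k$, forcing $\#[\Gamma]^{\rgun}_{[t,t+1]}\le \frac{C}{T}e^{\delta_\Gamma t}\cdot$(something summing to $e^{\delta_\Gamma T}$).

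\textbf{Main obstacle.} The delicate point is exactly the extraction of the $1/T$: the crude packing argument only gives $Ce^{\delta_\Gamma T}$, and recovering the polynomial gain requires either (a) Knieper's observation that a closed geodesic of length $\ell$ with $\ell\in[t,t+1]$ gives rise, under iteration, to closed geodesics of all lengths $\asymp k\ell$, so that the "true" count in a window is the count of \emph{primitive} classes and one sums $\sum_{\ell\le T}\sum_{k:\,k\ell\le T}1$-type relations --- this is why dividing the exponential growth by its own "integrated" version produces $1/T$; or (b) the mixing-based lower bound of Proposition~\ref{minoration} combined with an a priori two-sided comparison. I expect to follow route (a): the plan is to show that for any $c\in[\Gamma]^{\rgun}$ with $\ell(c)=\ell\le T/2$, the iterates $c,c^2,\dots,c^{\lfloor T/\ell\rfloor}$ all lie in $[\Gamma]_T$ and are pairwise non-conjugate, so that grouping $[\Gamma]_T$ by the "primitive root" and using the window upper bound $\#\{\text{primitive }c:\ell(c)\in[t,t+1]\}\le Ce^{\delta_\Gamma t}$ on each window, one gets $\sum_{c \text{ primitive}}\lfloor T/\ell(c)\rfloor \le \#[\Gamma]_T$, i.e.\ $\frac{T}{2}\sum_{\ell(c)\le T/2}\ell(c)^{-1}\cdots$; inverting this inequality against the lower bound from Proposition~\ref{minoration}, together with a Chebyshev/Abel summation, forces $\#[\Gamma]^{\rgun}_T\le \frac{C}{T}e^{\delta_\Gamma T}$. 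I would finally remark that this, combined with Proposition~\ref{minoration} and Propositions~\ref{premiere majoration} (once both are in hand), gives Proposition~\ref{Prop : comptage dans le cas compact}.\ref{Item : comptage 1}.
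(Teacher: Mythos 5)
Your window estimate $\#[\Gamma]^{\rgun}_{[t,t+1]}\leq Ce^{\delta_\Gamma t}$ (separated vectors on the closed geodesics, lower bound of Corollary~\ref{measure_of_dyn_ball}, bounded multiplicity) is sound, but the step you ultimately commit to for extracting the factor $1/T$ — route (a), grouping by primitive roots and ``inverting'' against the lower bound of Proposition~\ref{minoration} — is a genuine gap. A lower bound on $\#[\Gamma]^{\rgun}_T$ cannot be converted into an upper bound of the same order by any Chebyshev/Abel summation, and the iterates relation $\#[\Gamma]_T\geq\sum_{c\ \mathrm{primitive}}\lfloor T/\ell(c)\rfloor$ only controls the (lower-order) non-primitive contribution: the classes with $\ell$ close to $T$, which dominate the count, are untouched by it, so this route lands you back at $Ce^{\delta_\Gamma T}$. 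Your appeal to entropy-expansiveness and ``Knieper's Theorem 5.6'' in the fourth step is likewise not a mechanism: nothing there produces a polynomial correction to the exponential bound.

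The missing idea is to use the \emph{entire length} of each closed orbit in the measure estimate, i.e.\ to attach to each class not one dynamical ball but $\asymp n$ time-shifted ones. Concretely (this is the paper's proof): for $c\in[\Gamma]^{\rgun}_{[n,n+1]}$ choose $\gamma_c\in c$ and $v_c$ tangent to $\axis(\gamma_c)$, and consider the family $\{B^{(n+1)}_{T^1\Omega}(\phi_k v_c,\epsilon_0/6):\ c\in[\Gamma]^{\rgun}_{[n,n+1]},\ 0\leq k<n\}$. Each ball projects into $T^1M'_{\bip}$ (for a torsion-free finite-index $\Gamma'$) and has Bowen--Margulis measure at least $C^{-1}e^{-\delta_\Gamma n}$ by Corollary~\ref{measure_of_dyn_ball}, while the triangular inequality shows any vector of $T^1\Omega$ lies in at most $C_1:=\max_x\#\{\gamma: d_\Omega(x,\gamma x)\leq 1+2\epsilon_0/3\}$ balls of the family. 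Since $m'$ is a probability measure this gives $n\,\#[\Gamma]^{\rgun}_{[n,n+1]}\,C^{-1}e^{-\delta_\Gamma n}\leq C_1$, i.e.\ $\#[\Gamma]^{\rgun}_{[n,n+1]}\leq C'e^{\delta_\Gamma n}/n$, and summing over $n\leq T$ yields $\#[\Gamma]^{\rgun}_T\leq \frac{C''}{T}e^{\delta_\Gamma T}$. So the $1/T$ comes from the fact that a closed geodesic of period $\approx n$ occupies a tube of measure $\gtrsim n\,e^{-\delta_\Gamma n}$, not from any counting identity among conjugacy classes; without this shift-along-the-orbit device your argument stops at $Ce^{\delta_\Gamma T}$.
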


\begin{proof}
 Let $\Gamma'\subset\Gamma$ be a finite-index torsion-free subgroup; we set $M'=\Omega/\Gamma'$ and  take $\epsilon_0<1$ smaller than the injectivity radius of $M'$. For each  rank-one conjugacy class $c$ of $\Gamma$, choose $\gamma_c\in c$ and $v_c\in \axis(\gamma_c)$. Consider an integer $n\geq 1$. One easily checks using the triangular inequality that any vector of $T^1\Omega$ belongs to at most 
 $$C_1:=\max_{x\in\Omega}\#\{\gamma\in\Gamma : d_\Omega(x,\gamma x)\leq 1+2\epsilon_0/3\}$$
 balls of the family 
 $$\{B_{T^1\Omega}^{(n+1)}(\phi_{k}v_c,\epsilon_0/6) : c\in[\Gamma]^{\rgun}_{[n,n+1]}, \ 0\leq k< n\}.$$
 By Corollary~\ref{measure_of_dyn_ball}, we can bound from below the $m'$-measure of the projection in $T^1M'$ of these balls by $C^{-1}e^{-\delta_\Gamma n}$ for some constant $C>0$, where $m'$ is the Bowen--Margulis probability measure on $T^1M'$. Since $m'$ is a probability measure we obtain:
 \[n\#[\Gamma]^{\rgun}_{[n,n+1]}\leq C_1Ce^{\delta_\Gamma n \epsilon_0/3}.\qedhere\]
\end{proof}

\subsection{The upper bound on the number of non-rank-one closed geodesics}\label{ssection:borne sup rksup}

Let us bound from of above the number of non-rank-one conjugacy classes of $\Gamma$. The idea is that to each non-rank-one conjugacy class we can associate a closed geodesic which is contained in a flow-invariant closed subset of $T^1M_{\core}$ to which the Bowen--Margulis measure gives zero measure; this implies that the topological entropy of the geodesic flow on this subset is smaller than $\delta_\Gamma$.

\begin{prop}\label{l'entropie de l'adherence des orbites periodiques singulieres} 
 Let $\Omega\subset\PR(V)$ be a properly convex open set, and $\Gamma\subset\Aut(\Omega)$ a convex cocompact discrete subgroup with $M=\Omega/\Gamma$ rank-one and non-elementary. Let $K\subset T^1M_{\core}$ be the $(\phi_t)_t$-invariant closed subset that consists of the vectors whose lifts $v\in T^1\Omega$ are such that $d_{\spl}(\phi_{-\infty}v,\phi_\infty v)\leq 2$. Then the topological entropy of $(\phi_t)_t$ on $K$ is strictly smaller than $\delta_\Gamma$.
\end{prop}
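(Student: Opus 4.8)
The plan is to show that the geodesic flow on $K$ is entropy-expansive in a quantitative way that is incompatible with entropy $\delta_\Gamma$, by combining two facts: first, that the Bowen--Margulis measure $m$ gives zero mass to $K$ (since $K\subset T^1M_{\core}$ consists of vectors whose endpoints are at simplicial distance $\leq 2$, in particular non-rank-one, whereas by Proposition~\ref{Proposition : sse est plein} and Proposition~\ref{l'ensemble limite conique a mesure non nulle} applied via Proposition~\ref{Proposition : cvxcocpct -> divergent}, $m$ is carried by $\partial_{\sse}\Omega$, hence $m(K)=0$); and second, a strict-inequality upper semi-continuity argument for topological entropy. More precisely, I would use the variational principle (Fact~\ref{le principe variationnel}): $h_{\topp}(K,(\phi_t)_t)=\sup_{\mu}h_\mu(\phi_1)$ over $\phi$-invariant probability measures on $K$. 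Any such $\mu$ is supported on $K$, hence is singular with respect to $m$ (indeed $\mu(K)=1$ while $m(K)=0$, and $K$ is closed and flow-invariant).

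The key step is then to run the uniqueness-of-MME machinery of Section~\ref{ssection:uniciteMME} --- essentially the argument inside the proof of Theorem~\ref{Thm : proba d'entropie max dans le cas compact}, but now the "singular measure" $m''=\mu$ is given \emph{a priori}. Concretely: reduce to $\Gamma$ torsion-free (Observation~\ref{obs:revetentrop}); fix $\epsilon>0$; pick compact sets $K_1\subset T^1M_{\sse}\cap T^1M_{\core}$ with $m(K_1)\geq 1-\epsilon$ and $K_2\subset K$ with $\mu(K_2)\geq 1-\epsilon$; observe the separation estimate $\min\{\tilde{d}_{T^1M}^{(2t)}(\phi_{-t}v,\phi_{-t}w):v\in K_1,w\in K_2\}\to\infty$ as $t\to\infty$, which holds because for $v\in K_1$ the endpoints $\phi_{\pm\infty}\vv$ are strongly extremal so a bounded bi-infinite tracking would force $w\in\phi_\R v$, contradicting $w\in K$. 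Then, using entropy-expansiveness (Facts~\ref{geod_is_expansive} and \ref{bowen}, Remarks~\ref{reparam_entropy} and \ref{f^n_is_expansive}), one builds a partition $\p$ subordinate to a maximal $(d^{(2n)}_{T^1M},\epsilon_0/8)$-separated set of $T^1M_{\core}$, with the cells meeting $\phi_{-n}K_2$ listed first (indices $1,\dots,l$), and estimates $h_\mu(\phi_1)\leq\frac{1}{2n}H_\mu(\p)\leq\frac{1}{2n}\bigl((1-\epsilon)\log l+\log k+2/e\bigr)$ up to $\epsilon$-errors, where $k$ is the total number of cells and $l$ the number of "bad" cells. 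Bounding $k\leq C' e^{2n\delta_\Gamma}$ and $l\leq \epsilon C'' e^{2n\delta_\Gamma}$ via Corollary~\ref{measure_of_dyn_ball}, Corollary~\ref{ensemble separe d'une boule dynamique} and Lemma~\ref{le coeur fait bip} (the $k$-bound counts $m$-mass of dynamical balls in $T^1M_{\bip}$; the $l$-bound uses that the bad cells are disjoint from $\phi_{-n}K_1$, whose $m$-mass is $\geq 1-\epsilon$, so their $m$-mass is $\leq\epsilon$), one gets $h_\mu(\phi_1)\leq \delta_\Gamma + \frac{(1-\epsilon)\log\epsilon+\log\max(C',C'')+2/e}{2n}$. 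Letting $n\to\infty$ gives $h_\mu(\phi_1)\leq\delta_\Gamma+\frac{1-\epsilon}{2n}\log\epsilon$... actually one fixes $n$ large first and then sees the bound is $<\delta_\Gamma$ for $\epsilon$ small; the crucial point is that the $\epsilon$-dependence enters with the $\log\epsilon\to-\infty$ factor weighted by a quantity bounded away from $0$, while $k,l$ are controlled by the \emph{fixed} constants $C',C''$. Taking a supremum over $\mu$ is harmless since the bound is uniform in $\mu$.

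I expect the main obstacle to be purely bookkeeping: one must carry out the two-sided counting (the $k$-bound over \emph{all} core cells versus the $l$-bound over the \emph{bad} cells) with a uniform choice of constants, so that the final inequality $h_{\topp}(K)<\delta_\Gamma$ is genuinely strict; this is exactly the delicate balancing already performed in the proof of Theorem~\ref{Thm : proba d'entropie max dans le cas compact}, and the present proposition is essentially a corollary of that argument once one notes that the supremum defining $h_{\topp}(K)$ ranges over measures that are all singular to $m$ (because supported on the $m$-null set $K$). The one new verification needed is precisely that $m(K)=0$, i.e.\ that $K$ is disjoint from $\partial_{\sse}$-fibred part of $T^1M$ up to $m$-measure zero: this follows since $\xi\in\partial_{\sse}\Omega$ and $d_{\spl}(\xi,\eta)\leq 2$ are incompatible for $\xi\neq\eta$ with $[\xi,\eta]\cap\Omega\neq\emptyset$ (a strongly extremal smooth point has infinite simplicial distance to anything it is not joined to by a boundary segment), together with Proposition~\ref{Proposition : sse est plein}.

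Alternatively, and perhaps more cleanly, I would phrase the whole thing as: $h_{\topp}(K,(\phi_t)_t)=\sup\{h_\mu(\phi_1):\mu\in\Pcal^\phi(K)\}$, each competitor $\mu$ satisfies $h_\mu(\phi_1)<\delta_\Gamma$ by the uniqueness argument above (since $\mu\neq m$, as $m(K)=0<1=\mu(K)$, and $m$ is the \emph{unique} measure of maximal entropy on $T^1M_{\core}$ by Theorem~\ref{Thm : proba d'entropie max dans le cas compact}), and moreover the entropy map $\mu\mapsto h_\mu(\phi_1)$ is upper semi-continuous on the weak-* compact set $\Pcal^\phi(K)$ by entropy-expansiveness (Remark~\ref{l'entropie est semi-continue}), hence attains its supremum; therefore $h_{\topp}(K)=h_{\mu_0}(\phi_1)<\delta_\Gamma$ for some $\mu_0\in\Pcal^\phi(K)$. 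This shortens the write-up considerably, the only cost being that it invokes Theorem~\ref{Thm : proba d'entropie max dans le cas compact} itself rather than re-running its internals.
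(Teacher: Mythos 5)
Your second, ``alternative'' formulation is exactly the paper's proof: by entropy-expansiveness (Fact~\ref{geod_is_expansive}, Remark~\ref{l'entropie est semi-continue}, with Observation~\ref{obs:revetentrop} to reduce to the torsion-free case) the entropy map is upper semi-continuous on the compact set of flow-invariant probability measures on $K$, so the variational principle is attained by some $\mu_0$ on $K$; since $K$ is disjoint from $T^1M_{\sse}$ (an endpoint at simplicial distance $\leq 2$ from the other endpoint of a geodesic through $\Omega$ cannot be strongly extremal) while the Bowen--Margulis measure is concentrated on $T^1M_{\sse}$ by Theorem~\ref{Thm : The weak Hopf--Tsuji--Sullivan--Roblin dichotomy}, we get $\mu_0\neq m$, and Theorem~\ref{Thm : proba d'entropie max dans le cas compact} gives $h_{\topp}(K)=h_{\mu_0}(\phi_1)<\delta_\Gamma$. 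So that version is correct and coincides with the paper's argument; invoking Theorem~\ref{Thm : proba d'entropie max dans le cas compact} as a black box is precisely what the paper does.

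Your first, longer route has a gap as written: the claim that ``the bound is uniform in $\mu$'' is not justified. In the internals of the uniqueness proof, the inequality one obtains is $h_\mu(\phi_1)\leq \delta_\Gamma+\frac{1}{2n}\bigl((1-\epsilon)\log\epsilon+\log\max(C',C'')+2/e\bigr)$, valid only for $n$ larger than a threshold coming from the separation estimate \eqref{K1 loin de K2}; that threshold depends on the compact set $K_2$ with $\mu(K_2)\geq 1-\epsilon$, hence on $\mu$. Since the strict margin is of size $O(1/n)$ and degrades as $n$ grows, strictness for each individual $\mu$ does not pass to the supremum without uniformity of the admissible $n$. In the present situation this can be repaired: every competitor $\mu$ gives full mass to the fixed compact set $K$, which is disjoint from the flow-invariant set $T^1M_{\sse}$, so one may take $K_2=K$ for all $\mu$ and $K_1$ depending only on $\epsilon$, making the threshold $n_0(\epsilon)$ independent of $\mu$ and the bound genuinely uniform. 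But the cleanest fix is simply your alternative route (existence of a maximizing $\mu_0$ on $K$ via upper semi-continuity), which avoids the uniformity issue altogether.
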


\begin{proof}
 According to Remark~\ref{l'entropie est semi-continue} and Observation~\ref{obs:revetentrop}, there exists a probability measure $m'$ on $K$ whose entropy is the topological entropy on $K$. Observe that $K$ is disjoint from the set $T^1M_{\sse}$ of vectors whose lifts $v\in T^1\Omega$ satisfy $\phi_{-\infty}v,\phi_\infty v\in\partial_{\sse}\Omega$, while the Bowen--Margulis probability measure $m_\Gamma$ is concentrated on $T^1M_{\sse}$ by Theorem~\ref{Thm : The weak Hopf--Tsuji--Sullivan--Roblin dichotomy}. Thus, $m'$ and $m_\Gamma$ are different. By Theorem~\ref{Thm : proba d'entropie max dans le cas compact}, the entropy of $m'$ must be strictly smaller than $\delta_\Gamma$.
\end{proof}

\begin{cor}\label{seconde majoration}
 Let $\Omega\subset\PR(V)$ be a properly convex open set, and $\Gamma\subset\Aut(\Omega)$ a convex cocompact discrete subgroup with $M=\Omega/\Gamma$ rank-one and non-elementary. Then the exponential growth rate of the number of non-rank-one conjugacy classes of translation length shorter than $t$, when $t$ grows, is strictly less than $\delta_\Gamma$.
\end{cor}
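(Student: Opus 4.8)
The plan is to attach to each non-rank-one conjugacy class a vector lying in the compact set $K$ of Proposition~\ref{l'entropie de l'adherence des orbites periodiques singulieres}, in such a way that distinct classes of comparable translation length stay essentially distinguishable at the scale of dynamical balls, and then to count using the entropy bound $h_{\topp}(K,(\phi_t)_t)<\delta_\Gamma$ established there.

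First I would make the usual reductions: by the discussion at the beginning of Section~\ref{Preparatifs pour la partie Knieper} one may assume that $\Gamma$ also acts convex cocompactly on $\Omega^*$ (so that Corollary~\ref{Corollaire : real d'ell dans core} is available), and by Selberg's lemma one fixes a torsion-free normal finite-index subgroup $\Gamma'\trianglelefteq\Gamma$, sets $M'=\Omega/\Gamma'$, $\epsilon_0:=\inj(M')>0$, and lets $K'\subset T^1M'$ be the analogue of $K$ for $\Gamma'$; since $\Lambda^{\orb}_\Omega(\Gamma')=\Lambda^{\orb}_\Omega(\Gamma)$ (finite index) and the condition on the endpoints of lifts is group-independent, $K'$ is exactly the preimage of $K$ under $T^1M'\to T^1M$, so Observation~\ref{obs:revetentrop} gives $h_{\topp}(K',(\phi_t)_t)=h_{\topp}(K,(\phi_t)_t)<\delta_\Gamma=\delta_{\Gamma'}$. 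The classes in $[\Gamma]^{\sing}$ of zero translation length are classes of torsion elements (they fix a point of $\CC^{\core}_\Omega(\Gamma)$ by Fact~\ref{une geodesique presque periodique dans le coeur convexe}), hence finitely many, and can be discarded.

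Next, to a non-rank-one class $c$ with $\ell(c)=:t_c>0$ I would associate a vector as follows: pick $\gamma_c\in c$, pick $x_c\in\CC^{\core}_\Omega(\Gamma)$ with $d_\Omega(x_c,\gamma_c x_c)=t_c$ (Fact~\ref{une geodesique presque periodique dans le coeur convexe}), and let $v_c\in T^1_{x_c}\Omega$ be tangent to $[x_c,\gamma_c x_c]$ in the direction of $\gamma_c x_c$, so that $\pi\phi_{t_c}v_c=\gamma_c x_c$. Then $\phi_{-\infty}v_c$ and $\phi_\infty v_c$ are the boundary points aligned with $x_c$ and $\gamma_c x_c$; by Lemma~\ref{lem:realell}.\ref{item:realellpasr1} they satisfy $d_{\spl}(\phi_{-\infty}v_c,\phi_\infty v_c)=2$ (this is where non-rank-oneness of $\gamma_c$ is used), and by Corollary~\ref{Corollaire : real d'ell dans core} (applied to $\Gamma$) they lie in $\Lambda^{\orb}_\Omega(\Gamma)$. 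Hence the projection $\bar v_c$ of $v_c$ to $T^1M'$ lies in $K'$. The crux is then the following finiteness statement: fixing $0<\epsilon<\epsilon_0/2$, there is $N\in\N$ such that for every $n\geq1$ and every $w\in T^1M'$, at most $N$ classes $c\in[\Gamma]^{\sing}$ with $\ell(c)\in[n,n+1)$ satisfy $\tilde{d}^{(n)}_{T^1M'}(\bar v_c,w)<\epsilon$. Indeed, two such classes $c,c'$ have $\tilde{d}^{(n)}_{T^1M'}(\bar v_c,\bar v_{c'})<2\epsilon$; lifting to $T^1\Omega$ and acting by $\Gamma'$ one compares $v_c$ with $\beta v_{c'}$ for some $\beta\in\Gamma'$, and reading the footpoints at times $0$ and $n$ (using $|\ell(c)-n|,|\ell(c')-n|<1$ and $\Gamma$-invariance of $d_\Omega$) shows that $\gamma_c^{-1}\beta\gamma_{c'}\beta^{-1}$ moves $x_c$ by at most $2+4\epsilon$; since $\Gamma$ acts properly discontinuously on $\Omega$ and cocompactly on $\CC^{\core}_\Omega(\Gamma)\ni x_c$, the number of $\delta\in\Gamma$ with $d_\Omega(x_c,\delta x_c)\leq 2+4\epsilon$ is bounded by some $N_0$ independent of $c$, so $c'$ lies in a family of at most $N_0$ conjugacy classes determined by $c$, and $N=N_0+1$ works.

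Finally I would conclude by an entropy count. For each $n$, take a maximal $(\tilde{d}^{(n)}_{T^1M'},\epsilon)$-separated subset $S_n$ of $K'$; as $\epsilon<\epsilon_0/2$, Remark~\ref{Remarque : distdyn} forces $S_n$ to be $(d^{(n)}_{T^1M'},\epsilon)$-separated as well, hence $\#S_n\leq N\big(d^{(n)}_{T^1M'}|_{K'},\epsilon\big)$, and maximality together with the crux gives $\#\{c\in[\Gamma]^{\sing}:\ell(c)\in[n,n+1)\}\leq N\cdot N\big(d^{(n)}_{T^1M'}|_{K'},\epsilon\big)$. Since $\limsup_n\tfrac1n\log N\big(d^{(n)}_{T^1M'}|_{K'},\epsilon\big)$ increases to $h_{\topp}(K',(\phi_t)_t)$ as $\epsilon\to0$, it is already $\leq h_{\topp}(K',(\phi_t)_t)$ for our $\epsilon$; summing over $n\leq T$ and adding the finitely many classes of zero length gives
\[\limsup_{T\to\infty}\frac1T\log\#[\Gamma]^{\sing}_T\leq h_{\topp}(K,(\phi_t)_t)<\delta_\Gamma.\]
I expect the crux — bounding how many distinct conjugacy classes can produce dynamically indistinguishable geodesic segments of comparable length — to be the main obstacle, and it is precisely there that proper discontinuity of $\Gamma$ on $\Omega$ and cocompactness of its action on the convex core enter.
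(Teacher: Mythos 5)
Your proposal is correct and follows essentially the same route as the paper: associate to each non-rank-one class a length-realising vector with endpoints in $\Lambda^{\orb}$ (Fact~\ref{une geodesique presque periodique dans le coeur convexe} and Corollary~\ref{Corollaire : real d'ell dans core}), observe via Lemma~\ref{lem:realell}.\ref{item:realellpasr1} that it lands in the set $K$ of Proposition~\ref{l'entropie de l'adherence des orbites periodiques singulieres}, bound the multiplicity by proper discontinuity and cocompactness on the convex core, and compare with the topological entropy of $K$. The only (harmless) difference is that you count against a maximal separated, hence spanning, set in the quotient, whereas the paper greedily extracts a separated subfamily of the vectors $v_c$ themselves.
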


\begin{proof}
 Let $\Gamma'\subset\Gamma$ be a finite-index torsion free subgroup; we set $M'=\Omega/\Gamma'$ and  $\epsilon_0$ to be the injectivity radius of $M'$;  we denote by $\epsilon_0$ the injectivity radius of $M'$. It is easy to show that there are only finitely many conjugacy class $c$ of $\Gamma$ such that $\ell(c)\leq \epsilon_0$. Using Corollary~\ref{Corollaire : real d'ell dans core}, for each conjugacy class $c$ with $\ell(c)>0$ we can find  an element $\gamma_c\in c$ and a vector $v_{c}\in T^1\Omega$ such that $\phi_{\pm\infty}v_c\in\Lambda^{\orb}$ and $\gamma_c\pi v_c=\pi\phi_{\ell(c)}v_c$. Consider $t>0$. Using the triangular inequality, one can check that any vector of $T^1\Omega$ belongs to at most $$C_1:=\max_{x\in\Ccal_\Omega^{\core}(\Gamma)}\#\{\gamma\in\Gamma : d_\Omega(x,\gamma x)\leq 1+2\epsilon_0/3\}$$ balls of the family
 $$\{B_{T^1\Omega}^{(t+1)}(v_c,\epsilon_0/6) : c\in[\Gamma]^{\sing}_{[t, t+1]}\}.$$
 Therefore we can extract from $\{v_c : c\in[\Gamma]^{\sing}_{[t, t+1]}\}$ a $(d_{T^1\Omega}^{(t+1)},\epsilon_0/6)$-separated family of size at least $C_1^{-1}\#[\Gamma]^{\sing}_{[t, t+1]}$. The projection in $T^1M'$ of this family belongs to the set $K$ in Proposition~\ref{l'entropie de l'adherence des orbites periodiques singulieres} by Lemma~\ref{lem:realell}.\ref{item:realellpasr1}. By definition of the topological entropy, the exponential growth rate of the size of such a family, when $t$ goes to infinity, is bounded above by the topological entropy on $K$, which is strictly less than $\delta_\Gamma$ by Proposition~\ref{l'entropie de l'adherence des orbites periodiques singulieres} above.
\end{proof}

\subsection{Sums of uniform measures on closed geodesics}\label{ssection:equidistribution}

Let $\Omega\subset\PR(V)$ be a properly convex open set and $\Gamma\subset\Aut(\Omega)$ a discrete subgroup; denote $M=\Omega/\Gamma$. We introduce a few notations. For any conjugacy class $c\in[\Gamma]^{\rgun}$, we denote by $\Lcal c$ the unique $(\phi_t)_{t\in\R}$-invariant probability measure supported on the projection in $T^1M$ of the axis of any element of $c$. For each finite subset $A\subset[\Gamma]^{\rgun}$, we consider
\[\Lcal A=\frac{1}{\#A}\sum_{c\in A}\Lcal c.\]

\begin{prop}\label{Proposition : equidistribution}
  Let $\Omega\subset\PR(V)$ be a properly convex open set, and $\Gamma\subset\Aut(\Omega)$ a convex cocompact discrete subgroup with $M=\Omega/\Gamma$ rank-one and non-elementary. Let $A\subset[\Gamma]^{\rgun}$ be such that $\log(\# A_T)/T$ converges to $\delta_\Gamma$ when $T$ tends to infinity. Then $\Lcal A_T$ converges to the Bowen--Margulis probability measure when $T$ goes to infinity.
\end{prop}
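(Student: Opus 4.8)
The plan is to deduce Proposition~\ref{Proposition : equidistribution} from the mixing of the Bowen--Margulis measure (Theorem~\ref{Thm : The weak Hopf--Tsuji--Sullivan--Roblin dichotomy}, Proposition~\ref{Proposition : cvxcocpct -> divergent}), the counting asymptotics of Proposition~\ref{Prop : comptage dans le cas compact}\ref{Item : comptage 1} together with the upper bound on singular classes (Corollary~\ref{seconde majoration}), and the entropy rigidity (Theorem~\ref{Thm : proba d'entropie max dans le cas compact}). The key observation is that the family $\{\Lcal c : c\in A_T\}$, after averaging, is a sequence of $(\phi_t)_t$-invariant probability measures on the compact space $T^1M_{\core}$, so by weak* compactness it suffices to show that every weak* accumulation point $\mu$ of $(\Lcal A_T)_T$ equals the Bowen--Margulis probability measure $m$. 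First I would establish a lower bound on the entropy of $\mu$: by the closing lemma (Lemma~\ref{closinglemma}) and the estimate on dynamical balls (Corollary~\ref{measure_of_dyn_ball}), the measures $\Lcal c$ for $c\in A_{[n,n+1]}$ are ``spread out'' in the sense that one can control how many of them can concentrate in a single dynamical ball; combined with $\#A_T \asymp e^{\delta_\Gamma T}/T$ (which follows from the hypothesis $\log(\#A_T)/T \to \delta_\Gamma$ being upgraded, or more carefully, just from $\#A_T$ having exponential growth rate exactly $\delta_\Gamma$), a standard argument (as in Knieper) shows that $h_\mu(\phi) \geq \delta_\Gamma$. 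Since $\delta_\Gamma = h_{\topp}(T^1M_{\core},(\phi_t)_t)$ by Proposition~\ref{l'entropie de BM est max}, the variational principle forces $h_\mu(\phi) = \delta_\Gamma$, and then Theorem~\ref{Thm : proba d'entropie max dans le cas compact} gives $\mu = m$.

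Let me be more precise about the entropy lower bound, which is the technical heart. Fix $n \geq 1$ and let $\epsilon_0$ be the injectivity radius of a torsion-free finite-index cover (reducing to the torsion-free case via Selberg and Observation~\ref{obs:revetentrop}). For each $c \in A_{[n,n+1]}$ pick a vector $v_c \in T^1\Omega$ on the axis of a representative, and note that $\Lcal c$ gives mass roughly $1/\ell(c)$ to the portion of the closed geodesic seen in time $\epsilon_0/6$. Cover the relevant part of $T^1M_{\core}$ by a partition $\p$ whose elements have small diameter for $d_{T^1M}^{(n)}$; by Corollary~\ref{measure_of_dyn_ball} each $(d_{T^1M}^{(n)}, \epsilon_0/3)$-ball has $m$-measure $\asymp e^{-\delta_\Gamma n}$, and dually each such ball meets at most a bounded number of the axes $\{v_c\}$ (triangular inequality argument as in Proposition~\ref{premiere majoration}). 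This gives that $\Lcal A_T$ puts comparable mass on exponentially many disjoint dynamical balls, from which $H_{\Lcal A_T}(\p^{(n)}) \geq \delta_\Gamma n - O(\log n)$; passing to the limit and using upper semicontinuity of entropy (Remark~\ref{l'entropie est semi-continue}, Fact~\ref{bowen}, Fact~\ref{geod_is_expansive}) yields $h_\mu(\phi) \geq \delta_\Gamma$. One must be a little careful that $\mu$ is genuinely supported in $T^1M_{\core}$ (clear, since each $\Lcal c$ is), and that the partition argument respects the reparametrization subtleties, but these are routine.

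I expect the main obstacle to be the bookkeeping in the entropy lower bound: one needs to convert ``$A_T$ has $\asymp e^{\delta_\Gamma T}/T$ elements, each contributing a closed geodesic of length $\leq T$'' into a genuine lower bound for $H_\mu$, and this requires a Katok-style argument relating separated sets of periodic orbits to the metric entropy of the limiting measure. The cleanest route is probably: for each $n$, the number of $c \in A_{[n, n+1]}$ is $\geq c_1 e^{\delta_\Gamma n}/n$ by the hypothesis combined with Proposition~\ref{premiere majoration} controlling consecutive dyadic-type blocks; the associated vectors $\{v_c\}$ form (after thinning by a bounded factor, using Corollary~\ref{ensemble separe d'une boule dynamique}) a $(d_{T^1M}^{(n)}, \epsilon_0/6)$-separated set $S_n$ of that cardinality; then for any partition $\p$ of $T^1M_{\core}$ with elements of diameter $< \epsilon_0/6$ one estimates $H_{\Lcal A_{n+1}}(\p^{(n)})$ from below using that the members of $\p^{(n)}$ containing points of $S_n$ each carry $\Lcal A_{n+1}$-mass $\lesssim (\text{number of }c\text{'s in }A_{n+1}) \cdot n^{-1} \cdot (\#A_{n+1})^{-1}$, so the entropy of the partition is at least $\log \#S_n - O(\log n) = \delta_\Gamma n - O(\log n)$. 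Dividing by $n$ and letting $n \to \infty$ (with the entropy-expansivity of Fact~\ref{geod_is_expansive} ensuring $h_\mu(\phi) = \lim \frac 1n H_\mu(\phi_n, \p) \geq \lim \frac 1n H_{\Lcal A_{n+1}}(\phi_n,\p)$ via upper semicontinuity on the weak* limit) finishes the proof. The equidistribution toward $m$ rather than some other maximal-entropy measure is then immediate from the uniqueness in Theorem~\ref{Thm : proba d'entropie max dans le cas compact}.
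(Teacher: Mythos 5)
Your high-level strategy is the same as the paper's: pass to a weak* accumulation point $\mu$ of $(\Lcal A_T)_T$, prove $h_\mu(\phi)\geq\delta_\Gamma$, and conclude $\mu=m$ by the uniqueness of the measure of maximal entropy (mixing and the counting asymptotics of Proposition~\ref{Prop : comptage dans le cas compact}.\ref{Item : comptage 1} are not needed, and citing them is harmless but superfluous). However, the technical heart — the entropy lower bound — has two genuine gaps as written. First, your per-block estimate $\#A_{[n,n+1]}\gtrsim e^{\delta_\Gamma n}/n$ does not follow from the hypothesis: the assumption only controls the \emph{cumulative} count $\#A_T$ (so $\#A_T\geq e^{(\delta_\Gamma-\alpha)T}$ for $T$ large), and individual blocks $A_{[n,n+1]}$ may well be empty (e.g.\ $A$ supported on lengths in even unit intervals still satisfies $\log(\#A_T)/T\to\delta_\Gamma$). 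Likewise ``$\#A_T\asymp e^{\delta_\Gamma T}/T$'' is not a consequence of the hypothesis; only the upper bound of Proposition~\ref{premiere majoration} holds. Second, your bookkeeping ties the partition depth $n$ to the time horizon of the measure, bounding $H_{\Lcal A_{n+1}}(\p^{(n)})$, whereas what is needed is a lower bound on $H_{\Lcal A_{T_k}}(\p^{(n)})$ for \emph{fixed} $n$ and $T_k\to\infty$ along the subsequence defining $\mu$. The appeal to upper semicontinuity of entropy cannot repair this: each $\Lcal A_T$ is a convex combination of measures on closed orbits and hence has Kolmogorov--Sinai entropy $0$, so semicontinuity of $\mu\mapsto h_\mu$ gives nothing; the correct continuity statement is at the level of the fixed-depth partition entropies $H_{\cdot}(\p^{(n)})$, and it requires choosing $\p$ with $\mu(\partial P)=0$ for all $P\in\p$, which your sketch does not arrange.

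For comparison, the paper closes exactly these two gaps as follows. It fixes $\p$ with $m'$-null boundaries so that $H_{m'}(\p^{(n)})=\lim_k H_{\Lcal'A_{T_k}}(\p^{(n)})$ for each fixed $n$. Then, for large $T$, it decomposes $[0,T]$ into unit intervals $I\in\mathcal I_T$, writes $\Lcal'A_T$ as a convex combination of the $\Lcal'A_I$, and uses the standard inequalities (\cite[Prop.\,4.3.3]{katok}) $H_{\Lcal'A_I}(\p^{(n)})\geq \frac{n}{T+1}H_{\Lcal'A_I}(\p^{(\lceil T\rceil)})-\frac{n}{T-n}\log(\#\p^{n})$ together with the bounded-multiplicity fact (your triangular-inequality argument, giving $H_{\Lcal'A_I}(\p^{(\lceil T\rceil)})\geq\log(\#A_I)-\log C_1$). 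The key step that makes only the cumulative hypothesis necessary is then concavity of $\log$ plus Cauchy--Schwarz:
\[
\sum_{I\in\mathcal I_T}\frac{\#A_I}{\#A_{]T_0,T]}}\log(\#A_I)\;\geq\;\log\Bigl(\frac{\sum_I(\#A_I)^2}{\#A_{]T_0,T]}}\Bigr)\;\geq\;\log\Bigl(\frac{\#A_{]T_0,T]}}{\#\mathcal I_T}\Bigr),
\]
which yields $\liminf_T \frac1n H_{\Lcal'A_T}(\p^{(n)})\geq\delta_\Gamma-\alpha$ without any lower bound on individual blocks. If you want to keep your separated-set formulation, you would still need this interval decomposition and averaging step (or an equivalent Katok-type argument with the correct order of limits); as it stands, your proof does not go through.
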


\begin{proof}
 Let $\Gamma'\subset\Gamma$ be a torsion-free finite-index subgroup; let $\epsilon_0$ be the injectivity radius of $M'=\Omega/\Gamma'$. For each conjugacy class $c\in[\Gamma]^{\rgun}$, we choose a representative $\gamma_c\in\Gamma$, and we call $\Lcal' c$ the unique $(\phi_t)_t$-invariant probability measure on the projection in $T^1M'$ of the axis of $\gamma_c$. For any finite subset $B\subset [\Gamma]^{\rgun}$, we set $\Lcal' B=(\#B)^{-1}\sum_{c\in B}\Lcal'c$. By theorem~\ref{Thm : proba d'entropie max dans le cas compact}, it is enough to show that any accumulation point $m'\lim_{k\to\infty}\Lcal'A_T$ on $T^1M'$ has entropy bounded below by $\delta_\Gamma$.
 
 Let us give ourselves a finite measurable partition $\p$ of $T^1M'$ of diameter less than $\epsilon_0/3$ and such that for any element $P\in\p$, we have $m'(\partial P)=0$. Then 
 \[h_{m'}(\phi) \geq H_{m'}(\phi,\p) = \lim_{n\to\infty}\frac{1}{n}H_{m'}(\p^{(n)}).\]
 Fix $n\geq 1$ and note that for each $P\in\p^{(n)}$, we have $m'(\partial P)=0$. As a consequence 
 \[H_{m'}(\p^{(n)})=\lim_{k\to\infty}H_{\Lcal'A_{T_k}}(\p^{(n)})\geq \liminf_{T\to\infty}H_{\Lcal'A_T}(\p^{(n)}).\]
 Consider $\alpha>0$ and let us show that $\liminf_{T\to\infty}H_{\Lcal'A_T}(\p^{(n)})\geq n(\delta_\Gamma - \alpha)$. Let $T_0>0$ be large enough so that $\#A_T\geq e^{(\delta_\Gamma-\alpha)T}$ for any $T\geq T_0$.
 Take $T> T_0$ and decompose $[0,T]$ into disjoint intervals : 
 \[[0,T]=[0,T_0]\sqcup\bigsqcup_{I\in\mathcal{I}_T}I,\]
 such that each $I\in\mathcal{I}_T$ has diameter less than $1$, and $\#\mathcal{I}_T=\lceil T-T_0\rceil$. 
 Then by \cite[Prop.\,4.3.3.6]{katok} we have 
 \begin{align*}
 \liminf_{T\to\infty}H_{\Lcal'A_T}(\p^{(n)})&\geq \liminf_{T\to\infty} \frac{\#A_{T_0}}{\#A_T}H_{\Lcal'A_{T_0}}(\p^{(n)})+\sum_{I\in\mathcal{I}_T}\frac{\#A_I}{\#A_T}H_{\Lcal'A_I}(\p^{(n)})\\
 & \geq \liminf_{T\to\infty}\sum_{I\in\mathcal{I}_T}\frac{\#A_I}{\#A_{]T_0,T]}}H_{\Lcal'A_I}(\p^{(n)})\\
 & \geq \liminf_{T\to\infty}\frac{n}{T}\sum_{I\in\mathcal{I}_T}\frac{\#A_I}{\#A_{]T_0,T]}}H_{\Lcal'A_I}(\p^{(\lceil T\rceil)}),
 \end{align*}
 where we have used the Euclidean division $\lceil T\rceil=q_Tn+r_T$ with the following classical inequality (see \eg \cite[Prop.\,4.3.3.1-4]{katok}): 
 \[H_{\lambda_I}(\p^{(n)}) \geq \frac{1}{q_T}H_{\lambda_I}(\p^{(\lceil T\rceil)}) - \frac{1}{q_T}H_{\lambda_I}(\p^{(r_T)})
  \geq \frac{n}{T+1}H_{\lambda_I}(\p^{(\lceil T\rceil)}) - \frac{n}{T-n}\log(\#\p^{n}).\]
 
 Using the triangular inequality, one checks that for any $P\in\p^{(\lceil T\rceil)}$ and any $I\in\mathcal{I}_T$, there are at most $C_1=\max_{x\in\Ccal^{\core}_\Omega(\Gamma)}\#\{\gamma\in\Gamma:d_\Omega(x,\gamma x)\leq 1+2\epsilon_0/3\}$ conjugacy classes $c\in A_I$ such that $\Lcal'c(P)>0$; this implies that $\Lcal'A_I(P)\leq C_1 \#A_I^{-1}$. Hence $H_{\Lcal'A_I}(\p^{(\lceil T\rceil)})\geq \log(\#A_I)-\log(C_1)$, and we resume our computation, using the concavity of the logarithm and Cauchy--Schwarz inequality:
 \begin{align*}
 \frac{n}{T}\sum_{I\in\mathcal{I}_T}\frac{\#A_I}{\#A_{]T_0,T]}}H_{\Lcal'A_I}(\p^{(\lceil T\rceil)})& \geq \frac{n}{T}\sum_{I\in\mathcal{I}_T}\frac{\#A_I}{\#A_{]T_0,T]}}\log(\#A_I)-\frac nT \log(C_1)\\
 &\geq \frac{n}{T} \log\left(\frac{1}{\#A_{]T_0,T]}}\sum_{I\in\mathcal{I}_T}\#A_I^2\right)-\frac nT \log(C_1)\\
 &\geq \frac{n}{T} \log\left(\frac{\#A_{]T_0,T]}}{\#\mathcal{I}_T}\right)-\frac nT \log(C_1)\\
 &\geq \frac{n}{T} \log\left(\frac{\#A_T-\#A_{T_0}}{\lceil T-T_0\rceil}\right)-\frac nT \log(C_1)\\
 &\geq \frac{n}{T} \log\left(\frac{e^{(\delta_\Gamma-\alpha)T}-\#A_{T_0}}{\lceil T-T_0\rceil}\right)-\frac nT \log(C_1).
 \end{align*}
 This last term converges to $n(\delta_\Gamma-\alpha)$ as $T$ goes to infinity. This concludes the proof.
\end{proof}

We can now end the proof of Proposition~\ref{Prop : comptage dans le cas compact}.

\begin{proof}[Proof of Proposition~\ref{Prop : comptage dans le cas compact}]
 It is the immediate combination of Facts~\ref{smear} and \ref{entropie<d-2}, Theorem~\ref{Thm : proba d'entropie max dans le cas compact}, Propositions~\ref{l'entropie de BM est max}, \ref{minoration}, \ref{premiere majoration}, \ref{Proposition : equidistribution}, and Corollary~\ref{seconde majoration}.
\end{proof}

\subsection{Periodic geodesics and conjugacy classes}\label{sec:perconj}

In this section, we estimate, in some cases, the asymptotic of the ratio of the number of rank-one closed geodesics by the number of rank-one conjugacy classes. The same discussion is carried in more details in \cite[\S6.1]{EeERfH+};  here the discussion is kept at its strict minimum: we only give the necessary definitions and observations.

\begin{defi}\label{defn:corfix}
 Let $M=\Omega/\Gamma$ be a non-elementary rank-one convex projective orbifold. The \emph{core-fixing subgroup} of $\Gamma$ is the kernel of the restriction of $\Gamma$ to the span of $\Lambda_\Gamma$. 
\end{defi}

\begin{obs}[{\cite[Obs.\,6.14]{EeERfH+}}]\label{obs:corfix}
 Let $M=\Omega/\Gamma$ be a rank-one non-elementary convex projective orbifold, and let $\corfix\subset\Gamma$ be the core-fixing subgroup. 
 If $T^1\Omega_{\bip}$ is the preimage by $\pi_\Gamma:T^1\Omega\rightarrow T^1M$ of $T^1M_{\bip}$, then the set of vectors $v\in T^1\Omega_{\bip}$ with $\Stab_\Gamma(v)=F$ is open and dense in $T^1\Omega_{\bip}$, and $\Gamma$-invariant.
\end{obs}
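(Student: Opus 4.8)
The plan is to establish four things about the set $\Sigma:=\{v\in T^1\Omega_{\bip}:\Stab_\Gamma(v)=F\}$: that it is $\Gamma$-invariant, that $F\subseteq\Stab_\Gamma(v)$ for \emph{every} $v\in T^1\Omega_{\bip}$ (so that $\Sigma=T^1\Omega_{\bip}\setminus\bigcup_{\gamma\in\Gamma\setminus F}\mathrm{Fix}_{T^1\Omega}(\gamma)$), that it is open, and that it is dense, the last being the substantial point.

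$\Gamma$-invariance and the inclusion $F\subseteq\Stab_\Gamma(v)$ are quick. Since $F$ is the kernel of the restriction homomorphism $\Gamma\to\PGL(\Span\Lambda_\Gamma)$ it is normal, so $\Stab_\Gamma(gv)=g\Stab_\Gamma(v)g^{-1}$ and $\Sigma$ is $\Gamma$-invariant. If $v\in T^1\Omega_{\bip}$ then $\phi_{\pm\infty}v\in\Lambda^{\prox}\subseteq\PR(\Span\Lambda_\Gamma)$ (recall $\Span\Lambda_\Gamma=\Span\Lambda^{\prox}$, which by Fact~\ref{fait:des rk1 partout} is the smallest non-zero $\Gamma$-invariant subspace), so the projective line spanned by $v$ lies in $\PR(\Span\Lambda_\Gamma)$; as $F$ fixes $\PR(\Span\Lambda_\Gamma)$ pointwise it fixes that line and hence fixes $v$. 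For openness I would use proper discontinuity of $\Gamma\curvearrowright T^1\Omega$ (Section~\ref{distance}): given $v_0$ and a relatively compact neighbourhood $\Omega'$ of $\pi v_0$, only finitely many $\gamma$, say $\gamma_1,\dots,\gamma_k$, satisfy $\gamma\overline{\Omega'}\cap\overline{\Omega'}\neq\emptyset$, and any $\gamma$ fixing a vector based in $\Omega'$ lies among them; hence on $\pi^{-1}(\Omega')\cap T^1\Omega_{\bip}$ the complement of $\Sigma$ is the finite union $\bigcup_{\gamma_i\notin F}\bigl(\mathrm{Fix}_{T^1\Omega}(\gamma_i)\cap T^1\Omega_{\bip}\bigr)$ of closed sets. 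This also reduces density to the statement that $\mathrm{Fix}_{T^1\Omega}(\gamma)\cap T^1\Omega_{\bip}$ has empty interior in $T^1\Omega_{\bip}$ for each $\gamma\in\Gamma\setminus F$.

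For this last reduction I would first record a three-fixed-point observation: if $\gamma v=v$ then $\gamma$ fixes the distinct collinear points $\phi_{-\infty}v$, $\pi v$, $\phi_\infty v$, so $\gamma$ restricts to the identity on the projective line spanned by $v$; hence that line lies in $\PR(E^\gamma)$, where the fixed set $\{x\in\Omega:\gamma x=x\}$ equals $\PR(E^\gamma)\cap\Omega$ for a single eigenspace $E^\gamma\subsetneq V$ (uniqueness of the slice uses proper convexity of $\Omega$; that $E^\gamma\neq V$ uses $\gamma\neq\mathrm{id}$). Thus $\mathrm{Fix}_{T^1\Omega}(\gamma)\cap T^1\Omega_{\bip}$ consists of biproximal vectors whose spanned line is contained in the proper $\gamma$-fixed subspace $\PR(E^\gamma)$. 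Suppose for contradiction that some non-empty $U$, open in $T^1\Omega_{\bip}$, consisted of such vectors. Using density of periodic rank-one vectors in $T^1\Omega_{\bip}$ (from Facts~\ref{fait:des rk1 partout} and~\ref{equivalences rang un}: rank-one pairs are dense in $\Lambda^{\prox}\times\Lambda^{\prox}$, their points are smooth strongly extremal, and the corresponding axes meet $\Omega$) together with openness of $U$, near any periodic rank-one $v_0\in U$ the rank-one axes $x_g^+\oplus x_g^-$ whose endpoint pairs range over a dense subset of a neighbourhood of $(\phi_\infty v_0,\phi_{-\infty}v_0)$ carry tracking periodic vectors lying in $U$; since those vectors lie in $\pi^{-1}(\PR(E^\gamma))$, taking closures one obtains a relatively open neighbourhood of $\phi_\infty v_0$ in $\Lambda^{\prox}$ contained in $\PR(E^\gamma)$, i.e.\ $\gamma$ fixes a non-empty open subset of $\Lambda^{\prox}$ pointwise. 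One then argues, via minimality of the $\Gamma$-action on $\Lambda^{\prox}$ (Fact~\ref{fait:des rk1 partout}) and a minimal-dimension bookkeeping on the subspaces spanned by the $\Gamma$-translates of this open piece, that $\gamma$ must already act trivially on $\PR(\Span\Lambda_\Gamma)$, i.e.\ $\gamma\in F$ — the desired contradiction.

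The hard part is exactly this final step: passing from "$\gamma$ fixes pointwise a non-empty open piece of $\Lambda^{\prox}$" to "$\gamma\in F$". Because $\Lambda^{\prox}$ need not be connected, such an open piece can be clopen, and the naive conclusion "$\Lambda^{\prox}\subseteq\PR(E^\gamma)$" is false in general; what makes the argument go through is the rank-one hypothesis, which prevents $\Lambda^{\prox}$ (and the convex core) from decomposing into "skew" clopen blocks spanning complementary subspaces — precisely such a join decomposition would destroy the existence of strongly extremal boundary points, contradicting rank-one. Making this propagation of the fixed information across $\Lambda^{\prox}$ precise (equivalently, ruling out that $\gamma\notin F$ fixes an entire open subset of $T^1\Omega_{\bip}$) is the delicate point of the proof; it is carried out in detail in \cite{EeERfH+}.
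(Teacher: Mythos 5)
First, a point of comparison: the paper does not prove this statement at all --- it is imported verbatim from \cite[Obs.\,6.14]{EeERfH+} --- so there is no in-paper argument to measure yours against; it has to stand on its own. The routine parts of your proposal are fine: normality of $\corfix$ gives $\Gamma$-invariance; both endpoints of any $v\in T^1\Omega_{\bip}$ lie in $\Lambda^{\prox}\subset\PR(\Span\Lambda_\Gamma)$, so $\corfix\subset\Stab_\Gamma(v)$ for every such $v$; proper discontinuity makes the complement of your set $\Sigma$ locally a finite union of the closed sets $\mathrm{Fix}_{T^1\Omega}(\gamma)\cap T^1\Omega_{\bip}$, $\gamma\notin\corfix$, which gives openness and reduces density to showing each such fixed-vector set has empty interior; and your observation that a fixed vector forces $\gamma$ to be the identity on the spanned projective line, hence that an open set of fixed vectors would produce a nonempty relatively open subset of $\Lambda^{\prox}$ fixed pointwise by $\gamma$, is correct (for this last passage you do not even need periodic rank-one vectors --- continuity and openness of the endpoint map on $T^1\Omega_{\bip}$ suffice).

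The genuine gap is the step you flag yourself: deducing $\gamma\in\corfix$ from ``$\gamma$ fixes pointwise a nonempty open subset of $\Lambda^{\prox}$''. Everything before it is soft; this implication \emph{is} the content of the observation, and you do not prove it --- you defer it to \cite{EeERfH+}, which is precisely the reference the statement is quoted from, so as a proof the text is incomplete (indeed circular) at the decisive point. Moreover, the heuristic you offer in its place is not justified and is dubious as stated: a clopen splitting $\Lambda^{\prox}=\Lambda_1\sqcup\Lambda_2$ with $\Span\Lambda_1\cap\Span\Lambda_2=\{0\}$ does not by itself ``destroy strongly extremal boundary points'', because segments contained in $\partial\Omega$ are a feature of $\Omega$, not of the linear spans of pieces of the limit set; in fact, by Fact~\ref{fait:des rk1 partout} there are rank-one elements with attracting and repelling points in the two different pieces, and their axes cross $\Omega$, so no immediate contradiction with strong extremality arises from such a splitting. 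What is really needed is a dynamical argument ruling out a $\gamma\notin\corfix$ that acts as a scalar on the span of an open piece of $\Lambda^{\prox}$ --- for instance by playing such a $\gamma$ against rank-one elements whose fixed-point pairs straddle the fixed open set and its complement, and invoking discreteness of $\Gamma$. Supplying that argument (rather than citing it) is what a complete proof of the observation requires.
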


\begin{prop}\label{prop:nbconj}
 Let $\Omega\subset\PR(V)$ be a properly convex open set, and $\Gamma\subset\Aut(\Omega)$ a convex cocompact discrete subgroup with $M=\Omega/\Gamma$ rank-one and non-elementary. Let $\corfix\subset\Gamma$ be the core-fixing subgroup. Let $K\subset T^1M_{\bip}$ be the set vectors of whose lifts $v\in T^1\Omega$ satisfy $\Stab_\Gamma(v)\neq \corfix$. Then the topological entropy of the geodesic flow on $K$ is strictly smaller than $\delta_\Gamma$, as well as the exponential growth rate of the number of rank-one conjugacy classes of $\Gamma$ with axis in $K$.
\end{prop}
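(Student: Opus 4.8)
The plan is to mimic closely the proofs of Proposition~\ref{l'entropie de l'adherence des orbites periodiques singulieres} and Corollary~\ref{seconde majoration}, the two ingredients being an entropy-drop statement on a flow-invariant closed set that the Bowen--Margulis measure misses, and a separation/counting argument that feeds into the variational principle. First I would show that $K$ is indeed a $(\phi_t)_t$-invariant \emph{closed} subset of $T^1M_{\bip}$: by Observation~\ref{obs:corfix}, the set of $v\in T^1\Omega_{\bip}$ with $\Stab_\Gamma(v)=\corfix$ is open and dense and $\Gamma$-invariant, so its complement in $T^1\Omega_{\bip}$ is closed, $\Gamma$-invariant and has empty interior; passing to the quotient, $K$ is closed in $T^1M_{\bip}$ with empty interior. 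Since $T^1M_{\bip}$ is compact (Proposition~\ref{Proposition : cvxcocpct -> divergent} gives finiteness of the Bowen--Margulis measure, and $T^1M_{\bip}=\supp(m)$ is closed in the compact $T^1M_{\core}$), $K$ is compact.

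Next, for the entropy drop: by Remark~\ref{l'entropie est semi-continue} and Observation~\ref{obs:revetentrop} (lifting to a torsion-free finite-index subgroup so that the geodesic flow is entropy-expansive, Fact~\ref{geod_is_expansive}), there is a $(\phi_t)_t$-invariant probability measure $m'$ on $K$ realising $h_{\topp}(K,(\phi_t)_t)$. By Theorem~\ref{Thm : The weak Hopf--Tsuji--Sullivan--Roblin dichotomy}.\ref{Item : cas divergent}, the Bowen--Margulis probability measure $m_\Gamma$ gives full measure to the set of vectors whose lift $v$ has $\Stab_\Gamma(v)=\corfix$ (this is the open dense $\Gamma$-invariant set of Observation~\ref{obs:corfix}, and $m_\Gamma$ is supported on $T^1M_{\bip}$ with full support; more precisely, by ergodicity of $m_\Gamma$ and since that set has positive measure — being open and nonempty in $\supp(m_\Gamma)$ — it has full measure). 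Hence $m'\neq m_\Gamma$, so by the uniqueness of the measure of maximal entropy (Theorem~\ref{Thm : proba d'entropie max dans le cas compact}) we get $h_{\topp}(K,(\phi_t)_t)=h_{m'}(\phi)<\delta_\Gamma$.

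For the counting statement, I would argue exactly as in Corollary~\ref{seconde majoration}. Pass to a torsion-free finite-index $\Gamma'\subset\Gamma$ with injectivity radius $\epsilon_0$ on $M'=\Omega/\Gamma'$ (this changes the exponential growth rate by at most a bounded multiplicative factor on the number of conjugacy classes, which does not affect the growth \emph{rate}). For each rank-one conjugacy class $c$ of $\Gamma$ with axis in $K$ and $\ell(c)>0$, pick $\gamma_c\in c$ and a vector $v_c\in T^1\Omega$ on $\axis(\gamma_c)$ whose projection lies in $K$. By the triangular inequality, any vector of $T^1\Omega$ lies in at most $C_1:=\max_{x\in\Ccal^{\core}_\Omega(\Gamma)}\#\{\gamma\in\Gamma:d_\Omega(x,\gamma x)\leq 1+2\epsilon_0/3\}$ of the balls $B_{T^1\Omega}^{(t+1)}(v_c,\epsilon_0/6)$ with $c\in[\Gamma]^{\rgun}_{[t,t+1]}$, axis in $K$; so one extracts a $(d_{T^1\Omega}^{(t+1)},\epsilon_0/6)$-separated family of size at least $C_1^{-1}$ times the number of such classes, whose projections in $T^1M'$ lie in (the preimage of) $K$. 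By the definition of topological entropy, the exponential growth rate in $t$ of the size of such separated families is at most $h_{\topp}(K,(\phi_t)_t)<\delta_\Gamma$, proving the claim.

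The main obstacle I anticipate is pinning down precisely that the Bowen--Margulis measure gives zero measure to $K$, i.e. that $m_\Gamma$-almost every vector has stabiliser exactly $\corfix$: this needs the combination of $\supp(m_\Gamma)=T^1M_{\bip}$ (Proposition~\ref{Myrberg est plein}), ergodicity of $m_\Gamma$ (Corollary~\ref{Corollaire : ergo et melange}), and the openness/density/$\Gamma$-invariance in Observation~\ref{obs:corfix}, to conclude that the full-stabiliser locus — being a $(\phi_t)_t$-invariant measurable set of positive measure (it is open, nonempty, and meets the support) — has full measure. Everything else is a routine transcription of the arguments already used in Section~\ref{ssection:borne sup rksup}.
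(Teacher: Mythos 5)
Your proposal is correct and follows essentially the same route as the paper: $K$ is closed, flow-invariant and of empty interior in $T^1M_{\bip}=\supp(m)$, hence $m(K)=0$ by ergodicity, so the measure of maximal entropy on $K$ (which exists by entropy-expansiveness after passing to a torsion-free finite-index subgroup) differs from the Bowen--Margulis measure and has entropy $<\delta_\Gamma$ by Theorem~\ref{Thm : proba d'entropie max dans le cas compact}; the counting bound is then the same separated-set argument as in Corollary~\ref{seconde majoration}, applied to axis vectors lying over $K$. The only cosmetic difference is that you spell out the ergodicity step (full measure of the $\Stab=\corfix$ locus) which the paper compresses into a citation of Theorem~\ref{Thm : The weak Hopf--Tsuji--Sullivan--Roblin dichotomy}.
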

\begin{proof}
 The subset $K\subset T^1M_{\bip}$ is closed and $(\phi_t)_t$-invariant, and has empty-interior by Observation~\ref{obs:corfix}; thus, it is given zero measure by the Bozen--Margulis measure, by Theorem~\ref{Thm : The weak Hopf--Tsuji--Sullivan--Roblin dichotomy}. By Remark~\ref{l'entropie est semi-continue} and Observation~\ref{obs:revetentrop}, we can find a probability measure on $K$ whose entropy $\delta$ is the topological entropy of $(\phi_t)_t$; this measure is different from the Bowen--Margulis probability measure since the latter gives zero measure to $K$. Hence $\delta<\delta_\Gamma$ by Theorem~\ref{Thm : proba d'entropie max dans le cas compact}.
 
 Let $\Gamma'\subset\Gamma$ be a finite-index torsion free subgroup; we set $M'=\Omega/\Gamma'$ and  $\epsilon_0$ to be the injectivity radius of $M'$;  we denote by $\epsilon_0$ the injectivity radius of $M'$. Recall that there are only finitely many conjugacy class $c$ of $\Gamma$ such that $\ell(c)\leq \epsilon_0$. For each rank-one conjugacy class $c\in[\Gamma]^{\rgun}$ with $\ell(c)>0$ we can find  an element $\gamma_c\in c$ and a vector $v_{c}\in T^1\Omega_{\bip}:=\pi_\Gamma^{-1}T^1M_{\bip}$ such that $\gamma_c\pi v_c=\pi\phi_{\ell(c)}v_c$. Let $A\subset[\Gamma]^{\rgun}$ be the subset made of conjugacy classes $c$ such that $\Stab_\Gamma(v_c)\neq F$. Consider $t>0$. Using the triangular inequality, one can check that any vector of $T^1\Omega$ belongs to at most $$C_1:=\max_{x\in\Ccal_\Omega^{\core}(\Gamma)}\#\{\gamma\in\Gamma : d_\Omega(x,\gamma x)\leq 1+2\epsilon_0/3\}$$ balls of the family
 $$\{B_{T^1\Omega}^{(t+1)}(v_c,\epsilon_0/6) : c\in A_{[t, t+1]}\}.$$
 Therefore we can extract from $\{v_c : c\in A_{[t, t+1]}\}$ a $(d_{T^1\Omega}^{(t+1)},\epsilon_0/6)$-separated family of size at least $C_1^{-1}\#A_{[t, t+1]}$. The projection in $T^1M'$ of this family is $(d_{T^1M'}^{(t+1)},\epsilon_0/6)$-separated, and belongs to the preimage by $T^1M'\rightarrow T^1M$ of $K$. By definition of the topological entropy, the exponential growth rate of the size of such a family, when $t$ goes to infinity, is bounded above by $\delta$, which is strictly less than $\delta_\Gamma$.
\end{proof}

\begin{rqq}
Let $M=\Omega/\Gamma$ be a non-elementary rank-one convex projective orbifold with compact convex core.
As in \cite[Obs.\,6.15 \& Prop.\,6.16]{EeERfH+}, one may use the notion of strongly primitive rank-one elements (\ie rank-one elements $\gamma\in\Gamma$ such that $\ell(\gamma)\leq\ell(\gamma')$ for any rank-one element $\gamma'\in\Gamma$ with the same axis as $\gamma$) to prove that, if the core-fixing subgroup $\corfix\subset\Gamma$ is the centraliser, then
\begin{equation*}\label{eq:ngconj}\#[\Gamma]_T^{\rgun}\underset{T\to\infty}{\sim}\#[\Gamma]_T^{\prgun}\underset{T\to\infty}{\sim}\#F \cdot \#\mathcal G^{\rgun}_T,\end{equation*}
where $[\Gamma]^{\prgun}$ is the set of strongly primitive rank-one conjugacy classes in $\Gamma$ and $\mathcal G^{\rgun}_T$ is the set of rank-one periodic orbits in $T^1M$ with period at most $T$.

The proof in \cite{EeERfH+} uses equidistribution results which may be replaced for the present purpose by Proposition~\ref{Prop : comptage dans le cas compact} (one would also need Proposition~\ref{prop:nbconj}). This way, one actually obtains that $\frac{\#[\Gamma]_T^{\prgun}}{\#\mathcal G^{\rgun}_T}$ goes \emph{exponentially fast} to $\#F$, while in \cite{EeERfH+} no rate of convergence is given.
\end{rqq}

\bibliographystyle{alpha-mod}
{\small \bibliography{bib}}

\Addresses

\end{document}